\DeclareSymbolFont{extraup}{U}{zavm}{m}{n}
\DeclareMathSymbol{\varclub}{\mathalpha}{extraup}{84} 
\DeclareMathSymbol{\varspade}{\mathalpha}{extraup}{85}
\definecolor{darkred}{rgb}{0.9,0.1,0.1}
\definecolor{darkblue}{rgb}{0,0,0.7}
	\definecolor{darkgreen}{rgb}{0,0.5,0}
\definecolor{bluegray}{rgb}{0.4, 0.6, 0.8}
\definecolor{cadmiumorange}{rgb}{0.93, 0.53, 0.18}
\definecolor{darkcerulean}{rgb}{0.03, 0.27, 0.49}
\pgfplotsset{compat=newest}
\pgfplotsset{width=\textwidth,height=5cm}
\def\CA{\mathcal{A}}
\def\CB{\mathcal{B}}
\def\CC{\mathcal{C}}
\def\CD{\mathcal{D}}
\def\CE{\mathcal{E}}
\def\CF{\mathcal{F}}
\def\CG{\mathcal{G}}
\def\CH{\mathcal{H}}
\def\CI{\mathcal{I}}
\def\CJ{\mathcal{J}}
\def\CL{\mathcal{L}}
\def\CM{\mathcal{M}}
\def\CN{\mathcal{N}}
\def\CO{\mathcal{O}}
\def\CP{\mathcal{P}}
\def\CQ{\mathcal{Q}}
\def\CR{\mathcal{R}}
\def\CS{\mathcal{S}}
\def\CT{\mathcal{T}}
\def\CU{\mathcal{U}}
\def\CV{\mathcal{V}}
\def\CW{\mathcal{W}}
\def\CX{\mathcal{X}}
\def\CY{\mathcal{Y}}
\def\DD{\CD}
\def\EE{\mathscr{E}}
\def\FF{\mathscr{F}}
\def\II{\mathscr{I}}
\def\JJ{\mathscr{J}}
\def\LL{\mathscr{L}}
\def\MM{\boldsymbol{\CM}}
\def\TT{\mathscr{T}}
\def\fI{\mathfrak{I}}
\def\fK{\mathfrak{K}}
\def\fR{\mathfrak{R}}
\def\fS{\mathfrak{S}}
\def\fc{\mathfrak{c}} 
\def\fd{\mathfrak{d}}
\def\fs{\mathfrak{s}}
\def\ft{\mathfrak{t}}
\def\sh{\mathsf{h}}
\newcommand{\esh}{\operatorname{e}_{\mathsf{h}}}
\newcommand{\death}{\raisebox{-0.17em}{\includegraphics[width=3.3mm]{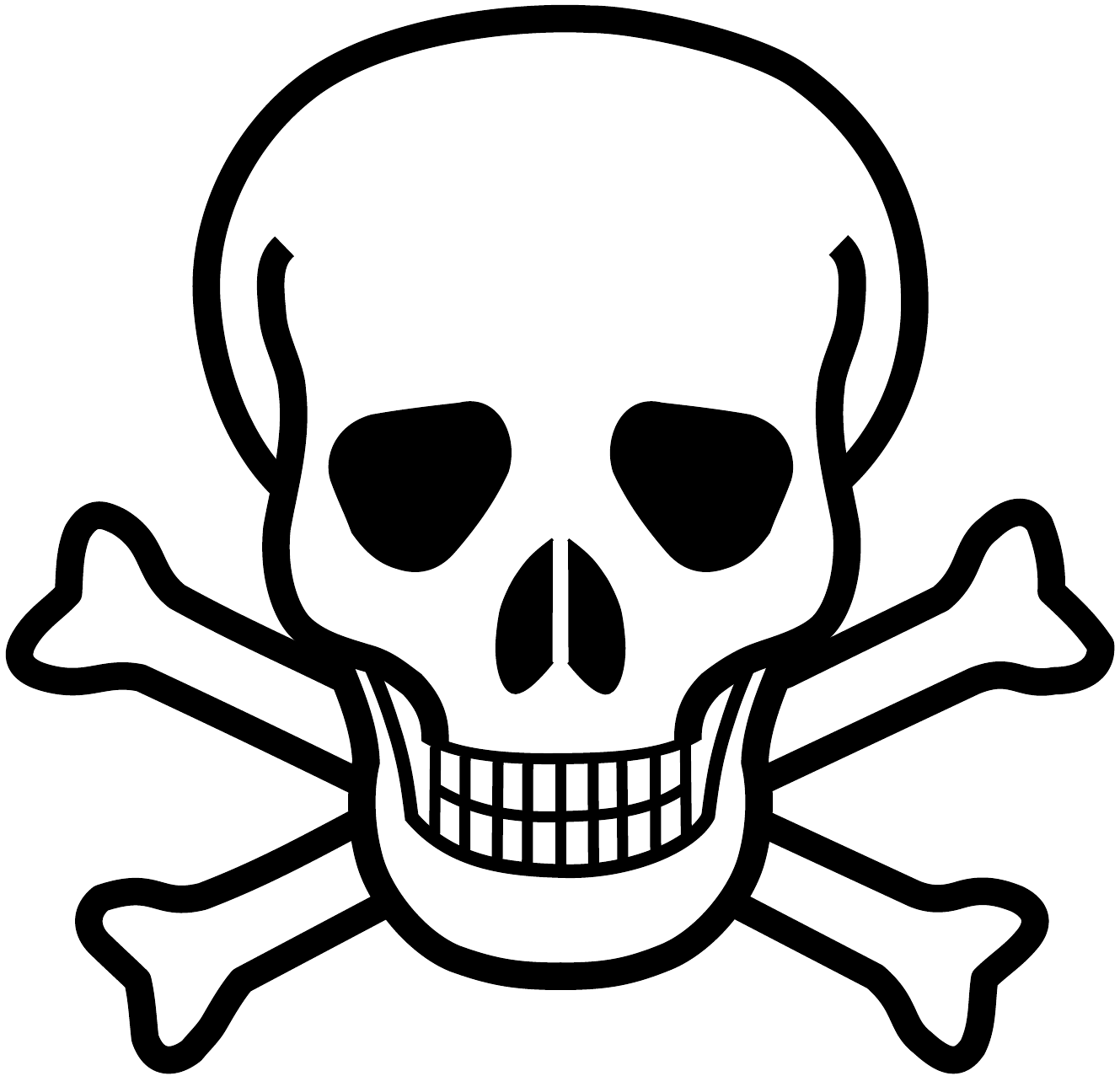}}}
\newcommand{\deathsmall}{\raisebox{-0.1em}{\includegraphics[width=2.0mm]{skull}}}
\colorlet{testcolor}{green!60!black}
\colorlet{testcolor2}{blue!100!black}
\colorlet{lightblue}{darkblue!80}
\colorlet{grayblue}{bluegray!100}
\colorlet{orange}{cadmiumorange!100}
		\pgfmathsetlength{\pgf@xb}{\pgfkeysvalueof{/pgf/outer xsep}}%
		\pgfmathsetlength{\pgf@yb}{\pgfkeysvalueof{/pgf/outer ysep}}%
\colorlet{symbols}{black!90!black}
\colorlet{cameron}{darkgreen!90!black}
\colorlet{testcolor}{green!60!black}
\colorlet{darkblue}{blue!60!black}
\colorlet{darkgreen}{green!60!black}
\def\symbol#1{\textcolor{symbols}{#1}}
\def\1{\mathbf{\symbol{1}}}
\def\drawx{\draw[-,solid] (-3pt,-3pt) -- (3pt,3pt);\draw[-,solid] (-3pt,3pt) -- (3pt,-3pt);}
\tikzset{
	root/.style={draw=symbols,circle,inner sep=0pt,minimum size=0.5mm,fill=white},	
	smalldot/.style={circle,fill=symbols,draw=symbols,inner sep=0pt,minimum size=0.5mm},
	dot/.style={circle,fill=black,inner sep=0pt,minimum size=1mm},
	bigdot/.style={circle,fill=black,inner sep=0pt,minimum size=4mm},
	noiseedge/.style={black,semithick,decorate, decoration={snake,segment length=4pt,amplitude=1pt}},
	smallestdot1/.style={circle,fill=symbols,draw=symbols,inner sep=0pt,minimum size=0.2mm},
	smallestdot2/.style={circle,fill=cameron,draw=cameron,inner sep=0pt,minimum size=0.2mm},
	smallnoiseedge1/.style={draw=symbols,decorate, decoration={snake,segment length=1.75pt,amplitude=0.55pt}},
	smallnoiseedge2/.style={draw=cameron,decorate, decoration={snake,segment length=1.75pt,amplitude=0.55pt}},
	noiseedge1/.style={draw=symbols,decorate, decoration={snake,segment length=0.9pt,amplitude=0.55pt}},
	noiseedge2/.style={draw=cameron,decorate, decoration={snake,segment length=0.9pt,amplitude=0.55pt}},
	smallnoisenode1/.style={draw=symbols,circle,inner sep=0pt,minimum size=0.5mm,fill=white},	
	smallnoisenode2/.style={draw=cameron,circle,inner sep=0pt,minimum size=0.5mm,fill=white},
	poly/.style={draw=symbols,circle,inner sep=0pt,minimum size=0.5mm,fill=black},
	dot/.style={circle,fill=black,inner sep=0pt,minimum size=1mm},
	int/.style={circle,fill=black,draw=black,inner sep=0pt,minimum size=0.7mm},
	circ/.style={circle,draw=black,inner sep=0pt, minimum size=1mm},
	var/.style={circle,fill=black!10,draw=black,inner sep=0pt, minimum size=2mm},
	dotred/.style={circle,fill=black!50,inner sep=0pt, minimum size=2mm},
	generic/.style={semithick,shorten >=1pt,shorten <=1pt},
	oddfunc/.style={generic, dotted},
	dist/.style={ultra thick,draw=testcolor,shorten >=1pt,shorten <=1pt},
	testfcn/.style={ultra thick,testcolor,shorten >=1pt,shorten <=1pt,<-},
	testfunction/.style={ultra thick,testcolor,shorten >=1pt,shorten <=1pt},
	testfcnx/.style={ultra thick,testcolor,shorten >=1pt,shorten <=1pt,<-,
		postaction={decorate,decoration={markings,mark=at position 0.6 with {\drawx}}}},
	kprime/.style={semithick,shorten >=1pt,shorten <=1pt,densely dashed,->},
	kprimex/.style={semithick,shorten >=1pt,shorten <=1pt,densely dashed,->,
		postaction={decorate,decoration={markings,mark=at position 0.4 with {\drawx}}}},
	kernel/.style={semithick,shorten >=1pt,shorten <=1pt,->,draw=black},
	Pkernel/.style={ultra thick,shorten >=1pt,shorten <=1pt,->,draw=blue},
	PkernelBig/.style={very thick,shorten >=1pt,shorten <=1pt,decorate, draw=blue, decoration={zigzag,amplitude=1.5pt,segment length = 3pt,pre length=2pt,post length=2pt}},
	multx/.style={shorten >=1pt,shorten <=1pt,
		postaction={decorate,decoration={markings,mark=at position 0.5 with {\drawx}}}},
	kernelx/.style={semithick,shorten >=1pt,shorten <=1pt,->,
		postaction={decorate,decoration={markings,mark=at position 0.4 with {\drawx}}}},
	kernel1/.style={->,semithick,shorten >=1pt,shorten <=1pt,postaction={decorate,decoration={markings,mark=at position 0.45 with {\draw[-] (0,-0.1) -- (0,0.1);}}}},
	kernel2/.style={->,semithick,shorten >=1pt,shorten <=1pt,postaction={decorate,decoration={markings,mark=at position 0.45 with {\draw[-] (0.05,-0.1) -- (0.05,0.1);\draw[-] (-0.05,-0.1) -- (-0.05,0.1);}}}},
	kernelBig/.style={semithick,shorten >=1pt,shorten <=1pt,decorate, decoration={zigzag,amplitude=1.5pt,segment length = 3pt,pre length=2pt,post length=2pt}},
	kernelBigg/.style={thick,shorten >=1pt,shorten <=1pt,decorate, decoration={zigzag,amplitude=3.5pt,segment length = 7pt,pre length=2pt,post length=2pt}},
	kernelBigg1/.style={thick,shorten >=1pt,shorten <=1pt,decorate, decoration={saw,amplitude=3.5pt,segment length = 7pt,pre length=2pt,post length=2pt}},
	kernelBigg2/.style={thick,shorten >=1pt,shorten <=1pt,decorate, decoration={bumps,amplitude=3.5pt,segment length = 7pt,pre length=2pt,post length=2pt}},
	rho/.style={dotted,semithick,shorten >=1pt,shorten <=1pt},
	renorm/.style={shape=circle,fill=white,inner sep=1pt},
	labl/.style={shape=rectangle,fill=white,inner sep=1pt},
	cumu2n/.style={inner sep=3pt},
	cumu2/.style={draw=red!80,fill=red!40},
	cumu3/.style={regular polygon, regular polygon sides=3,draw=red!80,rounded corners=3pt,fill=red!40,minimum size=5mm},
	cumu4/.style={regular polygon, regular polygon sides=4,draw=red!80,rounded corners=3pt,fill=red!40,minimum size=7mm},
	cumu5/.style={regular polygon, regular polygon sides=5,draw=red!80,rounded corners=3pt,fill=red!40,minimum size=7mm},
	bcumu2n/.style={inner sep=3pt},
	bcumu2/.style={draw=blue!80,fill=blue!40},
	bcumu3/.style={regular polygon, regular polygon sides=3,draw=blue!80,rounded corners=3pt,fill=blue!40,minimum size=5mm},
	bcumu4/.style={regular polygon, regular polygon sides=4,draw=blue!80,rounded corners=3pt,fill=blue!40,minimum size=7mm},
	bcumu5/.style={regular polygon, regular polygon sides=5,draw=blue!80,rounded corners=3pt,fill=blue!40,minimum size=7mm},
	xi/.style={circle,fill=symbols!30,draw=symbols,inner sep=0pt,minimum size=1.2mm},
	cmn/.style={circle,fill=cameron!50,draw=symbols,inner sep=0pt,minimum size=1.2mm},
	xix/.style={crosscircle,fill=symbols!10,draw=symbols,inner sep=0pt,minimum size=1.2mm},
	xib/.style={circle,fill=symbols!10,draw=symbols,inner sep=0pt,minimum size=1.6mm},
	xibx/.style={crosscircle,fill=symbols!10,draw=symbols,inner sep=0pt,minimum size=1.6mm},
	not/.style={circle,fill=symbols,draw=symbols,inner sep=0pt,minimum size=0.5mm},
	>=stealth,
}
\def\DeclareSymbol#1#2#3{\expandafter\gdef\csname MH@symb@#1\endcsname{\tikz[baseline=#2,scale=0.13,draw=symbols]{#3}}\expandafter\gdef\csname MH@symb@#1s\endcsname{\scalebox{0.7}{\tikz[baseline=#2,scale=0.15,draw=symbols]{#3}}}}
\def\<#1>{\csname MH@symb@#1\endcsname}
\newcommand{\gr}[1]{\textcolor{cameron}{#1}}
\newcommand\bigwp{\mathop{\mathpalette\bigDi@mond\relax}}
\newcommand\bigDi@mond[2]{%
	\vcenter{\hbox{\m@th
			\scalebox{\ifx#1\displaystyle 2\else1.2\fi}{$#1\diamond$}%
	}}%
}
\author[1,2]{Peter K. Friz}
\author[1]{Tom Klose}
\affil[1]{Technische Universit\"at Berlin, Berlin, Germany.}
\affil[2]{Weierstraß–Institut f\"ur Angewandte Analysis und Stochastik, Berlin, Germany.}
\date{March 19, 2022}
\title{Precise Laplace asymptotics for singular stochastic PDEs: The case of 2D gPAM}
\numberwithin{equation}{section}
\begin{document}

\maketitle

\thispagestyle{plain}

\newcolumntype{R}[1]{>{\raggedright\arraybackslash}p{#1}}

\vspace{2em}

\noindent

\begin{abstract}
	We implement a Laplace method for the renormalised solution to the generalised 2D Parabolic Anderson Model (gPAM) driven by a small spatial white noise.
	Our work rests upon Hairer's theory of regularity structures which allows to generalise classical ideas of Azencott and Ben Arous on path space as well as Aida and Inahama and Kawabi on rough path space to the space of models.
	The technical cornerstone of our argument is a Taylor expansion of the solution in the noise intensity parameter: We prove precise bounds for its terms and the remainder and use them to estimate asymptotically irrevelant terms to arbitrary order.
	While most of our arguments are not specific to~gPAM, we also outline how to adapt those that are.
\end{abstract}

\setcounter{tocdepth}{4}
\tableofcontents

\renewcommand\thesection{I}

\section{Introduction}

We start this introduction with a review of the history of (precise) Laplace asymptotics and mention some of their applications~(subsection~\ref{sec:history_applications}). Our contribution is the development of the Laplace method for the generalised Parabolic Anderson Model (gPAM), a singular stochastic PDE, so we collect its well-posedness and large deviation results in subsection~\ref{intro:eq_ldp}. We then state our main results in full detail in subsection~\ref{intro:sec:results} and indicate how they may be generalised to cover other singular stochastic PDEs~(subsection~\ref{sec:intro_generalisation}). Finally, we discuss the organisation of this article in subsection~\ref{sec:intro_organisation}. 

\subsection{History, relation to other work, and applications}  \label{sec:history_applications}

In 1774, Laplace~\cite{laplace}\footnote{The quoted article is an English translation of the original 1774 article in French. The latter is properly referenced in the article we quoted.} himself devised the method that now bears his name to calculate asymptotics as $\eps \to 0$ of integrals
	\begin{equation*}
		\int_D f(x) \exp\del[3]{-\frac{F(x)}{\eps^2}} \dif x
	\end{equation*}
for intervals $D = [a,b]$ and functions $F$ with a unique minimum inside~$D$. It was generalised later to cover (some) domains~$D \subseteq \R^d$ for $d > 1$ as well and quickly found its way into the canon of classical asymptotic analysis. In parallel to the rapid developments in mathematical physics, Laplace's method gained further traction: it inspired asymptotic approximation techniques that were successfully applied in functional space. More precisely, they allowed to study newly emerging \emph{path integrals} of the form
	\begin{equation*}
		\mathfrak{J}_D(\eps) := \int_D f(x) \exp\del[3]{-\frac{F(x)}{\eps^2}} \dif \P\del[3]{\frac{x}{\eps}}
	\end{equation*}
as popularised by Glimm and Jaffe~\cite{glimm_jaffe} when $\eps \to 0$, where $D$ is a Borel subset of a Banach space~$\CX$, $f$ and $F$ are real-valued functions on~$\CX$, and $\P$ is a probability measure on~$\CB(\CX)$.\footnote{One also requires sufficient regularity of $f$, $F$, and $\partial D$, the boundary of~$D$.} In this direction, pioneering work was done by Feynman~\cite{feynman-hibbs} in his research on quantum mechanics in the 1960's.

Probabilists joined the venture in the 1960's. Building on earlier works of Cramer, Varadhan's theory of \emph{large deviations}~\cite{varadhan66} provides a powerful framework to set up Laplace's method in infinite-dimensional spaces and, in turn, allowed to calculate $\log$-asymptotics of $\mathfrak{J}_\CX(\eps)$ as $\eps \to 0$. 
A large deviation principle for rescaled Wiener measure was given in \cite{MR201892}, as were precise asymptotic results for Wiener integrals of the form \eqref{intro:functional_general}, with $X^\eps = \eps w$ in notation introduced below. 
We fix the following notation: For a family~$(X^\eps: \eps \in [0,1])$ of $\CX$-valued random variables that satisfies a large deviation principle~(LDP), we set\footnote{The restriction to $f \equiv 1$ is for simplicity only. See also~\thref{intro:rmk:f} below.}
		\begin{equation}
			J(\eps) 
			:= \E\sbr[3]{\exp\del[3]{-\frac{F(X^\eps)}{\eps^2}}} 
			\label{intro:functional_general}
		\end{equation}
where $F: \CX \to \R$ is assumed to have $N+3$ Fr\'{e}chet derivatives for some $N \geq 0$. \emph{Precise} Laplace asymptotics, eponymous for this article, assert that~$J$ has an expansion of type
	\begin{equation}
		J(\eps)
		=
		\exp(-c\eps^{-2})(a_0 + \eps a_1 + \ldots + \eps^N a_N + o(\eps^N)) \quad \text{as} \quad \eps \to 0
		\label{intro:asymptotic_expansion}
	\end{equation}
with explicitly given coefficients~$c, a_0, \ldots ,a_N \in \R$. 
The hypotheses~\ref{ass:h1} to~\ref{ass:h4} on~$F$ -- detailed in~\thref{thm:laplace_asymp} below -- under which this statement is true have found their definite form in the works of Ben Arous~\cite{ben_arous_laplace} on path space~$\CX = \CC([0,1];\R^d)$: Building on foundational work by Azencott~\cite{azencott} (in the elliptic case), he establishes~\eqref{intro:asymptotic_expansion} for finite-dimensional diffusions
	\begin{equation}
		\dif X_t^\eps = \eps \sigma(X_t^\eps) \dif w_t + b(\eps,X_t^\eps) \dif t, \quad X^\eps_0 = x, 
		\label{intro:sde_rde}
	\end{equation}
where $w$ is an $r$-dimensional Brownian motion (BM) and $\sigma$ and $b$ satisfy suitable boundedness and differentiability assumptions.
All of the previous statements, and many more, are surveyed in an extensive review article by Piterbarg and Fatalov~\cite{piterbarg_vatalov}.
We further note that sharp Laplace asymptotics~\eqref{intro:asymptotic_expansion} for some classical stochastic PDEs have been established by Rovira and Tindel~\cite{rovira_tindel_parabolic,rovira_tindel_hyperbolic}. See also Albeverio et al.~\cite{albeverio_small_noise_1,albeverio_small_noise_2}. 

The advent of pathwise solution techniques initiated by Lyons's theory of~\emph{rough paths}~\cite{lyons} has brought new momentum to the Laplace method. Starting with Aida~\cite{aida2007}, the Japanese school of stochastic analysis has spearheaded these developments: Inahama~\cite{inahama_taylor, inahama06} and then Inahama and Kawabi~\cite{inahama_kawabi_abel,inahama_kawabi_ldp,inahama_kawabi} consider the Stratonovich version of~\eqref{intro:sde_rde} in infinite-dimensional Banach spaces in which the classical theory of stochastic differential equations (SDEs) breaks down. They bypass this shortcoming by referring to rough paths theory -- which \emph{does} work in infinite dimensions -- and obtain the expansion~\eqref{intro:asymptotic_expansion} in this context. 
In another article, Inahama~\cite{inahama_rde_fbm} has considered~\eqref{intro:sde_rde} for $w = w^H$, an $r$-dimensional fractional Brownian motion (fBM) of Hurst parameter~$H \in (\nicefrac{1}{4},\nicefrac{1}{2}]$ with rough path lift~$\mathbf{W}^H$. More precisely, he considers the solution~$X^\eps$ of the rough differential equation~(RDE)
	\begin{equation}
		\dif X_t^\eps = \eps \sigma(X_t^\eps) \dif \mathbf{W}^H_t + b(\eps,X_t^\eps) \dif t, \quad X_0^\eps = 0.
		\label{inahama:fBM_eq}
	\end{equation} 
When $b$ and $\sigma$ are sufficiently smooth with bounded derivatives, he establishes~\eqref{intro:asymptotic_expansion} under (essentially) the same hypotheses~\ref{ass:h1}~to~\ref{ass:h4} for~$\CX = \CC^{q-\operatorname{var}}([0,1];\R^d)$ and $q > H^{-1}$. 

In recent years, insights from rough paths theory have contributed to important breakthroughs in stochastic PDEs: In the seminal article~\cite{hairer_kpz}, Hairer derives an \emph{intrinsic} local solution to the KPZ equation and then developed his ideas into the theory of \emph{regularity structures}~\cite{hairer_rs}, a far-reaching generalisation of rough paths (RP) theory. --- In this article, we consider a popular example that Hairer's theory allows to treat -- the generalised 2D Parabolic Anderson Model (gPAM) formally given by the equation
	\begin{equation*}
		\partial_t \hat{u}^{(\eps)} = \Delta \hat{u}^{(\eps)} + g(\hat{u}^{(\eps)})\del[1]{\eps \xi  - \eps^2 \glqq \infty \grqq g'(\hat{u}^{(\eps)})}, \quad \hat{u}(0,\cdot) = u_0,
	\end{equation*}
where~$\xi$ is a spatial white noise (SWN) -- and establish the expansion~\eqref{intro:asymptotic_expansion} for $X^\eps = \hat{u}^{(\eps)}$. See subsection~\ref{intro:eq_ldp} for a detailed discussion of~gPAM and its large deviation behaviour and subsection~\ref{intro:sec:results} for a precise statement of our results. 
In subsection~\ref{sec:intro_generalisation}, we will explain our focus on gPAM, point out the difficulties in taking the solutions of other singular stochastic PDEs for~$X^\eps$, and indicate how to overcome them.
 
We emphasise that the arguments presented in this article immediately generalise Inahama's result~\cite{inahama_rde_fbm} for the specific driver~$\mathbf{W}^H$ in~\eqref{inahama:fBM_eq} to \emph{any} Gaussian rough path~\cite{GaussI,friz-victoir}. 
On the surface, the restriction in Inahama's work is due to its heavy reliance on a variation embedding of $\CH^H$, the Cameron-Martin space of $w^H$, due to Friz and Victoir~\cite{friz_victoir_variation_embedding} -- but ultimately can be traced further: the embedding allows him to identify a certain operator (below denoted by $D^2(F \circ \Phi \circ \LL)(\sh)$) as Hilbert-Schmidt and then resort to the classical correspondence
	\begin{center}
		Hilbert-Schmidt operators $\longleftrightarrow$ Carleman-Fredholm determinants $\longleftrightarrow$ exponential integrability	
	\end{center}
for elements in the second Wiener chaos, see for example Janson~\cite[Chap. 6]{janson}. In contrast, our argument is solely based on large deviations\footnote{Ben Arous~\cite{ben_arous_laplace} also used a large deviation argument. See~\thref{rmk:ben_arous_ldp} for further comments in this direction.} and therefore bypasses this correspondence completely, see subsection~\ref{sec:exp_integr_quadr} for details and~subsection~\ref{sec:intro_organisation} for an outline of the argument. In turn, we do not have to rely on the variation embedding specific to~$\CH^H$.

A Laplace method, in spirit of Azencott~\cite{azencott1985}, think asymptotic evaluation of $\P(X_1^\eps \in A)$, rather than $\E\sbr[1]{e^{-F(X^\eps)/\eps^2}}$ as in ~\cite{azencott, ben_arous_laplace},  on rough path type spaces has also been used in ~\cite{friz_gassiat_pigato} to analyze option price asymptotics of \emph{rough volatility}, cf.~\cite{rough_vol_rs} and also Section 14.5 in \cite{friz-hairer} for context. Related ``pre-rough'' works, also with applications in finance, include Kusuoka and Osajima \cite{kusuoka2008remark, osajima2015general}.

\paragraph*{Applications.}

Beside being interesting in their own right, precise Laplace asymptotics have applications in various fields of stochastic analysis and mathematical physics. 

In that regard, we mention a series of articles by Aida~\cite{aida2003,aida2007,aida2009_1,aida2009_2,aida2012} concerned with~\emph{semi-classical analysis}.
More specifically, he studies the lowest eigenvalue $E_0(\lambda)$, or ground state, of a Schr\"odinger-type operator $-L_{\l,V} = -L + V_\l$ on an abstract Wiener space~$(B,H,\mu)$. When $L$ is the Ornstein-Uhlenbeck operator on $L^2(B,\mu)$ and~$V_\l = \l V(\l^{-\nicefrac{1}{2}}\cdot)$ a rescaled potential on~$B$, asymptotics like~\eqref{intro:asymptotic_expansion} are instrumental in his work for proving a lower bound for the so-called semi-classical limit\footnote{In general, semi-classical physics is concerned with the behaviour of quantum mechanical equations as Planck's constant $\hbar$ goes to~$0$. This can be phrased as an equivalent problem for which a parameter~$\l$ tends to~$\infty$.} of $E_0(\l)$ as~$\l \to \infty$. 

Another major field of application are precise \emph{heat kernel asymptotics} in the small time limit. Consider the Stratonovich SDE given by
	\begin{equation}
		\dif X_t^\eps = \eps \sum_{i=1}^r V_i(X_t^\eps) \circ \dif w_t^i + \eps^2 V_0(X_t^\eps)\dif t, \quad X_0^\eps = z \in \R^d,
		\label{intro:sde_heat_kernel}
	\end{equation}
where $w$ is an $r$-dim. BM and $V_0, \ldots, V_r$ are smooth vector fields (VF) on $\R^d$ satisfying Hörmander's bracket condition. Under these assumptions, it is well-known that~$X^1_t$ has a smooth density~$p_t(z,\cdot)$ w.r.t.~$\lambda^d$, the $d$-dim. Lebesgue measure, for each $t > 0$. From an analytical viewpoint, the generator $\eps^2 \CL$ of $X^\eps$ (where $\CL = V_0 + \nicefrac{1}{2} \sum_{i=1}^r V_i^2$) gives rise to the parabolic PDE~$\partial_t f = \CL f$ of which $p_\cdot(z,\cdot)$ is a fundamental solution -- hence the terminology~\emph{heat kernel}. Ben Arous~\cite{ben_arous_heat_kernel}, building on earlier works by Azencott, Bismut and other, obtains the asymptotic expansion, outside the cutlocus, 
	\begin{equation}
		p_t(z,\bar{z}) 
		=
		t^{-\nicefrac{d}{2}} \exp\del[3]{-\frac{D(z,\bar{z})}{2t}}\del[1]{a_0(z,\bar{z}) + t a_1(z,\bar{z}) + \ldots + t^N a_N(z,\bar{z}) + o(t^N)} \quad \text{as} \quad t \to 0
		\label{eq:heat_kernel_asymptotics}
	\end{equation}
with the following rough strategy of proof: first, Brownian scaling implies that~$X^{\eps}_1 = X^1_{\eps^2}$ in distribution, so the small time problem can be transformed into a small noise problem. Secondly, the \emph{Laplace method} is employed to expand the Fourier transform of~$p_{\eps^2}(z,\cdot)$. Fourier inversion then leads to the expansion~\eqref{eq:heat_kernel_asymptotics}, which is where one uses non-degeneracy of the Malliavin matrix. See also Watanabe, Takanobu \cite{MR877589,MR1354169} and Kusuoka--Stroock \cite{MR1120913} for some representative works on the applications of Malliavin calculus to precise asymptotics of large classes of Wiener functionals. 
 
In the Young regime, more precisely when~$w = w^H$ in~\eqref{intro:sde_heat_kernel} is a fBM with Hurst index~$H > \nicefrac{1}{2}$, Baudoin--Hairer~\cite{MR2322701} established the existence of a smooth density. Baudoin--Ouyang~\cite{baudoin_ouyang} as well as I\-na\-ha\-ma~\cite{inahama_heat_kernel_young} then obtained precise heat kernel asymptotics for said density.

These results have subsequently been generalised to the rough regime by  I\-na\-ha\-ma, first for $H \in (\nicefrac{1}{3},\nicefrac{1}{2}]$ under ellipticity~\cite{inahama_kernel_asymptotics_1}, followed by Inahama and Naganuma~\cite{inahama_kernel_asymptotics_2} under Hörmander's condition and for~$H \in (\nicefrac{1}{4},\nicefrac{1}{2}]$, which is the decisive regime for which this can be expected \cite{MR2680405}.

In the case of singular stochastic PDEs, the existence of densities has been settled by Cannizzaro, Friz, and Gassiat~\cite{cfg} for gPAM and by Gassiat and Labb\'{e}~\cite{gassiat_labbe} for the~$\Phi^4_3$ equation. Sch\"onbauer~\cite{schoenbauer} has then generalised these results in the most general regularity structures framework. 
In all these singular SPDE works, the obstacle in obtaining smoothness is the lack of moment estimates for the inverse of the Malliavin matrix. (In a rough paths / SDE context, this was overcome in \cite{CLL,cass2015smoothness}; a meaningful extension to the singular SPDE setting remains an open problem.) 

Our work, in a similar setting as~\cite{cfg}, constitutes a first example of a family of measures on infinite-dimensional space, well-defined only through singular SPDE theory, for which large deviations can be extended to full Laplace asymptotics. Techniques from singular stochastic PDEs have recently been used to construct the~$\Phi^4_3$ measure~\cite{10.1214/17-AOP1212, barashkov_gubinelli} and the Sine-Gordon measure, for which large deviation results (in the semi-classical limit) were recently shown in~\cite{barashkov_phd_thesis}; it is likely that the methods of this paper will also be of relevance in establishing precise Laplace asymptotics in these models.

\subsection{The generalised Parabolic Anderson Model and its large deviations} \label{intro:eq_ldp}

\emph{Formally}, 2D gPAM\footnote{For further motivations of linear PAM as well as its relation to (spatially) discrete models, the reader is directed to the monograph of K\"onig~\cite{koenig}.} is given by the stochastic~PDE
	\begin{equation}
		\begin{cases}
		\partial_t u^{\eps}(t,x) & = \Delta u^{\eps}(t,x) + g(u^{\eps}(t,x))\eps \xi(x) \\
		u^{\eps}(0,x) & = u_0(x)
		\end{cases}
		, \quad (t,x) \in [0,\infty) \x \T^2
		\tag{gPAM$_\eps$},
		\label{gpam}
	\end{equation}
where the initial datum $u_0 \in \CC^\eta(\T^2)$ for $\eta \in (\nicefrac{1}{2},1)$ is fixed, $g \in \CC_b^{N + 7}(\R,\R)$ with $N \geq 0$\footnote{For Laplace asymptotics, assuming differentiability of order~$N+3$ is commonplace. The additional~\enquote{$+4$} is due to regularity structure (and reminiscient of rough paths) theory, see~\thref{rmk:order_of_diffb} for details.}, 
$\eps \in I := [0,1]$, and $\xi$ is the centred Gaussian field with Cameron-Martin space $\CH := L^2(\T^2)$ and formal covariance structure $\rho(x,y) = \d(x-y)$ known as spatial white noise~(SWN). The covariance correctly suggests that SWN is a highly singular object: it can be proved that $\xi \in \CC^{-\frac{d}{2}-\kappa}(\T^d)$ a.s. for each $\kappa > 0$ and spatial dimension $d$. From Schauder theory, one then expects $u^{\eps}$ (and a fortiori $g(u^{\eps})$) to be in $\CC^{2-\frac{d}{2}-\kappa}$. As is well-known from harmonic analysis, however, the product $g(u^{\eps}) \xi$ is well-defined if and only if 
	\begin{equation*}
		\del[2]{2-\frac{d}{2}-\kappa} + \del[2]{-\frac{d}{2}-\kappa}  = 2 - d -2\kappa > 0,
	\end{equation*}
which fails in case $d = 2$ and thus proves \eqref{gpam} ill-posed. At the expense of modifying the equation~\eqref{gpam} in a \emph{renormalisation procedure}, this malaise has been overcome with Hairer's advent of \emph{regularity structures}.\footnote{\emph{Paracontrolled calculus} developed by Gubinelli, Imkeller, and Perkowski \cite{gip} provides an alternative solution theory.} In our context, \cite[Thm.~1.11]{hairer_rs} reads:

\begin{theorem} \label{intro:ex_sol_hairer}
	Fix~$\eps > 0$ and let $\xi_\d := \xi * \rho_\d$, where $\rho_\d := \d^{-2} \rho(\d^{-1} \cdot)$ is a spatially rescaled version of a fixed mollifier~$\rho$. Then, there exists a choice of constants~$(\fc_\d: \d > 0)$ and a stopping time~$T^\eps > 0$ such that for each $u_0 \in \CC^{\eta}(\T^2)$ and each~$T > 0$, the classical solution~$\hat{u}^{\eps}_{\xi_\d}$ to the \emph{renormalised} equation
	\begin{equation}
		\begin{cases}
			\partial_t \hat{u}^{\eps}_{\xi_\d} & = \Delta \hat{u}^{\eps}_{\xi_\d} + g(\hat{u}_{\xi_\d}^{\eps})\del[1]{\eps \xi_\d - \eps^2\fc_{\d} g'(\hat{u}_{\xi_\d}^{\eps})} \\
			\hat{u}_{\xi_\d}^{\eps}(0,\cdot) & = u_0
		\end{cases}
		\tag{$\widehat{\text{gPAM}}_\eps$},
		\label{rgpam}
	\end{equation}
	converges on $\{T^\eps > T\}$ in probability, as $\d \to 0$, ~to a unique limit
	\begin{equation*}
		\hat{u}^\eps \in \CX_T := \{u \in \CC([0,T],\CC^\eta(\T^2)): u(0,\cdot) = u_0\}
	\end{equation*}
	which is independent of the mollifier~$\rho$.
\end{theorem}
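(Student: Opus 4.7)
The statement is an instance of Hairer's general existence theorem for singular SPDEs specialised to gPAM, so the plan is to follow the programme of~\cite{hairer_rs}: reformulate the ill-posed classical PDE~\eqref{rgpam} as a well-posed fixed-point problem for a \emph{modelled distribution} and then transfer its solution back to an honest function via the reconstruction theorem.

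First, build a regularity structure $\mathscr{T} = (A, T, G)$ tailored to gPAM. The generators are the noise symbol~$\Xi$ (of homogeneity~$-1-\kappa$ for some small~$\kappa > 0$), the polynomial symbols~$X^k$, and the abstract integration operator~$\mathcal{I}$ encoding convolution with the heat kernel. Since in spatial dimension~$d = 2$ the heat kernel lifts homogeneity by~$2$ while~$\Xi$ has homogeneity only slightly below~$-1$, the equation is \emph{locally subcritical} and below any fixed homogeneity threshold only finitely many symbols appear.

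Second, for the mollified noise~$\xi_\d$ assemble the canonical smooth model~$Z_\d = (\Pi^\d, \Gamma^\d)$ and identify the only symbol whose canonical realisation fails to converge as~$\d \to 0$, namely $\Xi\,\mathcal{I}(\Xi)$, whose (constant) expectation $\E[\xi_\d \, (K * \xi_\d)]$ diverges logarithmically. Define the renormalised model~$\hat{Z}_\d = M_\d Z_\d$ by letting the character~$M_\d$ of the (abelian) renormalisation group subtract exactly this constant; explicitly this produces the~$\fc_\d$ in~\eqref{rgpam}. Proving that $\hat{Z}_\d \to \hat{Z}$ in probability in the space of admissible models is the \emph{main obstacle}: equivalence of moments inside a fixed Wiener chaos reduces it to deterministic estimates on a finite list of stochastic kernels, which are then dispatched by direct Fourier-space computations.

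Third, with the renormalised model in hand, the abstract counterpart of~\eqref{rgpam} is the fixed-point problem
\[
U \;=\; \mathcal{P}\del[1]{G(U)\,\Xi} + P u_0
\]
posed in a space~$\mathcal{D}^{\gamma,\eta}$ of modelled distributions for suitably chosen indices. Since $g \in \CC_b^{N+7}$, its composition operator~$G$ is smooth enough on~$\mathcal{D}^{\gamma,\eta}$ for a standard Banach fixed-point argument to produce a unique local solution up to a random time~$T^\eps > 0$. Applying the reconstruction theorem to this solution yields, on $\{T^\eps > T\}$, an element of~$\CX_T$ which, by the identification of reconstructed classical products against the canonical model, coincides for each~$\d > 0$ with~$\hat{u}^\eps_{\xi_\d}$ of~\eqref{rgpam}. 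Continuity of the abstract solution map in the model then transfers $\hat{Z}_\d \to \hat{Z}$ to $\hat{u}^\eps_{\xi_\d} \to \hat{u}^\eps$ in probability, and mollifier independence of the limit is automatic since $\hat{Z}$ depends only on the law of~$\xi$.
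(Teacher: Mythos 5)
Your sketch is an accurate summary of Hairer's proof of this result, which the paper does not reprove but simply cites as \cite[Thm.~1.11]{hairer_rs} (with the gPAM regularity structure, models, and renormalisation recalled in Appendix~\ref{app:background_pam}). The three steps you outline --- locally subcritical regularity structure, canonical model with BPHZ-type renormalisation of the symbol $\mathcal{I}(\Xi)\Xi$ giving the constant $\fc_\d$, and abstract fixed-point plus reconstruction with continuity of the solution map in the model --- are exactly the ingredients the paper relies on, so your approach is the same as the one the paper invokes.
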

The regularity structures set-up to be used in this article will be introduced in Appendix~\ref{app:background_pam}. Its main idea is the following lifting procedure: at first, one constructs a finite number of explicit stochastic objects taylored to the structure of \eqref{gpam}; their collection is called the \emph{canonical lift} $\bz^{\eps\xi_\d} = \LL(\eps\xi_\d)$ of the driving noise $\eps \xi_\d$ to the \emph{model space}~$\MM$. Secondly, one establishes the continuity of the solution map $\Phi: \MM \to \CX_T$ and, thirdly, builds a renormalised version $\hbz^{\eps\xi_\d} \in \MM$ from~$\bz^{\eps\xi_\d}$ such that $\hat{u}^{\eps}_{\xi_\d} = \Phi(\hbz^{\eps\xi_\d})$. At last, one proves that the limit
	\begin{equation*}
		\d_\eps\hbz 
		:= \lim_{\d \to 0} \hbz^{\eps\xi_\d} 
		= \lim_{\d \to 0} \d_\eps\hbz^{\xi_\d} 
	\end{equation*}
exists in probability in $\MM$ -- and finally \emph{defines} $\hat{u}^{\eps} := \Phi(\d_\eps\hbz)$. Hairer carried out this programme for the equation~\eqref{gpam}, amongst others, in the seminal article~\cite{hairer_rs}. In a series of papers~\cite{bhz, chandra-hairer,rs_renorm}, it has subsequently been developed into a black-box theory to cover a large class of singular stochastic PDEs. 

With the (local in time) existence of solutions secured, Hairer and Weber have subsequently investigated the large deviation behaviour of~$(\hat{u}^{\eps}: \eps \in I)$. More precisely, they obtained the following result~\cite[Thm.~4.4]{hairer_weber_ldp}, where~\enquote{$u^{\deathsmall}$} denotes a \enquote{graveyard path}:

\begin{theorem}\label{thm:ldp}
	The family $(\hat{u}^{\eps}: \eps \in I)$ satisfies a large deviation principle (LDP) in $\CX_T \cup \{u^{\deathsmall}\}$ with good rate function (RF)~$\JJ$, where~$\hat{u}^\eps := u^{\deathsmall}$ if~$T^\eps < T$.\footnote{In case~$T > T^\eps$, it seems more natural to solve the stochastic PDE up to time~$T^\eps$ and only set~$\hat{u}^\eps(t,\cdot) := \death$ for~$t \in (T^\eps,T]$, where \enquote{$\death$} is a graveyard \emph{state}. However, we follow Hairer and Weber~\cite{hairer_weber_ldp} in discarding all such trajectories in the first place by sending them to the graveyard path~$u^{\deathsmall}$.}
\end{theorem}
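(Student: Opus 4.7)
The strategy is to lift the problem one level up using the factorisation $\hat{u}^\eps = \Phi(\d_\eps\hbz)$ recalled after \thref{intro:ex_sol_hairer}: first establish an LDP for the renormalised models $(\d_\eps\hbz : \eps \in I)$ in $\MM$ with good rate function
\begin{equation*}
	\II(\bz) := \inf \cbr[1]{\tfrac{1}{2}\|h\|_\CH^2 \,:\, \bz = \LL(h)},
\end{equation*}
and then transfer it via the contraction principle for a continuous extension $\tilde{\Phi}: \MM \to \CX_T \cup \{u^{\deathsmall}\}$ of the solution map that sends blow-up models to the cemetery state~$u^{\deathsmall}$. This will yield the LDP with $\JJ(u) = \inf \cbr[1]{\II(\bz) : \tilde{\Phi}(\bz) = u}$.

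The classical input is Schilder's theorem for spatial white noise: $\eps\xi$ satisfies an LDP in $\CC^{-1-\kappa}(\T^2)$ at speed~$\eps^{-2}$ with good rate function $\tfrac12\|\cdot\|_\CH^2$ on $\CH$ and $+\infty$ otherwise. For fixed~$\d > 0$, the mollified field $\eps\xi_\d$ is smooth and the map $\eta \mapsto \LL(\eta \ast \rho_\d)$ from $\CC^{-1-\kappa}(\T^2)$ into $\MM$ is continuous, each symbol being a finite iterated convolution; applying the contraction principle to $\eps\xi \mapsto \bz^{\eps\xi_\d}$ thus gives a $\d$-indexed LDP for the canonical lift. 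The renormalised model $\hbz^{\eps\xi_\d}$ differs from $\bz^{\eps\xi_\d}$ only by a deterministic $\eps^2\fc_\d$-multiple of a fixed element of $\MM$; along a suitable joint regime $\d = \d(\eps) \to 0$ this contribution is asymptotically irrelevant at speed~$\eps^{-2}$, and the $\d$-indexed LDP persists for $\d_\eps\hbz^{\xi_\d}$.

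The technical heart is then the exponential equivalence between $\d_\eps\hbz^{\xi_\d}$ and the limiting model $\d_\eps\hbz$ produced by \thref{intro:ex_sol_hairer}, namely
\begin{equation*}
	\lim_{\d \to 0} \limsup_{\eps\to 0} \eps^2\log\P\del[1]{d_\MM(\d_\eps\hbz^{\xi_\d},\d_\eps\hbz) > \rho} = -\infty, \quad \rho > 0.
\end{equation*}
Since each component of $\d_\eps\hbz$ lies in a fixed finite sum of Wiener chaoses, Gaussian hypercontractivity upgrades the $L^p$-convergence $\hbz^{\xi_\d} \to \hbz$ from \thref{intro:ex_sol_hairer} to sub-Gaussian tails that match the Schilder speed after rescaling. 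Identification of the limit rate function as $\II$ uses that $\LL(h \ast \rho_\d) \to \LL(h)$ in $\MM$ for every $h \in \CH$, a deterministic fact about the analytic lift.

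The \emph{main obstacle} is precisely this handling of the renormalisation: the map $\xi \mapsto \hbz^\xi$ is not continuous in any reasonable topology on $\xi$, so a direct contraction is out of reach; what saves the argument is that the BPHZ counterterm is of order $\eps^2\fc_\d$ and therefore negligible on the large-deviations scale $\eps^{-2}$ along the Cameron-Martin tilts $\xi \mapsto \xi + h/\eps$ that drive the rate function. Once the LDP on $\MM$ is in place, continuity of $\tilde{\Phi}$ (which follows from the continuity statement inside \thref{intro:ex_sol_hairer} combined with the closed definition of the cemetery state on its complement) together with goodness of $\II$ delivers both the LDP for $(\hat{u}^\eps)$ and the goodness of~$\JJ$ on $\CX_T \cup \{u^{\deathsmall}\}$.
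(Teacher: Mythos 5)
Your proposal is correct in outline but takes a genuinely different route from the paper. The paper does not perform a Schilder-plus-exponential-equivalence argument; instead, it sets up the framework to apply Hairer and Weber's Theorem 3.5 directly as a black box. Concretely, the paper builds the abstract Wiener space $(B,\CH,\mu)$, realises the minimal model $\hbz^{\minus}$ as a random variable $\Psib$ in a Banach space $\BE = E_{\<wns>} \oplus E_{\<11s>}$, writes out the explicit Wiener--It\^{o} chaos decomposition and integral kernels of the components $\Psi_{\d,\tau}$, verifies the $L^2$-convergence $\Psi_{\d,\tau} \to \Psi_\tau$ (here using hypercontractivity, i.e.\ equivalence of $L^p$-topologies on a fixed chaos, to upgrade the $L^1$-convergence from \cite{hairer_rs}), computes the homogeneous parts $(\Psib_\d)_{\hom}(h) = \bigoplus_\tau \Pi^{h_\d}\tau$ and their $\d \to 0$ limit, and checks the rescaling identity $\Psib_\d^{(\eps)} = \bigoplus_\tau \eps^{[\tau]}\Psi_{\d,\tau}$. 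Hairer--Weber's theorem then outputs the LDP on $\MM_-$ with rate function $\JJ_{\MM_-}$; two applications of the generalised contraction principle --- first through the continuous extension $\EE:\MM_-\to\MM$, then through the solution map $\Phi$ on the open neighbourhood $\{\JJ_\MM < \infty\}$ --- yield the claimed LDP in $\bar{\CX}_T$. The cemetery path $u^{\deathsmall}$ is handled exactly because $\Phi$ is continuous on that open neighbourhood.

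Your route --- Schilder for $\eps\xi$, contraction through the $\d$-mollified canonical lift, negligibility of the counterterm $\eps^2\fc_\d$ at speed $\eps^{-2}$, and exponential equivalence between $\d_\eps\hbz^{\xi_\d}$ and $\d_\eps\hbz$ via hypercontractive tail bounds, closed by the Dembo--Zeitouni exponential-approximation theorem --- is the classical ``Friz--Victoir style'' strategy for Gaussian rough paths and is in fact morally close to what sits \emph{inside} Hairer--Weber's Theorem 3.5. It would work here. What the paper's formulation buys is a cleaner division of labour: the delicate identification of the limit rate function (which in your approach requires verifying the $\Gamma$-type convergence of the $\d$-indexed rate functions $I_\d(\bz) = \inf\{\tfrac12\|h\|_\CH^2 : \bz = \LL(h\ast\rho_\d)\}$ and the compatibility condition in the exponential-approximation theorem) is packaged once and for all in the abstract theorem, and the verification reduces to concrete chaos-kernel computations. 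One small inaccuracy in your write-up: the BPHZ modification of $\Pi^{\xi_\d}\<11>$ is an \emph{affine} shift by the scalar $\fc_\d$ in the $E_{\<11s>}$-component of the ambient Banach space $\BE$, not a multiple of a fixed admissible model (the space $\MM$ is not linear), and after dilation it becomes the shift by $\eps^2\fc_\d$; and you don't actually need a diagonal regime $\d = \d(\eps)$ --- the double limit $\lim_{\d\to 0}\limsup_{\eps\to 0}$ that you also write down is the precise form in which exponential equivalence is used.
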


\begin{remark} \label{intro:footnote:ldp}
	Actually, Hairer and Weber have dealt with the $\Phi^4_3$ equation rather than \eqref{gpam} but their methods generalise to all equations that fall into the framework of Hairer's earlier work~\cite{hairer_rs}. For the reader's convenience, we will spell out their argument for the case of \eqref{gpam} in Appendix~\ref{app:ldp_pam}. 
	A more detailed formulation of~\thref{thm:ldp} is provided in~\thref{app:thm_ldp_gpam}.
	Regarding explosion times, see section~\ref{sec:explosion} for details. 
\end{remark}
 
Large deviations provide a first answer concerning the asymptotics of $J$ in~\eqref{intro:functional_general} on \emph{logarithmic} scale: for $X^\eps := \hat{u}^{\eps}$ and $F: \CX_T \to \R$ continuous and bounded, Varadhan's lemma~\cite[Thm.~2.1.10]{deuschel-stroock} implies that
	\begin{equation}
		J(\eps) = \exp\del[3]{-\frac{\inf_{\CX_T} (F + \JJ) + o(1)}{\eps^2}}, \quad \eps \to 0.
		\label{eq:varadhan:2}
	\end{equation}
We will significantly sharpen~\eqref{eq:varadhan:2} by obtaining the \emph{full expansion}. 	

\noindent

\newpage
\subsection{Main results} \label{intro:sec:results}

Let $h \in \CH$ and consider the equation
	\begin{equation}
		(\partial_t - \Delta) \hat{u}^{\eps}_{\xi_\d;h} 
		= g\del[1]{\hat{u}^{\eps}_{\xi_\d;h}}\del[1]{\eps \xi_\d  + h - \eps^2 \fc_{\d} g'\del[1]{\hat{u}^{\eps}_{\xi_\d;h}}}, 
		\quad 
		\hat{u}^{\eps}_{\xi_\d;h}(0,\cdot) = u_0,
		\label{eq:approx_renorm_shifted}
	\end{equation}
a shifted version of~\eqref{rgpam}. In analogy with \thref{intro:ex_sol_hairer}, we set $\hat{u}^{\eps}_h := \lim_{\d \to 0} \hat{u}^{\eps}_{\xi_\d;h}$ (in prob.~in~$\CX_T$). 

Our main result, eponymous for this work, reads as follows:

\begin{thm}[Precise Laplace asymptotics]\label{thm:laplace_asymp}
	Let~$g \in \CC_b^{N + 7}$. For~$T > 0$, we assume that ~$F: \CX_T \to \R$ satisfies the following hypotheses:
	\begin{enumerate}[label={(H\arabic*)}]
		\item \label{ass:h1} $F \in \C_{b}(\CX_T;\R)$, i.e. $F$ is continuous and bounded from $\CX_T$ into $\R$. 
		\item \label{ass:h2} The functional $\FF$ given by 
			\begin{equation}
				\FF := (F \circ \Phi \circ \LL) + \II: \ \CH \to (-\infty,+\infty]
			\end{equation}
		attains its \emph{unique} minimum\footnote{In analogy with previous works on Laplace asymptotics, for example~Ben~Arous~\cite[sec.~1.4]{ben_arous_laplace}, we emphasise that the theorem remains valid if there are \emph{finitely many} minimisers: One simply localises around each of them and proceeds with the local analysis. The value of the coefficients~$a_0, \ldots, a_N$, however, will of course change in that case.} at $\sh \in \CH$. Here, $\II$ denotes Schilder's rate function, i.e. 
			\begin{equation}
			\II(h) = \frac{\norm{h}^2_\CH}{2} \thinspace \mathbf{1}_{h \in \CH}  + \infty \thinspace \mathbf{1}_{h \notin \CH}.
			\label{eq:schilders_rf}
			\end{equation}
			\item \label{ass:h3} The functional $F$ is $N + 3$ times Fr\'{e}chet differentiable in a neighbourhood $\CN$ of $(\Phi \circ \LL)(\sh)$. 
			In addition, there exist constants $M_1,\ldots,M_{N+3}$ such that
			\begin{equation}
				\abs[0]{D^{(k)}F(v)[y,\ldots,y]} \leq M_k \norm{y}_{\CX_T}^k, \quad k=1,\ldots,N+3,
				\label{thm:laplace_asymp:ass:h3:boundedness}
			\end{equation}
		holds for all~$v \in \CN$ and $y \in \CX_T$.
		\item The minimiser $\sh$ is \emph{non-denegerate}. More precisely, $D^2\FF\sVert_\sh > 0$ in the sense that for all $h \in \CH\setminus\{0\}$ we have
			\begin{equation}
				D^2\FF\sVert_{\sh}[h,h] > 0.
			\end{equation}
		 \label{ass:h4}
	\end{enumerate}	
	Then, we have the precise Laplace asymptotics	
	\begin{equation}
	J(\eps)
	= 
	\E\sbr{\exp\del{-\frac{F(\hat{u}^{\eps})}{\eps^2}}\1_{T^\eps > T}} = \exp\del[3]{-\frac{\FF(\sh)}{\eps^2}}(a_0 + \eps a_1 + \ldots + \eps^N a_N + o(\eps^N)) \quad \text{as} \; \, \eps  \to 0,
	\label{thm:laplace_asymp:expansion}
	\end{equation}
where 
	\begin{equation*}
		a_0 = \E\sbr[2]{\exp\del[2]{-\frac{1}{2} \hat{Q}_\sh}}, 
		\quad \hat{Q}_\sh := \partial_\eps^2\sVert[0]_{\eps = 0} F(\hat{u}^\eps_\sh).
	\end{equation*} 	
The other coefficients $(a_k)_{k=1}^N \in \R$ are given explicitly in~section~\ref{sec:asymp_coeff}, more specifically in~\thref{prop:coeff_asymp_exp}.
\end{thm}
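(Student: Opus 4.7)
The plan is to follow the Azencott--Ben~Arous--Inahama scheme, adapted from path space to the space of models~$\MM$. First, I would \textbf{localise} around the minimiser $\sh$: using the LDP of \thref{thm:ldp} together with hypothesis~\ref{ass:h2}, one shows that for any Cameron-Martin neighbourhood $B$ of $\sh$, the contribution to $J(\eps)$ from the event $\{\text{the noise lies far from } B\}$ is bounded above by $\exp(-(\FF(\sh)+c)/\eps^2)$ for some $c>0$ (a Varadhan-type upper bound using~\ref{ass:h1}). This reduces the problem to an integral over a tubular neighbourhood of the minimising model $\LL(\sh)$.

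Second, perform a \textbf{Cameron--Martin shift} $\xi \mapsto \xi + \sh/\eps$ via Girsanov's theorem: after restricting to the localised event, this replaces $\hat{u}^\eps$ with the shifted solution $\hat{u}^\eps_\sh$ and produces the density $\exp(-\eps^{-1}\langle\sh,\xi\rangle_\CH - \tfrac{1}{2}\eps^{-2}\|\sh\|_\CH^2)$. Combined with $\exp(-F(\hat{u}^\eps)/\eps^2)$ and writing $\FF(\sh) = F(\Phi(\LL(\sh))) + \tfrac{1}{2}\|\sh\|_\CH^2$, the prefactor $\exp(-\FF(\sh)/\eps^2)$ is extracted, leaving an integral of a bounded random quantity whose asymptotics as $\eps\to 0$ yields the polynomial~$a_0 + \eps a_1 + \cdots + \eps^N a_N + o(\eps^N)$.

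Third, perform a \textbf{Taylor expansion in the noise intensity}, which the authors identify as the technical cornerstone. Both the renormalised shifted model and the solution admit expansions of the form
\begin{equation*}
\hat{u}^\eps_\sh = \sum_{k=0}^{N+2} \eps^k u^{(k)}_\sh + \eps^{N+3} R^\eps_{N+3}, \qquad u^{(0)}_\sh = \Phi(\LL(\sh)),
\end{equation*}
obtained by differentiating the fixed-point equation for $\hat{u}^\eps_\sh$ in $\eps$; each $u^{(k)}_\sh$ solves a linearised PDE driven by polynomial expressions in~$\xi$, $\sh$, and lower-order~$u^{(j)}_\sh$. Substituting this together with the Fréchet expansion of $F$ from~\ref{ass:h3}, the $\eps^{-1}$ terms vanish because $\sh$ is a critical point of~$\FF$ and the $\eps^0$ contribution gives the quadratic form $\hat{Q}_\sh = \partial^2_\eps|_{\eps=0} F(\hat{u}^\eps_\sh)$, whose exponential is $a_0$. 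Higher-order terms are collected combinatorially to produce the $a_k$ as in~\thref{prop:coeff_asymp_exp}.

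Fourth, and this is where the \textbf{main obstacle} lies, one must establish (i) $\eps$-uniform $L^p(\P)$ bounds on each $u^{(k)}_\sh$ and on the remainder $R^\eps_{N+3}$ in $\CX_T$, propagated through the regularity-structures machinery (hence the requirement $g \in \CC_b^{N+7}$ rather than just~$\CC_b^{N+3}$, cf.~\thref{rmk:order_of_diffb}), and (ii) \textbf{exponential integrability} of the random variable $\hat{Q}_\sh$, for which the classical Hilbert--Schmidt/Carleman--Fredholm route relies on variation embeddings of the Cameron--Martin space that are unavailable for SWN on~$\T^2$. As emphasised in the introduction, this is precisely the step where the present work departs from Inahama's rough-paths approach: the exponential integrability must instead be obtained from a self-contained large-deviation argument (the subject of Section~\ref{sec:exp_integr_quadr} in the paper), applied to the quadratic Wiener functional $\hat{Q}_\sh$ viewed as living on the space of models. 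Once (i)--(ii) are in place, Hölder's inequality combines the $L^p$-remainder estimates with the exponential moment of $\hat{Q}_\sh$ to show that terms beyond order~$\eps^N$ are genuinely $o(\eps^N)$, completing the expansion~\eqref{thm:laplace_asymp:expansion}.
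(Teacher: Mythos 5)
Your proposal is correct and follows essentially the same route as the paper: localisation via the LDP on model space, Cameron--Martin shift, Taylor expansion in $\eps$ with regularity-structures control of the terms and remainder, first-order optimality to kill the $\eps^{-1}$ contribution, and a large-deviations argument (not Carleman--Fredholm) for exponential integrability of $\hat{Q}_\sh$. The only detail you elide is that "differentiating the fixed-point equation in $\eps$" is subtle because the natural solution space $\DD^{\gamma,\eta}_{\CU}(T_\sh\delta_\eps\hbz)$ itself varies with $\eps$; the paper circumvents this by passing to the extended regularity structure and a family of $\eps$-parametrised fixed-point problems living in the single Banach space $\DD^{\gamma,\eta}_{\gr{\CU}}(E_\sh\hbz)$, after which the Implicit Function Theorem applies, and the required moment bounds come from pathwise estimates $\norm{u^{(m)}_\sh(\bz)}_{\CX_T} \aac (1+\barnorm{\bz^{\minus}})^m$ combined with a Fernique theorem for $\barnorm{\hbz^{\minus}}$ rather than from $L^p(\P)$ bounds established directly.
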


\begin{remark}\label{rmk:ld_factor}
	Observe that, under our hypothesis~\ref{ass:h2}, the factor~$\exp\del[1]{-\eps^{-2} \FF(\sh)}$ in formula~\eqref{thm:laplace_asymp:expansion} coincides with the RHS of \eqref{eq:varadhan:2}. In other words, we recover the previously known \emph{large deviation factor}.
\end{remark}

\begin{remark} \label{intro:rmk:f}
	In many works on precise Laplace asymptotics, the functional $J$ is slightly more general than~\eqref{intro:functional_general}, namely 
	\begin{equation*}
	J(\eps) = \E\sbr[3]{f(X^\eps)\exp\del[3]{-\frac{F(X^\eps)}{\eps^2}}\1_{T^\eps > T}},
	\end{equation*}
	where $f: \CX_T \to \R$ is continuous, bounded, as well as $(N + 1)$ times Fr\'{e}chet differentiable and positive in a neighbourhood of $(\Phi \circ \LL)(\sh)$. Since the presence of $f$ does not add conceptual difficulty, we work with $f \equiv 1$ to streamline the presentation. However, our work can easily be amended for general~$f$.
\end{remark}

\begin{remark} \label{rmk:constant_coeff}
	Note that~$\hat{Q}_\sh$ is in the second inhomogeneous Wiener chaos with non-trivial components only in~$\CH_k$ for~$k \in \{0,2\}$. As such, it admits the representation~$\hat{Q}_\sh = \E\sbr[1]{\hat{Q}_\sh} + I_2(A)$ for some Hilbert-Schmidt operator~$A$ and~$I_2$ the second Wiener-It\^{o} isometry. From general principles (see~\cite[Thm.~$6.2$]{janson} and~\cite[Thm.~$9.2$]{simon_trace_ideals}), this immediately implies that
	\begin{equation}
		a_0 
		= e^{-\nicefrac{1}{2} \E\sbr[0]{\hat{Q}_\sh}} \operatorname{det}_2(\operatorname{Id}_\CH + A)^{-\nicefrac{1}{2}}
		\label{eq:repr_a0_Carleman_Fredholm}
	\end{equation}
	where~$\operatorname{det}_2$ denotes the Carleman-Fredholm determinant.
	In recent works on precise Laplace asymptotics such as~\cite{inahama_kawabi}, it is shown (in our notation) that
	\begin{equation*}
		A[k,k] = Q_\sh(\LL(k)), \quad
		\E\sbr[1]{\hat{Q}_\sh} = \operatorname{Tr}(A-\tilde{A}) + \lambda
	\end{equation*}
	for any~$k \in \CH$, some explicit~$\lambda \in \R$, and some explicit Hilbert-Schmidt operator~$\tilde{A}$ that renders~$A-\tilde{A}$ trace-class.
	Such information is interesting from the viewpoint of semi-classical analysis but requires considerably more effort
	\begin{enumerate}[label=(\arabic*)]
		\item due to the need for renormalisation (recall that~$\hat{Q}_\sh$ is \emph{quadratic} in~$\xi$) and \item since one needs more than just Cameron-Martin regularity of the minimiser~$\sh$ (see our sketch in sec.~\ref{sec:intro_generalisation} below) to show that~$A - \tilde{A}$ is trace-class.
	\end{enumerate}
	These problems are addressed in great detail in a recent follow-up article~\cite{klose_arxiv} of the second named author.
\end{remark}

Along the way, and of paramount importance in the proof of~\thref{thm:laplace_asymp}, we obtain a formula that is commonly referred to as \emph{(stochastic) Taylor expansion}.\footnote{Actually, the expansion is fully deterministic and the term \enquote{stochastic} only historic: It was accurate in the work of Azencott~\cite{azencott} but became outdated by the emergence of pathwise techniques. This was already remarked by Inahama and Kawabi~\cite{inahama_kawabi} in the context of rough paths. Henceforth we refrain from using the now misleading term~\enquote{stochastic}.} 
We believe that this result is of independent interest and therefore consider it the second contribution of this paper:

\begin{thm}[Taylor expansion in the noise intensity] \label{thm:stoch_taylor_gpam} 
	Let $h \in \CH$ and $g \in \CC_b^{\ell + 4}$ for some $\ell \geq 1$. Then, for each $T > 0$ there exists an~$\eps_0 = \eps_0(T) > 0$ such that with $I_0 := [0,\eps_0)$, the function
	\begin{equation}
		\hat{u}_h^{\bullet}: I_0 \to \CX_T, \quad \eps \mapsto \hat{u}_h^{\eps}
		\label{thm:stoch_taylor_gpam:diffb} 
	\end{equation}
	is $\ell$ times Fr\'{e}chet differentiable. By Taylor's formula, the expansion
	\begin{equation}
		\hat{u}^{\eps}_h
		=
		w_h
		+
		\sum_{m=1}^{\ell - 1} \frac{\eps^m}{m!} \hat{u}^{(m)}_{h}
		+ \hat{R}^{(\ell)}_{h,\eps}, \quad \eps \in I_0,
		\label{thm:stoch_taylor_gpam:exp}
	\end{equation}
	 then holds with the following summands: 
	\begin{enumerate}[label=(\arabic*)]
		\item \label{thm:stoch_taylor_gpam:1} The term $w_h := \Phi(\LL(h))$ is the unique solution to the \emph{deterministic} PDE
		\begin{equation}
		(\partial_t - \Delta) w_h = g(w_h) h, \quad w_h(0,\cdot) = u_0.
		\label{eq:wh}
		\end{equation}
		\item \label{thm:stoch_taylor_gpam:2} The terms~$\hat{u}^{(m)}_{h}$, $m \in [\ell - 1]$,\footnote{We use the notation~$[n] := \{1,\ldots,n\}$ for any~$n \in \N$ throughout the article.} and $\hat{R}^{(\ell)}_{h,\eps}$ are given by
		\begin{equation*}
			\hat{u}^{(m)}_{h}
			:=
			\partial_\eps^{m}\sVert_{\eps = 0} \hat{u}^{\eps}_h, \qquad
			\hat{R}^{(\ell)}_{h,\eps}
			:=
			\int_0^1 \frac{(1-s)^{\ell-1}}{(\ell-1)!} \partial_r^{\ell}\sVert[0]_{r = s\eps} \sbr[1]{\hat{u}^r_h} \dif s,
		\end{equation*}
	\end{enumerate}
	In addition, the terms in the expansion~\eqref{thm:stoch_taylor_gpam:exp} have the following properties:\footnote{We formulate the estimates w.r.t. a generic model~$\bz$ and the corresponding quantities~$u_h^\eps(\bz)$ and~$u_h^{(m)}(\bz)$: see~Def.~\ref{def:terms_taylor_exp} on p.~\pageref{def:terms_taylor_exp} below for their precise definition. The expansion in~\eqref{thm:stoch_taylor_gpam:exp} then holds for $\eps < \eps_0(T,\bz)$. Since $\hat{u}_h^\eps = u_h^\eps(\hbz)$, $\hat{u}_h^{(m)} = u_h^{(m)}(\hbz)$, and~$\eps_0(T) = \eps_0(T,\hbz)$, this is consistent with the first part of the theorem where~$\bz = \hbz$.}
	\begin{enumerate}[label=(\roman*)]
		\item \label{thm:stoch_taylor_gpam:i} The terms~$u^{(m)}_{h}(\bz)$ and the remainder~$R^{(\ell)}_{h,\eps}(\bz)$ are continuous functions of the model~$\bz$. In addition, the estimates\footnotemark
			\begin{equation}
				\norm[0]{u^{(m)}_{h}(\bz)}_{\CX_T} \aac (1 + \barnorm{\bz^{\minus}})^m, \quad
				\norm[0]{R^{(\ell)}_{h,\eps}(\bz)}_{\CX_T} \aac \eps^\ell(1 + \barnorm{\bz^{\minus}})^\ell 
				\label{thm:stoch_taylor_gpam:estimate}
			\end{equation}
		hold: The first one uniformly over all~$h \in \CH$ with~$\norm[0]{h}_\CH \leq \rho$, the second \emph{additionally} over all~$(\eps,\bz^{\minus}) \in I \x \MM_-$ with~$\eps \barnorm{\bz^{\minus}} < \rho$, both for each~$\rho > 0$.
		\item \label{thm:stoch_taylor_gpam:ii} The terms~$u^{(m)}_{h}(\bz)$ are \emph{$m$-homogeneous} w.r.t. dilation in the model~$\bz$, that is
		\begin{equation*}
			\eps^m u^{(m)}_{h}(\bz) = u^{(m)}_{h}(\d_\eps\bz).
		\end{equation*}
	\end{enumerate}
	In addition, the terms~$\hat{u}_h^{(m)}$ are limits of~$\hat{u}_{\xi_\d,h}^{(m)}$ as~$\d \to 0$ in the spirit of~\thref{intro:ex_sol_hairer} above. 
	See~\thref{prop:expl_eq} below for the \emph{linear} stochastic PDEs that~$\hat{u}_{\xi_\d,h}^{(m)}$ satisfy.
\end{thm}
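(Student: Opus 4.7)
The plan is to transfer everything to the level of models: since the $\eps$-dependence of the shifted and dilated renormalised model will be polynomial, smoothness of $\eps \mapsto \hat{u}^\eps_h$ will follow from the corresponding smoothness of the solution map $\Phi: \MM \to \CX_T$. Concretely, I would define the shifted, dilated, renormalised model
\[
	\hat{\mathbf{Z}}^\eps_h := T_h\bigl(\d_\eps \hat{\mathbf{Z}}\bigr),
\]
where $T_h$ denotes the translation on model space corresponding to adding $h$ to the noise (well-defined since $h \in \CH = L^2(\T^2)$ can itself be canonically lifted to a model) and $\d_\eps$ is the dilation acting homogeneously on the stochastic components. An analysis of the abstract fixed-point problem driven by $\eps\xi_\d + h$ gives the identification $\hat{u}^\eps_{\xi_\d;h} = \Phi\bigl(T_h \d_\eps \hat{\mathbf{Z}}^{\xi_\d}\bigr)$, and passing to the limit $\d \to 0$ yields $\hat{u}^\eps_h = \Phi(\hat{\mathbf{Z}}^\eps_h)$.

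The crucial structural observation is that $\eps \mapsto \hat{\mathbf{Z}}^\eps_h$ is polynomial with finitely many coefficients. Indeed, evaluating on any symbol $\tau$ in the (finite) negative-degree part of the gPAM regularity structure and expanding $(\eps \xi_\d + h)^k$-type products produces a polynomial in $\eps$ of degree at most the number of occurrences of $\Xi$ in $\tau$; crucially, the renormalisation constants $\eps^2 \fc_\d$ scale homogeneously and therefore preserve this polynomial structure. This yields the decomposition
\[
	\hat{\mathbf{Z}}^\eps_h = \sum_{m=0}^{M} \eps^m \, \mathbf{Z}^{(m)}_h(\hat{\mathbf{Z}},h),
\]
for some finite $M$, with each coefficient model $\mathbf{Z}^{(m)}_h$ continuous in $(\hat{\mathbf{Z}},h)$ and depending on the stochastic components with homogeneity exactly $m$ under dilation (in particular $\mathbf{Z}^{(0)}_h = \LL(h)$). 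Composing this polynomial with $\Phi$, which is locally $\CC^\ell$ in the model whenever $g \in \CC^{\ell+4}$ (the ``$+4$'' being consumed by reconstruction, products, and Schauder-type steps of the abstract fixed-point argument), delivers the Fr\'echet differentiability \eqref{thm:stoch_taylor_gpam:diffb} and the expansion \eqref{thm:stoch_taylor_gpam:exp} with the stated integral remainder. The radius $\eps_0 = \eps_0(T,\bz)$ arises as the size of the neighbourhood in $\MM$ around $\LL(h)$ on which $\Phi$ yields well-posed solutions on $[0,T]$, and is strictly positive since the deterministic PDE for $w_h = \Phi(\LL(h))$ is well-posed on $[0,T]$.

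The quantitative estimates in item~(i) then fall out from local Lipschitz bounds for $\Phi$ combined with the homogeneity: since $\mathbf{Z}^{(m)}_h$ contains exactly $m$ copies of the stochastic part of $\bz$ (whose norm is captured by $\barnorm{\bz^\minus}$), one obtains $\norm{u^{(m)}_h(\bz)}_{\CX_T} \aac (1+\barnorm{\bz^\minus})^m$ uniformly over $\norm{h}_\CH \leq \rho$, and the remainder bound for $R^{(\ell)}_{h,\eps}(\bz)$ by an integral estimate on $\partial_r^\ell \Phi(\hat{\mathbf{Z}}^r_h(\bz))$ for $|r| \leq \eps_0$. Assertion~(ii) is an immediate consequence of the homogeneity of the $\mathbf{Z}^{(m)}_h$ together with the Fa\`a di Bruno formula applied to $\eps \mapsto \Phi\bigl(\sum_k \eps^k \mathbf{Z}^{(k)}_h(\bz)\bigr)$, since $\mathbf{Z}^{(0)}_h = \LL(h)$ is independent of $\bz$. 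Finally, the approximation $\hat{u}^{(m)}_{\xi_\d,h} \to \hat{u}^{(m)}_h$ as $\d \to 0$ follows from the continuity of $\bz \mapsto u^{(m)}_h(\bz)$ applied to the convergence $\hat{\mathbf{Z}}^{\xi_\d} \to \hat{\mathbf{Z}}$ in $\MM$ underlying \thref{intro:ex_sol_hairer}.

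The hard part will be the careful bookkeeping required to identify the coefficient models $\mathbf{Z}^{(m)}_h$ explicitly and to verify that the translation $T_h$, the dilation $\d_\eps$, and the renormalisation all interact correctly on each symbol of the gPAM regularity structure. A secondary technical point is pinpointing the derivative budget: each Fr\'echet differentiation of $\Phi$ in its model argument consumes one derivative of $g$, and further derivatives are needed to close the abstract fixed-point problem, which is what leads to the requirement $g \in \CC^{\ell+4}$.
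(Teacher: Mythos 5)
Your proposal takes a genuinely different route from the paper's, and it has a real gap. You propose to write the model curve $\eps \mapsto T_h \d_\eps\hbz$ as a polynomial $\sum_{m=0}^{M}\eps^m\mathbf{Z}^{(m)}_h$ in the ambient linear space of model coefficients and then compose with a supposedly $\CC^\ell$-Fr\'echet solution map $\Phi$. The paper does something quite different: it fixes the model once and for all to $E_h\hbz$ in the extended regularity structure, moves all $\eps$-dependence into the noise side of the abstract fixed-point equation (the term $G(\gr{U^\eps})[\eps\<wn> + \<cm>]$), so that the family of solutions $\gr{U^\eps}$ all live in \emph{one} Banach space $\DD^{\gamma,\eta}_{\gr{\CU}}(E_h\hbz)$, and then deduces $\CC^\ell$-smoothness of $\eps \mapsto \gr{U^\eps}$ from the classical Implicit Function Theorem (Lemmas~\ref{thm:frechet:gpam} and~\ref{lem:sec_cond_ift}). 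This reparametrisation is the core idea of Section~\ref{sec:abstr_fp_prob} and is absent from your outline.

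The gap is your assertion, stated without argument, that $\Phi$ is ``locally $\CC^\ell$ in the model.'' This is precisely the obstacle the paper identifies and engineers around: the fixed-point map $\CS$ targets the model-dependent space $\DD^{\gamma,\eta}_\CU(\bz)$, so Fr\'echet differentiability of $\Phi = \CR\circ\CS$ with respect to $\bz$ is a statement about a map on a fibred, non-linear space whose geometry (per the remark after Corollary~\ref{coro:solvability:fp_prob}) is not understood; no such smoothness result for the reconstruction-of-the-solution map exists in the literature. Relatedly, your ``coefficient models'' $\mathbf{Z}^{(m)}_h$ with $m\geq 1$ are \emph{not} admissible models --- the admissibility constraints \ref{def:smooth_admissible_model:Pi4}--\ref{def:smooth_admissible_model:Pi6} are nonlinear, so Taylor coefficients of a model curve in $\eps$ do not lie in $\MM$ --- which means $\Phi$ is not even defined at the places your Fa\`a di Bruno argument would need to evaluate its derivatives. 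To push your route through you would either have to extend $\Phi$ to a $\CC^\ell$ map on an open subset of the linear ambient space $\BE$, or develop a genuine differential calculus on $\MM$; absent either, the missing smoothness of $\Phi$ in the model is a missing idea, not a technical footnote. The remaining ingredients in your sketch (the polynomial structure of the model curve, the homogeneity under dilation, the $g\in\CC^{\ell+4}$ derivative budget, and the $\d\to 0$ limit via continuity in the model) are consistent with what the paper does, but they cannot be assembled without supplying that step.
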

\footnotetext{By \enquote{$\aac$} we denote estimates up to a constant. If we wish to specify its dependence on a parameter~$\a$, we write~\enquote{$\aac_\a$}.
The \enquote{norm} $\barnorm{\cdot}$ appearing on the RHS is the \emph{homogeneous} norm for a minimal admissible model~$\bz^{\minus} \in \MM_-$, see subsection~\ref{app:sec:adm_models} in the appendix for details.
For now, one may think of the homogeneous rough path norm~$\threebars \boldsymbol{X} \threebars_\a = \norm[0]{X}_\a + \norm[0]{\mathbb{X}}_{2\a;2}^{\nicefrac{1}{2}}$ for an $\a$-Hölder,~$2$-step rough path~$\boldsymbol{X} = (X,\mathbb{X})$.
}

\subsection{Generalisation to other singular stochastic PDEs} \label{sec:intro_generalisation}

In a series of major recent works, Hairer and co-authors~\cite{bhz, chandra-hairer,rs_renorm} have developed regularity structures into a black-box theory that encompasses a large class of stochastic PDEs. 
In principle, the arguments which we will use in the proofs of theorems~\ref{thm:laplace_asymp} and~\ref{thm:stoch_taylor_gpam} apply to any stochastic PDE that is covered by the afore-mentioned articles.

One reason we concentrate on gPAM in this article is that it allows for a relatively simple analysis via regularity structures, so we can emphasise the main principles that our arguments are based upon.

Another reason is that Cameron-Martin shifts~$\xi \mapsto \xi + h$ of the driving noise of gPAM, as needed in~\eqref{eq:approx_renorm_shifted}, have already been implemented on model space~$\MM$ via the translation operator~$T_h$ in~\cite[sec.~$3.4$]{cfg}.
Its well-posedness is a complicated \emph{analytical} problem, in general, because it requires products of elements in Besov spaces of different $L^p$-scale to exist. 
Even in the case of gPAM, building a solution theory for the shifted equation~\eqref{eq:approx_renorm_shifted} poses the problem to define
	\begin{equation*}
		\mathbf{\Pi}\<11g> := (G\xi) h, \quad G\xi \in \CC^{1-\kappa} = B_{\infty,\infty}^{1-\kappa}, \quad h \in \CH = B_{2,2}^0 \embed \CC^{-1}
	\end{equation*}  
where~$G\xi$ denotes the solution to the linearised equation~($g \equiv 1$);
Cannizzaro, Friz, and Gassiat~\cite[Lem.~A.$2$]{cfg} have managed to obtain the necessary estimates~\enquote{by hand}. 
However, more complicated trees that already appear for the shifted $\Phi^4_3$ or KPZ equation, for example
	\begin{equation*}
		\tau := \scalebox{1.1}{\<phi4>} \rightsquigarrow \mathbf{\Pi}\tau := (Gh)(G\xi)G\del[1]{(G\xi)(G\xi)(Gh)}, \quad 
		\sigma := \scalebox{1.1}{\<kpz>} \rightsquigarrow \mathbf{\Pi}\sigma := \sbr[1]{G_x\del[1]{(G_x \xi)(G_x h)}}^2, 
	\end{equation*}
render ad hoc estimates unwieldy.
Instead, analytic arguments need to be complemented by insights into the algebraic make-up of the underlying regularity structure.

Schönbauer~\cite{schoenbauer}, in contrast, approached the problem from a different perspective. By an ingeneous \enquote{doubling trick}, similar in spirit to previous arguments by Inahama~\cite{inahama_malliavin_diffb} in the rough paths context, he is able to leverage the \emph{probabilistic} estimates of Chandra and Hairer~\cite{chandra-hairer}; thereby, he manages to prove~\cite[Thm.~$3.11$]{schoenbauer} that the action of~$T_h$ on BPHZ lifts of smooth functions is well-defined and continuous almost surely.
While this argument applies in the most general setting we referenced in the beginning of this section, it falls short of proving $T_h$ well-defined and continuous on \emph{generic} models. The latter is crucial for the localisation procedure, see~sec.~\ref{sec:localisation_cameron_martin} below, to work beyond gPAM.

We propose to follow a completely different route using the \emph{structural assumptions on the Laplace functionals} that we consider. As the reader will observe later, we only ever want to translate~$\xi$ in direction~$\sh$, the minimiser of the functional~$\FF$ given in~\ref{ass:h2} above. A priori, it is clear that~$\sh$ is in $\CH$ (otherwise~$\II(\sh) = +\infty$) --- but it is known from variational calculus that minimisers often enjoy better regularity properties, see for example~\cite[Thm.~$14.4.1$]{jost_pde}.
The strategy there -- establish some initial regularity 
and then bootstrap using Schauder estimates -- informs our reasoning. Under the premise that~$g \in \CC_b^\infty(\R)$ and~$u_0 \in \CC^{\infty}(\T^2)$, we sketch the agument that~$\sh \in \CC^\infty(\T^2)$ for gPAM, see~\cite[Thm.~$2$]{klose_arxiv} for a precise statement. 
We stress that this renders the translation operator~$T_\sh$ well-defined \emph{immediately} without any appeal to advanced analytic estimates.

\begin{enumerate}[label=(\arabic*)]
	\item By first-order optimality, $D\FF\sVert[0]_{\sh} \equiv 0$. Equivalently, for all~$k \in \CH$ we have
		\begin{equation}
			\scal{\sh,k}_\CH = DF\sVert[0]_{w_\sh}(v_{\sh,k}), \quad v_{\sh,k} := \partial_\eps\sVert[0]_{\eps=0} w_{\sh + \eps k}
			\label{eq:first_order_optimality}
		\end{equation}
	where~$w_\sh$ is the solution to~\eqref{eq:wh}.
	\item The directional derivative~$v_{\sh,k}$ satisfies the \emph{linear}, inhomogeneous PDE
		\begin{equation}
			(\partial_t - \Delta) v_{\sh,k} = g'(w_\sh) v_{\sh,k} \sh + g(w_\sh) k, \quad v_{\sh,k}(0,\cdot) = 0.
			\label{eq:dir_der_h_k}
		\end{equation}
	If~$\sh$ actually were in~$H^\gamma$, $\gamma \geq 0$\footnote{Recall that~$\sh \in \CH \equiv H^0$, so the statement \emph{is} true when~$\gamma = 0$.}, we expect~$w_\sh \in \CC_T H^{\gamma+1^{-}}$ by the regularising properties of the heat semigroup. In turn, we can then take~$k \in H^{-\theta}$ for some~$\theta > \gamma$ and still make sense of~\eqref{eq:dir_der_h_k} and its solution~$v_{\sh,k}$. Even more so, we can establish the estimate
		\begin{equation}
			\norm[0]{v_{\sh,k}}_{\CC_T^\gamma} \aac_T \norm[0]{k}_{H^{-\theta}}, \quad \CC_T^\gamma := \CC_T H^{-\gamma + \frac{1}{2}}
			\label{eq:dir_der_h_k_est}
		\end{equation}
	via Gronwall's inequality.
	\item We extend the linear, bounded operator~$DF\sVert[0]_{w_\sh}$ from~$\CX_T$ to~$\CC_T^\gamma$ by a density argument, preserving its norm. Altogether, combining~\eqref{eq:first_order_optimality} with the estimate~\eqref{eq:dir_der_h_k_est}, we obtain the estimate
		\begin{equation*}
			\abs[0]{\scal{\sh,k}_\CH} \aac_T \norm[0]{DF\sVert[0]_{w_\sh}}_{\CL(\CX_T,\R)} \norm[0]{k}_{H^{-\theta}} 
		\end{equation*}
	for~$k \in \CC^\infty$ and infer that~$\sh \in H^\theta$ by duality. 
	\item Since~$\theta > \gamma$, the minimiser~$\sh$ has gained regularity in this procedure. Bootstrapping the previous steps implies that~$\sh \in \CC^{\infty}$.
\end{enumerate}
We expect that this argument can be adapted to cover the time-dependency of the Cameron-Martin space~$L^2([0,T] \x \T^3)$ for the $\Phi^4_3$ equation and, similarly, for other singular stochastic PDEs.
Investigating this strategy further is left for future work.

Once $T_\sh$ is made sense of as outlined, the arguments presented in the article at hand can easily be generalised beyond gPAM.

\subsection{Organisation of the article} \label{sec:intro_organisation}

In this subsection, we discuss the organisation of the article at hand and indicate by triangles~\enquote{$\triangleright$} in which (sub-)sections the hypotheses from~\thref{thm:laplace_asymp} are used. We also highlight where we use the large deviation principle (LDP) for $(\d_\eps\hbz^{\minus}: \eps \in I)$ formulated in~\thref{app:thm_ldp_models}.
Note that the boldface terms can be read as an outline for the proof of~\thref{thm:laplace_asymp}. %
	\begin{enumerate}[label=$\bullet$ \textbf{(\arabic*)}, wide=0pt, parsep = 0.5pt, listparindent=0pt]
		\item \label{overview:1} \textbf{Localisation.} In section~\ref{sec:localisation_cameron_martin}, we localise the functional $J$ to a $\rho$-neighbourhood of $\LL_-(\sh)$, the lift of the minimiser~$\sh$ to the space of minimal models~$\MM_-$. By the Cameron-Martin theorem, we then recenter the localised functional~$J_\rho$ to the neighbourhood $\{\eps \barnorm{\hbz^{\minus}} < \rho\}$, at the expense of changing $(F \circ \Phi^{\minus})(\d_\eps\hbz^{\minus})$ to the function ~$\tilde{F}^{\minus}_\Phi(\sh,\eps) := (F \circ \Phi^{\minus})(T_\sh \d_\eps\hbz^{\minus}) - F(w_\sh) + \eps\xi(\sh)$.\footnote{Note that~$(F \circ \Phi^{\minus})(\d_\eps\hbz^{\minus}) = F(\hat{u}^\eps)$ and~$(F \circ \Phi^{\minus})(T_\sh \d_\eps\hbz^{\minus}) = F(\hat{u}_\sh^\eps)$.} \ $\triangleright$ \ref{ass:h1}, (LDP) 
		\item \label{overview:2} \textbf{Taylor expansion in the noise intensity.}\footnote{For a graphical overview of this section's content -- including technical details -- see also~\thref{figure:overview} on p.~\pageref{figure:overview}.} In~\thref{coro:stoch_taylor_gpam_functional}, we obtain a Taylor expansion of the functional~$F(\hat{u}_\sh^\eps) = (F \circ \Phi^{\minus})(T_\sh \d_\eps \hbz^{\minus})$ up to order $N+2$ for small $\eps$.
		In~\thref{coro:stoch_taylor_gpam_functional:properties}, we then see that the Taylor terms and the remainder depend continuously on~$\hbz$ and provide estimates for both. In addition, we show that the former are homogeneous in~$\eps$.
		This result follows easily from~\thref{thm:stoch_taylor_gpam} whose proof we prepare throughout the whole section~\ref{sec:stoch_taylor}. \thinspace $\triangleright$~\ref{ass:h3}
		
		\emph{The proof of~\thref{thm:laplace_asymp} continues in section~\ref{sec:local_analysis}. In order to better grasp its structure, the reader might want to skip the rest of section~\ref{sec:stoch_taylor} in a first reading.}
		\vspace{0.5em}
			\begin{enumerate}[label=$\bullet$ \textbf{(2.\arabic*)}, wide=0pt, parsep = 7pt, listparindent=0pt]
				\item \label{overview:2.1}	\textbf{Abstract fixed-point problem.} In subsection~\ref{sec:abstr_fp_prob}, we set up the fixed-point problem for~$U^\eps$, the abstract representation of~$\hat{u}^{\eps}_\sh$ in a suitable space of modelled distributions. The choice of the latter requires care: although the natural candidate~$\DD^{\gamma,\eta}(T_\sh \d_\eps\hbz)$ has a Banach structure, it is unsuited for taking derivatives in~$\eps$, for it is a different space for each~$\eps \in I$. Instead, we study a family of fixed point (FP) equations parametrised by $\eps$ whose solutions\footnote{Green colour indicates that we are working in the extended gPAM regularity structure, see sec.~\ref{app:rs_background}.}~$\gr{U^\eps}$ live in one fixed Banach space~$\DD^{\gamma,\eta}_{\gr{\CU}}(E_\sh\hbz)$. We then prove that this choice is consistent: both $U^\eps$ and $\gr{U^\eps}$ are reconstructed to~$\hat{u}^{\eps}_\sh$ w.r.t. their underlying models~(\thref{prop:fp_choice}).
				\item \label{overview:2.2}	\textbf{Fr\'{e}chet differentiability of the fixed-point map.} The main result of subsection~\ref{sec:diff_fp_map} is~\thref{prop:der_fp_map}, in which we prove Fr\'{e}chet differentiability of order~$N+3$ for the abstract fixed-point map~$\fI$ that sends $\eps$ to~$\gr{U^\eps}$. In the same theorem, we derive explicit FP equations for the derivatives~$\fI^{(m)} := \partial_\eps^m \fI$ and establish their continuous model dependence. Our main tool here is the Implicit Function Theorem~(IFT), the applicability of which is verified in lemmas~\ref{thm:frechet:gpam} and~\ref{lem:sec_cond_ift}. 
				\item \label{overview:2.3}	\textbf{Taylor expansion in the modelled distribution space.} 
				In~\thref{thm:abstract_taylor}, we obtain a Taylor expansion of~$\gr{U^\eps}$ in the Banach space~$\DD^{\gamma,\eta}_{\gr{\CU}}(E_\sh\hbz)$.
				In subsection~\ref{sec:taylor_exp_mod_distr}, we analyse its terms and connect them to their counterparts in~\thref{thm:stoch_taylor_gpam}. 
				In particular, we obtain estimates for the Taylor terms~(subsection~\ref{sec:estimates_taylor_terms}) and the remainder~(subsection~\ref{sec:estimates_taylor_remainder}) by means of an abstract version of Duhamel's principle~(\thref{lem:inv_theta_op}).
				\item \label{overview:2.4} \textbf{Proof of Taylor  expansion.} In subsection~\ref{sec:pf_stoch_taylor} we synthesise the results of subsections~\ref{sec:abstr_fp_prob} to~\ref{sec:taylor_exp_mod_distr} to give a proof of~\thref{thm:stoch_taylor_gpam}.
				\item \label{overview:2.5} \textbf{Stochastic PDEs for the Taylor terms.}
				We derive stochastic PDEs for the Taylor terms~$\hat{u}^{(m)}_{\xi_\d,\sh}$ 
				from~\thref{thm:stoch_taylor_gpam}. 
				Their limits~$\hat{u}_\sh^{(m)}$ as~$\d \to 0$ appear in the formulas for the coefficients~$a_0, \ldots, a_N$.
			\end{enumerate} \vspace{0.5em}
		\item \label{overview:3} \textbf{Local analysis in the vicinity of the minimiser.} Section~\ref{sec:local_analysis} revolves around the study of~$\tilde{F}^{\minus}_{\Phi}(\sh,\eps)$. The constant term in the Taylor-like expansion for~$(F \circ \Phi^{\minus})(T_\sh \d_\eps\hbz^{\minus})$ compensates the term $-F(w_\sh)$. \ $\triangleright$~\ref{ass:h2},~\ref{ass:h4} \vspace{0.5em}
			\begin{enumerate}[label=$\bullet$ \textbf{(3.\arabic*)}, wide=0pt, parsep = 7pt, listparindent=0pt]
				\item \label{overview:3.1} \textbf{First-order optimality annihilates the linear term.} In subsection~\ref{sec:first_order_vanish}, we prove that the sum of $\eps \xi(\sh)$ and the linear term in the expansion for~$(F \circ \Phi^{\minus})(T_\sh \d_\eps\hbz^{\minus})$ vanishes because $\sh$ is a mi\-ni\-mi\-ser of~$\FF$~(\thref{prop:first_order_optimality}).	\ $\triangleright$~\ref{ass:h2}
				\item \label{overview:3.2} \textbf{Exponential integrability of the quadratic term.} In subsection~\ref{sec:exp_integr_quadr}, we establish the exponential integrability of the quadratic term~$-\nicefrac{1}{2}\hat{Q}_\sh$ in the afore-mentioned expansion of~$(F \circ \Phi^{\minus})(T_\sh \d_\eps\hbz^{\minus})$ (\thref{prop:exp_integr_Q}). We stress that, in contrast to previous works, we do not have to resort to the connection of second Wiener chaos, trace-class operators, and Carleman-Fredholm determinants. Instead, we use a large deviations argument for~$(-\nicefrac{1}{2}Q_\sh(\d_\eps\hbz): \eps \in I)$, itself valid by continuity of~$Q_\sh$ in the model, and then conclude by non-degeneracy of~$\sh$. \ $\triangleright$~\ref{ass:h4}, (LDP)
				\item \label{overview:3.3} \textbf{The coefficients in the asymptotic expansion.} In subsection~\ref{sec:asymp_coeff}, we decompose~$J_\rho$ in four steps that finally lead to the asymptotic expansion claimed in eq.~\eqref{thm:laplace_asymp:expansion}: Its coefficients $a_0, \ldots, a_N$ are finite thanks to the exponential integrability of $-\nicefrac{1}{2}\hat{Q}_\sh$~(\thref{prop:coeff_asymp_exp}). We close our proof of~\thref{thm:laplace_asymp} by showing that the four remainder terms created in the decomposition procedure are of order~$o(\eps^N)$. Our main tool here is a Fernique-type theorem for the minimal BPHZ model~$\hbz^{\minus}$.
			\end{enumerate} \vspace{0.5em}
	\end{enumerate}
	\vspace{-1.5em}
	\begin{enumerate}[label=$\bullet$ \textbf{(\Alph*)}, wide=0pt, parsep = 0.5pt, listparindent=0pt]
		\item \textbf{Background knowledge on regularity structures.} In appendix~\ref{app:background_pam} we collect the prerequisites of Hairer's theory of regularity structures that are necessary to follow the presentation in this article. \vspace{0.5em}
			\begin{enumerate}[label=$\bullet$ \textbf{(A.\arabic*)}, wide=0pt, parsep = 7pt, listparindent=0pt]
				\item \textbf{The original and extended gPAM regularity structure.} In subsection~\ref{app:rs_background}, we briefly recapitulate the algebraic structure necessary to treat~\ref{gpam}, that is: the original regularity structure, built by Hairer, along with its extended counterpart that was introduced to accommodate Cameron-Martin shifts.
				\item \textbf{Admissible models.} Section~\ref{app:sec:adm_models} deals with (minimal) admissible models and the analytical operations that act on them. After repeating the notions of extension and translation, we introduce the dilation operator and study its properties. In particular, we see that it dovetails nicely with the homogeneous norm on minimal model space. In the end, a short summary of BPHZ renormalisation on model space is presented.
				\item \textbf{Modelled distributions.} In section~\ref{sec:modelled_distributions}, we recapitulate the notion of a modelled distribution and how (the lifts of) non-linear functions act upon them. We then study Fr\'{e}chet differentiability of such lifts, obtain a version of Taylor's theorem, and see how modelled distributions behave w.r.t. dilation. 
			\end{enumerate} \vspace{0.5em}
			\item \textbf{Deterministic gPAM and explosion times.} In section~\ref{sec:explosion}, we prove that the solutions to deterministic gPAM~$w_h$ from \eqref{eq:wh} exist globally when the driver~$h$ is a Cameron-Martin function.
			In addition, we provide information on the explosion time~$T^\eps$ of~$\hat{u}^\eps$ from~\thref{intro:ex_sol_hairer} and that of~$\hat{u}_h^\eps$ from~\eqref{thm:stoch_taylor_gpam:diffb}. 
			To quantify the latter, we assume boundedness of~$\eps \barnorm{\hbz^{\minus}}$, a consequence of the localisation procedure in section~\ref{sec:localisation_cameron_martin}.  
			\item \textbf{Large Deviations.} 
			Section~\ref{app:ldp_pam} is, in essence, a recapitulation of Hairer's and Weber's work for the stochastic Allen-Cahn equation in the case of gPAM.
			Most importantly, in \thref{app:thm_ldp_models} we see that the family~$(\d_\eps\hbz^{\minus}: \eps \in I)$ satisfies a LDP on the space~$\MM_-$ of minimal admissible models.
			This allows us to give a complete proof of~\thref{thm:ldp}, formulated with all the details in~\thref{app:thm_ldp_gpam}.
			\item \textbf{Fernique's theorem for the minimal BPHZ model.}  In section~\ref{app:fernique}, we prove that $\barnorm{\hbz^{\minus}}$, the homogeneous norm of the minimal BPHZ model $\hbz^{\minus}$, has Gaussian tails~(\thref{thm:fernique}) -- as needed for our estimates in subsection~\ref{sec:asymp_coeff} to work.
			\item \textbf{Proof of Duhamel's Formula.} In section~\ref{sec:duhamel_proof}, we provide a proof of~\thref{lem:inv_theta_op}, the abstract version of Duhamel's Formula we use to estimate the Taylor terms and remainder in section~\ref{sec:taylor_exp_mod_distr}.
	\end{enumerate}

	\emph{From here on, we assume that the reader is familiar with the content of appendix~\ref{app:background_pam}.}

\paragraph*{Acknowledgements.}

The authors acknowledge funding from the European Research Council through Consolidator Grant 683164. TK thanks the Berlin Mathematical School for institutional support and the Hausdorff Research Institute for Mathematics in Bonn for its warm hospitality and financial support during the Junior Research Trimester \enquote{Randomness, PDEs, and Nonlinear Fluctuations} where parts of this paper were completed. He is grateful to Giuseppe Cannizzaro, Antoine Hocquet, and, in particular, Carlo Bellingeri and Paul Gassiat for stimulating discussions on the subject of this article. Both authors thank the anonymous referees for their careful reading of the manuscript and their helpful suggestions to improve the presentation in this article. 

\listoftodos

\renewcommand\thesection{\arabic{section}}
\setcounter{section}{0}

\section{Localisation around the minimiser and Cameron-Martin}  \label{sec:localisation_cameron_martin}
We set $F^{\minus}_\Phi := F \circ \Phi \circ \EE$ and denote by~$\LL_-(\sh)$ the lift of the minimiser $\sh \in \CH$ to $\MM_-$. In addition, for an $\barnorm{\cdot}$-open neighbourhood $\CO$ of $\LL_-(\sh)$, we decompose
\begin{equs}[][rp:eq:localisation]
	J(\eps) 
	\equiv 
	\E\sbr{\exp\del{-\frac{F^{\minus}_\Phi(\d_\eps \hbz^{\minus})}{\eps^2}}; \ T^\eps > T}
	& = 
	J_{\CO}(\eps) 
	+
	J_{\CO^c}(\eps) 
\end{equs}	
with
	\begin{equation}
		J_A(\eps) 
		:=
		\E\sbr[4]{\exp\del{-\frac{F^{\minus}_\Phi(\d_\eps \hbz^{\minus})}{\eps^2}}; \ T^\eps > T, \ \d_\eps \hbz^{\minus} \in A}, \quad A \in \{\CO, \CO^c\}.
	\end{equation}
Our aim in this section is to prove that the essential contribution to $J(\eps)$ comes from the vicinity~$\CO$ of~$\LL_-(\sh)$. In other words, we want to qualify the contribution of~$J_{\CO^c}$ to~$J$ as asymptotically irrelevant. To this end, we appropriately adapt the proof of \cite[Lem.~$1.32$]{ben_arous_laplace} and \cite[Lem.~$8.2$]{inahama06}. However, these authors rule out explosion in finite time. In contrast, Azencott~\cite{azencott} does account for this phenomenon, so the proof of the proposition that follows is also informed by his work.

\begin{proposition}\label{rp:prop:localisation}
	For each $\barnorm{\cdot}$-open neighbourhood $\CO$ of $\LL_-(\sh)$, there exists some $d > a := \FF(\sh)$ and $\eps_0 > 0$ such that for all $\eps \in (0,\eps_0)$, we have 
	\begin{equation}
		J_{\CO^c}(\eps) 
		\leq \exp\del[3]{-\frac{d}{\eps^2}}.                                                                                                                                                             	
		\label{rp_prop:localisation:eq}                              
	\end{equation}	
\end{proposition}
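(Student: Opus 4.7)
The plan is to run the classical Azencott--Ben Arous localisation argument at the level of minimal admissible models, leveraging the large deviation principle for $(\d_\eps \hbz^{\minus})_{\eps \in I}$ on $\MM_-$ provided by Theorem~\ref{app:thm_ldp_models}. Write $\II_{\minus}$ for the associated good rate function; by the standard identification it is finite precisely on lifts $\LL_-(h)$ of Cameron--Martin elements $h \in \CH$, taking there the value $\II(h) = \tfrac{1}{2}\norm{h}_\CH^2$.

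The first step is a Varadhan--type upper bound. By hypothesis~\ref{ass:h1} the functional $F$ is bounded and continuous, and after absorbing the event $\{T^\eps \le T\}$ into a harmless constant (by setting $F$ equal to $\sup|F|$ on the graveyard path $u^{\deathsmall}$), the composition $F^{\minus}_\Phi := F \circ \Phi \circ \EE$ becomes a bounded upper semi-continuous functional on $\MM_-$. Applying the upper bound half of Varadhan's lemma to the closed set $\CO^c$ then yields
\begin{equation*}
	\limsup_{\eps \to 0} \eps^2 \log J_{\CO^c}(\eps) \;\leq\; -b, \qquad b \;:=\; \inf_{z \in \CO^c} \sbr[1]{F^{\minus}_\Phi(z) + \II_{\minus}(z)}.
\end{equation*}

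It then remains to prove that $b > a := \FF(\sh)$. Boundedness of $F^{\minus}_\Phi$ together with the goodness of $\II_{\minus}$ ensures that any minimising sequence for $F^{\minus}_\Phi + \II_{\minus}$ on $\CO^c$ stays in a sublevel set of $\II_{\minus}$ and is therefore precompact; lower semi-continuity then delivers a minimiser $z_* \in \CO^c$. Finiteness of $\II_{\minus}(z_*)$ forces $z_* = \LL_-(h_*)$ for some $h_* \in \CH$, and by consistency of the model lift with the solution map (Theorem~\ref{intro:ex_sol_hairer}) one obtains $b = F(w_{h_*}) + \II(h_*) = \FF(h_*)$. Since $\LL_-(\sh) \in \CO$ by construction of the neighbourhood, $h_* \neq \sh$, and the uniqueness assumption~\ref{ass:h2} on the minimiser of $\FF$ yields $b = \FF(h_*) > \FF(\sh) = a$. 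Picking any $d \in (a, b)$, the Varadhan bound above then produces $J_{\CO^c}(\eps) \leq \exp(-d/\eps^2)$ for all $\eps$ small enough, as required.

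The main obstacle is the book-keeping around finite-time explosion: $\Phi$ does not map continuously into $\CX_T$ on all of $\MM_-$, so care is needed in formulating the Varadhan upper bound once the graveyard is included, or equivalently in decomposing $J_{\CO^c}$ into an explosion contribution (which has to be shown to vanish faster than $e^{-d/\eps^2}$ by combining the LDP for $\hbz^{\minus}$ with the explosion-time estimates of section~\ref{sec:explosion}) and a genuine PDE contribution (handled as above). This is exactly the distinction between the Ben Arous / Inahama template (which excludes explosion) and the Azencott variant that is needed here, and it is the one point where the argument departs from a purely formal application of Varadhan's lemma.
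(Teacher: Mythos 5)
Your proposal reaches the right conclusion using the same fundamental ingredients as the paper — the LDP for $(\d_\eps \hbz^{\minus})$ on $\MM_-$, compactness of sublevel sets of the good rate function, and uniqueness of the minimiser of $\FF$ — but the paper organises them differently. You argue that the infimum $b = \inf_{\CO^c}(F^{\minus}_\Phi + \JJ_{\MM_-})$ is \emph{attained} and then invoke a Varadhan-type upper bound for the integral restricted to the closed set $\CO^c$. The paper instead never claims attainment of the global infimum over $\CO^c$: it fixes $k$ large, finds the minimum $a_1 > a$ of $\FF^{\minus}$ on the compact set $L_k \cap \CO^c$, works on a $\delta$-fattening where $\FF^{\minus} > a_2 \in (a, a_1)$, and then splits $J_{\CO^c}$ into a ``far'' piece (on $L_k(\delta)^c$, killed by choosing $k > 2\norm{F}_\infty$ and the bare LDP upper bound) and a ``near'' piece (on $L_k(\delta) \cap \CO^c$, handled by the auxiliary function $F_1^{\minus}$ and \cite[Lem.~2.1.8]{deuschel-stroock}). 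This explicit two-part decomposition is exactly what your last paragraph acknowledges is needed but does not carry out; a ``Varadhan upper bound for a closed set'' is not a black box here because $\CO^c$ is neither compact nor exponentially tight by fiat, and making the restricted estimate rigorous \emph{is} the two-part argument, not a separate afterthought.

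There is also a small sign confusion worth flagging: for the \emph{upper} bound half of Varadhan's lemma one needs the exponent $-F/\eps^2$, i.e. $e^{-F/\eps^2}$, to be upper semi-continuous, which requires $F$ to be \emph{lower} semi-continuous. Setting $F$ equal to $+\sup|F|$ at the graveyard path makes $F^{\minus}_\Phi$ \emph{upper} semi-continuous, which is the wrong direction for a direct citation of \cite[Lem.~2.1.8]{deuschel-stroock}. This is harmless in substance, because the expectation defining $J_{\CO^c}$ is already restricted to $\{T^\eps > T\}$ (so the graveyard value never enters the integrand, and models of finite rate function never explode), but it means you cannot naively apply the lemma to the raw extended functional; you need either the lower semi-continuous envelope of $F^{\minus}_\Phi$ or, as in the paper, the careful construction of $F_1^{\minus}$ which confines the genuine $F$-dependence to a compact set where everything is continuous. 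Once this is fixed, your compactness-plus-uniqueness argument for $b > a$ is correct and is the same mechanism as the paper's inequality $a_1 > a$.
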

                   
\begin{proof}
	Since $\JJ_{\MM_-}$ is a \emph{good} rate function (RF), it has compact sublevel sets, i.e. for each $k >0$, the set 
	\begin{equation}
	L_k := \{\bz^{\minus} \in \MM_-: \thinspace \JJ_{\MM_-}(\bz^{\minus}) \leq k\}
	\label{rp:prop:localisation:pf:eq1}	
	\end{equation}                                                                                                                      
	is compact in $\MM_-$. For $\d > 0$, we further denote by                                  
	$L_k(\d) := \{\bz^{\minus} \in \MM_-: \ \barnorm{\bz^{\minus} \thinspace;L_k} < \d\}$
	the open $\d$-fattening of $L_k$.
	
	Since the set $L_k \cap \CO^c$ is compact in $\MM_-$ as well, the function $\FF^{\minus} := \FF \circ \EE = F_\Phi^{\minus} + \JJ_{\MM_-}$ attains its minimum $a_1 > a$ on this set by continuity of $F$, $\Phi$, and~$\EE$ and lower semi-continuity (l.s.c.) of $\JJ_{\MM_-}$ as a RF. Let now $a_2 \in (a,a_1)$; then, there exists $\d_0 > 0$ such that
	\begin{equation}
		\FF^{\minus}(\bz^{\minus}) > a_2 \quad \text{for} \quad \bz^{\minus} \in \overline{L_k(\d_0)} \cap \CO^c
		\label{rp:prop:localisation:pf:eq2}                                                      
	\end{equation} 
	and thus for each $\bz^{\minus} \in \overline{L_k(\d)} \cap \CO^c$ with $\d \in(0,\d_0)$. 
	
	$\triangleright$ The inequality in \eqref{rp:prop:localisation:pf:eq2} can be seen as follows: By l.s.c. of $\FF^{\minus}$, the set $\{\FF^{\minus} > a_2\}$ is open and since $a_2 < a_1$, we have $L_k \cap \CO^c \subseteq \{\FF^{\minus} > a_2\}$. Hence, $L_k \subseteq \{\FF^{\minus} > a_2\} \cup \CO$ and since $\C := \sbr[1]{\{\FF^{\minus} > a_2\} \cup \CO}^c$ is closed, $L_k$ is compact, and $L_k \cap \C = \emptyset$, the function
	\begin{equation*}
	\operatorname{dist}(\cdot\thinspace;\C): \ 
	L_k \ni \bz^{\minus} \mapsto \operatorname{dist}(\bz^{\minus};\C) := \barnorm{\bz^{\minus}\thinspace;\C} > 0
	\end{equation*}
	attains its strictly positive minimum on $L_k$. Therefore, we may find $\d_0 > 0$ such that 
	\begin{equation*}
	\overline{L_k(\d_0)} \subseteq \{\FF^{\minus} > a_2\} \cup \CO,
	\end{equation*}
	which implies that $\overline{L_k(\d_0)} \cap \CO^c \subseteq \{\FF^{\minus} > a_2\}$. $\triangleleft$

	We now fix some $\d \in (0,\d_0)$ and split the analysis of~$J_{\CO^c}(\eps) $ into two parts:
	\begin{enumerate}[label=\textbf{Part \arabic*:}, align=left, leftmargin=0pt, labelindent=0pt,listparindent=0pt, itemindent=!]
		\item \textbf{Controlling $\boldsymbol{J_{\CO^c}(\eps) }$ on $\boldsymbol{L_k(\d)^c \cap \CO^c}$.}
		
		By assumption~\ref{ass:h1}, the estimate
		\begin{equation}
			\E\sbr[3]{\exp\del[3]{-\frac{F_\Phi^{\minus}(\d_\eps \hbz^{\minus})}{\eps^2}}; \thinspace T^\eps > T, \ \d_\eps\hbz^{\minus} \in L_k(\d)^c \cap \CO^c}
			\leq \exp\del[3]{\frac{\norm{F}_{\infty}}{\eps^2}} \P\del[1]{\d_\eps \hbz^{\minus} \in L_k(\d)^c}
			\label{rp:prop:localisation:pf:eq3}
		\end{equation}
		holds. From \thref{app:thm_ldp_models}, we know that the family $\{\P_\eps := \P \circ (\d_\eps \hbz^{\minus})^{-1}: \thinspace \eps > 0\}$ satisfies a LDP on~$\MM_-$, so we apply the LDP upper bound to the closed set $L_k(\d)^c$ to obtain
		\begin{equation*}
		\limsup_{\eps \to 0} \eps^2 \log \P_\eps\del[1]{L_k(\d)^c} 
		\leq -\inf\{\JJ_{\MM_-}(\bz^{\minus}): \thinspace \bz^{\minus} \in L_k(\d)^c\}
		\leq -k
		\end{equation*}
		by definition of $L_k \subseteq L_k(\d)$. We may thus find $\eps_1 > 0$ such that for all $\eps \in (0,\eps_1)$ we have
		\begin{equation*}
		\P_\eps\del[1]{L_k(\d)^c} \leq \exp\del[3]{-\frac{k}{2\eps^2}}.
		\end{equation*}
		Choosing $k > 0$ such that $-d_1 \equiv -d_1(k) := \norm{F}_\infty - \nicefrac{k}{2} < 0$, we further estimate \eqref{rp:prop:localisation:pf:eq3} by
		\begin{equation}
			(\text{RHS}) 
			\leq \exp\del[3]{-\frac{d_1}{\eps^2}}
			\label{rp:prop:localisation:pf:eq4}                                                
		\end{equation}
		for $\eps \in (0,\eps_1)$. This same $k$ is fixed for the rest of the proof.
		\item \textbf{Controlling $\boldsymbol{J_{\CO^c}(\eps)}$ on $\boldsymbol{L_k(\d) \cap \CO^c}$.}
		
		We now define the function $F_1^{\minus}(\bz^{\minus}) := F_\Phi^{\minus}(\bz^{\minus})$ for $\bz^{\minus} \in \overline{L_k(\d)} \cap \CO^c$ and $F_1^{\minus}(\bz^{\minus}) := c > a_2$ otherwise. This function is l.s.c. and bounded, for $F$ is bounded by assumption~\ref{ass:h1}. We may thus apply \cite[Lem.~$2.1.8$]{deuschel-stroock} in conjunction with~\thref{sec:explosion:time_0_lem} to infer that
		\begin{equation*}
			\limsup_{\eps \to 0} \eps^2 \log \E\sbr[3]{\exp\del[3]{-\frac{F_1^{\minus}(\d_\eps\hbz^{\minus})}{\eps^2}}; \thinspace T^\eps > T}
			\leq                                              
			-\inf (F_1 + \JJ_{\MM})
			\overset{\eqref{rp:prop:localisation:pf:eq2}}{\leq}
			-a_2.
		\end{equation*}
		Thus, there exists $a_3 \in(a,a_2)$ and $\eps_2 > 0$ such that for all $\eps \in (0,\eps_2)$, we have
		\begin{equation}
			\E\sbr[3]{\exp\del[3]{-\frac{F_1^{\minus}(\d_\eps\hbz^{\minus})}{\eps^2}}; \thinspace T^\eps > T}
			\leq \exp\del[3]{-\frac{a_3}{\eps^2}}.                                                                 
			\label{rp:prop:localisation:pf:eq5}
		\end{equation}                                 
		Now observe that
		\begin{equs}
			\thinspace
			&                                                                                          
			\E\sbr[3]{\exp\del[3]{-\frac{F^{\minus}_\Phi(\d_\eps\hbz^{\minus})}{\eps^2}}; \thinspace T^\eps > T, \ \d_\eps\hbz^{\minus} \in L_k(\d) \cap \CO^c}
			\leq 
			\E\sbr[3]{\ldots; \thinspace T^\eps > T, \ \d_\eps\hbz^{\minus} \in \overline{L_k(\d)} \cap \CO^c} \\
			\equiv 
			\
			&
			\E\sbr[3]{\exp\del[3]{-\frac{F_1^{\minus}(\d_\eps\hbz^{\minus})}{\eps^2}}; \thinspace T^\eps > T, \ \d_\eps\hbz^{\minus} \in \overline{L_k(\d)} \cap \CO^c}
			\overset{\eqref{rp:prop:localisation:pf:eq5}}{\leq}
			\exp\del[3]{-\frac{a_3}{\eps^2}}.
		\end{equs}
	\end{enumerate}
	Combining these two steps, we find that $\abs[0]{J_{\CO^c}(\eps)} \leq \exp\del[1]{-\eps^{-2}d}$ for all $\eps \in (0,\eps_0)$, where $\eps_0 := \eps_1 \wedge \eps_2$ and $d := d_1 + a_3 > a_3 > a$. This completes the proof.
\end{proof}

By the previous proposition, it suffices to consider the functional
	\begin{equation}
		J_\rho(\eps) 
		:= 
		\E\sbr[4]{\exp\del[4]{-\frac{F^{\minus}_\Phi\del[0]{\d_\eps \hbz^{\minus}}}{\eps^2}}; \thinspace T^\eps > T, \ 
			\barnorm{T_{-\sh} \d_\eps \hbz^{\minus}} < \rho}.
	\end{equation} 
for some~$\rho > 0$ which we will later choose sufficiently small. 
Note that $J_\rho(\eps) \equiv J_\CO(\eps)$ for the specific choice 
	\begin{equation*}
		\CO \equiv \CO(\sh,\rho) := \{\bz^{\minus} \in \MM_-: \ \barnorm{T_{-\sh} \bz^{\minus}} < \rho\}.\footnote{Note that this set is open by continuity of the translation operator $T_{-\sh}$ on $(\MM_-, \barnorm{\cdot})$, a consequence of~\thref{app:prop:translation} and~\thref{lem:estimate_min_norm_vs_hom_norm}.}
	\end{equation*}
Instead of tracking the $\sh$-dependency of $J_\rho(\eps)$ via the set $\CO$, we want to encode it by appropriately changing the functional $F_\Phi \rightsquigarrow \tilde{F}_\Phi(\sh,\cdot)$. As usual, this amounts to a straightforward application of the Cameron-Martin theorem.
The setup is that of the abstract Wiener space~$(B,\CH,\mu)$ one also encounters when proving large deviation principles, see sec.~\ref{app:ldp_pam} in the appendix.

\begin{proposition}\label{prop:cameron_martin}
	In the same setting as before, there exists~$\rho_0 = \rho_0(T) > 0$ such that
		\begin{equation}
			J_\rho(\eps) 
			=                                                  
			\exp\del[3]{-\frac{\FF(\sh)}{\eps^2}} \E\sbr[3]{\exp\del[3]{-\frac{\tilde{F}^{\minus}_\Phi(\sh,\eps)}{\eps^2}}; \thinspace \eps\barnorm{\hbz^{\minus}} < \rho}
			\label{eq:prop:cameron_martin:functional}
		\end{equation}
	for all $\rho \in (0,\rho_0)$, where 
	\begin{equation}
		\tilde{F}^{\minus}_\Phi(\sh,\eps) 
		:= 
		F^{\minus}_\Phi\del[1]{T_\sh \d_\eps \hbz^{\minus}}
		- F^{\minus}_\Phi(\LL_-(\sh))
		+ \eps \xi(\sh).
		\label{eq:prop:cameron_martin:FPhi}
	\end{equation}
\end{proposition}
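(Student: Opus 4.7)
My plan is to compute \(J_\rho(\eps)\) via a direct application of the Cameron-Martin theorem to the abstract Wiener space \((B,\CH,\mu)\), shifting the driving noise \(\xi\) by \(\sh/\eps \in \CH\) and translating the resulting identities back to the level of models through the translation operator \(T_\sh\). Concretely, write \(J_\rho(\eps) = \E[\phi(\xi)]\), where
\[
\phi(\xi) := \exp\del[3]{-\frac{F^{\minus}_\Phi(\d_\eps\hbz^{\minus}(\xi))}{\eps^2}} \, \1_{T^\eps > T} \, \1_{\barnorm{T_{-\sh}\d_\eps\hbz^{\minus}(\xi)} < \rho}.
\]
The main observation is that shifting the noise \(\xi \mapsto \xi + \sh/\eps\) corresponds, at the level of (minimal) models, to applying \(T_\sh\) to \(\d_\eps\hbz^{\minus}(\xi)\); equivalently, \(\d_\eps\hbz^{\minus}(\xi + \sh/\eps) = T_\sh\d_\eps\hbz^{\minus}(\xi)\). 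Hence the set \(\{\barnorm{T_{-\sh}\d_\eps\hbz^{\minus}} < \rho\}\) is transformed into \(\{\barnorm{\d_\eps\hbz^{\minus}} < \rho\}\), which using the homogeneity \(\barnorm{\d_\eps\bz^{\minus}} = \eps\barnorm{\bz^{\minus}}\) from Appendix A.2 becomes \(\{\eps\barnorm{\hbz^{\minus}} < \rho\}\).

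Next, applying Cameron-Martin in the form \(\E[\phi(\xi + h)] = \E[\phi(\xi)\exp(\xi(h) - \tfrac{1}{2}\norm{h}_\CH^2)]\) with \(h = \sh/\eps\), and rearranging, I obtain
\[
\E[\phi(\xi)] = \E\sbr[1]{\phi(\xi + \sh/\eps)\exp\del[1]{-\eps^{-1}\xi(\sh) - \tfrac{1}{2}\eps^{-2}\norm{\sh}_\CH^2}}.
\]
Substituting the transformed indicator and the identity \((F \circ \Phi^{\minus})(T_\sh\d_\eps\hbz^{\minus})\) for the functional, the prefactor \(\exp(-\tfrac{1}{2}\eps^{-2}\norm{\sh}_\CH^2)\) combines with \(-F(w_\sh)/\eps^2 = -F^{\minus}_\Phi(\LL_-(\sh))/\eps^2\) (added and subtracted inside the exponent) to yield exactly
\[
\exp\del[3]{-\frac{F(w_\sh) + \tfrac{1}{2}\norm{\sh}_\CH^2}{\eps^2}} = \exp\del[3]{-\frac{\FF(\sh)}{\eps^2}}
\]
via Schilder's rate function \eqref{eq:schilders_rf}. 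The remaining exponent collects into exactly the definition of \(\tilde F^{\minus}_\Phi(\sh,\eps)\) in \eqref{eq:prop:cameron_martin:FPhi}, giving the claimed identity up to the issue of the explosion indicator.

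The hardest point, and the one which forces the restriction \(\rho < \rho_0(T)\), is to justify that the explosion-time indicator \(\1_{T^\eps > T}\) transforms consistently and can be absorbed. Under the shift, \(T^\eps > T\) evaluated at \(\xi + \sh/\eps\) becomes the explosion time \(T^\eps_\sh\) of the solution \(\hat u^\eps_\sh\) driven by \(\eps\xi + \sh\) (equivalently, of \(\Phi(T_\sh \d_\eps\hbz^{\minus})\)). I will invoke the quantitative non-explosion statement from section~\ref{sec:explosion} (in the spirit of \thref{sec:explosion:time_0_lem}), which states that for each \(T>0\) there exists \(\rho_0(T) > 0\) such that whenever \(\eps\barnorm{\hbz^{\minus}} < \rho_0\), the shifted solution \(\hat u^\eps_\sh\) exists on \([0,T]\); this uses continuity of \(\Phi\) and \(T_\sh\) on model space together with global existence of the deterministic solution \(w_\sh\) (well-definedness of \(w_\sh\) on \([0,T]\) is proven in section~\ref{sec:explosion}). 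Consequently, on the event \(\{\eps\barnorm{\hbz^{\minus}} < \rho\}\) with \(\rho < \rho_0\), the indicator \(\1_{T^\eps_\sh > T}\) is identically one and may be dropped.

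Combining these three ingredients — Cameron-Martin on the Wiener space, the intertwining \(\d_\eps\hbz^{\minus}(\xi + \sh/\eps) = T_\sh\d_\eps\hbz^{\minus}(\xi)\), and the quantitative non-explosion on small neighbourhoods of \(\LL_-(\sh)\) — yields the asserted identity \eqref{eq:prop:cameron_martin:functional}. The routine work lies in bookkeeping the three sign conventions (for the Paley-Wiener term, the Cameron-Martin density, and the \(T_\pm\sh\) action on models); the only genuinely non-trivial step is the last, which is why \(\rho_0\) must be chosen small in a \(T\)-dependent manner.
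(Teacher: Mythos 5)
Your proof is correct and follows essentially the same route as the paper: rewrite \(J_\rho(\eps)\) via the Cameron--Martin density (you apply it as \(\E[\phi(\xi)]=\E[\phi(\xi+\sh/\eps)\exp(-\eps^{-1}\xi(\sh)-\tfrac12\eps^{-2}\|\sh\|^2)]\), the paper phrases it through the pushforward \((T_{-\sh/\eps})_*\mu\) -- these are the same identity), use the a.s.\ intertwining \(T_\sh\d_\eps\hbz^{\minus}(\xi)=\d_\eps\hbz^{\minus}(\xi+\sh/\eps)\) from \thref{lem:transop_cm_shift} and the dilation homogeneity \(\barnorm{\d_\eps\cdot}=\eps\barnorm{\cdot}\), regroup the exponent into \(\FF(\sh)=F^{\minus}_\Phi(\LL_-(\sh))+\II(\sh)\) plus \(\tilde F^{\minus}_\Phi(\sh,\eps)\), and absorb the explosion indicator by the quantitative non-explosion statement. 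One small slip: the lemma you want for the last step is \thref{sec:explosion:cm_lem} (together with \(T_\infty^\sh=+\infty\) from \thref{rmk:explosion_gpam}), not \thref{sec:explosion:time_0_lem}; you describe the correct content, just cite the neighbouring result.
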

                                                          
\begin{proof} 
	Recall that white noise $\xi$ is the identity on the Gaussian measure space $(B,\mu)$, that is $\xi(\omega) = \omega$ for $\omega \in B$. Also recall that $T^\eps = T_\infty^{\minus}(\d_\eps \hbz^{\minus})$. From the Cameron-Martin theorem \cite[Prop.~2.26]{daprato-zabczyk} on the abstract Wiener space~$(B,\CH,\mu)$, we know that the push-forward measure~$\mu_\sh^\eps := (T_{-\nicefrac{\sh}{\eps}})_*\mu$ has density
		\begin{equation*}
			\frac{\dif \mu_\sh^\eps}{\dif \mu}(\omega)
			=
			\exp\del[3]{-\frac{1}{\eps^2}\sbr[1]{\II(\sh) + \eps \omega(\sh)}}.
		\end{equation*}
	where $\omega(\sh) = I(\sh)(\omega)$ with the Paley-Wiener map $I$. We then find that	
	\begin{align*}
		J_\rho(\eps)
		&
		=
		\E\sbr[3]{\exp\del[3]{-\frac{F^{\minus}_\Phi\del[1]{\d_\eps\hbz^{\minus}}}{\eps^2}}; \ 
			T_\infty^{\minus}(\d_\eps \hbz^{\minus}) > T, \
			\barnorm{T_{-\sh} \d_\eps \hbz^{\minus}} < \rho} \\
		& 
		=                                                                                                     
		\int_B
		\exp\del[3]{-\frac{F^{\minus}_\Phi \del[1]{T_\sh \d_\eps\hbz^{\minus}(\omega - \nicefrac{\sh}{\eps})}}{\eps^2}}
		\mathbf{1}_{T_\infty^{\minus}(T_\sh\d_\eps \hbz^{\minus}(\omega - \nicefrac{\sh}{\eps})) > T} \mathbf{1}_{\barnorm{\d_\eps\hbz^{\minus}(\omega-\nicefrac{\sh}{\eps})} < \rho} \thinspace \mu(\dif \omega) 
		\\
		&                                                                                                                                              
		=
		\exp\del[3]{-\frac{F^{\minus}_\Phi(\LL_-(\sh)) + \II(\sh)}{\eps^2}} 
		\E\sbr[3]{\exp\del[3]{-\frac{\tilde{F}^{\minus}_\Phi(\sh,\eps)}{\eps^2}}; \  T_\infty^{\minus}(T_\sh \d_\eps \hbz^{\minus}) > T, \ \barnorm{\d_\eps\hbz^{\minus}} < \rho} 
		\\
		&
		=
		\exp\del[3]{-\frac{\FF(\sh)}{\eps^2}} \E\sbr[3]{\exp\del[3]{-\frac{\tilde{F}_\Phi(\sh,\eps)}{\eps^2}}; \  T_\infty^{\minus}(T_\sh \d_\eps \hbz^{\minus}) > T, \ \eps \barnorm{\hbz^{\minus}} < \rho}.
	\end{align*}
	Note that we have used~\thref{lem:transop_cm_shift} (with $h := -\sh$) for the second equality.
	Since $T_\infty^\sh = + \infty$~(see~\thref{rmk:explosion_gpam}), any $T > 0$ satisfies~$T < T_\infty^\sh$. Choosing $\rho_0 > 0$ as in~\thref{sec:explosion:cm_lem}, we have
		\begin{equation}
			\{\eps \barnorm{\hbz^{\minus}} < \rho\} \subseteq \{T_\infty^{\minus}(T_\sh \d_\eps \hbz^{\minus}) > T\} 
		\end{equation}	
	for all~$\rho \in (0,\rho_0)$, which establishes the claim. 
\end{proof}

\section{Taylor expansion in the noise intensity parameter} \label{sec:stoch_taylor} 

The application of the Cameron-Martin theorem in the last section prompts us to analyse the function~$\tilde{F}^{\minus}_\Phi(\sh,\cdot)$ further. In this direction, the first and most important step is the local expansion of
	\begin{equation*}
		\Phi^{\minus}(T_\sh \d_\eps\hbz^{\minus}) 
		\equiv
		(\Phi \circ \EE)(T_\sh \d_\eps\hbz^{\minus}) 
		\equiv
		\Phi(T_\sh \d_\eps\hbz) 
		=
		\Phi(T_\sh \d_\eps\hbz) = \hat{u}^{(\eps)}_\sh(\hbz)
	\end{equation*}
 in~$\eps$. This is the content of~\thref{thm:stoch_taylor_gpam} given in subsection~\ref{intro:sec:results} of the introduction.

The local expansion of~$F^{\minus}_\Phi(T_\sh \d_\eps \hbz^{\minus})$ in $\eps$ that we need for the analysis of~$\tilde{F}^{\minus}_\Phi(\sh,\cdot)$ is an easy corollary of that theorem. While~\thref{thm:stoch_taylor_gpam} is stated for general~$g \in \CC^{\ell+4}$, $\ell \geq 1$, the specific regularity assumptions on $g$ (and $F$) for the study of precise Laplace asymptotics impose~$\ell = N + 3$. 

\begin{corollary}[Taylor expansion of~{{$F\del[1]{\hat{u}^{\bullet}_\sh}$}}]\label{coro:stoch_taylor_gpam_functional}
	For each $\eps \in I_0$ we have
	\begin{equation}
		F_\Phi(T_\sh \d_\eps \hbz)
		\equiv 
		F\del[1]{\hat{u}^{\eps}_\sh}
		=
		F_\sh^{(0)}
		+
		\sum_{m=1}^{N + 2} \frac{\eps^m}{m!} \hat{F}^{(m)}_{\sh}
		+ \hat{R}^{F;\eps;N + 3}_{\sh}
		\label{thm:stoch_taylor_gpam:exp_F}
	\end{equation}
	where $F_\sh^{(0)} := F(w_\sh) \equiv F_\Phi\del[0]{\LL(\sh)}$, 
	\begin{equs}
		\hat{F}^{(m)}_{\sh}
		\equiv
		F^{(m)}_{\sh}(\hbz) 
		& := \partial_\eps^{m}\sVert_{\eps = 0} F\del[1]{u^{\eps}_\sh(\hbz)},  \quad m \in [N+2],
	\end{equs}
	and~$\hat{R}^{F;\eps;N + 3}_{\sh} \equiv R^{F;\eps;N + 3}_{\sh}(\hbz)$ is implicitly defined from~\eqref{thm:stoch_taylor_gpam:exp_F}. 
	We also set~$\hat{Q}_\sh := \hat{F}_\sh^{(2)}$.
\end{corollary}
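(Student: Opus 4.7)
The plan is to compose the Taylor expansion of $\eps \mapsto \hat{u}^\eps_\sh$ from Theorem~\ref{thm:stoch_taylor_gpam} with the functional $F$ and then invoke Taylor's theorem in $\R$ for the resulting real-valued function of $\eps$.

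First, I apply Theorem~\ref{thm:stoch_taylor_gpam} with $h = \sh$ and $\ell = N+3$, which is permitted because hypothesis~\ref{ass:h3} requires $g \in \CC_b^{N+7} = \CC_b^{\ell+4}$. This yields an $\eps_0 = \eps_0(T) > 0$ such that the map $\eps \mapsto \hat{u}^\eps_\sh$ is $(N+3)$-times Fr\'echet differentiable from $I_0 = [0,\eps_0)$ into $\CX_T$, with $\hat{u}^0_\sh = w_\sh = (\Phi \circ \LL)(\sh)$. In particular, this map is continuous at $\eps = 0$, so by possibly shrinking $\eps_0$ I may assume $\hat{u}^\eps_\sh \in \CN$ for all $\eps \in I_0$, where $\CN$ is the neighbourhood of $w_\sh$ from hypothesis~\ref{ass:h3} on which $F$ is $(N+3)$-times Fr\'echet differentiable.

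Next, by the chain rule for Fr\'echet derivatives (applied iteratively, i.e. Fa\`a di Bruno), the real-valued function $\varphi: I_0 \to \R$ defined by $\varphi(\eps) := F(\hat{u}^\eps_\sh)$ is $(N+3)$-times differentiable at every $\eps \in I_0$, and in particular at $\eps = 0$. Applying the classical Taylor expansion with integral remainder of order $N+3$ to $\varphi$ around $\eps = 0$ gives
\begin{equation*}
F(\hat{u}^\eps_\sh) = \varphi(0) + \sum_{m=1}^{N+2} \frac{\eps^m}{m!}\, \varphi^{(m)}(0) + \int_0^1 \frac{(1-s)^{N+2}}{(N+2)!}\, \varphi^{(N+3)}(s\eps)\, \eps^{N+3}\, \dif s,
\end{equation*}
for all $\eps \in I_0$. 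By construction $\varphi(0) = F(w_\sh) = F_\sh^{(0)}$ and $\varphi^{(m)}(0) = \partial_\eps^m\sVert[0]_{\eps=0} F(\hat{u}^\eps_\sh) = \hat{F}^{(m)}_\sh$, which matches the coefficients stated in the corollary. Defining $\hat{R}^{F;\eps;N+3}_\sh$ as the integral remainder above (equivalently, as the difference forced by~\eqref{thm:stoch_taylor_gpam:exp_F}) completes the identification.

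The only point requiring a little care is ensuring differentiability of $\varphi$: this uses that the derivatives $\hat{u}^{(m)}_\sh \in \CX_T$ are elements of the same Banach space on which $F$ is differentiable, and that the derivatives $D^{(k)}F$ restricted to $\CN$ give continuous multilinear forms on $\CX_T$ by~\eqref{thm:laplace_asymp:ass:h3:boundedness}. Neither of these is a genuine obstacle; the whole argument is essentially a bookkeeping exercise around the chain rule, which is why the corollary is advertised as an easy consequence of Theorem~\ref{thm:stoch_taylor_gpam}. No additional structural input (in particular no further information on the models $\hbz$) is needed beyond what the theorem already provides.
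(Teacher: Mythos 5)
Your proof is correct and follows the same route as the paper: apply Theorem~\ref{thm:stoch_taylor_gpam} to get $\CC^{N+3}$ differentiability of $\eps \mapsto \hat{u}^\eps_\sh$, shrink $\eps_0$ so that the image stays in the neighbourhood $\CN$ from~\ref{ass:h3}, compose with $F$ via the chain rule, and finish with Taylor's theorem on $\R$. The paper's argument is precisely this, just stated more tersely.
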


\begin{remark}
	Observe that the preceding corollary was formulated for the specific element~$\sh \in \CH$. However, $\sh$ does not enter the proof by its property~\ref{ass:h2} of being a minimiser of~$\FF$. Instead, we will need the assumptions in~\ref{ass:h3}, that is: the differentiability of~$F$ in the neighbourhood~$\CN$ of $w_\sh \equiv (\Phi \circ \LL)(\sh)$ and the bounds~\eqref{thm:laplace_asymp:ass:h3:boundedness} for its derivatives.
\end{remark}

\begin{proof}
	By~\thref{thm:stoch_taylor_gpam}, $\eps \mapsto \hat{u}^{\eps}_\sh$ is differentiable on~$I_0 = [0,\eps_0)$ and thus continuous. Therefore, with possibly smaller~$\eps_0 > 0$, we have $\hat{u}_\sh^{\eps} \in \CN$ for all $\eps \in  I_0$. Assumption~\ref{ass:h3} and~\thref{thm:stoch_taylor_gpam} then immediately imply that~$F \circ \hat{u}_\sh^{\bullet} \in \CC^{N+3}(I_0,\R)$, so the expansion in~\eqref{thm:stoch_taylor_gpam:exp_F} follows from Taylor's theorem.
\end{proof}

We collect the properties of the Taylor terms and the remainder in the corollary that follows.

\begin{corollary} \label{coro:stoch_taylor_gpam_functional:properties}
	In the setting of~\thref{coro:stoch_taylor_gpam_functional}, the following assertions hold for $m = 1, \ldots, N + 2$:
	\begin{enumerate}[label=(\roman*)]
		\item \label{coro:stoch_taylor_gpam_functional:i} The terms~$F^{(m)}_{\sh}(\bz)$ and the remainder~$R^{F;\eps;N + 3}_{\sh}(\bz)$ are continuous functions of the model~$\bz$. In addition, we have the estimates
		\begin{equation}
			\abs[1]{\hat{F}^{(m)}_{\sh}} \aac (1 + \barnorm{\hbz^{\minus}})^m, \quad
			\abs[1]{\hat{R}^{F;\eps;(N + 3)}_{\sh}} \aac_\rho \eps^{N+3}(1 + \barnorm{\hbz^{\minus}})^{N+3} 
			\label{coro:stoch_taylor_gpam_functional:remainder_estimate}
		\end{equation}
		where the second bound requires $\eps \barnorm{\hbz^{\minus}} < \rho$ for some $\rho > 0$~to be valid.
		\item \label{coro:stoch_taylor_gpam_functional:ii}
		The terms~$\hat{F}^{(m)}_{\sh} \equiv F^{(m)}_{\sh}(\hbz)$ are \emph{$m$-homogeneous}  
		w.r.t. dilation of the model~$\hbz$, that is
		\begin{equation}
			\eps^m F^{(m)}_{\sh}(\hbz) = F^{(m)}_{\sh}(\d_\eps\hbz) 
			\label{coro:stoch_taylor_gpam_functional:m_homog}
		\end{equation}
		for $\eps \in I_0$. In particular, $\eps^2 Q_\sh(\hbz) = Q_\sh(\d_\eps \hbz)$.
	\end{enumerate}	
\end{corollary}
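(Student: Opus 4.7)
The statement is essentially a corollary of Theorem~\ref{thm:stoch_taylor_gpam}: part~\ref{coro:stoch_taylor_gpam_functional:ii} exploits the multiplicative group law $\d_{\eps_1}\d_{\eps_2} = \d_{\eps_1\eps_2}$ of the dilations, while part~\ref{coro:stoch_taylor_gpam_functional:i} combines Faà di Bruno's formula with hypothesis~\ref{ass:h3} and the quantitative estimates of Theorem~\ref{thm:stoch_taylor_gpam}~\ref{thm:stoch_taylor_gpam:i}.

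\textbf{Homogeneity~\ref{coro:stoch_taylor_gpam_functional:ii}.} Since $u^{\eps'}_\sh(\bz) = \Phi(T_\sh \d_{\eps'}\bz)$ and the dilations form a one-parameter multiplicative group on~$\MM$, I first establish the key identity
\begin{equation*}
u^{\eps'}_\sh(\d_\eps \bz) \;=\; \Phi(T_\sh \d_{\eps'}\d_\eps \bz) \;=\; \Phi(T_\sh \d_{\eps\eps'}\bz) \;=\; u^{\eps\eps'}_\sh(\bz).
\end{equation*}
Applying $F$, differentiating $m$ times in $\eps'$, and evaluating at $\eps' = 0$ yields $F^{(m)}_\sh(\d_\eps\bz) = \partial_{\eps'}^m|_{\eps'=0} F(u^{\eps\eps'}_\sh(\bz)) = \eps^m\, F^{(m)}_\sh(\bz)$, which is~\eqref{coro:stoch_taylor_gpam_functional:m_homog}; the special case $m = 2$ gives $\eps^2 Q_\sh(\bz) = Q_\sh(\d_\eps\bz)$.

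\textbf{Continuity and coefficient bounds in~\ref{coro:stoch_taylor_gpam_functional:i}.} Faà di Bruno applied to $\eps \mapsto F(u^\eps_\sh(\bz))$ at $\eps = 0$ yields
\begin{equation*}
F^{(m)}_\sh(\bz) \;=\; \sum_{\pi \vdash [m]} c_\pi\, D^{(|\pi|)}F(w_\sh)\bigl[u^{(|B_1|)}_\sh(\bz), \ldots, u^{(|B_{|\pi|}|)}_\sh(\bz)\bigr],
\end{equation*}
where the sum is over set partitions of $[m]$ and $c_\pi$ are combinatorial constants. Continuity of $\bz \mapsto F^{(m)}_\sh(\bz)$ is inherited from Theorem~\ref{thm:stoch_taylor_gpam}~\ref{thm:stoch_taylor_gpam:i}. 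For the bound, polarization of the symmetric multilinear form $D^{(k)}F(w_\sh)$ together with~\eqref{thm:laplace_asymp:ass:h3:boundedness} gives $|D^{(k)}F(w_\sh)[y_1,\ldots,y_k]| \aac \prod_i \norm{y_i}_{\CX_T}$; combined with $\norm{u^{(|B_i|)}_\sh(\bz)}_{\CX_T} \aac (1+\barnorm{\bz^{\minus}})^{|B_i|}$ from Theorem~\ref{thm:stoch_taylor_gpam}~\ref{thm:stoch_taylor_gpam:i} and $\sum_i|B_i| = m$, this yields $|F^{(m)}_\sh(\bz)| \aac (1+\barnorm{\bz^{\minus}})^m$.

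\textbf{Remainder bound --- the main obstacle.} The tempting route via the integral form $R^{F;\eps;N+3}_\sh \propto \int_0^\eps(\eps-t)^{N+2} \partial_r^{N+3}|_{r=t} F(u^r_\sh(\bz))\,dt$ would require bounds on $\partial_r^j u^r_\sh(\bz)$ at $r \neq 0$, which Theorem~\ref{thm:stoch_taylor_gpam} does not supply (its estimates are formulated at $r = 0$). To sidestep this, I work directly from $R^{F;\eps;N+3}_\sh(\bz) = F(u^\eps_\sh(\bz)) - \sum_{m=0}^{N+2}\frac{\eps^m}{m!}F^{(m)}_\sh(\bz)$: setting $\Delta_\eps := u^\eps_\sh(\bz) - w_\sh$, Theorem~\ref{thm:stoch_taylor_gpam}~\ref{thm:stoch_taylor_gpam:i} gives $\norm{\Delta_\eps}_{\CX_T} \aac \eps(1+\barnorm{\bz^{\minus}})$ under $\eps\barnorm{\bz^{\minus}} < \rho$, and shrinking $\rho$ ensures $w_\sh + t\Delta_\eps \in \CN$ for all $t \in [0,1]$. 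Taylor expanding $F$ around $w_\sh$ to order $N+2$ with integral remainder, then substituting $\Delta_\eps = \sum_{m=1}^{N+2}\tfrac{\eps^m}{m!}u^{(m)}_\sh(\bz) + R^{(N+3)}_{\sh,\eps}(\bz)$ into the polynomial part and collecting powers of $\eps$: the coefficient of $\eps^m$ for $m \leq N+2$ coincides with $\tfrac{1}{m!}F^{(m)}_\sh(\bz)$ by uniqueness of Taylor coefficients, and the residual splits into (a) the integral remainder $\int_0^1 \tfrac{(1-t)^{N+2}}{(N+2)!}D^{(N+3)}F(w_\sh+t\Delta_\eps)[\Delta_\eps]^{N+3}\,dt$, controlled via~\eqref{thm:laplace_asymp:ass:h3:boundedness} by $\eps^{N+3}(1+\barnorm{\bz^{\minus}})^{N+3}$, and (b) cross-terms containing at least one factor $R^{(N+3)}_{\sh,\eps}(\bz)$ or a polynomial contribution in $\eps$ of total degree $\geq N+3$, each of which is bounded by $\eps^{N+3}(1+\barnorm{\bz^{\minus}})^{N+3}$ via Theorem~\ref{thm:stoch_taylor_gpam}~\ref{thm:stoch_taylor_gpam:i}. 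Continuity of $R^{F;\eps;N+3}_\sh(\bz)$ in $\bz$ is then immediate from that of the other terms in~\eqref{thm:stoch_taylor_gpam:exp_F}.
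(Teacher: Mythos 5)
Your proposal is correct, but part~\ref{coro:stoch_taylor_gpam_functional:ii} takes a genuinely different route from the paper's. The paper first establishes (in \thref{lem:coro_stoch_taylor}) the explicit Riordan/Fa\`a di Bruno formula
\begin{equation*}
\hat{F}^{(m)}_\sh = m! \sum_{k=1}^m \frac{1}{k!} \sum_{\bi \in S_k^m} D^{(k)}F\big|_{w_\sh}\Big(\tfrac{1}{i_1!}\hat{u}^{(i_1)}_\sh,\ldots,\tfrac{1}{i_k!}\hat{u}^{(i_k)}_\sh\Big),
\end{equation*}
then deduces the $m$-homogeneity by inserting the already-proved homogeneity $\eps^{i_j}u^{(i_j)}_\sh(\bz)=u^{(i_j)}_\sh(\d_\eps\bz)$ from~\thref{thm:stoch_taylor_gpam}\ref{thm:stoch_taylor_gpam:ii} into each slot of the multilinear form and using $\sum_j i_j = m$. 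You instead bypass the combinatorics entirely by exploiting the multiplicative group law $\d_{\eps'}\d_\eps=\d_{\eps\eps'}$ at the level of the solution map: $u^{\eps'}_\sh(\d_\eps\bz)=\Phi(T_\sh\d_{\eps\eps'}\bz)=u^{\eps\eps'}_\sh(\bz)$, and then a chain-rule rescaling at $\eps'=0$. This is more conceptual and arguably cleaner, and it also makes~\thref{thm:stoch_taylor_gpam}\ref{thm:stoch_taylor_gpam:ii} itself superfluous for the purpose of this corollary; the paper's route has the advantage of reusing a lemma that is needed elsewhere anyway.

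For part~\ref{coro:stoch_taylor_gpam_functional:i} your argument is essentially the paper's, only without the intermediate bookkeeping of~\thref{lem:coro_stoch_taylor}, whose content you reconstruct on the fly: you expand $F$ around $w_\sh$ in powers of $\Delta_\eps := u^\eps_\sh(\bz)-w_\sh$ with Lagrange integral remainder, substitute the Taylor expansion of $\Delta_\eps$ from~\thref{thm:stoch_taylor_gpam}, and collect powers of $\eps$. Your diagnosis of why one should not try the raw integral form $\int_0^\eps(\eps-t)^{N+2}\partial_r^{N+3}\big|_{r=t}F(u^r_\sh)\,\dif t$ is correct and in fact explains the design of~\thref{lem:coro_stoch_taylor}. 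The only place you are slightly informal is the claim that cross-terms of polynomial total degree $\geq N+3$ are bounded by $\eps^{N+3}(1+\barnorm{\bz^{\minus}})^{N+3}$: this is true but uses the constraint $\eps\barnorm{\bz^{\minus}}<\rho$ to reduce the exponent from $|\bi|$ to $N+3$ via $\eps^{|\bi|}(1+\barnorm{\bz^{\minus}})^{|\bi|}\leq (1+\rho)^{|\bi|-(N+3)}\eps^{N+3}(1+\barnorm{\bz^{\minus}})^{N+3}$, which is exactly where the $\rho$-dependence of the implicit constant in~\eqref{coro:stoch_taylor_gpam_functional:remainder_estimate} arises; it is worth spelling this out.
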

For the proof of the corollary, in particular part~\ref{coro:stoch_taylor_gpam_functional:i}, we need some auxiliary formulas. They have already appeared in the literature before, cf.~\cite[pp.~306/307]{inahama_kawabi}.  

\begin{lemma} \label{lem:coro_stoch_taylor}
	For $k,n \in \N$, let $S_k^n := \{\boldsymbol{i} \in \N_{\geq 1}^k: \thinspace \abs{\bi} = n\}$. Then, for all~$\eps \in I_0$ the equalities
		\begin{align}
			\hat{F}^{(m)}_\sh 
			& = 
			m! \sum_{k=1}^m \frac{1}{k!} \sum_{\bi \in S_k^m} D^{(k)}F\sVert[0]_{w_\sh} \del[3]{\frac{1}{i_1!} \hat{u}^{(i_1)}_\sh,\ldots,\frac{1}{i_k!} \hat{u}^{(i_k)}_\sh}, \label{lem:coro_stoch_taylor:eq1}\\
			\hat{R}^{F;\eps;(N+3)}_\sh 
			& =
			\sum_{k=1}^{N+2} \frac{1}{k!} \sum_{\substack{\bi \in [N+3]^k \\ \abs{\bi} \geq N+3}} D^{(k)}F\sVert[0]_{w_\sh} \del[3]{\frac{1}{i_1!} \hat{R}^{(N+3)}_{\sh,\eps}(i_1),\ldots,\hat{R}^{(N+3)}_{\sh,\eps}(i_k)} \label{lem:coro_stoch_taylor:eq2} \\
			& +
			\int_0^1 \frac{(1-s)^{N+2}}{(N+2)!} D^{(N+3)}F\del[0]{w_\sh + s \hat{R}_{\sh,\eps}^{(1)}} \del[1]{\hat{R}_{\sh,\eps}^{(1)},\ldots,\hat{R}_{\sh,\eps}^{(1)}} \dif s \notag
		\end{align}
	hold with
		\begin{equation}
		 \hat{R}^{(N+3)}_{\sh,\eps}(m) 
		 :=
		 \begin{cases}
			\eps^m \hat{u}_{\sh}^{(m)} & \quad \text{if} \ \ m \in [N+2], \\
			\hat{R}_{\sh,\eps}^{(N+3)} & \quad \text{if} \ \ m = N+3, 
		 \end{cases}
		 \label{lem:coro_stoch_taylor:eq3}
		\end{equation}
	the terms on the RHS of which are defined in~\thref{thm:stoch_taylor_gpam}. 
	In particular, we find $\hat{F}_\sh^{(1)} = DF\sVert_{w_\sh}\del[1]{\hat{u}^{(1)}_\sh}$ and
	\begin{equation} 
		\hat{Q}_\sh
		=
		DF\sVert_{w_\sh}\del[1]{\hat{u}^{(2)}_\sh}
		+ D^2F\sVert_{w_\sh}\del[1]{\hat{u}^{(1)}_\sh,\hat{u}^{(1)}_\sh}.	
	\end{equation}
\end{lemma}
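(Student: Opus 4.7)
The plan is to obtain both identities by applying two complementary calculus tools to the composition $\phi(\eps) := F(\hat{u}^\eps_\sh)$: Faà di Bruno's formula for the Taylor coefficients $\hat{F}^{(m)}_\sh$, and Taylor's theorem with integral remainder (applied to $F$ at $w_\sh$, not to $\phi$ at $0$) for the rest term $\hat{R}^{F;\eps;(N+3)}_\sh$. The requisite regularity is already in hand: by \thref{thm:stoch_taylor_gpam} the map $\eps \mapsto \hat{u}^\eps_\sh \in \CX_T$ is $(N+3)$-times Fréchet differentiable on $I_0 = [0,\eps_0)$ (we are in the setting $g \in \CC_b^{N+7}$, so $\ell = N+3$ is admissible), and by hypothesis~\ref{ass:h3} the functional $F$ is $(N+3)$-times Fréchet differentiable on a neighbourhood $\CN$ of $w_\sh = \hat{u}^0_\sh$; shrinking $\eps_0$ if necessary, as in the proof of \thref{coro:stoch_taylor_gpam_functional}, keeps $\hat{u}^\eps_\sh \in \CN$ throughout $I_0$.

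For \eqref{lem:coro_stoch_taylor:eq1} I would apply Faà di Bruno's formula in its multilinear (ordered set-partition) form to $\phi$ and evaluate at $\eps = 0$: for each $m \in [N+3]$,
\[
\phi^{(m)}(0) = \sum_{k=1}^m \frac{1}{k!} \sum_{\bi \in S_k^m} \frac{m!}{i_1!\cdots i_k!}\, D^{(k)}F\big|_{w_\sh}\bigl(\hat{u}^{(i_1)}_\sh, \ldots, \hat{u}^{(i_k)}_\sh\bigr),
\]
where the multinomial coefficient counts the ordered set-partitions of $[m]$ with prescribed block sizes $(i_1,\ldots,i_k)$, the factor $1/k!$ accounts for the symmetry of $D^{(k)}F$ in its slots, and we used $\partial_\eps^i\big|_{\eps=0}\hat{u}^\eps_\sh = \hat{u}^{(i)}_\sh$. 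Absorbing each $1/i_j!$ into the $j$-th argument of $D^{(k)}F$ by multilinearity reproduces \eqref{lem:coro_stoch_taylor:eq1} verbatim; the special-case identities for $\hat{F}^{(1)}_\sh$ and $\hat{Q}_\sh = \hat{F}^{(2)}_\sh$ then follow by direct inspection of $S_1^1, S_1^2$, and $S_2^2$. For \eqref{lem:coro_stoch_taylor:eq2}, I would set $y := \hat{u}^\eps_\sh - w_\sh = \hat{R}^{(1)}_{\sh,\eps}$ and apply Taylor's theorem to $F$ at $w_\sh$ with integral remainder of order $N+3$:
\[
F(\hat{u}^\eps_\sh) = F(w_\sh) + \sum_{k=1}^{N+2} \frac{1}{k!}\, D^{(k)}F\big|_{w_\sh}(y^{\otimes k}) + \int_0^1 \frac{(1-s)^{N+2}}{(N+2)!}\, D^{(N+3)}F\big|_{w_\sh + sy}(y^{\otimes(N+3)})\, ds,
\]
which rewritten via $y = \hat{R}^{(1)}_{\sh,\eps}$ reproduces the integral line of \eqref{lem:coro_stoch_taylor:eq2}. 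Substituting the expansion \eqref{thm:stoch_taylor_gpam:exp} of $y$ into each $D^{(k)}F\big|_{w_\sh}(y^{\otimes k})$ and expanding multilinearly over $\bi \in [N+3]^k$ (using the bookkeeping convention \eqref{lem:coro_stoch_taylor:eq3}) gives a finite multi-index sum.

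The finishing move is to split that sum according to whether $|\bi| \leq N+2$ (with all entries then in $[N+2]$) or $|\bi| \geq N+3$. The former part, after regrouping by $m := |\bi|$ and using \eqref{lem:coro_stoch_taylor:eq1} in reverse, exactly reconstitutes $\sum_{m=1}^{N+2} \frac{\eps^m}{m!} \hat{F}^{(m)}_\sh$, which cancels against the corresponding subtraction in the definition of $\hat{R}^{F;\eps;(N+3)}_\sh$; the latter part survives as the first double sum in \eqref{lem:coro_stoch_taylor:eq2}. The main obstacle I anticipate is exactly this combinatorial matching: one must check that the decomposition $[N+3]^k = \bigsqcup_{m=1}^{N+2} S_k^m \,\sqcup\, \{\bi \in [N+3]^k : |\bi| \geq N+3\}$ produces coefficients consistent with both the multinomial counts appearing in Faà di Bruno and with the slightly asymmetric factorial bookkeeping in \eqref{lem:coro_stoch_taylor:eq3} for the slot $i_j = N+3$. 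Once this matching is verified term-by-term, no further analytic input is needed and the identities hold pointwise on $I_0$.
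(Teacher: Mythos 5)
Your proposal is correct and follows essentially the same route as the paper: both derive \eqref{lem:coro_stoch_taylor:eq2} by Taylor-expanding $F$ at $w_\sh$ to order $N+3$ with integral remainder, substituting the expansion of $\hat{R}^{(1)}_{\sh,\eps}$ from \thref{thm:stoch_taylor_gpam}, expanding multilinearly over $\bi \in [N+3]^k$, and splitting into $\abs{\bi}\leq N+2$ versus $\abs{\bi}\geq N+3$, with Faà di Bruno (Riordan's form) identifying the former with the $\hat{F}^{(m)}_\sh$. The only difference is presentational: you derive \eqref{lem:coro_stoch_taylor:eq1} up front and then use it in reverse, whereas the paper recovers it from the same coefficient comparison inside the Taylor expansion.
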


\begin{proof}
	Recall that~$\hat{u}^{\eps}_\sh \in \CN$ for all $\eps \in I_0$. By hypothesis~\ref{ass:h3}, $F$ is $C^{N+3}$ in $\CN$, so we expand it around~$w_\sh$ to get 
		\begin{equation}
			F(\hat{u}_\sh^\eps) - F(w_\sh)
			=
			\sum_{k=1}^{N+2} \frac{1}{k!} D^{(k)}F(w_\sh)\sbr[1]{R^{(1)}_{\sh,\eps}}^k 
			+
			\int_0^1 \frac{(1-s)^{N+2}}{(N+2)!} D^{(N+3)}F(w_\sh + s \hat{R}_{\sh,\eps}^{(1)}) \sbr[1]{\hat{R}_{\sh,\eps}^{(1)}}^{N+3} \dif s 
			\label{lem:coro_stoch_taylor:pf_aux0}
		\end{equation}
	where 
		\begin{equation*}
			\hat{R}_{\sh,\eps}^{(1)}
			=
			\hat{u}_\sh^\eps - w_\sh
			=
			\sum_{\ell=1}^{N+2} \frac{\eps^\ell}{\ell!} \hat{u}^{(\ell)}_\sh + \hat{R}_{\sh,\eps}^{(N+3)}
			=
			\sum_{\ell=1}^{N+3} \frac{1}{\ell!} \hat{R}_{\sh,\eps}^{(N+3)}(\ell)
		\end{equation*}
	by~\thref{thm:stoch_taylor_gpam} and eq.~\eqref{lem:coro_stoch_taylor:eq3}. This identity enables us to write
		\begin{equation}
			\sum_{k=1}^{N+2} \frac{1}{k!} D^{(k)}F\sVert[0]_{w_\sh}\sbr[1]{R^{(1)}_{\sh,\eps}}^k 
			=
			\sum_{k=1}^{N+2} \frac{1}{k!} \sum_{\bi \in [N+3]^k} D^{(k)}F\sVert[0]_{w_\sh}\sbr[3]{\frac{1}{i_1!}\hat{R}_{\sh,\eps}^{(N+3)}(i_1),\ldots,\frac{1}{i_k!}\hat{R}_{\sh,\eps}^{(N+3)}(i_k)}.
			\label{lem:coro_stoch_taylor:pf_aux}
		\end{equation}
	Next, we observe the trivial equalities
		\begin{equs}
			\thinspace
			[N+3]^k 
			& =
			\{ \bi \in [N+3]^k: \ \abs{\bi} \leq N+2 \} \ \sqcup \ \{ \bi \in [N+3]^k: \ \abs{\bi} \geq N+3 \} \\
			& =
			\bigcup_{m=1}^{N+2} S_k^m \ \sqcup \ \{ \bi \in [N+3]^k: \ \abs{\bi} \geq N+3 \}
		\end{equs}
	by which we may split up the sum in eq.~\eqref{lem:coro_stoch_taylor:pf_aux}. As for the first part involving the union of the~$S_k^m$'s, that leads to the sum
		\begin{align}
			\thinspace &
			\sum_{k=1}^{N+2} \frac{1}{k!} \sum_{m=1}^{N+2} \sum_{\bi \in S_k^m} D^{(k)}F\sVert[0]_{w_\sh}\sbr[3]{\frac{\eps^{i_1}}{i_1!}\hat{u}_{\sh}^{(i_1)},\ldots,\frac{\eps^{i_k}}{i_k!}\hat{u}_{\sh}^{(i_k)}} 
			= 
			\sum_{m=1}^{N+2} \eps^m \sum_{k=1}^{m} \frac{1}{k!}  \sum_{\bi \in S_k^m} D^{(k)}F\sVert[0]_{w_\sh}\sbr[3]{\frac{\hat{u}_{\sh}^{(i_1)}}{i_1!},\ldots,\frac{\hat{u}_{\sh}^{(i_k)}}{i_k!}} \notag \\
			= \ &  
			\sum_{m=1}^{N+2} \frac{\eps^m}{m!} \partial_\eps^m\sVert[1]_{\eps=0} (F \circ \hat{u}^\eps_\sh)
			=
			\sum_{m=1}^{N+2} \frac{\eps^m}{m!} \hat{F}_\sh^{(m)}
			\label{lem:coro_stoch_taylor:pf_aux_1}
		\end{align}
	In the first equality, we have just used the fact that~$S_k^m = \emptyset$ for~$k > m$ as well as the multi-linearity of~$D^{(k)}F(w_\sh)$. The second equality is due to Riordan's Formula, a version of the well-known Faa di Bruno formula (see the review paper by Johnson~\cite{faa_di_bruno}), which states that
		\begin{equation}
			\partial_\eps^m (f \circ g)(\eps) = m! \sum_{k=1}^m \frac{1}{k!} \sum_{\bi \in S_k^m} f^{(k)}(g(\eps))\sbr[3]{\frac{1}{i_1!} g^{(i_1)},\ldots,\frac{1}{i_k!} g^{(i_k)}}
			\label{eq:riordans_formula}
		\end{equation}
	for $f$ and $g$ sufficiently smooth.
	
	Comparing prefactors of~$\eps^m$ in~\eqref{lem:coro_stoch_taylor:pf_aux_1}, the claim for $\hat{F}^{(m)}_\sh$ in~\eqref{lem:coro_stoch_taylor:eq1} follows.
	A fortiori, that implies the claim~\eqref{lem:coro_stoch_taylor:eq2} for $\hat{R}^{F;\eps;(N+3)}_\sh$ by comparing the terms in~\eqref{lem:coro_stoch_taylor:pf_aux0} to those in~\eqref{thm:stoch_taylor_gpam:exp_F}. 
\end{proof}

\begin{proof}[of~\thref{coro:stoch_taylor_gpam_functional:properties}]
	Assertion~\ref{coro:stoch_taylor_gpam_functional:ii} is a direct consequence of~\thref{thm:stoch_taylor_gpam}\ref{thm:stoch_taylor_gpam:ii} combined with~\eqref{lem:coro_stoch_taylor:eq1}.
	
	We turn to assertion~\ref{coro:stoch_taylor_gpam_functional:i}. In conjunction with assumption~\ref{ass:h3}, eq.~\eqref{thm:laplace_asymp:ass:h3:boundedness}, the explicit representation for~$\hat{F}_\sh^{(m)}$ derived in~\thref{lem:coro_stoch_taylor} allows for the estimate
	\begin{equs}[][coro:stoch_taylor_gpam_functional:properties:pf_aux1]
		\abs[1]{\hat{F}^{(m)}_{\sh}}
		&
		\aac_m 
		\sum_{k=1}^m M_k \sum_{\bi \in S_k^m} \prod_{\ell=1}^k  \norm[1]{\hat{u}^{(i_\ell)}_\sh}_{\CX_T}
		\aac_T 
		\sum_{k=1}^m M_k \#[S_k^m] (1 + \barnorm{\hbz^{\minus}})^m
		\aac_m
		(1 + \barnorm{\hbz^{\minus}})^m.
	\end{equs}
	In the penultimate estimate, we have applied the estimates from~\thref{thm:stoch_taylor_gpam}\ref{thm:stoch_taylor_gpam:i}. As for the remainder, note that $\{w_\sh + s \hat{R}_{\sh,\eps}^{(1)}: s \in [0,1]\} \subseteq \CN$ for $\eps \in I_0$, for it is the line segment connecting~$w_\sh$ and~$\hat{u}^{\eps}_\sh$. Hence, the integral term in~\eqref{lem:coro_stoch_taylor:eq2} may be estimated analogously to~\eqref{coro:stoch_taylor_gpam_functional:properties:pf_aux1}. The same is true for the sum in~\eqref{lem:coro_stoch_taylor:eq2}, except that one uses the additional estimate
		\begin{equation*}
			\eps^{\abs[0]{\bi}} (1 + \barnorm{\hbz^{\minus}})^{\abs[0]{\bi}} 
			\leq
			(1 + \rho)^{\abs[0]{\bi} - (N+3)} \eps^{N+3} (1 + \barnorm{\hbz^{\minus}})^{N+3} 
		\end{equation*}
	for multi-indices $\bi$ with $\abs[0]{\bi} \geq N+3$, as readily implied by $\eps \in [0,1]$ and $\eps \barnorm{\hbz^{\minus}} < \rho$.
\end{proof}	
In the remainder of this section, we will prepare the proof of~\thref{thm:stoch_taylor_gpam} to be given in subsection~\ref{sec:pf_stoch_taylor}.  If the reader is willing to momentarily believe the validity of that theorem, she may want to proceed with section~\ref{sec:local_analysis} on page~\pageref{sec:local_analysis} in a first reading. In that section, we use the corollaries~\ref{coro:stoch_taylor_gpam_functional} and~\ref{coro:stoch_taylor_gpam_functional:properties} to continue the proof of the precise Laplace asymptotics as claimed in~\thref{thm:laplace_asymp}.

\subsection{The abstract fixed-point problem} \label{sec:abstr_fp_prob}

Consider the following abstract fixed-point problems for $\bz \in \MM$ and $\gr{\bz} \in \gr{\MM}$:

\begin{align}
	U & = \CP^{\bz} \del[1]{G (U)\<wn>} + \TT Pu_0 &&  \hspace{-7em} \rightarrowtriangle \quad U \in \DD^{\gamma,\eta}_\CU(\bz), \label{eq:fp_normal} \\
	\gr{U} & = \CP^{\gr{\bz}}  \del[1]{G(\gr{U}) [\<wn> + \<cm>]} + \TT Pu_0 &&  \hspace{-7em}\rightarrowtriangle \quad \gr{U} \in \DD^{\gamma,\eta}_{\gr{\CU}}(\gr{\bz}), \label{eq:fp_extended} \\
	\gr{U^\eps} & = \CP^{\gr{\bz}} \del[1]{G(\gr{U^\eps}) [\eps \<wn> + \<cm>]} + \TT Pu_0 &&  \hspace{-7em}\rightarrowtriangle \quad \gr{U^\eps} \in \DD^{\gamma,\eta}_{\gr{\CU}}(\gr{\bz}), \label{eq:fp_extended_eps} 
\end{align}
\vspace{-2em}
\begin{remark} \label{rmk:dep_fp_model}
	Note that the model enters these equations implicitly via the abstract convolution kernels $\CP^{\bz}$ and $\CP^{\gr{\bz}}$, respectively, and thereby characterises the right space of modelled distributions we seek solutions in.
	Our main case of interest is $\gr{\bz} := E_{\sh} \hbz \in \gr{\MM}$ where $\sh \in \CH$ is the minimiser from assumption~\ref{ass:h2} and $\hbz \in \MM$ the BPHZ model. For $h \in \CH$ we suppress the dependence on the specific model~$\bz \in \MM$ and simply write
	\begin{equation*}
		\CP^h = \CP(T_h\bz;\thinspace \cdot \thinspace), \quad \CP^{e_h} = \CP(E_h\bz;\thinspace \cdot \thinspace).
	\end{equation*}
\end{remark}
The solvability of the first two fixed-point equations was proved by Cannizzaro, Friz, and Gassiat in~\cite[Prop.~3.23 \& 3.25]{cfg}. 
Re\-gar\-ding~\eqref{eq:fp_extended_eps}, we define the abstract, parameter-dependent FP map 
\begin{equation}
	\CM^{\gr{\bz}}: I \x \DD^{\gamma,\eta}_{\gr{\CU}}(\gr{\bz}) \to \DD^{\gamma,\eta}_{\gr{\CU}}(\gr{\bz}), \quad I := [0,1],
\end{equation} 
for $\gr{\bz} \in \gr{\MM}$ by 
\begin{equation}
	\CM^{\gr{\bz}}(\eps,\gr{Y}) 
	:= \CP^{\gr{\bz}}\del[1]{G^\eps(\gr{Y})} + \TT Pu_0,
	\quad
	G^\eps(\gr{Y}) := G(\gr{Y}) [\eps\<wn> + \<cm>].
	\label{eq:def_fp_map}
\end{equation}
Next, we revisit and amend some variants of the notion of~\emph{local Lipschitz continuity} as introduced by Hairer in~\cite[sec.~7.3]{hairer_rs} and slightly rephrased by Sch\"onbauer~\cite[Def.~3.18]{schoenbauer}.

\begin{definition} \label{def:unif_strong_local_Lipschitz}
Let $\gr{\CV}, \gr{\tilde{\CV}}$ be two sectors\footnote{See \cite[Def. 2.5]{hairer_rs}. In addtion, the domain~$D$ is given as~$D := I \x \T^2$ for some interval~$I \subseteq \R^+$, cf.~\thref{rmk:domain_D} in the appendix.} in~$\gr{\TT}$ and fix~$\gamma \geq \bar{\gamma} > 0$ as well as~$\eta, \bar{\eta} \in \R$.
	\begin{itemize}
		\item Given a model~$\gr{\bz} \in \gr{\MM}$, we say that a map~$F: \DD^{\gamma,\eta}_{\gr{\CV}}(\gr{\bz}) \to \DD^{\bar{\gamma},\bar{\eta}}_{\gr{\tilde{\CV}}}(\gr{\bz})$ is \emph{locally Lipschitz continuous} if for any compact set $K \subset D$ and any $R > 0$ one has the bound
			\begin{equation}
				\threebars F(\gr{Y}) - F(\gr{\bar{Y}}) \threebars_{\bar{\gamma},\bar{\eta},K} 
				\aac \threebars \gr{Y} - \gr{\bar{Y}} \threebars_{\gamma,\eta,K} 
				\label{def:unif_strong_local_Lipschitz:eq1}
			\end{equation}
		uniformly over all $\gr{Y}, \gr{\bar{Y}} \in \DD^{\gamma,\eta}_{\gr{\CV}}(\gr{\bz})$ with $\threebars \gr{Y} \threebars_{\gamma,\eta, K} \vee \threebars \gr{\bar{Y}} \threebars_{\gamma,\eta, K} \leq R$.
		\item Let $F^{\gr{\bz}}: \DD^{\gamma,\eta}_{\gr{\CV}}(\gr{\bz}) \to \DD^{\bar{\gamma},\bar{\eta}}_{\gr{\tilde{\CV}}}(\gr{\bz})$ be locally Lipschitz for any~$\gr{\bz} \in \gr{\MM}$. We say that $F = (F^{\gr{\bz}})_{\gr{\bz} \in \gr{\MM}}$ is \emph{stronly locally Lipschitz continuous} if for any $\gr{\bz} \in \gr{\MM}$ there exists a neighbourhood $\gr{\boldsymbol{\CB}}$ of $\gr{\bz}$ in $\gr{\MM}$ such that for any compact set $K \subseteq D$ and $R > 0$ one has the bound
			\begin{equation}
				\threebars F^{\gr{\bz}}(\gr{Y}) ; F^{\gr{\bbz}}(\gr{\bar{Y}}) \threebars_{\bar{\gamma},\bar{\eta},K} 
				\aac 
				\threebars \gr{Y} \thinspace ; \gr{\bar{Y}} \threebars_{\gamma,\eta,K} 
				+ 
				\threebars \gr{\bz} \thinspace ; \gr{\bbz} \threebars_{\gamma,\bar{K}}
				\label{def:unif_strong_local_Lipschitz:eq2}
			\end{equation}
		uniformly over all $\gr{\bbz} \in \gr{\boldsymbol{\CB}}$ and $\gr{Y} \in \DD^{\gamma,\eta}_{\gr{\CV}}(\gr{\bz}), \gr{\bar{Y}} \in \DD^{\gamma,\eta}_{\gr{\CV}}(\gr{\bbz})$ such that $\threebars \gr{Y} \threebars_{\gamma,\eta, K,\gr{\bz}} \vee \threebars \gr{\bar{Y}} \threebars_{\gamma,\eta, K, \gr{\bbz}} \leq R$. Here, $\bar{K}$ denotes the $1$-fattening of $K$, i.e. $\bar{K} = \{z \in D: \thinspace \operatorname{dist}(z,K) \leq 1\}$.
		\item A family $F = (F^{\eps})_{\eps \in I}$ of stronly locally Lipschitz maps $F^{\eps} = (F^{\eps,\gr{\bz}})_{\gr{\bz} \in \gr{\MM}}$ is called \emph{uniformly stronly locally Lipschitz continuous} if the bound~\eqref{def:unif_strong_local_Lipschitz:eq2} holds uniformly in $\eps$ for each $F^{\eps}$.
	\end{itemize}
\end{definition}
The following lemma characterises the Lipschitz properties of the functions~$(G^\eps)_{\eps \in I}$ introduced in~\eqref{eq:def_fp_map}.

\begin{lemma} \label{prop:a_priori_estimate}
	The family $(G^\eps)_{\eps \in I}$ is uniformly strongly locally Lipschitz. 
\end{lemma}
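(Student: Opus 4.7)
My plan is to reduce the statement to two well-known building blocks for modelled distributions, and then note that the $\eps$-dependence enters as a scalar multiple bounded by $1$, which immediately yields the uniformity.

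First, I would decompose the map as
\begin{equation*}
	G^\eps(\gr{Y}) \;=\; \eps\,G(\gr{Y})\cdot\<wn> \;+\; G(\gr{Y})\cdot\<cm>,
\end{equation*}
and treat the two summands separately. For the first factor in each summand, the composition $\gr{Y}\mapsto G(\gr{Y})$ is strongly locally Lipschitz from $\DD^{\gamma,\eta}_{\gr{\CU}}(\gr{\bz})$ to a modelled-distribution space with the same regularity indices, since $g\in\CC_b^{N+7}$ is amply smooth; this is the standard lift theorem for composition with smooth functions (Hairer, \cite[Prop.~6.13]{hairer_rs}, and its adaptation to the extended gPAM regularity structure in Cannizzaro--Friz--Gassiat, \cite[Prop.~3.20]{cfg}). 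In particular, the strong local Lipschitz bound \eqref{def:unif_strong_local_Lipschitz:eq2} holds in a neighbourhood $\gr{\boldsymbol{\CB}}$ of any $\gr{\bz}\in\gr{\MM}$.

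Next, multiplication by a fixed symbol $\tau\in\{\<wn>,\<cm>\}$ --- each of which has a definite negative homogeneity recorded in Appendix~\ref{app:rs_background} --- is the abstract product of a modelled distribution with a constant element of the regularity structure. This is a bounded linear operation that drops the regularity and weight by $|\tau|$, and it is strongly locally Lipschitz (and indeed affine) as a map between the appropriate $\DD^{\bar\gamma,\bar\eta}_{\gr{\CV}\cdot\tau}$-spaces, by the abstract multiplication theorem (Hairer, \cite[Thm.~4.7, Prop.~6.12]{hairer_rs}). Composing the two operations preserves the strong local Lipschitz property, because in \eqref{def:unif_strong_local_Lipschitz:eq2} the bound for the outer operation applied to $G(\gr Y)$ versus $G(\gr{\bar Y})$ feeds the control of $G$ back in, and a model-difference term $\triple{\gr{\bz};\gr{\bbz}}_{\gamma,\bar K}$ is picked up only from the composition step (since the multiplication step involves no model-dependent reconstruction beyond the algebraic product).

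Finally, adding the two resulting modelled distributions and multiplying the first by the scalar $\eps\in I=[0,1]$ preserves the strong local Lipschitz bound, the scalar $\eps$ only replacing the constant in front by $\eps$ times that constant; replacing $\eps$ by $1$ in the worst case yields a Lipschitz constant independent of $\eps\in I$. This gives the \emph{uniform} strong local Lipschitz property as in the last bullet of Definition~\ref{def:unif_strong_local_Lipschitz}. The only genuinely delicate point --- and hence the main obstacle --- is bookkeeping the homogeneities: one must check that the sectors and exponents $(\bar\gamma,\bar\eta)$ produced by the product $G(\gr Y)\cdot[\eps\<wn>+\<cm>]$ are admissible in the sense required by the abstract integration operator $\CP^{\gr{\bz}}$ appearing in \eqref{eq:def_fp_map}, so that $G^\eps$ indeed maps back into $\DD^{\gamma,\eta}_{\gr{\CU}}(\gr{\bz})$ after convolution. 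Once this is verified (using the standard scheme of the gPAM regularity structure), the lemma follows.
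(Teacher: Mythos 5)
Your argument is essentially the paper's proof: treat $G^\eps(\gr{Y})$ as the composition of the strongly locally Lipschitz map $\gr{Y}\mapsto G(\gr{Y})$ (from \cite[Prop.~6.13]{hairer_rs} and~\cite[Prop.~3.11]{hairer_pardoux}) with multiplication by the constant modelled distribution $f_\eps := \eps\<wn>+\<cm>$, invoking the multiplication theorem~\cite[Prop.~6.12]{hairer_rs}, and note that $\eps\in[0,1]$ makes the constants uniform. The one place where you gloss over the decisive point: when one applies \cite[Prop.~6.12]{hairer_rs} across two models $\gr{\bz},\gr{\bbz}$ one a priori incurs a term $\threebars f_\eps^{\gr{\bz}};f_\eps^{\gr{\bbz}}\threebars_{\zeta,\zeta,K}$, and the reason this vanishes is that $\reg{\<wn>}=\reg{\<cm>}=\min\gr{\CA}$ so the structure group $\gr{\CG}$ acts trivially on these symbols, forcing $\gr{\Gamma}f_\eps=f_\eps=\gr{\bar\Gamma}f_\eps$ and hence $\threebars f_\eps^{\gr{\bz}};f_\eps^{\gr{\bbz}}\threebars=0$. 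Your parenthetical ``the multiplication step involves no model-dependent reconstruction beyond the algebraic product'' is the right intuition but should be replaced by this explicit degree argument; once that is done, the argument closes exactly as the paper does (and the homogeneity bookkeeping you flag as the ``main obstacle'' is, in fact, routine --- the genuinely delicate ingredient was precisely the structure-group observation).
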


\begin{proof}
	Let $\alpha := \reg{\<wn>} = -1 - \kappa$. For any $\zeta > 0$, we have $\<wn>, \<cm> \in \DD^{\zeta,\zeta}_{\gr{\CV}}$ where $\gr{\CV} = \scal{\<wn>,\<cm>}$ is a sector of reg.~$\alpha$. Then the same is true for $f_\eps := \eps \<wn> + \<cm>$ for any $\eps \in I$. 
	Recall that for $(\gr{\bz}, \gr{Y}) \in \gr{\MM} \ltimes \DD^{\gamma,\eta}_{\gr{\CU}}$ we have $G(\gr{Y}) \in \DD^{\gamma,\eta}_{\gr{\CU}}(\gr{\bz})$ by \cite[Prop.~6.13]{hairer_rs} because the sector $\gr{\CU}$ is function-like. Since the pair $(\gr{\CV},\gr{\CU})$ is $\bar{\gamma}$-regular (see \cite[Def. 4.6]{hairer_rs}) with $\bar{\gamma} := \gamma + \alpha$, with $\bar{\eta} := \eta + \alpha$ it follows from \cite[Prop.\thinspace6.12]{hairer_rs} that $G^\eps(\DD^{\gamma,\eta}_{\gr{\CU}}(\gr{\bz})) \subseteq \DD^{\bar{\gamma},\bar{\eta}}(\gr{\bz})$ for each $\gr{\bz} \in \gr{\MM}$.
	
	In order to check the estimate~\eqref{def:unif_strong_local_Lipschitz:eq2}, let $\gr{\bz}, \gr{\bbz}$, and $\gr{\bar{Y}}$ be as required. Note that~$\a = \min \gr{\CA}$, so by def. of the structure group~$\gr{\CG}$, we have $\gr{\Gamma} f_\eps = f_\eps = \gr{\bar{\Gamma}} f_\eps$ for any $\gr{\Gamma},\gr{\bar{\Gamma}} \in \gr{\CG}$. 
	Hence $\threebars f_\eps^{\gr{\bz}};f_\eps^{\gr{\bbz}} \threebars_{\zeta,\zeta;K} = 0$. Again, \cite[Prop.\thinspace6.12]{hairer_rs} then implies that
	\begin{equs}[][prop:a_priori_estimate:pf:eq]
		\threebars G^{\eps,\gr{\bz}}(\gr{Y});G^{\eps,\gr{\bbz}}(\gr{\bar{Y}}) \threebars_{\bar{\gamma},\bar{\eta},K}
		&
		\aac 
		\threebars G^{\gr{\bz}}(\gr{Y});G^{\gr{\bbz}}(\gr{\bar{Y}}) \threebars_{\gamma,\eta,K} 
		+
		\threebars f^{\gr{\bz}}_\eps;f^{\gr{\bbz}}_\eps \threebars_{\zeta,\zeta, K}
		+
		\threebars \gr{\bz}; \gr{\bbz} \threebars_{\gamma,\bar{K}} \\
		&
		\aac_{K,R} 
		\threebars \gr{Y},\gr{\bar{Y}} \threebars_{\gamma,\eta,K}
		+
		\threebars \gr{\bz}; \gr{\bbz} \threebars_{\gamma;\bar{K}}, 
	\end{equs}
	where the last estimate is a consequence of \cite[Prop.~3.11]{hairer_pardoux} and the boundedness assumption on~$\gr{Y}, \gr{\bar{Y}}$. 
\end{proof}

We can now answer the question of solvability of \eqref{eq:fp_extended_eps} affirmatively. The following corollary is a simple consequence of~\cite[Thm.~7.8]{hairer_rs} which applies by the previous lemma.

\begin{corollary}\label{coro:solvability:fp_prob}
	For each $\gr{\bz} \in \gr{\MM}$ there exists a time $\bar{T} > 0$ such that for each $\eps \in I$ there exists a unique fixed point $\gr{U^\eps}(\gr{\bz}) \in \DD^{\gamma,\eta;\bar{T}}_{\gr{\CU}}(\gr{\bz})$ to \eqref{eq:fp_extended_eps}. 
	In addition, the map 
	\begin{equation}
		\fI_{\gr{\bz}}: I \to \DD^{\gamma,\eta;\bar{T}}_{\gr{\CU}}(\gr{\bz}), \quad
		\fI_{\gr{\bz}}(\eps) := \gr{U^\eps}(\gr{\bz})
		\label{eq:fI}
	\end{equation}
	satisfies the bound
	\begin{equation}
	\threebars \fI_{\gr{\bz}}(\eps);\fI_{\gr{\bbz}}(\eps) \threebars_{\gamma,\eta,K} \aac_{\bar{T}} \threebars \gr{\bz}; \gr{\bbz} \threebars_{\gamma;\bar{K}}, 
	\label{prop:a_priori_estimate:eq}
	\end{equation} 
	uniformly over all $\eps \in I$ and all $\gr{\bz}, \gr{\bbz} \in \gr{\MM}$ with $\threebars \gr{\bz} \threebars_{\gamma;\fK} \vee \threebars \gr{\bbz} \threebars_{\gamma;\fK} \leq R$, for each $R > 0$ and all compact sets $K \subseteq (0,\bar{T}) \x \T^2$. In particular,~$\fI_{\gr{\bz}}$ depends continuously on~$\gr{\bz}$.
\end{corollary}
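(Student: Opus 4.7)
The plan is to apply Hairer's abstract fixed-point theorem~\cite[Thm.~7.8]{hairer_rs} to the map~$\CM^{\gr{\bz}}(\eps,\cdot)$ defined in~\eqref{eq:def_fp_map}, treating~$\eps$ as a parameter. The decisive input is~\thref{prop:a_priori_estimate}: since the locally Lipschitz bound on~$G^\eps$ is \emph{uniform} in~$\eps \in I$, so is the contraction estimate one derives for~$\CM^{\gr{\bz}}$, and a single short time~$\bar{T}$ therefore works for the whole family.

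First I would combine~\thref{prop:a_priori_estimate} with the Schauder-type estimate for~$\CP^{\gr{\bz}}$ from~\cite[Thm.~7.1]{hairer_rs}, which gains two degrees of regularity and, when restricted to~$[0,T] \x \T^2$, picks up a factor~$T^\theta$ for some~$\theta > 0$. This yields
\begin{equation*}
\threebars \CM^{\gr{\bz}}(\eps,\gr{Y}) - \CM^{\gr{\bz}}(\eps,\gr{\bar{Y}}) \threebars_{\gamma,\eta;T}
\aac T^\theta \threebars \gr{Y} - \gr{\bar{Y}} \threebars_{\gamma,\eta;T}
\end{equation*}
uniformly over~$\eps \in I$ and~$\gr{Y},\gr{\bar{Y}}$ in a fixed ball of~$\DD^{\gamma,\eta;T}_{\gr{\CU}}(\gr{\bz})$. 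Choosing~$\bar{T} = \bar{T}(\gr{\bz})$ small enough that~$\bar{T}^\theta < \nicefrac{1}{2}$ and that~$\CM^{\gr{\bz}}(\eps,\cdot)$ maps the ball into itself, Banach's contraction principle produces a unique~$\gr{U^\eps}(\gr{\bz}) \in \DD^{\gamma,\eta;\bar{T}}_{\gr{\CU}}(\gr{\bz})$ solving~\eqref{eq:fp_extended_eps}, with~$\bar{T}$ independent of~$\eps$.

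For the continuity bound~\eqref{prop:a_priori_estimate:eq}, I would compare~$\fI_{\gr{\bz}}(\eps)$ and~$\fI_{\gr{\bbz}}(\eps)$ via the fixed-point identities and invoke the strong locally Lipschitz estimate~\eqref{def:unif_strong_local_Lipschitz:eq2} for~$G^\eps$, together with the model-stability of~$\CP$ (which loses an additional factor~$\bar{T}^\theta$). The resulting bound is of the form
\begin{equation*}
\threebars \fI_{\gr{\bz}}(\eps); \fI_{\gr{\bbz}}(\eps) \threebars_{\gamma,\eta,K}
\aac \bar{T}^\theta \del[1]{ \threebars \fI_{\gr{\bz}}(\eps); \fI_{\gr{\bbz}}(\eps) \threebars_{\gamma,\eta,K} + \threebars \gr{\bz}; \gr{\bbz} \threebars_{\gamma;\bar{K}} },
\end{equation*}
and after shrinking~$\bar{T}$ once more, the first term on the RHS is absorbed into the LHS. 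All implicit constants depend only on~$R$ and~$\bar{T}$ by the \emph{uniform} strong local Lipschitz property, so the estimate is uniform in~$\eps \in I$ and in models~$\gr{\bbz}$ in a neighbourhood of~$\gr{\bz}$.

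The only real subtlety is exactly this uniformity in~$\eps$: a priori one might worry that the driver~$\eps \<wn> + \<cm>$ introduces an~$\eps$-dependent constant into the Lipschitz bound. This is ruled out by the observation made in the proof of~\thref{prop:a_priori_estimate} that~$\threebars f_\eps^{\gr{\bz}}; f_\eps^{\gr{\bbz}} \threebars_{\zeta,\zeta;K} = 0$ for every~$\eps \in I$, together with the fact that~$\eps \in [0,1]$ keeps~$f_\eps$ in a bounded set. Consequently, the parameter-dependent assertion reduces to the parameter-free fixed-point theorem, with all estimates inherited uniformly across~$\eps$.
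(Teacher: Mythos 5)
Your proof is correct and follows the same route as the paper, which simply invokes~\cite[Thm.~7.8]{hairer_rs} and notes that the requisite uniform strong local Lipschitz property of~$(G^\eps)_{\eps\in I}$ was established in~\thref{prop:a_priori_estimate}; you have essentially unpacked that citation into the contraction and model-stability estimates that prove it. In particular, you correctly identify the decisive observation (that~$\threebars f_\eps^{\gr{\bz}};f_\eps^{\gr{\bbz}}\threebars_{\zeta,\zeta;K}=0$ makes all bounds $\eps$-uniform) and correctly avoid the Implicit Function Theorem, which the paper's subsequent remark explicitly warns cannot deliver the model-continuity because of the fibred structure of~$\gr{\MM}\ltimes\DD^{\gamma,\eta}_{\gr{\CU}}$.
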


\begin{remark}
	If it is clear from the context what the underlying model $\gr{\bz}$ is, we will just write $\gr{U^\eps}$ and~$\fI$, respectively. 
\end{remark}

\begin{remark}
	One might be inclined to think that continuity of $\fI_{\gr{\bz}}$ in $\gr{\bz}$ is automatic from the Implicit Function Theorem (IFT). We want to emphasise that this is \emph{not} the case due to the structure of the \enquote{fibred} space $\gr{\MM} \ltimes \DD^{\gamma,\eta}_{\gr{\CU}}$~(see~\thref{figure:model_space_fibres}). To the best of the authors' knowledge,  the \emph{geometry} of it has not yet been fully understood. However, we point to the work of Ghani Varzaneh, Riedel, and Scheutzow~\cite{ghani_riedel_scheutzow} who introduce the notion of a \emph{measurable field of Banach spaces} to characterise the (in spirit related) space of controlled rough paths.
	\begin{figure}[h]
		\centering
		\begin{tikzpicture}[scale=0.25]
		\node  (0) at (1.5, -17) {};
		\node  (1) at (11, -15) {};
		\node  (2) at (12.75, -10.5) {};
		\node  (3) at (2, -11.25) {};
		\node  (4) at (3.25, -26.25) {};
		\node  (5) at (10.25, -26.75) {};
		\node  (6) at (18.5, -27.25) {};
		\node  (7) at (21, -25) {};
		\node  (8) at (17, -21.75) {};
		\node  (9) at (9.75, -20.25) {};
		\node  (10) at (2.25, -24.25) {};
		\node  (11) at (2.25, -14.5) {};
		\node [bigdot, scale=0.3, label={[shift={(0.2,-0.6)}]$\gr{W}$}] (12) at (5.5, -12) {};
		\node [label={[shift={(0.0,-0.2)}]$\fI_{\gr{\bz}}(\eps)$}] (13) at (9.25, -11.75) {};
		\node  (14) at (15.75, -15) {};
		\node  (15) at (24.5, -17.25) {};
		\node  (16) at (28.25, -12.5) {};
		\node  (17) at (19.5, -10.25) {};
		\node  (21) at (5, -15.75) {};
		\node [bigdot, scale=0.3, label={below:$\gr{\bz}$}] (22) at (7, -24.75) {};
		\node [bigdot, scale=0.3, label={below:$\gr{\bbz}$}] (23) at (15.25, -24) {};
		\node  (24) at (7, -24.25) {};
		\node  (25) at (18.75, -13.5) {};
		\node  (26) at (21.75, -12.75) {};
		\node [label={[shift={(0.0,-0.2)}]$\fI_{\gr{\bbz}}(\eps)$}] (27) at (25.25, -13) {};
		\node  (28) at (20.25, -15.5) {};
		\node  (29) at (15.25, -23.75) {};
		\node [scale=0.3, label={below:$\DD_{\gr{\CU}}^{\gamma,\eta}(\gr{\bz})$}] (32) at (1, -17) {};
		\node [scale=0.3, label={below:$\DD_{\gr{\CU}}^{\gamma,\eta}(\gr{\bbz})$}] (33) at (24, -17.5) {};
		\node [scale=0.3, label={below:$\gr{\MM}$}] (34) at (13, -20) {};
		\node  (35) at (7.5, -24.75) {};
		\node  (36) at (14.75, -24) {};
		\node [bigdot, scale=0.3, label={[shift={(0.0,-0.6)}]$\gr{W}$}] (37) at (21.75, -12.75) {};
		\node  (38) at (8.5, -12.25) {};
		\node  (39) at (19, -12.5) {};
		\draw [in=165, out=30] (0.center) to (1.center);
		\draw [in=225, out=45] (1.center) to (2.center);
		\draw [bend right=15, looseness=1.25] (2.center) to (3.center);
		\draw [in=135, out=-105] (3.center) to (0.center);
		\draw [in=210, out=75] (10.center) to (9.center);
		\draw [in=135, out=30] (9.center) to (8.center);
		\draw [in=90, out=-45, looseness=1.25] (8.center) to (7.center);
		\draw [in=15, out=-105] (7.center) to (6.center);
		\draw [in=15, out=-180, looseness=1.25] (6.center) to (5.center);
		\draw [in=-60, out=-165, looseness=0.75] (5.center) to (4.center);
		\draw [in=255, out=105, looseness=1.25] (4.center) to (10.center);
		\draw [in=-165, out=45, looseness=0.75] (11.center) to (12);
		\draw [in=-165, out=15, looseness=0.75] (12) to (13.center);
		\draw [in=-165, out=60] (14.center) to (17.center);
		\draw [in=135, out=-15] (17.center) to (16.center);
		\draw [in=90, out=-165, looseness=0.50] (16.center) to (15.center);
		\draw [in=0, out=135] (15.center) to (14.center);
		\draw [<-, >=stealth, dashed] (21.center) to (24.center);
		\draw [in=150, out=45] (25.center) to (26.center);
		\draw [in=-150, out=-30] (26.center) to (27.center);
		\draw [->, >=stealth, dashed] (29.center) to (28.center);
		\draw [->, >=stealth, in=165, out=-30] (35.center) to (36.center);
		\draw [->, >=stealth, red, in=150, out=-30, looseness=1.25] (38.center) to (39.center);
		\end{tikzpicture}
		\caption{A schematic depiction of the space $\gr{\MM} \ltimes \DD^{\gamma,\eta}_{\gr{\CU}}$. The \enquote{fibres} $\DD^{\gamma,\eta}_{\gr{\CU}}(\gr{\bz})$ are Banach spaces that vary when the underlying model $\gr{\bz} \in \gr{\MM}$ changes.}
		\label{figure:model_space_fibres}
	\end{figure}
\end{remark}

\begin{notation}
	We will use the following notations from \cite[Prop.~3.25]{cfg}. Note that we suppress the dependence on the initial condition~$u_0 \in \CC^{\eta}(\T^2)$ that is fixed throughout the paper.
	\begin{itemize}
		\item We denote the solution map to \eqref{eq:fp_normal} by $\CS$, that is:
		\begin{equation}
			\CS: \MM \to \DD^{\gamma,\eta}_\CU(\bz), \quad
			\bz \mapsto U = U(\bz),
		\end{equation}
		\item We introduce the map $\CS_{tr}$ as 
		\begin{equation}
			\CS_{tr}: \CH \x \MM \to \DD^{\gamma,\eta}_{\CU}(T_h \bz), \quad
			\CS_{tr} (h, \bz) := \CS(T_h \bz).
		\end{equation}
		\item We denote by $\gr{\CS}$ the solution map to \eqref{eq:fp_extended}, that is:
		\begin{equation}
			\gr{\CS}: \gr{\MM} \to \DD^{\gamma,\eta}_{\gr{\CU}}(\gr{\bz}), \quad
			\gr{\bz} \mapsto \gr{U} = \gr{U}(\gr{\bz}),
		\end{equation}
		\item We introduce the map $\gr{\CS_{ex}}$ as 
		\begin{equation}
			\gr{\CS_{ex}}: \CH \x \MM \to \DD^{\gamma,\eta}_{\gr{\CU}}(E_h \bz), \quad
			\gr{\CS_{ex}} (h, \bz) := \gr{\CS}(E_h \bz).
		\end{equation}
	\end{itemize}
	For fixed $\eps \in I$, we add two more definitions to this list. 
	\begin{itemize}
		\item We denote by $\gr{\CS^{\eps}}$ the solution map to \eqref{eq:fp_extended_eps}, that is:
		\begin{equation}
			\gr{\CS^{\eps}}: \gr{\MM} \to \DD^{\gamma,\eta}_{\gr{\CU}}(\gr{\bz}), \quad
			\gr{\bz} \mapsto \gr{U^\eps} = \gr{U^\eps}(\gr{\bz}),
		\end{equation}	
		\item We introduce the map $\gr{\CS^{\eps}_{ex}}$ as 
		\begin{equation}
			\gr{\CS^{\eps}_{ex}}: \CH \x \MM \to \DD^{\gamma,\eta}_{\gr{\CU}}(E_h \bz), \quad
			\gr{\CS^{\eps}_{ex}} (h, \bz) := \gr{\CS^{\eps}}(E_h \bz).
		\end{equation}	
	\end{itemize}
By definition, $\gr{\CS^\eps}(\gr{\bz}) = \fI_{\gr{\bz}}(\eps)$: The map $\gr{\CS^\eps}$ just emphasises the dependence on the model~$\gr{\bz}$.
\end{notation}

\subsection*{A family of fixed-point equations}

Recall that our object of interest, introduced below \thref{eq:approx_renorm_shifted} in the introduction, can also be written as
\begin{equation}
	\hat{u}^{\eps}_\sh(\hbz)
	=
	\Phi(T_\sh \d_\eps\hbz) 
	= 
	\CR\del[1]{\CS_{tr}(\sh,\d_\eps\hbz)}
	=
	\lim_{\d \to 0} \CR\del[1]{\CS_{tr}(\sh,M_\d \bz^{\eps\xi_\d}}. 
\end{equation}
In order to expand this quantity in $\eps$ we would like to differentiate the map~$\eps \mapsto \CS_{tr}(\sh,\d_\eps\hbz)$ but immediately run into a problem: the space $\DD^{\gamma,\eta}_\CU(T_\sh \d_\eps\hbz)$ of modelled distributions that $\CS_{tr}(\sh,\d_\eps\hbz)$ lives in depends on $\eps$ itself. This is due to the \enquote{fibred} structure of $\gr{\MM} \ltimes \DD^{\gamma,\eta}_{\gr{\CU}}$, see~\thref{figure:model_space_fibres}.

We need the following lemma that is inspired by~\cite[Prop.~3.25]{cfg} to remediate this problem. 
\begin{lemma} \label{lem:dil_transl_rel}
	Let $\bz \in \MM$ be an admissible model, $h \in \CH$, and $\eps \in I$. Then, 
	\begin{equation}
		(\fd_\eps \circ \ft_{\<cms>}) \sbr[1]{\CS_{tr}(h,\d_\eps \bz)}
		=
		\gr{\CS^{\eps}_{ex}} (h, \bz).
	\end{equation}	
\end{lemma}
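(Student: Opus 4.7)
The plan is to prove the identity by uniqueness of the fixed point to equation~\eqref{eq:fp_extended_eps}. Concretely, setting $U := \CS_{tr}(h,\d_\eps\bz) = \CS(T_h\d_\eps\bz)$, I will show that $\fd_\eps\ft_{\<cms>} U$ solves the same abstract fixed-point equation as $\gr{\CS^\eps}(E_h\bz) = \gr{\CS^{\eps}_{ex}}(h,\bz)$, and then invoke \thref{coro:solvability:fp_prob}.

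First, I unfold the definitions: $U$ is the unique solution in $\DD^{\gamma,\eta}_\CU(T_h\d_\eps\bz)$ to $U = \CP^{T_h\d_\eps\bz}(G(U)\<wn>) + \TT P u_0$, while $\gr{\CS^{\eps}_{ex}}(h,\bz)$ is by definition the unique solution in $\DD^{\gamma,\eta}_{\gr{\CU}}(E_h\bz)$ to the equation
\begin{equation*}
	\gr{V} = \CP^{E_h\bz}\bigl(G(\gr{V})[\eps\<wn>+\<cm>]\bigr) + \TT P u_0.
\end{equation*}
My task is then to transport the first equation through $\ft_{\<cms>}$ and then through $\fd_\eps$ and to recognise the result as the second equation.

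The lift $\ft_{\<cms>}$ corresponds on the model side to the passage $T_{\bar h}\bar\bz \mapsto E_{\bar h}\bar\bz$ from the original to the extended regularity structure; by construction it intertwines the non-extended noise symbol $\<wn>$ (whose model realisation under $T_{\bar h}\bar\bz$ is the shifted noise) with the extended combination $\<wn>+\<cm>$ (whose realisation under $E_{\bar h}\bar\bz$ is the same shifted noise), while commuting with the abstract convolution $\CP$, with the lift of the smooth function $G$, and with the polynomial piece. Applying $\ft_{\<cms>}$ with $\bar h=h$, $\bar\bz=\d_\eps\bz$ therefore yields
\begin{equation*}
	\ft_{\<cms>} U = \CP^{E_h\d_\eps\bz}\bigl(G(\ft_{\<cms>} U)[\<wn>+\<cm>]\bigr) + \TT P u_0 \quad \text{in } \DD^{\gamma,\eta}_{\gr{\CU}}(E_h\d_\eps\bz).
\end{equation*}
The dilation $\fd_\eps$ is the natural companion to $\d_\eps$ on models and, applied to a modelled distribution based at $\d_\eps\bar{\gr\bz}$, produces one based at $\bar{\gr\bz}$. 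Since $\<wn>$ encodes the random noise (which scales by $\eps$ under dilation) while $\<cm>$ encodes the deterministic shift $h$ (invariant under $\d_\eps$), the action of $\fd_\eps$ on the symbol $\<wn>+\<cm>$ produces $\eps\<wn>+\<cm>$. Using the intertwining of $\fd_\eps$ with $\CP$, with the function/polynomial lifts, and with $E_h$ (all collected in Appendix~\ref{app:sec:adm_models}), one obtains
\begin{equation*}
	\fd_\eps \ft_{\<cms>} U = \CP^{E_h\bz}\bigl(G(\fd_\eps \ft_{\<cms>} U)[\eps\<wn>+\<cm>]\bigr) + \TT P u_0,
\end{equation*}
which is precisely the fixed-point equation characterising $\gr{\CS^\eps}(E_h\bz)$. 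Uniqueness from \thref{coro:solvability:fp_prob}, applied on the common small time interval of existence, closes the proof.

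The main obstacle will be in the bookkeeping underlying the two intertwining steps, especially for $\fd_\eps$: one must verify carefully that $\fd_\eps$ conjugates $\CP^{E_h\d_\eps\bz}$ into $\CP^{E_h\bz}$ and acts as claimed on $\<wn>$ and $\<cm>$. These are structural properties baked into the definition of the dilation operator on the extended regularity structure (Appendix~\ref{app:sec:adm_models}), but their verification amounts to tracking how the structure-group elements of the two models transform under $\d_\eps$, particularly on the negative-homogeneity symbols where the recentering is nontrivial. The analogous (and easier) check for $\ft_{\<cms>}$ in the first step will follow from the explicit description of how the extended model $E_h\bar\bz$ is built from the non-extended $T_h\bar\bz$ already used in~\cite{cfg}.
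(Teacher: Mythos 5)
Your proposal is correct and follows essentially the same route as the paper: factor the operation as $\fd_\eps\circ\ft_{\<cms>}$, verify that the result solves the $\eps$-parametrised fixed-point equation~\eqref{eq:fp_extended_eps} with model~$E_h\bz$, and conclude by uniqueness. The only superficial difference is that you re-derive the $\ft_{\<cms>}$ step from its structural description, whereas the paper simply cites~\cite[Prop.~3.25]{cfg} for the identity $\gr{\CS_{ex}}(h,\bbz)=\ft_{\<cms>}\bigl(\CS_{tr}(h,\bbz)\bigr)$ and then devotes the computation to the $\fd_\eps$ step via Lemmas~\ref{lem:ext_dil_commute} and~\ref{lem:consistency_dilation}.
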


\begin{proof}
	From~\cite[Prop.~3.25]{cfg}, we know that
	\begin{equation*}
		\gr{\CS_{ex}}(h,\bbz) = \ft_{\<cms>} \del[1]{\CS_{tr}(h,\bbz)},
	\end{equation*}	
	for each~$\bbz \in \MM$, so by choosing $\bbz = \d_\eps \bz$ we only need to prove that 
	\begin{equation*}
		\fd_\eps \sbr[1]{\gr{\CS_{ex}}(h,\d_\eps\bz)} = \gr{\CS^\eps_{ex}}(h,\bz).
	\end{equation*}
	We denote the RHS by $\gr{U^\eps}$ and the LHS by $\fd_\eps(\gr{U_\eps})$, where~$\gr{U_\eps} := \gr{\CS_{ex}}(h,\d_\eps\bz)$. We then observe that
	\begin{equs}
		\thinspace
		&
		\CP^{E_h \bz} (G(\fd_\eps \gr{U_\eps})[\eps \<wn> + \<cm>])
		= \CP^{E_h \bz} (\fd_\eps(G(\gr{U_\eps})[\<wn> + \<cm>])) \\
		= \
		&
		\fd_\eps\sbr[1]{\CP^{\d_\eps E_h \bz} \del[0]{G(\gr{U_\eps})[\<wn> + \<cm>]}} 
		= \fd_\eps\sbr[1]{\CP^{E_h \d_\eps \bz} \del[0]{G(\gr{U_\eps})[\<wn> + \<cm>]}},
	\end{equs} 
	where the last and the penultimate  equality are due to~\thref{lem:ext_dil_commute} and~\thref{lem:consistency_dilation}\ref{lem:consistency_dilation:iii}, respectively. Note that we have also used that $\fd_\eps$ commutes with multiplication and application of~$G$. It follows that
	\begin{equs}
		\fd_\eps \gr{U_\eps}
		& =
		\fd_\eps\sbr[1]{\CP^{E_h \d_\eps \bz} (G(\gr{U_\eps})[\<wn> + \<cm>]) + \TT Pu_0} \\
		& =
		\CP^{E_h \bz} (G(\fd_\eps \gr{U_\eps})[\eps \<wn> + \<cm>]) + \TT Pu_0,
	\end{equs}
	from which we infer that $\fd_\eps \gr{U_\eps}$ solves~\eqref{eq:fp_extended_eps} (w.r.t. $\gr{\bz} := E_h \bz$). Hence, $\gr{U^\eps} = \fd_\eps \gr{U_\eps}$ by uniqueness.
\end{proof}

The following proposition links the fixed-point equation~\eqref{eq:fp_normal} with driving model $T_\sh \d_\eps \hbz \in \MM$ to the familiy of fixed-point problems in~\eqref{eq:fp_extended_eps} where the driving model now is $E_\sh \hbz \in \gr{\MM}$. Note that the latter does not suffer from $\eps$-dependency of the solution space~$\DD^{\gamma,\eta}(E_\sh \hbz)$, the problem we set out to solve.

\begin{proposition}\label{prop:fp_choice} 
	Let $\eps \in I$, $h \in \CH$, and $\bz \in \MM$. Then, 
	\begin{equation}
		\CR^{T_h \d_\eps \bz} \CS_{tr}(h,\d_\eps \bz)
		=
		\CR^{E_h \bz} \gr{\CS^{\eps}_{ex}} (h, \bz).
		\label{prop:fp_choice:eq1}
	\end{equation}
	In particular, choosing the BPHZ model $\hbz \in \MM$ leads to
	\begin{equation*}
		\hat{u}^{\eps}_\sh(\hbz) 
		= \CR^{E_\sh \hbz} \gr{\CS^{\eps}_{ex}} (\sh, \hbz)
		\equiv \CR^{E_\sh \hbz} \fI^{E_\sh \hbz}(\eps).
	\end{equation*}
\end{proposition}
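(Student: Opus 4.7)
The plan is to apply $\CR^{E_h \bz}$ to both sides of the identity from Lemma~\ref{lem:dil_transl_rel},
\begin{equation*}
\gr{\CS^{\eps}_{ex}}(h,\bz) = (\fd_\eps \circ \ft_{\<cms>})\sbr[1]{\CS_{tr}(h, \d_\eps \bz)}.
\end{equation*}
One side then immediately produces the right-hand side of~\eqref{prop:fp_choice:eq1}, so the task reduces to collapsing $\CR^{E_h\bz}\sbr[1]{(\fd_\eps \circ \ft_{\<cms>})\CS_{tr}(h,\d_\eps\bz)}$ down to $\CR^{T_h \d_\eps \bz} \CS_{tr}(h,\d_\eps\bz)$ by successively pushing the reconstruction operator past $\fd_\eps$ and then past $\ft_{\<cms>}$.

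The first push rests on the compatibility of reconstruction with dilation: for any $Z \in \DD^{\gamma,\eta}_{\gr{\CU}}(\d_\eps E_h \bz)$,
\begin{equation*}
\CR^{E_h \bz}\sbr[1]{\fd_\eps Z} = \CR^{\d_\eps E_h \bz} Z.
\end{equation*}
This is the analytic manifestation of the defining property of $\fd_\eps$ -- the operator absorbs exactly the $\eps$-factors produced by dilating the model, so the two reconstructions represent the same underlying distribution -- and should follow directly from Lemma~\ref{lem:consistency_dilation}. Combined with Lemma~\ref{lem:ext_dil_commute}, which gives $E_h \d_\eps \bz = \d_\eps E_h \bz$, this reduces the expression to $\CR^{E_h \d_\eps \bz}\sbr[1]{\ft_{\<cms>}\CS_{tr}(h, \d_\eps \bz)}$.

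The second push uses the compatibility of $\ft_{\<cms>}$ with reconstruction: for any $\bbz \in \MM$ and any $Y \in \DD^{\gamma,\eta}_{\CU}(T_h \bbz)$ one has
\begin{equation*}
\CR^{E_h \bbz}\sbr[1]{\ft_{\<cms>} Y} = \CR^{T_h \bbz} Y,
\end{equation*}
a statement built into the construction of the extension operator in~\cite[Prop.~3.25]{cfg}: the passage from a modelled distribution over $T_h \bbz$ to one over $E_h \bbz$ merely repackages it within an enriched regularity structure without altering the distribution it represents. Specialising $\bbz := \d_\eps \bz$ and $Y := \CS_{tr}(h, \d_\eps\bz)$ gives the left-hand side of~\eqref{prop:fp_choice:eq1}. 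The ``in particular'' statement then follows by taking $\bz := \hbz$ and $h := \sh$ and recalling both $\hat{u}^\eps_\sh(\hbz) = \CR^{T_\sh \d_\eps \hbz}\,\CS_{tr}(\sh,\d_\eps\hbz)$ and $\gr{\CS^\eps_{ex}}(\sh,\hbz) = \fI_{E_\sh \hbz}(\eps)$ by definition. I expect the main point requiring care to be the first reconstruction--dilation identity; once this and Lemma~\ref{lem:dil_transl_rel} are in hand, everything else is essentially a matter of composing the two identities.
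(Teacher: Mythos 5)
Your proposal is correct and follows the same route as the paper: both proofs combine Lemma~\ref{lem:dil_transl_rel}, the identity $\CR^{T_h\bbz} = \CR^{E_h\bbz} \circ \ft_{\<cms>}$, and Lemmas~\ref{lem:consistency_dilation}\ref{lem:consistency_dilation:ii} and~\ref{lem:ext_dil_commute}, merely chaining them in the opposite direction. The only nit is the citation for the second push: the identity $\CR^{E_h\bbz}\,\ft_{\<cms>} = \CR^{T_h\bbz}$ is the content of \cite[Lem.~3.22(ii)]{cfg}, not \cite[Prop.~3.25]{cfg} (the latter concerns the solution maps rather than the reconstruction operators themselves).
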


\begin{proof}
	We combine~\cite[Lem.~3.22(ii)]{cfg}, which states that $\CR^{T_h\bbz} = \CR^{E_h\bbz} \circ \thinspace \ft_{\<cms>}$ for each $\bbz \in \MM$, and~\thref{lem:consistency_dilation}\ref{lem:consistency_dilation:ii} to obtain the equality
	\begin{equs}
		\CR^{T_h \d_\eps \bz} \CS_{tr}(h,\d_\eps \bz)
		=
		\CR^{E_h \d_\eps \bz} \thinspace \ft_{\<cms>}\del[1]{\CS_{tr}(h,\d_\eps \bz)} \\
		= 
		\CR^{E_h \bz} (\fd_\eps \circ \ft_{\<cms>}) \del[1]{\CS_{tr}(h,\d_\eps \bz)}
	\end{equs}
	where we have also used that $E_h$ and $\d_\eps$ commute, see~\thref{lem:ext_dil_commute}. The claim now follows from~\thref{lem:dil_transl_rel}.
\end{proof}

In~\thref{coro:solvability:fp_prob}, we have proved \emph{local} existence of the FP eq.~\eqref{eq:fp_extended_eps} for each $\gr{\bz} \in \gr{\MM}$. More precisely, we have obtained $\bar{T} = \bar{T}(\gr{\bz}) > 0$ such that $\gr{U^{\eps}}(\gr{\bz})$ exists up to time $\bar{T}$, uniformly over $\eps \in I$.

In general, however, the existence time $T_\infty(\eps,\gr{\bz})$ of $\gr{U^\eps}(\gr{\bz})$ does depend on~$\eps$.\footnote{Think of the deterministic ODE $\dot{x}_t = \eps x_t^2$ with I.C. $x_0 > 0$. Since $x_t = x_0 \del[0]{1-\eps x_0 t}^{-1}$, it has explosion time $T(\eps) = \del[0]{\eps x_0}^{-1}$, which is uniformly bounded from below by~$x_0^{-1}$ for $\eps \in [0,1]$. Hence, any $\bar{T} \leq x_0^{-1}$ would do.} 
Recall that we have started with a fixed time horizon $T \in (0, T_\infty^\sh \wedge T_0)$, so we need to find the right subset of~$I \x \gr{\MM}$ for which existence up to time~$T$ is ensured. We need the following corollary towards this end.

\begin{corollary} \label{coro:ex_time_h_tr_vs_ex}
	For each $\eps \in I$ and $\bz \in \MM$, we have $T_\infty(\eps,E_\sh\bz) = T_\infty(T_\sh\d_\eps\bz)$. 
\end{corollary}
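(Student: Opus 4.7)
The plan is to leverage the explicit identification provided by Lemma~\ref{lem:dil_transl_rel}, namely
\begin{equation*}
(\fd_\eps \circ \ft_{\<cms>})\sbr[1]{\CS_{tr}(\sh,\d_\eps\bz)} = \gr{\CS^{\eps}_{ex}}(\sh,\bz),
\end{equation*}
which realises the two fixed-point solutions as images of one another under the abstract dilation $\fd_\eps$ and translation $\ft_{\<cms>}$. Since both operations act pointwise on modelled distributions and leave the underlying time domain unchanged, the strategy is to argue that a solution of \eqref{eq:fp_normal} with driving model $T_\sh\d_\eps\bz$ exists on a time interval $[0,T']$ if and only if the corresponding solution of \eqref{eq:fp_extended_eps} with driving model $E_\sh\bz$ does; passing to the supremum then yields the claimed equality of explosion times.

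Concretely, for $\eps > 0$ the dilation $\fd_\eps$ is invertible with inverse $\fd_{1/\eps}$ (see Appendix~\ref{app:sec:adm_models}), and the translation $\ft_{\<cms>}$ is invertible with inverse $\ft_{-\<cms>}$. These bijections carry $\DD^{\gamma,\eta;T'}_{\CU}(T_\sh\d_\eps\bz)$ to $\DD^{\gamma,\eta;T'}_{\gr{\CU}}(E_\sh\bz)$ and back, for any fixed $T' > 0$. Given a local solution on $[0,T']$ on either side, one feeds it through the appropriate composition to obtain a modelled distribution on $[0,T']$ on the other side; by Lemma~\ref{lem:dil_transl_rel} together with uniqueness of the fixed-point problem, it must coincide with the genuine solution there. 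Taking suprema of admissible $T'$ yields the desired equality.

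For the boundary case $\eps = 0$ a brief separate argument is needed because $\fd_0$ is degenerate. However, at $\eps = 0$ equation~\eqref{eq:fp_extended_eps} loses its $\eps\<wn>$ term and both sides reduce to the abstract representation of the deterministic PDE~\eqref{eq:wh} driven by $\sh$, whose unique solution $w_\sh$ is global in time by Section~\ref{sec:explosion}; hence both explosion times equal $+\infty$ and trivially agree. The main obstacle I anticipate is the bookkeeping of the differing modelled-distribution spaces $\DD^{\gamma,\eta;T'}_{\CU}(T_\sh\d_\eps\bz)$ and $\DD^{\gamma,\eta;T'}_{\gr{\CU}}(E_\sh\bz)$: one must verify that $\fd_\eps$ and $\ft_{\<cms>}$ not only intertwine the models correctly (already handled by Lemma~\ref{lem:dil_transl_rel}) but also preserve the time horizon, which follows from their construction but warrants explicit checking.
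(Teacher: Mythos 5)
Your proof is correct and follows the same route as the paper: both rest on Lemma~\ref{lem:dil_transl_rel}, and you supply the detail (invertibility of $\fd_\eps$ for $\eps>0$, separate handling of $\eps=0$ where $\fd_0$ is degenerate) that the paper compresses into ``the claim follows immediately.'' One small imprecision worth noting: $\ft_{\<cms>}$ is injective but \emph{not} surjective onto $\gr{\CU}$ (for instance $\ft_{\<cms>}\<1>=\<1>+\<1g>$, so the image of $\CU$ is a proper linear subspace of $\gr{\CU}$), hence $(\fd_\eps\circ\ft_{\<cms>})$ is not a bijection between $\DD^{\gamma,\eta;T'}_{\CU}(T_\sh\d_\eps\bz)$ and $\DD^{\gamma,\eta;T'}_{\gr{\CU}}(E_\sh\bz)$ as you assert; what saves the argument is that Lemma~\ref{lem:dil_transl_rel} (via the identity $\gr{\CS_{ex}}(h,\bbz)=\ft_{\<cms>}\CS_{tr}(h,\bbz)$ of~\cite[Prop.~3.25]{cfg}) guarantees that the relevant fixed point in the extended space actually lies in the image, so injectivity together with uniqueness of the fixed point is enough to pull back and identify existence times.
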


\begin{proof}
	Since $\gr{U^\eps}(E_\sh\bz) = \gr{\CS_{ex}^\eps}(\sh,\bz)$ and $T_\infty(T_\sh \d_\eps \bz)$ is the explosion time of $\CS_{tr}(\sh,\d_\eps\bz)$, the claim follows im\-me\-dia\-te\-ly from~\thref{lem:dil_transl_rel}.
\end{proof}

With this result, the following corollary was proved in the last part of the proof of~\thref{prop:cameron_martin}.
\begin{corollary}\label{coro:expl_rho}
	For $T > 0$ as before, choose $\rho_0 = \rho_0(T) > 0$ as in~\thref{sec:explosion:cm_lem}. Then, for any $\rho \in (0,\rho_0]$ and $(\eps,\bz^{\minus}) \in I \x \MM_-$ with $\eps \barnorm{\bz^{\minus}} < \rho$, we have $T_\infty(\eps,E_\sh\bz) > T$ for the extension $\bz = \EE(\bz^{\minus})$ of~$\bz^{\minus}$.
\end{corollary}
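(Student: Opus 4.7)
The plan is to reduce the statement to the explosion-time estimate for translated dilated models that was proved in \thref{sec:explosion:cm_lem} (which we are allowed to invoke), and which was also the key ingredient in the last step of the proof of \thref{prop:cameron_martin}.

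\textbf{Step 1: Rewrite in terms of the translated/dilated model.} By \thref{coro:ex_time_h_tr_vs_ex} applied to the extension $\bz = \EE(\bz^{\minus})$, we have the identity
\begin{equation*}
  T_\infty(\eps, E_\sh \bz) \;=\; T_\infty(T_\sh \d_\eps \bz),
\end{equation*}
so it suffices to show $T_\infty(T_\sh \d_\eps \bz) > T$ whenever $\eps \barnorm{\bz^{\minus}} < \rho \leq \rho_0(T)$.

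\textbf{Step 2: Apply the explosion-time lemma.} By construction, the homogeneous norm of the minimal model controls the dilation, and $\d_\eps \bz$ is the extension of $\d_\eps \bz^{\minus}$, so $\barnorm{\d_\eps \bz^{\minus}} = \eps\,\barnorm{\bz^{\minus}}$ (this is the homogeneity of the dilation on $\MM_-$ used throughout section~\ref{app:sec:adm_models}). Under the bound $\eps \barnorm{\bz^{\minus}} < \rho \leq \rho_0(T)$, the hypothesis of \thref{sec:explosion:cm_lem} is met for the model $\d_\eps \bz$ translated by $\sh$, and the lemma yields
\begin{equation*}
  T_\infty(T_\sh \d_\eps \bz) \;>\; T.
\end{equation*}
Combining with Step~1 gives the claim.

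\textbf{Main obstacle.} There is no genuine obstacle here: the work was already done in \thref{sec:explosion:cm_lem} and in \thref{coro:ex_time_h_tr_vs_ex}. The only point to be careful about is consistency between the minimal and full (extended) admissible models under the operations $\d_\eps$, $T_\sh$, and $\EE$; this is handled by invoking the commutation properties of dilation and extension (\thref{lem:ext_dil_commute} and \thref{lem:consistency_dilation}) that have been established in the appendix, exactly as in the closing paragraph of the proof of \thref{prop:cameron_martin}.
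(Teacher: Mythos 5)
Your proof is correct and follows essentially the same route the paper takes: first translate via \thref{coro:ex_time_h_tr_vs_ex} to rewrite $T_\infty(\eps,E_\sh\bz)=T_\infty(T_\sh\d_\eps\bz)$, and then invoke \thref{sec:explosion:cm_lem} (which requires $T<T_\infty^\sh$, automatic here since $T_\infty^\sh=+\infty$ by \thref{rmk:explosion_gpam}) to conclude. The paper compresses this into a pointer back to the closing lines of the proof of \thref{prop:cameron_martin}, which is exactly the argument you reconstructed; your Step~2 remark on the homogeneity $\barnorm{\d_\eps\bz^{\minus}}=\eps\barnorm{\bz^{\minus}}$ is not strictly needed since \thref{sec:explosion:cm_lem} is already phrased with the hypothesis $\eps\barnorm{\bz^{\minus}}<\rho$.
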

For fixed $T$ and $\bz$, it makes sense to consider the \emph{maximal} $\eps \in I$ that satisfies the conditions in the previous corollary.

\begin{definition}
	For~$T > 0$ and~$\rho_0 = \rho_0(T) > 0$ as in~\thref{sec:explosion:cm_lem}, we define
		\begin{equation}
			\eps_0(T,\bz) := \frac{\rho_0(T)}{\barnorm{\bz^{\minus}}} \wedge 1 \qquad \del[3]{\frac{1}{0} := \infty}
			\label{eq:def_eps_zero}
		\end{equation} 
	Note that~$\eps_0(T,\bz) = \sup \{\eps \in I: \thinspace \eps \barnorm{\bz^{\minus}} < \rho_0(T)\}$.
\end{definition}

A few remarks before we summarise our findings thus far. 

\begin{remark}
	It is immediate to see that~$\eps_0(T,\cdot) \circ \EE$ is continuous on~$(\MM_-,\barnorm{\cdot})$ and, by~\thref{lem:estimate_min_norm_vs_hom_norm}, on~$(\MM_-,\threebars \cdot \threebars)$. Therefore,~$\eps_0(T,\cdot)$ is continuous on~$(\MM,\threebars \cdot \threebars)$ by continuity of~$\EE$, cf.~\thref{app:thm:extension};
	in particular, we have
		\begin{equation*}
			\liminf_{\d \to 0} \thinspace \eps_0(T,\hbz^{\xi_\d}) = \eps_0(T,\hbz) \quad \text{as} \quad \lim_{\d \to 0} \hbz^{\xi_\d} = \hbz.
		\end{equation*}
	In light of~\thref{coro:expl_rho}, this means that the renormalised solutions~$\hat{u}_{\xi_\d,\sh}^{(\eps)}$ and their limit~$\hat{u}_{\sh}^{(\eps)}$ as $\d \to 0$ exist up to time~$T$, uniformly over all~$\eps < \eps_0(T,\hbz)$.
	Since~$T > 0$ is fixed, the previous observation justifies to fix~$\eps_0 := \eps_0(T,\hbz)$ and~$I_0 := [0,\eps_0)$ for the rest of the article. From here on, we only consider~$\eps \in I_0$, so non-explosion is guaranteed both for the approximate and the limiting equations.
\end{remark}

\begin{remark} \label{not:green_models}
	Aligning with the nature of our problem, we will only consider models~$\gr{\bz} \in \gr{\MM}$ that satisfy~$\gr{\bz} = E_\sh \bz$ for some $\bz \in \MM$ from here on.
	In light of the previous remark, the solution~$\gr{U^\eps}$ w.r.t. $\gr{\bz} = E_\sh \hbz$ exists up to time $T$ for all $\eps \in I_0$. 
	Disregarding such questions of existence, one could work with any $\gr{\bz} \in \gr{\MM}$ at the expense of replacing $T \rightsquigarrow \bar{T} = \bar{T}(\gr{\bz})$ again, cf.~\thref{coro:solvability:fp_prob}.
\end{remark} 

\begin{remark}[The case~$\eps = 0$.] \label{rmk:W_indep_model_not}
	Recall that our problem reduces to the \emph{deterministic} PDE~\eqref{eq:wh}, namely
		\begin{equation*}
			(\partial_t - \Delta) w_\sh = g(w_\sh) \sh, \quad w_\sh(0,\cdot) = u_0,
		\end{equation*}
	in case~$\eps = 0$. This equation is represented by~\eqref{eq:fp_extended_eps} in case~$\eps_0$, so that $\CR(E_\sh \hbz, \thinspace \gr{U^0}) = w_\sh$ as one might reasonably expect. Now recall that $\gr{U^0}(E_\sh \hbz)$ takes values in the sector~$\gr{\CU} = \scal{\1,\<1>,\<1g>,X}$ of which only one symbol (namely $\<1>$) will be mapped to a non-deterministic distribution via $\hat{\Pi}$ (namely, a re-centered version of $G * \xi$). Compared with general ansatz~\eqref{eq:ansatz_U} for $\gr{Y} \in \DD^{\gamma,\eta}_{\gr{\CU}}(E_\sh\bz)$, one then quickly finds from~\eqref{eq:fp_extended_eps} with $\eps = 0$ that~$\scal{\gr{U^0}(E_\sh \hbz),\<1>} = 0$, which is consistent with our intuition. We gain even more information: since for any $\bz \in \MM$, 
		\begin{itemize}
			\item the interpretation of $\1$ and $X$ is determined by admissibility of $\bz$ and
			\item that of $\<1g>$ by the extension operator $E_\sh$
		\end{itemize}
	 this means that~$\gr{U^0}(E_\sh \bz)$ is actually \emph{independent} of~$\bz$.\footnote{Of course, this is only half the truth: We have~$\gr{U^0}(E_\sh \bz) \in \DD^{\gamma,\eta}(E_\sh\bz)$ and the RHS \emph{does} depend on~$\bz$. The way that this is meant is that~$\gr{U^0}(E_\sh \bz) \in \DD^{\gamma,\eta}(E_\sh\bbz)$ for~$\bz,\bbz \in \MM$.} Henceforth, we will therefore use the symbol~$\gr{W}$ for any $\gr{U^0}(E_\sh\bz)$ with explicit representation given by 
		\begin{equation}
			\gr{W}(z) = w_\sh(z) \1 + \phi_{\<1gs>}^{\gr{W}}(z) \<1g> + \scal{\phi_X^{\gr{W}}(z),X}. 
			\label{rmk:structure_W:eq}
		\end{equation}  
	Note that $\gr{W} = \fI_{E_\sh\hbz}(0)$ is exactly the constant term in the Taylor expansion of~$\fI_{E_\sh\hbz}(\eps)$ that we are after.
	With $W := U(\LL(\sh))$ as in~\eqref{eq:fp_normal}, it also satisfies
	\begin{equation*}
		\CR^{\LL(\sh)}  W = w_\sh = \CR^{E_\sh \bz} \gr{W}
	\end{equation*}
	for any~$\bz \in \MM$.	
	Finally, we emphasise that it is not important that $\sh$ (in $E_\sh$) is the minimiser of~$\FF$, at least at this stage. However, this property of~$\sh$ will be used in the sequel and thus makes it the natural choice.
\end{remark}

\subsection*{Summary of this section's findings}

For $T \in (0,T_\infty^\sh \wedge T_0)$, \thref{prop:fp_choice} justifies to abandon the map 
\begin{equation*}
I_0 \to \DD^{\gamma,\eta,T}_{\CU}(T_\sh \d_\eps \hbz), \quad \eps \mapsto \CS_{tr} (\sh, \d_\eps \hbz)
\end{equation*}
and instead focus on 
\begin{equation}
\fI_{E_\sh \hbz}: I_0 \to \DD^{\gamma,\eta,T}_{\gr{\CU}}(E_\sh \hbz), \quad
\eps \mapsto \gr{\CS^{\eps}_{ex}} (\sh, \hbz),
\end{equation}
the solution map to~\eqref{eq:fp_extended_eps} (for $\gr{\bz} := E_\sh\hbz$) which we have already encountered in~\thref{prop:a_priori_estimate}, eq.~\eqref{eq:fI}. Note, however, that we have replaced $\bar{T} \rightsquigarrow T$ and, accordingly, $I \rightsquigarrow I_0$.
The target Banach space of this map does not depend on $\eps$ anymore, so we can study its Fr\'{e}chet differentiability in the next section.

\begin{landscape}
	\begin{figure}[h]
		\subsection*{A schematic overview of our strategy}
		\centering
		\includegraphics[scale=0.7]{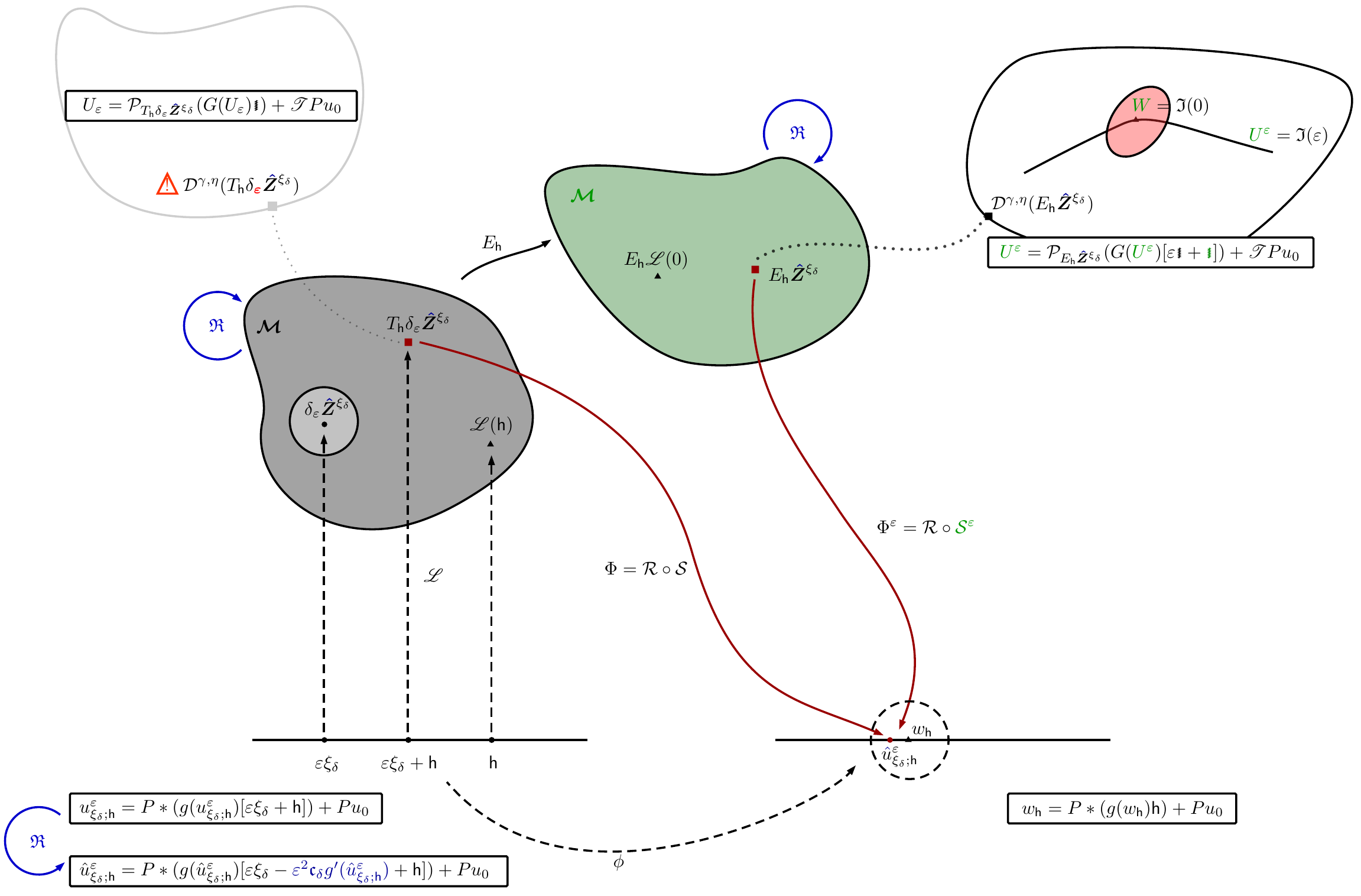}
		\captionsetup{font=small,format=plain}
		\caption{
			Fix~$\d > 0$. On the LHS, we see the \enquote{noises}~$\sh$ and~$\eps\xi_\d + \sh$ which are mapped by the solution map~$\phy$ to~$w_\sh$ and~$u_{\xi_\d;\sh}^\eps$, respectively.
			Dashed lines such as that for~$\phy$ represent \emph{discontinuous} operations: we thus \emph{lift}~$\sh$ and~$\eps\xi_\d + \sh$ (and also~$\eps \xi_\d$) to the model space~$\MM$.
			Attached to the model~$T_\sh\d_\eps\bz^{\xi_\d}$ is a space~$\DD^{\gamma,\eta}(T_\sh\d_\eps \bz^{\xi_\d})$ of modelled distributions: it hosts an abstract FP problem for~$U_\eps$ which encodes~$u_{\xi_\d;\sh}^\eps$. The solution map~$\Phi$ that sends $T_\sh \d_\eps \bz^{\xi_\d}$ to $U_\eps$ via~$\CS$ and then to~$u_{\xi_\d;\sh}^\eps$ via~$\CR$ is~\emph{continuous} now.
			Unfortunately,~$\bz^{\xi_\d}$ does not converge as~$\d \to 0$: a shortcoming cured by the \emph{renormalisation group}~$\fR$ which acts on~$\MM$ to change~$\bz^{\xi_\d}$ into~$\hbz^{\xi_\d}$ and,~\enquote{dually},~$u_{\xi_\d;\sh}$ into~$\hat{u}_{\xi_\d;\sh}$.
			Indeed,~$\hbz^{\xi_\d}$ does have a limit~$\hbz$ as~$\d \to 0$ and so one can~\emph{define}~$\hat{u}_\sh^\eps := \Phi(T_\sh\d_\eps\hbz)$.
			The lightgray ball around~$\d_\eps \hbz^{\xi_\d}$ symbolises the localisation procedure of sec.~\ref{sec:localisation_cameron_martin} (for technical reasons, it was carried out in~$\MM_-$ rather than~$\MM$).
			We run into problems once~$\eps$ starts to vary: Because the Banach space~$\DD^{\gamma,\eta}(T_\sh\d_\eps \hbz^{\xi_\d})$ depends on~$\eps$ itself, we cannot differentiate~$U_\eps$ w.r.t. that parameter.
			The remedy is to work on the extended model space~$\gr{\MM}$ depicted on the RHS: In the Banach space~$\DD^{\gamma,\eta}(E_\sh\hbz^{\xi_\d})$ attached to~$E_\sh\hbz^{\xi_\d}$, we set up a FP problem with~$\eps$ as a parameter.
			Its solution~$\gr{U^\eps} = \gr{\CS^\eps}(E_\sh\hbz^{\xi_\d})$ is reconstructed to~$\hat{u}_{\xi_\d;\sh}^\eps$ by the operator~$\CR$ just as~$U_\eps$ is~(\thref{prop:fp_choice}), as symbolised by the two red arrows.
			However, the map~$\fI: [0,1] \ni \eps \mapsto \gr{U^\eps} \in \DD^{\gamma,\eta}(E_\sh \hbz^{\xi_\d})$ is well-defined and may be investigated for its differentiability properties.
			The shape of this figure is inspired by L. Zambotti's slides on \emph{SPDEs and regularity structures} from the Second Haifa Probability School 2017~\cite{zambotti_slides}.
		}
		\label{figure:overview}
	\end{figure}
\end{landscape}

\subsection{Differentiability of the fixed-point map} \label{sec:diff_fp_map}

Our goal in this section is to establish differentiability of $\fI_{E_{\mathsf{h}} \hbz}$ via the Implicit Function Theorem in Banach spaces~\cite[Thm.~4.E.]{zeidler_applied_fa}. 
Throughout this section, $\gr{\bz} := E_\sh \hbz$ will be fixed; see, however,~\thref{not:green_models}. We then introduce the space 
	\begin{equation*}
		\CY_T^{\gr{\bz}} := I_0 \x \DD^{\gamma,\eta,T}_{\gr{\CU}}(\gr{\bz}), \quad 
		\norm[0]{(\eps,\gr{Y})}_{\CY_T^{\gr{\bz}}} := \del[1]{\eps^2 + \threebars \gr{Y} \threebars_{\gamma,\eta;T}^2}^{\nicefrac{1}{2}}
	\end{equation*}
and consider the map
\begin{equation}
	\Psi^{\gr{\bz}}_T: \CY_T^{\gr{\bz}} \to \DD^{\gamma,\eta,T}_{\gr{\CU}}(\gr{\bz}), \quad
	(\eps,\gr{Y}) \mapsto \gr{Y} - \CM_{\gr{\bz}}(\eps,\gr{Y}),
	\label{eq:Psi_impl_fct}
\end{equation}
with~$\CM^{\gr{\bz}}(\eps,\gr{Y}) = \CP^{\gr{\bz}}\del[1]{G(\gr{Y}) [\eps\<wn> + \<cm>]} + \TT Pu_0$ as introduced in~\eqref{eq:def_fp_map}. Whenever convenient and without risk of confusion, we will drop the dependence on~$T$ and~$\gr{\bz}$ for all involved quantities.

Proposition~\ref{lem:diffb_G_from_g} allows to calculate the candidates for the derivatives~$D^{(k)} \Psi$.

\begin{lemma}\label{lem:k_der_Psi}
	Let $k \in \N$ and suppose the function $\Psi$ is $\CC^k$ F-differentiable in $(\eps,\gr{Y}) \in \CY$. Then, its $k$-th F-derivative $D^{(k)} \Psi(\eps,\gr{Y}) \in \CL^{(k)}(\DD^{\gamma,\eta;T}_{\gr{\CU}}(\gr{\bz}))$\footnote{For a normed linear space~$V$ and $k \in \N$, we denote by~$\CL^{(k)}(V)$ the $k$-linear, bounded maps from~$V$ into itself.} at $(\eps,\gr{Y})$ is given by
	\begin{equs}[][lem:k_der_Psi:eq]
		\thinspace
		&
		D^{(k)} \Psi(\eps,\gr{Y})\sbr[1]{(\eps_1,\gr{Y_1}), \ldots, (\eps_k,\gr{Y_k})} \\
		&
		=
		\gr{Y_1} \mathbf{1}_{k=1} 
		- \CP\del[2]{G^{(k)}(\gr{Y}) \prod_{m=1}^k \gr{Y_m} \thinspace [\eps\<wn> + \<cm>]}
		- \sum_{\ell=1}^{k} \eps_\ell \CP\del[2]{G^{(k-1)}(\gr{Y}) \prod_{\substack{m=1, \\ m \neq \ell}}^k \gr{Y_m} \thinspace \<wn>}
	\end{equs}	
\end{lemma}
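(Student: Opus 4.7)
The plan is to proceed by induction on $k$, exploiting the structure of $\Psi$. Observe that
\begin{equation*}
\Psi(\eps,\gr{Y}) \;=\; \underbrace{\gr{Y}}_{\text{linear in }\gr{Y}} \;-\; \CP\bigl(G(\gr{Y})[\eps\<wn>+\<cm>]\bigr) \;-\; \underbrace{\TT Pu_0}_{\text{constant}},
\end{equation*}
so the constant part drops out after one differentiation, the linear-in-$\gr{Y}$ part contributes only to $D\Psi$ (giving the $\1_{k=1}\gr{Y_1}$ term), and all higher-order behaviour is stored in the single nonlinear functional $N(\eps,\gr{Y}) := \CP\bigl(G(\gr{Y})[\eps\<wn>+\<cm>]\bigr)$.

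For the base case $k=1$, I would compute $D\Psi$ by direct application of the product rule. Since $\CP$ is linear and continuous on the relevant modelled-distribution space, and since $G$ is Fr\'echet differentiable by Proposition~\ref{lem:diffb_G_from_g} (with first derivative $G'(\gr{Y})\gr{Y_1}$, where the product is the product of modelled distributions), one obtains
\begin{equation*}
D\Psi(\eps,\gr{Y})[(\eps_1,\gr{Y_1})] = \gr{Y_1} - \CP\bigl(G'(\gr{Y})\gr{Y_1}\,[\eps\<wn>+\<cm>]\bigr) - \eps_1\,\CP\bigl(G(\gr{Y})\<wn>\bigr),
\end{equation*}
which matches \eqref{lem:k_der_Psi:eq} for $k=1$. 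The inductive step follows the same logic: differentiating the expression at level $k-1$ one more time, each of the $k$ differentiations can either
\begin{enumerate}[label=(\roman*)]
\item hit the factor $G^{(k-1)}(\gr{Y})$, raising its order to $G^{(k)}(\gr{Y})$ and appending the new direction $\gr{Y_k}$ to the product of modelled distributions (using Proposition~\ref{lem:diffb_G_from_g} once more), or
\item hit the factor $[\eps\<wn>+\<cm>]$, producing $\eps_k \<wn>$.
\end{enumerate}
The crucial structural observation is that $[\eps\<wn>+\<cm>]$ is \emph{affine} in $\eps$, so after any differentiation it becomes $\<wn>$ and is then annihilated by any further $\eps$-derivative. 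Consequently, across all $k$ differentiations, the $\eps$-slot can be hit \emph{at most once}. This yields exactly the two families of terms on the RHS of \eqref{lem:k_der_Psi:eq}: a single ``pure $G$'' term in which all $k$ directions hit $G$, and a sum over $\ell \in [k]$ in which the $\ell$-th direction hits the $\eps$-slot while the remaining $k-1$ directions hit $G$. The multilinearity/symmetry of the higher derivatives of $G$ (each $G^{(j)}(\gr{Y})$ is a symmetric $j$-linear form in its $\gr{Y_m}$-arguments) guarantees that our formula is a well-defined symmetric $k$-linear map in the $k$ arguments $(\eps_m,\gr{Y_m})$, as required.

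The main technical obstacle is not the combinatorics but verifying that the candidate expression in \eqref{lem:k_der_Psi:eq} is indeed the Fr\'echet derivative in the Banach space $\DD^{\gamma,\eta;T}_{\gr{\CU}}(\gr{\bz})$, i.e.\ that the remainder in the first-order Taylor expansion is $o(\|(\eps_1,\gr{Y_1})\|_{\CY})$ in the $\threebars\cdot\threebars_{\gamma,\eta;T}$-norm. This reduces to three facts we can invoke cleanly: continuity of $\CP$ as an abstract integration operator between the appropriate $\DD^{\gamma,\eta}$-spaces; the local Lipschitz/multilinear estimates for products of modelled distributions (on which \thref{prop:a_priori_estimate} relied); and the differentiability properties of the lift $G$ provided by Proposition~\ref{lem:diffb_G_from_g}. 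Once these three ingredients are plugged into the product-rule computation above, the induction closes and \eqref{lem:k_der_Psi:eq} follows.
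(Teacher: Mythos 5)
Your combinatorial computation matches the paper's exactly: both proceed by induction, computing at each step the directional derivative $\frac{\dif}{\dif t}\bigl\vert_{t=0}D^{(k)}\Psi(\eps + t\eps_{k+1},\gr{Y}+t\gr{Y_{k+1}})$ via \thref{lem:diffb_G_from_g}, and both note that the affine structure of $\eps\<wn>+\<cm>$ lets the $\eps$-slot be hit at most once.

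However, your final paragraph misreads what this lemma asks for. The hypothesis already grants that $\Psi$ is $\CC^k$ Fr\'echet at $(\eps,\gr{Y})$; the lemma merely identifies the \emph{formula} for $D^{(k)}\Psi$ under that assumption. Because Fr\'echet differentiability is postulated, the $k$-th Fr\'echet derivative is uniquely determined by the iterated directional derivatives you (and the paper) compute, and there is \emph{no remainder estimate to verify here}. The remainder bound $o(\|(\eps_{k+1},\gr{Y_{k+1}})\|_{\CY})$ that you flag as the ``main technical obstacle'' is precisely what \thref{thm:frechet:gpam} (the next lemma) is devoted to establishing --- there the premise is \emph{not} assumed, the candidate expressions from~\eqref{lem:k_der_Psi:eq} are used, and the Taylor remainders $\Delta^{(k)}$ are estimated in the $\threebars\cdot\threebars_{\gamma,\eta;T}$-norm. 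The paper even remarks immediately after the lemma that ``the premise of the previous proposition is yet to be validated. In any case, however, the expressions on the RHS of~\eqref{lem:k_der_Psi:eq} make perfect sense.'' So your proof would close the lemma with considerably more work than needed: the product-rule induction alone suffices, and the three ``ingredients'' you list (continuity of $\CP$, multilinear estimates, differentiability of $G$) belong to the proof of the subsequent lemma, not this one.
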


\begin{proof}
	Using \thref{lem:diffb_G_from_g}, one computes
	\begin{equs}
		D \Psi(\eps,\gr{Y})(\eps_1,\gr{Y_1}) 
		&
		= \frac{\dif}{\dif t}\sVert[2]_{t = 0} \Psi(\eps + t \eps_1, \gr{Y} + t \gr{Y_1})
		= \gr{Y_1} - \CP\del[1]{G'(\gr{Y}) \gr{Y_1} [\eps\<wn> + \<cm>]} - \eps_1 \CP(G(\gr{Y})\thinspace\<wn>),
	\end{equs}	
	so the claim is true for $k=1$. Analogously, we find
	\begin{equs}
		\thinspace
		&
		D^{(2)} \Psi(\eps,\gr{Y})[(\eps_1,\gr{Y_1}),(\eps_2,\gr{Y_2})] 
		= \frac{\dif}{\dif t}\sVert[2]_{t = 0} D\Psi(\eps + t \eps_1, \gr{Y} + t \gr{Y_1})(\eps_2,\gr{Y_2}) \\
		&
		=
		- \CP\del[1]{G''(\gr{Y})\gr{Y_1 Y_2} [\eps \<wn> + \<cm>]} -\eps_1 \CP(G'(\gr{Y})\gr{Y_2}\<wn>) -\eps_2 \CP(G'(\gr{Y})\gr{Y_1}\<wn>)
	\end{equs}
	which settles the case $k=2$. We proceed by induction, $k \mapsto k+1$:
	\begin{equs}
		\thinspace
		&
		D^{(k+1)} \Psi(\eps,\gr{Y})\sbr[1]{(\eps_1,\gr{Y_1}), \ldots, (\eps_{k+1},\gr{Y_{k+1}})} \\
		&
		=
		\frac{\dif}{\dif t}\sVert[2]_{t = 0}
		D^{(k)} \Psi(\eps + t\eps_{k+1},\gr{Y} + t\gr{Y_{k+1}})\sbr[1]{(\eps_1,\gr{Y_1}), \ldots, (\eps_k,\gr{Y_k})} \\
		&
		=
		-\CP\del[1]{G^{(k+1)}(\gr{Y}) \gr{Y_1} \cdot \ldots \cdot \gr{Y_k} \gr{Y_{k+1}} [\eps\<wn> + \<cm>]}
		- \eps_{k+1} \CP\del[1]{G^{(k)}(\gr{Y})\gr{Y_1} \cdot \ldots \cdot \gr{Y_k} \<wn>} \\
		&
		- \sum_{\ell=1}^{k} \eps_\ell \CP\del[1]{G^{k}(\gr{Y}) \gr{Y_{k+1}} \prod_{\substack{m=1, \\ m \neq \ell}}^k \gr{Y_m} \thinspace \<wn>}.
	\end{equs}
	The last expression can be reorganised as claimed.
\end{proof}

Note that the premise of the previous proposition is yet to be validated. In any case, however, the expressions on the RHS of~\eqref{lem:k_der_Psi:eq} make perfect sense: We will use this fact to our advantage when verifying the criteria for the Implicit Function Theorem~(IFT) to apply. 

We start with differentiability of the map $\Psi$ from~$\eqref{eq:Psi_impl_fct}$. For the corresponding statement in the rough paths setting (without dilation and encoded Cameron-Martin shifts), see Bailleul~\cite{bailleul_reg_ito_lyons}.

\begin{lemma}[IFT,~$\boldsymbol{1}$st condition] \label{thm:frechet:gpam} 
	Suppose $g \in \CC^{\ell + 4}$, $\ell \geq 1$, and construct the lift $G^{(m)}$ of $g^{(m)}$, $m = 1, \ldots, \ell$, by means of~\eqref{eq:G_ext_rs}.
	Then, there exists a neighbourhood $N$ of $(0,\gr{W}) \in \CY$ on which the map $\Psi$ is $\CC^\ell$ Fr\'{e}chet with derivatives given in~\eqref{lem:k_der_Psi:eq}. 
\end{lemma}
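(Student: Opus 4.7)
The plan is to decompose $\Psi$ into elementary operations whose Fr\'echet derivatives are known and then invoke the chain rule. Writing
\begin{equation*}
    \Psi(\eps,\gr{Y})
    = \gr{Y} - \TT Pu_0 - \Theta(\eps,\gr{Y}), \qquad
    \Theta(\eps,\gr{Y}) := \CP^{\gr{\bz}}\bigl( G(\gr{Y})\,[\eps\<wn>+\<cm>]\bigr),
\end{equation*}
the first two summands are (affine-)linear and continuous on $\CY^{\gr{\bz}}_T$, hence $\CC^\infty$ Fr\'echet with trivial derivatives. All the work is in $\Theta$.

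First I would factor $\Theta$ as a composition of four maps: (i) the lift $\gr{Y}\mapsto G(\gr{Y})$ from $\DD^{\gamma,\eta}_{\gr{\CU}}(\gr{\bz})$ into itself; (ii) the affine-linear map $\eps\mapsto \eps\<wn>+\<cm>$ into a space of modelled distributions of high regularity; (iii) the bilinear product map, which by \cite[Prop.~6.12]{hairer_rs} is continuous into $\DD^{\bar{\gamma},\bar{\eta}}(\gr{\bz})$ with $\bar{\gamma}=\gamma+\alpha$, $\bar{\eta}=\eta+\alpha$ (here $\alpha=\reg{\<wn>}$), once one verifies that $(\gr{\CV},\gr{\CU})$ is $\bar{\gamma}$-regular (already used in the proof of Lemma~\ref{prop:a_priori_estimate}); (iv) the linear continuous abstract convolution $\CP^{\gr{\bz}}$ sending $\DD^{\bar{\gamma},\bar{\eta}}(\gr{\bz})$ back into $\DD^{\gamma,\eta}_{\gr{\CU}}(\gr{\bz})$. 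Steps (ii)--(iv) are smooth with derivatives being themselves (one of the) constituent (bi-)linear maps. The non-trivial step is (i): since $g\in\CC^{\ell+4}$, Proposition~\ref{lem:diffb_G_from_g} grants that $\gr{Y}\mapsto G(\gr{Y})$ is $\CC^\ell$ Fr\'echet on $\DD^{\gamma,\eta}_{\gr{\CU}}(\gr{\bz})$ with derivatives $\gr{Y}_1,\ldots,\gr{Y}_k\mapsto G^{(k)}(\gr{Y})\gr{Y}_1\cdots \gr{Y}_k$. The \enquote{$+4$} in $\CC^{\ell+4}$ compensates the loss incurred when lifting $g^{(k)}$ to the regularity structure, cf.~Remark~\ref{rmk:order_of_diffb}.

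Combining these four pieces by the chain rule and the product rule for multilinear maps, I would differentiate $k$ times in the joint variable $(\eps,\gr{Y})$: the first-order derivative picks up one contribution from the $\gr{Y}$-slot of $G$ and one from the $\eps$-slot of $\eps\<wn>+\<cm>$; each subsequent differentiation either raises the order of $G^{(k)}$ by one in the $\gr{Y}$-slot or differentiates the remaining $\eps$-prefactor. A straightforward induction on $k$ (matching the one carried out in the proof of Lemma~\ref{lem:k_der_Psi}) then reproduces precisely the formula~\eqref{lem:k_der_Psi:eq}. Signs come from the minus in front of $\Theta$ in $\Psi$.

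The main obstacle, and the reason for the restriction to a neighborhood $N$ of $(0,\gr{W})$, is to ensure that all the multilinear operators appearing in~\eqref{lem:k_der_Psi:eq} are bounded with uniform constants: this is needed both for well-posedness of the $k$-th derivative as an element of $\CL^{(k)}(\DD^{\gamma,\eta,T}_{\gr{\CU}}(\gr{\bz}))$ and, more importantly, to validate the limit defining the Fr\'echet derivative. Concretely, on a ball $N=\{(\eps,\gr{Y}): |\eps|+\threebars \gr{Y}-\gr{W}\threebars_{\gamma,\eta;T}<r\}$ the uniformly strongly locally Lipschitz estimates from Lemma~\ref{prop:a_priori_estimate} (together with the boundedness assumption $g\in\CC^{\ell+4}_b$, giving uniform control of $G^{(m)}$ for $m\le\ell$) furnish the required bounds on each of the multilinear terms in~\eqref{lem:k_der_Psi:eq}; continuity of $(\eps,\gr{Y})\mapsto D^{(k)}\Psi(\eps,\gr{Y})$ in the operator norm, needed for the $\CC^\ell$ (as opposed to merely $\ell$-times differentiable) assertion, then follows from the same estimates applied to the difference of the derivatives at nearby points. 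This closes the argument.
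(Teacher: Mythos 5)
Your proposal is correct, and it follows a genuinely different route from the paper. The paper's proof verifies the definition of Fr\'echet differentiability directly: it sets up an inductive argument in $k$, writes out the $k$-th order remainder $\Delta^{(k)}$ explicitly, decomposes it into a noise part $\Delta^{(k)}_{\<wns>}$ and a Cameron-Martin part $\Delta^{(k)}_{\<cms>}$, and estimates each summand using \thref{prop:taylor_modelled_distrib} (Taylor's theorem for lifted modelled distributions) together with the multiplication and integration estimates of~\cite{hairer_rs}. This is essentially the same mechanics as the proof of~\thref{lem:diffb_G_from_g}, repeated once more for the composed map $\Psi$. You instead factor $\Theta(\eps,\gr{Y}) = \CP^{\gr{\bz}}\bigl(B(G(\gr{Y}),\eps\<wn>+\<cm>)\bigr)$ as $\CP^{\gr{\bz}}\circ B\circ(G\times A)$ with $B$ the bounded bilinear product and $A(\eps)=\eps\<wn>+\<cm>$ affine, and invoke the chain rule: since $\CP^{\gr{\bz}}$, $B$, and $A$ are $\CC^\infty$ with derivatives that vanish from order $2$, $3$, $2$ respectively, the composition inherits $\CC^\ell$ smoothness entirely from $G$, for which~\thref{lem:diffb_G_from_g} is invoked as a black box. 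The formula~\eqref{lem:k_der_Psi:eq} then drops out from Leibniz: each multi-differentiation either hits the $G$-slot $k$ times (first summand) or hits $G$ $(k-1)$ times and the $\eps$-slot once (second summand); two or more $\eps$-derivatives vanish. This is cleaner and avoids the paper's redundancy; what the paper's direct approach buys is the explicit $\Delta^{(k)}_{\<wns>}/\Delta^{(k)}_{\<cms>}$ decomposition, and practice with the remainder structure that recurs in section~\ref{sec:estimates_taylor_remainder}. One minor inaccuracy: the uniform bounds you need are already packaged in~\thref{lem:diffb_G_from_g} (local boundedness of $G^{(m)}$ on balls in $\DD^{\gamma,\eta}_{\gr{\CU}}(\gr{\bz})$); the uniformly \emph{strongly} locally Lipschitz estimates of~\thref{prop:a_priori_estimate} concern continuity across different models $\gr{\bz}$, which is not what the chain rule requires here.
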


\begin{remark}
	It has proved futile to use the IFT to prove Malliavin differentiability of the solution map to (singular) stochastic (P)DEs. For gPAM, this is formulated in~\cite[Prop.\thinspace 4.7]{cfg}. In the proof of that proposition, the authors establish the $\CC^1$ F-differentiability of some $\widetilde{\Psi}$ (there called~$F_\gamma$) which coincides with $\Psi$ up to the roles of $\<wn>$ (there denoted by~$\textcolor{darkblue}{\Xi}$) and $\<cm>$ (there denoted by~$\textcolor{darkblue}{H}$) reversed.  
	Sch\"onbauer~\cite{schoenbauer} has used that idea in full generality, covering any stochastic PDE that can be solved with the theory of regularity structures.
	
	Prior to these recent developments,~Nualart and Saussereau~\cite{nualart_saussereau} have employed the same argument in the context of SDEs. So have Deya and Tindel~\cite{deya_tindel}, albeit for stochastic PDEs driven by the fractional BM with Hurst index $H > \frac{1}{2}$.
\end{remark}

In the following proof, we will occasionally refer to the proof of~\thref{lem:diffb_G_from_g}, where similar arguments have been used. We recommend the reader to check that proposition before proceeding.

\begin{proof} 
	Let $M > 0$, denote by $\CB_T^{\gamma,\eta}(\gr{W},M)$ a closed ball in $\DD^{\gamma,\eta;T}_{\gr{\CU}}(\gr{\bz})$ of radius $M$, centered at $\gr{W}$, and set $B_T^{\gamma,\eta}(\gr{W},M) := I_0 \x \CB_T^{\gamma,\eta}(\gr{W},M) \subseteq \CY$.
	We fix $\alpha := \reg{\<wn>} = \reg{\<cm>} = - 1 - \kappa$.
	
	Suppose we had already established that $\Psi \in \CC^k$ at $(\eps,\gr{Y}) \in B_T^{\gamma,\eta}(\gr{W},M)$ for some $k \in \{1,\ldots,\ell-1\}$; its derivatives up to order $k$ are then given by~\eqref{lem:k_der_Psi:eq}. 
	We want to do the induction step $k \mapsto k+1$ and prove that $\Psi \in \CC^{k+1}$ at~$(\eps,\gr{Y})$. As $D^{(k)}\Psi(\eps,\gr{Y})$ is linear in $\eps$, it suffices to consider the case $\eps = 0$. We let $(\eps_{k+1},\gr{Y_{k+1}}) \in I_0 \x \CB_T^{\gamma,\eta}(\gr{0},M)$ and study the $k$-th order remainder~(see~\cite[p.~$229$, eq.~($7$)]{zeidler_applied_fa})
	\begin{align}
		\thinspace
		&
		\Delta^{(k)}\sbr[1]{(\eps_i,\gr{Y_i})_{i=1}^{k+1}}
		\equiv 
		\Delta^{(k)}\sbr[1]{(0,\gr{Y});(\eps_i,\gr{Y_i})_{i=1}^{k+1}} \label{thm:frechet:gpam:pf_remainder_k} \\
		&
		:=
		D^{(k)}\Psi(\eps_{k+1},\gr{Y} + \gr{Y_{k+1}})\sbr[1]{(\eps_i,\gr{Y_i})_{i=1}^k}
		-
		D^{(k)}\Psi(0,\gr{Y})\sbr[1]{(\eps_i,\gr{Y_i})_{i=1}^k}
		-
		D^{(k+1)}\Psi(0,\gr{Y})\sbr[1]{(\eps_i,\gr{Y_i})_{i=1}^{k+1}} \notag
	\end{align}
	where $D^{(k+1)}\Psi(0,\gr{Y})\sbr[1]{(\eps_i,\gr{Y_i})_{i=1}^{k+1}}$ is \emph{defined} by the corresponding quantity in \eqref{lem:k_der_Psi:eq}. At this stage, this notation is only suggestive. However, we will prove that
	\begin{equation}
		\sup_{\substack{\norm[0]{(\eps_m,\gr{Y_m})}_{\CY} \leq 1, \\ 1 \leq m \leq k}} \threebars \Delta^{(k)}\sbr[1]{(0,\gr{Y});(\eps_i,\gr{Y_i})_{i=1}^{k+1}} \threebars_{\gamma,\eta;T} = o\del[1]{\norm[0]{(\eps_{k+1},\gr{Y_{k+1}})}_{\CY}}, \quad (\eps_{k+1},\gr{Y_{k+1}}) \to 0,
		\label{thm:frechet:gpam:pf_remainder_k_est}
	\end{equation}
	which indeed qualifies~$D^{(k+1)}\Psi(0,\gr{Y})$ as the $(k+1)$-th derivative of $\Psi$ at $(0,\gr{Y})$. An elementary calculation gives
	\begin{equation}
		\Delta^{(k)}\sbr[1]{(0,\gr{Y});(\eps_i,\gr{Y_i})_{i=1}^{k+1}} 
		=
		\Delta^{(k)}_{\<wns>}\sbr[1]{\gr{Y};(\eps_i,\gr{Y_i})_{i=1}^{k+1}} 
		+ 
		\Delta^{(k)}_{\<cms>}\sbr[1]{\gr{Y};(\gr{Y_i})_{i=1}^{k+1}}
		\label{thm:frechet:gpam:pf_remainder_k_decomp}
	\end{equation}
	where one summand collects the terms with the noise symbol $\<wn>$\thinspace, namely
	\begin{equs}[][thm:frechet:gpam:pf_remainder_k_wn]
		\Delta^{(k)}_{\<wns>}\sbr[1]{\gr{Y};(\eps_i,\gr{Y_i})_{i=1}^{k+1}}
		& :=
		\Delta^{(k);1}_{\<wns>}\sbr[1]{\gr{Y};(\eps_i,\gr{Y_i})_{i=1}^{k+1}} 
		+ 
		\Delta^{(k);2}_{\<wns>}\sbr[1]{\gr{Y};(\eps_i,\gr{Y_i})_{i=1}^{k+1}}, \\
		\Delta^{(k);1}_{\<wns>}\sbr[1]{\gr{Y};(\eps_i,\gr{Y_i})_{i=1}^{k+1}}
		& :=
		- \eps_{k+1}\CP\del[2]{\sbr[1]{G^{(k)}(\gr{Y} + \gr{Y_{k+1}}) - G^{(k)}(\gr{Y})}\prod_{m=1}^k \gr{Y_m} \thinspace \<wn>}, \\
		\Delta^{(k);2}_{\<wns>}\sbr[1]{\gr{Y};(\eps_i,\gr{Y_i})_{i=1}^{k+1}}
		& :=
		- \sum_{\ell=1}^k \eps_\ell \CP\del[2]{\sbr[1]{G^{(k-1)}(\gr{Y} + \gr{Y_{k+1}}) - G^{(k-1)}(\gr{Y}) - G^{(k)}(\gr{Y})\gr{Y_{k+1}}} \prod_{\substack{m=1, \\ m \neq \ell}}^k \gr{Y_m} \thinspace \<wn>},
	\end{equs}
	and the other one collects those with the Cameron-Martin symbol $\<cm>$\thinspace, namely
	\begin{equation}
		\Delta^{(k)}_{\<cms>}\sbr[1]{\gr{Y};(\gr{Y_i})_{i=1}^{k+1}}
		:=
		- \CP\del[2]{\sbr[1]{G^{(k)}(\gr{Y} + \gr{Y_{k+1}}) - G^{(k)}(\gr{Y}) - G^{(k+1)}(\gr{Y})\gr{Y_{k+1}}} \prod_{m=1}^k \gr{Y_m} \thinspace \<cm>}.
		\label{thm:frechet:gpam:pf_remainder_k_cm}
	\end{equation}
	\begin{enumerate}[label=\textbf{Part \arabic*:}, align=left, leftmargin=0pt, labelindent=0pt,listparindent=0pt, itemindent=!]
		\item \textbf{Analysis of the Cameron-Martin part~$\boldsymbol{\Delta^{(k)}_{\<cms>}}$.}
		At first, we write
		\begin{equation}
			\Delta^{(k)}_{\<cms>}\sbr[1]{\gr{Y};(\gr{Y_i})_{i=1}^{k+1}} = -\CP\del[1]{R^{(k)}\sbr[1]{\gr{Y};(\gr{Y_i})_{i=1}^{k+1}} \<cm>}
		\end{equation}
		with $R^{(k)}\sbr[1]{\gr{Y};(\gr{Y_i})_{i=1}^{k+1}}$ implicitly defined from~\eqref{thm:frechet:gpam:pf_remainder_k_cm} and explicitly given in~\eqref{lem:diffb_G_from_g:pf:remainder_k} in the proof of~\thref{lem:diffb_G_from_g}, where it is also identified as an element of~$\DD^{\gamma,\eta;T}_{\gr{\CU}}(\gr{\bz})$. 
		 
		As in the proof of \cite[Lem.~9.1]{hairer_rs}, the map~$z \mapsto \<cm>$ is easily seen to be an element in $\DD^{\zeta,\zeta}(\gr{\bz})$, $\zeta > 0$, with values in a sector of reg.~$\alpha$. From \cite[Prop.~6.12]{hairer_rs}, we thus infer that
		$R^{(k)}\sbr[1]{\gr{Y};(\gr{Y_i})_{i=1}^{k+1}} \<cm> \in \DD^{\gamma+\alpha,\eta+\alpha;T}(\gr{\bz})$
		with values in another sector of reg.~$\alpha$.
		Since $\eta < \bar{\a} := \alpha+2$ and $\bar{\a} > 0$\footnote{See \thref{rmk:diff_fp_map_phi43} below for a comment on the implications of this constraint for adaptions of our argument to cover the $\Phi^4_3$ equation where $\bar{\a} < 0$.}, we have
		\begin{equation}
			\threebars \Delta^{(k)}_{\<cms>}\sbr[1]{\gr{Y};(\gr{Y_i})_{i=1}^{k+1}} \threebars_{\gamma,\eta;T}
			\aac
			\threebars \Delta^{(k)}_{\<cms>}\sbr[1]{\gr{Y};(\gr{Y_i})_{i=1}^{k+1}} \threebars_{\gamma,\bar{\a};T}
			\aac
			\threebars \Delta^{(k)}_{\<cms>}\sbr[1]{\gr{Y};(\gr{Y_i})_{i=1}^{k+1}} \threebars_{\gamma + \bar{\a},\bar{\a};T}
			\label{pf:thm:frechet:gpam:est_cm:aux}
		\end{equation}
		and now apply~\cite[Prop.'s~6.16 \& 6.12]{hairer_rs}\footnote{Actually, instead of \cite[Prop.~6.12]{hairer_rs}, we apply an analogue of \cite[Thm.~4.7]{hairer_rs} (valid for $\DD^{\gamma}$ spaces \emph{without} blow-ups) in the setting of $\DD^{\gamma,\eta}$ spaces \emph{with} blow-ups.} to obtain that
		\begin{equation} 
			\threebars \Delta^{(k)}_{\<cms>}\sbr[1]{\gr{Y};(\gr{Y_i})_{i=1}^{k+1}} \threebars_{\gamma+\bar{\a},\bar{\a};T} 
			\aac
			\threebars R^{(k)}\sbr[1]{\gr{Y};(\gr{Y_i})_{i=1}^{k+1}} \<cm> \thinspace \threebars_{\gamma + \a,\eta + \a;T}
			\aac
			\threebars R^{(k)}\sbr[1]{\gr{Y};(\gr{Y_i})_{i=1}^{k+1}} \threebars_{\gamma,\eta;T}.
			\label{pf:thm:frechet:gpam:est_cm}
		\end{equation}
		The argument is now identical to that in~\eqref{lem:diffb_G_from_g:pf:remainder_k_est_prod_est}: relying on Taylor's theorem up to order one (i.e.~\thref{prop:taylor_modelled_distrib}), eq.~\eqref{prop:taylor_modelled_distrib_eq}) we have
		\begin{equs}[][estimate_unif]
			\threebars R^{(k)}\sbr[1]{\gr{Y};(\gr{Y_i})_{i=1}^{k+1}} \threebars_{\gamma,\eta;T}
			& 
			\aac
			\del[1]{\threebars \gr{Y} \threebars_{\gamma,\eta;T} + \threebars \gr{Y_{k+1}} \threebars_{\gamma,\eta;T}} \threebars \gr{Y_{k+1}} \threebars_{\gamma,\eta;T}^2 \prod_{n=1}^k \threebars \gr{Y_n} \threebars_{\gamma,\eta;T} \\
			&
			\leq
			C(\threebars \gr{W} \threebars_{\gamma,\eta;T},M) \threebars \gr{Y_{k+1}} \threebars_{\gamma,\eta;T}^2, 
		\end{equs}
		where we have used that $\gr{Y} \in \CB^T_{\gamma,\eta}(\gr{W},M)$, $\gr{Y_{k+1}} \in \CB^T_{\gamma,\eta}(\gr{0},M)$,~and that $\threebars \gr{Y_m} \threebars_{\gamma,\eta;T} \leq 1$ for $1 \leq m \leq k$ as assumed in~\eqref{thm:frechet:gpam:pf_remainder_k_est}. 
		We then infer that
		\begin{equation}
			\sup_{\substack{\threebars \gr{Y_m} \threebars_{\gamma,\eta;T} \leq 1, \\ 1 \leq m \leq k}} \threebars \Delta^{(k)}_{\<cms>}\sbr[1]{\gr{Y};(\gr{Y_i})_{i=1}^{k+1}} \threebars_{\gamma,\eta;T}
			= O\del[1]{\threebars \gr{Y_{k+1}} \threebars_{\gamma,\eta;T}^2}, \quad \gr{Y_{k+1}} \to 0.
			\label{eq:cm_est}
		\end{equation}
		\item \textbf{Analysis of the noise part~$\boldsymbol{\Delta^{(k)}_{\<wns>} = \Delta^{(k);1}_{\<wns>} + \Delta^{(k);2}_{\<wns>}}$.}
		Modulo minor changes, the arguments are the same as before. For completeness, we still provide all the estimates, starting with $\Delta^{(k);2}_{\<wns>}$. 
		
		We define $P_k^r := \prod_{\substack{m=1, \\ m \neq r}}^k \gr{Y_m}$ for $r = 1,\ldots,k$ and $R^{(k);r}$ like the RHS of~\eqref{lem:diffb_G_from_g:pf:remainder_k_est_prod}, only with $P_k$ replaced by~$P_k^r$. Exactly the same argument as in~\eqref{pf:thm:frechet:gpam:est_cm:aux} and \eqref{pf:thm:frechet:gpam:est_cm} then gives
		\begin{equation}
			\threebars \Delta^{(k);2}_{\<wns>}\sbr[1]{\gr{Y};(\eps_i,\gr{Y_i})_{i=1}^{k+1}} \threebars_{\gamma,\eta;T}
			\aac
			\sum_{r=1}^k \eps_r \threebars R^{(k);r}\sbr[1]{\gr{Y};(\gr{Y_i})_{i=1}^{k+1}} \threebars_{\gamma,\eta;T},
			\label{eq:rem_wn_2nd_term_est}
		\end{equation}
		which by the same arguments as for~\eqref{lem:diffb_G_from_g:pf:remainder_k_est_prod_est} and then \eqref{estimate_unif} results in the estimate
		\begin{equs}
			\threebars \Delta^{(k);2}_{\<wns>}\sbr[1]{\gr{Y};(\eps_i,\gr{Y_i})_{i=1}^{k+1}} \threebars_{\gamma,\eta;T}
			& 
			\aac
			\sum_{r=1}^k \eps_r \del[1]{\threebars \gr{Y} \threebars_{\gamma,\eta;T} + \threebars \gr{Y_{k+1}} \threebars_{\gamma,\eta;T}} \threebars \gr{Y_{k+1}} \threebars_{\gamma,\eta;T}^2 \prod_{\substack{m=1, \\ m \neq r}}^k \threebars \gr{Y_m}  \threebars_{\gamma,\eta;T} \\
			&
			\leq
			C(\threebars \gr{W} \threebars_{\gamma,\eta;T},M) \threebars \gr{Y_{k+1}} \threebars_{\gamma,\eta;T}^2,
		\end{equs}
		where we have additionally used that $\eps_r \leq 1$ for $1\leq r \leq k$. 
		In turn, we obtain
		\begin{equation}
			\sup_{\substack{\norm[0]{(\eps_m,\gr{Y_m})}_{\CY} \leq 1, \\ 1 \leq m \leq k}} 
			\threebars \Delta^{(k);2}_{\<wns>}\sbr[1]{\gr{Y};(\eps_i,\gr{Y_i})_{i=1}^{k+1}} \threebars_{\gamma,\eta;T}
			= O\del[1]{\threebars \gr{Y_{k+1}} \threebars_{\gamma,\eta;T}^2}, \quad \gr{Y_{k+1}} \to 0.
			\label{eq:wn_est_2}
		\end{equation}
	\end{enumerate}	
	The term $\Delta_{\<wns>}^{(k),;1}\sbr[1]{\gr{Y};(\eps_i,\gr{Y_i})_{i=1}^{k+1}}$ gets treated in the same way, only this time we use Taylor's formula only up to $0$-th order (\thref{prop:taylor_modelled_distrib}, eq.~\eqref{prop:taylor_modelled_distrib_eq}). The same arguments as before then lead to
	\begin{equation}
		\sup_{\substack{\norm[0]{(\eps_m,\gr{Y_m})}_{\CY} \leq 1, \\ 1 \leq m \leq k}} \threebars \Delta^{(k);1}_{\<wns>}\sbr[1]{\gr{Y};\eps_{k+1},(\gr{Y_i})_{i=1}^{k+1}} \threebars_{\gamma,\eta;T}
		= O\del[1]{\eps_{k+1} \threebars \gr{Y_{k+1}} \threebars_{\gamma,\eta;T}}, \quad (\eps_{k+1},\gr{Y_{k+1}}) \to 0.
		\label{eq:wn_est_1}
	\end{equation}
	Combining~\eqref{eq:cm_est},~\eqref{eq:wn_est_2}, and \eqref{eq:wn_est_1} gives 
	\begin{equation*}
		\sup_{\substack{\norm[0]{(\eps_m,\gr{Y_m})}_{\CY} \leq 1, \\ 1 \leq m \leq k}} \threebars \Delta^{(k)}\sbr[1]{(0,\gr{Y});(\eps_i,\gr{Y_i})_{i=1}^{k+1}} \threebars_{\gamma,\eta;T} = O\del[1]{\norm[0]{(\eps_{k+1},\gr{Y_{k+1}})^2}_{\CY}}, \quad (\eps_{k+1},\gr{Y_{k+1}}) \to 0,
	\end{equation*}
	so the claim~\eqref{thm:frechet:gpam:pf_remainder_k_est} follows. 
	All the estimates are also valid for $k = 0$, which establishes $\Psi \in \CC^1$ at $(\eps,\gr{Y}) \in B_T^{\gamma,\eta}(\gr{W},M)$, so we can close the inductive proof.
\end{proof}

\begin{remark}\label{rmk:diff_fp_map_phi43}
	At first glance, it might seem that the arguments in~\eqref{pf:thm:frechet:gpam:est_cm:aux} exclude amendments of our results to cover $\Phi^4_3$ where $\a \in (-\nicefrac{18}{7},-\nicefrac{5}{2})$ and, as a consequence, $\bar{\a} := \a+ 2 < 0$, cf.~\cite[Sec.~9.4]{hairer_rs}. However, this is not the case. Fundamentally, this is due to~\cite[Lem. 9.7 \& Prop. 9.8]{hairer_rs} (and their proofs) which state that, for $\gamma > \abs{2\a +4}$ and $\eta \in (-\nicefrac{2}{3},\bar{\a})$, the map $U \mapsto U^3$ is strongly locally Lipschitz from $\DD^{\gamma,\eta}_V$ to $\DD^{\gamma + 2\a +4,3\eta}_{\bar{V}}$ for sectors $V$ and $\bar{V}$ of regularity $\bar{\a}$ and $3\bar{\a}$, respectively. Hence, \cite[Prop.~6.16]{hairer_rs} implies that
	\begin{equation*}
		\threebars \CP(U^3) \threebars_{\gamma,\eta;T}
		\aac 
		\threebars \CP(U^3) \threebars_{\gamma,3\eta+2;T}
		\aac 
		\threebars \CP(U^3) \threebars_{\gamma +2\alpha +6,3\eta+2;T}
		\aac 
		\threebars U^3 \threebars_{\gamma +2\alpha +4,3\eta;T}
		\aac
		\threebars U \threebars_{\gamma,\eta;T}
	\end{equation*}
	where the first two inequalities hold because $\eta < 3\eta +2$ and $2\a + 6 > 0$.
\end{remark}

The following lemma verifies the second condition for the IFT to apply.

\begin{lemma}[IFT, $\boldsymbol{2}$nd condition] \label{lem:sec_cond_ift}
	For each $T \in (0,T_\infty^\sh \wedge T_0)$ and $(\eps,\gr{J}) \in I \x \DD_{\gr{\CU}}^{\gamma,\eta,T}(\gr{\bz})$, the map\begin{equation}
		D_2 \Psi^{\gr{\bz}}(\eps,\gr{J})
		= \operatorname{Id} - \CP^{\gr{\bz}}\del[1]{G'(\gr{J}) \bullet \sbr[1]{\eps \<wn> + \<cm>}}
	\end{equation}
	is an element of $\CL(\DD_{\gr{\CU}}^{\gamma,\eta;T}(\gr{\bz}))$ and bijective. In particular, this is true for the map~$\Theta = \Theta_T^{\gr{\bz}}$ given by
	\begin{equation}
		\Theta_T^{\gr{\bz}} := D_2 \Psi^{\gr{\bz}}(0,\gr{W})
		= \operatorname{Id} - \CP^{\gr{\bz}}\del[1]{G'(\gr{W}) \bullet \<cm>}.
	\end{equation}
	We will write~$\tilde{\Theta} := \Theta^{-1}$ for the inverse of~$\Theta$.
\end{lemma}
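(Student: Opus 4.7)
The plan is to verify boundedness and bijectivity of $D_2\Psi^{\gr{\bz}}(\eps,\gr{J})$ separately; both rely on the regularising action of $\CP^{\gr{\bz}}$ and directly recycle the estimates already used in the proof of Lemma~\ref{thm:frechet:gpam}. For boundedness, since $G'(\gr{J}) \in \DD^{\gamma,\eta;T}_{\gr{\CU}}(\gr{\bz})$ by Proposition~\ref{lem:diffb_G_from_g} and $\eps\<wn>+\<cm>$ belongs to a sector of regularity $\a = -1-\kappa$, the multiplication bound of \cite[Prop.~6.12]{hairer_rs} yields that $G'(\gr{J})\,\gr{Y}\,[\eps\<wn>+\<cm>] \in \DD^{\gamma+\a,\eta+\a;T}(\gr{\bz})$ with norm controlled by $\threebars\gr{Y}\threebars_{\gamma,\eta;T}$. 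Feeding this into the abstract Schauder estimate \cite[Prop.~6.16]{hairer_rs} and using $\bar\a := \a+2 > 0$ to embed back into $\DD^{\gamma,\eta;T}$---exactly as in the chain culminating in \eqref{pf:thm:frechet:gpam:est_cm}---gives $\threebars D_2\Psi^{\gr{\bz}}(\eps,\gr{J})\gr{Y}\threebars_{\gamma,\eta;T} \lesssim \threebars\gr{Y}\threebars_{\gamma,\eta;T}$.

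For bijectivity, I would recast the equation $D_2\Psi^{\gr{\bz}}(\eps,\gr{J})\gr{Y} = \gr{Z}$ as the \emph{linear} fixed-point problem
\begin{equation*}
\gr{Y} \;=\; \Lambda_{\gr{Z}}(\gr{Y}) \;:=\; \gr{Z} + \CP^{\gr{\bz}}\bigl(G'(\gr{J})\,\gr{Y}\,[\eps\<wn>+\<cm>]\bigr),
\end{equation*}
and first establish short-time contractivity of $\Lambda_{\gr{Z}}$ on $\DD^{\gamma,\eta;T'}_{\gr{\CU}}(\gr{\bz})$ for small $T' \leq T$. The same Schauder chain as above produces a Lipschitz constant that scales like $T'^{\theta}$ for some $\theta > 0$: the positive regularity gain $\bar\a > 0$ delivered by $\CP^{\gr{\bz}}$ translates, upon restricting to $[0,T']\times\T^2$, into a power of $T'$ in the embedding $\DD^{\gamma+\a+2,\eta+\a+2;T'}\hookrightarrow \DD^{\gamma,\eta;T'}$. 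Banach's fixed-point theorem then supplies a unique preimage $\gr{Y}$ of $\gr{Z}$ on $[0,T']$.

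Globalisation is cheap because the equation is linear, so no explosion can occur and the contraction constant is intrinsic to the length of the time interval, not to its location. I would iterate the preceding construction: at time $T'$, reinitialise the problem using the already-constructed values of $\gr{Y}$ as inhomogeneous data, obtain the solution on $[T',2T']$, and proceed until $T$ is reached after finitely many steps. Concatenation and uniqueness at each stage simultaneously produce injectivity and surjectivity of $D_2\Psi^{\gr{\bz}}(\eps,\gr{J})$. The specialisation $(\eps,\gr{J}) = (0,\gr{W})$ then yields invertibility of $\Theta_T^{\gr{\bz}}$ together with its inverse $\tilde\Theta$.

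The main technical obstacle I anticipate lies in the reinitialisation step: the $\DD^{\gamma,\eta;T}$-norm carries a weight $\eta$ that is singular at $t = 0$, so one has to verify that the data transferred across the times $kT'$ lives in the correct weighted space. This bookkeeping is precisely what the abstract version of Duhamel's principle referenced as Lemma~\ref{lem:inv_theta_op} accomplishes; modulo that lemma, the argument sketched above closes.
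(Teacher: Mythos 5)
Your argument matches the paper's in both halves: boundedness via the Schauder chain \eqref{pf:thm:frechet:gpam:est_cm:aux}--\eqref{pf:thm:frechet:gpam:est_cm}, and bijectivity by rewriting $D_2\Psi(\eps,\gr{J})\gr{Y}=\gr{Z}$ as the linear fixed-point problem $\gr{Y}=\gr{Z}+\CP^{\gr{\bz}}(G'(\gr{J})\gr{Y}[\eps\<wn>+\<cm>])$. The only difference is that the paper discharges the fixed-point step by a one-line appeal to \cite[Thm.~7.8]{hairer_rs} (and notes linearity, so strong local Lipschitzness reduces to mere linear boundedness), whereas you unpack the contraction-plus-restart mechanism by hand. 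This is fine, but two remarks are in order. First, the short-time contraction and iteration you describe, including the singular $\eta$-weight at $t=0$ and the reinitialisation at $kT'$, are precisely what \cite[Thm.~7.8, Lem.~7.5, Prop.~7.11]{hairer_rs} already take care of; invoking them directly, as the paper does, is cleaner than re-deriving them. Second, your closing sentence attributes the resolution of the weight bookkeeping to Lemma~\ref{lem:inv_theta_op} (Duhamel), but that proposition is a later, quantitative refinement (uniform operator-norm control after normalisation by $\fd_\l$) and is itself proved by the same restart machinery; the logical worker for mere invertibility is \cite[Thm.~7.8]{hairer_rs}, not Lemma~\ref{lem:inv_theta_op}. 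With that attribution corrected, your proof closes.
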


\begin{proof}
	It is clear that $D_2 \Psi(\eps,\gr{J}) \in \CL(\DD_{\gr{\CU}}^{\gamma,\eta}(\gr{\bz}))$: linearity is trivial and boundedness follows by exactly the same arguments as~\eqref{pf:thm:frechet:gpam:est_cm:aux},  \eqref{pf:thm:frechet:gpam:est_cm}, and the paragraph preceding them:
	\begin{equs}
		\threebars D_2 \Psi(\eps,\gr{J})(\gr{Y}) \threebars_{\gamma,\eta;T} 
		& \aac
		\threebars \gr{Y} \threebars_{\gamma,\eta;T}
		+
		\threebars \CP\del[1]{G'(\gr{J}) \gr{Y} \sbr[1]{\eps \<wn> + \<cm>}} \threebars_{\gamma+\bar{\alpha},\bar{\alpha};T} \\
		& \aac
		\threebars \gr{Y} \threebars_{\gamma,\eta;T}
		+
		\threebars G'(\gr{W}) \gr{Y} \sbr[1]{\eps \<wn> + \<cm>} \threebars_{\gamma+\alpha,\eta + \alpha;T} \\
		& \aac
		C(\threebars \gr{J} \threebars_{\gamma,\eta;T}, g) \threebars \gr{Y}  \threebars_{\gamma,\eta}
	\end{equs}
	In the last estimate, we have additionally used~\cite[Prop.~6.13]{hairer_rs}. Regarding the bijectivity of~$D_2 \Psi(\eps,\gr{J})$, we need to prove that for each $\gr{V} \in \DD_{\gr{\CU}}^{\gamma,\eta}(\gr{\bz})$ there exists a $\gr{Y} \in \DD_{\gr{\CU}}^{\gamma,\eta}(\gr{\bz})$ such that
	\begin{equation}
		\gr{Y} = \gr{V} + \CP\del[1]{G'(\gr{J}) \gr{Y} \sbr[1]{\eps \<wn> + \<cm>}}.
		\label{pf:lem:sec_cond_ift:fp_prob}
	\end{equation}
	This FP problem can again be solved by~\cite[Thm.~7.8]{hairer_rs} because $DG_\eps(\gr{J}): \gr{Y} \mapsto  G'(\gr{J}) \gr{Y} \sbr[1]{\eps\<wn> + \<cm>}$	
	\begin{itemize}
		\item maps $\DD^{\gamma,\eta;T}_{\gr{\CU}}(\gr{\bz})$ into $\DD^{\gamma+\alpha,\eta+\alpha;T}_{\gr{\CU}}(\gr{\bz})$, $\alpha := \reg{\<cm>} = -1-\kappa$,
		\item is strongly locally Lipschitz in the sense of~\thref{def:unif_strong_local_Lipschitz}
	\end{itemize}
	The arguments are the same as in the proof of~\thref{prop:a_priori_estimate}, in particular~\eqref{prop:a_priori_estimate:pf:eq}. Actually, it suffices for~$DG_\eps(\gr{J})$ to be locally Lipschitz continuous which is true because it is linear and continuous.
\end{proof}

\begin{remark}
	Later on, we will actually derive bounds for the operator norm of~$\Theta$ for models with norm of order one, see~\thref{lem:inv_theta_op} and~\thref{rmk:expl_duhamel}.
\end{remark}

With these preparations, the following theorem about $\CC^\ell$ F-differentiability of $\fI_{\gr{\bz}}$ essentially is a consequence of the IFT. In addition, we provide explicit (iterative) FP problems solved by its derivatives~$\fI^{(m)}_{\gr{\bz}}$. For the corresponding statement in the context of Malliavin calculus,~see~\cite[Prop.~4.1]{schoenbauer}.

\begin{theorem}[Differentiabiliy of $\boldsymbol{\fI}$]\label{prop:der_fp_map}
	Let $T \in (0,T_\infty^\sh \wedge T_0)$,~$\bz \in \MM$, and set $\gr{\bz} := E_\sh\bz \in \gr{\MM}$. Suppose~$g \in \CC^{\ell + 4}$ for some $\ell \geq 1$.
	Then there exists some~$\eps_\star = \eps_\star(T,\bz) > 0$ such that, for $I_\star := [0,\eps_\star)$, we have
	\begin{enumerate}[label=(\roman*)]
		\item \label{prop:der_fp_map:i} $\fI_{\gr{\bz}} \in \CC^\ell(I_\star,\DD^{\gamma,\eta;T}_{\gr{\CU}}(\gr{\bz}))$ in the Fr\'{e}chet sense.
		\item \label{prop:der_fp_map:ii} 
		For $m = 1,\ldots,\ell$ and $\eps \in I_\star$, the derivative $\fI^{(m)}_{\gr{\bz}}(\eps) \in \DD^{\gamma,\eta;T}_{\gr{\CU}}({\gr{\bz}})$ uniquely solves the FP equation 
			\begin{equation}
				\fI^{(m)}_{\gr{\bz}}(\eps) = \CM^{\gr{\bz}}_m\del[2]{\eps,\del[1]{\fI^{(k)}_{\gr{\bz}}(\eps)}_{k=0}^{m}}, \quad \eps \in I_\star,
				\label{eq:der_fp_map:fp_eq}
			\end{equation}
		for the FP map $\CM_m^{\gr{\bz}}$ given by
		\begin{align}
			\CM_m^{{\gr{\bz}}}\del[2]{\eps,\del[1]{\fI^{(k)}_{\gr{\bz}}(\eps)}_{k=0}^{m}}
			&
			:=
			\CP^{\gr{\bz}}\del[1]{G^{(m)}(\fI_{\gr{\bz}}(\eps)) \fI^{(m)}_{\gr{\bz}}(\eps) \sbr[1]{\eps \<wn> + \<cm>}} \label{eq:der_fp_map:fp_map} \\
			&
			+
			\eps \CP^{\gr{\bz}}\del[1]{B_{\gr{\bz}}^{(m-1),\<wns>}(\eps) \thinspace \<wn>} 
			+ \CP^{\gr{\bz}}\del[1]{B_{\gr{\bz}}^{(m-1),\<cms>}(\eps) \thinspace\<cm>}
			+ m \CP^{\gr{\bz}}\del[1]{A_{\gr{\bz}}^{(m-1),\<wn>}(\eps) \thinspace \<wn>}.
			\notag
		\end{align}
		Here, we have set\footnote{Note that the terms on the RHS of~\eqref{eq:A} and~\eqref{eq:B} are exactly the same, only the summation in the latter starts with~$k=2$. Note, however that~$B^{(m-1),\sigma}_{\gr{\bz}}$ only depends on~$\fI^{(n)}_{\gr{\bz}}$ for $n \in [m-1]$, whereas~$A^{(m),\<wns>}_{\gr{\bz}}$ does so for $n \in [m]$.
			The asymmetry in choice of superscript (\enquote{$m$} for $A$, but \enquote{$m-1$} for $B$) reflects that fact.
			Also, note that~$B_{\gr{\bz}}^{(m-1),\sigma}(\eps) \equiv 0$ for $m = 1$.}
		for $m \geq 1$:
		\begin{equation}
			A_{\gr{\bz}}^{(m),\<wns>}(\eps)
			:=
			m! \sum_{k=1}^{m} \frac{1}{k!} G^{(k)}(\fI_{\gr{\bz}}(\eps)) \sum_{\boldsymbol{i} \in S_k^{m}}  \prod_{\ell=1}^k \frac{1}{i_\ell!} \fI_{\gr{\bz}}^{(i_\ell)}(\eps), \quad
			A_{\gr{\bz}}^{(0),\<wns>}(\eps) := G(\fI_{\gr{\bz}}(\eps))
			\label{eq:A} 
		\end{equation}
	and
		\begin{equation}
			B_{\gr{\bz}}^{(m-1),\sigma}(\eps)
			:=
			m! \sum_{k=2}^{m}  \frac{1}{k!} G^{(k)}(\fI_{\gr{\bz}}(\eps)) \sum_{\boldsymbol{i} \in S_k^{m}} \prod_{\ell=1}^k \frac{1}{i_\ell!} \fI_{\gr{\bz}}^{(i_\ell)}(\eps), \quad \sigma \in \{\<wn>,\<cm>\}.
			\label{eq:B}
		\end{equation} 
		where
		\begin{equation*}
				S_k^m := \{\boldsymbol{i} \in \N_{\geq 1}^k: \ \abs{\boldsymbol{i}} = m\}.
		\end{equation*}
		\item \label{prop:der_fp_map:iii} 
		For $m = 1,\ldots,\ell$ and~$\eps \in I_\star$, the map $\gr{\bz} \mapsto \fI^{(m)}_{\gr{\bz}}(\eps)$ is strongly locally Lipschitz in the sense of~\thref{def:unif_strong_local_Lipschitz}.
		\item \label{prop:der_fp_map:iv} It is possible to choose $\eps_\star$ as\footnote{Recall that $T_\infty(\eps,E_\sh\bz)$ is the existence time of~$\fI_{E_\sh\bz}(\eps)$, cf.~\thref{coro:ex_time_h_tr_vs_ex} and the comments preceding it.} 
			\begin{equation}
				\eps_\star(T,\bz)
				:=
				\sup I_\infty^{\bz}(T), \quad
				I_\infty^{\bz}(T) :=
				\{\eps \in I: \ T_\infty(r,E_\sh\bz) > T \quad \text{for all} \ r \in [0,\eps)\}.
				\label{prop:der_fp_map:iv_eq_eps_star} 
			\end{equation}
		In this case, we have $\eps_\star(T,\bz) \geq \eps_0(T,\bz)$, the latter defined in~\eqref{eq:def_eps_zero}. In particular, $\fI_{\gr{\bz}} \in \CC^\ell(I_0,\DD^{\gamma,\eta;T}_{\gr{\CU}}(\gr{\bz}))$ for $I_0 = [0,\eps_0)$ as before.
	\end{enumerate}	
\end{theorem}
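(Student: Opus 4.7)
The plan is to apply the Implicit Function Theorem (IFT) in Banach spaces (\cite[Thm.~4.E.]{zeidler_applied_fa}) to the map $\Psi^{\gr{\bz}}_T$ defined in~\eqref{eq:Psi_impl_fct}, whose two hypotheses $\CC^\ell$-differentiability and invertibility of the second partial derivative have precisely been verified in Lemmas~\ref{thm:frechet:gpam} and~\ref{lem:sec_cond_ift}. At the base point $(0,\gr{W})$ the IFT produces a neighbourhood $J_0 \subseteq I$ of $0$ and a unique $\CC^\ell$ map $\varphi \colon J_0 \to \DD^{\gamma,\eta;T}_{\gr{\CU}}(\gr{\bz})$ with $\Psi(\eps,\varphi(\eps)) = 0$; by uniqueness in~\thref{coro:solvability:fp_prob} this local branch must coincide with $\fI_{\gr{\bz}}$. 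To upgrade this to differentiability on the whole interval $I_\star$, I would note that the proof of Lemma~\ref{thm:frechet:gpam} only uses boundedness of $\gr{Y}$ in some ball (Taylor expansions of $G$ at a base point), so the $\CC^\ell$-property of $\Psi$ actually holds in a neighbourhood of any pair $(\bar{\eps}, \fI_{\gr{\bz}}(\bar{\eps}))$ with $\bar{\eps} \in I_\star$; combined with Lemma~\ref{lem:sec_cond_ift}, this lets us re-apply the IFT at every such $\bar{\eps}$, yielding part~\ref{prop:der_fp_map:i}.

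For part~\ref{prop:der_fp_map:ii}, I would differentiate the fixed-point identity $\fI_{\gr{\bz}}(\eps) = \CM^{\gr{\bz}}(\eps,\fI_{\gr{\bz}}(\eps))$ in $\eps$. For $m=1$ one gets directly
\begin{equation*}
    \fI^{(1)}_{\gr{\bz}}(\eps) = \CP^{\gr{\bz}}\bigl(G'(\fI_{\gr{\bz}}(\eps))\,\fI^{(1)}_{\gr{\bz}}(\eps)\,[\eps\<wn>+\<cm>]\bigr) + \CP^{\gr{\bz}}\bigl(G(\fI_{\gr{\bz}}(\eps))\<wn>\bigr),
\end{equation*}
which matches~\eqref{eq:der_fp_map:fp_eq} since $B^{(0),\sigma}_{\gr{\bz}}(\eps)\equiv 0$ and $A^{(0),\<wns>}_{\gr{\bz}}(\eps)=G(\fI_{\gr{\bz}}(\eps))$. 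For general $m$, I would iterate: apply Leibniz's rule to each product and then Riordan/Faà di Bruno's formula~\eqref{eq:riordans_formula} to $\partial_\eps^m [G(\fI_{\gr{\bz}}(\eps))]$ and $\partial_\eps^{m-1}[G'(\fI_{\gr{\bz}}(\eps))]$. Isolating the single term in which all $m$ derivatives land on one factor of $\fI_{\gr{\bz}}$ (which produces $G^{(1)}(\fI_{\gr{\bz}})\fI^{(m)}_{\gr{\bz}}$ multiplied by $\eps\<wn>+\<cm>$) and putting the remaining combinatorial terms into $A^{(m-1),\<wns>}_{\gr{\bz}}$ and $B^{(m-1),\sigma}_{\gr{\bz}}$ yields~\eqref{eq:der_fp_map:fp_map}. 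Uniqueness is just the uniqueness of the FP in Lemma~\ref{lem:sec_cond_ift}, since the map $\gr{Y}\mapsto\CP^{\gr{\bz}}(G'(\fI_{\gr{\bz}}(\eps))\gr{Y}[\eps\<wn>+\<cm>])$ is exactly $\operatorname{Id}-D_2\Psi(\eps,\fI_{\gr{\bz}}(\eps))$ and the remaining terms are just inhomogeneities.

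For part~\ref{prop:der_fp_map:iii}, I would proceed inductively in $m$. The strongly locally Lipschitz dependence of $\fI_{\gr{\bz}}(\eps)$ itself on $\gr{\bz}$ is~\thref{coro:solvability:fp_prob}. Given strong local Lipschitz continuity of $\fI^{(k)}_{\gr{\bz}}(\eps)$ for $k<m$, the inhomogeneities $A^{(m-1),\<wns>}_{\gr{\bz}}$ and $B^{(m-1),\sigma}_{\gr{\bz}}$ in~\eqref{eq:der_fp_map:fp_map} are strongly locally Lipschitz thanks to the analogous product/composition rules used in the proof of Lemma~\ref{prop:a_priori_estimate} (see~\eqref{prop:a_priori_estimate:pf:eq}), and invertibility of $\operatorname{Id}-D_2\CM^{\gr{\bz}}$ with a bound uniform in a small neighbourhood of $\gr{\bz}$ allows one to transfer this Lipschitz estimate to the unique solution $\fI^{(m)}_{\gr{\bz}}$. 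Finally, part~\ref{prop:der_fp_map:iv} is obtained by setting $\eps_\star$ as in~\eqref{prop:der_fp_map:iv_eq_eps_star}: on $I_\infty^{\bz}(T)$ the fixed point $\fI_{\gr{\bz}}(\eps)$ exists as an element of $\DD^{\gamma,\eta;T}_{\gr{\CU}}(\gr{\bz})$, which is what the IFT argument above needs; the inclusion $\eps_\star\geq \eps_0$ follows immediately from~\thref{coro:expl_rho}, which asserts $T_\infty(\eps, E_\sh\bz)>T$ whenever $\eps\barnorm{\bz^{\minus}}<\rho_0(T)$.

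The main obstacle I anticipate is the bookkeeping in~\ref{prop:der_fp_map:ii}: the Faà di Bruno combinatorics have to be arranged exactly as in~\eqref{eq:A}--\eqref{eq:B}, and in particular one must be careful to distinguish the \emph{linear-in-$\fI^{(m)}_{\gr{\bz}}$} term (which, together with the $\operatorname{Id}$, assembles to the invertible operator $\Theta$ of Lemma~\ref{lem:sec_cond_ift} at $(\eps,\fI_{\gr{\bz}}(\eps))$) from the remaining terms that depend only on lower-order derivatives. Everything else is a direct consequence of the IFT machinery combined with the regularity-structure estimates already established.
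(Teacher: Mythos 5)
Your proposal is correct and follows essentially the same route as the paper: parts~\ref{prop:der_fp_map:i} and~\ref{prop:der_fp_map:iv} via the IFT (using Lemmas~\ref{thm:frechet:gpam} and~\ref{lem:sec_cond_ift}, with your re-application-at-each-$\bar\eps$ observation being precisely how the paper extends from the local statement at $(0,\gr{W})$ to the maximal interval $I_\star$), part~\ref{prop:der_fp_map:ii} by differentiating the fixed-point identity via Leibniz plus Riordan/Faà di Bruno and peeling off the $k=1$ contribution that produces the linear operator $\Theta = D_2\Psi(\eps,\fI_{\gr{\bz}}(\eps))$, and part~\ref{prop:der_fp_map:iii} by induction using strong local Lipschitz continuity of composition, multiplication, and the abstract fixed-point map. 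One small slip: after Leibniz, the term feeding into $A^{(m-1),\<wns>}_{\gr{\bz}}$ is $\partial_\eps^{m-1}[G(\fI_{\gr{\bz}}(\eps))]$, not $\partial_\eps^{m-1}[G'(\fI_{\gr{\bz}}(\eps))]$ as you wrote (and note that, consistently with your own computation, the linear factor in the theorem statement should read $G'(\fI_{\gr{\bz}}(\eps))$ rather than $G^{(m)}(\fI_{\gr{\bz}}(\eps))$).
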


\begin{remark}
	Note that $\CM_m^{\gr{\bz}}$ in~\eqref{eq:der_fp_map:fp_map} is \emph{linear} in~$\fI^{(m)}_{\gr{\bz}}(\eps)$, so the explosion time of~$\fI^{(m)}_{\gr{\bz}}(\eps)$ coincides with that of~$\fI_{\gr{\bz}}(\eps)$. Also note that $\eps \<wn>$ as well as the second term on the RHS of~\eqref{eq:der_fp_map:fp_map} vanish for~$\eps = 0$.
\end{remark}

\begin{proof} 
	By lemmas~\ref{thm:frechet:gpam} and~\ref{lem:sec_cond_ift}, we can apply the IFT~(see\thinspace\cite[Thm.~4.E]{zeidler_applied_fa}) to $\Psi$. It asserts that there exists a neighbourhood~$\CO$ of $\gr{W}$ and $\eps_\star > 0$ and a unique function~$\gr{Y_\star} \in \CC^\ell(I_\star,\CO)$ such that
			\begin{equation*}
				\Psi(\eps,\gr{Y_\star}(\eps)) = 0 \qquad \text{for} \quad \eps \in I_\star = [0,\eps_\star).
			\end{equation*}
	By uniqueness of solutions to the fixed-point problem~\eqref{eq:fp_extended_eps}, we find that $\gr{Y_\star}(\eps) = \fI(\eps)$ and hence the claim~\ref{prop:der_fp_map:i} follows.
	Regarding~\ref{prop:der_fp_map:ii}, we differentiate $m$~times both sides of the FP equation
	\begin{equation*}
		\fI_{\gr{\bz}}(\eps) = 
		\CP^{\gr{\bz}}\del[1]{G(\fI_{\gr{\bz}}(\eps)) \eps \thinspace \<wn>} + \CP^{\gr{\bz}}\del[1]{G(\fI_{\gr{\bz}}(\eps)) \thinspace\<cm>} + \TT Pu_0.
	\end{equation*}
	For the expression on the RHS, this is valid thanks to the linearity of $\CP^{\gr{\bz}}$, \thref{lem:diffb_G_from_g}, and~\ref{prop:der_fp_map:i}
	By Leibniz rule, we have
	\begin{equation*}
		\partial_\eps^m [G(\fI_{\gr{\bz}}(\eps))\eps \thinspace \<wn>]
		=
		\sum_{k=0}^m \binom{m}{k} \partial_\eps^{m-k} [G(\fI_{\gr{\bz}}(\eps))] \thinspace \partial_\eps^k [\eps \thinspace\<wn>]
		= 
		\partial_\eps^m [G(\fI_{\gr{\bz}}(\eps))] \eps \thinspace\<wn> + m \partial_\eps^{m-1} [G(\fI_{\gr{\bz}}(\eps))] \thinspace \<wn>
	\end{equation*}
	which then leads to
	\begin{equation}
		\fI_{\gr{\bz}}^{(m)}(\eps) = 
		\eps \CP^{\gr{\bz}}\del[1]{\partial_\eps^m G(\fI_{\gr{\bz}}(\eps)) \thinspace \<wn>} 
		+ \CP^{\gr{\bz}}\del[1]{\partial_\eps^m G(\fI_{\gr{\bz}}(\eps)) \thinspace\<cm>}
		+ m \CP^{\gr{\bz}}\del[1]{\partial_\eps^{m-1} G(\fI_{\gr{\bz}}(\eps)) \thinspace \<wn>}. 
		\label{eq:dif_fp_eq_m}
	\end{equation}
	We calculate the derivatives by Riordan's formula~\eqref{eq:riordans_formula}, which reads
	\begin{equation}
		\partial_\eps^m (G \circ \fI_{\gr{\bz}})(\eps) 
		= m! \thinspace \sum_{k=1}^m \frac{1}{k!} G^{(k)}(\fI_{\gr{\bz}}(\eps)) \sum_{\boldsymbol{i} \in S_k^m} \prod_{\ell=1}^k \frac{1}{i_\ell!} \fI_{\gr{\bz}}^{(i_\ell)}(\eps).
		\label{eq:riordan_applied}
	\end{equation} 
	when combined with~\thref{lem:diffb_G_from_g}. Note that the last summand on the RHS of~\eqref{eq:dif_fp_eq_m} does not contribute a term containing~$\fI^{(m)}_{\gr{\bz}}$. For the first two summands, only the term corresponding to~$k=1$ in~\eqref{eq:riordan_applied} does, more precisely 
	\begin{equation*}
		k=1
		\ \rightsquigarrow \
		S_1^m = \{m\} 
		\ \rightsquigarrow \
		G^{(m)}(\fI_{\gr{\bz}}(\eps)) \fI^{(m)}_{\gr{\bz}}(\eps).
	\end{equation*}
	We single out the corresponding terms in the definition of $\CM_m^{\gr{\bz}}$ in~\eqref{eq:der_fp_map:fp_map} and, accordingly, only start the summation in~$B_{\gr{\bz}}^{(m-1),\sigma}$ in~\eqref{eq:B} at $k=2$.
	The claim in~\eqref{eq:der_fp_map:fp_eq} follows. 
	
	Regarding~\ref{prop:der_fp_map:iii}, recall that the operations of multiplication and composition with (lifts of sufficiently) regular functions are strongly locally Lipschitz continuous operations~(cf.~\cite[Prop.~$6.12$]{hairer_rs} and~\cite[Prop.~$3.11$]{hairer_pardoux}, respectively). As a consequence of~\cite[Thm.~$7.8$]{hairer_rs}, so is~$\gr{\bz} \mapsto \fI_{\gr{\bz}}(\eps)$ for $\eps \in I_\star$ fixed, cf.~\thref{coro:solvability:fp_prob} above.
	Therefore, the map
		\begin{equation*}
			\gr{V} \mapsto 
			G^{(m)}(\fI_{\gr{\bz}}(\eps)) \gr{V} \sbr[1]{\eps \<wn> + \<cm>} 
			+ B_{\gr{\bz}}^{(m-1),\<wns>}(\eps) \thinspace \eps \<wn>
			+ B_{\gr{\bz}}^{(m-1),\<cms>}(\eps) \thinspace\<cm>
			+ m A_{\gr{\bz}}^{(m-1),\<wn>}(\eps) \thinspace \<wn>
		\end{equation*}
	is strongly locally Lipschitz continuous from~$\DD^{\gamma,\eta,T}(\gr{\bz})$ to~$\DD^{\bar{\gamma},\bar{\eta},T}(\gr{\bz})$ in case~$m = 1$. Then,~\cite[Thm.~$7.8$]{hairer_rs} implies strong local Lipschitz continuity of~$\gr{\bz} \mapsto \fI^{(1)}_{\gr{\bz}}(\eps)$. 
	Iteratively, the same arguments prove the claim for~$m \in \{2,\ldots,\ell\}$.
	
	The proof of~\ref{prop:der_fp_map:iv} is almost identical to the corresponding statement in~\cite{schoenbauer}: Suppose~$\eps_\star$ is chosen as in~\eqref{prop:der_fp_map:iv_eq_eps_star} and assume there exists~$r_\star > \eps_\star$ such that $T_\infty(r,E_\sh\bz) > T$ for all $r \in [0,r_\star)$. We can then redo the arguments in~\thref{thm:frechet:gpam} with~$(0,\gr{W}) \equiv (0,\fI_{\gr{\bz}}(0))$ replaced by~$(\eps_\star,\fI_{\gr{\bz}}(\eps_\star))$ to infer that we can obtain $\theta > 0$ such that the map
		\begin{equation*}
			[0,\theta) \ni \eps \mapsto \fI_{\gr{\bz}}(\eps_\star + \eps) \in \DD^{\gamma,\eta,T}(\gr{\bz})
		\end{equation*}
	is $\CC^\ell$ in the Fr\'{e}chet sense.
	In turn, that implies that~$\fI_{\gr{\bz}} \in \CC^{\ell}([0,\eps_\star + \theta),\DD^{\gamma,\eta,T}(\gr{\bz}))$ which contradicts the maximality of~$\eps_\star$. By~\thref{coro:expl_rho} and eq.~\eqref{eq:def_eps_zero}, we immediately have that 
		\begin{equation*}
			\eps_0(T,\bz) = \sup I^{\bz}(T) \leq \sup I_\infty^{\bz}(T)	= \eps_\star(T,\bz),
		\end{equation*}
	thus finishing the proof.
\end{proof}	

\subsection{Taylor expansion in spaces of modelled distribution} \label{sec:taylor_exp_mod_distr}

Given~\thref{prop:der_fp_map}, Taylor's formula on Banach spaces~\cite[Thm.\thinspace 4.C]{zeidler_applied_fa} provides us with an abstract analogue of~\thref{thm:stoch_taylor_gpam} in the space~$\DD^{\gamma,\eta,T}_{\gr{\CU}}(E_\sh\bz)$. The following corollary is a direct consequence of the afore-mentioned theorem.

\begin{corollary} \label{thm:abstract_taylor}
	In the setting of~\thref{prop:der_fp_map}, the expansion
	\begin{equation}
		\gr{U^{\eps}}
		= \gr{W} + \sum_{m=1}^{\ell - 1} \frac{\eps^m}{m!} \gr{U^{(m)}} + \gr{\fR_{\eps}^\ell} \quad \text{in} \quad \DD^{\gamma,\eta,T}_{\gr{\CU}}(\Gamma^{\esh})
		\label{thm:abstract_taylor:eq_exp}
	\end{equation}
	 holds for each~$\eps \in I_0$ with 
	\begin{enumerate}[label=(\arabic*)]
		\item the term~$\gr{W}$ as introduced in~\thref{rmk:W_indep_model_not},~$\gr{U^{\eps}} \equiv \gr{U^{\eps}}(\bz) := \fI_{E_\sh\bz}(\eps)$, cf.~\eqref{eq:fI}, and 
		\begin{equation}
			\gr{U^{(m)}} \equiv \gr{U^{(m)}}(\bz) := \fI^{(m)}_{E_\sh\bz}(0) \quad \text{for} \quad m = 1, \ldots, \ell-1.
			\label{thm:abstract_taylor:eq_U_m}
		\end{equation}
		The latter are given iteratively as solutions to the fixed-point problems
			\begin{equation}
				\gr{U^{(m)}}(\bz)
				= 	
				\CP^{E_\sh\bz}\del[1]{G^{(m)}(\gr{W})\gr{U^{(m)}}(\bz) \thinspace \<cm>}
				+ \CP^{E_\sh\bz}\del[1]{B_{E_\sh \bz}^{(m-1),\<cms>} \thinspace\<cm>}
				+ m \CP^{E_\sh\bz}\del[1]{A_{E_\sh\bz}^{(m-1),\<wns>} \thinspace \<wn>}
				\label{thm:abstract_taylor:eq_U_m_formula_fp_eq}
			\end{equation}
		or, equivalently, by the explicit formula
			\begin{equation}
				\gr{U^{(m)}}(\bz)
				= \tilde{\Theta}^{E_\sh\bz} \sbr[1]{\CP^{E_\sh\bz}\del[1]{B_{E_\sh \bz}^{(m-1),\<cms>} \thinspace\<cm>}
				+ m \CP^{E_\sh\bz}\del[1]{A_{E_\sh\bz}^{(m-1),\<wns>} \thinspace \<wn>}}.
				\label{thm:abstract_taylor:eq_U_m_formula}
			\end{equation}
		The terms~$A_{E_\sh\bz}^{(0),\<wns>} := A_{E_\sh\bz}^{(0),\<wns>}(0) = G(\gr{W})$ are given by
			\begin{align}
				A_{E_\sh\bz}^{(m),\<wns>} 
				& := A_{E_\sh\bz}^{(m),\<wns>}(0)
				= m! \sum_{k=1}^{m} \frac{1}{k!} G^{(k)}(\gr{W}) \sum_{\boldsymbol{i} \in S_k^{m}}  \prod_{n=1}^k \frac{1}{i_n!} \gr{U^{(i_n)}}(\bz), \label{eq:A_eps_0} \\
				B_{E_\sh \bz}^{(m-1),\<cms>} 
				& := B_{E_\sh \bz}^{(m-1),\<cms>}(0)
				= m! \sum_{k=2}^{m}  \frac{1}{k!} G^{(k)}(\gr{W}) \sum_{\boldsymbol{i} \in S_k^{m}} \prod_{n=1}^k \frac{1}{i_n!} \gr{U^{(i_n)}}(\bz). \label{eq:B_eps_0}
			\end{align}
		given in~\eqref{eq:A} and~\eqref{eq:B}, respectively. They only depend on~$\gr{U^{(k)}}(\bz)$ for $k = 1,\ldots,m-1$.
		\item the remainder $\gr{\fR_\eps^{\ell}} \equiv \gr{\fR_\eps^{\ell}}(\bz)$ is defined implicitly from~\eqref{thm:abstract_taylor:eq_exp}, that is
		\begin{equation}
			\gr{\fR_{\eps}^\ell}
			:=
			\gr{U^{\eps}} - \gr{W} + \sum_{m=1}^{\ell - 1} \frac{\eps^m}{m!} \gr{U^{(m)}}. 
			\label{thm:abstract_taylor:remainder}
		\end{equation}
	\end{enumerate}
	All the expressions in the expansion~\eqref{thm:abstract_taylor:eq_exp} are continuous in the model~$\bz$.
\end{corollary}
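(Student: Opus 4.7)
The plan is to apply Taylor's theorem on Banach spaces (Zeidler~\cite{zeidler_applied_fa}, Thm.~4.C) to the map~$\fI_{E_\sh\bz}\colon I_0 \to \DD^{\gamma,\eta,T}_{\gr{\CU}}(E_\sh\bz)$, which is the key output of~\thref{prop:der_fp_map}. Since this map is $\CC^\ell$ in the Fr\'{e}chet sense on~$I_0$ (by parts~\ref{prop:der_fp_map:i} and~\ref{prop:der_fp_map:iv}), Taylor's theorem directly furnishes the expansion~\eqref{thm:abstract_taylor:eq_exp} in the target Banach space, with~$\gr{U^{(m)}}$ defined as in~\eqref{thm:abstract_taylor:eq_U_m}, the constant term equal to~$\fI_{E_\sh\bz}(0) = \gr{W}$ (recall~\thref{rmk:W_indep_model_not}), and~$\gr{\fR_\eps^\ell}$ simply being the residual defined by the identity~\eqref{thm:abstract_taylor:remainder}.

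To derive the fixed-point equation~\eqref{thm:abstract_taylor:eq_U_m_formula_fp_eq}, I would specialise the FP equation~\eqref{eq:der_fp_map:fp_eq}--\eqref{eq:der_fp_map:fp_map} from~\thref{prop:der_fp_map}\ref{prop:der_fp_map:ii} at $\eps = 0$ with model~$\gr{\bz} = E_\sh\bz$. The term~$\eps\CP^{\gr{\bz}}(B^{(m-1),\<wns>}(\eps)\<wn>)$ drops out immediately, and likewise the factor~$\eps\<wn>$ in the bracket~$[\eps\<wn> + \<cm>]$ of the leading term vanishes. Since~$\fI_{E_\sh\bz}(0) = \gr{W}$, the quantities~$A^{(m-1),\<wns>}(0)$ and~$B^{(m-1),\<cms>}(0)$ reduce to the expressions in~\eqref{eq:A_eps_0} and~\eqref{eq:B_eps_0}, respectively. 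This yields exactly~\eqref{thm:abstract_taylor:eq_U_m_formula_fp_eq}.

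Rearranging this equation puts the~$\gr{U^{(m)}}$-containing term on the left as
\begin{equation*}
\bigl(\operatorname{Id} - \CP^{E_\sh\bz}(G'(\gr{W})\,\bullet\,\<cm>)\bigr)\,\gr{U^{(m)}}(\bz) = \CP^{E_\sh\bz}\!\bigl(B_{E_\sh\bz}^{(m-1),\<cms>}\<cm>\bigr) + m\,\CP^{E_\sh\bz}\!\bigl(A_{E_\sh\bz}^{(m-1),\<wns>}\<wn>\bigr),
\end{equation*}
and the operator on the left is precisely~$\Theta^{E_\sh\bz}$ from~\thref{lem:sec_cond_ift}. Applying~$\tilde{\Theta}^{E_\sh\bz} = (\Theta^{E_\sh\bz})^{-1}$, whose existence as a bounded linear operator is guaranteed by that lemma, gives the explicit formula~\eqref{thm:abstract_taylor:eq_U_m_formula}. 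Continuity of all summands in~\eqref{thm:abstract_taylor:eq_exp} with respect to~$\bz$ follows from the strong local Lipschitz continuity of the maps~$\gr{\bz} \mapsto \fI^{(m)}_{\gr{\bz}}(\eps)$ established in~\thref{prop:der_fp_map}\ref{prop:der_fp_map:iii}, combined with continuity of the extension operator~$\EE$ and the fact that~$\gr{W}$ is independent of the model (see~\thref{rmk:W_indep_model_not}).

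I expect the main obstacle to be purely bookkeeping, namely tracking the combinatorial coefficients to verify that the term containing~$\fI^{(m)}$ in Riordan's formula decouples cleanly, leaving a single copy of~$\gr{U^{(m)}}$ times~$G'(\gr{W})$ on the left and collecting all lower-order derivatives of~$\fI$ into the auxiliary terms~$A^{(m-1),\<wns>}_{E_\sh\bz}$ and~$B^{(m-1),\<cms>}_{E_\sh\bz}$, so that the invertibility machinery of~\thref{lem:sec_cond_ift} can be invoked identically for every~$m \in \{1,\ldots,\ell-1\}$. No new analytic estimates are required beyond those already established in the preceding subsections.
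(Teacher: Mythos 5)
Your proposal is correct and follows essentially the same route as the paper, which treats the corollary as a direct consequence of Theorem~\ref{prop:der_fp_map} together with Taylor's formula on Banach spaces, with the fixed-point equation~\eqref{thm:abstract_taylor:eq_U_m_formula_fp_eq} obtained by evaluating~\eqref{eq:der_fp_map:fp_eq}--\eqref{eq:der_fp_map:fp_map} at $\eps = 0$ and~\eqref{thm:abstract_taylor:eq_U_m_formula} by applying $\tilde\Theta^{E_\sh\bz}$ from Lemma~\ref{lem:sec_cond_ift}. You implicitly (and correctly) read the coefficient in front of $\gr{U^{(m)}}$ as $G'(\gr{W})$ when rearranging so as to match $\Theta^{E_\sh\bz}$ -- this is the right reading, since the $k=1$ term of Riordan's formula produces $G^{(1)}$, not $G^{(m)}$.
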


\begin{remark}
Note that Taylor's theorem actually gives us an \emph{explicit} formula for the remainder~$\gr{\fR_{\eps}^\ell}(\bz)$, namely
	\begin{equation}
		\gr{\fR_{\eps}^\ell}(\bz)
		:= 
		\int_0^1 \frac{(1-s)^{\ell-1}}{(\ell-1)!} \fI^{(\ell)}_{E_\sh\bz}(s\eps) \dif s.
	\end{equation}
However, as we will see in subsection~\ref{sec:estimates_taylor_remainder}, the definition in~\eqref{thm:abstract_taylor:remainder} is very well-suited for an inductive argument.
\end{remark}
	
\begin{remark} \label{rmk:inahama_kawabi_faa_di_bruno}
	The formula for~$\gr{U^{(m)}}$ in~\eqref{thm:abstract_taylor:eq_U_m_formula_fp_eq} formally agrees with the corresponding ones by Inahama and Kawabi, specifically~\cite[eq.'s~(4.2) - (4.5)]{inahama_kawabi} (in the case that the SDE they consider has no drift~$b$, i.e.~$b \equiv 0$).
\end{remark}

By itself,~\thref{thm:abstract_taylor} is merely a version of Taylor's theorem. It unfolds its full merit only when complemented with an analysis of the \emph{properties} of the Taylor terms and the remainder. Most importantly in this direction, we will provide estimates for the~$\gr{U^{(m)}}$'s in subsection~\ref{sec:estimates_taylor_terms} and for $\gr{\fR_\eps^{\ell}}$ in subsection~\ref{sec:estimates_taylor_remainder}.

Before we turn to the estimates, we study further properties of the terms~$\gr{U^{(m)}}$ which we need in our analysis. To begin with, the next proposition may be understood as $m$-homegeneity of~$\gr{U^{(m)}}$ w.r.t. to dilation of the model.

\begin{proposition} \label{thm:abstract_taylor:ii} 
	With $\fd_\eps$ and $\d_\eps$ as in section~\ref{sec:models_analytic_op}, the terms~$\gr{U^{(m)}}(\bz) \in \DD^{\gamma,\eta,T}(E_\sh\bz)$ satisfy the identity
	\begin{equation}
		\eps^m \gr{U^{(m)}}(\bz) = \fd_\eps \del[1]{\gr{U^{(m)}}(\d_\eps \bz)}, \quad \eps \in I_0 = [0,\eps_0(T,\bz)). 
		\label{thm:abstract_taylor:homogenity}
	\end{equation}	
\end{proposition}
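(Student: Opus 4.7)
The plan is to establish the proposition as a corollary of the intertwining relation
\[
\fd_\eps \, \fI_{E_\sh \d_\eps \bz}(r) \;=\; \fI_{E_\sh \bz}(\eps r), \qquad r \in [0,r_0),
\]
for a suitable $r_0 > 0$, by applying $\partial_r^m|_{r=0}$ to both sides. This is the natural analogue (at the level of abstract solutions $\gr{U^{\bullet}}$) of Lemma~\ref{lem:dil_transl_rel}, and it reduces to a uniqueness argument for the parametrised fixed-point problem \eqref{eq:fp_extended_eps}.

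First I would argue the intertwining: recall from Lemma~\ref{lem:dil_transl_rel} and its proof that $\fd_\eps$ commutes with multiplication and with composition by $G$, that $\fd_\eps[\<wn>] = \eps\<wn>$, $\fd_\eps[\<cm>] = \<cm>$, that $E_\sh$ and $\d_\eps$ commute (Lemma~\ref{lem:ext_dil_commute}), and that $\CP^{E_\sh \bz}\circ \fd_\eps = \fd_\eps \circ \CP^{E_\sh \d_\eps\bz}$ (Lemma~\ref{lem:consistency_dilation}\ref{lem:consistency_dilation:iii}). Writing the FP equation \eqref{eq:fp_extended_eps} satisfied by $\fI_{E_\sh \d_\eps\bz}(r)$ and applying $\fd_\eps$ to both sides gives
\[
\fd_\eps \fI_{E_\sh \d_\eps\bz}(r)
= \CP^{E_\sh \bz}\!\bigl(G(\fd_\eps \fI_{E_\sh \d_\eps\bz}(r))\,[(\eps r)\<wn> + \<cm>]\bigr) + \TT Pu_0,
\]
so that $\fd_\eps \fI_{E_\sh \d_\eps\bz}(r)$ solves the FP problem \eqref{eq:fp_extended_eps} associated to the model $E_\sh \bz$ with parameter $\eps r$. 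By uniqueness (Corollary~\ref{coro:solvability:fp_prob}) the claimed intertwining follows, for $r$ small enough that both sides are well-defined.

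Next I would differentiate. By Theorem~\ref{prop:der_fp_map}\ref{prop:der_fp_map:i}, the map $r \mapsto \fI_{E_\sh \d_\eps\bz}(r)$ is $\CC^\ell$ near $r=0$, and $r \mapsto \fI_{E_\sh \bz}(\eps r)$ is $\CC^\ell$ near $r=0$ since $\eps \in I_0 = [0,\eps_0(T,\bz))$. Because $\fd_\eps \in \CL(\DD^{\gamma,\eta,T}_{\gr{\CU}}(\d_\eps \bz), \DD^{\gamma,\eta,T}_{\gr{\CU}}(\bz))$ (see section~\ref{sec:models_analytic_op}) is linear and bounded, it commutes with Fr\'echet differentiation. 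Applying $\partial_r^m|_{r=0}$ therefore yields
\[
\fd_\eps \gr{U^{(m)}}(\d_\eps\bz)
= \fd_\eps \, \partial_r^m\!\bigl|_{r=0} \fI_{E_\sh \d_\eps\bz}(r)
= \partial_r^m\!\bigl|_{r=0} \fI_{E_\sh \bz}(\eps r)
= \eps^m \gr{U^{(m)}}(\bz),
\]
which is the asserted identity \eqref{thm:abstract_taylor:homogenity}.

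The only subtlety lies in making the intertwining rigorous on a common interval of $r$, but this is automatic: Theorem~\ref{prop:der_fp_map}\ref{prop:der_fp_map:iv} guarantees $\CC^\ell$-regularity of $\fI_{E_\sh \d_\eps \bz}$ on $[0,\eps_0(T,\d_\eps\bz))$ and of $\fI_{E_\sh \bz}$ on $[0,\eps_0(T,\bz))$, so both derivatives at $r=0$ exist and are characterised by the FP equation, and the uniqueness step transfers the equality of functions into equality of Taylor coefficients. No harder analytical input is required—the proposition is essentially a formal consequence of the intertwining of $\fd_\eps$ with the FP machinery already worked out for $\eps=1$ in Lemma~\ref{lem:dil_transl_rel}.
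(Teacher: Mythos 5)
Your proof is correct and takes a genuinely cleaner route than the paper's. The paper proceeds by induction on $m$: it uses the explicit representation $\gr{U^{(m)}}(\bz) = \tilde{\Theta}^{E_\sh\bz}[\cdots]$ from \eqref{thm:abstract_taylor:eq_U_m_formula}, proves a separate lemma (\ref{lem:interaction_theta_dilation}) on how $\tilde{\Theta}$ intertwines with $\fd_\eps$, and then threads the homogeneity through the multi-index combinatorics of $A^{(m),\<wns>}_{E_\sh\bz}$ and $B^{(m),\<cms>}_{E_\sh\bz}$ in the induction step (cf.\ \eqref{eq:A_B_dilated}). You instead extract the identity directly at the level of the fixed-point map $\fI$: the intertwining
\[
\fd_\eps\,\fI_{E_\sh\d_\eps\bz}(r) = \fI_{E_\sh\bz}(\eps r)
\]
is exactly the argument of Lemma~\ref{lem:dil_transl_rel} with the parameter $r$ in place of $1$, and differentiating $m$ times at $r=0$ (licensed by the boundedness and linearity of $\fd_\eps$, Lemma~\ref{lem:consistency_dilation}\ref{lem:consistency_dilation:i}, and by $\CC^\ell$ differentiability of both sides from Theorem~\ref{prop:der_fp_map}\ref{prop:der_fp_map:i}) gives the claim immediately. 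This avoids the explicit formula for $\gr{U^{(m)}}$, the $\tilde\Theta$-dilation lemma, and the induction altogether. The only point worth spelling out a bit further — you flag it but leave it implicit — is that both sides are defined and smooth on a \emph{common} $r$-interval of positive length: since $\eps \in I_0 = [0,\eps_0(T,\bz))$ forces $\eps \barnorm{\bz^{\minus}} < \rho_0(T)$, Lemma~\ref{lem:expl_hom} gives $\eps_0(T,\d_\eps\bz) = (\rho_0(T)/(\eps\barnorm{\bz^{\minus}})) \wedge 1 > 0$, and $\eps r < \eps_0(T,\bz)$ holds for $r < \eps_0(T,\bz)/\eps$, so Theorem~\ref{prop:der_fp_map}\ref{prop:der_fp_map:iv} supplies $\CC^\ell$ regularity of both $r \mapsto \fI_{E_\sh\d_\eps\bz}(r)$ and $r \mapsto \fI_{E_\sh\bz}(\eps r)$ near $r=0$, as needed.
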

Before we can prove it, in light of~\eqref{thm:abstract_taylor:eq_U_m_formula} we first need to understand how the operator~$\tilde{\Theta}$ (see~\thref{lem:sec_cond_ift}) transforms when the underlying model is dilated.

\begin{lemma} \label{lem:interaction_theta_dilation}
	Let $\bz \in \MM$. We have 
	\begin{equation}
		\tilde{\Theta}^{E_\sh \bz} \circ \fd_\eps
		=
		\fd_\eps \circ \tilde{\Theta}^{E_\sh \d_\eps \bz}, \quad \eps \in I_0.
		\label{lem:interaction_theta_dilation:eq}
	\end{equation}
\end{lemma}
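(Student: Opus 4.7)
The plan is to first establish the analogous intertwining relation for $\Theta$ itself, namely
\begin{equation*}
	\Theta^{E_\sh \bz} \circ \fd_\eps = \fd_\eps \circ \Theta^{E_\sh \d_\eps \bz} \quad \text{on} \quad \DD^{\gamma,\eta,T}_{\gr{\CU}}(E_\sh \d_\eps \bz),
\end{equation*}
and then deduce \eqref{lem:interaction_theta_dilation:eq} by applying $\tilde{\Theta}^{E_\sh \bz} = (\Theta^{E_\sh \bz})^{-1}$ on the left and $\tilde{\Theta}^{E_\sh \d_\eps \bz}$ on the right.

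To prove the intertwining for $\Theta$, I would unwrap both sides on an arbitrary $Y \in \DD^{\gamma,\eta,T}_{\gr{\CU}}(E_\sh \d_\eps \bz)$ using the definition $\Theta^{\gr{\bz}} = \operatorname{Id} - \CP^{\gr{\bz}}(G'(\gr{W}) \bullet \<cm>)$ from \thref{lem:sec_cond_ift}. The problem then reduces to verifying the single identity
\begin{equation*}
	\CP^{E_\sh \bz}\del[1]{G'(\gr{W}) \, \fd_\eps Y \cdot \<cm>}
	= \fd_\eps \, \CP^{E_\sh \d_\eps \bz}\del[1]{G'(\gr{W}) \, Y \cdot \<cm>}.
\end{equation*}
This will be obtained by combining three ingredients already collected in Sections~\ref{app:sec:adm_models} and~\ref{sec:modelled_distributions}: (i)~the commutation relation $\fd_\eps \circ \CP^{E_\sh \d_\eps \bz} = \CP^{E_\sh \bz} \circ \fd_\eps$, which follows from \thref{lem:consistency_dilation}\ref{lem:consistency_dilation:iii} and \thref{lem:ext_dil_commute} exactly as invoked in the proof of \thref{lem:dil_transl_rel}; (ii)~the fact that $\fd_\eps$ intertwines the product of modelled distributions and the composition with smooth functions, so that $\fd_\eps(G'(\gr{W}) Y \cdot \<cm>) = G'(\fd_\eps \gr{W}) \, \fd_\eps Y \cdot \fd_\eps \<cm>$; and (iii)~the invariance $\fd_\eps \gr{W} = \gr{W}$ and $\fd_\eps \<cm> = \<cm>$, which holds because neither the expansion of $\gr{W}$ in \eqref{rmk:structure_W:eq} nor the symbol $\<cm>$ contains any factor of the noise~$\<wn>$ and therefore each symbol involved has noise-homogeneity zero (cf.\ \thref{rmk:W_indep_model_not}, where $\gr{W}$ was shown to be independent of the underlying admissible model).

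Once the intertwining of $\Theta$ is established, the conclusion is formal: both $\Theta^{E_\sh \bz}$ and $\Theta^{E_\sh \d_\eps \bz}$ are invertible by \thref{lem:sec_cond_ift}, and $\fd_\eps: \DD^{\gamma,\eta,T}_{\gr{\CU}}(E_\sh \d_\eps \bz) \to \DD^{\gamma,\eta,T}_{\gr{\CU}}(E_\sh \bz)$ is a linear bijection (this is part of the definition of the dilation on modelled distributions in Section~\ref{sec:modelled_distributions}, where $\eps \in I_0$ ensures all quantities are well-defined up to time $T$). Composing the identity $\Theta^{E_\sh \bz} \circ \fd_\eps = \fd_\eps \circ \Theta^{E_\sh \d_\eps \bz}$ on the left with $\tilde{\Theta}^{E_\sh \bz}$ and on the right with $\tilde{\Theta}^{E_\sh \d_\eps \bz}$ immediately yields \eqref{lem:interaction_theta_dilation:eq}.

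The main obstacle, as I see it, is the bookkeeping around item~(iii): one must be careful that the action of $\fd_\eps$ on the symbols $\<1g>$ and $X$ that appear in the expansion of $\gr{W}$ is indeed trivial — i.e.\ that the dilation on $\gr{\MM}$ scales \emph{only} by the number of $\<wn>$-leaves and leaves the Cameron-Martin and polynomial symbols untouched. This is precisely the content of the definition of $\fd_\eps$ on the extended regularity structure recalled in Section~\ref{sec:modelled_distributions}, but the verification that $G'(\gr{W})$ (viewed pointwise through its Taylor lift) is genuinely invariant under $\fd_\eps$ — rather than merely transforming covariantly — requires one to note that all symbols entering the lift of $G'(\gr{W})$ again belong to the noise-free sector $\scal{\1,\<1g>,X}$ on which $\fd_\eps$ acts as the identity.
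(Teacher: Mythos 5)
Your proposal is correct and follows essentially the same route as the paper: both reduce \eqref{lem:interaction_theta_dilation:eq} to the intertwining of $\Theta$ itself and then verify it by unwrapping the definition, using linearity and multiplicativity of $\fd_\eps$, \thref{lem:consistency_dilation}\ref{lem:consistency_dilation:iii} to commute $\fd_\eps$ past $\CP$, \thref{lem:ext_dil_commute} to commute $\d_\eps$ and $E_\sh$, and the invariance $\fd_\eps\gr{W} = \gr{W}$, $\fd_\eps\<cm> = \<cm>$. Your item~(iii) — that $\gr{W}$ and $\<cm>$ have no $\<wn>$-component, so $\fd_\eps$ acts trivially on $G'(\gr{W})\<cm>$ — is exactly the (implicit) step the paper uses when it rewrites $G'(\gr{W})(\fd_\eps\gr{Y_\eps})\<cm> = \fd_\eps\bigl(G'(\gr{W})\gr{Y_\eps}\<cm>\bigr)$, and is justified by $\scal{\gr{W},\<1>}=0$ as noted in \thref{rmk:W_indep_model_not}.
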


\begin{proof}
	The claimed identity is equivalent to
	\begin{equation*}
		\fd_\eps \circ \Theta^{E_\sh \d_\eps \bz}
		=
		\Theta^{E_\sh \bz} \circ \fd_\eps, \quad \eps \in I_0.
	\end{equation*}
	Let $\gr{Y_\eps} \in \DD^{\gamma,\eta;T}_{\gr{\CU}}(E_\sh \d_\eps \bz)$. By lemmas~\ref{lem:ext_dil_commute} and~\ref{lem:consistency_dilation}\ref{lem:consistency_dilation:i}, we have $\fd_\eps \gr{Y_\eps} \in \DD^{\gamma,\eta;T}_{\gr{\CU}}(E_\sh \bz)$, so we find that
	\begin{equs}
		\Theta^{E_\sh \bz}\del[0]{\fd_\eps \gr{Y_\eps}}
		&
		=
		\fd_\eps \gr{Y_\eps} - \CP_{E_\sh\bz}\del[1]{G'(\gr{W}) (\fd_\eps \gr{Y_\eps}) \thinspace \<cm>}
		=
		\fd_\eps \gr{Y_\eps} - \CP_{E_\sh\bz}\del[1]{\fd_\eps \del[1]{G'(\gr{W_\sh}) \gr{Y_\eps} \thinspace \<cm>}} \\
		&
		=
		\fd_\eps \del[1]{\gr{Y_\eps} - \CP_{E_\sh\d_\eps \bz}\del[1]{G'(\gr{W}) \gr{Y_\eps} \thinspace \<cm>}}
		=
		\fd_\eps\del[1]{\Theta^{E_\sh \d_\eps \bz}(\gr{Y_\eps})}.
	\end{equs}
	In the previous equalities, we have used the explicit form of $\Theta$ given in~\thref{lem:sec_cond_ift}, the multiplicativity and linarity of~$\fd_\eps$, \thref{lem:consistency_dilation}\ref{lem:consistency_dilation:iii} (to relate $\fd_\eps$, $\d_\eps$, and $\CP$), and \thref{lem:ext_dil_commute} (to commute~$\d_\eps$ and~$E_\sh$).
\end{proof}

\begin{proof}[of~\thref{thm:abstract_taylor:ii}]
	 The proof proceeds by induction, starting with~$m=1$: 
	\begin{equs}[][pf:thm:abstract_taylor:ii_aux1]
		\eps \gr{U^{(1)}}(\bz)
		&
		=
		\tilde{\Theta}^{E_\sh \bz}\CP^{E_\sh \bz}\del[1]{A_{E_\sh\bz}^{(0), \<wns>} \thinspace \eps\<wn>}	
		=
		\tilde{\Theta}^{E_\sh \bz}\CP^{E_\sh \bz}\del[1]{G (\gr{W}) \thinspace \eps\<wn>}	
		=
		\tilde{\Theta}^{E_\sh \bz} \CP^{E_\sh \bz}\del[1]{\fd_\eps(G(\gr{W})\thinspace\<wn>)}	\\
		&
		=
		\tilde{\Theta}^{E_\sh \bz} \fd_\eps\del[2]{\CP^{\d_\eps E_\sh \bz}\del[1]{G(\gr{W}) \thinspace \<wn>}}
		=
		\fd_\eps \tilde{\Theta}^{E_\sh \d_\eps \bz} \CP^{E_\sh \d_\eps \bz}\del[1]{G(\gr{W}) \thinspace \<wn>}
		=
		\fd_\eps \gr{U^{(1)}}(\d_\eps \bz)
	\end{equs} 
	In the previous equalities, we have used~\eqref{thm:abstract_taylor:eq_U_m_formula} for~$m=1$,~\thref{lem:interaction_theta_dilation}, lemmas~\ref{lem:consistency_dilation}\ref{lem:consistency_dilation:iii} and~\ref{lem:ext_dil_commute} as in the previous proof, and the fact that~$\fd_\eps \gr{W} = \gr{W}$ since~$\scal{\gr{W},\<1>} = 0$, cf.~\thref{rmk:W_indep_model_not}.
	
	We assume the claim is true for $m \in \{1,\ldots,\ell-2\}$ and do the induction step $m \mapsto m+1$. First, recall that 
	\begin{equation}
		\eps^{m+1} 
		= \eps^{\thinspace\abs{\bi}}
		= \prod_{n=1}^k \eps^{i_n} 
		\quad \text{for} \quad \bi \in S_{k}^{m+1} = \{\bi \in \N_{\geq 1}^k: \ \abs{\bi} = m + 1\}.
		\label{pf:thm:abstract_taylor:aux2 }
	\end{equation}
	Using this fact in combination with linearity and multiplicativity of~$\fd_\eps$, the induction hypothesis immediately implies that
	\begin{equation}
		\eps^{m+1} A_{E_\sh\bz}^{(m),\<wns>} = \fd_\eps A_{E_\sh\d_\eps\bz}^{(m),\<wns>} \thinspace \eps, \quad 
		\eps^{m+1} B_{E_\sh\bz}^{(m),\<cms>} = \fd_\eps B_{E_\sh\d_\eps\bz}^{(m),\<cms>}.
		\label{eq:A_B_dilated}
	\end{equation}
	 By~\eqref{thm:abstract_taylor:eq_U_m_formula} and the same reasoning as in~\eqref{pf:thm:abstract_taylor:ii_aux1}, we thus have
	 \begin{equs}
	 	\eps^{m+1} \gr{U^{(m+1)}}(\bz)
	 	& = 
	 	\tilde{\Theta}^{E_\sh\bz} \sbr[1]{\CP^{E_\sh\bz}\del[1]{\eps^{m+1} B_{E_\sh \bz}^{(m),\<cms>} \thinspace\<cm>}
	 	+ (m+1) \CP^{E_\sh\bz}\del[1]{\eps^{m+1} A_{E_\sh\bz}^{(m),\<wns>} \thinspace \<wn>}} \\
	 	& =
	 	\tilde{\Theta}^{E_\sh\bz} \sbr[1]{\CP^{E_\sh\bz}\del[1]{\fd_\eps B_{E_\sh\d_\eps\bz}^{(m),\<cms>}\thinspace\<cm>}
	 	+ (m+1) \CP^{E_\sh\bz}\del[1]{\fd_\eps\sbr[1]{A_{E_\sh\d_\eps\bz}^{(m),\<wns>} \thinspace \<wn>}}} \\
	 	& =
	 	\del[1]{\tilde{\Theta}^{E_\sh\bz} \circ \fd_\eps}  \sbr[1]{\CP^{E_\sh\d_\eps\bz}\del[1]{B_{E_\sh\d_\eps\bz}^{(m),\<cms>}\thinspace\<cm>}
 		+ (m+1) \CP^{E_\sh\d_\eps\bz}\del[1]{A_{E_\sh\d_\eps\bz}^{(m),\<wns>} \thinspace \<wn>}} \\
	 	& =
	 	\fd_\eps \del[1]{\tilde{\Theta}^{E_\sh\d_\eps\bz} \sbr[1]{\CP^{E_\sh\d_\eps\bz}\del[1]{B_{E_\sh\d_\eps\bz}^{(m),\<cms>}\thinspace\<cm>}
	 	+ (m+1) \CP^{E_\sh\d_\eps\bz}\del[1]{A_{E_\sh\d_\eps\bz}^{(m),\<wns>} \thinspace \<wn>}}} 
		=
	 	\fd_\eps \gr{U^{(m+1)}}(\d_\eps \bz)
	 \end{equs}
	This completes the proof.
\end{proof}

The following definition relates the quantities appearing in~\thref{thm:abstract_taylor} to those in~\thref{thm:stoch_taylor_gpam}. Note that it is consistent since the reconstruction operator~$\CR$ is linear and continuous.

\begin{definition}\label{def:terms_taylor_exp}
	As before, let $\bz \in \MM$. We set~$u^\eps_\sh(\bz) := \CR\del[1]{E_\sh\bz;\thinspace \gr{U^\eps}}$ and
	\begin{itemize}
		\item 
		$u^{(m)}_{\sh}(\bz) := \CR\del[1]{E_\sh\bz;\thinspace \gr{U^{(m)}}}$, 
		$u^{(m)}_{\xi_\d;\sh} := u^{(m)}_{\sh}(\bz^{\xi_\d})$, 
		$\hat{u}^{(m)}_{\xi_\d;\sh} := u^{(m)}_{\sh}(\hbz^{\xi_\d})$, 
		$\hat{u}^{(m)}_{\sh} := u^{(m)}_{\sh}(\hbz)$,
		\item 
		$R^{(\ell)}_{\sh,\eps}(\bz) := \CR\del[1]{E_\sh\bz;\thinspace \gr{\fR_{\eps}^{\ell}}}$, 
		$\hat{R}^{(\ell)}_{\sh,\eps} := R^{(\ell)}_{\sh,\eps}(\hbz) $.
	\end{itemize}
\end{definition}
For the sake of curiosity, explicit equations for the terms~$\hat{u}^{(m)}_{\xi_\d;\sh}$ are derived in appendix~\ref{sec:stoch_pde_taylor_terms}; they are not needed for the proof of~\thref{thm:stoch_taylor_gpam}. 

The following lemma will be needed in subsection~\ref{sec:first_order_vanish}; it states that the term~$u^{(1)}_{\xi_\d,\sh}$ needs not be renormalised. This is not surprising given that it is a linear function of the noise.

\begin{lemma} \label{coro:no_renorm_first_order_term}
	We have~$u^{(1)}_{\xi_\d;\sh} = \hat{u}^{(1)}_{\xi_\d;\sh}$. 
\end{lemma}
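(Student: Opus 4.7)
The strategy is to exploit the fact that the first-order Taylor term lives in the \emph{first noise chaos} of the regularity structure, on which BPHZ renormalisation acts trivially.

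\textbf{Step 1: Chaos decomposition of $\gr{U^{(1)}}(\bz)$.} Specialising the explicit representation~\eqref{thm:abstract_taylor:eq_U_m_formula} to $m=1$ (note $B^{(0),\<cms>}_{E_\sh\bz}\equiv 0$ and $A^{(0),\<wns>}_{E_\sh\bz}=G(\gr{W})$) yields
\begin{equation*}
\gr{U^{(1)}}(\bz)
\,=\,
\tilde{\Theta}^{E_\sh\bz}\,\CP^{E_\sh\bz}\bigl(G(\gr{W})\,\<wn>\bigr).
\end{equation*}
By \thref{rmk:W_indep_model_not} the element $\gr{W}$ (and hence $G(\gr{W})$, which is a function of it via the abstract nonlinear lift) depends only on $\sh$ and not on the underlying model $\bz$. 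Moreover, $\tilde\Theta^{E_\sh\bz}$ is the Neumann series of the bounded linear operator $\gr{Y}\mapsto \CP^{E_\sh\bz}(G'(\gr{W})\gr{Y}\,\<cm>)$, which multiplies only by $\<cm>$. Consequently, every basis symbol occurring in $\gr{U^{(1)}}(\bz)$ with non-zero coefficient carries \emph{exactly one} factor of $\<wn>$ — i.e.\ it lies in the first inhomogeneous noise chaos of the extended gPAM regularity structure — and, crucially, the coefficient functions $\phi_\tau$ multiplying these symbols are \emph{model-independent} (they depend only on $\sh$ and $u_0$).

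\textbf{Step 2: Renormalisation acts trivially on the first chaos.} For gPAM the BPHZ renormalisation map $M_\d$ alters a canonical lift $\bz^{\xi_\d}$ only through the substitution $\hat{\Pi}\<Xi2>=\Pi\<Xi2>-\fc_\d$ (and the induced changes on symbols containing $\<Xi2>$ as a subtree), cf.~\thref{intro:ex_sol_hairer} and the construction recalled around eq.~\eqref{rgpam}. Every such modification requires the presence of at least two copies of $\<wn>$. Therefore, for every symbol $\tau$ in the first noise chaos one has
\begin{equation*}
\Pi_z^{\bz^{\xi_\d}}\tau \,=\, \hat\Pi_z^{\hbz^{\xi_\d}}\tau,
\qquad
\Gamma_{zz'}^{\bz^{\xi_\d}}\tau \,=\, \hat\Gamma_{zz'}^{\hbz^{\xi_\d}}\tau,
\end{equation*}
and the same is true after applying the extension operator $E_\sh$, since $E_\sh$ only touches the $\<cm>$-sector.

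\textbf{Step 3: Conclusion via reconstruction.} Combining Steps 1 and 2, $\gr{U^{(1)}}(\bz^{\xi_\d})$ and $\gr{U^{(1)}}(\hbz^{\xi_\d})$ have identical coefficient functions on identical first-chaos symbols, and the two models agree on those symbols. Reconstructing then gives
\begin{equation*}
u^{(1)}_{\xi_\d;\sh}
\,=\, \CR\bigl(E_\sh\bz^{\xi_\d};\gr{U^{(1)}}(\bz^{\xi_\d})\bigr)
\,=\, \CR\bigl(E_\sh\hbz^{\xi_\d};\gr{U^{(1)}}(\hbz^{\xi_\d})\bigr)
\,=\, \hat{u}^{(1)}_{\xi_\d;\sh}.
\end{equation*}

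The main subtlety I anticipate is the verification, in Step 1, that $\tilde{\Theta}^{E_\sh\bz}$ really preserves the first-chaos decomposition and produces model-independent coefficients on first-chaos symbols. This should be handled by an induction on homogeneity, using that the abstract integration $\CP^{E_\sh\bz}$ assigns to each symbol $\tau$ a coefficient determined only by values $\Pi_z^{E_\sh\bz}$ takes on subsymbols of $\tau$, all of which remain in the first chaos. A fully streamlined alternative — useful as a sanity check — is to differentiate the renormalised equation~\eqref{eq:approx_renorm_shifted} in $\eps$ at $\eps=0$: the counterterm $\eps^2\fc_\d g'(\hat u^\eps_{\xi_\d;\sh})$ contributes $0$, so $\hat u^{(1)}_{\xi_\d;\sh}$ satisfies the \emph{same} classical linear PDE as $u^{(1)}_{\xi_\d;\sh}$, namely $(\partial_t-\Delta)v=g'(w_\sh)v\,\sh+g(w_\sh)\,\xi_\d$ with $v(0,\cdot)=0$, and uniqueness closes the argument.
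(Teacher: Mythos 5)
Your high-level instinct is right, but the argument as written in Steps~1--3 has a genuine circularity that the paper's proof is specifically designed to avoid. The claim in Step~1 that ``every basis symbol occurring in $\gr{U^{(1)}}(\bz)$ with non-zero coefficient carries exactly one factor of $\<wn>$'' is false: $\gr{U^{(1)}}(\bz)$ takes values in the function-like sector $\gr{\CU}=\scal{\1,\<1>,\<1g>,X}$, and its $\1$-component is precisely the reconstructed function $u^{(1)}_\sh(\bz) = \CR^{E_\sh\bz}\gr{U^{(1)}}(\bz)$ by~\cite[Prop.~3.28]{hairer_rs}. This coefficient is model-dependent (it equals $u^{(1)}_{\xi_\d;\sh}$ for $\bz=\bz^{\xi_\d}$ and $\hat u^{(1)}_{\xi_\d;\sh}$ for $\bz=\hbz^{\xi_\d}$), so ``the coefficient functions $\phi_\tau$ are model-independent'' is exactly the statement you are trying to prove, and Step~3 then begs the question. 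The paper sidesteps this by passing from $\gr{U^{(1)}}$ to the auxiliary object $\gr{L^{(1)}}(\bz) = G'(\gr{W})\gr{U^{(1)}}(\bz)\,\<cm> + G(\gr{W})\,\<wn>$, for which the $\<11>$-coefficient vanishes for a purely \emph{structural} reason (namely, $\langle\gr{W},\<1>\rangle = 0$, cf.~\thref{rmk:W_indep_model_not}), independently of whether $\gr{U^{(1)}}(\bz)$ is model-independent; the remaining identification of $u^{(1)}$ with $\hat u^{(1)}$ is then closed through the integral (heat-kernel) equation they both satisfy.

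Your ``sanity check'' at the end, by contrast, is not a mere sanity check but a complete and arguably cleaner proof: differentiating the renormalised equation~\eqref{eq:approx_renorm_shifted} once at $\eps = 0$ annihilates the counterterm $\eps^2\fc_\d\,g'(\hat u^\eps_{\xi_\d;\sh})$, so that both $u^{(1)}_{\xi_\d;\sh}$ and $\hat u^{(1)}_{\xi_\d;\sh}$ solve the \emph{same} linear parabolic PDE $(\partial_t - \Delta)v = g'(w_\sh)v\,\sh + g(w_\sh)\xi_\d$ with $v(0,\cdot) = 0$, and uniqueness (e.g.\ via Gronwall) forces equality. This is exactly the calculation recorded in~\thref{rmk:heuristic_comp}, and I would recommend promoting it from a footnote to the body of the argument and discarding Steps~1--3.
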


\begin{proof}
	From~\thref{prop:der_fp_map}\ref{prop:der_fp_map:iii}, we know that
	\begin{equation*}
		\gr{U^{(1)}}(\bz) = \CP^{E_\sh\bz} \del[1]{\gr{L^{(1)}}(\bz)}, \quad
		\gr{L^{(1)}}(\bz) 
		:= 
		A_{E_\sh\bz}^{(0),\<wns>}\thinspace \<wn> + B_{E_\sh\bz}^{(0),\<cms>}\thinspace \<cm>
		=
		G'(\gr{W}) \gr{U^{(1)}}(\bz) \thinspace\<cm> + G(\gr{W})\thinspace\<wn>
	\end{equation*}
	As a consequence of~\thref{rmk:W_indep_model_not}, eq.~\eqref{rmk:structure_W:eq}, we find~$\scal{\gr{L^{(1)}}(\bz),\<11>} = 0$ for any~$\bz \in \MM$. In turn, since the action of~$\Pi^{\xi_\d;\thinspace\esh}$ and~$\hat{\Pi}^{\xi_\d;\thinspace\esh}$ differs only on the symbol~$\<11>$, we have
	\begin{equs}
		\CR^{E_\sh \bz^{\xi_\d}}\del[1]{\gr{L^{(1)}}\del[0]{\bz^{\xi_\d}}}(z)
		& 
		=
		\Pi^{\xi_\d;\thinspace\esh}_z \sbr[1]{\gr{L^{(1)}}\del[0]{\bz^{\xi_\d}}(z)}(z) 
		= 
		\hat{\Pi}^{\xi_\d;\thinspace\esh}_z \sbr[1]{\gr{L^{(1)}}\del[0]{\bz^{\xi_\d}}(z)}(z) \\
		&
		=
		\hat{\Pi}^{\xi_\d;\thinspace\esh}_z \sbr[1]{\gr{L^{(1)}}\del[0]{\hbz^{\xi_\d}}(z)}(z)
		=
		\CR^{E_\sh \hbz^{\xi_\d}}\del[1]{\gr{L^{(1)}}\del[0]{\hbz^{\xi_\d}}}(z).
	\end{equs}
	With this observation, we then get
	\begin{equs}
		\hat{u}_{\xi_\d;\sh}^{(1)}(z)	
		&
		=
		\CR\del[1]{E_\sh \hbz^{\xi_\d};\thinspace \gr{U^{(1)}}\del[0]{\hbz^{\xi_\d}}}(z) 
		=
		\sbr[1]{P * \CR^{E_\sh \hbz^{\xi_\d}}\del[1]{\gr{L^{(1)}}\del[0]{\hbz^{\xi_\d}}}}(z) \\
		&
		=
		\sbr[1]{P * \CR^{E_\sh \bz^{\xi_\d}}\del[1]{\gr{L^{(1)}}\del[0]{\bz^{\xi_\d}}}}(z)
		=
		\CR^{E_h \bz^{\xi_\d}}\del[1]{\gr{U^{(1)}}\del[0]{\bz^{\xi_\d}}}(z) 
		=
		u_{\xi_\d;\sh}^{(1)}(z)	
	\end{equs}
	which is what we wanted to show.
\end{proof}

\paragraph*{Further preparations: Duhamel's Formula.}

Before we can estimate the terms in~\eqref{thm:abstract_taylor:eq_exp}, we need some final preparations.
In~\eqref{thm:abstract_taylor:eq_U_m_formula_fp_eq} we have seen that~$\gr{Y} \in \{\gr{U^{(m)}}: \ m \in [\ell-1]\}$ satisfies

\begin{enumerate}[label=(\arabic*)]
	\item \label{duhamel:1} a \emph{linear} fixed-point equation of type
	\begin{equation}
		\gr{Y} = \CP^{\esh} \del[1]{G'(\gr{W})\gr{Y}\thinspace\<cm>\thinspace + \gr{\tilde{V}}}
		\label{sec:duhamel:eq0}
	\end{equation}
	for some modelled distribution~$\gr{\tilde{V}} \in \DD_{\a}^{\bar{\gamma},\bar{\eta}}$ for $\bar{\gamma} := \gamma + \a$ and $\bar{\eta} := \eta + \a$. The subscript~\enquote{$\a$} indicates that $\gr{\tilde{V}}$ takes with values in a sector of regularity~$\a := \deg(\<wn>)$.
	\item \label{duhamel:2} an inhomogeneous \emph{linear} equation of the form
	\begin{equation}
		\Theta^{\esh} \gr{Y} = \gr{V}
		\Longleftrightarrow
		\gr{Y} = \tilde{\Theta}^{\esh} \gr{V}
		\label{sec:duhamel:eq1}
	\end{equation}
	where 
	\begin{equation}
		\gr{V} :=  \CP^{\esh} \gr{\tilde{V}} \in \DD_{\gr{\CU}}^{\gamma,\a+2} \embed \DD_{\gr{\CU}}^{\gamma,\eta}
		\label{sec:duhamel:eq1b_form_V}
	\end{equation}
	for~$\eta \in (0,\a+2)$  and 
	\begin{equation}
		\Theta^{\esh} := \operatorname{Id} - \CP^{\esh}\del[1]{G'(\gr{W}) \bullet \<cm>} \in \CL\del[1]{\DD^{\gamma,\eta}_{\gr{\CU}}(\Gamma^{\esh})}, 
		\quad
		\tilde{\Theta}^{\esh} := \sbr[1]{\Theta^{\esh}}^{-1},
		\label{sec:duhamel:eq2}
	\end{equation}
	introduced in~\thref{lem:sec_cond_ift} can be interpreted as a \enquote{Duhamel operator} and its inverse. 
	In the reasoning above, we have used~\cite[Prop.~6.16]{hairer_rs} which asserts that 
	\begin{equation}
		\CP^{\esh}: \DD_{\a}^{\bar{\gamma},\bar{\eta}} \to \DD_{\gr{\CU}}^{\bar{\gamma}+2,\a+2}
		\label{sec:duhamel:integr_op_bdd}
	\end{equation}
	is linear and bounded.
\end{enumerate}
In subsection~\ref{sec:estimates_taylor_remainder}, more precisely~\thref{lem:formula_R_n+1}, we will see that one can also derive an equation of form~\eqref{sec:duhamel:eq1} for $\gr{Y} = \gr{\fR_{\eps}^\ell}$. That makes a strong case for understanding the operator norm of~$\tilde{\Theta}$, so that estimating~$\gr{Y} \in \{\gr{U^{(m)}}, \gr{\fR_{\eps}^\ell}: \ m \in [\ell-1]\}$ reduces to estimating the corresponding~$\gr{V}$'s in~\eqref{sec:duhamel:eq1}. 

In this spirit, the following proposition is reminiscient of Duhamel's Formula in linear PDE theory. For its formulation, we define~$\gr{\tilde{\CV}} := \scal{\gr{\CU}\<wn> \cup \gr{\CU}\<cm>} = \scal{\tau\sigma: \ \tau \in \gr{\CU}, \sigma \in \{\<wn>,\<cm>\}}$, a sector of regularity~$\a$.

\begin{proposition}[Duhamel] \label{lem:inv_theta_op}
	As before, let~$\gamma = 1 + 2\kappa$, $\a = \deg(\<wn>) = -1 - \kappa$, and~$\eta \in (\nicefrac{1}{2},1)$.
	For each $T \in (0, T_0 \wedge T_\infty^\sh)$,  $\bz = (\Pi,\Gamma) \in \MM$, and $\gr{V} \in \DD^{\gamma,\eta}(\Gamma^{\esh})$, the equation~\eqref{sec:duhamel:eq1} admits a unique solution~$\gr{Y} \in \DD_{\gr{\CU}}^{\gamma,\eta}(\Gamma^{\esh})$~on~$(0,T)$.
	Setting~$\lambda = \lambda(\bz) := 1 + \barnorm{\bz^{\minus}}$ and defining~$\gr{V}$ from $\gr{\tilde{V}} \in \DD_{\gr{\tilde{\CV}}}^{\bar{\gamma},\bar{\eta}}(\Gamma^{\esh})$ as in~\eqref{sec:duhamel:eq1b_form_V} we also have the bound
	\begin{equation}
		\threebars \fd_\l \gr{Y} \threebars_{\gamma,\eta,T;\Gamma^{\esh;\nicefrac{1}{\l}}}
		\aac
		\threebars \fd_\l \gr{\tilde{V}} \threebars_{\bar{\gamma},\bar{\eta},T;\Gamma^{\esh;\nicefrac{1}{\l}}}
	\end{equation} 
	that holds uniformly over all~$\bz \in \MM$.
\end{proposition}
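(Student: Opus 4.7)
My strategy is two-fold: first establish existence and uniqueness of $\gr{Y}$ via a contraction argument on $\DD^{\gamma,\eta;T'}_{\gr{\CU}}(\Gamma^{\esh})$ for some short $T' > 0$, then extract the desired norm bound after passing through the dilation operator $\fd_\l$ with $\l = 1 + \barnorm{\bz^{\minus}}$.

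For existence and uniqueness, I would rewrite $\Theta^{\esh}\gr{Y} = \gr{V}$ as the fixed-point problem
\begin{equation*}
\gr{Y} = \CP^{\esh}\bigl(G'(\gr{W}) \gr{Y} \<cm>\bigr) + \gr{V}.
\end{equation*}
The multiplication $\gr{Y} \mapsto G'(\gr{W}) \gr{Y} \<cm>$ sends $\DD^{\gamma,\eta}_{\gr{\CU}}(\Gamma^{\esh})$ into $\DD^{\bar\gamma,\bar\eta}_{\gr{\tilde{\CV}}}(\Gamma^{\esh})$ by \cite[Prop.~6.12]{hairer_rs}, while $\CP^{\esh}$ maps back into $\DD^{\gamma,\eta}_{\gr{\CU}}(\Gamma^{\esh})$ through~\eqref{sec:duhamel:integr_op_bdd}, with a small-time gain in the operator norm inherited from \cite[Prop.~6.16]{hairer_rs}. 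Choosing $T' > 0$ small enough (but depending only on the norm of $\gr{\bz}$), the composition becomes a strict contraction, so Banach's fixed-point theorem delivers a unique $\gr{Y}$ on $(0,T')$; this is the same mechanism we already exploited in~\thref{lem:sec_cond_ift}. Patching finitely many local solutions then extends $\gr{Y}$ to the full interval $(0,T)$.

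For the uniform estimate, I would apply $\fd_\l$ to the equation. Using~\thref{lem:interaction_theta_dilation} (equivalently, the commutation of $\fd_\l$ with multiplication and with the abstract integration operator via~\thref{lem:consistency_dilation}, together with $\fd_\l \gr{W} = \gr{W}$, cf.~\thref{rmk:W_indep_model_not}), the dilated unknown satisfies
\begin{equation*}
\fd_\l \gr{Y} = \CP^{E_\sh \d_{\nicefrac{1}{\l}}\bz}\bigl(G'(\gr{W})(\fd_\l \gr{Y})\<cm>\bigr) + \fd_\l \gr{V},
\end{equation*}
driven now by the rescaled model $E_\sh \d_{\nicefrac{1}{\l}}\bz$ whose homogeneous norm is bounded by a universal constant, by the definition of $\l$ and the behaviour of $\barnorm{\cdot}$ under dilation recalled in subsection~\ref{app:sec:adm_models}. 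Re-running the contraction argument in this rescaled setting, \emph{all} implicit constants (coming from \cite[Prop.~6.12, 6.16]{hairer_rs}) depend on the underlying model only through its norm, hence are uniform in $\bz$. This yields a short-time bound
\begin{equation*}
\threebars \fd_\l \gr{Y} \threebars_{\gamma,\eta,T';\Gamma^{\esh;\nicefrac{1}{\l}}} \lesssim \threebars \fd_\l \gr{\tilde V} \threebars_{\bar\gamma,\bar\eta,T';\Gamma^{\esh;\nicefrac{1}{\l}}}
\end{equation*}
on $(0,T')$ with a $\bz$-independent constant, which we then iterate across $\lceil T/T'\rceil$ sub-intervals.

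The main obstacle is precisely this patching step: one must verify that $T'$ can be chosen uniformly in $\bz$ and that the multiplicative constants accumulated when concatenating the short-time estimates do not blow up. This ultimately relies on $\gr{W}$ solving the deterministic problem~\eqref{eq:wh}, so that the linearisation coefficient $G'(\gr{W})$ carries no $\bz$-dependent information, and the norm of $\gr{W}$ itself is controlled purely in terms of $\sh$ and $T$ via classical parabolic estimates (section~\ref{sec:explosion}). The detailed bookkeeping I defer to section~\ref{sec:duhamel_proof}.
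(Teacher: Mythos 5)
Your proposal follows the same route as the paper's proof in appendix~\ref{sec:duhamel_proof}: a small-time contraction for $\gr{Y} = \CP^{\esh}(G'(\gr{W})\gr{Y}\thinspace\<cm>) + \gr{V}$ (the paper phrases this as a Neumann-series bound on $\operatorname{Id} - \CQ_{<\gamma}\CP^{\esh}\1_s^+(G'(\gr{W})\thinspace\bullet\thinspace\<cm>)$ in Step~$2$), normalization via $\fd_\l$ so that, by~\thref{lem:rescaling_lambda}, all implicit constants become uniform in~$\bz$, and then patching across finitely many subintervals. The obstacle you flag and defer to section~\ref{sec:duhamel_proof} is exactly what occupies Steps~$3$ and~$4$ of the paper's argument: one restarts the fixed-point problem at time $s$ with the harmonic extension $P_s\sbr[1]{\CR^{\esh}\gr{Y}(s,\cdot)}$ as initial datum, whose $\DD^{\gamma,\eta}$-norm is in turn controlled by the already-established estimate on $(0,s]$ (so that nothing $\bz$-dependent accumulates), and then invokes~\cite[Prop.~7.11]{hairer_rs} to identify the restarted solution with the original on the overlap; your sketch is correct but stops short of supplying this mechanism.
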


In the proof of the proposition, we combine different aspects of the equivalent viewpoints in~\ref{duhamel:1} and~\ref{duhamel:2} above. It is outsourced to section~\ref{sec:duhamel_proof} in the appendix to streamline the presentation.
Still, let us make a few remarks about the philosophy behind the previous proposition and existent literature.
\begin{remark}\label{rmk:expl_duhamel}
	In the setting of the previous proposition, we could actually prove the slightly stronger statement that
	\begin{equation*}
		\threebars \fd_\l \gr{Y} \threebars_{\gamma,\eta;T;\Gamma^{\esh;\nicefrac{1}{\l}}}
		\aac
		\threebars \fd_\l \gr{V} \threebars_{\gamma,\eta;T;\Gamma^{\esh;\nicefrac{1}{\l}}},
	\end{equation*} 
	for~$\gr{V} \in \DD^{\gamma,\eta,T}(\Gamma^{\esh})$ not necessarily of form~\eqref{sec:duhamel:eq1b_form_V}. From a functional analytic perspective, the previous inequality can then equivalently be stated as
	\begin{equation*}
		\norm[0]{\tilde{\Theta}^{E_\sh\d_{\nicefrac{1}{\lambda}}\bz}}_{\operatorname{op};T} \aac 1
		\label{lem:inv_theta_op:bound}
	\end{equation*}
	uniformly over each~$\bz \in \MM$, where~$\norm{\thinspace \cdot \thinspace}_{\operatorname{op};T}$ denotes the operator norm in~$\CL\del[1]{\DD^{\gamma,\eta;T}(\Gamma^{\esh;\nicefrac{1}{\lambda}})}$.
\end{remark}

\begin{remark} \label{rmk:philosophy_duhamel}
	The purpose of~\thref{lem:inv_theta_op} is to prepare for the proof of the estimates in~\thref{thm:stoch_taylor_gpam}, eq.~\eqref{thm:stoch_taylor_gpam:estimate}: In~\thref{def:terms_taylor_exp}, we have seen that the quantities therein are reconstructions of modelled distributions~$\gr{Y} \in \{\gr{U^{(m)}}, \gr{\fR_{\eps}^\ell}: \ m \in [\ell-1]\}$. 
	By the trivial identity~$\gr{Y} =  \fd_{\nicefrac{1}{\lambda}} \fd_\lambda \gr{Y}$ and~\thref{lem:consistency_dilation}\ref{lem:consistency_dilation:ii}, we see that 
	\begin{equation*}
		\CR^{E_\sh\bz} \gr{Y}
		= 
		\CR^{E_\sh\bz} \fd_{\nicefrac{1}{\lambda}} \fd_\lambda \gr{Y}
		= 
		\CR^{E_\sh\d_{\nicefrac{1}{\l}}\bz} \fd_\lambda \gr{Y}
		=
		\scal{\fd_\lambda \gr{Y},\1}
		\label{rmk:philosophy_duhamel:eq1}
	\end{equation*}
	where the last identity is true by~\cite[Prop.~$3.28$]{hairer_rs} and the fact that~$\gr{\CU}$ is a function-like sector, see~\cite[Def.~$2.5$]{hairer_rs}. 
	Finally, we can estimate
	\begin{equation}
		\norm[0]{\CR^{E_\sh\bz} \gr{Y}}_{\CX_T}
		=
		\norm[0]{\scal{\fd_\lambda \gr{Y},\1}}_{\CX_T}
		\leq
		\threebars \fd_\l \gr{Y} \threebars_{\gamma,\eta,T;\Gamma^{\esh;\nicefrac{1}{\l}}}.
		\label{rmk:philosophy_duhamel:eq2}
	\end{equation}	
	By~\thref{lem:inv_theta_op}, it then only remains to estimate the~$\gr{\tilde{V}}$ corresponding to the specific choice of~$\gr{Y}$. That will be done in the subsections to follow.
\end{remark}

\begin{remark}
	The previous proposition is consistent with its rough path counterpart~\cite[Lem.~2.1]{inahama_kawabi}, itself a version of Duhamel's formula. In the framework of regularity structures, Hairer and Mattingly~\cite{hairer-mattingly} established a version of Duhamel's formula that applies in our framework but did not consider the estimates provided above.
\end{remark}

We now start estimating~$\gr{Y} = \gr{U^{(m)}}$ in the spirit of~\thref{rmk:philosophy_duhamel}.

\subsubsection{Estimates on the Taylor terms}\label{sec:estimates_taylor_terms}

In this subsection, we estimate the Taylor terms $\gr{U^{(m)}}$ in the expansion~\eqref{thm:abstract_taylor:eq_exp}.
Recall from~\eqref{thm:abstract_taylor:eq_U_m_formula} that
\begin{equation}
	\gr{U^{(m)}}(\bz)
	=
	\tilde{\Theta}^{E_\sh\bz}\sbr[1]{\CP^{E_\sh\bz}\del[1]{\gr{\tilde{V}^{(m)}}(\bz)}},
	\qquad
	\gr{\tilde{V}^{(m)}}(\bz) := 
	m A_{E_\sh\bz}^{(m-1),\<wns>}\<wn> + B_{E_\sh\bz}^{(m-1),\<cms>}\<cm>.
	\label{estimates_taylor_terms:fp_eq_2}
\end{equation}
which is of type~\eqref{sec:duhamel:eq1}. 
We obtain the following estimates:

\begin{proposition}\label{prop:est_taylor_terms}
	For each $\bz \in \MM$, $\lambda = \lambda(\bz) := 1 + \barnorm{\bz^{\minus}}$, and $T \in (0,T_\infty^\sh \wedge T_0)$, we have the estimate
	\begin{equation}
		\threebars \fd_\lambda \gr{U^{(m)}}(\bz) \threebars_{\gamma,\eta;T;\Gamma^{\esh;\nicefrac{1}{\l}}} \aac \del[1]{1 + \barnorm{\bz^{\minus}}}^{m} 
		\label{prop:est_taylor_terms:eq}
	\end{equation}
	where the implicit constant differs for each~$m$. 
\end{proposition}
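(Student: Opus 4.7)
The natural approach is to proceed by induction on $m$, with the Duhamel formula from Proposition~\ref{lem:inv_theta_op} serving as the main workhorse. Indeed, formula~\eqref{estimates_taylor_terms:fp_eq_2} is precisely of the form~\eqref{sec:duhamel:eq1}--\eqref{sec:duhamel:eq1b_form_V}, with the integrand
\[
	\gr{\tilde V^{(m)}}(\bz) = m\, A_{E_\sh\bz}^{(m-1),\<wns>}\<wn> + B_{E_\sh\bz}^{(m-1),\<cms>}\<cm>
\]
taking values in the sector $\gr{\tilde\CV}$ of regularity $\alpha$. Thus Proposition~\ref{lem:inv_theta_op} immediately reduces the desired estimate to
\[
	\bigl\Vvert \fd_\lambda \gr{\tilde V^{(m)}}(\bz)\bigr\Vvert_{\bar\gamma,\bar\eta;T;\Gamma^{\esh;\nicefrac{1}{\lambda}}} \lesssim (1+\barnorm{\bz^{\minus}})^m.
\]

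For the base case $m=1$, the only non-trivial term is $A^{(0),\<wns>}_{E_\sh\bz} = G(\gr{W})$, so that $\gr{\tilde V^{(1)}}(\bz) = G(\gr{W})\,\<wn>$. Since $\fd_\lambda$ is a multiplicative operator satisfying $\fd_\lambda\<wn> = \lambda\<wn>$, $\fd_\lambda\<cm> = \<cm>$, and $\fd_\lambda\gr{W} = \gr{W}$ (the last because $\gr{W}$ has no components on noise-bearing symbols, cf.~Remark~\ref{rmk:W_indep_model_not}), we find $\fd_\lambda\gr{\tilde V^{(1)}}(\bz) = \lambda\, G(\gr{W})\,\<wn>$. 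Applying the multiplication bound of~\cite[Prop.~6.12]{hairer_rs} together with boundedness of $G$ via~\cite[Prop.~3.11]{hairer_pardoux} yields the bound with an extra factor $\lambda = 1 + \barnorm{\bz^{\minus}}$, which is exactly what is claimed.

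For the induction step, assume the estimate~\eqref{prop:est_taylor_terms:eq} for all $j < m$. Using the explicit formulas~\eqref{eq:A_eps_0}--\eqref{eq:B_eps_0}, together with the fact that $\fd_\lambda$ commutes with $G^{(k)}(\gr{W})$ and is multiplicative, each summand of $\fd_\lambda\gr{\tilde V^{(m)}}(\bz)$ has the shape
\[
	G^{(k)}(\gr{W}) \prod_{n=1}^{k}\fd_\lambda \gr{U^{(i_n)}}(\bz) \cdot \fd_\lambda\sigma,
	\qquad \bi\in S_k^{m-1},\ \sigma\in\{\<wn>,\<cm>\}.
\]
Since $|\bi| = m-1$, the inductive hypothesis combined with the multiplication estimate~\cite[Prop.~6.12]{hairer_rs} gives a bound by $(1+\barnorm{\bz^{\minus}})^{m-1}$ for the product of Taylor terms. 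The $\<cm>$-summand thus contributes $\lesssim \lambda^{m-1}$, whereas the $\<wn>$-summand, in view of $\fd_\lambda\<wn> = \lambda\<wn>$, contributes $\lesssim \lambda^{m}$. Summing the two gives the desired $\lesssim (1+\barnorm{\bz^{\minus}})^m$.

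\textbf{Main obstacle.} The only delicate point is keeping careful track of the norms when invoking the multiplication and composition estimates on the scaled structure group $\Gamma^{\esh;\nicefrac{1}{\lambda}}$, and ensuring that the scaling convention $\fd_\lambda\<wn>=\lambda\<wn>$, $\fd_\lambda\<cm>=\<cm>$, $\fd_\lambda\gr{W}=\gr{W}$ propagates consistently through $G^{(k)}(\gr{W})$ and the products. Once this is granted --- which is handled by the consistency results of~Lemma~\ref{lem:consistency_dilation}\ref{lem:consistency_dilation:iii} and Lemma~\ref{lem:ext_dil_commute} --- the induction closes cleanly. The implicit constants naturally deteriorate with $m$ (they depend on $\|g^{(k)}\|_\infty$ for $k\leq m+1$, on $\sh$ via $\gr{W}$, and on $T$), but since $m\leq \ell-1$ is bounded in our application, this is immaterial.
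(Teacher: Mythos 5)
Your proposal follows the same route as the paper: reduce to estimating $\gr{\tilde{V}^{(m)}}$ via the Duhamel bound of Proposition~\ref{lem:inv_theta_op}, handle the base case $m=1$ by hand, and close the induction with the explicit formulas~\eqref{eq:A_eps_0}--\eqref{eq:B_eps_0}. The overall structure is correct.

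There is one small inaccuracy in the inductive step. You assert that the multi-indices in \emph{both} summands of $\gr{\tilde{V}^{(m)}}$ satisfy $\bi\in S_k^{m-1}$. That is true for the noise part, since $A_{E_\sh\bz}^{(m-1),\<wns>}$ sums over $\bi\in S_k^{m-1}$ and acquires the extra factor $\lambda$ from $\fd_\lambda\<wn>=\lambda\,\<wn>$, giving $\lambda\cdot(1+\barnorm{\bz^{\minus}})^{m-1}=(1+\barnorm{\bz^{\minus}})^m$. But by~\eqref{eq:B_eps_0} the Cameron--Martin part $B_{E_\sh\bz}^{(m-1),\<cms>}$ sums over $\bi\in S_k^{m}$ with $k\geq 2$; the product of Taylor terms already contributes $(1+\barnorm{\bz^{\minus}})^{\abs{\bi}}=(1+\barnorm{\bz^{\minus}})^m$, and since $\fd_\lambda\<cm>=\<cm>$ there is no extra $\lambda$, so the $\<cm>$-summand is of order $\lambda^m$, not $\lambda^{m-1}$ as you claim. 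Because $\lambda\geq 1$, your weaker intermediate bound still lands inside $\lesssim(1+\barnorm{\bz^{\minus}})^m$, so the conclusion is unaffected --- but the indexing should be fixed, and one should note explicitly that since $k\geq 2$ each $i_n\leq m-1$ even though $\abs{\bi}=m$, which is why the induction hypothesis still applies termwise.
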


\begin{proof}
	Since~$A_{E_\sh\bz}^{(m-1),\<wns>}$ and~$B_{E_\sh\bz}^{(m-1),\<cms>}$ take values in~$\gr{\CU}$, a function-like sector, it is immediately clear that~$\gr{\tilde{V}^{(m)}}$ takes values in~$\gr{\tilde{\CV}}$ and thus satisfies the assumptions of~\thref{lem:inv_theta_op}.
	We now use the identity
	\begin{equation}
		\fd_\lambda \gr{\tilde{V}^{(m)}}(\bz)
		=
		m\fd_\lambda A_{E_\sh\bz}^{(m-1),\<wns>}\lambda\thinspace\<wn> 
		+ \fd_\lambda B_{E_\sh\bz}^{(m-1),\<cms>}\thinspace\<cm>
		\label{prop:est_taylor_terms:pf:eq0}
	\end{equation}
	and run an inductive argument in~$m$. Henceforth, we suppress the subscript~$E_\sh\bz$ of $A$ and $B$. All the norms~$\threebars \cdot \threebars$ are taken w.r.t. the model~$E_\sh \d_{\nicefrac{1}{\l}} \bz$.
	\begin{itemize}[wide=10pt, parsep = 0.5pt, leftmargin=15pt]
		\item $m = 1$: The previous formula reads 
			$\fd_\lambda\gr{\tilde{V}^{(1)}}(\bz) 
			=  \fd_\lambda A^{(0),\<wns>}\lambda\thinspace\<wn> 
			= \fd_\lambda G(\gr{W}) \lambda\thinspace\<wn>
			= G(\gr{W}) \lambda\thinspace\<wn>$
			since $\scal{\gr{W},\<1>} = 0$,~see~\thref{rmk:W_indep_model_not}. We then obtain the estimate
		\begin{equs}
			\threebars \fd_\lambda \gr{\tilde{V}^{(1)}}(\bz) \threebars_{\bar{\gamma},\bar{\eta};T} 
			= 
			\lambda \threebars G(\gr{W})\thinspace\<wn>\thinspace \threebars_{\bar{\gamma},\bar{\eta};T}
			\aac
			\lambda 
			= 1 + \barnorm{\bz^{\minus}}
		\end{equs}
		for $\a$, $\gamma$, and~$\eta$ (and thus~$\bar{\gamma}$ and $\bar{\eta}$) as in the formulation of~\thref{lem:inv_theta_op}. That same proposition verifies the claim.
		\item $m = 2$: In this case, the formula~\eqref{prop:est_taylor_terms:pf:eq0} is valid with 
		\begin{equation*}
			\fd_\lambda A^{(1),\<wns>} = 2 G'(\gr{W}) \fd_\lambda\gr{U^{(1)}}(\bz), \quad
			\fd_\lambda  B^{(1),\<cms>} = G''(\gr{W}) \sbr[1]{\fd_\lambda \gr{U^{(1)}}(\bz)}^2.
		\end{equation*}
		Analogously to the case~$m=1$, we find
		\begin{equs}
			\threebars \fd_\lambda A^{(1),\<wns>} \lambda \thinspace\<wn> \threebars_{\bar{\gamma},\bar{\eta};T} 
			&
			\aac 
			\threebars G'(\gr{W})\thinspace\<wn> \threebars_{\bar{\gamma},\bar{\eta};T}  \lambda \threebars \fd_\lambda \gr{U^{(1)}}(\bz) \threebars_{\gamma,\eta;T}
			\aac (1 + \barnorm{\bz^{\minus}})^2
		\end{equs}
		as well as
		\begin{equs}
			\threebars\fd_\lambda B^{(1),\<cms>}\thinspace\<cm> \threebars_{\bar{\gamma},\bar{\eta};T} 
			\aac 
			\threebars G''(\gr{W})\thinspace\<cm> \threebars_{\bar{\gamma},\bar{\eta};T} \threebars \fd_\lambda \gr{U^{(1)}}(\bz) \threebars_{\gamma,\eta;T}^2 
			\aac 
			(1 + \barnorm{\bz^{\minus}})^2.
		\end{equs}
		These estimates can be combined to
		\begin{equation*}
			\threebars \fd_\lambda \gr{\tilde{V}^{(2)}}(\bz) \threebars_{\bar{\gamma},\bar{\eta};T}
			\aac
			(1 + \barnorm{\bz^{\minus}})^2
		\end{equation*}
		which implies the claimed statement by~\thref{lem:inv_theta_op}.
	\end{itemize}
	Suppose the claim is true for some $m-1 \in [\ell-2]$. The argument in case~$m=2$ is a blueprint for the strategy in the induction step 
	\begin{itemize}[wide=10pt, parsep = 0.5pt, leftmargin=15pt]
		\item $m-1 \mapsto m$: We have
	\begin{equs}
		\threebars \fd_\lambda B^{(m-1),\<cms>}\thinspace\<cm> \threebars_{\bar{\gamma},\bar{\eta};T}
		& \aac
		m! \sum_{k=2}^{m} \frac{1}{k!} 
		\threebars G^{(k)}(\gr{W}) \<cm> \threebars_{\bar{\gamma},\bar{\eta};T} 
		\sum_{\boldsymbol{i} \in S_k^{m}} \prod_{n=1}^k \frac{1}{i_n!} 
		\threebars \fd_\l \gr{U^{(i_n)}}(\bz) \threebars_{\gamma,\eta;T} \\
		& \aac
		\sum_{k=2}^{m}  
		\sum_{\boldsymbol{i} \in S_k^{m}} (1 + \barnorm{\bz^{\minus}})^{\thinspace \abs{\bi}} 
		=
		(1 + \barnorm{\bz^{\minus}})^m \sum_{k=2}^m \#[S_k^m]
		\aac
		(1 + \barnorm{\bz^{\minus}})^m
	\end{equs} 
	where we have used that~$\sum_{n=1}^{k} i_n = m$ for all $\bi \in S_k^m$ and the induction hypothesis. Analogously, we find
	\begin{equs}
		\threebars \fd_\lambda A^{(m-1),\<wns>} \lambda \<wn> \threebars_{\bar{\gamma},\bar{\eta};T}
		& \aac
		\l (m-1)! \sum_{k=1}^{m-1} \frac{1}{k!} 
		\threebars G^{(k)}(\gr{W}) \<wn>\threebars_{\bar{\gamma},\bar{\eta};T} 
		\sum_{\boldsymbol{i} \in S_k^{m-1}}  
		\prod_{n=1}^k \frac{1}{i_n!} 
		\threebars \gr{U^{(i_n)}}(\bz) \threebars_{\gamma,\eta;T}  \\
		& \aac
		\lambda (1 + \barnorm{\bz^{\minus}})^{m-1}
		=
		(1 + \barnorm{\bz^{\minus}})^m.
	\end{equs}
	Altogether, this can be combined to give the estimate
		\begin{equation*}
			\threebars \fd_\lambda \gr{\tilde{V}^{(m)}}(\bz) \threebars_{\bar{\gamma},\bar{\eta};T}
			\aac
			(1 + \barnorm{\bz^{\minus}})^m
		\end{equation*}
	and again, the claim follows from~\thref{lem:inv_theta_op}.
	\end{itemize}
\end{proof}

\subsubsection{Estimates on the Taylor remainder} \label{sec:estimates_taylor_remainder}

In this subsection, we focus on estimating the Taylor remainder~$\gr{\fR^{\ell}_\eps}$.
Recall from~\eqref{thm:abstract_taylor:remainder} that 
\begin{equation}
	\gr{\fR_{\eps}^{n+1}}  
	=
	\gr{\fR_{\eps}^{1}} 
	- \sum_{j=1}^{n} \frac{\eps^j}{j!} \gr{U^{(j)}} 
	\
	\in \DD^{\gamma,\eta}_{\gr{\CU}}(\Gamma^{\esh})
	\label{eq:taylor_remainder}
\end{equation}
where	
\begin{equation}
	\gr{\fR_{\eps}^{1}} 
	:= \gr{U^{\eps}} - \gr{W} 
	= \CP^{\esh}\del[1]{G(\gr{U^{\eps}})\eps \<wn> + [G(\gr{U^{\eps}}) - G(\gr{W})] \<cm>} 
	\label{eq:taylor_remainder_first_order}
\end{equation}
and, as before, $\gr{U^{(m)}}$ is given in~\eqref{thm:abstract_taylor:eq_U_m_formula}.
Our aim is to prove the following proposition, the analog to last subsection's~\thref{prop:est_taylor_terms}.	
\begin{proposition} \label{prop:remainder_est}
	Let $n \in [\ell]$ and for~$\bz \in \MM$, set~$\lambda = \lambda(\bz) := 1 + \barnorm{\bz^{\minus}}$. Then, for $T \in (0,T_\infty^{\sh} \wedge T_0)$ and~$\rho \in (0,\rho_0(T))$ with $\rho_0(T) > 0$ as in~\thref{sec:explosion:cm_lem}, the estimate
	\begin{equation}
		\threebars \fd_\l \gr{\fR^{n}_\eps}(\bz) \threebars_{\gamma,\eta;T;\Gamma^{\esh,\nicefrac{1}{\l}}}
		\aac
		\eps^n \del[1]{1 + \barnorm{\bz^{\minus}}}^n
		\label{prop:remainder_est:eq}
	\end{equation}
	holds uniformly over all $(\eps,\bz) \in I \x \MM$ with~$\eps \barnorm{\bz^{\minus}} < \rho$.
\end{proposition}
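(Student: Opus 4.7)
The plan is to proceed by induction on $n$, in each step deriving a linear equation of the form~\eqref{sec:duhamel:eq1} for $\gr{\fR_\eps^n}$ and then invoking the Duhamel bound of Proposition~\ref{lem:inv_theta_op}. This is the strategy foreshadowed by the paper's remark that such an equation (to be established in a forthcoming Lemma~\ref{lem:formula_R_n+1}) will hold for $\gr{Y}=\gr{\fR_\eps^\ell}$, and it mirrors the philosophy explained in Remark~\ref{rmk:philosophy_duhamel}: once an object satisfies $\Theta^{\esh}\gr{Y}=\CP^{\esh}(\gr{\tilde V})$, bounding $\gr{Y}$ reduces to bounding $\gr{\tilde V}$ in the appropriate space of modelled distributions.

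For the base case $n=1$, I would start from~\eqref{eq:taylor_remainder_first_order} and split $G(\gr{U^\eps})=G(\gr{W})+[G(\gr{W}+\gr{\fR_\eps^1})-G(\gr{W})]$. Applying the first-order modelled Taylor formula (Proposition~\ref{prop:taylor_modelled_distrib}) to the bracket gives a linear-in-$\gr{\fR_\eps^1}$ piece $G'(\gr{W})\gr{\fR_\eps^1}\<cm>$ plus a quadratic remainder, so that
\begin{equation*}
\Theta^{\esh}\gr{\fR_\eps^1}
=\CP^{\esh}\bigl(G(\gr{U^\eps})\,\eps\<wn>+R_1(\gr{\fR_\eps^1})\<cm>\bigr),
\end{equation*}
where $R_1$ is quadratic in $\gr{\fR_\eps^1}$. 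Dilating by $\lambda=1+\barnorm{\bz^{\minus}}$, using that $\eps\l\<wn>$ becomes $\eps\l\<wn>$ under the minimal-model dilation with $\threebars\eps\l\<wn>\threebars\aac\eps\l$ for the dilated model, and absorbing the quadratic term via the induction-like constraint $\eps\barnorm{\bz^{\minus}}<\rho$ (so that $\gr{\fR_\eps^1}$ is a priori small in the relevant ball), Proposition~\ref{lem:inv_theta_op} delivers $\threebars\fd_\l\gr{\fR_\eps^1}\threebars\aac\eps\l$.

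For the inductive step $n\mapsto n+1$, I would substitute the expansion $\gr{U^\eps}=\gr{W}+\sum_{j=1}^{n}\frac{\eps^j}{j!}\gr{U^{(j)}}+\gr{\fR_\eps^{n+1}}$ into the fixed-point equation~\eqref{eq:fp_extended_eps} and expand $G(\gr{U^\eps})$ around $\gr{W}$ via the abstract Taylor formula to order $n$, organising the derivatives $G^{(k)}(\gr{W})$ through Riordan's formula~\eqref{eq:riordans_formula}; the terms of order $\leq n$ in $\eps$ precisely match the fixed-point equations~\eqref{thm:abstract_taylor:eq_U_m_formula_fp_eq} satisfied by $\gr{W}$ and the $\gr{U^{(j)}}$, $j\leq n$, and cancel. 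What remains is an equation $\Theta^{\esh}\gr{\fR_\eps^{n+1}}=\CP^{\esh}(\gr{\tilde V_{n+1}})$ in which $\gr{\tilde V_{n+1}}$ is a finite sum of products of (i) Taylor coefficients $G^{(k)}(\gr{W})$, (ii) multilinear combinations of the $\gr{U^{(j)}}$'s (bounded by Proposition~\ref{prop:est_taylor_terms}), and (iii) one copy of either $\gr{\fR_\eps^{n+1}}$ itself (giving a contribution absorbed by the inverse of $\Theta^{\esh}$, as in $n=1$) or a Taylor integral remainder that is of size $\eps^{n+1}$. Collecting powers of $\eps$ via $\eps^{i_1+\cdots+i_k}=\eps^{n+1}$ on the index set $S_k^{n+1}$, and using the homogeneity identity~\eqref{thm:abstract_taylor:homogenity} together with the dilation-compatible estimates $\threebars\fd_\l(\eps^{i_\ell}\gr{U^{(i_\ell)}}(\bz))\threebars\aac(\eps\l)^{i_\ell}$, one rearranges $\gr{\tilde V_{n+1}}$ as $\eps^{n+1}\l^{n+1}$ times a quantity bounded uniformly under $\eps\barnorm{\bz^{\minus}}<\rho$. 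A final application of Proposition~\ref{lem:inv_theta_op} yields~\eqref{prop:remainder_est:eq}.

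The main obstacle will be the combinatorial bookkeeping in the inductive step: one must track, via Faà di Bruno, exactly which monomials in $\gr{U^{(j)}}$ and $\gr{\fR_\eps^{n+1}}$ appear in the expansion of $G(\gr{U^\eps})$, verify the cancellations against the fixed-point identities for the $\gr{U^{(j)}}$, and show that each surviving term in $\gr{\tilde V_{n+1}}$ either contains at least $n+1$ explicit factors of $\eps$ together with a product of $\gr{U^{(j)}}$'s whose total homogeneity matches, or is of Taylor-remainder type and hence $O(\eps^{n+1}\l^{n+1})$ in the dilated norm. Once the expression for $\gr{\tilde V_{n+1}}$ is isolated cleanly, the estimates themselves are routine applications of Proposition~\ref{prop:est_taylor_terms}, the multiplicativity of modelled distributions~\cite[Prop.~6.12]{hairer_rs}, and the already established boundedness of $\CP^{\esh}$ (cf.~\eqref{sec:duhamel:integr_op_bdd}) together with Duhamel.
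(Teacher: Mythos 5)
Your inductive step essentially mirrors the paper's argument (Lemmas~\ref{lem:formula_R_n+1}, \ref{lem:remainder_noise_part}, \ref{lem:remainder_cameron_martin_part}, then Duhamel), so that part is sound in outline. The genuine gap is in your base case $n=1$.

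You propose to derive $\Theta^{\esh}\gr{\fR_\eps^1}=\CP^{\esh}\bigl(G(\gr{U^\eps})\,\eps\<wn>+R_1(\gr{\fR_\eps^1})\,\<cm>\bigr)$ with $R_1$ quadratic in $\gr{\fR_\eps^1}$, and then apply Proposition~\ref{lem:inv_theta_op}. But Duhamel only gives $\threebars\fd_\l\gr{Y}\threebars\aac\threebars\fd_\l\gr{\tilde V}\threebars$ when $\gr{\tilde V}$ does \emph{not} depend on $\gr{Y}$: the linear self-dependence is absorbed into $\Theta^{\esh}$, nothing more. Your $\gr{\tilde V}$ contains a term quadratic in the unknown $\gr{\fR_\eps^1}$, so the estimate you would obtain reads $\threebars\fd_\l\gr{\fR_\eps^1}\threebars\aac\eps\lambda+\threebars\fd_\l\gr{\fR_\eps^1}\threebars^2$, which is circular: to close it you already need the very bound $\threebars\fd_\l\gr{\fR_\eps^1}\threebars\aac\eps\lambda$ you are trying to prove. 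The constraint $\eps\barnorm{\bz^{\minus}}<\rho$ does not by itself furnish the required a priori smallness of $\gr{\fR_\eps^1}$; that smallness is precisely the content of the base case. One could try to set up a continuity/bootstrap in $\eps$, but that is a separate argument and you do not supply it.

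The paper avoids this loop entirely in Lemma~\ref{lem:induc_basis}: it writes $\gr{\fR_\eps^1}=\fd_\eps(\gr{U_\eps})-\fd_0(\gr{U_0})$ with $\gr{U_\eps}=\gr{\CS_{ex}}(\sh,\d_\eps\bz)$ and invokes the joint local Lipschitz continuity of the solution map $\gr{\CS_{ex}}$ from \cite[Prop.~3.25]{cfg}, which gives directly $\threebars\gr{\CS_{ex}}(\sh,\d_\eps\bz);\gr{\CS_{ex}}(\sh,\d_0\bz)\threebars\aac\threebars\d_\eps\bz^{\minus}\threebars\aac\eps(1+\rho)\barnorm{\bz^{\minus}}$. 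It then tracks carefully how the non-$\fd_\eps$-invariant symbol $\<1>$ (and the $\Gamma^{\esh;\eps}$-action on it) behaves under $\fd_\lambda\fd_\eps$, which contributes the extra $\eps\l\,\threebars G(\gr{W})\threebars_{\gamma,\eta,T}$ term. Your proposal does not address either the continuity-of-solution-map input or the $\<1>$-bookkeeping, both of which are what actually break the circularity and make the base case work.

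Beyond the base case, two smaller inaccuracies: (i) after the cancellations, the RHS driving $\gr{\fR_\eps^{n+1}}$ (i.e.\ the brackets $[\gr{\fR_\eps^{n+1}};\<wn>]$ and $[\gr{\fR_\eps^{n+1}};\<cm>]$ of Lemma~\ref{lem:formula_R_n+1}) depends on $\gr{\fR_\eps^n}$ and $\gr{\fR_\eps^1}$, not on $\gr{\fR_\eps^{n+1}}$ itself — the only $\gr{\fR_\eps^{n+1}}$-dependence is the linear piece already sitting inside $\Theta^{\esh}$; (ii) the ``Taylor integral remainder'' in the $\<wn>$-bracket is of size $\eps^n\lambda^n$, not $\eps^{n+1}\lambda^{n+1}$, and the extra power comes from the prefactor $\eps\lambda$ multiplying the $\<wn>$-contribution in~\eqref{prop:remainder_est:pf_aux1}; the $\<cm>$-bracket is what must be of order $n+1$ by itself.
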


The proof is based on induction. The case $n=1$ is the content of the following lemma:

\begin{lemma}[Induction basis]\label{lem:induc_basis}
	For $\bz \in \MM$, let $\lambda := 1+ \barnorm{\bz^{\minus}}$ and assume that~$\eps \barnorm{\bz^{\minus}} < \rho$. Then, the term $\gr{\fR_{\eps}^1} = \gr{U^\eps} - \gr{W} \in \DD^{\gamma,\eta,T}_{\gr{\CU}}(\Gamma^{\esh})$ 
	can be estimated by
	\begin{equation*}
		\threebars 
		\fd_{\lambda} \gr{\fR^{1}_\eps}(\bz) \threebars_{\gamma,\eta,T;\Gamma^{\nicefrac{1}{\lambda}; \esh}}
		\aac_{\rho,T}
		\eps \del[1]{1 + \barnorm{\bz^{\minus}}}.
		\label{lem:induc_basis:eq2}
	\end{equation*}
\end{lemma}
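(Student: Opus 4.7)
The starting point is the identity \eqref{eq:taylor_remainder_first_order}, which presents $\gr{\fR_\eps^1}$ as $\CP^{\esh}$ applied to a sum of a term proportional to $\eps\<wn>$ and one proportional to $\<cm>$. The plan is to isolate a \emph{linear-in-$\gr{\fR_\eps^1}$} component of the $\<cm>$-term via a first-order Taylor expansion of $G$ and absorb it into the Duhamel operator $\Theta^{\esh}$ of~\thref{lem:sec_cond_ift}, so that the remaining terms are either explicitly of order~$\eps$ (the noise part) or genuinely quadratic in $\gr{\fR_\eps^1}$.

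Concretely, using a Taylor expansion of~$G$ on modelled distributions (as in~\thref{prop:taylor_modelled_distrib}, adapted via~\thref{lem:diffb_G_from_g}), I would write
\begin{equation*}
  G(\gr{U^\eps}) - G(\gr{W})
  = G'(\gr{W})\, \gr{\fR_\eps^1} + \gr{R_G^{(2)}},
\end{equation*}
where $\gr{R_G^{(2)}}$ is a second-order remainder satisfying $\threebars \gr{R_G^{(2)}}\threebars_{\gamma,\eta;T} \aac \threebars \gr{\fR_\eps^1}\threebars_{\gamma,\eta;T}^{2}$, with the implicit constant depending only on $\|g\|_{\CC^3_b}$ and $\threebars \gr{W}\threebars_{\gamma,\eta;T}$. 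Substituting this into \eqref{eq:taylor_remainder_first_order} and moving the $G'(\gr{W})\gr{\fR_\eps^1}\<cm>$ contribution to the left-hand side yields
\begin{equation*}
  \Theta^{\esh}\gr{\fR_\eps^1}
  \;=\; \CP^{\esh}\!\bigl(\eps\, G(\gr{U^\eps})\,\<wn>\bigr)
        \;+\; \CP^{\esh}\!\bigl(\gr{R_G^{(2)}}\,\<cm>\bigr).
\end{equation*}

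Next, I would apply the Duhamel formula \thref{lem:inv_theta_op} (in the form spelled out in \thref{rmk:philosophy_duhamel}) to pass to the rescaled model $\Gamma^{\esh;1/\lambda}$, obtaining
\begin{equation*}
  \threebars \fd_\lambda \gr{\fR_\eps^1}\threebars_{\gamma,\eta;T;\Gamma^{\esh;1/\lambda}}
  \;\aac\;
  \threebars \fd_\lambda\bigl(\eps\, G(\gr{U^\eps})\,\<wn>\bigr)\threebars_{\bar\gamma,\bar\eta;T}
  \;+\;
  \threebars \fd_\lambda\bigl(\gr{R_G^{(2)}}\,\<cm>\bigr)\threebars_{\bar\gamma,\bar\eta;T}.
\end{equation*}
Because $\fd_\lambda$ acts on $\<wn>$ by a factor of $\lambda$ and $\fd_\lambda \gr{W} = \gr{W}$ (as $\langle \gr{W},\<1>\rangle = 0$ by \thref{rmk:W_indep_model_not}), the first summand equals $\eps\lambda\, \threebars G(\gr{W} + \fd_\lambda \gr{\fR_\eps^1})\,\<wn>\threebars$, which by local Lipschitz continuity of the lift of $g$ is bounded by a constant times $\eps\lambda = \eps(1+\barnorm{\bz^{\minus}})$ as long as $\threebars \fd_\lambda \gr{\fR_\eps^1}\threebars$ stays bounded. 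The second summand inherits the quadratic bound, namely
\begin{equation*}
  \threebars \fd_\lambda \bigl(\gr{R_G^{(2)}}\,\<cm>\bigr)\threebars_{\bar\gamma,\bar\eta;T}
  \;\aac\; \threebars \fd_\lambda \gr{\fR_\eps^1}\threebars_{\gamma,\eta;T;\Gamma^{\esh;1/\lambda}}^{2}.
\end{equation*}

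Setting $x := \threebars \fd_\lambda \gr{\fR_\eps^1}\threebars_{\gamma,\eta;T;\Gamma^{\esh;1/\lambda}}$, the combined estimate reads $x \le C_1 \eps\lambda + C_2 x^2$, with constants depending only on $T$, $\|g\|_{\CC^3_b}$, and bounds on $\gr{W}$. The main (and only real) obstacle is to close this self-referential inequality and deduce $x \aac \eps\lambda$; I would handle it by a continuity/bootstrap argument in $\eps$. Since $\gr{\fR_0^1}=0$ and $\eps \mapsto \gr{\fR_\eps^1}(\bz)$ is continuous (by \thref{prop:der_fp_map}\ref{prop:der_fp_map:i}), $x$ starts at zero; as long as $2 C_1 C_2 \eps\lambda \le 1/2$, which is guaranteed by the localisation constraint $\eps\barnorm{\bz^{\minus}}<\rho$ with $\rho$ sufficiently small (depending on $T$ via $\rho_0(T)$), the quadratic inequality forces $x \le 2 C_1 \eps\lambda$. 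This is precisely the claimed bound \eqref{lem:induc_basis:eq2}, with implicit constant depending on $\rho$ and $T$. The induction step for $n \ge 2$ will then follow by iterating this scheme, replacing the quadratic $G$-remainder by the $(n{+}1)$-th Taylor remainder of $G$ and using the already-proven bounds \eqref{prop:est_taylor_terms:eq} for the Taylor terms $\gr{U^{(j)}}$.
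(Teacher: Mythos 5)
Your approach is genuinely different from the paper's. The paper does not invoke Duhamel for the induction basis at all; instead it uses Lemma~\ref{lem:dil_transl_rel} to write $\gr{\fR^1_\eps}=\fd_\eps\gr{U_\eps}-\fd_0\gr{U_0}$ with $\gr{U_\eps}=\gr{\CS_{ex}}(\sh,\d_\eps\bz)$, decomposes the norm $\threebars\fd_\lambda\gr{\fR^1_\eps}\threebars$ symbol by symbol to control the extra factors of $\eps\lambda$ coming from dilation, and then applies the \emph{joint local Lipschitz continuity} of the solution map $\gr{\CS_{ex}}$ (from~\cite[Prop.~3.25]{cfg}) to $\threebars\d_\eps\bz-\d_0\bz\threebars\aac\eps\barnorm{\bz^{\minus}}$. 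This avoids any self-referential inequality. Your route — rewrite $\Theta^{\esh}\gr{\fR^1_\eps}$ via a first-order Taylor expansion of $G$, apply Proposition~\ref{lem:inv_theta_op}, and close a quadratic inequality $x\le C_1\eps\lambda+C_2x^2$ by continuity in $\eps$ — is a reasonable Duhamel/bootstrap strategy and, when it works, would be self-contained and independent of the Lipschitz estimate for $\gr{\CS_{ex}}$.

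However, as written the bootstrap has a gap. You assert that $2C_1C_2\eps\lambda\le\nicefrac{1}{2}$ is guaranteed by choosing $\rho$ small in the localisation $\eps\barnorm{\bz^{\minus}}<\rho$. That constraint controls only $\eps\barnorm{\bz^{\minus}}$; since $\eps\in[0,1]$ is otherwise unrestricted, $\eps\lambda=\eps+\eps\barnorm{\bz^{\minus}}\le 1+\rho$ is only bounded from above by a quantity of order one, not made small. With $C_1,C_2$ depending on $T$, $g$, $\gr{W}$ (and not in your control), the discriminant condition $4C_1C_2\,\eps\lambda<1$ may fail on a non-trivial portion of the admissible $(\eps,\bz)$ range, so the continuity argument cannot rule out a jump across the closing gap between the two roots. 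The fix is to split cases: for $\eps\lambda$ below the threshold, run the bootstrap exactly as you propose; for $\eps\lambda$ above the threshold, the claim $x\aac\eps\lambda$ degenerates to $x\aac 1$, which must instead be supplied by a separate a priori bound (e.g.\ the uniform bound on $\threebars\fd_\lambda\gr{U^\eps}\threebars$ that follows from solving the fixed point problem over the normalised model $\d_{\nicefrac{1}{\lambda}}\bz$ of order-one norm, cf.\ Lemma~\ref{lem:rescaling_lambda} and Corollary~\ref{coro:solvability:fp_prob}). Without this second case the proof is incomplete. A smaller caveat: your bound on the noise part, $\eps\lambda\threebars G(\gr{W}+\fd_\lambda\gr{\fR^1_\eps})\<wn>\threebars$, yields a contribution of the form $\eps\lambda(1+x)$, so strictly the inequality you should close is $x\le C_1\eps\lambda(1+x)+C_2 x^2$; absorbing the linear-in-$x$ term also requires $C_1\eps\lambda<1$, which runs into the same issue.
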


\begin{proof}
	Recall from (the proof of)~\thref{lem:dil_transl_rel} that
	\begin{equation}
		\gr{U^\eps} = \gr{\CS_{ex}^\eps}(\sh,\bz) = \fd_\eps\del[1]{\gr{U_\eps}}, \quad \gr{U_\eps} := \gr{\CS_{ex}}(\sh,\d_\eps\bz),
		\label{lem:induc_basis:pf_aux1}
	\end{equation}
	so, bearing in mind that $\gr{W} = \gr{U^0}$~(see~\thref{rmk:W_indep_model_not}), we have
	\begin{equation*}
		\fd_\lambda \gr{\fR^1_\eps} = \fd_\lambda \sbr[1]{\fd_\eps\del[1]{\gr{U_\eps}} - \fd_0\del[1]{\gr{U_0}}}.
	\end{equation*}
	Recall that~$\gr{\CU} = \{\1,\<1g>,\<1>,X\}$, among which only~$\<1>$ is \emph{not} invariant under~$\fd_\eps$.
	
	\begin{enumerate}[label=\textbf{Part \arabic*:}, align=left, leftmargin=0pt, labelindent=0pt,listparindent=0pt, itemindent=!]
	\item Let~$z \in D_T := (0,T] \x \T^2$. We have 
		\begin{equs}
			\abs[0]{\scal{\fd_\lambda \gr{\fR^{1}_\eps}(z),\<1>}}
			& 
			=
			\abs[0]{\scal{\fd_\lambda \sbr[0]{\fd_\eps\del[0]{\gr{U_\eps}(z)} - \fd_0\del[0]{\gr{U_0}(z)}},\<1>}}
			=
			\abs[0]{\scal{\fd_\lambda \fd_\eps \gr{U_\eps}(z),\<1>}} \\
			& 
			\leq
			\eps \l \abs[0]{\scal{\gr{U_\eps}(z) - \gr{U_0}(z),\<1>}} + \eps \l \abs[0]{\scal{\gr{U_0}(z),\<1>}}
		\end{equs}
	so that we can estimate the first of the two summands defining the norm~$\threebars \cdot \threebars_{\gamma,\eta,T}$ in~\thref{def:mod_distr}, eq.~\eqref{def:mod_distr:eq}:
		\begin{align}
			\thinspace
			&
			\norm[0]{\fd_\lambda \gr{\fR^{1}_\eps}}_{\gamma,\eta,T}
			=
			\sup_{\tau \in \gr{\CU}} \sup_{z \in D_T} \thinspace \abs{t}^{\frac{\text{\tiny deg}(\tau) - \eta}{2} \vee 0} \abs[0]{\scal{\fd_\lambda \gr{\fR^{1}_\eps}(z),\tau}} \notag\\
			\leq \
			&
			\eps \l \sup_{z \in D_T} \thinspace \abs{t}^{\frac{\text{\tiny deg}(\<1s>) - \eta}{2}} \del[1]{\abs[0]{\scal{\gr{U_\eps}(z) - \gr{U_0}(z),\<1>}} + \abs[0]{\scal{\gr{U_0}(z),\<1>}}}
			+
			\sup_{\tau \in \gr{\CU} \setminus \{\<1s>\}} \sup_{z \in D_T} \thinspace \abs{t}^{\frac{\text{\tiny deg}(\tau) - \eta}{2} \vee 0} \abs[0]{\scal{\gr{U_\eps}(z) - \gr{U_0}(z),\tau}}	\notag \\
			\leq \
			&
			(1 + \eps \lambda) \sup_{\tau \in \gr{\CU}} \sup_{z \in D_T} \thinspace \abs{t}^{\frac{\text{\tiny deg}(\tau) - \eta}{2} \vee 0} \abs[0]{\scal{\gr{U_\eps}(z) - \gr{U_0}(z),\tau}}
			+
			\eps \l \sup_{z \in D_T} \thinspace \abs{t}^{\frac{\text{\tiny deg}(\<1s>) - \eta}{2} \vee 0} \abs[0]{\scal{\gr{U_0}(z),\<1>}} \label{lem:induc_basis:pf_term1} \\
			= \
			&
			(1 + \eps \l) \norm[0]{\gr{U_\eps} - \gr{U_0}}_{\gamma,\eta,T} 
			+ 
			\eps \l \sup_{z \in D_T} \thinspace \abs{t}^{\frac{\text{\tiny deg}(\<1s>) - \eta}{2} \vee 0} \abs[0]{\scal{\gr{U_0}(z),\<1>}} \notag
		\end{align}
	For now, we leave this term as it is and later analyse it further. 
	\item
	We turn to the second term in~\eqref{def:mod_distr:eq}, the one containing the~$\Gamma$'s. At first, by~\thref{lem:dil_Gamma} in the appendix, we observe that 
		\begin{equs}[][lem:induc_basis:term2_aux1]
			\thinspace
			[\fd_\l \fd_\eps \gr{U_\eps}; \fd_\l \fd_0 \gr{U_0}](z,\bar{z}) 
			:= \
			&
			\fd_\lambda \fd_\eps \gr{U_\eps}(z) - \fd_\lambda \fd_0 \gr{U_0}(z)
			- \Gamma^{\esh;\nicefrac{1}{\lambda}}_{z\bar{z}} \fd_\lambda \fd_\eps \gr{U_\eps}(\bar{z})
			+ \Gamma^{\esh;\nicefrac{1}{\lambda}}_{z\bar{z}}\fd_\lambda \fd_0\gr{U_0}(\bar{z}) \\
			= \
			&
			\fd_\lambda \fd_\eps\gr{U_\eps}(z) 
			- \fd_\lambda \fd_0\gr{U_0}(z)
			- \fd_\lambda \fd_\eps \Gamma^{\esh;\eps}_{z\bar{z}} \gr{U_\eps}(\bar{z}) 
			+ \fd_\lambda \fd_0  \Gamma^{\esh;0}_{z\bar{z}} \gr{U_0}(\bar{z})
		\end{equs}
	holds for any~$z,\bar{z} \in D_T$.
	Now recall that
		\begin{equation*}
			\Gamma_{z\bar{z}}^{\esh;\eps}\<1> = \<1> + \gamma_{z\bar{z}}^{\esh;\eps}(\CJ\<wn>)\1 
			\quad \text{with} \quad 
			\gamma_{z\bar{z}}^{\esh;\eps}(\CJ\<wn>) = \eps \scal{\Pi\<wn>, K(\cdot - \bar{z}) - K(\cdot - z)}
		\end{equation*}
	so that, in particular,~$\gamma_{z\bar{z}}^{\esh;0}(\CJ\<wn>) = 0$ and thus~$\Gamma_{z\bar{z}}^{\esh;0} \<1> = \<1>$. Thus, we find that 
		\begin{equs}
			\thinspace
			&
			\fd_\l \fd_\eps \scal{\gr{U_\eps}(z),\<1>} \thinspace \<1> - \fd_\l \fd_\eps \Gamma_{z\bar{z}}^{\esh;\eps} \scal{\gr{U_\eps}(\bar{z}),\<1>} \thinspace \<1> \\
			= \
			&
			\eps \l \sbr[1]{\scal{\gr{U_\eps}(z),\<1>}  - \scal{\gr{U_\eps}(\bar{z}),\<1>}} \<1> - \scal{\gr{U_\eps}(\bar{z}),\<1>} \gamma_{z\bar{z}}^{\esh;\eps}(\CJ\<wn>) \thinspace \1 \\
			= \ 
			&
			\eps \l \scal{\gr{U_\eps}(z) - \Gamma_{z\bar{z}}^{\esh;\eps} \gr{U_\eps}(\bar{z}),\<1>} \<1>
			- \scal{\Gamma_{z\bar{z}}^{\esh;\eps} \scal{\gr{U_\eps}(\bar{z}),\<1>} \<1>, \1} \thinspace \1.
		\end{equs} 
	As a consequence, we find that
		\begin{equation}
			\scal{[\fd_\l \fd_\eps \gr{U_\eps}; \fd_\l \fd_0 \gr{U_0}](z,\bar{z}),\tau}
			=
			\scal{\gr{U_\eps}(z) - \gr{U_0}(z) 
			- \Gamma^{\esh;\eps}_{z\bar{z}} \gr{U_\eps}(\bar{z}) 
			+ \Gamma^{\esh;0}_{z\bar{z}} \gr{U_0}(\bar{z}), \tau}, 
			\label{lem:induc_basis:term2_aux2}
		\end{equation}
	 for~$\tau \in \gr{\CU} \setminus \{\<1>\}$. For~$\tau = \<1>$, the situation is different:
		\begin{equs}[][lem:induc_basis:term2_aux3]
			\thinspace 
			&
			\scal{[\fd_\l \fd_\eps \gr{U_\eps}; \fd_\l \fd_0 \gr{U_0}](z,\bar{z}),\<1>} 
			=
			\eps \l \scal{\gr{U_\eps}(z) - \Gamma_{z\bar{z}}^{\esh;\eps} \gr{U_\eps}(\bar{z}),\<1>} \\  
			= \
			&
			\eps \l \scal{\gr{U_\eps}(z) - \gr{U_0}(z) - \Gamma_{z\bar{z}}^{\esh;\eps} \gr{U_\eps}(\bar{z}) + \Gamma_{z\bar{z}}^{\esh;0} \gr{U_0}(\bar{z}),\<1>} 
			+
			\eps \l \scal{\gr{U_0}(z) - \Gamma_{z\bar{z}}^{\esh;0} \gr{U_0}(\bar{z}),\<1>} \\  
			= \
			&
			\eps \l \scal{\gr{U_\eps}(z) - \gr{U_0}(z) - \Gamma_{z\bar{z}}^{\esh;\eps} \gr{U_\eps}(\bar{z}) + \Gamma_{z\bar{z}}^{\esh;0} \gr{U_0}(\bar{z}),\<1>} 
			+
			\eps \l \scal{\gr{U_0}(z) - \gr{U_0}(\bar{z}),\<1>}
		\end{equs}
	We can now combine~\eqref{lem:induc_basis:term2_aux1},~\eqref{lem:induc_basis:term2_aux2}, and~\eqref{lem:induc_basis:term2_aux3} to obtain the following estimate:
		\begin{align}
			\thinspace
			&
			\sup_{\substack{z,\bar{z} \in D_T, \\ \abs{z - \bar{z}} \leq 1}} \sup_{\tau \in \gr{\CU}} \del[1]{\abs[0]{t} \wedge \abs[0]{\bar{t}}}^{\frac{\text{\tiny deg}(\tau)-\eta}{2} \vee 0} \frac{\abs[0]{\thinspace \scal{[\fd_\l \fd_\eps \gr{U_\eps}; \fd_\l \fd_0 \gr{U_0}](z,\bar{z}),\tau}}}{\abs{z - \bar{z}}^{\gamma - \text{\tiny deg}(\tau)}} \notag\\ 
			\leq \
			&
			(1 + \eps \l)
			\sup_{\substack{z,\bar{z} \in D_T, \\ \abs{z - \bar{z}} \leq 1}} \sup_{\tau \in \gr{\CU}} \del[1]{\abs[0]{t} \wedge \abs[0]{\bar{t}}}^{\frac{\text{\tiny deg}(\tau)-\eta}{2} \vee 0} 
			\frac{\abs[0]{\thinspace \scal{\gr{U_\eps}(z) - \gr{U_0}(z) - \Gamma^{\esh;\eps}_{z\bar{z}} \gr{U_\eps}(\bar{z}) 
			+ \Gamma^{\esh;0}_{z\bar{z}} \gr{U_0}(\bar{z}), \tau}}}{\abs{z - \bar{z}}^{\gamma - \text{\tiny deg}(\tau)}} \label{lem:induc_basis:pf_term2} \\ 
			+ \
			&
			\eps \l 
			\sup_{\substack{z,\bar{z} \in D_T, \\ \abs{z - \bar{z}} \leq 1}} 
			\del[1]{\abs[0]{t} \wedge \abs[0]{\bar{t}}}^{\frac{\text{\tiny deg}(\<1s>)-\eta}{2} \vee 0} 
			\frac{\abs[0]{\thinspace \scal{\gr{U_0}(z)
			- \gr{U_0}(\bar{z}), \<1>}}}{\abs{z - \bar{z}}^{\gamma - \text{\tiny deg}(\<1s>)}} 
			\notag
		\end{align}
	\item Observe that $\gr{U_0}$ solves the fixed-point equation
	\begin{equation*}
		\gr{U_0} = \CP^{E_\sh\d_0\bz} G(\gr{U_0})[\<wn> + \<cm>] + \TT P u_0 
		=
		\CI\del[1]{G(\gr{U_0})[\<wn> + \<cm>]} + \bar{\CT}
		\quad \text{in} \ \DD^{\gamma,\eta}_{\gr{\CU}}(\Gamma^{\esh;0}),
	\end{equation*}
	so a quick comparison of coefficients shows that~$\scal{\gr{U_0},\<1>} = \scal{G(\gr{U_0}),\1} = g\del[0]{\scal{\gr{U_0},\1}}$. We remind the reader of the identity
	\begin{equation*}
		\scal{\gr{U_0},\1}
		=
		\CR^{E_\sh \d_0\bz} \gr{U_0}
		=
		\CR^{E_\sh \bz} \fd_0\gr{U_0}
		=
		\CR^{E_\sh \bz} \gr{W}
		=
		\scal{\gr{W},\1},
	\end{equation*} 
	so that~$\scal{\gr{U_0},\<1>} = \scal{G(\gr{W}),\1}$. Next, we emphasise that for~$z = (t,x), \bar{z} = (\bar{t},\bar{x}) \in D_T$ with $\abs{z - \bar{z}} \leq 1$ we have
	\begin{equation*}
		\abs{z - \bar{z}}^\gamma \leq \abs{z - \bar{z}}^{\gamma - \text{\tiny deg}(\<1s>)}, 
		\qquad
		\del[1]{\thinspace \abs{t} \vee \abs{\bar{t}}}^{\frac{\text{\tiny deg}(\<1s>)-\eta}{2} \vee 0} 
		\leq T^{\frac{\text{\tiny deg}(\<1s>)-\eta}{2} \vee 0}. 
	\end{equation*}
	Since~$\deg(\1) = 0$ and $\Gamma_{z\bar{z}}^{\esh;0} \1 = \1$, we then infer that 
	\begin{equs}[][lem:induc_basis:pf_term_U_0]
		\thinspace
		&
		\eps \l 
		\sbr[3]{
		\sup_{z \in D_T} \thinspace \abs{t}^{\frac{\text{\tiny deg}(\<1s>) - \eta}{2} \vee 0} \abs[0]{\scal{\gr{U_0}(z),\<1>}}
		+
		\sup_{\substack{z,\bar{z} \in D_T, \\ \abs{z - \bar{z}} \leq 1}} 
		\del[1]{\abs[0]{t} \wedge \abs[0]{\bar{t}}}^{\frac{\text{\tiny deg}(\<1s>)-\eta}{2} \vee 0} 
		\frac{\abs[0]{\thinspace \scal{\gr{U_0}(z)
		- \gr{U_0}(\bar{z}), \<1>}}}{\abs{z - \bar{z}}^{\gamma - \text{\tiny deg}(\<1s>)}}} \\
		\aac_T \
		&
		\eps \l 
		\sbr[3]{
		\sup_{z \in D_T} \abs[0]{\scal{G(\gr{W})(z),\1}}
		+
		\sup_{\substack{z,\bar{z} \in D_T, \\ \abs{z - \bar{z}} \leq 1}} 
		\frac{\abs[0]{\thinspace \scal{G(\gr{W})(z)
		- \Gamma_{z\bar{z}}^{\esh;0} G(\gr{W})(\bar{z}), \1}}}{\abs{z - \bar{z}}^{\gamma}}}
		\leq
		\eps \l \thinspace \threebars G(\gr{W}) \threebars_{\gamma,\eta,T}.
	\end{equs}
	\item 
	The estimates in~\eqref{lem:induc_basis:pf_term1},~\eqref{lem:induc_basis:pf_term2}, and~\eqref{lem:induc_basis:pf_term_U_0} then lead to the inequality
	\begin{equs}[][lem:induc_basis:pf_estimate]
		\threebars \fd_{\lambda} \gr{\fR^{1}_\eps}(\bz) \threebars_{\gamma,\eta,T;\Gamma^{\nicefrac{1}{\lambda}; \esh}}
		& 
		\leq 
		(1 + \eps \l) \threebars \gr{U_\eps} ; \gr{U_0} \threebars_{\gamma,\eta,T} 
		+ \eps \l \thinspace \threebars G(\gr{W}) \threebars_{\gamma,\eta,T} \\
		&
		=
		(1 + \eps \l) \threebars \gr{\CS_{ex}}(\sh,\d_\eps\bz) ; \gr{\CS_{ex}}(\sh,\d_0\bz) \threebars_{\gamma,\eta,T} 
		+ \eps \l \thinspace \threebars G(\gr{W}) \threebars_{\gamma,\eta,T}.
	\end{equs}
	where the equality is true by definition of~$\gr{U_\eps}$ in~\eqref{lem:induc_basis:pf_aux1}. 
	By joint local Lipschitz continuity of~$\gr{\CS_{ex}}$ as established in~\cite[Prop.~3.25]{cfg},\footnote{In~\cite[Prop.~3.25]{cfg}, the map~$\gr{\CS_{ex}}$ is denoted by~$\CS_{ex}^H$.} we can further estimate
	\begin{equs}
		\threebars \gr{\CS_{ex}}(\sh,\d_\eps\bz); \gr{\CS_{ex}}(\sh,\d_0\bz) \threebars_{\gamma,\eta,T} 
		& \aac 
		\threebars  \d_\eps \bz - \d_0 \bz \threebars
		\aac
		\threebars \d_\eps \bz^{\minus} - \boldsymbol{0} \threebars \\
		=
		& \
		\eps\barnorm{\Pi^{\minus}}_{\<wns>} \vee \eps^2\barnorm{\Pi^{\minus}}^2_{\<11s>} 
		\leq
		\eps (1 + \rho) \barnorm{\bz^{\minus}}.
	\end{equs}
	By assumption, we have~$\eps \l \leq 1 + \rho$ since $\eps \leq 1$, so the previous estimate may be combined with~\eqref{lem:induc_basis:pf_estimate} to give
		\begin{equation*}
		\threebars \fd_{\lambda} \gr{\fR^{1}_\eps}(\bz) \threebars_{\gamma,\eta,T;\Gamma^{\nicefrac{1}{\lambda}; \esh}}
		\aac 
		\eps \barnorm{\bz^{\minus}} + \eps \lambda
		\aac
		\eps (1 + \barnorm{\bz^{\minus}})
		\end{equation*}
	where the implicit constant depends on~$\rho$, $T$, and the norm of~$G(\gr{W})$, a deterministic quantity.
	\end{enumerate}
\end{proof}

We proved the case $n=1$ by leveraging the continuity of the involved solution maps. The general strategy, however, is to invoke Duhamel's formula,~\thref{lem:inv_theta_op}, again. As alluded to earlier, our first step towards this goal is to derive an equation of type~\eqref{sec:duhamel:eq1} for~$\gr{\fR_{\eps}^{n+1}}$.

\begin{proposition}\label{lem:formula_R_n+1}
	For $n \geq 1$ we have 
	\begin{equation}
		\Theta^{\esh} \gr{\fR_{\eps}^{n+1}}  
		= 
		\CP^{\esh}\del[1]{[\gr{\fR_{\eps}^{n+1}}; \<wn>]\thinspace\eps \thinspace \<wn>
		+
		[\gr{\fR_{\eps}^{n+1}};\<cm>]\thinspace\<cm>},
		\label{lem:formula_R_n+1:eq}
	\end{equation}
	where 
	\begin{align}
		[\gr{\fR_{\eps}^{n+1}}; \<wn>] 
		& := 
		G(\gr{U^\eps}) - G(\gr{W})
		- \sum_{j=1}^{n-1} \sum_{\boldsymbol{i} \in \cup_{l=1}^{n-1} S_j^l} \frac{1}{j!} 
		G^{(j)}(\gr{W}) \prod_{m=1}^j \frac{\eps^{i_m}}{i_m!} \gr{U^{(i_m)}}, 
		\label{lem:formula_R_n+1_wn} \\
		[\gr{\fR_{\eps}^{n+1}}; \<cm>] 
		& := 
		G(\gr{U^\eps}) - G(\gr{W}) - G'(\gr{W})(\gr{U^\eps} - \gr{W})
		- \sum_{j=2}^n \sum_{\boldsymbol{i} \in \cup_{l=1}^n S_j^l} \frac{1}{j!} 
		G^{(j)}(\gr{W}) \prod_{m=1}^j \frac{\eps^{i_m}}{i_m!} \gr{U^{(i_m)}}, \label{lem:formula_R_n+1_cm}
	\end{align}	
	and $S_k^n := \{\boldsymbol{i} \in \N_{\geq 1}^k: \ \abs{\boldsymbol{i}} = n\}$.
\end{proposition}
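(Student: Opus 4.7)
The plan is to obtain an equation of type \eqref{sec:duhamel:eq1} for $\gr{\fR_{\eps}^{n+1}}$ by direct substitution, and then to identify the coefficients of $\<wn>$ and $\<cm>$ inside $\CP^{\esh}$ through a Faa di Bruno / reindexing computation. First, I would start from the definition \eqref{eq:taylor_remainder} and replace $\gr{U^\eps} - \gr{W}$ via \eqref{eq:taylor_remainder_first_order}, i.e.
\begin{equation*}
	\gr{U^\eps} - \gr{W} = \CP^{\esh}\del[1]{\eps \thinspace G(\gr{U^\eps})\<wn> + [G(\gr{U^\eps}) - G(\gr{W})] \<cm>},
\end{equation*}
and replace each $\gr{U^{(j)}}$ via the fixed-point equation \eqref{thm:abstract_taylor:eq_U_m_formula_fp_eq} so that the $\TT P u_0$ contributions cancel and everything sits inside one $\CP^{\esh}$. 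Applying $\Theta^{\esh} = \operatorname{Id} - \CP^{\esh}(G'(\gr{W}) \bullet \<cm>)$ to both sides then just adds one further term inside $\CP^{\esh}$. At this stage both sides are of the form $\CP^{\esh}(C_{\<wns>} \<wn> + C_{\<cms>} \<cm>)$, and the task reduces to identifying $C_{\<wns>}$ with $\eps[\gr{\fR_{\eps}^{n+1}}; \<wn>]$ and $C_{\<cms>}$ with $[\gr{\fR_{\eps}^{n+1}}; \<cm>]$.

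For the $\<wn>$ coefficient, the contribution from $\gr{U^\eps}-\gr{W}$ is $\eps G(\gr{U^\eps})$, while each $\gr{U^{(j)}}$ contributes $-\frac{\eps^j}{j!}\cdot j A^{(j-1),\<wns>} = -\frac{\eps^j}{(j-1)!} A^{(j-1),\<wns>}$. After the substitution $k=j-1$, factoring out $\eps$, and inserting the explicit formula \eqref{eq:A_eps_0} for $A^{(k),\<wns>}$, I would swap the order of the sums over $k$ and the inner index $\ell$ and use the identity $\eps^k = \prod_m \eps^{i_m}$ whenever $\abs{\boldsymbol{i}}=k$ to absorb the $\eps$-powers into the product. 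Since $S_l^k = \emptyset$ for $k < l$, the union $\bigcup_{k=l}^{n-1} S_l^k$ can be written as $\bigcup_{k=1}^{n-1} S_l^k$, and after relabeling indices this produces precisely the truncated Faa di Bruno series that appears in~\eqref{lem:formula_R_n+1_wn}.

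For the $\<cm>$ coefficient, the same procedure yields a sum of three pieces: $G(\gr{U^\eps}) - G(\gr{W})$ from the fixed-point equation for $\gr{U^\eps}-\gr{W}$; the term $-G'(\gr{W})\sum_{j=1}^n \frac{\eps^j}{j!}\gr{U^{(j)}}$ coming from the singled-out $k=1$ piece in the FP equations for the $\gr{U^{(j)}}$; and $-\sum_{j=1}^n \frac{\eps^j}{j!} B^{(j-1),\<cms>}$ from the remaining $k\ge 2$ pieces. Using the telescoping identity $\sum_{j=1}^n \frac{\eps^j}{j!}\gr{U^{(j)}} = \gr{U^\eps} - \gr{W} - \gr{\fR_{\eps}^{n+1}}$ rewrites the middle piece as $-G'(\gr{W})(\gr{U^\eps}-\gr{W}) + G'(\gr{W})\gr{\fR_{\eps}^{n+1}}$, and the stray $+G'(\gr{W})\gr{\fR_{\eps}^{n+1}}$ is exactly what the $\Theta^{\esh}$-correction $-\CP^{\esh}(G'(\gr{W})\gr{\fR_{\eps}^{n+1}}\<cm>)$ removes. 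The remaining $B$-sum is transformed by the same reindexing as in the $\<wn>$ case: swap the order of summation over $(j,k)$, replace $\bigcup_{j=k}^n S_k^j$ by $\bigcup_{j=1}^n S_k^j$, and absorb $\eps^j$ into the product. This produces exactly \eqref{lem:formula_R_n+1_cm}.

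The proof is thus conceptually a direct substitution; the only real work is the combinatorial bookkeeping in the two reindexing steps, which I expect to be the main obstacle in writing it out cleanly. In particular, one has to be careful that the $k=1$ contribution to the Faa di Bruno sum for $\<cm>$ has already been split off into the $G'(\gr{W})\gr{U^{(j)}}$ terms (so the $B$-sums start from $k=2$), and that the factor $j$ in front of $A^{(j-1),\<wns>}$ precisely cancels with the factorial needed to turn the resulting expression into the canonical Faa di Bruno shape; matching these factorials is the only genuinely delicate step.
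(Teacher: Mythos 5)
Your proposal is correct and follows essentially the same route as the paper: substitute the fixed-point equations for $\gr{U^\eps}-\gr{W}$ and the $\gr{U^{(j)}}$'s, isolate the $G'(\gr{W})\gr{\fR_\eps^{n+1}}\<cm>$ term so it cancels against the $\Theta^{\esh}$-correction, and then reindex the $A$- and $B$-sums using $\eps^{\abs{\bi}}=\prod_m \eps^{i_m}$ and $S_k^j=\emptyset$ for $j<k$. The only cosmetic difference is that the paper first establishes the $n=1$ case explicitly and then subtracts $\sum_{j=2}^n\frac{\eps^j}{j!}\gr{U^{(j)}}$, whereas you handle the whole sum $\sum_{j=1}^n$ at once via the telescoping identity $\sum_{j=1}^n\frac{\eps^j}{j!}\gr{U^{(j)}}=\gr{U^\eps}-\gr{W}-\gr{\fR_\eps^{n+1}}$; also, the $\TT Pu_0$ cancellation you mention occurs within $\gr{U^\eps}-\gr{W}$ (equation~\eqref{eq:taylor_remainder_first_order}) rather than between $\gr{\fR_\eps^1}$ and the $\gr{U^{(j)}}$'s, whose fixed-point equations carry no such term.
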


\begin{proof}
	We start with the case~$n=1$.
	Recall from~\thref{thm:abstract_taylor} that $A^{(0),\<wns>} = G(\gr{W})$, $B^{(0),\<cms>} = 0$, thus~$\gr{U^{(1)}} = \CP^{\esh}(G'(\gr{W}) \gr{U^{(1)}} \<cm>) + \CP^{\esh}(G(\gr{W})\thinspace \<wn>)$. Combined with~\eqref{eq:taylor_remainder_first_order} we find
	\begin{equs}
		\gr{\fR_{\eps}^{2}}  
		&
		=
		\gr{\fR_{\eps}^{1}} 
		- \eps \gr{U^{(1)}}
		=
		\CP^{\esh}\del[1]{[G(\gr{U^\eps}) - G(\gr{W})]\eps \<wn>} 
		+
		\CP^{\esh}\del[1]{G(\gr{U^\eps}) - G(\gr{W}) - G'(\gr{W})\eps\gr{U^{(1)}}\<cm>}
	\end{equs}
	and then use the identity 
		$\eps \gr{U^{(1)}} 
		= \gr{\fR_{\eps}^{1}} - \gr{\fR_{\eps}^{2}} 
		= \gr{U^\eps} - \gr{W} - \gr{\fR_{\eps}^{2}}$ in the last term of the second summand.
	This leads to the claimed identity
	\begin{equs}[][pf:lem:formula_R_n+1:eq1]
		\gr{\fR_{\eps}^{2}}  
		&
		=
		\CP^{\esh}\del[1]{[G(\gr{U^\eps}) - G(\gr{W})]\eps \thinspace \<wn>} 
		+
		\CP^{\esh}\del[1]{[G(\gr{U^\eps}) - G(\gr{W}) - G'(\gr{W})[\gr{U^\eps} - \gr{W}]] \thinspace \<cm>} \\
		&
		+
		\CP^{\esh}\del[1]{G'(\gr{W}) \gr{\fR_{\eps}^{2}}  \thinspace \<cm>}
	\end{equs}
	For $n \geq 2$, we use the fact that
	\begin{equation*}
		\gr{\fR_{\eps}^{n+1}}
		=
		\gr{\fR_{\eps}^{2}} - \sum_{j=2}^{n} \frac{\eps^j}{j!} \gr{U^{(j)}}
	\end{equation*}
	combined with~\eqref{thm:abstract_taylor:eq_U_m_formula_fp_eq} and~\eqref{pf:lem:formula_R_n+1:eq1} to obtain the identity
	\begin{equs}[][lem:formula_R_n+1:eq_induction]
		\thinspace
		&
		\gr{\fR_{\eps}^{n+1}}  
		-
		\CP^{\esh}\del[1]{[G(\gr{U^\eps}) - G(\gr{W})]\eps \thinspace \<wn>} 
		-
		\CP^{\esh}\del[1]{[G(\gr{U^\eps}) - G(\gr{W}) - G'(\gr{W})[\gr{U^\eps} - \gr{W}]] \thinspace\<cm>}
		\\
		= 
		&
		\
		\gr{\fR_{\eps}^{2}}
		-
		\CP^{\esh}\del[1]{[G(\gr{U^\eps}) - G(\gr{W})]\eps \<wn>}
		-
		\CP^{\esh}\del[1]{[G(\gr{U^\eps}) - G(\gr{W}) - G'(\gr{W})[\gr{U^\eps} - \gr{W}]]\<cm>}
		- 
		\sum_{j=2}^{n} \frac{\eps^j}{j!} \gr{U^{(j)}} 
		\\
		=
		&
		\
		\CP^{\esh}\del[1]{G'(\gr{W})\gr{\fR_{\eps}^{2}}\thinspace\<cm>}
		-
		\sum_{j=2}^{n}\frac{\eps^j}{j!}
		\sbr[2]{ 
			\CP^{\esh}(G'(\gr{W}) \gr{U^{(m)}} \<cm>)
			+
			\CP^{\esh}\del[1]{A^{(j-1),\<wns>}\thinspace\<wn>} + \CP^{\esh}\del[1]{B^{(j-1),\<cms>}\thinspace\<cm>}} \\
		=
		&
		\
		\CP^{\esh} \del[1]{G'(\gr{W})\gr{\fR_{\eps}^{n+1}} \thinspace \<cm>}
		-
		\sum_{j=2}^{n}\frac{\eps^j}{j!}
		\sbr[2]{
			\CP^{\esh}\del[1]{A^{(j-1),\<wns>}\thinspace\<wn>} + \CP^{\esh}\del[1]{B^{(j-1),\<cms>}\thinspace\<cm>}}.
	\end{equs}
	Next, observe that $S_k^j = \emptyset$ for $k > j$ $(*)$. By definition of~$B^{(j-1),\<cms>}$ in~\eqref{eq:B_eps_0}, we have
	\begin{equs}
		\sum_{j=2}^n \frac{\eps^j}{j!} B^{(j-1),\<cms>}
		&
		=
		\sum_{j=2}^n \eps^{j} \sum_{k=2}^{j} \frac{1}{k!} G^{(k)}(\gr{W}) \sum_{\boldsymbol{i} \in S_k^j}  \prod_{m=1}^k \frac{1}{i_m!} \gr{U^{(i_m)}}
		\overset{(*)}{=}
		\sum_{k=2}^{n} \frac{1}{k!} G^{(k)}(\gr{W}) \sum_{j=1}^n \eps^{j} \sum_{\boldsymbol{i} \in S_k^j}  \prod_{m=1}^k \frac{1}{i_m!} \gr{U^{(i_m)}} \\
		&
		=
		\sum_{k=2}^{n} \frac{1}{k!} G^{(k)}(\gr{W})  \sum_{\boldsymbol{i} \in \cup_{j=1}^n S_k^j}  \prod_{m=1}^k \frac{\eps^{i_m}}{i_m!} \gr{U^{(i_m)}} 
	\end{equs}
	where the last equality uses that $\eps^j = \eps^{\thinspace \abs{\bi}} = \prod_{m=1}^k \eps^{i_m}$ for $\bi \in S_k^j$. Analogously, by definition of~$A^{(j),\<wns>}$ in~\eqref{eq:A_eps_0} we have
		\begin{equation*}
			\sum_{j=2}^n \frac{\eps^j}{j!} j A^{(j-1),\<wns>}
			=
			\eps \sum_{j=1}^{n-1} \frac{\eps^j}{j!} A^{(j),\<wns>}
			=
			\eps \sum_{k=1}^{n-1} \frac{1}{k!} G^{(k)}(\gr{W}) \sum_{\boldsymbol{i} \in \cup_{j=1}^{n-1}  S_k^j}  \prod_{m=1}^k \frac{\eps^{i_m}}{i_m!} \gr{U^{(i_m)}}.
		\end{equation*}
	Once we update~\eqref{lem:formula_R_n+1:eq_induction} by the previous two identities, the claim follows.
\end{proof}

\begin{remark}
	Formally, the expression for $\gr{\fR^{n+1}_\eps}$ from~\thref{lem:formula_R_n+1} agrees with the one by~Inahama and Kawabi, see~the penultimate equation on page~303 in~\cite{inahama_kawabi}. 
\end{remark}

For $\gr{Y} := \gr{\fR_{\eps}^{n+1}}$, we learn from the previous proposition that the corresponding~$\gr{\tilde{V}}$ and~$\gr{V}$ in~\eqref{sec:duhamel:eq1b_form_V} resp.~\eqref{sec:duhamel:eq1} are given by
	\begin{equation}
		\gr{\tilde{V}^{[n+1]}} 
		:= 
		[\gr{\fR_{\eps}^{n+1}}; \<wn>]\thinspace\eps \thinspace \<wn>
		+
		[\gr{\fR_{\eps}^{n+1}};\<cm>]\thinspace\<cm>, \quad
		\gr{V^{[n+1]}} := \CP^{\esh}(\gr{\tilde{V}}).
	\end{equation}
In the next two lemmas, we derive more explicit expressions for~$[\gr{\fR_{\eps}^{n+1}}; \sigma]$, $\sigma \in \{\thinspace\<wn>, \thinspace \<cm>\thinspace\}$. 

\begin{lemma}\label{lem:remainder_noise_part}
	The identity 
	\begin{equs}[][lem:remainder_noise_part:eq]
		\thinspace
		[\gr{\fR_{\eps}^{n+1}}; \<wn>] 
		& = 
		\sum_{j=1}^{n-1} \frac{1}{j!} G^{(j)}(\gr{W}) 
		\del[4]{\thinspace
			\sum_{\substack{\boldsymbol{i} \in [n-1]^j \\ \abs{\boldsymbol{i}} \geq n}} \prod_{m=1}^j \frac{\eps^{i_m}}{i_m!} \gr{U^{(i_m)}}
			+ \sum_{m=1}^j \binom{j}{m} \del[1]{\gr{\fS^{n-1}_\eps}}^{j-m} \del[1]{\gr{\fR_{\eps}^{n}}}^m} \\
		&
		+
		\int_0^1 \frac{(1-s)^{n-1}}{(n-1)!} G^{(n)}(\gr{W} + s \gr{\fR_{\eps}^{1}}) \sbr[1]{\gr{\fR_{\eps}^{1}}}^n \dif s
	\end{equs}
	holds with $\gr{\fS^{n-1}_\eps} := \sum_{i=1}^{n-1} \frac{\eps^i}{i!} \gr{U^{(i)}}$.
\end{lemma}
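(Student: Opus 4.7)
The plan is to derive~\eqref{lem:remainder_noise_part:eq} by applying Taylor's theorem to $G$ about $\gr{W}$ with an explicit Lagrange-type integral remainder of order $n$, and then reorganising the polynomial part via the decomposition $\gr{\fR^1_\eps} = \gr{\fS^{n-1}_\eps} + \gr{\fR^n_\eps}$. More precisely, since $\gr{U^\eps} - \gr{W} = \gr{\fR^1_\eps}$, Taylor's theorem in the space $\DD^{\gamma,\eta}_{\gr{\CU}}(\Gamma^{\esh})$ (available thanks to~\thref{lem:diffb_G_from_g} and the $\CC^{\ell+4}$ assumption on $g$) gives
\begin{equs}
  G(\gr{U^\eps}) - G(\gr{W})
  = \sum_{j=1}^{n-1} \tfrac{1}{j!} G^{(j)}(\gr{W}) \bigl[\gr{\fR^1_\eps}\bigr]^j
  + \int_0^1 \tfrac{(1-s)^{n-1}}{(n-1)!} G^{(n)}\bigl(\gr{W} + s\gr{\fR^1_\eps}\bigr) \bigl[\gr{\fR^1_\eps}\bigr]^n \dif s.
\end{equs}
The integral term is exactly the last line of~\eqref{lem:remainder_noise_part:eq}, so it remains to rewrite the polynomial sum modulo the subtracted sum appearing in the definition of $[\gr{\fR^{n+1}_\eps};\<wn>]$ from~\eqref{lem:formula_R_n+1_wn}.

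Next, from $\gr{\fR^1_\eps} = \gr{\fS^{n-1}_\eps} + \gr{\fR^n_\eps}$ (which follows from the definitions $\gr{\fS^{n-1}_\eps} = \sum_{i=1}^{n-1}\tfrac{\eps^i}{i!}\gr{U^{(i)}}$ and $\gr{\fR^n_\eps} = \gr{U^\eps} - \gr{W} - \gr{\fS^{n-1}_\eps}$), I would apply the binomial formula
\begin{equation*}
  \bigl[\gr{\fR^1_\eps}\bigr]^j
  = \bigl[\gr{\fS^{n-1}_\eps}\bigr]^j
  + \sum_{m=1}^j \binom{j}{m} \bigl[\gr{\fS^{n-1}_\eps}\bigr]^{j-m} \bigl[\gr{\fR^n_\eps}\bigr]^m.
\end{equation*}
The terms with $m \geq 1$ already match the binomial contributions appearing in~\eqref{lem:remainder_noise_part:eq}. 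For the leading $m=0$ term, multilinearity expands $[\gr{\fS^{n-1}_\eps}]^j$ into a sum over multi-indices $\boldsymbol{i}\in[n-1]^j$:
\begin{equation*}
  \bigl[\gr{\fS^{n-1}_\eps}\bigr]^j
  = \sum_{\boldsymbol{i}\in[n-1]^j} \prod_{m=1}^j \tfrac{\eps^{i_m}}{i_m!} \gr{U^{(i_m)}}.
\end{equation*}

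The main — and essentially only — combinatorial step is to identify the subtracted term $\sum_{\boldsymbol{i}\in\cup_{l=1}^{n-1}S_j^l}$ in~\eqref{lem:formula_R_n+1_wn} with the partial sum $\sum_{\boldsymbol{i}\in[n-1]^j,\,\abs{\boldsymbol{i}}\leq n-1}$. This is just the observation that, for $j\leq n-1$, a multi-index $\boldsymbol{i}\in\N_{\geq 1}^j$ with $\abs{\boldsymbol{i}}\leq n-1$ automatically has each component bounded by $n-1$, and conversely. Splitting $[n-1]^j = \{\abs{\boldsymbol{i}}\leq n-1\}\sqcup\{\abs{\boldsymbol{i}}\geq n\}$ and subtracting thus leaves precisely the restricted sum $\sum_{\boldsymbol{i}\in[n-1]^j,\,\abs{\boldsymbol{i}}\geq n}$ appearing in~\eqref{lem:remainder_noise_part:eq}. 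Assembling the three contributions (restricted multi-index sum, binomial remainder terms, and integral remainder) yields the stated identity. I do not anticipate a genuine obstacle here: once the Taylor expansion for $G$ is in place, the argument is purely algebraic bookkeeping.
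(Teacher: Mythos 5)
Your proposal is correct and follows essentially the same route as the paper: Taylor expand $G(\gr{U^\eps})-G(\gr{W})$ about $\gr{W}$ with integral remainder of order $n$, split $\gr{\fR^1_\eps}=\gr{\fS^{n-1}_\eps}+\gr{\fR^n_\eps}$ and apply the binomial theorem, then subtract the defining sum from~\eqref{lem:formula_R_n+1_wn} and use $\bigcup_{l=1}^{n-1}S_j^l=\{\boldsymbol{i}\in[n-1]^j:\,\abs{\boldsymbol{i}}\in[n-1]\}$ to isolate the restricted sum over $\abs{\boldsymbol{i}}\geq n$. The only small infelicity is the phrase \enquote{and conversely}: what is actually needed (and is trivially true) is that the restriction $\boldsymbol{i}\in[n-1]^j$ is vacuous on $\{\abs{\boldsymbol{i}}\leq n-1\}$ when each $i_m\geq 1$; the literal converse is false, but this does not affect the argument.
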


\begin{proof}
	By definition,~$\gr{\fR_{\eps}^{1}} = \gr{U^\eps} - \gr{W}$.
	We expand $G(\gr{U^\eps})$ at $\gr{W} \equiv \gr{U^0}$ to write 
	\begin{equs}[][taylor_exp_G_U_eps]
		\thinspace
		&
		G(\gr{U^\eps})
		-
		G(\gr{W}) \\
		=
		& \
		\sum_{j=1}^{n-1} \frac{1}{j!} D^{(j)}G(\gr{W})\sbr[1]{\gr{\fR_{\eps}^{1}} ,\ldots,\gr{\fR_{\eps}^{1}}} + \int_0^1 \frac{(1-s)^{n-1}}{(n-1)!} D^{(n)}G(\gr{W} + s \gr{\fR_{\eps}^{1}})\sbr[1]{\gr{\fR_{\eps}^{1}} ,\ldots,\gr{\fR_{\eps}^{1}}} \dif s \\
		=
		& \
		\sum_{j=1}^{n-1} \frac{1}{j!} G^{(j)}(\gr{W})\sbr[1]{\gr{\fR_{\eps}^{1}}}^{j} 
		+ \int_0^1 \frac{(1-s)^{n-1}}{(n-1)!} G^{(n)}(\gr{W} + s \gr{\fR_{\eps}^{1}}) \sbr[1]{\gr{\fR_{\eps}^{1}}}^n \dif s
	\end{equs}
	where we used~\thref{lem:diffb_G_from_g} in the last equality. At the same time, by the identity $\gr{\fR_{\eps}^{1}} 	= \gr{\fS^{n-1}_\eps} + \gr{\fR_{\eps}^{n}}$ and the binomial theorem, the first term in the expansion~\eqref{taylor_exp_G_U_eps} reads
	\begin{equs}[][taylor_exp_G_U_eps_plugin]
		\sum_{j=1}^{n-1} \frac{1}{j!} G^{(j)}(\gr{W})\sbr[1]{\gr{\fR_{\eps}^{1}}}^{j}
		& =
		\sum_{j=1}^{n-1} \frac{1}{j!} G^{(j)}(\gr{W}) \sbr[3]{\del[1]{\gr{\fS^{n-1}_\eps}}^j + \sum_{m=1}^j \binom{j}{m} \del[1]{\gr{\fS^{n-1}_\eps}}^{j-m} \del[1]{\gr{\fR_{\eps}^{n}}}^m}. 
	\end{equs}
	For~$[\gr{\fR_{\eps}^{n+1}}; \<wn>]$ as in~\eqref{lem:formula_R_n+1_wn}, we have thus far established that
	\begin{align}
		[\gr{\fR_{\eps}^{n+1}}; \<wn>]
		=
		& \
		\sum_{j=1}^{n-1} \frac{1}{j!} G^{(j)}(\gr{W}) \sbr[3]{
			\del[1]{\gr{\fS^{n-1}_\eps}}^j + \sum_{m=1}^j \binom{j}{m} \del[1]{\gr{\fS^{n-1}_\eps}}^{j-m} \del[1]{\gr{\fR_{\eps}^{n}}}^m
			-
			\sum_{\boldsymbol{i} \in \cup_{l=1}^{n-1} S_j^l} \prod_{m=1}^j \frac{\eps^{i_m}}{i_m!} \gr{U^{(i_m)}}
		} \notag \\
		+
		& \
		\int_0^1 \frac{(1-s)^{n-1}}{(n-1)!} G^{(n)}(\gr{W} + s \gr{\fR_{\eps}^{1}}) \sbr[1]{\gr{\fR_{\eps}^{1}}}^n \dif s.
		\label{taylor_exp_G_U_eps_update}
	\end{align}
	We further rewrite\footnote{Recall that~$[n] := \{1,\ldots,n\}$ for~$n \in \N$.}
	\begin{equs}[][eq:fS_eps_n-1_j]
		\del[1]{\gr{\fS^{n-1}_\eps}}^j
		& = 
		\del[4]{\sum_{i=1}^{n-1} \frac{\eps^i}{i!} \gr{U^{(i)}}}^j
		=
		\prod_{r=1}^j \del[4]{\sum_{\ell_r=1}^{n-1} \frac{\eps^{\ell_r}}{\ell_r!} \gr{U^{(\ell_r)}}}
		=
		\sum_{\boldsymbol{\ell} \in [n-1]^j} \prod_{m=1}^j \frac{\eps^{\ell_m}}{\ell_m!} \gr{U^{(\ell_m)}}
	\end{equs}
	and observe that
	\begin{equation*}
		\bigcup_{l=1}^{n-1} S_j^l 
		=
		\bigcup_{l=1}^{n-1} \{\boldsymbol{i} \in \N_{\geq 1}^j: \ \abs{\boldsymbol{i}} = l\}
		=
		\{\boldsymbol{i} \in \N_{\geq 1}^j: \ \abs{\boldsymbol{i}}  \in [n-1] \}
		=
		\{\boldsymbol{i} \in [n-1]^j: \ \abs{\boldsymbol{i}}  \in [n-1] \}.
	\end{equation*}
	Combining this identity with the formula~\eqref{eq:fS_eps_n-1_j} for $\del[0]{\gr{\fS^{n-1}_\eps}}^j$, we see that 
	\begin{equs}
		\del[1]{\gr{\fS^{n-1}_\eps}}^j 
		-
		\sum_{\boldsymbol{i} \in \cup_{l=1}^{n-1} S_j^l} \prod_{m=1}^j \frac{\eps^{i_m}}{i_m!} \gr{U^{(i_m)}} 
		=
		& \
		\sum_{\bi \in [n-1]^j} \prod_{m=1}^j \frac{\eps^{i_m}}{i_m!} \gr{U^{(i_m)}}
		-
		\sum_{\substack{\boldsymbol{i} \in [n-1]^j \\ \abs{\boldsymbol{i}} \in [n-1]}} \prod_{m=1}^j \frac{\eps^{i_m}}{i_m!} \gr{U^{(i_m)}} \\
		=
		& \
		\sum_{\substack{\boldsymbol{i} \in [n-1]^j \\ \abs{\boldsymbol{i}} \geq n}} \prod_{m=1}^j \frac{\eps^{i_m}}{i_m!} \gr{U^{(i_m)}}
	\end{equs}
	Plugging this identity into~\eqref{taylor_exp_G_U_eps_update} finishes the proof.
\end{proof}

A similar identity holds for~$[\gr{\fR_{\eps}^{n+1}}; \<cm>]$ and can be proved analogously.

\begin{lemma} \label{lem:remainder_cameron_martin_part}
	For~$\gr{\fS^{n-1}_\eps}$ as in~\thref{lem:remainder_noise_part}, the following identity holds:
	\begin{equs}[][lem:remainder_cameron_martin_part:eq]
		\thinspace
		[\gr{\fR_{\eps}^{n+1}}; \<cm>] 
		& = 
		\sum_{j=2}^{n} \frac{1}{j!} G^{(j)}(\gr{W}) 
		\del[4]{\thinspace
			\sum_{\substack{\boldsymbol{i} \in [n-1]^j \\ \abs{\boldsymbol{i}} \geq n+1}} \prod_{m=1}^j \frac{\eps^{i_m}}{i_m!} \gr{U^{(i_m)}}
			+ \sum_{m=1}^j \binom{j}{m} \del[1]{\gr{\fS^{n-1}_\eps}}^{j-m} \del[1]{\gr{\fR_{\eps}^{n}}}^m} \\
		&
		+
		\int_0^1 \frac{(1-s)^{n}}{n!} G^{(n+1)}(\gr{W} + s \gr{\fR_{\eps}^{1}}) \sbr[1]{\gr{\fR_{\eps}^{1}}}^{n+1} \dif s
	\end{equs}
\end{lemma}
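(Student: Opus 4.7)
The plan is to mimic the proof of Lemma~\ref{lem:remainder_noise_part} almost verbatim, the only difference being that we now Taylor-expand $G(\gr{U^\eps})$ around $\gr{W}$ one order higher, so as to absorb both $G(\gr{W})$ \emph{and} the extra first-order term $G'(\gr{W})(\gr{U^\eps} - \gr{W})$ which appears in formula~\eqref{lem:formula_R_n+1_cm} for $[\gr{\fR_{\eps}^{n+1}}; \<cm>]$, but is absent from~\eqref{lem:formula_R_n+1_wn}.

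First, I would invoke Lemma~\ref{lem:diffb_G_from_g} to write, in analogy with~\eqref{taylor_exp_G_U_eps},
\begin{equs}
G(\gr{U^\eps}) - G(\gr{W}) - G'(\gr{W})\gr{\fR_{\eps}^{1}}
&= \sum_{j=2}^{n} \frac{1}{j!} G^{(j)}(\gr{W}) \sbr[1]{\gr{\fR_{\eps}^{1}}}^{j} \\
&+ \int_0^1 \frac{(1-s)^{n}}{n!} G^{(n+1)}\del[1]{\gr{W} + s \gr{\fR_{\eps}^{1}}} \sbr[1]{\gr{\fR_{\eps}^{1}}}^{n+1} \dif s,
\end{equs}
where $\gr{\fR_{\eps}^{1}} = \gr{U^\eps} - \gr{W}$. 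This already isolates the desired integral remainder term appearing on the RHS of~\eqref{lem:remainder_cameron_martin_part:eq}.

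Second, I would decompose $\gr{\fR_{\eps}^{1}} = \gr{\fS_\eps^{n-1}} + \gr{\fR_{\eps}^{n}}$ and apply the binomial theorem to each power $\sbr[1]{\gr{\fR_{\eps}^{1}}}^{j}$, exactly as in the proof of Lemma~\ref{lem:remainder_noise_part}. The pure $\gr{\fR_{\eps}^{n}}$-cross-terms with $m \geq 1$ give the second piece $\sum_{m=1}^{j} \binom{j}{m} (\gr{\fS_\eps^{n-1}})^{j-m}(\gr{\fR_{\eps}^{n}})^m$ in the claim, while the leftover pure power $(\gr{\fS_\eps^{n-1}})^j$ must be compared with the explicit subtraction $\sum_{\bi \in \cup_{l=1}^{n} S_j^l}\prod_m \frac{\eps^{i_m}}{i_m!}\gr{U^{(i_m)}}$ from~\eqref{lem:formula_R_n+1_cm}.

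Finally, I would expand $(\gr{\fS_\eps^{n-1}})^j = \sum_{\bi \in [n-1]^j}\prod_m \frac{\eps^{i_m}}{i_m!}\gr{U^{(i_m)}}$ as in~\eqref{eq:fS_eps_n-1_j}, and observe that for $j \geq 2$ one has the combinatorial identity
\begin{equation*}
\bigcup_{l=1}^{n} S_j^l \;=\; \cbr[1]{\bi \in \N_{\geq 1}^j : \abs{\bi} \in [n]} \;=\; \cbr[1]{\bi \in [n-1]^j : \abs{\bi} \in [n]},
\end{equation*}
the last equality holding because $i_m \geq 1$ for each $m$ together with $\abs{\bi} \leq n$ forces $i_m \leq n - (j-1) \leq n-1$ when $j \geq 2$. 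Subtracting then produces exactly $\sum_{\bi \in [n-1]^j,\, \abs{\bi} \geq n+1}\prod_m \frac{\eps^{i_m}}{i_m!}\gr{U^{(i_m)}}$, which is the first piece of the RHS of~\eqref{lem:remainder_cameron_martin_part:eq}. The only substantive difference with Lemma~\ref{lem:remainder_noise_part} is the index shift from $n$ to $n+1$ (caused by the presence of $G'(\gr{W})\gr{\fR_{\eps}^{1}}$) and the starting index $j=2$ instead of $j=1$; there is no genuine obstacle, just careful bookkeeping of summation ranges.
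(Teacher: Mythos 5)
Your proof is correct and is exactly the "analogous" argument the paper gestures at: expand $G(\gr{U^\eps})$ around $\gr{W}$ one order higher (to order $n+1$ rather than $n$) so that the extra $G'(\gr{W})\gr{\fR^1_\eps}$ term in~\eqref{lem:formula_R_n+1_cm} is absorbed, then substitute $\gr{\fR^1_\eps} = \gr{\fS^{n-1}_\eps} + \gr{\fR^n_\eps}$, apply the binomial theorem, and cancel $(\gr{\fS^{n-1}_\eps})^j$ against the subtracted sum via the index identity $\bigcup_{l=1}^n S_j^l = \{\bi \in [n-1]^j : \abs{\bi} \in [n]\}$ (valid for $j\geq 2$). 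The bookkeeping of the shifted ranges ($j \geq 2$, $\abs{\bi}\geq n+1$, remainder of order $n+1$) is all handled correctly.
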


Finally, we have gathered all the tools to prove~\thref{prop:remainder_est}.

\begin{proof}[of~\thref{prop:remainder_est}]
	The statement for~$n=1$ is the content of~\thref{lem:induc_basis}, so we assume the claim is true for~$n \in [\ell-1]$ and consider the induction step~$n \mapsto n+1$.
	Using~\thref{lem:formula_R_n+1} in conjunction with lemmas~\ref{lem:interaction_theta_dilation} and~\ref{lem:consistency_dilation}, we obtain the identity
	\begin{equs}
		\fd_\lambda \gr{\fR_{\eps}^{n+1}}(\bz)  
		& =
		(\fd_\lambda \circ\tilde{\Theta}^{E_\sh\bz}) 
		\sbr[1]{\CP^{E_\sh\bz}([\gr{\fR_{\eps}^{n+1}}; \thinspace\<wn>\thinspace]\thinspace\eps\thinspace\<wn>
			+
			[\gr{\fR_{\eps}^{n+1}};\thinspace\<cm>]\thinspace\<cm>\thinspace)} \\
		& =
		\tilde{\Theta}^{E_\sh \d_{\nicefrac{1}{\lambda}}\bz} 
		\sbr[1]{\CP^{E_\sh\d_{\nicefrac{1}{\lambda}}\bz}(\fd_\lambda[\gr{\fR_{\eps}^{n+1}};\thinspace \<wn>\thinspace]\eps\thinspace\<wn>\thinspace\lambda
			+
			\fd_\lambda[\gr{\fR_{\eps}^{n+1}};\thinspace\<cm>\thinspace]\thinspace\<cm>)},
	\end{equs}
	so by Duhamel's principle presented in~\thref{lem:inv_theta_op}, we have the estimate
	\begin{equs}[][prop:remainder_est:pf_aux1] 
		\threebars 
		\fd_{\lambda} \gr{\fR^{n+1}_\eps}(\bz) \threebars_{\gamma,\eta;T;\Gamma^{\esh;\nicefrac{1}{\lambda};}}
		& \aac \
		\eps \lambda \threebars \fd_\lambda[\gr{\fR_{\eps}^{n+1}}; \<wn>] \threebars_{\gamma,\eta;T;\Gamma^{\esh; \nicefrac{1}{\lambda}}} 
		+ \
		\threebars \fd_\lambda[\gr{\fR_{\eps}^{n+1}}; \<cm>] \threebars_{\gamma,\eta;T;\Gamma^{\esh; \nicefrac{1}{\lambda}}}.
	\end{equs}
	We have to estimate the terms~$\threebars \fd_\lambda[\gr{\fR_{\eps}^{n+1}}; \sigma] \threebars_{\gamma,\eta;\Gamma^{\nicefrac{1}{\lambda}; \esh}}$ on the right hand side and start with~$\sigma = \<wn>$\thinspace. In this case, the factor~$\eps\lambda = \eps(1 + \barnorm{\bz^{\minus}})$ already contributes one power towards the exponent in the claim.
	By~\thref{lem:remainder_noise_part}, terms of three types are to be estimated:\footnote{Recall that the operator~$\fd_\lambda$ is linear, multiplicative, and commutes with the application of lifts~$G$ of functions~$g$, see eq.~\eqref{def:frac_dil_symbols} and~\thref{lem:consistency_dilation}\ref{lem:consistency_dilation:i}.}
	\begin{enumerate}[label=(\arabic*)]
		\item \label{item:terms_1} Those only containing Taylor terms $\fd_\lambda \gr{U^{(i)}}$ with $\bi \in [n-1]$, for which~\thref{prop:est_taylor_terms} implies the estimate
		\begin{equs}
			\thinspace
			&
			\sum_{\substack{\boldsymbol{i} \in [n-1]^j \\ \abs{\boldsymbol{i}} \geq n}} \prod_{m=1}^j \frac{\eps^{i_m}}{i_m!} \threebars \fd_\lambda \gr{U^{(i_m)}}(\bz) \threebars_{\gamma,\eta;T;\Gamma^{\esh; \nicefrac{1}{\lambda}}}
			\aac
			\sum_{\substack{\boldsymbol{i} \in [n-1]^j \\ \abs{\boldsymbol{i}} \geq n}} \prod_{m=1}^j \frac{\eps^{i_m}}{i_m!} (1 + \barnorm{\bz^{\minus}})^{i_m} \\
			\aac
			& \
			\sum_{\substack{\boldsymbol{i} \in [n-1]^j \\ \abs{\boldsymbol{i}} \geq n}}
			\sbr[1]{\eps (1 + \barnorm{\bz^{\minus}})}^{\abs{\boldsymbol{i}}}
			\aac_\rho
			\eps^n (1+\barnorm{\bz^{\minus}})^n,
		\end{equs}
		where we have used $\eps \barnorm{\bz^{\minus}} < \rho$ to estimate terms with powers bigger that~$n$.
		\item \label{item:terms_2} Those that contain at least one factor of~$\fd_\lambda\gr{\fR_{\eps}^{n}}$, for which we have
		\begin{equation}
			\sum_{m=1}^j \binom{j}{m} \threebars \fd_\lambda\gr{\fS^{n-1}_\eps} \threebars_{\gamma,\eta;T;\Gamma^{\esh; \nicefrac{1}{\lambda}}}^{j-m} \threebars \fd_\lambda\gr{\fR_{\eps}^{n}} \threebars_{\gamma,\eta;T;\Gamma^{\esh; \nicefrac{1}{\lambda}}}^m
			\aac
			\eps^n (1+\barnorm{\bz^{\minus}})^n
			\label{item:terms_2:eq}
		\end{equation}
		Here, we have used the induction hypothesis,~\thref{prop:est_taylor_terms} to estimate the terms $\fd_\lambda \gr{U^{(k)}}$ (hidden in $\fd_\lambda\gr{\mathfrak{S}_{\eps}^{n-1}}$), and the fact that $\eps \barnorm{\bz^{\minus}} < \rho$ to estimate powers higher than~$n$ by a $\rho$-dependent constant.
		\item \label{item:terms_3} The Bochner integral in~\eqref{lem:remainder_noise_part:eq}. In order to control this term, we observe that
			\begin{equation*}
				\threebars 
				\gr{W} + s \fd_\lambda \gr{\fR_{\eps}^{1}} 
				\threebars_{\gamma,\eta,T;\Gamma^{\esh; \nicefrac{1}{\lambda}}}
				\aac
				\threebars 
				\gr{W} 
				\threebars_{\gamma,\eta,T;\Gamma^{\esh; \nicefrac{1}{\lambda}}}
				+
				(1 + \rho)
			\end{equation*}
		holds uniformly over~$s \in [0,1]$. In this estimate, we have used the induction basis~(\thref{lem:induc_basis}) and the assumption that~$\eps \barnorm{\bz^{\minus}} < \rho$. 
		Hence, we can estimate
		\begin{equs}
			\thinspace
			&
			\int_0^1 \frac{(1-s)^{n-1}}{(n-1)!} \threebars G^{(n)}(\gr{W} + s \fd_\lambda \gr{\fR_{\eps}^{1}}) \threebars_{\gamma,\eta;T;\Gamma^{\esh; \nicefrac{1}{\lambda}}} \dif s \ \threebars \fd_\lambda\gr{\fR_{\eps}^{1}}\threebars_{\gamma,\eta,T;\Gamma^{\esh; \nicefrac{1}{\lambda}}}^n \\
			\aac \
			&
			\int_0^1 \frac{(1-s)^{n-1}}{(n-1)!} 
			\threebars \gr{W} + s \fd_\lambda \gr{\fR_{\eps}^{1}} \threebars_{\gamma,\eta;T;\Gamma^{\esh; \nicefrac{1}{\lambda}}} \dif s \ \threebars \fd_\lambda\gr{\fR_{\eps}^{1}}\threebars_{\gamma,\eta,T;\Gamma^{\esh; \nicefrac{1}{\lambda}}}^n
			\aac
			\eps^n (1 + \barnorm{\bz^{\minus}})^n
		\end{equs}
		again by the induction basis. 
		Note that the first estimate, depends on the norms of~$\norm[0]{g^{(r)}}_{\infty}$, $r \in \{n,n+1\}$, as well as~$\norm[0]{\Gamma^{\esh;\nicefrac{1}{\l}}}$; the latter, however, satisfies a deterministic bound that depends on~$\norm[0]{\sh}_\CH$, see~\thref{app:lemma:deter_bound_Gamma} in the appendix.
	\end{enumerate}
	By definition of $\lambda$, these estimates can be combined to
	\begin{equation*}
		\eps \lambda \thinspace \threebars \fd_\lambda[\gr{\fR_{\eps}^{n+1}}; \sigma] \threebars_{\gamma,\eta;T;\Gamma^{\nicefrac{1}{\lambda}; \esh}}
		\aac_\rho
		\eps^{n+1} (1 + \barnorm{\bz^{\minus}})^{n+1},
	\end{equation*}
	in case~$\sigma = \<wn>$. For $\sigma = \<cm>$, we argue analogously using~\thref{lem:remainder_cameron_martin_part} instead of~\thref{lem:remainder_noise_part}. However, note that no additional factor~$\eps\lambda$ appears in
		$\threebars \fd_\lambda[\gr{\fR_{\eps}^{n+1}}; \<cm>] \threebars_{\gamma,\eta;T;\Gamma^{\esh; \nicefrac{1}{\lambda}}}$
	in~\eqref{prop:remainder_est:pf_aux1}, so all estimates have to be of order $n+1$ rather than~$n$. In fact, they are: 
	For terms of types~\ref{item:terms_1} and~\ref{item:terms_3} appearing in~$[\gr{\fR_{\eps}^{n+1}}; \<cm>]$, this can be seen immediately from their definition.
	For terms of type~\ref{item:terms_2} note that the first sum in~\eqref{lem:remainder_cameron_martin_part:eq} starts with $j=2$, so $\del[1]{\gr{\fS^{n-1}_\eps}}^{j-m}$ always comes with an exponent $j-m \geq 1$, contributing one additional power to the exponent of $\eps \del[0]{1 + \barnorm{\bz^{\minus}}}$ in~\eqref{item:terms_2:eq}.
\end{proof}

\subsection{Proof of \thref{thm:stoch_taylor_gpam}} \label{sec:pf_stoch_taylor}
	
The whole section is occupied by the proof. 
\begin{proof}
Note that 
	\begin{equation*}
		\gr{\CU} = \scal{\1,\<1>,\<1g>,X} \subseteq \bar{\CT} \oplus \CT_{\geq \a + 2}, \quad  1-\kappa = \a +2 < \gamma = 1 + 2\kappa
	\end{equation*}	
and recall that $\CX_T := \CC\del[1]{[0,T],\CC^{\eta}(\T^2)}$ for fixed~$\eta \in (\nicefrac{1}{2},1)$. 
As a consequence of~\cite[Prop.~$3.28$]{hairer_rs}, it thus follows that
	\begin{equation*}
		\gr{V} \in \DD_{\gr{\CU}}^{\gamma,\eta,T}(\gr{\bz})
		\implies
		\CR^{\gr{\bz}} \gr{V} \in 
		\CC\del[1]{(0,T],\CC^{\a+2}(\T^2)} \cap \CC\del[1]{[0,T],\CC^{\eta}(\T^2)} 
		\subseteq \CX_T.
	\end{equation*}
More generally, the reconstruction operators
	\begin{equation*}
		\CR: \gr{\MM} \ltimes \DD_{\gr{\CU}}^{\gamma,\eta;T} \to \CX_T, \quad
		\CR^{E_\sh\bullet}: \MM \ltimes \DD_{\gr{\CU}}^{\gamma,\eta;T}(E_\sh\bullet) \to \CX_T.
	\end{equation*}	
are jointly locally Lipschitz continuous by the reconstruction theorem~\cite[Thm.~3.10]{hairer_rs} and~\thref{app:prop:extension_op}. Since~$\CR^{E_\sh\bz}$ is linear and bounded for any fixed~$\bz \in \MM$ (and thus Fr\'{e}chet~$\CC^\infty$), it immediately follows from~\thref{prop:der_fp_map} that
	\begin{equation*}
		u_\sh^{\bullet}(\bz) = \CR^{E_\sh \bz} \circ \fI_{E_\sh\bz} \in \CC^{(\ell)}(I_0(T,\bz),\CX_T).
	\end{equation*}

\vspace{0.5em}
\noindent
\textbf{$\bullet$ Property~\ref{thm:stoch_taylor_gpam:i}: Continuous dependence on the model and estimates.}
\vspace{0.5em}

\noindent
By~\thref{def:terms_taylor_exp}, continuity of $u_\sh^{(m)}(\bz)$, $m \in [\ell-1]$, and~$R_{\sh,\eps}^{(\ell)}(\bz)$ in~$\bz$ is a direct consequence of joint continuity of~$\CR^{E_\sh\bullet}$ and~\thref{thm:abstract_taylor}.

In view of~\thref{rmk:philosophy_duhamel}, the estimates in~\eqref{thm:stoch_taylor_gpam:estimate} are straightforward consequences of~\thref{prop:est_taylor_terms} and~\thref{prop:remainder_est}, respectively. We have
	\begin{equs}
		\norm[0]{u_{\sh}^{(m)}(\bz)}_{\CX_T}
		& =
		\norm[0]{\CR^{E_\sh\bz} \gr{U^{(m)}}(\bz)}_{\CX_T}
		\leq
		\threebars \fd_\l \gr{U^{(m)}}(\bz) \threebars_{\gamma,\eta;T;\Gamma^{\esh;\nicefrac{1}{\l}}}
		\aac
		(1 + \barnorm{\bz^{\minus}})^{m}
	\end{equs}
	and
	\begin{equs}
		\norm[0]{R_{\sh,\eps}^{(\ell)}(\bz)}_{\CX_T}
		& = 
		\norm[0]{\CR^{E_\sh\bz} \gr{\fR^{\ell}_\eps}(\bz)}_{\CX_T} 	
		\leq 
		\threebars \fd_{\l} \gr{\fR^{\ell}_\eps}(\bz) \threebars_{\gamma,\eta;T;\Gamma^{\esh,\nicefrac{1}{\l}}}
		\aac
		\eps^{\ell} \del[1]{1 + \barnorm{\bz^{\minus}}}^{\ell}.
	\end{equs}

\vspace{0.5em}
\noindent
\textbf{$\bullet$ Property~\ref{thm:stoch_taylor_gpam:ii}: Homogeneity w.r.t. model dilation.}
\vspace{0.5em}

\noindent
Recall that we want to prove the equality~$\eps^m u^{(m)}_{\sh}(\bz) = u^{(m)}_{\sh}(\d_\eps \bz)$ for $\eps \in I_0(T,\bz)$.
By now, this is an easy consequence of~\thref{thm:abstract_taylor:ii}: We have
\begin{equation*}
	\eps^m u^{(m)}_{\sh}(\bz) 
	=
	\CR^{E_\sh \bz}\del[1]{\eps^m \gr{U^{(m)}}(\bz)} 
	=
	\CR^{E_\sh \bz}\del[1]{\fd_\eps \gr{U^{(m)}}(\d_\eps \bz)}
	=
	\CR^{E_\sh \d_\eps \bz}\del[1]{\gr{U^{(m)}}(\d_\eps \bz)} 
	=
	u^{(m)}_{\sh}(\d_\eps \bz)
\end{equation*}
where we have additionally used~\thref{lem:consistency_dilation}\ref{lem:consistency_dilation:ii} and~\thref{lem:ext_dil_commute}). 
\end{proof}

\subsection{Stochastic PDEs for the Taylor terms} \label{sec:stoch_pde_taylor_terms}

In this section, we derive stochastic PDEs that are satisfied by the Taylor terms~$\hat{u}^{(m)}_{\xi_\d;\sh}$ in the expansion~\eqref{thm:stoch_taylor_gpam:exp} in~\thref{thm:stoch_taylor_gpam} (with the notational convention introduced in~\thref{def:terms_taylor_exp}). Even though these equations are not used elsewhere in this article, we feel that they are still interesting to the reader. For a corresponding statement in the framework of rough paths theory, see Inahama and Kawabi~\cite[Def.~4.1]{inahama_kawabi}. In what follows, we will use the following convention:

\begin{notation}\label{def:fp_prob_uhat_notation}
For $\gr{U^{(m)}}(\bz)$, $A^{(m),\<wns>}_{E_\sh \bz}$, and~$B^{(m-1),\<cms>}_{E_\sh\bz}$ as in~\thref{thm:abstract_taylor}, we set
	\begin{equation*}
	\gr{\hat{U}^{(m)}_{\xi_\d,\sh}} := \gr{U^{(m)}}(\hbz^{\xi_\d}), \quad
	\hat{A}^{(m),\<wns>}_{\xi_\d,\sh} := A^{(m),\<wns>}_{E_\sh\hbz^{\xi_\d}}, \quad
	\hat{B}^{(m-),\<cms>}_{\xi_\d,\sh} := B^{(m-),\<cms>}_{E_\sh\hbz^{\xi_\d}}.
	\label{def:fp_prob_uhat_notation:eq}
	\end{equation*}
\end{notation}
With this convention, the fixed-point equation~\eqref{thm:abstract_taylor:eq_U_m_formula_fp_eq} for~$\gr{\hat{U}^{(m)}_{\xi_\d,\sh}}$ reads

\begin{equation}
	\gr{\hat{U}_{\xi_\d,\sh}^{(m)}}
	=
	\CP^{E_\sh\hbz^{\xi_\d}}(G'(\gr{W}) \gr{\hat{U}_{\xi_\d,\sh}^{(m)}} \thinspace \<cm>)
	+
	m \CP^{E_\sh\hbz^{\xi_\d}}\del[1]{\hat{A}_{\xi_\d,\sh}^{(m-1),\<wns>}\thinspace \<wn>} + \CP^{E_\sh\hbz^{\xi_\d}}\del[1]{\hat{B}_{\xi_\d,\sh}^{(m-1),\<cms>}\thinspace \<cm>}
	\label{def:fp_prob_uhat_notation:fp_eq_U_m}
\end{equation}	

The announced equations are:

\begin{proposition}[Explicit equations for $\boldsymbol{\hat{u}^{(m)}_{\xi_\d;\sh}}$]\label{prop:expl_eq}
	The term $\hat{u}^{(m)}_{\xi_\d;\sh}$ satisfies the \emph{linear} stochastic PDE given by
	\begin{equs}[][prop:expl_eq:eq1]
		(\partial_t - \Delta) \hat{u}^{(m)}_{\xi_\d;\sh}
		&
		=
		\hat{a}_{\xi_\d;\sh}^{(m-1),\<wns>}  \xi_\d 
		- \hat{a}_{\xi_\d;\sh}^{(m-1),\fc} \fc_{\d} 
		+ \sbr[1]{\hat{b}_{\xi_\d;\sh}^{(m-1),\<cms>}
			+ \hat{u}^{(m)}_{\xi_\d;\sh} g'(w_{\sh})} \sh,
		\quad
		\hat{u}^{(m)}_{\xi_\d;\sh}(0,\cdot) = 0.
	\end{equs}
	The coefficient functions of $\xi_\d$ and $\sh$ are defined by\footnote{The second equalities with $\scal{\ldots,\1}$ are due to an analogue of~\cite[Prop.~3.28]{hairer_rs} to the spaces $\DD^{\gamma,\eta}_{\gr{\CU}}(\hbz^{\xi_\d;\esh})$.}
	\begin{equs}[][prop:expl_eq:coeff_fct_xi_h]
		\hat{a}_{\xi_\d;\sh}^{(m-1),\<wns>} 
		& := m \CR^{\esh}\del[1]{\hat{A}_{\xi_\d;\sh}^{(m-1),\<wns>}}
		= m \scal{\hat{A}_{\xi_\d;\sh}^{(m-1),\<wns>},\mathbf{1}}, \\ 
		\hat{b}_{\xi_\d;\sh}^{(m-1),\<cms>} 
		& := \CR^{\esh}\del[1]{\hat{B}_{\xi_\d;\sh}^{(m-1),\<cms>}}
		= \scal{\hat{B}_{\xi_\d;\sh}^{(m-1),\<cms>},\mathbf{1}}. 
	\end{equs}
	They calculate to the same expressions as the corresponding terms in~\eqref{eq:A_eps_0}, multiplied by~$m$, and~\eqref{eq:B_eps_0}, respectively, only with the replacements~$G \rightsquigarrow g$, $\gr{W} \rightsquigarrow w_\sh$, and $\gr{U^{(k)}} \rightsquigarrow \hat{u}_{\xi_\d;\sh}^{(k)}$.
	For the coefficient of~$\fc_\d$, we have
	\begin{equs}[][prop:expl_eq:coeff_fct_ren_const]
		\hat{a}_{\xi_\d;\sh}^{(m-1),\fc}
		&
		:=  
		m \scal{\hat{A}_{\xi_\d;\sh}^{(m-1),\<wns>},\<1>}
		=
		m! \sum_{k=1}^{m-1} \frac{1}{k!} g^{(k)}(w_\sh) \sum_{\bi \in S_k^{m-1}} \sum_{r=1}^k \frac{1}{i_r!} \hat{a}_{\xi_\d;\sh}^{(i_r-1),\<wns>} \prod_{\substack{n=1, \\ n \neq r}}^k \frac{1}{i_n!} \hat{u}_{\xi_\d;\sh}^{(i_n)}.
	\end{equs}
\end{proposition}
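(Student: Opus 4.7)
The strategy is to apply the reconstruction operator $\CR^{E_\sh\hbz^{\xi_\d}}$ to the abstract fixed-point equation \eqref{def:fp_prob_uhat_notation:fp_eq_U_m} for $\gr{\hat{U}^{(m)}_{\xi_\d,\sh}}$ and then read off the resulting mild PDE for $\hat{u}^{(m)}_{\xi_\d;\sh}$.

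\emph{Step 1: Reconstruction and mild form.} By the Schauder estimate for $\CP$ and the fact that reconstruction commutes with the abstract integration against $P$ (for smooth noise as in our setting), one has $\CR \circ \CP^{E_\sh\hbz^{\xi_\d}} = P * \CR^{E_\sh\hbz^{\xi_\d}}$. Applying $\CR^{E_\sh\hbz^{\xi_\d}}$ to \eqref{def:fp_prob_uhat_notation:fp_eq_U_m} and using the above, the right-hand side splits into three convolutions with the heat kernel $P$.

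\emph{Step 2: Identification of each reconstruction.} Since $\gr{\CU}$ is function-like with $\1$-component equal to the classical value, one has:
\begin{enumerate}[label=(\roman*)]
\item $\CR^{E_\sh\hbz^{\xi_\d}}\bigl(G'(\gr{W}) \gr{\hat{U}^{(m)}_{\xi_\d,\sh}} \<cm>\bigr) = g'(w_\sh) \hat{u}^{(m)}_{\xi_\d;\sh} \sh$, because $\hat{\Pi}\<cm>$ reconstructs to $\sh$ and the $\1$-components of $G'(\gr{W})$ and $\gr{\hat{U}^{(m)}_{\xi_\d,\sh}}$ are $g'(w_\sh)$ and $\hat{u}^{(m)}_{\xi_\d;\sh}$ respectively.
\item $\CR^{E_\sh\hbz^{\xi_\d}}\bigl(\hat{B}^{(m-1),\<cms>}_{\xi_\d;\sh}\<cm>\bigr) = \hat{b}^{(m-1),\<cms>}_{\xi_\d;\sh} \sh$ by the same argument.
\item $m\,\CR^{E_\sh\hbz^{\xi_\d}}\bigl(\hat{A}^{(m-1),\<wns>}_{\xi_\d;\sh}\<wn>\bigr) = \hat{a}^{(m-1),\<wns>}_{\xi_\d;\sh} \xi_\d - \hat{a}^{(m-1),\fc}_{\xi_\d;\sh} \fc_\d$. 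Here the renormalization constant enters: expanding $\hat{A}^{(m-1),\<wns>}_{\xi_\d;\sh} = A_\1 \1 + A_{\<1s>}\<1> + A_{\<1gs>}\<1g> + \scal{A_X,X}$ and multiplying by $\<wn>$, the product $\<1>\cdot\<wn> = \<11>$ is the unique symbol requiring BPHZ renormalization, which acts as $\hat{\Pi}_z\<11>(z) = \Pi_z\<11>(z) - \fc_\d$. The $\1$-component contributes $A_\1 \xi_\d$, while the remaining components either vanish at the base point (for $\<1g>$, $X$) or, after reconstruction of the full product, reassemble into $A_{\<1s>}\,[\xi_\d \cdot (G*\xi_\d) - \fc_\d]$; pairing this correctly with the $\xi_\d$-contribution, the coefficient of $\xi_\d$ coincides with $m\scal{\hat{A}^{(m-1),\<wns>}_{\xi_\d;\sh}, \1} = \hat{a}^{(m-1),\<wns>}_{\xi_\d;\sh}$ and the coefficient of $\fc_\d$ is $-m\scal{\hat{A}^{(m-1),\<wns>}_{\xi_\d;\sh}, \<1>} = -\hat{a}^{(m-1),\fc}_{\xi_\d;\sh}$.
\end{enumerate}

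\emph{Step 3: Explicit combinatorial formulas.} For \eqref{prop:expl_eq:coeff_fct_xi_h}, substitute the expression \eqref{eq:A_eps_0}--\eqref{eq:B_eps_0} for $\hat{A}^{(m-1),\<wns>}$ and $\hat{B}^{(m-1),\<cms>}$, and use that the $\1$-coefficient of a product of function-like modelled distributions equals the product of their $\1$-coefficients, with $\scal{\gr{U^{(i_n)}},\1} = \hat{u}^{(i_n)}_{\xi_\d;\sh}$. For \eqref{prop:expl_eq:coeff_fct_ren_const}, compute the $\<1>$-coefficient of the product $\prod_{n=1}^k \gr{U^{(i_n)}}$ by Leibniz: exactly one factor must contribute its $\<1>$-coefficient while all others contribute their $\1$-coefficient. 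Reading off \eqref{thm:abstract_taylor:eq_U_m_formula_fp_eq} (only the $m\CP(A^{(m-1),\<wns>}\<wn>)$ term contributes to $\<1>$, via $\CI\<wn> = \<1>$), one identifies $\scal{\gr{U^{(i_r)}}, \<1>} = \hat{a}^{(i_r-1),\<wns>}_{\xi_\d;\sh}$. Summing over the distinguished index $r$ yields the combinatorial formula.

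\emph{Step 4: From mild to strong form.} The mild-form identity obtained in Steps 1--3 reads
\begin{equation*}
\hat{u}^{(m)}_{\xi_\d;\sh} = P * \bigl[g'(w_\sh)\hat{u}^{(m)}_{\xi_\d;\sh}\sh + \hat{b}^{(m-1),\<cms>}_{\xi_\d;\sh}\sh + \hat{a}^{(m-1),\<wns>}_{\xi_\d;\sh}\xi_\d - \hat{a}^{(m-1),\fc}_{\xi_\d;\sh}\fc_\d\bigr].
\end{equation*}
Applying $(\partial_t - \Delta)$ gives \eqref{prop:expl_eq:eq1}. The initial condition $\hat{u}^{(m)}_{\xi_\d;\sh}(0,\cdot) = 0$ for $m \geq 1$ follows by differentiating \eqref{thm:stoch_taylor_gpam:exp} at $\eps = 0$ and noting that $\hat{u}^\eps_{\xi_\d;\sh}(0,\cdot) = u_0$ is independent of $\eps$.

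\emph{Main obstacle.} The only non-routine step is the careful identification of the renormalization constant in Step 2(iii): one must track exactly which symbol products $\1\<wn>, \<1>\<wn>, \<1g>\<wn>, X\<wn>$ are present in the sector $\gr{\CU}\cdot\<wn>$, verify that only $\<11>$ lies below the reconstruction threshold and thus needs BPHZ renormalization, and then correctly pair the constant shift $-\fc_\d$ with its combinatorial prefactor. The combinatorial identity for $\hat{a}^{(m-1),\fc}_{\xi_\d;\sh}$ then drops out by comparing the $\<1>$-coefficients in the fixed-point equation \eqref{thm:abstract_taylor:eq_U_m_formula_fp_eq}, a purely algebraic bookkeeping once Step 2 is in place.
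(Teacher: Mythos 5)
Your overall strategy matches the paper's exactly: reconstruct the fixed-point equation \eqref{def:fp_prob_uhat_notation:fp_eq_U_m}, use $\CR\circ\CP = P*\CR$, identify each summand, apply $(\partial_t-\Delta)$, and obtain the $\<1>$-coefficient combinatorially. Steps 1, 3, and 4 are essentially the paper's argument. Step 2(i)--(ii) are also fine.

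However, Step 2(iii) contains a genuine error in the intermediate reasoning. You write that the $\<1>$-component of $\hat{A}^{(m-1),\<wns>}\<wn>$ ``reassembles into $A_{\<1s>}[\xi_\d\cdot(G*\xi_\d) - \fc_\d]$'' and then invoke ``pairing this correctly'' to discard the spurious $A_{\<1s>}\,(G*\xi_\d)\,\xi_\d$ contribution from the $\xi_\d$-coefficient. No such pairing exists; as written, your reconstruction would contain an extra term $A_{\<1s>}(K*\xi_\d)\xi_\d$ that does not appear in \eqref{prop:expl_eq:eq1}, and the claimed coefficients would be wrong. The correct mechanism is different and needs to be stated: the pointwise reconstruction for a smooth model is $(\CR\gr{V})(z) = \hat{\Pi}_z[\gr{V}(z)](z)$, and the admissibility condition \ref{def:smooth_admissible_model:Pi4} \emph{re-centers} $\Pi_z\<1>$ so that $\Pi_z\<1>(z) = (K*\xi_\d)(z) - (K*\xi_\d)(z) = 0$. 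Consequently $\hat{\Pi}_z\<11>(z) = \Pi_z\<1>(z)\,\Pi_z\<wn>(z) - \fc_\d = -\fc_\d$; the $K*\xi_\d$ piece never survives evaluation at the base point, so there is nothing to ``pair away.'' The paper sidesteps this confusion entirely by writing $\hat{\Pi}_z = \Pi_z\circ M(\fc_\d)$, observing that $M(\fc_\d)$ acts nontrivially only on $\<11>$ where it produces $-\scal{\cdot,\<1>}\fc_\d\,\1$, and then using that $\Pi_z\tau(z)=0$ for every non-polynomial symbol $\tau\neq\<wn>$. You should replace your ``pairing'' argument by this base-point vanishing; once that is done, the rest of your proof goes through.
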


\begin{remark}\label{rmk:heuristic_comp}
	As a sanity check, one can verify that the equation for $\hat{u}^{(m)}_{\xi_\d;\sh}$ in~\thref{prop:expl_eq} can also be obtained by naively applying $\partial_\eps^{m}\sVert_{\eps=0}$ to
	\begin{equation*}
		(\partial_t - \Delta) \hat{u}^{\eps}_{\xi_\d;\sh} 
		= g\del[1]{\hat{u}^{\eps}_{\xi_\d;\sh}}\del[1]{\eps \xi_\d  + \sh - \eps^2 \fc_\d g'\del[1]{\hat{u}^{(\eps)}_{\xi_\d;\sh}}}, 
		\quad 
		\hat{u}^{\eps}_{\xi_\d;\sh}(0,\cdot) = u_0.
	\end{equation*}
	In particular, we find	
	\begin{equs}
		(m & = 1) \quad 
		\hat{a}_{\xi_\d;\sh}^{(0),\<wns>} = g(w_{\sh}), \quad \hat{a}_{\xi_\d;\sh}^{(0),\fc} = 0 = \hat{b}_{\xi_\d;\sh}^{(0),\<cms>} \\ 
		(m & = 2) \quad 
		\hat{a}_{\xi_\d;\sh}^{(1),\<wns>} = 2 g'(w_{\sh})\hat{u}^{(1)}_{\xi_\d;\sh}, \quad \hat{a}_{\xi_\d;\sh}^{(1),\fc} = 2g'(w_{\sh})g(w_{\sh}), \quad
		\hat{b}_{\xi_\d;\sh}^{(1),\<cms>} = g''(w_{\sh})\sbr[1]{\hat{u}^{(1)}_{\xi_\d;\sh}}^2
	\end{equs}
	such that
	\begin{equs}
		(m & = 1) \quad
		(\partial_t - \Delta) \hat{u}^{(1)}_{\xi_\d;\sh}
		=
		g(w_{\sh})  \xi_\d 
		+ \hat{u}^{(1)}_{\xi_\d;\sh} g'(w_{\sh}) \sh \\ 
		(m & = 2) \quad 
		(\partial_t - \Delta) \hat{u}^{(2)}_{\xi_\d;\sh}
		=
		2 g'(w_{\sh})\hat{u}^{(1)}_{\xi_\d;\sh}  \xi_\d 
		- 2g'(w_{\sh})g(w_{\sh}) \fc_{\d} 
		+ \sbr[2]{
			g''(w_{\sh})\sbr[1]{\hat{u}^{(1)}_{\xi_\d;\sh}}^2
			+ \hat{u}^{(2)}_{\xi_\d;\sh} g'(w_{\sh})
		} \sh
	\end{equs}
	with $\hat{u}^{(i)}_{\xi_\d;\sh}(0,\cdot) = 0$ for $i=1,2$.
\end{remark}

\begin{proof}[of~\thref{prop:expl_eq}]
	All the operators $\CR$ and $\CP$ in this proof depend on the model~$E_\sh \hbz^{\xi_\d}$.
	Recall that $\CR \circ \CP = P * \CR$, $(\partial_t - \Delta) [P*v] = v$, and 
	\begin{equation*}
		(\CR\gr{V})(z)
		\equiv
		\del[1]{\CR^{E_\sh\hbz^{\xi_\d}}\gr{V}}(z)
		=
		\hat{\Pi}_z^{\xi_\d;\esh}\del[1]{\gr{V}(z)}(z), \qquad
		\gr{V} \in \DD^{\gamma+\a,\eta+\a}_{\gr{\CU}}(E_\sh\hbz^{\xi_\d})
	\end{equation*}
	where~$\a = \deg(\<wn>)$.
	By definition of $\hat{u}_{\xi_\d,\sh}^{(m)}$ and the fixed-point equation~\eqref{def:fp_prob_uhat_notation:fp_eq_U_m} for $\gr{\hat{U}_{\xi_\d,\sh}^{(m)}}$, we find
	\begin{equs}
		(\partial_t - \Delta) \hat{u}_{\xi_\d,\sh}^{(m)}(z)
		&
		=
		(\partial_t - \Delta) \del[1]{\CR\gr{\hat{U}_{\xi_\d,\sh}^{(m)}}}(z) \\
		&
		=
		\CR\del[2]{
			m\hat{A}_{\xi_\d,\sh}^{(m-1),\<wns>}\thinspace\<wn> 
			+
			\sbr[2]{
				G'(\gr{W}) \gr{\hat{U}_{\xi_\d,\sh}^{(m)}} 
				+ 
				\hat{B}_{\xi_\d,\sh}^{(m-1),\<cms>}
			} \<cm>
		}(z) \\
		&
		=
		\hat{\Pi}_z^{\xi_\d;\esh}\del[2]{
			m \hat{A}_{\xi_\d,\sh}^{(m-1),\<wns>}(z)\<wn> 
			+
			\sbr[2]{
				G'(\gr{W}(z)) \gr{\hat{U}_{\xi_\d,\sh}^{(m)}}(z) 
				+ 
				\hat{B}_{\xi_\d,\sh}^{(m-1),\<cms>}(z)
			} \<cm>
		}(z) \\
		&
		=
		\Pi_z^{\xi_\d;\esh}\del[2]{
			M(\fc_\d)
			\del[2]{
				m \hat{A}_{\xi_\d,\sh}^{(m-1),\<wns>}(z)\<wn> 
				+
				\sbr[2]{ 
					\hat{B}_{\xi_\d,\sh}^{(m-1),\<cms>}(z)
					+
					G'(\gr{W}(z)) \gr{\hat{U}_{\xi_\d,\sh}^{(m)}}(z) 
				} \<cm>
			}
		}(z) \\
		&
		=
		\hat{a}_{\xi_\d;\sh}^{(m-1),\<wns>}(z)  \xi_\d(z) 
		- \hat{a}_{\xi_\d;\sh}^{(m-1),\fc}(z) \fc_\d
		+ \sbr[1]{
			\hat{b}_{\xi_\d;\sh}^{(m-1),\<cms>}(z)
			+
			g'(w_{\sh}(z)) \hat{u}^{(m)}_{\xi_\d;\sh}(z) 
		} \sh(z).
	\end{equs}
	In the previous computation, we have used the fact that only the summand $\hat{A}_{\xi_\d;\sh}^{(m-1),\<wns>}(z) \thinspace \<wn>$ \ contributes a term that contains the symbol $\<11> \equiv \<1> \thinspace \<wn>$ on which~$M(\fc_\d)$ acts other than trivially; 
	in particular,
	\begin{equation*}
		M(\fc_\d) \del[2]{m \hat{A}_{\xi_\d;\sh}^{(m-1),\<wns>}(z) \thinspace \<wn> + (\ldots)}
		=
		m\hat{A}_{\xi_\d;\sh}^{(m-1),\<wns>}(z) \thinspace \<wn> 
		- m\scal{\hat{A}_{\xi_\d;\sh}^{(m-1),\<wns>}(z), \<1>} \fc_\d \1
		+ (\ldots).
	\end{equation*}
	The claimed explicit form of~$\hat{a}_{\xi_\d;\sh}^{(m-1),\<wns>}$ and~$\hat{b}_{\xi_\d;\sh}^{(m-1),\<cms>}$, follows directly from the equalities
	\begin{equation*}
		\scal{G^{(k)}(\gr{W}),\1} = 
		g^{(k)}(w_\sh), 
		\quad 
		\scal{\gr{\hat{U}_{\xi_\d;\sh}^{(j)}},\1} = \hat{u}_{\xi_\d;\sh}^{(j)}, 
		\quad
		j = 1, \ldots, m-1.
	\end{equation*}
	Hence, we are left to establish the explicit expression for~$\hat{a}_{\xi_\d;\sh}^{(m-1),\fc}$ in~\eqref{prop:expl_eq:coeff_fct_ren_const}. For that purpose, recall from~\eqref{eq:A_eps_0} that 
	\begin{equation*}
		m\hat{A}_{\xi_\d;\sh}^{(m-1),\<wn>} 
		= 
		m! \sum_{k=1}^{m-1} \frac{1}{k!} G^{(k)}(\gr{W}) \sum_{\boldsymbol{i} \in S_k^{m-1}}  \prod_{n=1}^k \frac{1}{i_n!} \gr{U_{\xi_\d,\sh}}^{(i_n)}
	\end{equation*}
	and note that $\scal{G^{(k)}(\gr{W}),\<1>} = 0$. The calculation of $\scal{\hat{A}_{\xi_\d;\sh}^{(m-1),\<wns>},\<1>}$ then turns out to be a simple combinatorial problem: we need to take the coefficient function $\scal{\thinspace \cdot \thinspace,\<1>}$ of \emph{exactly} one of the factors
	\begin{equation*}
		\gr{U^{(i_n)}_{\xi_\d,\sh}}, \quad \bi \in S_k^{m-1}, \quad n = 1,\ldots,k,
	\end{equation*}
	take the coefficient functions $\scal{\thinspace \cdot \thinspace,\1}$ for the rest of them, and sum over all possibilities to do so.
	To calculate $\scal{\gr{\hat{U}_{\xi_\d;\sh}^{(j)}},\<1>}$ for $j = 1,\ldots,m-1$, we rewrite \eqref{def:fp_prob_uhat_notation:fp_eq_U_m} as\footnote{The term \enquote{$+ \bar{\CT}$} collects all summands that take values in the polynomial regularity structure $\bar{\CT}$.}
	\begin{equs}
		\gr{\hat{U}_{\xi_\d;\sh}^{(j)}}
		&
		=
		\CI\del[2]{j\hat{A}_{\xi_\d;\sh}^{(j-1),\<wns>} \thinspace \<wn> + \sbr[1]{\hat{B}_{\xi_\d;\sh}^{(j-1),\<cms>} + G'(\gr{W}) \gr{\hat{U}_{\xi_\d;\sh}^{(j)}}} \thinspace \<cm>} + \bar{\CT} \\
	\end{equs}
	and, since $\<1> = \CI \thinspace \<wn>$, observe that
	\begin{equation*}
		\scal{\gr{\hat{U}_{\xi_\d;\sh}^{(j)}},\<1>}
		=
		\left\langle \CI\del[2]{j\hat{A}_{\xi_\d;\sh}^{(j-1),\<wns>} \thinspace \<wn> + \sbr[1]{\hat{B}_{\xi_\d;\sh}^{(j-1),\<cms>}+ G'(\gr{W}) \gr{\hat{U}_{\xi_\d;\sh}^{(j)}}} \thinspace \<cm>}, \<1> \right\rangle
		=
		j \scal{\hat{A}_{\xi_\d;\sh}^{(j-1),\<wns>},\1}
		=
		\hat{a}_{\xi_\d;\sh}^{(j-1),\<wns>}.
	\end{equation*}
	In case $j=1$, note that $\scal{\gr{\hat{U}_{\xi_\d;\sh}^{(1)}},\<1>} = \hat{a}_{\xi_\d;\sh}^{(0),\<wns>} = g(w_\sh)$.
\end{proof}

\section{Local analysis in the vicinity of the minimiser}\label{sec:local_analysis}

Recall that the application of the Cameron-Martin theorem in \thref{prop:cameron_martin} has lead to the expression
	\begin{equation}
		J_\rho(\eps) 
		=                                                  
		\exp\del[3]{-\frac{\FF(\sh)}{\eps^2}} \E\sbr[3]{\exp\del[3]{-\frac{\tilde{F}^{\minus}_\Phi(\sh,\eps)}{\eps^2}}; \thinspace \eps \barnorm{\hbz^{\minus}} < \rho},
		\label{sec:local_analysis:eq1}
	\end{equation}
with~$\tilde{F}^{\minus}_\Phi(\sh,\eps)$	given in~\eqref{eq:prop:cameron_martin:FPhi}. By plugging in the expansion obtained in~\thref{coro:stoch_taylor_gpam_functional}, we obtain the identity
	\begin{equation}
		\tilde{F}^{\minus}_\Phi(\sh,\eps) 
		= 
		\eps \sbr[1]{DF\sVert_{w_\sh}\del[1]{\hat{u}_\sh^{(1)}} + \xi(\sh)}
		+
		\frac{\eps^2}{2} \hat{Q}_\sh
		+
		\sum_{m=3}^{N + 2} \frac{\eps^m}{m!} \hat{F}^{(m)}_{\sh} 
		+ \hat{R}^{F;\eps;N + 3}_{\sh}
		\label{sec:local_analysis:eq2}
	\end{equation}
for each $\eps \in I_0 := [0,\eps_0)$ with $\eps_0 = \eps_0(T,\hbz)$ as in~\eqref{eq:def_eps_zero}.
	
\subsection{Vanishing of the linear term by first-order optimality} \label{sec:first_order_vanish}
In this subsection, we want to use first-order optimality implied by assumption~\ref{ass:h2} to prove the proposition that follows. Recall our convention from~\thref{def:terms_taylor_exp} that $\hat{u}_\sh^{(1)}(\omega) = u_\sh^{(1)}(\hbz(\omega))$ for $\omega \in \Omega$.

\begin{proposition}\label{prop:first_order_optimality}
	We have 
		\begin{equation}
			DF\sVert_{w_\sh}\del[1]{\hat{u}_\sh^{(1)}(\omega)} + \xi(\omega)(\sh) = 0.
			\label{prop:first_order_optimality:eq1}
		\end{equation}
	for $\P$-a.e. $\omega \in \Omega$. Hence, 
		\begin{equation}
			J_\rho(\eps) 
			=                                                  
			\exp\del[3]{-\frac{\FF(\sh)}{\eps^2}} 
			\E\sbr[2]{\exp\del[2]{
					- \frac{1}{2} \hat{Q}_\sh
					- \hat{S}_\sh^{F;\eps;N} 
					- \frac{1}{\eps^2}\hat{R}^{F;\eps;N + 3}_{\sh}
			}; \thinspace \eps\barnorm{\hbz^{\minus}} < \rho}
			\label{prop:first_order_optimality:eq2}
		\end{equation}
	where we have set
		\begin{equation}
			\hat{S}_\sh^{F;\eps;N}
			:= 
			\sum_{m=3}^{N + 2} \frac{\eps^{m-2}}{m!} \hat{F}^{(m)}_{\sh}. 
		\end{equation}
\end{proposition}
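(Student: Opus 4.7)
The plan is to recognise $\hat u_\sh^{(1)}(\omega)$ as the formal extension of the deterministic directional derivative $v_{\sh,k}$ from~\eqref{eq:dir_der_h_k} to $k = \xi(\omega)$, and then exploit first-order optimality of $\FF$ at $\sh$. Concretely, by~\ref{ass:h3} $F$ is Fr\'echet differentiable at $w_\sh$, and the solution map $\CH \ni h \mapsto w_h \in \CX_T$ is Fr\'echet differentiable at $\sh$ with derivative $k \mapsto v_{\sh,k}$; Schilder's rate function~\eqref{eq:schilders_rf} is obviously differentiable on $\CH$ with $D\II|_\sh(k) = \langle \sh, k\rangle_\CH$. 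Combining these via the chain rule with the first-order optimality condition $D\FF|_\sh \equiv 0$ that follows from~\ref{ass:h2} yields
$$
DF|_{w_\sh}(v_{\sh,k}) = -\langle \sh, k\rangle_\CH \qquad \text{for all } k \in \CH. \qquad (\ast)
$$

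I would then work at the mollified level. By~\thref{coro:no_renorm_first_order_term} the first Taylor coefficient requires no renormalisation, and by~\thref{prop:expl_eq} with $m=1$ it solves the linear PDE
$$
(\partial_t - \Delta) \hat u^{(1)}_{\xi_\d;\sh} = g(w_\sh)\, \xi_\d(\omega) + g'(w_\sh)\, \sh\, \hat u^{(1)}_{\xi_\d;\sh}, \qquad \hat u^{(1)}_{\xi_\d;\sh}(0,\cdot) = 0,
$$
which is precisely~\eqref{eq:dir_der_h_k} with $k := \xi_\d(\omega) \in \CC^\infty(\T^2) \subseteq \CH$ (this regularity holds $\P$-a.s.\ because $\xi_\d = \xi * \rho_\d$ is smooth). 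Uniqueness of the linear equation forces $\hat u^{(1)}_{\xi_\d;\sh}(\omega) = v_{\sh,\xi_\d(\omega)}$ a.s., so applying $(\ast)$ with $k = \xi_\d(\omega)$ gives
$$
DF|_{w_\sh}\del[1]{\hat u^{(1)}_{\xi_\d;\sh}(\omega)} + \langle \sh, \xi_\d(\omega)\rangle_{L^2(\T^2)} = 0.
$$
A short computation transferring the mollifier from $\xi$ onto $\sh$ (using symmetry of $\rho$) rewrites the inner product as $\xi(\omega)(\rho_\d * \sh)$, i.e.\ as the Paley--Wiener integral of $\rho_\d * \sh \in \CH$.

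It remains to pass to the limit $\d \to 0$. Since $\rho_\d * \sh \to \sh$ in $\CH$, the Paley--Wiener isometry gives $\xi(\rho_\d * \sh) \to \xi(\sh)$ in $L^2(\Omega)$. In parallel, the mollifier convergence underlying~\thref{intro:ex_sol_hairer}, extended to Taylor coefficients in~\thref{thm:stoch_taylor_gpam}, yields $\hat u^{(1)}_{\xi_\d;\sh} \to \hat u^{(1)}_\sh$ in probability in $\CX_T$; composing with the bounded linear functional $DF|_{w_\sh}: \CX_T \to \R$ from~\ref{ass:h3} gives convergence in probability of the first summand. Extracting a joint a.s.\ subsequence establishes~\eqref{prop:first_order_optimality:eq1}. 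Formula~\eqref{prop:first_order_optimality:eq2} is then pure bookkeeping: inserting~\eqref{sec:local_analysis:eq2} into~\eqref{sec:local_analysis:eq1} and cancelling the $\eps$-linear bracket against the $\eps^{-2}$ prefactor using~\eqref{prop:first_order_optimality:eq1}, the surviving terms regroup as $-\tfrac{1}{2}\hat Q_\sh - \hat S_\sh^{F;\eps;N} - \eps^{-2}\hat R^{F;\eps;N+3}_\sh$.

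The technically delicate point is ensuring that the chain rule identity $(\ast)$, \emph{a priori} valid only for deterministic $k \in \CH$, transfers to the random Cameron--Martin element $\xi_\d(\omega)$; the rigorous justification passes through the solution-map identification $\hat u^{(1)}_{\xi_\d;\sh}(\omega) = v_{\sh,\xi_\d(\omega)}$ and then takes limits. No extra regularity of $\sh$ beyond $\sh \in \CH$ (which is all~\ref{ass:h2} provides at this stage) is required.
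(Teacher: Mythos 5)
Your proposal is correct and follows essentially the same route as the paper: both derive the first-order optimality identity at the mollified level (using \thref{coro:no_renorm_first_order_term} to drop the hats), then pass to the limit $\d\to 0$ by extracting a joint a.s.\ subsequence from the $L^2$-convergence of the Paley--Wiener integral and the convergence in probability of $\hbz^{\xi_\d}\to\hbz$, using continuity of $DF|_{w_\sh}$ and of $u^{(1)}_\sh$ in the model. The only cosmetic difference is how the derivative $D(\Phi\circ\LL)|_\sh(\xi_\d(\omega))$ is identified with $\hat u^{(1)}_{\xi_\d;\sh}(\omega)$: you go through uniqueness of the linear PDE~\eqref{eq:dir_der_h_k} and the deterministic directional derivative $v_{\sh,k}$, whereas the paper chains the identities $\Phi\circ\LL(\eps\xi_\d+\sh) = \CR\circ\fI_{E_\sh\bz^{\xi_\d}}(\eps)$ from \thref{prop:fp_choice} and differentiates in $\eps$ via \thref{thm:stoch_taylor_gpam}; these are two equivalent ways of saying the same thing. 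One minor caution: your statement that $\CH\ni h\mapsto w_h\in\CX_T$ is \emph{Fr\'echet} differentiable at $\sh$ is stronger than what you actually use (only Gateaux differentiability along $k=\xi_\d(\omega)$ is invoked, and that is what the paper's machinery delivers), and your rewriting of $\langle\sh,\xi_\d\rangle_\CH$ as $\xi(\rho_\d*\sh)$ tacitly assumes $\rho$ is even; the paper avoids both points by citing the convergence $\langle\xi_\d,\sh\rangle_\CH\to\xi(\sh)$ directly from \cite[Rmk.~10.6]{hairer_rs}.
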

	
\begin{proof} 
	By assumption~\ref{ass:h2}, the Cameron-Martin function $\sh$ is the unique minimiser of $\FF = (F_\Phi \circ \LL) + \II$, so $D\FF\sVert_{\sh} \equiv 0 \in \CH'$. We calculate the Fr\'{e}chet derivatives of the two summands of~$\FF$. Since $\xi_\d$ is smooth~a.s., we have $\xi_\d(\omega) \in \CH$ for $\P$-a.e. $\omega \in \Omega$.
		\begin{enumerate}[label=(\arabic*)]
			\item  For Schilder's rate function $\II$, we obtain
				\begin{equation}
					D\II\sVert_{\sh} (\xi_\d(\omega))
					=
					\frac{1}{2} \frac{\dif}{\dif \l}\sVert[2]_{\l=0} \scal{\sh + \l \xi_\d(\omega), \sh + \l \xi_\d(\omega)}_{\CH}
					=
					\scal{\xi_\d(\omega),\sh}_{\CH}.
					\label{eq:der_schilder}
				\end{equation}
			\item Recall that~$(\Phi \circ \LL)(\sh) = w_\sh$ and, by~\thref{prop:fp_choice} (with $\bz := \bz^{\xi_\d}(\omega)$), 
				\begin{equs}
					(\Phi \circ \LL)(\eps \xi_\d(\omega) + \sh)
					& =
					\CR\del[1]{\CS\del[1]{T_\sh \d_\eps \bz^{\xi_\d}(\omega)}}
					=
					\CR\del[1]{\CS_{tr}(u_0,\sh,\d_\eps \bz^{\xi_\d}(\omega))} \\
					&
					=
					\CR\del[1]{\gr{\CS_{ex}^{\eps}}(u_0,\sh,\bz^{\xi_\d}(\omega))}
					=
					\CR\del[1]{\fI_{E_\sh \bz^{\xi_\d}(\omega)}(\eps)}
					=
					u_\sh^{(\eps)}\del[1]{\bz^{\xi_\d}(\omega)},
				\end{equs}
			which in turn implies that
				\begin{equation*}
					D(\Phi \circ \LL)\sVert[1]_{\sh}(\xi_\d(\omega))
					=
					\frac{\dif}{\dif \eps}\sVert[2]_{\eps = 0} u_\sh^{(\eps)}\del[1]{\bz^{\xi_\d}(\omega)}
					=
					u^{(1)}_\sh(\bz^{\xi_\d}(\omega))
					=
					u^{(1)}_\sh\del[1]{\hbz^{\xi_\d}(\omega)}.
				\end{equation*}
			where $\hbz^{\xi_\d}(\omega)$ denotes the BPHZ model associated to~$\xi_\d(\omega)$ and the last equality is due to~\thref{coro:no_renorm_first_order_term}. We combine these observations to obtain
				\begin{equation}
					D(F \circ \Phi \circ \LL)\sVert_{\sh}(\xi_\d(\omega))
					=
					DF\sVert[0]_{(\Phi \circ \LL)(\sh)} \del[2]{D(\Phi \circ \LL)\sVert[1]_{\sh}(\xi_\d(\omega))}
					=
					DF\sVert[0]_{w_\sh}\del[1]{u^{(1)}_\sh\del[1]{\hbz^{\xi_\d}(\omega)}}.
					\label{eq:der_FPhi}
				\end{equation}
		\end{enumerate}
		Altogether, \eqref{eq:der_schilder} and \eqref{eq:der_FPhi} lead to the equality
			\begin{equation}
				DF\sVert[0]_{w_\sh}\del[1]{u^{(1)}_\sh\del[1]{\hbz^{\xi_\d}(\omega)}}
				+
				\scal{\xi_\d(\omega),\sh}_{\CH}
				= D\FF\sVert_\sh(\xi_\d(\omega)) = 0.
				\label{eq:foo_cm}
			\end{equation}
		We recall two more results:
			\begin{enumerate}[label=(\arabic*)]
				\item From \cite[Rmk.~$10.6$]{hairer_rs} and the comments thereafter, we know that 
					\begin{equation}
						\lim_{\d \to 0} \ \scal{\xi_\d,\sh}_{\CH} = \xi(\sh) \quad \text{in} \ L^2(\P).
					\end{equation}
				\item From \cite[Thm.~$10.19$]{hairer_rs} (or, more generally, \cite[Thm.~$2.33$]{chandra-hairer}), we know that $\hbz^{\xi_\d}$ converges to $\hbz$ in probability in $\MM$ as $\d \to 0$.
			\end{enumerate}
		Therefore, we may pass to a common subsequence to obtain $\P$-a.s. limits in the preceding two statements. As $DF\sVert_{w_\sh}$ is continuous by assumption~\ref{ass:h3} and $u^{(1)}_\sh$ is continuous in the model by~\thref{thm:stoch_taylor_gpam}\ref{thm:stoch_taylor_gpam:i}, taking $\d \to 0$ in~\eqref{eq:foo_cm} establishes the claim in~\eqref{prop:first_order_optimality:eq1}.
\end{proof}

By the previous proposition, we now need to analyse the behaviour as $\eps \to 0$ of
\begin{equation}
	\CE_\rho(\eps)
	:=
	\E\sbr[2]{\exp\del[2]{
			- \frac{1}{2} \hat{Q}_\sh 
			- \hat{S}_\sh^{F;\eps;N} 
			- \frac{1}{\eps^2}\hat{R}^{F;\eps;N + 3}_{\sh}
		}; \thinspace \eps\barnorm{\hbz^{\minus}} < \rho}.
	\label{eq:cal_E}
\end{equation}

\subsection{Exponential integrability of the quadratic term} \label{sec:exp_integr_quadr}

In a first step, we study the exponential integrability of the quadratic term $-\nicefrac{1}{2} \thinspace \hat{Q}_\sh$ in \eqref{eq:cal_E}, as it will give us the leading order term $a_0$ in the expansion~\eqref{thm:laplace_asymp:expansion}, cf.~section~\ref{sec:asymp_coeff}.

\begin{proposition} \label{prop:exp_integr_Q}
There exists some $\beta > 0$ such that $\exp\del[2]{-\frac{1}{2} \hat{Q}_\sh} \in L^{p}(\P)$ with $p := 1 +\beta$.
\end{proposition}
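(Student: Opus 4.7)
The plan is to establish a tail bound of the form $\P(-\hat{Q}_\sh/2 > t) \lesssim \exp(-(1+\beta) t)$ for $t$ large, which immediately gives $\exp(-\hat{Q}_\sh/2) \in L^{1+\beta}(\P)$ via the standard tail representation of exponential moments. The central device is the $2$-homogeneity of $Q_\sh$ under model dilation (\thref{coro:stoch_taylor_gpam_functional:properties}\ref{coro:stoch_taylor_gpam_functional:ii}), $\eps^2 \hat{Q}_\sh = Q_\sh(\d_\eps \hbz)$, combined with the LDP for $(\d_\eps \hbz^{\minus})_\eps$ on $\MM_-$ (\thref{app:thm_ldp_models}). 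Specifically, for $t > 0$ I set $\eps_t := 1/\sqrt{t}$ to rewrite the tail event as
\[\bigl\{ -\hat{Q}_\sh/2 > t \bigr\} \subseteq \bigl\{ \d_{\eps_t} \hbz^{\minus} \in \bar{A}_- \bigr\}, \qquad \bar{A}_- := \{ \bz^{\minus} \in \MM_- : Q_\sh \circ \EE(\bz^{\minus}) \leq -2 \}.\]
Since $Q_\sh \circ \EE$ is continuous on $\MM_-$ (by~\thref{coro:stoch_taylor_gpam_functional:properties}\ref{coro:stoch_taylor_gpam_functional:i} and continuity of $\EE$), the set $\bar{A}_-$ is closed, and the LDP upper bound yields
\[\limsup_{t \to \infty} \frac{1}{t} \log \P\bigl( -\hat{Q}_\sh/2 > t \bigr) \leq -J^*, \qquad J^* := \inf \bigl\{ \JJ_{\MM_-}(\bz^{\minus}) : \bz^{\minus} \in \bar{A}_- \bigr\}.\]

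To conclude, it suffices to prove the strict inequality $J^* > 1$, after which any $\beta \in (0, J^* - 1)$ works. Since $\JJ_{\MM_-}$ is finite only on $\LL_-(\CH)$ and coincides there with Schilder's rate function, one has $J^* = \inf\{\tfrac{1}{2}\|k\|_\CH^2 : k \in \CH, \ Q_\sh(\LL(k)) \leq -2\}$. The bridge to non-degeneracy is the identity
\[D^2\FF|_\sh[k,k] = Q_\sh(\LL(k)) + \|k\|_\CH^2, \qquad k \in \CH,\]
which follows from $\FF = F \circ \Phi \circ \LL + \II$ together with the observation that substituting the smooth driver $k \in \CH$ in place of $\xi$ in $\hat{u}_\sh^\eps$ yields $w_{\sh+\eps k}$, so that $Q_\sh(\LL(k)) = \partial_\eps^2|_{\eps=0} F(w_{\sh+\eps k}) = D^2(F \circ \Phi \circ \LL)|_\sh[k,k]$, while $D^2\II|_\sh[k,k] = \|k\|_\CH^2$.

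Assuming $J^* \leq 1$ for contradiction, compactness of sublevel sets of the good rate function $\JJ_{\MM_-}$ combined with lower semi-continuity produces a minimiser $\LL_-(k^*)$ with $\tfrac{1}{2}\|k^*\|_\CH^2 = J^* \leq 1$ and $Q_\sh(\LL(k^*)) \leq -2$; note that $k^* \neq 0$ because $Q_\sh(\LL(0)) = 0 > -2$. Setting $v := k^*/\|k^*\|_\CH \in \CH$ and using the $2$-homogeneity of $k \mapsto Q_\sh(\LL(k))$ gives
\[D^2\FF|_\sh[v,v] = \frac{Q_\sh(\LL(k^*)) + \|k^*\|_\CH^2}{\|k^*\|_\CH^2} \leq \frac{-2 + 2 J^*}{2 J^*} \leq 0,\]
contradicting the non-degeneracy hypothesis~\ref{ass:h4}. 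Hence $J^* > 1$ and the proof is complete. The main obstacle is precisely this last reduction; the preceding LDP-based argument is a direct application of Varadhan-type reasoning once continuity of $Q_\sh$ in the model has been invoked, avoiding any appeal to the second Wiener chaos / Carleman--Fredholm machinery.
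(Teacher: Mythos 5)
Your proof is correct and follows essentially the same route as the paper's: you use the $2$-homogeneity of $Q_\sh$ together with the LDP for dilated models to reduce the tail bound to showing that the rate-function infimum over $\{Q_\sh(\LL(\cdot)) \leq -2\}$ strictly exceeds $1$, you close this via the identity $D^2\FF|_\sh[k,k] = Q_\sh(\LL(k)) + \|k\|_\CH^2$ and hypothesis~\ref{ass:h4}, and you handle existence of a minimiser by compactness and lower semi-continuity of the good rate function. The only cosmetic differences are that the paper first invokes the contraction principle to obtain a scalar LDP for $-\tfrac12 Q_\sh(\d_\eps\hbz)$ and then proves $C_* > 1$ directly, whereas you apply the LDP upper bound directly on $\MM_-$ to the closed set $\bar{A}_-$ and argue by contradiction.
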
	
The proof of the preceding proposition crucially relies on the non-degeneracy assumption~\ref{ass:h4} on the minimiser~$\sh$. It will be combined with the following elementary lemma.

\begin{lemma}\label{lem:exp_tails}
	Let $X: \Omega \to \R$ be a random variable and $\lambda \in \R$. Suppose there exists some $C > \lambda$ and $k, 
	K > 0$ such that for all $r \in (k,\infty)$ we have the estimate $\P(X \geq r) \leq K\exp(-C r)$.
	Then, there exists some $\beta > 0$ such that $\exp(\lambda X) \in L^{1 + \beta}(\P)$.
\end{lemma}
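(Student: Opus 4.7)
The claim is essentially a tail-integration exercise, so the plan is to reduce it to the standard layer-cake identity and then exploit the exponential tail directly. First, I would choose the parameter $\beta>0$ small enough that $\mu:=(1+\beta)\lambda < C$. Since $\lambda<C$, this is possible: in the range relevant to the sequel one has $\lambda>0$, so any $\beta\in(0,C/\lambda-1)$ does the job (the case $\lambda\le 0$ is either trivial or outside the scope of the hypothesis, which only controls the right tail of $X$). The goal is then to show that $\E[e^{\mu X}]<\infty$, which is exactly $e^{\lambda X}\in L^{1+\beta}(\P)$.

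For the estimate itself, the plan is to write
\begin{equation*}
\E\sbr[1]{e^{\mu X}}
=\E\sbr[1]{e^{\mu X}\mathbf{1}_{X<0}}+\E\sbr[1]{e^{\mu X}\mathbf{1}_{X\geq 0}}
\leq 1 \,+\, \int_0^\infty \mu e^{\mu r}\,\P(X\geq r)\,\dif r,
\end{equation*}
obtained from the identity $e^{\mu X}\mathbf{1}_{X\geq 0}=\mathbf{1}_{X\geq 0}+\int_0^\infty \mu e^{\mu r}\mathbf{1}_{X\geq r}\,\dif r$ together with Fubini. Splitting the remaining integral at the threshold $r=k$ of the tail bound yields two contributions: on $[0,k]$ one estimates $\P(X\geq r)\leq 1$ and bounds the integral by $e^{\mu k}-1$; on $[k,\infty)$ the hypothesis gives
\begin{equation*}
\int_k^\infty \mu e^{\mu r}\,\P(X\geq r)\,\dif r
\leq K\mu \int_k^\infty e^{(\mu-C)r}\,\dif r
=\frac{K\mu\, e^{(\mu-C)k}}{C-\mu}<\infty,
\end{equation*}
which is finite precisely because of the choice $\mu<C$ in the first step.

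There is no real obstacle here: the only place some care is needed is the bookkeeping to ensure $\mu$ stays strictly below $C$, as this is what turns the otherwise divergent integral $\int e^{\mu r}\dif r$ into a convergent one. Combining the two estimates gives $\E[e^{\mu X}]\leq e^{\mu k}+\frac{K\mu\, e^{(\mu-C)k}}{C-\mu}<\infty$, which finishes the proof.
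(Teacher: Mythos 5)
Your proof is correct and is the natural layer-cake argument one would expect for such a tail-integration statement; the paper itself omits the proof (calling the lemma ``elementary''), so there is no reference proof to compare against. Your observation that the hypothesis only constrains the right tail of $X$ is well-taken: the lemma as written with ``$\lambda \in \R$'' is slightly overbroad, since for $\lambda < 0$ one would need control of the left tail as well, but this is immaterial because the paper only invokes the lemma with $\lambda = 1$ (in the proof of the exponential integrability of $\hat{Q}_{\sh}$), where your argument applies without issue.
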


Recall that $\hat{Q}_\sh = Q_\sh(\hbz)$.
In \thref{coro:stoch_taylor_gpam_functional:properties}\ref{coro:stoch_taylor_gpam_functional:ii}, we have established that~$\eps^2 Q_\sh(\hbz) = Q_\sh(\d_\eps \hbz)$. Since $Q_\sh$ is continuous in the model (by~\thref{coro:stoch_taylor_gpam_functional:properties}\ref{coro:stoch_taylor_gpam_functional:i}) and the family $\del[1]{\d_\eps\hbz: \eps > 0}$ satisfies a LDP on model space $\MM$ with good rate function (RF)
	\begin{equation}
		\JJ_{\MM}(\bz)
		=
		\inf\{\II(h): \ h \in \CH \ \ \text{with} \ \ \LL(h) = \bz\}
		,
	\end{equation}
cf. \thref{app:thm_ldp_models}, we can apply the \emph{contraction principle} to infer that the family $\del[1]{-\frac{1}{2} Q_\sh(\d_\eps \hbz): \eps > 0}$ satisfies a LDP on $\R$ with good RF
	\begin{equs}
		\Lambda(y)
		:=
		& \thinspace
		\inf\{\JJ_{\MM}(\bz): \ \bz \in \MM \ \ \text{with} \  -\nicefrac{1}{2} \thinspace Q_\sh(\bz) = y\} \\
		=
		& 
		\thinspace 
		\inf\{\II(h): h \in \CH \ \ \text{with} \  -\nicefrac{1}{2} \thinspace Q_\sh(\LL(h)) = y\}.
	\end{equs}
Recall that $\II$ denotes Schilder's RF, that is: $\II(h) = \frac{1}{2}\norm{h}^2_\CH$ for $h \in \CH$.

\begin{proof}[of \thref{prop:exp_integr_Q}]
We want to apply \thref{lem:exp_tails} with $X := -\frac{1}{2} Q_\sh(\hbz)$ and $\lambda = 1$. 
We set
	\begin{equation}
		C_*
		:=
		\inf\{\JJ_{\MM}(\bz): \ \bz \in \MM, \ -\nicefrac{1}{2} \thinspace Q_\sh(\bz) \geq 1\} 
		\label{eq:c_star}
	\end{equation}
and immediately find\footnote{We remind the reader that~assumption~\ref{ass:h4} reads~$D^2\FF\sVert_\sh > 0$ for $\FF = F \circ \Phi \circ \LL + \II$. As we will see shortly, it implies~$Q_\sh \circ \LL > -\operatorname{Id}$ (and \emph{not} $Q_\sh \circ \LL > 0$), so the condition~$-\nicefrac{1}{2} \thinspace Q_\sh(\LL(h)) \geq 1$ for~$h \in \CH$ in~\eqref{eq:HH_star} is indeed meaningful. In particular, it is consistent with earlier work by Ben Arous~\cite{ben_arous_laplace}.}
\begin{equation}
	C_* 
	= 
	\inf\{\nicefrac{1}{2} \thinspace \norm{h}_\CH^2: \ h \in \CH_* \}, \quad
	\CH_* := \{h \in \CH: \ -\nicefrac{1}{2} \thinspace Q_\sh(\LL(h)) \geq 1\}.
	\label{eq:HH_star}
\end{equation}	
The LDP upper bound for $\del[1]{-\frac{1}{2}Q_\sh(\d_\eps \hbz): \eps > 0}$ now implies that for each~$C \in (0,C_*)$ there exists an~$\eps_0 = \eps_0(C) > 0$ such that 
	\begin{equs}
		\thinspace
		&
		\P\del[2]{-\frac{1}{2}Q_\sh(\hbz) \geq \eps^{-2}}
		=
		\P\del[2]{-\frac{1}{2} Q_\sh(\d_\eps\hbz)  \geq 1}
		\leq
		\exp\del[1]{-\eps^{-2} C},
	\end{equs}	
holds for all $\eps \leq \eps_0$. 
Hence, all we need to show is $C_* > \lambda = 1$, or equivalently
	\begin{equation}
		C_* - 1 > 0.
		\label{pf:prop:exp_integr_Q:aux1}
	\end{equation}
If $\CH_* = \emptyset$, then $C_* = +\infty$ and there is nothing to prove, so we assume~$\CH_* \neq \emptyset$. By a compactness argument, oursourced to~\thref{lem:pf_exp_int:compactness_argument} below, we find $h_* \in \CH_*$ with $h_* \neq 0$ and $C_* = \frac{1}{2}\norm{h_*}^2_{\CH}$.
Now take $v,w \in \CH$ and observe the following:
	\begin{enumerate}[label=(\arabic*)]
		\item From \eqref{eq:der_schilder}, we know that $D\II\sVert_{\sh}(v) = \scal{\sh,v}_\CH$. For the second derivative, we find
			\begin{equation*}
				D^2 \II\sVert_{\sh} [v,w] 
				= 
				\frac{\dif}{\dif t}\sVert[2]_{t = 0} D\II\sVert_{\sh + t v}(w)
				= 
				\frac{\dif}{\dif t}\sVert[2]_{t = 0} \scal{\sh + t v,w}_\CH
				=
				\scal{v,w}_\CH.
			\end{equation*}
		In particular, $C_* = \frac{1}{2}\norm{h_*}_\CH^2 = \frac{1}{2}D^2\II\sVert_\sh[h_*,h_*]$.
		\item 
		The definitions in~\thref{coro:stoch_taylor_gpam_functional} immediately imply
			\begin{equs}[][eq:quad_non_degen]
				\frac{1}{2} Q_\sh(\LL(h_*))
				&
				=
				\frac{1}{2} \partial^2_\eps\sVert[1]_{\eps = 0} F \del[1]{u^{(\eps)}_\sh(\LL(h_*))}
				=
				\frac{1}{2} \partial^2_\eps\sVert[1]_{\eps = 0} F \circ \Phi\del[1]{T_\sh \d_\eps \LL(h_*)}\\
				&
				=
				\frac{1}{2} \partial^2_\eps\sVert[1]_{\eps = 0} F \circ \Phi \circ \LL \del[0]{\sh + \eps h_*}
				=
				\frac{1}{2} D^2 (F \circ \Phi \circ \LL)\sVert_\sh[h_*,h_*].
			\end{equs}
		where we additionally use~\thref{rmk:lift_h} and~\thref{app:def:dilation} in the third equality.
	\end{enumerate}
Altogether, we obtain \eqref{pf:prop:exp_integr_Q:aux1} from
	\begin{equation*}
		-1 + C_*
		\geq
		\frac{1}{2} D^2 \del[1]{(F \circ \Phi \circ \LL) + \II}\sVert[1]_\sh [h_*,h_*]
		=
		\frac{1}{2} D^2 \FF\sVert_\sh [h_*,h_*]
		> 0,
	\end{equation*}
where we have used that $h_* \in \CH_*$ in the first estimate and assumption~\ref{ass:h4} in the last.
\end{proof}

\begin{lemma}\label{lem:pf_exp_int:compactness_argument}
	In the setting of the previous proof, assume~$\CH_* \neq \emptyset$. Then, $C_* < \infty$ and there exists $h_* \in \CH_*$ with $h_* \neq 0$ and~$C_* = \frac{1}{2}\norm{h_*}^2_{\CH}$.
\end{lemma}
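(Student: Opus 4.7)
The plan is to apply the direct method of the calculus of variations. Since $\CH_* \neq \emptyset$, picking any $h_0 \in \CH_*$ gives $C_* \le \tfrac{1}{2}\norm{h_0}_{\CH}^2 < \infty$. Choose a minimising sequence $(h_n)_n \subset \CH_*$ with $\tfrac{1}{2}\norm{h_n}_{\CH}^2 \to C_*$; it is bounded in the Hilbert space $\CH$, hence, along a subsequence (not relabelled), $h_n \rightharpoonup h_\star$ weakly in $\CH$. Weak lower semicontinuity of the norm yields $\tfrac{1}{2}\norm{h_\star}_{\CH}^2 \le \liminf_n \tfrac{1}{2}\norm{h_n}_{\CH}^2 = C_*$, so once we verify $h_\star \in \CH_*$, we conclude $\tfrac{1}{2}\norm{h_\star}_{\CH}^2 = C_*$. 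Finally, $h_\star \neq 0$ is automatic: if $h_\star = 0$, then by the identity~\eqref{eq:quad_non_degen} derived in the main proof, $Q_\sh(\LL(0)) = D^2 (F \circ \Phi \circ \LL)\sVert_\sh[0,0] = 0$, so $-\tfrac{1}{2}Q_\sh(\LL(0)) = 0 < 1$, contradicting $0 \in \CH_*$.

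The heart of the argument is therefore to show that $h \mapsto Q_\sh(\LL(h))$ is weakly sequentially continuous on $\CH$. By~\eqref{eq:quad_non_degen}, this map equals the quadratic form $h \mapsto D^2(F \circ \Phi \circ \LL)\sVert_\sh[h,h]$, so it suffices to prove that this symmetric continuous bilinear form is weakly continuous. I would do this by chain-ruling through the deterministic solution map $h \mapsto w_h$ (defined in~\eqref{eq:wh}): a direct expansion gives
\begin{equation*}
	D^2(F \circ \Phi \circ \LL)\sVert_\sh[h,h]
	= D^2F\sVert_{w_\sh}[v_{\sh,h},v_{\sh,h}] + DF\sVert_{w_\sh}(\partial_\eps^2\sVert_{\eps=0} w_{\sh+\eps h}),
\end{equation*}
where $v_{\sh,h} := \partial_\eps\sVert_{\eps=0} w_{\sh+\eps h}$ solves the linear PDE~\eqref{eq:dir_der_h_k} already appearing in the bootstrap sketch of subsection~\ref{sec:intro_generalisation}.

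The main obstacle is thus the compactness of the solution map $L: h \mapsto v_{\sh,h}$ from $\CH = L^2(\T^2)$ to $\CX_T$. I would write $L$ via Duhamel's formula and observe that it factors through the heat semigroup $(t,s) \mapsto e^{(t-s)\Delta}$, which maps $L^2(\T^2)$ into $\CC^{\eta}(\T^2)$ for $t>s$ with a gain of regularity controlled by the estimate~\eqref{eq:dir_der_h_k_est}. Combined with Arzelà-Ascoli and the compactness of the embedding $\CC^{\eta'}(\T^2) \embed \CC^{\eta}(\T^2)$ for $\eta'>\eta$, this yields compactness of $L$, so that $h_n \rightharpoonup h_\star$ implies $v_{\sh,h_n} \to v_{\sh,h_\star}$ strongly in $\CX_T$. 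Similarly, the second derivative $\partial_\eps^2\sVert_{\eps=0} w_{\sh+\eps h}$ depends linearly on the bilinear combinations of the $v_{\sh,h}$'s via another inhomogeneous linear PDE whose source is a continuous quadratic form of $v_{\sh,h}$; hence strong convergence propagates. By continuity of $DF\sVert_{w_\sh}$ and $D^2F\sVert_{w_\sh}$ (assumption~\ref{ass:h3}), we obtain $Q_\sh(\LL(h_n)) \to Q_\sh(\LL(h_\star))$, and passing to the limit in $-\tfrac{1}{2}Q_\sh(\LL(h_n)) \ge 1$ gives $h_\star \in \CH_*$, completing the proof.
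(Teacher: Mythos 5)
Your proposal takes a genuinely different route from the paper's proof and deserves a careful comparison. The paper avoids the weak topology on $\CH$ entirely: it exploits that $\JJ_{\MM}$ is a \emph{good} rate function, so the sublevel sets $L_k = \{\JJ_{\MM} \leq k\}$ are compact in $\MM$; it then intersects with the superlevel set $\{-\nicefrac{1}{2}Q_\sh \geq 1\}$, which is closed by continuity of $Q_\sh$ in the model, and invokes lower semicontinuity of $\JJ_{\MM}$ to attain the minimum on the resulting compact set. This reuses exactly the two facts that the authors have already paid for: goodness of the rate function on model space (from the LDP of Hairer--Weber) and continuity of $Q_\sh$ in the model (from \thref{coro:stoch_taylor_gpam_functional:properties}). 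Your argument, by contrast, runs the direct method of the calculus of variations in $\CH$ itself. That approach is sound in principle, but it forces you to prove weak sequential continuity of $h \mapsto Q_\sh(\LL(h))$, which you reduce to compactness of the linearised solution map $h \mapsto v_{\sh,h}$ from $L^2(\T^2)$ to $\CX_T$. This is a nontrivial parabolic compactness statement that the paper never needs: you must show a regularity gain $v_{\sh,h}(t) \in \CC^{\eta'}$ for some $\eta' > \eta$ \emph{uniformly} in $t$, together with a H\"older-in-time modulus to run Arzel\`a--Ascoli, and the paper's estimate \eqref{eq:dir_der_h_k_est} is stated for a different purpose (bootstrap regularity of the minimiser, with $k \in H^{-\theta}$) and does not directly supply this. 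So your plan is correct but entails a genuine PDE-compactness argument that needs to be worked out in full.

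There is also one imprecision worth flagging. You describe the source of the PDE for $\partial_\eps^2\sVert_{\eps=0}w_{\sh + \eps h}$ as ``a continuous quadratic form of $v_{\sh,h}$,'' from which you conclude that ``strong convergence propagates.'' Differentiating \eqref{eq:dir_der_h_k} once more in $\eps$ gives the source
\begin{equation*}
	g''(w_\sh)\thinspace v_{\sh,h}^2\thinspace \sh \;+\; g'(w_\sh)\thinspace\sbr[1]{\partial_\eps^2\sVert_{\eps=0}w_{\sh+\eps h}}\thinspace\sh \;+\; 2 g'(w_\sh)\thinspace v_{\sh,h}\thinspace h,
\end{equation*}
whose last term is a \emph{cross} term in $(v_{\sh,h}, h)$, not a quadratic form in $v_{\sh,h}$ alone. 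Along your minimising sequence, $v_{\sh,h_n} \to v_{\sh,h_\star}$ strongly while $h_n \rightharpoonup h_\star$ only weakly, so $v_{\sh,h_n}h_n$ converges only weakly; strong convergence of the second derivative then requires another appeal to compactness of the affine-linear Duhamel solve, not just continuity. None of this is fatal, but it must be spelled out. The rest of your proposal---weak lower semicontinuity of $\II$ for the matching upper bound, and the $h_\star \neq 0$ argument via \eqref{eq:quad_non_degen}---matches the paper exactly and is correct.
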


\begin{proof}
	Notice that $C_* \leq \JJ_{\MM}(\LL(h)) < \infty$ for any~$h \in \CH_* \neq \emptyset$ and recall that $\JJ_{\MM}$ is a good RF, so that the set $L_k := \{\bz \in \MM: \thinspace \JJ_{\MM}(\bz) \leq k\}$ is compact in~$\MM$ for each~$k \in [0,\infty)$. Also, for any such $k$ the set
		\begin{equation*}
			A_k := L_k \cap \{\bz \in \MM: \thinspace -\nicefrac{1}{2} \thinspace Q_\sh(\bz) \geq 1\} \subseteq \MM
		\end{equation*}
	is compact again because~$(-\nicefrac{1}{2} \thinspace Q_\sh)^{-1}([1,\infty])$ is closed in~$\MM$ by continuity of~$Q_\sh$. Now choose $k_*$ sufficiently large such that $A_{k_*} \neq \emptyset$. Then,   
		\begin{equation*}
			C_*
			\equiv 
			\inf\{\JJ_{\MM}(\bz): \ \bz \in \MM, \ -\nicefrac{1}{2} \thinspace Q_\sh(\bz) \geq 1\} 
			=
			\inf\{\JJ_{\MM}(\bz): \thinspace \bz \in A_{k_*} \}
		\end{equation*}
	and since $\JJ_{\MM}$ is l.s.c. as a RF, it attains its minimum on compact sets, so there exists a $\bz_* \in A_{k_*}$ with $C_* = \JJ_{\MM}(\bz_*)$. By definition of~$\JJ_{\MM}$, we may thus find $h_* \in \CH_*$ with $C_* = \frac{1}{2}\norm{h_*}^2_{\CH}$.
	
	Suppose $h_* = 0$. By~\eqref{eq:quad_non_degen}  and the fact that~$h_* \in \CH_*$, the latter defined in~\eqref{eq:HH_star}, we find that
		\begin{equation*}
			-1 \geq \frac{1}{2} Q_\sh(\LL(0)) = \frac{1}{2} D^2 (F \circ \Phi \circ \LL)\sVert_\sh[0,0] = 0,
		\end{equation*}
	because~$D^2 (F \circ \Phi \circ \LL)\sVert_\sh$ is a bilinear form on~$\CH$. Therefore, we clearly have~$h_* \neq 0$.
\end{proof}

\begin{remark}\label{rmk:ben_arous_ldp}
		Ben Arous has also employed a large deviations argument to prove~\cite[Lem.~1.51]{ben_arous_laplace}, the equivalent of~\thref{prop:exp_integr_Q} in his setting. He uses~\cite[Lem.~1.11]{ben_arous_laplace} to close that argument and the proof of the latter, in turn, is based on~\cite[Lem. 1.9,~1.23, and~1.26]{ben_arous_laplace}. These lemmas crucially rely on the correspondence between trace-class operators, Carleman-Fredholm determinants, and exponential integrability for elements in the second Wiener chaos that we alluded to in the introduction.
		
		This is the point where our line of reasoning diverts: We use the continuity of the It\^{o}-Lyons solution map (resp. its counterpart in regularity structures), one of the main results of rough paths theory only developed ten years after Ben Arous's work. More precisely, this idea entered our argument via continuity of $Q_\sh$ that we used to identify the set $\del[1]{-\nicefrac{1}{2}Q_\sh}^{-1}([1,\infty]) \subseteq \MM$ as closed.
		
		The connection to Ben Arous's line of reasoning is elaborated upon in the recent follow-up article~\cite{klose_arxiv} by the second author, see also~\thref{rmk:constant_coeff}.
\end{remark}

\subsection{The coefficients in the asymptotic expansion} \label{sec:asymp_coeff}

We will now systematically decompose the quantity~$\CE_\rho(\eps)$ as introduced in~\eqref{eq:cal_E} to calculate the coefficients~$(a_k)_{k=0}^N \subseteq [0,\infty)$ in the expansion~\eqref{thm:laplace_asymp:expansion}. The terms $I_\rho^{(j)}(\eps)$, $j=1,\ldots,4$, that pop up in this procedure will be estimated later.

We stress that the arguments in this section are completely analogous to those of Inahama and Kawabi~\cite[sec.\thinspace6]{inahama_kawabi}. Since the formal structure of our problem resembles theirs, this is not surprising: once the expansion in~\thref{coro:stoch_taylor_gpam_functional} and the corresponding estimates in~\thref{coro:stoch_taylor_gpam_functional:properties} are obtained, we can recycle their arguments to obtain the afore-mentioned coefficients. 
Since the notational conventions of Inahama and Kawabi are quite different from ours, we still provide the full argument for the sake of completeness. 

\paragraph*{Step 1.} We start by the trivial equality
	\begin{equs}[][coeff_err_1]
	\CE_\rho(\eps)
	& 
	=
	\E\sbr[2]{\exp\del[2]{
			- \frac{1}{2}\hat{Q}_\sh 
			- \hat{S}_\sh^{F;\eps;N}}
			\left\{\exp\del[2]{ 
			- \frac{1}{\eps^2} \hat{R}^{F;\eps;N + 3}_{\sh}} - 1 
		\right\}; \thinspace \eps \barnorm{\hbz^{\minus}} < \rho} \\
	&
	+
	\E\sbr[2]{\exp\del[2]{
			- \frac{1}{2}\hat{Q}_\sh 
			- \hat{S}_\sh^{F;\eps;N}
		}; \eps \barnorm{\hbz^{\minus}}  < \rho}
	=:
	I_\rho^{(1)}(\eps) + \CE_\rho^{(1)}(\eps).
	\end{equs}

\paragraph*{Step 2.} We expand the exponential of~$- \hat{S}_\sh^{F;\eps;N}$ to decompose~$\CE_\rho^{(1)}(\eps) = \CE_\rho^{(2)}(\eps) + I_\rho^{(2)}(\eps)$ with
	\begin{equs}[][coeff_err_2]
		\CE_\rho^{(2)}(\eps)
		& 
		:=
		\E\sbr[4]{\exp\del[3]{
				- \frac{1}{2}\hat{Q}_\sh} 
				\sbr[4]{1 + \sum_{k=1}^N \frac{(-1)^k}{k!} \del[4]{\sum_{m=3}^{N + 2} \frac{\eps^{m-2}}{m!}\hat{F}^{(m)}_{\sh}}^k\thinspace}
			; \thinspace \eps \barnorm{\hbz^{\minus}} < \rho}, \\
		I_\rho^{(2)}(\eps) 
		&
		:=
		\E\sbr[4]{\exp\del[3]{
				- \frac{1}{2}\hat{Q}_\sh}
				\sum_{k=N+1}^\infty \frac{(-1)^k}{k!} \del[4]{\sum_{m=3}^{N + 2} \frac{\eps^{m-2}}{m!}\hat{F}^{(m)}_{\sh}
			 	}^k
			; \thinspace \eps \barnorm{\hbz^{\minus}} < \rho}. 
	\end{equs}
For $k \in \{1,\ldots,N\}$, we now introduce the set $\CG(k,N)$ of all maps~$\pi: \{1,\ldots,k\} \to \{3,\ldots,N+2\}$. For $\pi \in \CG(k,N)$ and~$i \in \{1,\ldots,k\}$, we also set 
	\begin{equation*}
		\pi_i := \pi(i), \quad
		\ell(\pi) := \sum_{i=1}^k (\pi_i-2), \quad
		\abs{\pi} := \sum_{i=1}^k \pi_i,
	\end{equation*} 
to express~$\CE_\rho^{(2)}(\eps)$ as
	\begin{equation*}
	\CE_\rho^{(2)}(\eps)
	=
	\E\sbr[4]{\exp\del[3]{
			- \frac{1}{2}\hat{Q}_\sh}
		\sbr[4]{1 + \sum_{k=1}^N \frac{(-1)^k}{k!} \sum_{\pi \in \CG(k,N)} \eps^{\ell(\pi)} \prod_{i=1}^k \frac{\hat{F}^{(\pi_i)}_{\sh}}{\pi_i!} \thinspace}
		; \thinspace \eps \barnorm{\hbz^{\minus}} < \rho}.
	\end{equation*} 

\paragraph*{Step 3.} The quantity~$\CE_\rho^{(2)}(\eps)$ still contains powers of $\eps$ larger than~$N$, so we observe that we can write~$\CG(k,N) = \CG_{\leq}(k,N) \sqcup \CG_{>}(k,N)$ with
	\begin{equation}
	\CG_{\leq}(k,N) := \{\pi \in \CG(k,N): \ \ell(\pi) \leq N\}
	\label{eq:maps_k_N}
	\end{equation}
and $\CG_{>}(k,N)$ defined analogously, with \enquote{$\leq$} replaced by \enquote{$>$} in the previous definition. Thus, we can isolate all the powers of $\eps$ up to order $N$ in~$\CE_\rho^{(3)}(\eps)$, writing~$\CE_\rho^{(2)}(\eps) = \CE_\rho^{(3)}(\eps) + I_\rho^{(3)}(\eps)$ with 
	\begin{equs}[][coeff_err_3]
	\CE_\rho^{(3)}(\eps)
	&
	:=
	\E\sbr[4]{\exp\del[3]{
			- \frac{1}{2}\hat{Q}_\sh}
		\sbr[4]{1 + \sum_{k=1}^N \frac{(-1)^k}{k!} \sum_{\pi \in \CG_{\leq}(k,N)} \eps^{\ell(\pi)} \prod_{i=1}^k \frac{\hat{F}^{(\pi_i)}_{\sh}}{\pi_i!} \thinspace}
		; \thinspace \eps \barnorm{\hbz^{\minus}} < \rho}, \\
	I_\rho^{(3)}(\eps)  
	&
	:=
	\E\sbr[4]{\exp\del[3]{
			- \frac{1}{2}\hat{Q}_\sh}
		\sbr[4]{\sum_{k=1}^N \frac{(-1)^k}{k!} \sum_{\pi \in \CG_{>}(k,N)} \eps^{\ell(\pi)} \prod_{i=1}^k \frac{\hat{F}^{(\pi_i)}_{\sh}}{\pi_i!} \thinspace}
		; \thinspace \eps \barnorm{\hbz^{\minus}} < \rho}.
	\end{equs}

\paragraph*{Step 4.} In a last step, we get rid of the indicator function of the set $\{\eps \barnorm{\hbz^{\minus}} < \rho\}$ in~$\CE_\rho^{(3)}(\eps)$: We write~$\CE_\rho^{(3)}(\eps) = \CE_\rho^{(4)}(\eps) + I_\rho^{(4)}(\eps)$ with

\begin{equs}[][coeff_err_4]
	\CE_\rho^{(4)}(\eps)
	&
	:=
	\E\sbr[4]{\exp\del[3]{
			- \frac{1}{2}\hat{Q}_\sh}
		\sbr[4]{1 + \sum_{k=1}^N \frac{(-1)^k}{k!} \sum_{\pi \in \CG_{\leq}(k,N)} \eps^{\ell(\pi)} \prod_{i=1}^k \frac{\hat{F}^{(\pi_i)}_{\sh}}{\pi_i!} \thinspace}
		}, \\
	I_\rho^{(4)}(\eps)  
	&
	:=
	- \E\sbr[4]{\exp\del[3]{
			- \frac{1}{2}\hat{Q}_\sh}
		\sbr[4]{1 + \sum_{k=1}^N \frac{(-1)^k}{k!} \sum_{\pi \in \CG_{\leq}(k,N)} \eps^{\ell(\pi)} \prod_{i=1}^k \frac{\hat{F}^{(\pi_i)}_{\sh}}{\pi_i!} \thinspace}
		; \thinspace \eps \barnorm{\hbz^{\minus}} \geq \rho}.
\end{equs}

Finally, it is clear what the coefficients~$a_m$, $m \in \{0,\ldots,N\}$, should be.
Recall that the terms~$\hat{F}_\sh^{(i)}$ have been introduced in~\thref{coro:stoch_taylor_gpam_functional} above.

\begin{proposition}[The coefficients~$a_m$]\label{prop:coeff_asymp_exp}
	The equality $\CE_\rho^{(4)}(\eps) = \sum_{m=0}^N a_m \eps^m$ holds with
		\begin{equation*}
			a_m
			\equiv
			a_m(\sh,\hbz,F)
			:= \E\sbr[3]{\exp\del[3]{- \frac{1}{2}\hat{Q}_\sh}W_m} < \infty
		\end{equation*}
	for~$W_0 := 1$ and 
		\begin{equation*}
			W_m
			:=
			\sum_{k=1}^N \frac{(-1)^k}{k!} \sum_{\pi \in \CG_{=}(k,m)} \prod_{i=1}^k \frac{\hat{F}^{(\pi_i)}_{\sh}}{\pi_i!}, \qquad
			\CG_{=}(k,m) := \{\pi \in \CG(k,m): \ \ell(\pi) = m\}
		\end{equation*}	
	with~$m \in [N] := \{1,\ldots,N\}$.
\end{proposition}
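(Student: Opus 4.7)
The plan is to take the explicit formula for $\CE_\rho^{(4)}(\eps)$ in~\eqref{coeff_err_4} at face value and reorganise the (finite) double sum by powers of $\eps$. Every $\pi \in \CG_{\leq}(k,N)$ satisfies $\pi_i \geq 3$, so
\begin{equation*}
	\ell(\pi) \;=\; \sum_{i=1}^k (\pi_i - 2) \;\geq\; k \;\geq\; 1,
\end{equation*}
which means the constant term in $\eps$ comes exclusively from the \enquote{$1$} inside the bracket. Partitioning $\CG_{\leq}(k,N) = \bigsqcup_{m=k}^{N} \CG_{=}(k,m)$ and swapping the (finite) summations yields
\begin{equation*}
	\CE_\rho^{(4)}(\eps)
	\;=\; \E\sbr[2]{e^{-\frac{1}{2}\hat{Q}_\sh}}
	+ \sum_{m=1}^{N} \eps^m \; \E\sbr[4]{e^{-\frac{1}{2}\hat{Q}_\sh} \sum_{k=1}^{m} \frac{(-1)^k}{k!} \sum_{\pi \in \CG_{=}(k,m)} \prod_{i=1}^{k} \frac{\hat{F}^{(\pi_i)}_{\sh}}{\pi_i!}},
\end{equation*}
which matches the claimed identity $\CE_\rho^{(4)}(\eps) = \sum_{m=0}^{N} a_m \eps^m$ as soon as each $a_m$ is shown to be finite.

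For the finiteness, I would apply Hölder's inequality: with $p := 1 + \beta$ and $\beta > 0$ supplied by~\thref{prop:exp_integr_Q}, and conjugate exponent $q := p/(p-1)$,
\begin{equation*}
	|a_m| \;\leq\; \norm[0]{e^{-\frac{1}{2}\hat{Q}_\sh}}_{L^p(\P)} \cdot \norm[0]{W_m}_{L^q(\P)}.
\end{equation*}
The first factor is finite by~\thref{prop:exp_integr_Q}. For the second, the bound $\abs[0]{\hat{F}^{(\pi_i)}_\sh} \aac (1 + \barnorm{\hbz^{\minus}})^{\pi_i}$ from~\thref{coro:stoch_taylor_gpam_functional:properties}\ref{coro:stoch_taylor_gpam_functional:i} shows that each summand appearing in $W_m$ is controlled by a constant multiple of $(1 + \barnorm{\hbz^{\minus}})^{\abs{\pi}}$ with $\abs{\pi} = \ell(\pi) + 2k \leq 3N$. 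The Fernique-type theorem~\thref{thm:fernique} in appendix~\ref{app:fernique} asserts that $\barnorm{\hbz^{\minus}}$ has Gaussian tails, so $(1 + \barnorm{\hbz^{\minus}})^r \in L^q(\P)$ for every $r \geq 0$ and every $q \in [1,\infty)$; in particular $W_m \in L^q(\P)$, which closes the estimate.

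The argument is essentially a bookkeeping exercise once the two non-trivial inputs — exponential integrability of $-\tfrac{1}{2}\hat{Q}_\sh$ and the Gaussian tails of $\barnorm{\hbz^{\minus}}$ — are in place. The mildest obstacle is recognising the correct decomposition $\CG_{\leq}(k,N) = \bigsqcup_{m=k}^{N}\CG_{=}(k,m)$ and using $\ell(\pi) \geq k$ to rule out any spurious contribution to the $m=0$ coefficient, so that $a_0 = \E\sbr[0]{\exp(-\tfrac{1}{2}\hat{Q}_\sh)}$ emerges cleanly in agreement with the formula announced in~\thref{thm:laplace_asymp}.
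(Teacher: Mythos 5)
Your reorganisation of the double sum is exactly the paper's: you identify (via $\ell(\pi) \geq k \geq 1$) that the constant term comes only from the $1$, and you partition $\CG_{\leq}(k,N)$ by the value of $\ell(\pi)$, which is equivalent to the paper's observation that \enquote{$\pi \in \CG_{\leq}(k,N)$ with $\ell(\pi) = m$} is the same as \enquote{$\pi \in \CG_{=}(k,m)$}. The two formulations of $W_m$ -- with $\sum_{k=1}^{N}$ versus your $\sum_{k=1}^{m}$ -- coincide, since $\CG_{=}(k,m) = \emptyset$ for $k > m$.

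Where you diverge from the paper is the finiteness of $a_m$ after the Hölder step. The paper closes the argument by noting that $W_m$ lies in a finite inhomogeneous Wiener chaos and invokes Janson's theorem (moments of all orders for finite chaos elements). You instead use the pathwise bound $\abs[0]{\hat{F}^{(\pi_i)}_\sh} \aac (1 + \barnorm{\hbz^{\minus}})^{\pi_i}$ from \thref{coro:stoch_taylor_gpam_functional:properties} together with the Fernique estimate of \thref{thm:fernique} to conclude that $W_m \in L^q(\P)$. Both are valid. Your route has the merit of being entirely consistent with the rest of subsection~\ref{sec:asymp_coeff}, where the bounds for $I_\rho^{(1)}, \ldots, I_\rho^{(4)}$ are all proved via exactly this combination of Corollary~\ref{coro:stoch_taylor_gpam_functional:properties} and Fernique; it does not require tracking the chaos structure of $W_m$ (which would otherwise need to be argued from the Wiener chaos membership of the $\hat{u}^{(k)}_\sh$ and closure of finite chaoses under products). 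The paper's appeal to the Wiener chaos machinery is a shorter citation once one accepts that $W_m$ is chaotic, but your pointwise-bound-plus-Fernique argument is a clean and self-contained alternative.
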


\begin{remark} \label{rmk:coeff_asymp_exp}
	Note that the terms~$W_m$ can be calculated~\emph{explicitly} by~\thref{lem:coro_stoch_taylor} and~\thref{prop:expl_eq}.
\end{remark}

\begin{proof}
	Observe that for~$m \in [N]$,~\enquote{$\pi \in \CG_{\leq}(k,N)$ and $\ell(\pi) = m$} is equivalent to $\pi \in \CG_=(k,m)$.
	Therefore, we may rewrite~$\CE_\rho^{(4)}(\eps)$ from~\eqref{coeff_err_4} as
		\begin{equation*}
			\CE_\rho^{(4)}(\eps)
			=
			\E\sbr[4]{\exp\del[3]{
					- \frac{1}{2}\hat{Q}_\sh}
				\sum_{m=0}^N \eps^m W_m
			}
			= \sum_{m=0}^N a_m \eps^m
		\end{equation*}
	with~$W_m$ and~$a_m$ as claimed in the assertion.
	It remains to prove the~$a_m$'s are finite. By~\thref{prop:exp_integr_Q}, we know that $\exp\del[1]{-\nicefrac{1}{2}\hat{Q}_\sh} \in L^{p}(\P)$ for some $p > 1$. We denote the conjugate H\"older exponent to $p$ by $q$ to infer that
		\begin{equation*}
			\norm[3]{\exp\del[3]{- \frac{1}{2}\hat{Q}_\sh}W_m}_{L^1(\P)}
			\leq
			\norm[3]{\exp\del[3]{- \frac{1}{2}\hat{Q}_\sh}}_{L^p(\P)} \norm[0]{W_m}_{L^q(\P)}
			< \infty,
		\end{equation*} 
	Here we have used that~$W_m \in L^q(\P)$ because it belongs to a finite inhomogeneous Wiener chaos and thus has moments of all orders~\cite[Thm.~3.50]{janson}.
\end{proof}

We will now prove that $I^{(j)}_\rho(\eps) = o(\eps^N)$ as $\eps \to 0$ for each $j=1,\ldots,4$. For that purpose, recall that $\eps \in I_0$ is such that~$\eps \barnorm{\hbz^{\minus}} < \rho$; in particular, the estimates  
	\begin{equation*}
		\abs[1]{\hat{F}^{(m)}_{\sh}} \aac \del[1]{1 + \barnorm{\hbz^{\minus}}}^m, \quad
		\abs[1]{\hat{R}^{F;\eps;N + 3}_{\sh}} \aac_\rho \eps^{N+3}\del[1]{1 + \barnorm{\hbz^{\minus}}}^{N+3}
	\end{equation*}
from~\thref{coro:stoch_taylor_gpam_functional:properties} hold. In the remainder of this section, we will frequently use them in the following form:
	\begin{equs}[][eq:est_terms_and_remainder]
		\eps^m \abs[1]{\hat{F}^{(m+2)}_{\sh}} \1_{\eps \barnorm{\hbz^{\minus}} < \rho} 
		& \aac \eps^m \del[1]{1 + \barnorm{\hbz^{\minus}}}^{m+2} \1_{\eps \barnorm{\hbz^{\minus}} < \rho} 
		\aac (\eps + \rho)^m \del[1]{1 + \barnorm{\hbz^{\minus}}}^2 \\
		\frac{1}{\eps^2} \abs[1]{\hat{R}^{F;\eps;N + 3}_{\sh}} \1_{\eps \barnorm{\hbz^{\minus}} < \rho} 
		&
		\aac_\rho \eps^{N+1}\del[1]{1 + \barnorm{\hbz^{\minus}}}^{N+3} \1_{\eps \barnorm{\hbz^{\minus}} < \rho}
		\aac (\eps + \rho)^{N+1} \del[1]{1 + \barnorm{\hbz^{\minus}}}^2
	\end{equs}	
\\

\noindent
\textbf{Estimates for $\boldsymbol{I^{(1)}_\rho(\eps)}$ from Step $\boldsymbol{1}$.} 
Recall that
	\begin{equation*}
		I^{(1)}_\rho(\eps)
		=
		\E\sbr[3]{\exp\del[3]{
				- \frac{1}{2}\hat{Q}_\sh
				- \sum_{m=1}^{N} \frac{\eps^m}{(m+2)!}\hat{F}^{(m+2)}_{\sh}}
			\left\{\exp\del[2]{ 
				- \frac{1}{\eps^2} \hat{R}^{F;\eps;N + 3}_{\sh}} - 1 
			\right\}; \thinspace \eps \barnorm{\hbz^{\minus}} < \rho}
	\end{equation*}
and the mean value theorem provides the estimate
	\begin{equation*}
		\abs[2]{\exp\del[2]{- \frac{1}{\eps^2} \hat{R}^{F;\eps;N + 3}_{\sh}} - 1}
		\leq
		\exp\del[2]{\frac{1}{\eps^2} \abs{\hat{R}^{F;\eps;N + 3}_{\sh}}\thinspace} 
		\frac{1}{\eps^2} \abs{\hat{R}^{F;\eps;N + 3}_{\sh}}.
	\end{equation*}
Combined with the estimates in~\eqref{eq:est_terms_and_remainder}, we obtain the inequality
	\begin{equs}
		\abs[1]{I^{(1)}_\rho(\eps)}
		&
		\leq
		C \eps^{N+1}
		\E\sbr[3]{\exp\del[3]{
				- \frac{1}{2}\hat{Q}_\sh 
				+ C_2\del[3]{\thinspace\sum_{m=1}^{N} \frac{(\eps +\rho)^m}{(m+2)!} + (\eps +\rho)^{N+1}}\del[1]{1 + \barnorm{\hbz^{\minus}}}^{2}}
			\del[1]{1 + \barnorm{\hbz^{\minus}}}^{N+3}
			} \\
		&
		\leq
		C \eps^{N+1}
		\E\sbr[3]{\exp\del[3]{
					- \frac{1}{2}\hat{Q}_\sh 
				+ C_3 \del[1]{1 + \barnorm{\hbz^{\minus}}}^{2}}
			\del[1]{1 + \barnorm{\hbz^{\minus}}}^{N+3}
		}
	\end{equs}
where $C_3 = C_3(N,\sh,\eps,\rho) > 0$ and $C_3 \to 0$ as $\eps,\rho \to 0$. 
Finally, we first apply Hölder's and then the Cauchy-Schwarz inequality to obtain
	\begin{equs}
		\thinspace
		&
		\E\sbr[3]{\exp\del[3]{
				- \frac{1}{2}\hat{Q}_\sh
				+ C_3 \del[1]{1 +\barnorm{\hbz^{\minus}}}^{2}}
			\del[1]{1 +\barnorm{\hbz^{\minus}}}^{N+3}} \\
		\leq
		\thinspace
		&
		\norm[3]{\exp\del[3]{-\frac{1}{2}\hat{Q}_\sh}}_{L^p(\P)}
		\norm[1]{\exp\del[1]{C_3 q \del[1]{1 + \barnorm{\hbz^{\minus}}}^{2}}\del[1]{1 +\barnorm{\hbz^{\minus}}}^{q(N+3)}}_{L^1(\P)}^{\frac{1}{q}} \\
		\leq
		\thinspace 
		&
		\norm[3]{\exp\del[3]{-\frac{1}{2}\hat{Q}_\sh}}_{L^p(\P)}
		\norm[2]{\exp\del[2]{2C_3 q \del[1]{1 +\barnorm{\hbz^{\minus}}}^{2}}}_{L^1(\P)}^{\frac{1}{2q}}
		\norm[1]{\del[1]{1 +\barnorm{\hbz^{\minus}}}^{2q(N+3)}}_{L^1(\P)}^{\frac{1}{2q}}
	\end{equs}
The first factor is finite by~\thref{prop:exp_integr_Q}, where $p := 1 + \beta  > 1$ and $q = \nicefrac{p}{(p - 1)}$ its conjugate H\"older exponent. For the  second and third factor, we choose $\eps$ and $\rho$ so small that $2 C_3 q < \chi$, where $\chi$ is the constant from the Fernique-type~\thref{thm:fernique}. This choice renders the two factors finite. 

In total, we find $I^{(1)}_\rho(\eps) = o(\eps^N)$ as $\eps \to 0$. \\

\noindent
\textbf{Estimates for $\boldsymbol{I^{(2)}_\rho(\eps)}$ from Step $\boldsymbol{2}$.} Like Inahama and Kawabi \cite{inahama_kawabi}, we use the elementary inequality
	\begin{equation*}
		\abs{\sum_{k=n}^{\infty} \frac{1}{k!} x^k} \leq \frac{\thinspace \abs{x}^n}{n!} \exp(\thinspace\abs{x}), \quad x \in \R, \quad n \in \N_0,
	\end{equation*}
for $n := N+1$ and $x :=  \hat{S}_\sh^{F;\eps;N} = - \sum_{m=1}^{N} \eps^m ((m+2)!)^{-1} \hat{F}^{(m+2)}_{\sh}$ to obtain the inequality
	\begin{equs}
		\abs[1]{I_\rho^{(2)}(\eps)}
		&
		\leq 
		\frac{\eps^{N+1}}{(N+1)!}
		\E\sbr[3]{
			\exp\del[3]{- \frac{\hat{Q}_\sh}{2} + \sum_{m=1}^{N} \frac{\eps^m \abs[1]{\hat{F}^{(m+2)}_{\sh}}}{(m+2)!} }
			\del[3]{\sum_{m=1}^{N} \frac{\eps^{m-1} \abs[1]{\hat{F}^{(m + 2)}_{\sh}}}{(m + 2)!}}^{N+1}
			; \thinspace \eps \barnorm{\hbz^{\minus}} < \rho}
	\end{equs}
By~\eqref{eq:est_terms_and_remainder}, we can further estimate~$I_\rho^{(2)}(\eps)$:
	\begin{align}
		\abs[1]{I_\rho^{(2)}(\eps)}
		& \leq
		C 
		\frac{\eps^{N+1}}{(N+1)!}
		\E\sbr[3]{
			\exp\del[3]{- \frac{\hat{Q}_\sh}{2} + C_2 \sum_{m=1}^{N} \frac{(\eps + \rho)^m}{(m+2)!} \del[1]{1 + \barnorm{\hbz^{\minus}}}^2} \del[1]{1 + \barnorm{\hbz^{\minus}}}^{3(N+1)}} \notag\\
		& \leq
		C 
		\frac{\eps^{N+1}}{(N+1)!}
		\E\sbr[3]{
			\exp\del[3]{- \frac{\hat{Q}_\sh}{2} + C_3  \del[1]{1 + \barnorm{\hbz^{\minus}}}^2}
		 \del[1]{1 + \barnorm{\hbz^{\minus}}}^{3(N+1)}},
		 \label{eq:est_I_2}
	\end{align}	
where $C_3 = C_3(N,\sh,\eps,\rho) > 0$ is the same as in the estimates for~$I_\rho^{(1)}$. Choosing $\eps, \rho$ small and applying Hölder's and the Cauchy-Schwarz inequality as for~$I_\rho^{(1)}$, we find that the expected value in~\eqref{eq:est_I_2} is~finite. 

Thus, $I^{(2)}_\rho(\eps) = o(\eps^N)$ as $\eps \to 0$. \\

\noindent
\textbf{Estimates for $\boldsymbol{I^{(3)}_\rho(\eps)}$ from Step $\boldsymbol{3}$.} 
To begin with, observe that $\pi \in \CG_>(k,N)$, i.e. $\ell(\pi) \geq (N+1)$. By~\eqref{eq:est_terms_and_remainder}, we find
	\begin{equs}
		\eps^{\ell(\pi)} \prod_{i=1}^k \frac{\abs[0]{\hat{F}^{(\pi_i)}_{\sh}}}{\pi_i!} \1_{\eps \barnorm{\hbz^{\minus}} < \rho}
		& \aac
		\eps^{\ell(\pi)} \prod_{i=1}^k \frac{\del[1]{1 + \barnorm{\hbz^{\minus}}}^{\pi_i}}{\pi_i!} \1_{\eps \barnorm{\hbz^{\minus}} < \rho}
		\leq
		\eps^{\ell(\pi)} \del[1]{1 + \barnorm{\hbz^{\minus}}}^{\abs[0]{\pi}} \\
		&
		\leq
		\eps^{N + 1} \del[1]{1 + \barnorm{\hbz^{\minus}}}^{\abs[0]{\pi}}
	\end{equs}
We use these inequalities on~\eqref{coeff_err_3} to find
	\begin{equation}
		\abs[1]{I_\rho^{(3)}(\eps)} 
		\aac \eps^{N+1} \E\sbr[3]{\exp\del[3]{-\frac{\hat{Q}_\sh}{2}}\sum_{k=1}^N \frac{1}{k!} \sum_{\pi \in \CG_>(k,N)} \del[1]{1 + \barnorm{\hbz^{\minus}}}^{\abs[0]{\pi}}}.
		\label{eq:est_I_3}
	\end{equation}
Using the inequalities
	\begin{equation*}
		\#[\CG_{S}(k,N)] \leq \#[\CG(k,N)] \leq N^k, 
		\quad \abs[0]{\pi} \leq k (N+3) \quad \del[1]{\pi \in \CG_>(k,N), \ S \in \{\leq,>\}}
	\end{equation*}
we can further estimate
	\begin{equation*}
		\text{RHS of } \eqref{eq:est_I_3}
		\aac \eps^{N+1} \exp(N) \E\sbr[3]{\exp\del[3]{-\frac{\hat{Q}_\sh}{2}} (1 + \barnorm{\hbz^{\minus}})^{N(N+3)}}.
	\end{equation*}
The expected value in the previous expression is finite by the same arguments as before; altogether, we thus find that $I^{(3)}_\rho(\eps) = o(\eps^N)$ as $\eps \to 0$. \\
	
\noindent
\textbf{Estimates for $\boldsymbol{I^{(4)}_\rho(\eps)}$ from Step $\boldsymbol{4}$.} Recall that
	\begin{equation}
		I_\rho^{(4)}(\eps)
		=
		- \E\sbr[3]{\exp\del[3]{-\frac{\hat{Q}_\sh}{2}} 
			\sbr[3]{ 
				1 +
				\sum_{k=1}^N \frac{(-1)^k}{k!} \sum_{\pi \in \CG_{\leq}(k,N)} \eps^{\ell(\pi)}\prod_{i=1}^m \frac{\hat{F}^{(\pi_i)}_{\sh}}{\pi_i!}
			}; \thinspace \eps \barnorm{\hbz^{\minus}} \geq \rho}.
		\label{eq_I_4_reminder}
	\end{equation}
First, observe that since $\pi \in \CG_{\leq}(k,N)$, we have $\ell(\pi) \leq N$ and thus $\abs[0]{\pi} = \ell(\pi) + 2k \leq N + 2k$.
We now argue analogously as in the estimate for~$I_\rho^{(3)}$ to get
	\begin{equs}
		\sum_{k=1}^N \frac{1}{k!} \sum_{\pi \in \CG_{\leq}(k,N)} \prod_{i=1}^k \frac{\abs[0]{\hat{F}^{(\pi_i)}_{\sh}}}{\pi_i!} 
		& \aac 
		\sum_{k=1}^N \frac{1}{k!} \sum_{\pi \in \CG_{\leq}(k,N)} \del[1]{1 + \barnorm{\hbz^{\minus}}}^{\abs[0]{\pi}}
		\leq
		\sum_{k=1}^N \frac{1}{k!} \sum_{\pi \in \CG_{\leq}(k,N)} \del[1]{1 + \barnorm{\hbz^{\minus}}}^{N + 2k} \\
		&
		\leq 
		\sum_{k=1}^N \frac{N^k}{k!} \del[1]{1 + \barnorm{\hbz^{\minus}}}^{3N} 
		\leq
		\exp(N) \del[1]{1 + \barnorm{\hbz^{\minus}}}^{3N}
	\end{equs}
and then, brutally estimating $\eps^{\ell(\pi)} \leq 1$, we have	
	\begin{equation}
		\abs[1]{I_\rho^{(4)}(\eps)}
		\aac
		\E\sbr[3]{\exp\del[3]{-\frac{\hat{Q}_\sh}{2}}\sbr[1]{1 + \exp(N) (1 + \barnorm{\hbz^{\minus}})^{3N}} \1_{\eps \barnorm{\hbz^{\minus}} \geq \rho}}.
		\label{eq_I_4_est_1}
	\end{equation}
Next, with $\chi > 0$ as in~\thref{thm:fernique}, we estimate
	\begin{equs}[][eq_I_4_est_2]
		\P\del[1]{\barnorm{\hbz^{\minus}} \geq \nicefrac{\rho}{\eps}}	
		=
		\P\del[1]{\exp\del[1]{\chi \barnorm{\hbz^{\minus}}^2} \geq \exp\del[1]{\chi \sbr[1]{\nicefrac{\rho}{\eps}}^2}}
		\leq
		\exp\del[1]{- \chi \sbr[1]{\nicefrac{\rho}{\eps}}^2} \E\sbr[1]{\exp\del[1]{\chi \barnorm{\hbz^{\minus}}^2}}
	\end{equs}
using Markov's inequality. For $\nicefrac{1}{p_1} + \nicefrac{1}{p_2} = 1$ with $1 < p_1 < p$ and $p = 1 + \beta$ as in~\thref{prop:exp_integr_Q}, H\"{o}lder's inequality then implies the estimate
	\begin{equs}[][eq_I_4_est_3]
		\text{RHS of } \eqref{eq_I_4_est_1}
		\leq
		\P\del[1]{\barnorm{\hbz^{\minus}} \geq \nicefrac{\rho}{\eps}}^{\nicefrac{1}{p_2}} 
		\norm[3]{\exp\del[3]{-\frac{\hat{Q}_\sh}{2}}\sbr[1]{1 + \exp(N) \del[1]{1 + \barnorm{\hbz^{\minus}}}^{3N}}}_{L^{p_1}(\P)}
	\end{equs}
so that we are left to estimate the term~$\norm[0]{\cdot}_{L^{p_1}(\P)}$. Note that, since $p_1 < p$, we have
	\begin{equation}
		\norm[3]{\exp\del[3]{-\frac{\hat{Q}_\sh}{2}}}_{L^{p_1}(\P)} < \infty
		\label{eq_I_4_est_4}
	\end{equation}
by~\thref{prop:exp_integr_Q}. For the other part, we choose $r_1 > 1$ and $r_2$ such that $r_1 p_1 \leq p$ and~$\nicefrac{1}{r_1} + \nicefrac{1}{r_2} = 1$ and apply H\"{o}lder's inequality again to obtain
	\begin{equation}
		\norm[3]{\exp\del[3]{-\frac{\hat{Q}_\sh}{2}}\sbr[1]{\exp(N) (1 + \barnorm{\hbz^{\minus}})^{3N}}}_{L^{p_1}(\P)}
		\aac
		\norm[3]{\exp\del[3]{-\frac{p_1}{2}\hat{Q}_\sh}}_{L^{r_1}(\P)}^{\frac{1}{p_1}}
		\norm[1]{\del[1]{1 + \barnorm{\hbz^{\minus}}}^{3Np_1}}_{L^{r_2}(\P)}^{\frac{1}{p_1}}
		\label{eq_I_4_est_5}
	\end{equation}
which is finite.
Finally, we combine~\eqref{eq_I_4_est_2}, \eqref{eq_I_4_est_4}, and~\eqref{eq_I_4_est_5} to estimate
	\begin{equation*}
		\text{RHS of } \eqref{eq_I_4_est_3}
		\aac 
		\exp\del[1]{- \chi \sbr[1]{\nicefrac{\rho}{\eps}}^2}
	\end{equation*}
which implies that $I^{(4)}_\rho(\eps) = o(\eps^m)$ as $\eps \to 0$ for any~$m \in \N_0$.

\appendix

\section{Regularity structures -- background knowledge} \label{app:background_pam}

In this section, we provide the basis on regularity structures needed in this article. We try to keep the presentation self-contained but the sheer scope of the material forces us to refer the reader to the literature where appropriate. For a short introduction on regularity structures, we refer the reader to Hairer~\cite{hairer_intro} as well as Chandra and Weber~\cite{chandra_weber}.

\subsection{The original and the extended regularity structure} \label{app:rs_background}

In this section, we briefly repeat the algebraic structures needed in this article. More precisely, we introduce the (extended) \eqref{gpam} regularity structure.

\subsubsection*{$\bullet$ The original gPAM regularity structure}

The main ingredient of the theory is a \emph{regularity structure} $\TT = (\CA,\CT,\CG)$. Its first part is a graded vector space $\CT = \bigoplus_{\a \in \CA} \CT_\a$, the so-called \emph{model space}, where the second part $\CA \subseteq \R$ is a locally finite \emph{index set} that is bounded from below. The space $\CT$ is endowed with a group $\CG$ of linear bounded operators, the so-called \emph{structure group}, such that for every $\alpha \in \CA$, $\tau \in \CT_\a$, and every $\Gamma \in \CG$, one has
\begin{equation*}
	\Gamma \tau - \tau \in \CT_{< \a} := \bigoplus_{\substack{\b \in \CA, \thinspace \b < \a}} \CT_\b.
\end{equation*}
The specific regularity structure for \eqref{gpam} has been constructed frequently, see Hairer~\cite{hairer_rs} and~Hairer and Pardoux~\cite{hairer_pardoux}:  The model space is given by $\CT = \CT_{\CW} \oplus \CT_\CU$ where $\CT_{\CW} := \scal{\CW}$, $\CT_{\CU} := \scal{\CU}$, and 
\begin{align*}
	\CW & := \{\Xi, \CI(\Xi)\Xi,X_i \Xi: \ i=1,2\} = \{\<wn>, \<11>, X_i\<wn>: \ i=1,2\}, \\
	\CU	 & := \{\mathbf{1}, \CI(\Xi), X_i: \ i=1,2\} = \{\mathbf{1}, \<1>, X_i: \ i=1,2\}.
\end{align*}
We also set $\CF := \CU \cup \CW = \{\<wn>, \<11>, X_i\<wn>, \mathbf{1}, \<1>, X_i: \ i = 1,2\}$ such that $\CT = \scal{\CF}$. The index set $\CA$ specifies the \emph{homogeneities} associated to the symbols in $\CF$. We fix $0 < \kappa \ll 1$ and set $\CA := \{\reg{\tau}: \ \tau \in \CF\}$, where for $i = 1,2$ we set\footnote{As the graphical formalism is classical by now (and self-explanatory in our context), we refer to Hairer and Pardoux~\cite[Sec.~4.1]{hairer_pardoux} for a more detailed explanation.}
{\small
	\begin{table}[h]
		\captionsetup{width=.8\linewidth}
		\centering
		\renewcommand{\arraystretch}{1.5}
		\begin{tabular}{ccccccc}
			\toprule
			$\tau$ & $\<wn>$& $\<11>$ & $X_i\<wn>$& $\mathbf{1}$& $\<1>$& $X_i$ 
			\\
			\midrule
			$ \ \reg{\tau} \ $ & $\ -1 - \kappa \ $ & $ \ - 2\kappa \ $ & $ \ -\kappa \ $ & $ \ 0 \ $ & $ \ 1 - \kappa\ $ & $ \ 1 \ $
			\\
			\bottomrule
		\end{tabular}
		\label{app:rs:table_reg}
	\end{table}
}

\noindent
for fixed $\kappa > 0$ with $\kappa \ll 1$.
The construction of the structure group $\CG$ will be presented separately below.

\subsubsection*{$\bullet$ The extended gPAM regularity structure}

As exemplified by eq.~\eqref{eq:approx_renorm_shifted}, we want to consider noises shifted by Cameron-Martin functions~$h \in \CH$ during our analysis. Hence, we need to extend~$\TT$ by introducing another noise symbol $H \equiv \<cm>$ that represents $h$. This procedure has already been carried out in the context of Malliavin calculus, for \eqref{gpam} by Cannizzaro, Friz, and Gassiat~\cite{cfg}, and more recently by Sch\"{o}nbauer~\cite{schoenbauer} in a much more general framework. We adopt the setting of the former and consider the extended regularity structure $\gr{\TT} = (\gr{\CA}, \gr{\CT}, \gr{\CG})$: the extended model space $\gr{\CT} = \gr{\CT_{\CW}} \oplus \gr{\CT_{\CU}}$ is given by $\gr{\CT_{\CW}} := \scal{\gr{\CW}}$ and $\gr{\CT_{\CU}} := \scal{\gr{\CU}}$, where
\begin{align*}
	\gr{\CW}
	& 
	: = \{\Xi, H, \CI(\Xi)\Xi, \CI(H)\Xi, \CI(\Xi)H, \CI(H)H, X_i \Xi, X_i H: \ i=1,2\} \\
	& 
	= \{\<wn>, \<cm>, \<11>, \<1g1>, \<11g>, \<1g1g>, X_i \<wn>, X_i\<cm>: \ i=1,2\} \\
	\gr{\CU}	 
	& 
	:= \{\mathbf{1}, \CI(\Xi), \CI(H), X_i: \ i=1,2\} = \{\mathbf{1}, \<1>, \<1g>, X_i: \ i=1,2\}
\end{align*}
Sometimes, we will also refer to the extended list of symbols as $\gr{\CF} := \gr{\CU} \cup \gr{\CW}$, so that $\gr{\CT} = \scal{\gr{\CF}}$. Further, we introduce $\gr{\CA} := \{\reg{\tau}: \ \tau \in \gr{\CF}\}$, where for $i = 1,2$ the additional symbols have homogeneities
{\small
	\begin{table}[h]
		\captionsetup{width=.8\linewidth}
		\centering
		\renewcommand{\arraystretch}{1.5}
		\begin{tabular}{ccccccc}
			\toprule
			$\tau$ & $\<cm>$& $\<11g>$ & $\<1g1>$ & $\<1g1g>$ & $X_i\<cm>$ & $\<1g>$ 
			\\
			\midrule
			$ \ \reg{\tau} \ $ & $\ -1 - \kappa \ $ & $ \ - 2\kappa \ $ & $ \ -2\kappa \ $ & $ \ -2\kappa \ $ & $ \ - \kappa\ $ & $ \ 1 - \kappa \ $
			\\
			\bottomrule
		\end{tabular}
		\label{app:rs:table_reg_ext}
	\end{table}
} 

\subsubsection*{$\bullet$ The structure groups $\boldsymbol{\CG}$ and $\boldsymbol{\gr{\CG}}$.}

For the sake of brevity, we focus on the construction of $\gr{\CG}$ as in \cite[Sec.~3.2]{cfg}. This is in analogy with~\cite[Sec.~8.1]{hairer_rs} where the structure group~$\CG$ is constructed. The latter also serves as a reference for further details on our construction; for a far-reaching generalisation, see Bruned, Hairer, and Zambotti~\cite{bhz}.

We introduce the vector space $\gr{\CT_+}$ built from finite linear combinations of the basis vectors
\begin{equation*}
	X^k \prod_{j} \CJ_{\ell_j} \tau_j, \qquad j,k \in \N^2, \ \tau_j \in \gr{\CT}, \ \reg{\tau_j} + 2 - \abs{j} > 0, 
\end{equation*} 
on which we act by a co-module $\gr{\Delta}$, i.e. a linear map $\gr{\Delta}: \gr{\CT} \to \gr{\CT} \tp \gr{\CT_+}$ given by
\begin{equation*}
	\gr{\Delta} \tau := \tau \tp \1 \ \text{for} \ \tau \in \{\1,\<wn>,\<cm>\}, \quad \gr{\Delta} X_i := X_i \tp \1 + \1 \tp X_i,
\end{equation*}
as well as, for $\tau, \sigma \in \gr{\CT}$ and $B_\tau := \{m \in \N^2: \reg{\tau} + 2 - \abs{m} > 0\}$, recursively by
\begin{equation*}
	\gr{\Delta} (\tau\sigma) := (\gr{\Delta}\tau) (\gr{\Delta}\sigma), \quad 
	\gr{\Delta} (\CI \tau) := (\CI \tp \operatorname{Id}) \gr{\Delta} \tau + \sum_{k + \ell \in B_\tau} \frac{1}{k! \ell!} X^k \tp X^\ell\CJ_{k+\ell}(\tau)
\end{equation*}
In the previous expression, note that~$X^k \in \gr{\CT}$ and~$X^\ell \in \gr{\CT_+}$: By abuse of notation, we do not distinguish between these symbols on the left and right hand side of the tensor product.

One then defines $\gr{\CG_+}$ as the set of \emph{characters} of~$\gr{\CT_+}$, that is: linear functionals $\gr{f}: \gr{\CT_+} \to \R$ such that $\gr{f}(\tau\sigma) = \gr{f}(\tau) \gr{f}(\sigma)$. The structure group $\gr{\CG}$ is then given by all $\Gamma_{\gr{f}}$, $\gr{f} \in \gr{\CG_+}$, where
\begin{equation}
	\Gamma_{\gr{f}} \tau := (\operatorname{Id} \tp \gr{f}) \Delta \tau, \quad \tau \in \gr{\CT}.
	\label{app:rs:str_gr}
\end{equation}
A matrix representation for the structure groups $\CG$ and $\gr{\CG}$ is given in \cite[eq.~(3.3)]{cfg} and \cite[eq.~(3.4)]{cfg}, respectively.

\begin{remark}
	For convenience, the constructions that follow are presented for the regularity structure~$\TT$. Mutatis mutandis, they also hold for~$\gr{\TT}$.
\end{remark}

\subsection{Admissible models} \label{app:sec:adm_models}

In practice, the cornerstone of Hairer's theory are the so-called \emph{models}: a family of analytical objects taylored to the equation at hand to mimic the role of classical Taylor polynomials. 

In particular, \emph{admissible} models relate last paragraph's abstract integration operator $\CI$ to a concrete integral kernel~$K$. We follow Hairer~\cite[Sec.~10.4]{hairer_rs} in choosing
\begin{equation}
	K(x) = -\frac{1}{2\pi} \log \thinspace \norm{x}
	\label{app:sec:adm_models:def_K}
\end{equation} 
for $x \in \R^2$ in a sufficiently small neighbourhood of $0$. Outside of that neighbourhood, we choose~$K$ smooth, compactly supported, and such that~$\int_{\R^2} x^k K(x) \dif x = 0$ for~$\abs{k} \leq r$ and some fixed degree $r$ that is sufficiently large. The definitions that follow are in adapted to this choice (and thus our equation at hand) but they work in much greater generality.

\begin{definition}[Smooth admissible model]\label{def:smooth_admissible_model}
	Let $\Pi: \R^2 \to \CL(\CT,\CC^{\8}(\R^2))$ be a smooth map that satisfies the following four conditions:
	\begin{enumerate}[label=($\Pi$\arabic*)]
		\item For each $\fK \subseteq \R^2$ compact, the bound $\abs[0]{(\Pi_y\tau)(\bar{y})} \aac_{\fK} \abs{\bar{y}-y}^{\thinspace\reg{\tau}}$ holds uniformly over $y,\bar{y} \in \fK$ and $\tau \in \CF$. \label{def:smooth_admissible_model:Pi1}
		\item For each $y,\bar{y} \in \R^2$, we have $\Pi_y\symbol{\1}(\bar{y}) \equiv 1$. \label{def:smooth_admissible_model:Pi2}
		\item For any $k \in \N^2$, $y,\bar{y} \in \R^2$ and $\tau \in \CF$ with $\symbol{X^k\tau} \in \CT$ we have \label{def:smooth_admissible_model:Pi3}
		\begin{equation}
			(\Pi_y[\symbol{X^k\tau}])(\bar{y}) = (\bar{y} - y)^k(\Pi_y \tau)(\bar{y}).
		\end{equation}	
		\item For any $\tau \in \CF$ with $\CI \tau \in \CT$ and all $y,\bar{y} \in \R^2$ we have \label{def:smooth_admissible_model:Pi4}
		\begin{equation}
			(\Pi_y\CI \tau)(\bar{y}) = (K * \Pi_y\tau)(\bar{y}) + \sum_{\abs{\ell} < \thinspace\reg{\tau} +2} \frac{(\bar{y}-y)^l}{\ell!} f_y(\CJ_\ell \tau).
			\label{def:smooth_admissible_model:Pi4:eq}
		\end{equation}	
	\end{enumerate}
	where the map $f: \R^2 \to \CG_+$, $y \mapsto f_y$, is uniquely specified from~$\Pi$ by imposing multiplicativity and 
	\begin{enumerate}[label=($\Pi$5)]
		\item \qquad $f_y(X^\ell) = (-y)^\ell$, 
		\qquad $f_y(\CJ_\ell \tau) = - (D^\ell K * \Pi_y \tau)(y) \quad \text{for} \ \abs{\ell} < \reg{\tau} +2$.
		\label{def:smooth_admissible_model:Pi5}
	\end{enumerate}
	We then define $F: \R^2 \to \CG$ from $f$ by $F_y := \Gamma_{f_y}$ according to \eqref{app:rs:str_gr} and impose the algebraic relation
	\begin{enumerate}[label=($\Pi$6)]
		\item \qquad $\Pi_y \circ F_y^{-1} = \Pi_{\bar{y}} \circ F_{\bar{y}}^{-1}$ for every $y,\bar{y} \in \R^2$. \label{def:smooth_admissible_model:Pi6}
	\end{enumerate} 
	The pair $\bz := (\Pi,F)$ is then called a \emph{smooth admissible model}  for $\TT$, the collection of which we denote by~$\MM_{\8}$.
\end{definition}

\begin{remark}
	A priori, one would have to account for time-dependence of~\eqref{gpam} in the models. However, given that SWN~$\xi$ does not depend on time, one would immediately find that
		\begin{equation*}
			\Pi_z \tau(\bar{z}), \quad z = (t,x), \quad \bar{z} = (\bar{t},\bar{x}) \in \R^3
		\end{equation*}
	is independent both of~$t$ and~$\bar{t}$, cf.~\cite[sec.~$10.4$]{hairer_rs}; this fact is reflected in our definitions.
	
	In particular, it has informed our choice of the integral kernel~$K$ in~\eqref{app:sec:adm_models:def_K}: While initially, we need to take a suitable \enquote{cut-off} version of the heat kernel~$P$ that complies with \cite[Lem.~5.5]{hairer_rs}, we can actually integrate out the temporal variable and then arrive at the Green's function~$K$ of the Laplacian.
\end{remark}

\begin{remark}
	Notice that the map $\Pi$ uniquely specifies $f$ (and a fortiori $F$ through \eqref{app:rs:str_gr} and $\Gamma$ by $\Gamma_{y\bar{y}} = F_y^{-1} \circ F_{\bar{y}}$) via \ref{def:smooth_admissible_model:Pi5}. Hence, we will interchangeably write and speak of an admissible model
	\begin{equation*}
		\Pi \longleftrightarrow \bz= (\Pi,f) \longleftrightarrow \bz=(\Pi,F) \longleftrightarrow \bz=(\Pi,\Gamma).
	\end{equation*}
\end{remark}

In addition, we consider a set of test functions
\begin{equation}
	\CB \equiv \CB_2 := \{\eta \in \CC^2(\R^2): \ \norm{\eta}_{\CC^2} \leq 1, \ \supp \eta \subseteq B(0,1)\},
	\label{def:minimal_admissible_model:spatial_test_functions}
\end{equation}
whose rescaled versions are given by $\eta_{y}^{\l}(\bar{y}) = \l^{-2}\eta(\l^{-1}(\bar{y} - y))$ for $\l \in (0,1]$ and $y,\bar{y} \in \R^2$. Since we are studying \eqref{gpam} with periodic boundary conditions, it is then natural to enforce periodicity: 

\begin{definition}[Periodicity]\label{def:periodic_models}
	Let $\bz = (\Pi,F) \in \MM_\infty$. We denote by $\{\se_1, \se_2\}$ the standard basis of $\R^2$ and introduce the \emph{translation maps} 
	\begin{equation}
		T_i: \R^2 \to \R^2, \quad T_i(y) =  y + \se_i \qquad i = 1,2.
		\label{def:periodic_models:translation}
	\end{equation}	
	These maps $T_i$ act on test functions $\phi$ by $T_i\phi := \phi \circ T_i^{-1}$. The model $\bz = (\Pi,F)$ is said to be \emph{periodic} if
	\begin{equation}
		(\Pi_{T_i y} \stau)(T_i \phi) = (\Pi_y \stau)(\phi), \qquad
		F_{T_i y} = F_y 
		\label{def:periodic_models:eq}
	\end{equation}
	holds for every $y \in \R^2$, every test function $\phi$, every $\tau \in \CT$ and every $i = 1,2$. 
	The set of periodic admissible smooth models is denoted by $\MM_{\infty,p}$.
\end{definition}

\begin{definition}[Admissible model]\label{def:admissible_model}
	Let $\bz = (\Pi, F), \bbz = (\bar{\Pi},\bar{F}) \in \MM_{\infty,p}$, and $\fK \subseteq \R^2$ a compact domain. We set
	\begin{equs}[][def:admissible_model:pseudometric]
		\threebars \Pi - \bar{\Pi} \threebars_{\fK}
		:=
		\sup_{\l,\phi, \tau, z} \l^{-\thinspace\reg{\tau}} \abs[1]{(\Pi_z\tau - \bar{\Pi}_z \tau)(\phi_z^{\l})}
	\end{equs}
	where the suprema are taken over $\l \in (0,1]$, $\phi \in \CB$, $\tau \in \CF$, and $z \in \fK$.
	The set $(\MM_\infty,\threebars \cdot \threebars)$ then forms a pseudometric space, from which we define the (separable) space
	\begin{equation}
		\MM := \cl_{\threebars \cdot \threebars} \thinspace \MM_{\8,p}.
		\label{def:admissible_model:closure}
	\end{equation}
	The elements of $\MM$ are called (periodic) \emph{admissible models} for $\TT$. Another metric on $\MM$, in the literature (e.g. \cite[Sec.~A.3]{friz_gassiat_pigato}) by abuse of terminology referred to as the \emph{homogeneous} model norm~$\barnorm{\cdot}$, is given by\footnote{The reader might wonder why the metric~$\barnorm{\cdot\thinspace; \cdot}$ does not involve~$\Gamma$. This fact has extensively been elaborated on in the literature: We refer to~\cite[Rmk.~$3.5$]{cfg} for further details and references.}
	\begin{equation}
		\barnorm{\Pi - \bar{\Pi}}_{\fK}
		:=
		\sup_{\l,\phi, \tau, z} \del[2]{\l^{-\thinspace\reg{\tau}} \abs[1]{(\Pi_z\tau - \bar{\Pi}_z \tau)(\phi_z^{\l})}}^{\frac{1}{[\tau]}}
		\label{def:admissible_model:pseudometric_hom}
	\end{equation}
	where the supremum runs over the same set as in~\eqref{def:admissible_model:pseudometric}. In addition, $[\tau] := \#[\<wn> \ \text{in} \ \tau] \vee 1$ denotes the number of noises $\<wn>$ in the symbol $\tau$, e.g.~$[\<11>] = 2$. 
	Analogously to~$\MM$, we define the space~$\gr{\MM}$ of models for~$\gr{\TT}$.
\end{definition}

\begin{remark}
	Periodicity implies that we can always work on the spatial domain~$\fK = \T^2$, so we will omit the subscript~$\fK$ in~$\threebars \cdot \threebars$ and~$\barnorm{\cdot}$.
\end{remark}

\subsubsection*{$\bullet$ Minimal admissible models.}

We want to characterise a model given as minimal information as possible. For that purpose, we fix 
	\begin{itemize}
		\item $\CW_- := \{\<wn>,\<11>\} \subseteq \CW$ and~$\CT_- := \scal{\CW_-} \subseteq \CT$,
		\item $\gr{\CW_-} := \{\<wn>,\<cm>,\<11>,\<1g1>,\<11g>,\<1g1g>\} \subseteq \gr{\CW}$ and~$\gr{\CT_-} := \scal{\gr{\CW_-}} \subseteq \gr{\CT}$, 
	\end{itemize}
i.e. the symbols of negative homogeneity without factors of $\symbol{X_i}$ (in~$\CF$ or $\gr{\CF}$, respectively) and the linear spaces they span.

\begin{definition}[Minimal admissible model] \label{def:minimal_admissible_model}
	Let $\Pi^{\minus}: \R^2 \to \CL(\CT_-,\CC^{\8}(\R^2))$ be a smooth periodic map such that $\norm[0]{\Pi^{\minus}} < \8$
	for 
	\begin{equs}[][def:minimal_admissible_model:norm]
		\threebars \Pi^{\minus}\threebars
		& := 
		\max_{\tau \in \CW_-} \thinspace \threebars\Pi^{\minus}\threebars_\tau, \quad 
		\barnorm{\Pi^{\minus}}
		:= 
		\max_{\tau \in \CW_-} \thinspace \barnorm{\Pi^{\minus}}_\tau
	\end{equs}
	with 
	\begin{equation}
		\threebars \Pi^{\minus}\threebars_\tau := \sup_{\l, \phi, z} \l^{-\thinspace\reg{\tau}} \abs[1]{\scal{\Pi^{\minus}_z \tau,\phi_z^{\l}}}, \quad
		\barnorm{\Pi^{\minus}}_\tau :=
		\sup_{\l, \phi, z} \del[2]{\l^{-\thinspace\reg{\tau}} \abs[1]{\scal{\Pi^{\minus}_z \tau,\phi_z^{\l}}}}^{\frac{1}{[\tau]}},
		\label{def:minimal_admissible_model:norm:suprema}
	\end{equation}
	where
	\begin{itemize}
		\item $\scal{\cdot,\cdot}$ denotes the inner product in $L^2$ and
		\item the suprema are taken over $\l \in (0,1]$, $\phi \in \CB$, $z \in D$ as in~\eqref{def:admissible_model:pseudometric}.
	\end{itemize}
	In addition, we impose the constraints~\ref{def:smooth_admissible_model:Pi3} to \ref{def:smooth_admissible_model:Pi6} from~\thref{def:smooth_admissible_model}. The closure of the set of all such maps $\Pi^{\minus}$ under $\threebars \cdot \threebars$ is denoted by ~$\MM_-$.  Its elements are called \emph{minimal admissible models}. 
\end{definition}
The consistency of the preceding definition is checked in~\cite[Sec.~3.3]{cfg} and \cite[Sec.~2.4]{hairer_weber_ldp}. We emphasise that changing~$\CW_- \rightsquigarrow \gr{\CW_-}$ leads to the space~$\gr{\MM_-}$ of minimal admissible models w.r.t.~$\gr{\TT}$.

The following lemma is straightforward but helpful.

\begin{lemma}\label{lem:estimate_min_norm_vs_hom_norm}
	Convergence w.r.t. $\threebars \cdot \threebars$ and $\barnorm{\cdot}$ in $\MM_-$ are equivalent.
\end{lemma}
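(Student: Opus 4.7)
The core observation is that, since in the definitions~\eqref{def:minimal_admissible_model:norm:suprema} the quantity $\l^{-\reg{\tau}}|\langle \Pi_z^{\minus}\tau,\phi_z^\l\rangle|$ is non-negative and the map $x\mapsto x^{1/[\tau]}$ is continuous and monotone on $[0,\infty)$, one can pull the exponent $1/[\tau]$ outside the supremum. Concretely, I would first note the pointwise identity
\begin{equation*}
\barnorm{\Pi^{\minus}}_\tau \;=\; \bigl(\threebars \Pi^{\minus}\threebars_\tau\bigr)^{1/[\tau]}, \qquad \tau\in\CW_-,
\end{equation*}
and then extend this to the \emph{pseudometric} setting by applying the definitions to the difference of two minimal admissible models, i.e.\ replacing $\Pi_z^{\minus}\tau$ by $(\Pi_z^{\minus,1}-\Pi_z^{\minus,2})\tau$ everywhere in~\eqref{def:minimal_admissible_model:norm:suprema}. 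This yields, for any $\Pi^{\minus,1},\Pi^{\minus,2}\in\MM_-$,
\begin{equation*}
\barnorm{\Pi^{\minus,1}-\Pi^{\minus,2}}_\tau \;=\; \bigl(\threebars \Pi^{\minus,1}-\Pi^{\minus,2}\threebars_\tau\bigr)^{1/[\tau]}.
\end{equation*}

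Next, I would spell out the equivalence of convergence. Since $\CW_- = \{\<wn>,\<11>\}$ with $[\<wn>]=1$ and $[\<11>]=2$, the relation above specialises to
\begin{equation*}
\barnorm{d_n}_{\<wns>} = \threebars d_n \threebars_{\<wns>}, \qquad \barnorm{d_n}_{\<11s>} = \threebars d_n\threebars_{\<11s>}^{1/2},
\end{equation*}
for any sequence of differences $d_n := \Pi^{\minus,n}-\Pi^{\minus}$. Taking the maximum over $\tau\in\CW_-$ (as in the definitions of $\threebars\cdot\threebars$ and $\barnorm{\cdot}$ in~\eqref{def:minimal_admissible_model:norm}), convergence $\threebars d_n\threebars\to 0$ clearly forces $\barnorm{d_n}\to 0$ by continuity of $x\mapsto x^{1/2}$ at $0$; conversely, $\barnorm{d_n}\to 0$ gives $\threebars d_n\threebars_{\<11s>}=\barnorm{d_n}_{\<11s>}^2 \to 0$ and $\threebars d_n\threebars_{\<wns>}=\barnorm{d_n}_{\<wns>}\to 0$, hence $\threebars d_n\threebars\to 0$.

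\textbf{Main obstacle.} There is no real obstacle: the lemma is essentially bookkeeping that uses (i) the boundedness and finiteness of the index $[\tau]$ on $\CW_-$, and (ii) the fact that taking a positive power commutes with suprema of non-negative quantities. The only point that deserves a line of justification is the legitimacy of passing from the definition of the norms on individual models to a pseudometric on differences, which is immediate because both defining expressions depend linearly on $\Pi^{\minus}$ (the structure group data $F$ does not enter them, cf.\ the footnote after~\eqref{def:admissible_model:pseudometric_hom}).
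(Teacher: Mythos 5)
Your proof is correct and takes essentially the same approach as the paper: the paper simply invokes the continuity of $x \mapsto x^k$ and $x \mapsto x^{\nicefrac{1}{k}}$ at the origin, which is precisely the content of your argument once you observe (as you do) that the exponent $\nicefrac{1}{[\tau]}$ commutes with the supremum and the norms act linearly on differences of models.
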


\begin{proof}
	This follows immediately from the continuity of the maps~$x \mapsto x^k$ and~$x \mapsto x^{\nicefrac{1}{k}}$, $k \in \N$, at the origin.
\end{proof}

The introduction of minimal admissible model is justified by the following theorem, see~\cite[Thm.~3.9]{cfg} in case of~\eqref{gpam};~the general case is considered in~\cite[Thm.~$2.10$]{hairer_weber_ldp}.

\begin{theorem}[Extension theorem]\label{app:thm:extension}
	For every $\Pi^{\minus} \in \MM_{-}$, there exists a unique admissible model $\bz=(\Pi,F) \in \MM$ such that $\Pi_z\stau = \Pi^{\minus}_z\tau$ for all $\tau \in \CW_-$ and $z \in \R^2$. In addition, the extension map 
	\begin{equation}
		\EE: \MM_- \to \MM, \ \Pi^{\minus} \mapsto (\Pi,F)
		\label{app:thm:extension:eq}
	\end{equation}
	is locally Lipschitz continuous on $(\MM_-, \threebars \cdot \threebars)$\footnote{Note that $\EE$ is \emph{not} locally Lipschitz continuous on~$(\MM_-,\barnorm{\cdot})$ simply because the square root is not Lipschitz. However, $\EE$ is still continuous w.r.t $\barnorm{\cdot}$ by~\thref{lem:estimate_min_norm_vs_hom_norm}.}	
	and bijective.
\end{theorem}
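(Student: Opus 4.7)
The strategy is to reduce everything to the smooth case, where the axioms \ref{def:smooth_admissible_model:Pi2}--\ref{def:smooth_admissible_model:Pi6} become a blueprint for how $\Pi$ on $\CT \setminus \CT_-$ and $f$ on $\CT_+$ must be built from $\Pi^{\minus}$ on $\CT_-$, and then to transport the construction to the closure $\MM_-$ via local Lipschitz bounds. Concretely, for smooth periodic $\Pi^{\minus}$ I would first set $\Pi_y \tau := \Pi^{\minus}_y \tau$ for $\tau \in \CW_-$ and then force the remaining values of $\Pi_y \tau$ for $\tau \in \CF \setminus \CW_- = \{\1, \<1>, X_1, X_2, X_1\<wn>, X_2\<wn>\}$ by the axioms: \ref{def:smooth_admissible_model:Pi2} gives $\Pi_y \1 \equiv 1$; \ref{def:smooth_admissible_model:Pi3} gives $\Pi_y X_i(\bar y) = (\bar y - y)_i$ and $\Pi_y(X_i \<wn>)(\bar y) = (\bar y - y)_i \, \Pi_y \<wn>(\bar y)$; and \ref{def:smooth_admissible_model:Pi4} combined with \ref{def:smooth_admissible_model:Pi5} forces $\Pi_y \<1>(\bar y) = (K \ast \Pi_y \<wn>)(\bar y) - (K \ast \Pi_y \<wn>)(y)$. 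Then $f$ on the basis vectors of $\CT_+$ is declared multiplicative with $f_y(X^\ell) = (-y)^\ell$ and $f_y(\CJ_\ell \tau) = -(D^\ell K \ast \Pi_y \tau)(y)$ whenever $\abs{\ell} < \reg{\tau} + 2$, and $F_y := \Gamma_{f_y}$ per \eqref{app:rs:str_gr}.

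Having fixed the pair $(\Pi, F)$ in this way, I would verify in turn that it is admissible. Axioms \ref{def:smooth_admissible_model:Pi2}--\ref{def:smooth_admissible_model:Pi5} hold by construction; the content lies in showing \ref{def:smooth_admissible_model:Pi1} and \ref{def:smooth_admissible_model:Pi6}. For \ref{def:smooth_admissible_model:Pi1}, the polynomial symbols and the products $X_i \<wn>$ are immediate from the identities above and the given bound on $\Pi^{\minus}_y \<wn>$; the non-trivial bound is for $\Pi_y \<1>$, where one must show
\[
\abs{\langle \Pi_y \<1>, \phi_y^\lambda \rangle} \aac \lambda^{1-\kappa}
\]
uniformly in $\phi \in \CB$, $\lambda \in (0,1]$. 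This is the standard Schauder-type argument for the abstract integration operator: one splits $K$ as in \cite[Lem.~5.5]{hairer_rs} into a singular part and a smooth part, uses the cancellation $(K \ast \Pi_y \<wn>)(\bar y) - (K \ast \Pi_y \<wn>)(y)$ to gain regularity against the singularity, and combines the bound on $\Pi^{\minus}_y \<wn>$ with the moment vanishing of the smooth part to produce the claimed $\lambda^{1-\kappa}$. Axiom \ref{def:smooth_admissible_model:Pi6} is an algebraic identity which one checks on each $\tau \in \CF$ separately using the explicit recursive formula for $\Delta \tau$ and the definition of $f$.

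The defining formulas are polynomial (or multilinear-in-convolution) in $\Pi^{\minus}$, so each map $\Pi^{\minus} \mapsto \Pi_y \tau$ is continuous and in fact locally Lipschitz on bounded sets of $(\MM_{-,\infty,p}, \threebars \cdot \threebars)$, with constants depending only on $\threebars \Pi^{\minus} \threebars$ and the fixed kernel $K$. Taking differences and running through each symbol gives
\[
\threebars \EE(\Pi^{\minus}) - \EE(\bar \Pi^{\minus}) \threebars \aac_R \threebars \Pi^{\minus} - \bar \Pi^{\minus} \threebars \qquad \text{for } \threebars \Pi^{\minus} \threebars, \threebars \bar \Pi^{\minus} \threebars \leq R,
\]
so $\EE$ is locally Lipschitz on $\MM_{-,\infty,p}$ and extends by density and completeness to a locally Lipschitz map on $\MM_-$, with the algebraic identities \ref{def:smooth_admissible_model:Pi3}--\ref{def:smooth_admissible_model:Pi6} passing to the limit by continuity.

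Finally, uniqueness and bijectivity are essentially tautological once the construction is fixed. Any admissible model $(\Pi, F) \in \MM$ extending $\Pi^{\minus}$ must satisfy \ref{def:smooth_admissible_model:Pi2}--\ref{def:smooth_admissible_model:Pi6}, and these axioms determine $\Pi_y \tau$ for $\tau \in \CF \setminus \CW_-$ via the very formulas used to define $\EE$; thus the extension is unique. For bijectivity, note that the restriction map $\CR_-: \MM \to \MM_-$ given by $(\Pi, F) \mapsto \Pi\vert_{\CT_-}$ is well-defined (the defining bounds on $\Pi_y$ restrict to those on $\Pi^{\minus}_y$), and by uniqueness $\EE \circ \CR_- = \mathrm{Id}_\MM$ while $\CR_- \circ \EE = \mathrm{Id}_{\MM_-}$ is immediate. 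The main technical obstacle in carrying out the plan is the Schauder estimate for $\Pi_y \<1>$ in step two, which requires the careful decomposition of $K$ and the exploitation of the subtracted Taylor jet in \eqref{def:smooth_admissible_model:Pi4:eq} to convert the logarithmic singularity of $K$ into the correct homogeneity $1 - \kappa$; every other ingredient is either algebraic or a straightforward convolution estimate.
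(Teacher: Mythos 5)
The paper does not prove this theorem internally: it is stated and immediately delegated to \cite[Thm.~3.9]{cfg} for gPAM and \cite[Thm.~2.10]{hairer_weber_ldp} in general, so there is no in-paper argument to compare against. Your plan is a sound self-contained reconstruction of the argument those references carry out: axioms \ref{def:smooth_admissible_model:Pi2}--\ref{def:smooth_admissible_model:Pi5} force the values of $\Pi_y\tau$ for $\tau\in\CF\setminus\CW_-$ from $\Pi^{\minus}$, the only nontrivial analytic input is the Schauder estimate for $\Pi_y\<1>$, and uniqueness and bijectivity are tautological once the construction is fixed. One strengthening worth observing for this particular structure: the extension formulas are in fact \emph{affine} in $\Pi^{\minus}$ --- constants on $\1, X_i$, the linear maps $\Pi^{\minus}_y\<wn>\mapsto(\cdot-y)_i\,\Pi^{\minus}_y\<wn>$ and $\Pi^{\minus}_y\<wn>\mapsto K\ast\Pi^{\minus}_y\<wn>-(K\ast\Pi^{\minus}_y\<wn>)(y)$, and the identity on $\<wn>,\<11>$ --- so the Lipschitz bound you aim for is in fact global with a constant depending only on $K$; the local qualification is only needed for the structure-group bounds, which the pseudometric $\threebars\cdot\threebars$ of~\thref{def:admissible_model} does not track anyway (cf.\ the footnote there). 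Two small points to tighten: axiom \ref{def:smooth_admissible_model:Pi6} is already imposed on $\Pi^{\minus}$ as part of~\thref{def:minimal_admissible_model}, so for the extended symbols it follows automatically once $f$ is declared by \ref{def:smooth_admissible_model:Pi5} and requires no separate algebraic check; and you should note that $\EE$ preserves periodicity in the sense of~\thref{def:periodic_models} so that its image actually lands in $\MM$, which holds because $K$ is a fixed translation-invariant kernel but is part of the statement.
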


\begin{notation}
	Given a model $\bz = (\Pi,f) \in \MM$, by the previous theorem there always exists a $\Pi^{\minus} \in \MM_-$ (simply given by restriction of $\Pi$ to $\CT_-$) such that $\bz = \EE(\Pi^{\minus})$. We will also write~$\bz^{\minus} := \Pi^{\minus}$ and sometimes also write~$\bz^{\minus}$ for a generic element in~$\MM_-$.
	
	With a superscript~\enquote{$-$}-sign on an operation we signify that it acts on~$\MM_-$ (rather than~$\MM$), that is: it is composed with~$\EE$. In this article, this convention for example concerns the solution map~$\Phi^{\minus} := \Phi \circ \EE$, the functional~$F_\Phi^{\minus} := F \circ \Phi^{\minus}$, the explosion time~$T_\infty^{\minus} := T_\infty \circ \EE$ to be introduced in section~\ref{sec:explosion} below, etc. 
\end{notation}

Finally, we present how to lift smooth functions~$\zeta$ to~$\MM_-$ and then to~$\MM$. The example the reader should have in mind is~$\zeta = \xi_\d$, a realisation of smoothened SWN~$\xi$.

\begin{definition}[Canonical lift] \label{def:canonical_lift}
	Let~$\zeta \in \C^\infty(\T^2)$. We define~$\LL_-(\zeta) := \Pi^\zeta \in \MM_-$ by 
		\begin{equation}
			\Pi^{\zeta}_y \<wn>(\bar{y}) := \zeta, \quad \Pi_y^\zeta \<11>(\bar{y}) := \Pi_y^\zeta \<1>(\bar{y})\Pi_y^\zeta \<wn>(\bar{y})
			\label{def:canonical_lift:eq}
		\end{equation}
	where~$\Pi_y^\zeta \<1>(\bar{y})$ is defined by~\ref{def:smooth_admissible_model:Pi4} and~\ref{def:smooth_admissible_model:Pi5} above.
	In addition, we impose~\ref{def:smooth_admissible_model:Pi2} and~\ref{def:smooth_admissible_model:Pi3} and then define~$\LL(\zeta) = (\Pi^\zeta,F^\zeta) := \EE(\LL_-(\zeta))$.
\end{definition}
The fact that~$\LL(\zeta) \in \MM_\infty$ is commonplace in the literature by now; we refer to Hairer~\cite[Prop.~$8.27$]{hairer_rs} for details.

\subsubsection{Analytical operations for admissible models} \label{sec:models_analytic_op}

In this section, we introduce analytical operations on~$\MM$.
Let us emphasise that all these operations have obvious counterparts on~$\MM_-$: actually, given~\thref{app:thm:extension}, we could even \emph{define} them on~$\MM_-$ and simply \enquote{push them forward} onto~$\MM$ via~$\EE$.

We start with a recap of \emph{extension}\footnote{This is not to be confused with the extension operator~$\EE$ from~\thref{app:thm:extension}. Here, we will focus on extending models for~$\TT$ to models for~$\gr{\TT}$, that is: extend~$\bz \in \MM$ to~$\gr{\MM}$.} and \emph{translation} studied by Cannizzaro, Friz, and Gassiat~\cite{cfg}. 

\subsubsection*{$\bullet$ Extension and translation.}

On an algebraic level, we have introduced the symbol~$\<cm>$ to encode shifts of the driving noise~$\xi$ into Cameron-Martin direction~$h \in \CH$, thus extending the regularity structure~$\TT$ to~$\gr{\TT}$.
The following proposition provides an analytical counterpart on the level of models,~see~\cite[Prop.~3.10]{cfg}. 

\begin{proposition}[Extension]\label{app:prop:extension_op}
	Let $\bz = (\Pi,f) \in \MM$ and $h \in \CH$. Then, there exists a unique admissible model $\bz^{\eh} = (\Pi^{\eh},f^{\eh}) \in \gr{\MM}$ such that for all $z \in \R^3$, 
	\begin{enumerate}[label=(\arabic*)]
		\item $\Pi^{\eh}\sVert_{\CT} = \Pi$, $f^{\eh}\sVert_{\CT_+} = f$, $\Pi^{\eh}_z \<cm> \equiv h$, and
		\item $\Pi^{\eh}_z \tau = (\Pi^{\eh}_z \tau_1)(\Pi^{\eh}_z \tau_2)$ for all $\tau = \tau_1 \tau_2 \in \gr{\CW} \setminus \CW$ with $\tau_1 \in \gr{\CU}$ and $\tau_2 \in \{\<wn>,\<cm>\}$. 
		\item the conditions~\ref{def:smooth_admissible_model:Pi3} to \ref{def:smooth_admissible_model:Pi6} hold.
	\end{enumerate}
	In addition, the \emph{extension map} $E$ given by
	\begin{equation}
		E: \CH \x \MM \to \gr{\MM}, \ (h,\bz) \mapsto E_h \bz := \bz^{\eh}, 
	\end{equation}
	is jointly locally Lipschitz continuous.
\end{proposition}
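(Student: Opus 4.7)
The plan is to carry out an explicit, symbol-by-symbol construction of $\bz^{\eh}$ starting from smooth data $(\zeta,h)\in \CC^{\infty}\times \CC^{\infty}$ (so that $\bz=\LL(\zeta)\in\MM_{\infty,p}$) and then to pass to the limit using local Lipschitz bounds. Uniqueness will be a byproduct of the construction because conditions (1), (2) together with \ref{def:smooth_admissible_model:Pi3}--\ref{def:smooth_admissible_model:Pi6} leave no freedom: once $\Pi^{\eh}\<cm>\equiv h$ and $\Pi^{\eh}\sVert_{\CT}=\Pi$ are prescribed, property (2) dictates $\Pi^{\eh}_z$ on every mixed symbol $\<1g1>,\<11g>,\<1g1g>$; property \ref{def:smooth_admissible_model:Pi3} dictates it on $X_i\<cm>$; property \ref{def:smooth_admissible_model:Pi4}--\ref{def:smooth_admissible_model:Pi5} dictate it on $\<1g>$ (via convolution with $K$) and determine $f^{\eh}$ on the new generators $\CJ_\ell \<cm>$; property \ref{def:smooth_admissible_model:Pi6} is then automatic because it already holds for $\bz$ and reduces on the new symbols to the multiplicativity of $F^{\eh}_z$.

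The main work is analytic: with these definitions I need to verify the quantitative bound \ref{def:smooth_admissible_model:Pi1} for every $\tau\in\gr{\CW}\setminus\CW$. For $\<cm>$ itself I would use the continuous embedding $\CH=L^2(\T^2)\hookrightarrow \CC^{-1-\kappa}$ on the torus: Cauchy--Schwarz gives $|\langle h,\phi_z^\lambda\rangle|\lesssim \lambda^{-1}\|h\|_\CH$, so $\threebars \Pi^{\eh}\threebars_{\<cms>}\lesssim \|h\|_\CH$ with room to spare. For $\<1g>=\CI(\<cm>)$ the bound $|\Pi^{\eh}_z\<1g>(\phi_z^\lambda)|\lesssim \lambda^{1-\kappa}\|h\|_\CH$ follows from Schauder-type estimates on $K$ (using the expansion in \ref{def:smooth_admissible_model:Pi4}). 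For the products $\<11g>,\<1g1>$ I combine the pointwise bound $|\Pi^{\eh}_z\<1>(\bar z)|\lesssim |\bar z-z|^{1-\kappa}\threebars\Pi\threebars$ (from the existing model) with the Cameron-Martin bound for $\<cm>$ and~$\<1g>$; the product makes sense because the sum of regularities is positive. The symbol $\<1g1g>$ is the easiest one (both factors are of positive regularity). Finally $X_i\<cm>$ is handled by \ref{def:smooth_admissible_model:Pi3}.

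For the local Lipschitz estimate I would compare two extensions $(h,\bz)$ and $(\bar h,\bbz)$ symbol by symbol, systematically splitting each difference $A B-\bar A\bar B = (A-\bar A)B+\bar A(B-\bar B)$. Each of these pieces reduces either to a difference $\threebars\Pi-\bar\Pi\threebars_\fK$ (already controlled by hypothesis) or to $\|h-\bar h\|_\CH$ via the $L^2$-based estimates used in the previous paragraph; the overall bound is linear in these two quantities, with a multiplicative constant that is polynomial in $\threebars\Pi\threebars_\fK\vee\threebars\bar\Pi\threebars_\fK\vee\|h\|_\CH\vee\|\bar h\|_\CH$, which is exactly the definition of joint local Lipschitz continuity.

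Having established continuity on the dense set of smooth data, one extends $E$ by uniform continuity on bounded sets to the full space $\CH\times\MM$; the image lies in $\gr{\MM}$ because $\gr{\MM}$ is closed under $\threebars\cdot\threebars$. The most delicate step, and the one I would spend the most care on, is the estimate for $\<11g>$ and $\<1g1>$: the factor $\<cm>$ (or its integral) only lives in a Cameron-Martin-controlled regularity class, so producing a clean bound against a test function $\phi_z^\lambda$ requires a reconstruction-style argument (multiplication of distributions of opposite-sign regularity) rather than a pointwise product bound. This is precisely where it is essential that $h\in\CH$ and not merely $h\in\CC^{-1-\kappa}$, because one needs the extra $L^2$-integrability to pair with the $\CC^{1-\kappa}$-regularity of $\CR(\Pi\<1>)$.
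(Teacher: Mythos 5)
The paper does not give its own proof of this proposition; it is quoted from Cannizzaro--Friz--Gassiat~\cite{cfg}, Prop.~3.10, with the hard analytic bounds deferred to their Lemma~A.2. Your plan is in essence that argument: the admissibility conditions and (1)--(2) leave no freedom in the definition of $\Pi^{\eh}$ and $f^{\eh}$, so uniqueness is automatic; existence is proved symbol by symbol on smooth data, with the crucial quantitative input coming from the $L^2$-structure of $\CH$ (Cauchy--Schwarz against $\phi_z^\lambda$ for $\<cm>$, Schauder plus Sobolev for $\<1g>$, and a scale-mismatch product estimate where needed), followed by passage to $\MM$ by density and the local Lipschitz bound. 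So the approach matches.

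Two small factual corrections. First, you claim $\<1g1g>=\CI(H)H$ is easiest because \emph{both} factors have positive regularity, but $\reg{\<cm>}=-1-\kappa<0$; only the factor $\<1g>$ is positive. The estimate is nonetheless elementary, but for the reason that \emph{both factors are controlled by~$\|h\|_\CH$} and $\Pi^{\eh}_z\<1g>$ vanishes at $z$ of order $1-\kappa$, so a single Cauchy--Schwarz suffices:
\begin{equation*}
\abs[1]{\scal{(K*h(\cdot)-K*h(z))\,h,\phi_z^\lambda}}
\aac \lambda^{1-\kappa}\|h\|_\CH\cdot\|h\|_\CH\,\|\phi_z^\lambda\|_{L^2}
\aac \lambda^{-\kappa}\|h\|_\CH^2 \le \lambda^{-2\kappa}\|h\|_\CH^2.
\end{equation*}
Second, among the two mixed symbols $\<11g>$ and $\<1g1>$, only $\<1g1>=\CI(H)\Xi$ is genuinely delicate: for $\<11g>=\CI(\Xi)H$ both $\Pi^{\eh}_z\<1>=K*\xi(\cdot)-K*\xi(z)$ (a $\CC^{1-\kappa}$ function a.s.) and $\Pi^{\eh}_z\<cm>=h\in L^2$ are functions, so Cauchy--Schwarz again gives the bound $\lambda^{-\kappa}\threebars\Pi\threebars\|h\|_\CH$. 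It is $\<1g1>$ where one must pair the genuine distribution $\Pi\<wn>$ against the localised test function $(K*h(\cdot)-K*h(z))\phi_z^\lambda$ and exploit the vanishing at $z$ and the improved $B^2_{2,2}$-regularity of $K*h$; this is exactly what CFG's Lemma~A.2 provides. None of this changes the structure of your argument, only which symbol carries the real work.
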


Eventually, we want to build a \emph{translated} model $T_h \bz \in \MM$ for the original regularity structure~$\TT$, so we introduce abstract linear translation operations $\ft_{\<cms>}: \CT \to \CT^{\<cms>}$ and $\ft_{\<cms>}^+: \CT_+ \to \CT^{\<cms>}_+$ by
\begin{equs}
	\ft_{\<cms>} \<wn> := \<wn> + \<cm>, \quad 
	\ft_{\<cms>} X^k & := X^k, \quad
	\ft_{\<cms>} (\tau \sigma) := (\ft_{\<cms>} \tau) (\ft_{\<cms>} \sigma), \quad
	\ft_{\<cms>} (\CI \tau) := \CI(\ft_{\<cms>} \tau), \\
	\ft_{\<cms>}^+ X^k & := X^k, \quad
	\ft_{\<cms>}^+ (\tau \sigma) := (\ft^+_{\<cms>} \tau) (\ft^+_{\<cms>} \sigma), \quad
	\ft_{\<cms>}^+ (\CJ_\ell \tau) := \CJ_\ell(\ft_{\<cms>} \tau)
\end{equs}
where the very last relations holds for all~$\tau \in \CT$ such that $\CI\tau \in \CT$ and $\reg{\tau} + 2 - \abs{\ell} > 0$. We then have the following result, see~\cite[Prop.~3.12]{cfg}:

\begin{proposition}[Translation] \label{app:prop:translation}
	Let $\bz = (\Pi,f) \in \MM$ and $h \in \CH$. Then, $\bz^{h} = (\Pi^h,f^h)$ given by $\Pi^h := \Pi^{\eh} \circ \ft_{\<cms>}$ and $ f^h := f^{\eh} \circ \ft_{\<cms>}^+$ is an element of $\MM$, i.e. an admissible model for $\TT$. In addition, the \emph{translation map} $T$ given by
	\begin{equation}
		T: \CH \x \MM \to \MM, \ (h,\bz) \mapsto T_h \bz := \bz^h, 
	\end{equation}
	is jointly locally Lipschitz continuous.
\end{proposition}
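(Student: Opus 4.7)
\medskip

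\noindent\textbf{Proof proposal.} The plan is to leverage Proposition~\ref{app:prop:extension_op} in order to reduce the translation proposition to an essentially algebraic verification plus a continuity argument. Concretely, for $(h,\bz) \in \CH \times \MM$, Proposition~\ref{app:prop:extension_op} produces $\bz^{\eh}=(\Pi^{\eh},f^{\eh}) \in \gr{\MM}$, which is an admissible model in the sense of Definition~\ref{def:smooth_admissible_model} (adapted to~$\gr{\TT}$). The candidate translated model is obtained by pre-composition: $\Pi^{h}:=\Pi^{\eh}\circ\ft_{\<cms>}$ and $f^{h}:=f^{\eh}\circ\ft_{\<cms>}^{+}$. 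Since $\ft_{\<cms>}:\CT\to\gr{\CT}$ and $\ft_{\<cms>}^{+}:\CT_{+}\to\gr{\CT_{+}}$ are linear and designed to commute with products and with $\CI$, $\CJ_{\ell}$, the analytic and algebraic axioms required of $(\Pi^{h},f^{h})$ should follow essentially mechanically from the corresponding ones for $(\Pi^{\eh},f^{\eh})$.

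\medskip

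The key verifications proceed as follows. For the analytic bound \ref{def:smooth_admissible_model:Pi1}, one checks that $\ft_{\<cms>}\tau$ is a linear combination of basis vectors in $\gr{\CT}$ all of the same homogeneity $\reg{\tau}$ (e.g.\ $\ft_{\<cms>}\<wn>=\<wn>+\<cm>$, both of degree $-1-\kappa$, and $\ft_{\<cms>}\<11>=\<11>+\<1g1>+\<11g>+\<1g1g>$, all of degree $-2\kappa$); the bound then transfers from $\bz^{\eh}$ to $\bz^{h}$ with a constant depending only on the (finite) number of such basis vectors appearing. The polynomial condition \ref{def:smooth_admissible_model:Pi2}--\ref{def:smooth_admissible_model:Pi3} is immediate from $\ft_{\<cms>}\mathbf{1}=\mathbf{1}$ and $\ft_{\<cms>}X^{k}=X^{k}$. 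The admissibility condition \ref{def:smooth_admissible_model:Pi4} and the recursive definition of~$f_{y}^{h}$ in \ref{def:smooth_admissible_model:Pi5} are inherited because $\ft_{\<cms>}\CI\tau=\CI\ft_{\<cms>}\tau$ and $\ft_{\<cms>}^{+}\CJ_{\ell}\tau=\CJ_{\ell}\ft_{\<cms>}\tau$ by definition, so applying $\Pi^{\eh}$ (resp.\ $f^{\eh}$) after $\ft_{\<cms>}$ (resp.\ $\ft_{\<cms>}^{+}$) reproduces the required identities verbatim. For the compatibility axiom \ref{def:smooth_admissible_model:Pi6}, the main point is to verify that $\ft_{\<cms>}^{+}$ intertwines the two comodule structures, i.e.\ $\gr{\Delta}\circ\ft_{\<cms>}=(\ft_{\<cms>}\otimes\ft_{\<cms>}^{+})\circ\Delta$ on $\CT$; this is a direct induction over the recursive construction of $\Delta$, using that $\ft_{\<cms>}$ is multiplicative and commutes with $\CI,\CJ_{\ell}$. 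Granted this intertwining, $F_{y}^{h}=\Gamma_{f_{y}^{h}}$ and \ref{def:smooth_admissible_model:Pi6} for $\bz^{h}$ reduces to \ref{def:smooth_admissible_model:Pi6} for $\bz^{\eh}$.

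\medskip

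For joint local Lipschitz continuity of $T$, the strategy is as follows: given $(h,\bz),(\bar h,\bbz)\in\CH\times\MM$ in a bounded set, Proposition~\ref{app:prop:extension_op} gives $\threebars\bz^{\eh}-\bbz^{\bar e_{\bar h}}\threebars\lesssim\norm{h-\bar h}_{\CH}+\threebars\bz-\bbz\threebars$. Now, for $\tau\in\CF$ of degree $\alpha$, $\ft_{\<cms>}\tau$ is a fixed finite linear combination $\sum_{i}c_{i}\sigma_{i}$ with $\sigma_{i}\in\gr{\CF}$ of the same degree $\alpha$, so that for any test function $\phi_{z}^{\lambda}$,
\[
|(\Pi_{z}^{h}\tau-\bar\Pi_{z}^{\bar h}\tau)(\phi_{z}^{\lambda})|\le\sum_{i}|c_{i}|\,|(\Pi_{z}^{\eh}\sigma_{i}-\bar\Pi_{z}^{\bar e_{\bar h}}\sigma_{i})(\phi_{z}^{\lambda})|\lesssim\lambda^{\alpha}\threebars\bz^{\eh}-\bbz^{\bar e_{\bar h}}\threebars.
\]
Taking the supremum over $\lambda,\phi,z,\tau$ gives the desired Lipschitz estimate on the $\Pi$ part; the $\Gamma$ (equivalently $f$) part is handled identically using that $\ft_{\<cms>}^{+}$ sends each $\CJ_{\ell}\tau$ to a fixed finite linear combination of basis elements of $\gr{\CT_{+}}$ with the same associated homogeneity bookkeeping.

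\medskip

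The main obstacle I anticipate is the algebraic intertwining $\gr{\Delta}\circ\ft_{\<cms>}=(\ft_{\<cms>}\otimes\ft_{\<cms>}^{+})\circ\Delta$: although morally obvious from the construction, checking it rigorously by induction on the recursive definition of $\Delta$ requires some bookkeeping, in particular for the formula $\gr{\Delta}(\CI\tau)=(\CI\otimes\operatorname{Id})\gr{\Delta}\tau+\sum_{k+\ell\in B_{\tau}}\tfrac{1}{k!\ell!}X^{k}\otimes X^{\ell}\CJ_{k+\ell}(\tau)$, where one has to verify that the positive-renormalization contribution on the right-hand side transforms consistently under $\ft_{\<cms>}^{+}$. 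Once this compatibility is established, condition \ref{def:smooth_admissible_model:Pi6} for $\bz^{h}$ follows without pain, and the rest of the proof is a direct transfer from $\gr{\MM}$ to $\MM$ through the linear, degree-preserving operator $\ft_{\<cms>}$.
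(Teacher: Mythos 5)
You have reconstructed, essentially correctly, the proof that the paper does \emph{not} spell out: note that Proposition~\ref{app:prop:translation} is stated here with a citation to \cite[Prop.~3.12]{cfg} and no proof is given in this appendix. So there is no ``paper proof'' to compare against, but your argument follows the route one would expect from that reference: reduce to Proposition~\ref{app:prop:extension_op}, check the analytic axiom \ref{def:smooth_admissible_model:Pi1} using that $\ft_{\<cms>}$ sends a basis symbol $\tau$ to a fixed finite linear combination of symbols of the \emph{same} degree, inherit \ref{def:smooth_admissible_model:Pi2}--\ref{def:smooth_admissible_model:Pi5} from the commutation of $\ft_{\<cms>}$ with products, $\CI$, and $\CJ_\ell$, and reduce \ref{def:smooth_admissible_model:Pi6} to the intertwining $\gr{\Delta}\circ\ft_{\<cms>}=(\ft_{\<cms>}\otimes\ft_{\<cms>}^{+})\circ\Delta$. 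The intertwining does hold and the induction goes through because $\ft_{\<cms>}$ preserves homogeneities, which is what guarantees $B_{\ft_{\<cms>}\tau}=B_\tau$ in the $\CI$ step; you flagged this as the delicate point, and rightly so. Once the intertwining is in place, one gets $\ft_{\<cms>}\circ F^h_y=F^{\eh}_y\circ\ft_{\<cms>}$ and hence \ref{def:smooth_admissible_model:Pi6} for $\bz^h$, and also the consistency of $f^h=f^{\eh}\circ\ft^+_{\<cms>}$ with the recursive prescription \ref{def:smooth_admissible_model:Pi5}, i.e.\ $f^h_y(\CJ_\ell\tau)=-(D^\ell K*\Pi^h_y\tau)(y)$.

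Two small remarks rather than gaps. First, in Definition~\ref{def:admissible_model} the pseudometric $\threebars\cdot\threebars_{\fK}$ (and likewise $\barnorm{\cdot}$) is defined only in terms of $\Pi$; admissibility determines $\Gamma$ from $\Pi$, so your Lipschitz estimate for the $\Pi$ part already closes the continuity argument and the sentence about handling the $\Gamma$ part ``identically'' is superfluous in this set-up (it would become relevant only in a formulation where $\Gamma$ enters the model metric explicitly). Second, for the joint local Lipschitz bound you invoke Proposition~\ref{app:prop:extension_op} to get $\threebars\bz^{\eh};\bbz^{\bar e_{\bar h}}\threebars\lesssim\norm{h-\bar h}_{\CH}+\threebars\bz;\bbz\threebars$ on bounded sets; this is exactly what that proposition's ``jointly locally Lipschitz'' conclusion furnishes, so the reduction is clean and the final estimate $\threebars\bz^h;\bbz^{\bar h}\threebars\lesssim\threebars\bz^{\eh};\bbz^{\bar e_{\bar h}}\threebars$ via the fixed finite combination $\ft_{\<cms>}\tau=\sum_i c_i\sigma_i$ is correct. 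Overall the proposal is a faithful reconstruction of the standard argument.
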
 

\begin{remark} \label{rmk:lift_h}
	From the previous proposition, we also see that~$\LL(h)$ is well-defined for~$h \in \CH$. Indeed, because~$T_h \boldsymbol{0}$ for~$\boldsymbol{0} := \LL(0)$ is a model, we know that the analytical and algebraic relations for constructing~$\LL(h)$ by~\thref{def:canonical_lift} are true. 
	In fact, one simply has~$\LL(h) = T_h \boldsymbol{0}$, where the RHS may even serve as a \emph{definition} of the LHS. In the most general setting, this relation also holds, see the first equation of~p.~$28$ in the work of Sch\"onbauer~\cite{schoenbauer}.
	
	In addition, it is easy to see that for~$h, k \in \CH$, we have~$T_h \LL(k) = \LL(k + h)$.
\end{remark}

The following lemma will be crucial in sec.~\ref{app:fernique} below when we investigate Gaussian concentration on~$(\MM_-,\barnorm{\cdot})$.

\begin{lemma}\label{lem:ineq_translation_barnorm}
	Let $h \in \CH$ and $\bz^- \in \MM_-$.	Then, we have the inequality 
	\begin{equation*}
		\barnorm{T_h \bz^-} \aac \barnorm{\bz^-} + \norm{h}_\CH.
	\end{equation*}
\end{lemma}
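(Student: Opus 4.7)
Since $\barnorm{\,\cdot\,}$ is a maximum over the two generators $\CW_{-} = \{\<wn>, \<11>\}$, it suffices to bound $\barnorm{T_h\bz^-}_\tau$ for $\tau = \<wn>$ and $\tau = \<11>$ separately. By Proposition~\ref{app:prop:translation}, $\ft_{\<cms>}\<wn> = \<wn> + \<cm>$ and, expanding the product $\<11> = \CI(\<wn>)\<wn>$,
\[
\ft_{\<cms>}\<11> = \<11> + \<1g1> + \<11g> + \<1g1g>,
\]
so, writing $\bz^{\esh} = E_h\,\EE(\bz^{-})$ via Proposition~\ref{app:prop:extension_op} and using multiplicativity,
\[
\Pi^h_z\<wn> = \Pi_z\<wn> + h, \qquad \Pi^h_z\<11> = \Pi_z\<11> + \Pi^{\esh}_z\<1g1> + \Pi^{\esh}_z\<11g> + \Pi^{\esh}_z\<1g1g>.
\]
All four components in the expansion of $\Pi^h_z\<11>$ have degree $-2\kappa$, so it suffices to control their pairings with $\phi^\l_z \in \CB$ at scale $\l^{-2\kappa}$.

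For $\tau = \<wn>$, since $[\<wn>] = 1$ and $\reg{\<wn>} = -1-\kappa$, Cauchy--Schwarz yields $|\langle h,\phi^\l_z\rangle| \leq \|h\|_{\CH}\|\phi^\l_z\|_{L^2} \aac \l^{-1}\|h\|_{\CH}$; multiplying by $\l^{1+\kappa}$ gives $\l^{\kappa}\|h\|_\CH \leq \|h\|_\CH$. For $\tau = \<11>$ (where $[\<11>] = 2$), we bound each of the four summands by a constant times $\l^{-2\kappa}(\barnorm{\bz^-} + \|h\|_\CH)^2$ and use $2ab \leq a^2 + b^2$: the pure-noise term gives $\barnorm{\bz^-}_{\<11s>}^2$ directly; for the pure Cameron--Martin term $\<1g1g>$, the elliptic regularity bound $\|K*h\|_{C^{1-\kappa}} \aac \|h\|_\CH$ (Schauder in $2$D) implies $|\Pi^{\esh}_z\<1g>(y)| \leq |(K*h)(y) - (K*h)(z)| \aac \|h\|_\CH\, |y-z|^{1-\kappa}$, and Cauchy--Schwarz against $h\,\phi^\l_z$ yields a contribution of order $\l^{-\kappa}\|h\|_\CH^2$; for the mixed term $\<11g> = \<1>\<cm>$, the analogous bound $|\Pi_z\<1>(y)| \aac \barnorm{\bz^-}_{\<wns>}\,|y-z|^{1-\kappa}$ (from \ref{def:smooth_admissible_model:Pi4} and \ref{def:smooth_admissible_model:Pi5}) gives $\l^{-\kappa}\barnorm{\bz^-}\|h\|_{\CH}$; the remaining term $\<1g1> = \CI(\<cm>)\<wn>$ is treated analogously, testing the distribution $\Pi_z\<wn>$ against the compactly supported cut-off $\psi^\l_z(y) := [(K*h)(y) - (K*h)(z)]\phi^\l_z(y)$ which, after rescaling, factors as $\l^{1-\kappa}\|h\|_\CH$ times an admissible test function, yielding again $\l^{-\kappa}\barnorm{\bz^-}\|h\|_{\CH}$. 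Summing and multiplying by $\l^{2\kappa}$ gives the desired $(\barnorm{\bz^-} + \|h\|_\CH)^2$ bound; taking square roots completes the proof.

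The main obstacle is the rigorous treatment of the mixed term $\<1g1>$, since the cut-off $\psi^\l_z$ has only $C^{1-\kappa}$ regularity, below the $C^2$ threshold built into the definition of $\CB$. This is handled either by a direct approximation argument---smoothly mollifying $\psi^\l_z$ at scale $\l$, using that $\barnorm{\bz^-}_{\<wns>}$ controls pairings against all rescaled $C^2$ test functions, and passing to the limit via the $L^\infty$-closeness of the mollification---or equivalently by first establishing the bound on smooth admissible models (where the pointwise product is classical) and extending by density to $\MM_-$ using that both sides of the inequality are continuous on $(\MM_-,\threebars\,\cdot\,\threebars)$.
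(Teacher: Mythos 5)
Your proposal is correct in its overall plan but takes a genuinely different route from the paper: you re-derive the analytic estimates by hand, whereas the paper's proof is much shorter because its key observation is that the constants appearing in the proof of \cite[Lem.~A.2]{cfg} already exhibit the required homogeneity, namely that for $\tau \in \gr{\CW_-}\setminus\CW_-$ one has $\threebars\Pi^{\eh}\threebars_{\tau} \aac \norm{h}_{\CH}^{[\tau]_{\<cms>}}\threebars\Pi^{\minus}\threebars_{\<wns>}^{[\tau]_{\<wns>}}$. With this black box, the claim follows in a few lines from $\sqrt{ab}\le a+b$. Your direct approach is more self-contained and pedagogically transparent — the decomposition into the four terms $\<11>, \<1g1>, \<11g>, \<1g1g>$ and the Cauchy--Schwarz bounds for $\<11g>, \<1g1g>$ are correct — but it trades brevity for a non-trivial gap in the $\<1g1>$ case, which is exactly the content of \cite[Lem.~A.2]{cfg} that the paper chose not to reprove.

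The $\<1g1>$ estimate is the crux. You correctly identify it and you correctly flag that the cut-off $\psi^{\l}_z(y) = [(K*h)(y)-(K*h)(z)]\phi^{\l}_z(y)$ only has $\CC^{1-\kappa}$ regularity, below the $\CC^2$ threshold of $\CB_2$. The issue is more delicate than your two proposed fixes suggest: since $\reg{\<wn>} = -1-\kappa$ and $1-\kappa < 1+\kappa$, the pairing $\scal{\Pi_z\<wn>, \psi^{\l}_z}$ is not an a priori defined Besov duality pairing, and mollification in $\mu$ does not converge just from $L^\infty$-closeness — you need a quantitative estimate on the commutator, which in turn requires the support/vanishing structure of $\psi^{\l}_z$ near $z$ (its factor $(K*h)(y)-(K*h)(z)$ vanishes to order $1-\kappa$). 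The "smooth models plus density" route is the cleaner fix, but for it to give a \emph{uniform} bound in the smoothing parameter one again needs precisely the wavelet-type or integration-by-parts argument that \cite[Lem.~A.2]{cfg} carries out. So your proof is morally complete but would need to inline the content of that lemma to be rigorous; the paper's citation is not avoidable without re-doing that work.

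One small computational point: for $\<1g1>$ the rescaling gives $|\scal{\Pi_z\<wn>, \psi^{\l}_z}| \aac \l^{1-\kappa}\norm{h}_{\CH}\cdot \barnorm{\bz^-}\l^{-1-\kappa} = \l^{-2\kappa}\norm{h}_{\CH}\barnorm{\bz^-}$, not $\l^{-\kappa}$ as you write. Since $\reg{\<1g1>} = -2\kappa$, this is exactly on the boundary (the prefactor $\l^{2\kappa}$ cancels), so the error does not affect the conclusion, but as written your exponent would over-prove the bound and it is worth correcting.
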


\begin{remark}
	Note that the inequality in the previous lemma is uniform in~$h \in \CH$. This is to be compared to~\thref{app:prop:translation}, where joint Lipschitz continuity w.r.t.~$\threebars \cdot \threebars$ holds \emph{locally}.
\end{remark}

\begin{proof}
	Recall that $[\tau] = \#[\<wn> \ \text{in} \ \tau]$ and set~$[\tau]_{\<cms>} := \#[\<cm> \ \text{in} \ \tau]$. By the identities 
		\begin{equation}
		\CW_- = \{\<wn>,\<11>\}, \quad
		\gr{\CW_-} = \{\<cm>,\<wn>,\<11>,\<1g1>,\<11g>,\<1g1g>\},  \quad
		\ft_{\<cms>} \<wn> = \<wn> + \<cm>, \quad
		\ft_{\<cms>} \<11> = \<11> + \<1g1> + \<11g> + \<1g1g>,
		\end{equation}
	we trivially have 
	\begin{equs}[][lem:ineq_translation_barnorm:pf:eq1]
		\barnorm{\bz^{h,\minus}} 
		&
		= 
		\barnorm{\bz^{\eh,\minus} \circ \ft_{\<cms>}}
		= 
		\sup_{\tau \in \CW_-} \sup_{\l, \phi, y}  \del[2]{\l^{-\thinspace\reg{\tau}} \abs[1]{\scal{(\Pi^{\eh}_y \circ \ft_{\<cms>}) \tau,\phi_y^{\l}}}}^{\frac{1}{[\tau]}} \\
		&
		\leq  
		\sup_{\l, \phi, y}  \l^{-\thinspace\reg{\<wns>}} \abs[1]{\scal{(\Pi^{\eh}_y \circ \ft_{\<cms>}) \<wn>,\phi_y^{\l}}}
		+ 
		\sup_{\l, \phi, y}  \del[2]{\l^{-\thinspace\reg{\<11s>}} \abs[1]{\scal{(\Pi^{\eh}_y \circ \ft_{\<cms>}) \<11>,\phi_y^{\l}}}}^{\frac{1}{2}} \\
		&
		\aac
		\sup_{\tau \in \gr{\CW_-}} \sup_{\l,\phi,y} \del[2]{\l^{-\thinspace\reg{\tau}} \abs[1]{\scal{\Pi^{\eh}_y \tau,\phi_y^{\l}}}}^{\frac{1}{\llbracket \tau \rrbracket}}
		=
		\sup_{\tau \in \gr{\CW_-}} \threebars \Pi^{\eh} \threebars_\tau^{\frac{1}{\llbracket \tau \rrbracket}},
	\end{equs}
	where $\llbracket \tau \rrbracket$ denotes the total number of \enquote{noises} $\<wn>$ and $\<cm>$ in the symbol~$\tau$, that is: $\llbracket \tau \rrbracket := [\tau] + [\tau]_{\<cms>}$ and the suprema over $\l$, $\phi$, and $y$ are taken as in~\eqref{def:minimal_admissible_model:norm:suprema}. 
	
	Changing~$\gr{\CW_-} \rightsquigarrow \CW_-$ in the last term of~\eqref{lem:ineq_translation_barnorm:pf:eq1} simply gives~$\barnorm{\bz^{\minus}}$.
	Upon carefully inspecting the proof of~\cite[Lem.~A.2]{cfg}, one finds that the authors actually show the following bound for~$\tau \in \gr{\CW_-} \setminus \CW_-$:
	\begin{equation*}
		\threebars \Pi^{\eh} \threebars_\tau
		\aac
		\norm{h}_\CH^{[\tau]_{\<cms>}} \threebars\Pi^{\minus}\threebars_{\<wns>}^{[\tau]_{\<wns>}}
		=
		\norm{h}_\CH^{[\tau]_{\<cms>}} \barnorm{\Pi^{\minus}}_{\<wns>}^{[\tau]_{\<wns>}}
	\end{equation*}
	In particular, using the trivial bound $\sqrt{ab} \leq a+b$ for $a,b \geq 0$ in case~$\tau \in \{\<1g1>,\<11g>\}$, this implies the estimate
	\begin{equation*}
		\sup_{\tau \in \gr{\CW_-} \setminus \CW_-} \threebars\Pi^{\eh}\threebars_\tau^{\frac{1}{\llbracket \tau \rrbracket}}
		\aac
		\norm{h}_\CH + \barnorm{\Pi^{\minus}}_{\<wns>}
		\aac
		\norm{h}_\CH + \barnorm{\bz^{\minus}}.
	\end{equation*}
	Combining all the previous observations leads to
	\begin{equs}
		\sup_{\tau \in \gr{\CW_-}} \threebars \Pi^{\eh} \threebars_\tau^{\frac{1}{\llbracket \tau \rrbracket}}
		\leq
		\sup_{\tau \in \CW_-} \threebars\Pi^{\eh}\threebars_\tau^{\frac{1}{\llbracket \tau \rrbracket}}  
		+ 
		\sup_{\tau \in \gr{\CW_-} \setminus \CW_-} \threebars\Pi^{\eh}\threebars_\tau^{\frac{1}{\llbracket \tau \rrbracket}} 
		\aac
		\barnorm{\bz^-} + \norm{h}_\CH,
	\end{equs}
	and thereby proves the claim.
\end{proof}

\subsubsection*{$\bullet$ Dilation.}

The operator $T_h$ from~\thref{app:prop:translation} accounts for translations $\xi \rightsquigarrow \xi + h$ in Cameron-Martin directions~$h \in \CH$. Reviewing \thref{gpam}, we also need to analytically encode the operation~$\xi \rightsquigarrow \eps \xi$ which goes by the name~\emph{dilation} by $\eps \in I$. 
In analogy with translation, we first introduce some abstract linear dilation operations $\fd_{\eps}: \gr{\CT} \to \gr{\CT}$ and $\fd_{\eps}^+: \gr{\CT_+} \to \gr{\CT_+}$ by
\begin{equs}[][def:frac_dil_symbols]
	\fd_{\eps} \<wn> := \eps\<wn>, \quad 
	\fd_{\eps} \<cm> := \<cm>, \quad
	\fd_{\eps} X^k & := X^k, \quad
	\fd_{\eps} (\tau \sigma) := (\fd_{\eps} \tau) (\fd_{\eps} \sigma), \quad
	\fd_{\eps} (\CI \tau) := \CI(\fd_{\eps} \tau), \\
	\fd_{\eps}^+ X^k & := X^k, \quad
	\fd_{\eps}^+ (\tau \sigma) := (\fd_{\eps}^+ \tau) (\fd_{\eps}^+ \sigma), \quad
	\fd_{\eps}^+ (\CJ_\ell \tau) := \CJ_\ell(\fd_{\eps} \tau)
\end{equs}
where the very last relations holds for all~$\tau \in \gr{\CT}$ such that $\CI\tau \in \gr{\CT}$ and $\reg{\tau} + 2 - \abs{\ell} > 0$.

\begin{proposition}[Dilation] \label{app:def:dilation}
	Let $\gr{\bz} = (\gr{\Pi},\gr{f}) \in \gr{\MM}$ be an admissible model. Then, $\d_\eps \gr{\bz} = (\gr{\Pi}^\eps,\gr{f}^\eps)$ given by $\gr{\Pi}^\eps := \gr{\Pi} \circ \fd_{\eps}$ and $\gr{f}^\eps := \gr{f} \circ \fd_{\eps}^+$ is an element of $\gr{\MM}$. In addition, the \emph{dilation map} $\d$ given by
	\begin{equation}
		\d: \R \x \gr{\MM} \to \gr{\MM}, \ (\eps,\gr{\bz}) \mapsto \d_\eps \gr{\bz}, 
	\end{equation}
	is jointly continuous and $\gr{\Pi}^\eps \tau = \eps^{[\tau]} \gr{\Pi} \tau$ for any~$\tau \in \gr{\CT}$.
	For~$h \in \CH$ and the canonical lift~$\LL$ introduced in~\thref{def:canonical_lift} (cf. also~\thref{rmk:lift_h}), we have~$\d_\eps\LL(h) = \LL(\eps h)$. 
\end{proposition}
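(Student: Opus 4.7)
My approach proceeds in four steps mirroring the four assertions, with the homogeneity formula $\gr{\Pi}^\eps \tau = \eps^{[\tau]}\gr{\Pi}\tau$ (where $[\tau]$ denotes the number of $\<wn>$ factors in $\tau$) serving as the technical backbone. I would establish this identity first by induction on the tree structure of $\tau \in \gr{\CF}$. The base cases are trivial: $\fd_\eps$ leaves $\1$, $X_i$, $\<cm>$ invariant (matching $[\cdot]=0$) and sends $\<wn> \mapsto \eps\<wn>$. For a product $\tau = \tau_1 \tau_2$ the multiplicativity $\fd_\eps(\tau_1 \tau_2) = (\fd_\eps \tau_1)(\fd_\eps \tau_2)$, combined with multiplicativity of $\gr{\Pi}_y$ on the relevant sector and additivity $[\tau_1\tau_2] = [\tau_1] + [\tau_2]$, closes the induction. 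The case $\tau = \CI\sigma$ is the only delicate one: exploiting $\fd_\eps \CI \sigma = \CI \fd_\eps \sigma$ and $\fd_\eps^+ \CJ_\ell \sigma = \CJ_\ell \fd_\eps \sigma$, the prescriptions (\Pi 4) and (\Pi 5) together with the inductive hypothesis let both summands (the convolution $K * \gr{\Pi}_y \fd_\eps\sigma$ and the polynomial correction involving $f_y(\CJ_\ell \fd_\eps \sigma)$) factor out a common $\eps^{[\sigma]} = \eps^{[\CI\sigma]}$.

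Given this formula, the remaining axioms (\Pi 2)--(\Pi 6) for $\d_\eps \gr{\bz}$ transfer directly from $\gr{\bz}$ by precomposing with $\fd_\eps$, $\fd_\eps^+$, since these operators are designed to commute with $X^k$, products, the abstract integration/derivation $\CI/\CJ_\ell$, and the construction $\gr{f} \mapsto \Gamma_{\gr{f}}$. The analytic axiom (\Pi 1) is then immediate from Step~1:
\[
\abs[1]{(\gr{\Pi}^\eps_y \tau)(\phi_y^\lambda)} = \eps^{[\tau]}\,\abs[1]{(\gr{\Pi}_y \tau)(\phi_y^\lambda)} \aac_\fK \eps^{[\tau]} \lambda^{\thinspace\reg{\tau}}.
\]
Periodicity in the sense of \thref{def:periodic_models} passes through because $\fd_\eps, \fd_\eps^+$ do not interact with the spatial translations $T_i$.

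For joint continuity of $\d$, the telescoping identity
\[
\gr{\Pi}^\eps_z \tau - \gr{\bar{\Pi}}^{\bar\eps}_z \tau = (\eps^{[\tau]} - \bar\eps^{[\tau]})\,\gr{\Pi}_z \tau + \bar\eps^{[\tau]}\,(\gr{\Pi}_z - \gr{\bar{\Pi}}_z)\tau,
\]
together with the analogous decomposition for the $\gr{\Gamma}$-component derived from $\fd_\eps^+$, yields upon testing against $\phi_z^\lambda$ and taking the supremum defining $\threebars \cdot \threebars_\fK$ an estimate
\[
\threebars \d_\eps\gr{\bz} - \d_{\bar\eps}\gr{\bar{\bz}}\threebars_\fK \aac \abs{\eps - \bar\eps}\, P(\threebars \gr{\bz}\threebars_\fK) + (1 + \abs{\bar\eps})\, \threebars \gr{\bz} - \gr{\bar{\bz}} \threebars_\fK
\]
for some polynomial $P$, which is precisely joint continuity. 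Finally, both $\d_\eps \LL(h)$ and $\LL(\eps h)$ are admissible models uniquely determined, via the recursive prescriptions of \thref{def:canonical_lift}, by their action on $\<wn>$; since
\[
(\gr{\Pi}^{\LL(h)})^\eps_y\<wn>(\bar y) = \gr{\Pi}^{\LL(h)}_y(\eps\<wn>)(\bar y) = \eps\, h(\bar y) = \gr{\Pi}^{\LL(\eps h)}_y \<wn>(\bar y),
\]
the two models agree.

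I expect the main obstacle, though modest in scope, to be the bookkeeping in the recursive step for $\tau = \CI\sigma$ in the homogeneity formula: one must simultaneously track how both $\gr{\Pi}_y \CI \fd_\eps\sigma$ and the subleading polynomial terms in \eqref{def:smooth_admissible_model:Pi4:eq} transform under $\fd_\eps^+$, ensuring that $\eps^{[\sigma]}$ factors out uniformly across both summands. The rest is a routine consequence of the explicit formulas for $\fd_\eps$ and $\fd_\eps^+$, which were designed precisely to make each of the axioms and the canonical-lift construction transparently covariant under dilation.
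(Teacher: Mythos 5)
Your overall route is the same as the paper's: verify axioms $(\Pi1)$--$(\Pi6)$ and the canonical-lift identity by tracking how $\fd_\eps$ and $\fd_\eps^+$ interact with each piece of the construction. But you have inverted where the actual work lies.

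The homogeneity formula $\gr{\Pi}^\eps\tau = \eps^{[\tau]}\gr{\Pi}\tau$ is \emph{not} the delicate part and requires no tree induction, let alone care in the $\CI\sigma$ case: from~\eqref{def:frac_dil_symbols} one reads off immediately that $\fd_\eps\tau = \eps^{[\tau]}\tau$ and $\fd_\eps^+\sigma = \eps^{[\sigma]}\sigma$ for every basis symbol (this scalar identity is used freely in the paper, e.g.\ in the proof of \thref{lem:ext_dil_commute}), and then $\gr{\Pi}^\eps\tau = \gr{\Pi}(\fd_\eps\tau) = \eps^{[\tau]}\gr{\Pi}\tau$ by linearity of $\gr{\Pi}$ alone, with no appeal to $(\Pi4)$ or $(\Pi5)$. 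You expect "the main obstacle" in exactly the place where there is none.

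Conversely, the step you dismiss as "designed to commute" --- the interaction of $\fd_\eps,\fd_\eps^+$ with the construction $\gr{f}\mapsto\Gamma_{\gr{f}}$ --- is precisely the substantive content, and it is what makes the algebraic axiom $(\Pi6)$ go through. The paper isolates this as \thref{lem:dil_Gamma} and proves the identity $\fd_\eps\circ\gr{\Gamma}^\eps = \gr{\Gamma}\circ\fd_\eps$ by first reducing it to the co-module commutation $(\fd_\eps\otimes\fd_\eps^+)\gr{\Delta} = \gr{\Delta}\circ\fd_\eps$ and then verifying \emph{that} by a recursion over the tree structure (base cases $\{\1,\<wn>,\<cm>\}$, then $X_i$, then products, then the $\CI\tau$ case via the $\CJ_{k+\ell}$-terms in $\gr{\Delta}(\CI\tau)$). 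This is not transparent from the definitions the way you suggest, because $\Gamma_{\gr{f}}$ mixes model and structure-group data through the co-product, and one must check the two scalar weights $\eps^{[\cdot]}$ produced by $\fd_\eps$ and $\fd_\eps^+$ match across the tensor factors of $\gr{\Delta}\tau$. Stating this commutation as a lemma and proving it recursively is what makes your "axioms transfer directly" line actually true. Your joint-continuity telescoping and the canonical-lift identification $\d_\eps\LL(h)=\LL(\eps h)$ via agreement on $\<wn>$ and uniqueness of the lift are both fine and match the (mostly implicit) argument the paper relies on.
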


\begin{remark} \label{rmk:dilation_min_models}
	Observe that the dilation operator~$\d$ also maps~$\MM$ resp.~$\MM_-$ onto themselves.
\end{remark}

In order to prove that~$\d$ is well-defined and thus the preceding proposition, we need the following lemma which investigates how the structure group transforms unter dilations.

\begin{lemma} \label{lem:dil_Gamma}
	In the setting of~\thref{app:def:dilation}, consider the operators $\gr{\Gamma} := \Gamma_{\gr{f}} \in \gr{\CG}$ and $\gr{\Gamma}^\eps := \Gamma_{\gr{f}^\eps} \in \gr{\CG}$ built via~\eqref{app:rs:str_gr}. Then, $\fd_\eps \circ \gr{\Gamma}^\eps = \gr{\Gamma} \circ \fd_\eps$.
\end{lemma}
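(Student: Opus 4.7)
The plan is to reduce the claim to a \emph{co-algebraic} compatibility between dilation and the coproduct, namely the identity
\begin{equation*}
  (\fd_\eps \otimes \fd_\eps^+) \circ \gr{\Delta} \;=\; \gr{\Delta} \circ \fd_\eps \quad \text{on } \gr{\CT}.
\end{equation*}
Once this is established, the lemma follows in one line: for any $\tau \in \gr{\CT}$, using the definition $\gr{\Gamma}^\eps = \Gamma_{\gr{f}^\eps}$ with $\gr{f}^\eps = \gr{f} \circ \fd_\eps^+$ from~\thref{app:def:dilation} and the formula~\eqref{app:rs:str_gr},
\begin{equation*}
  \fd_\eps \gr{\Gamma}^\eps \tau
  = \fd_\eps (\mathrm{Id} \otimes \gr{f} \circ \fd_\eps^+) \gr{\Delta} \tau
  = (\mathrm{Id} \otimes \gr{f}) (\fd_\eps \otimes \fd_\eps^+) \gr{\Delta} \tau
  = (\mathrm{Id} \otimes \gr{f}) \gr{\Delta} \fd_\eps \tau
  = \gr{\Gamma} \fd_\eps \tau,
\end{equation*}
which is precisely the claimed commutation.

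To prove the co-algebraic identity I would induct on the structure of $\tau \in \gr{\CF}$, following the recursive definitions of $\gr{\Delta}$, $\fd_\eps$, and $\fd_\eps^+$ given in subsection~\ref{app:rs_background} and~\eqref{def:frac_dil_symbols}. The base cases $\tau \in \{\1, \<wn>, \<cm>, X_i\}$ are immediate: for $\<wn>$ we get $(\fd_\eps \otimes \fd_\eps^+)(\<wn> \otimes \1) = \eps \<wn> \otimes \1 = \gr{\Delta}(\eps \<wn>)$, while $\<cm>$, $\1$, and $X_i$ are fixed by $\fd_\eps$ and the check is trivial. For the inductive step on products, multiplicativity of $\fd_\eps$ and $\fd_\eps^+$ together with the fact that $\gr{\Delta}$ is itself multiplicative ($\gr{\Delta}(\tau\sigma) = \gr{\Delta}\tau \cdot \gr{\Delta}\sigma$) reduces the statement for $\tau\sigma$ to that for $\tau$ and $\sigma$ separately.

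The most delicate step is the inductive case $\tau \mapsto \CI(\tau)$. Here one splits the coproduct as
\begin{equation*}
  \gr{\Delta}\CI(\tau) = (\CI \otimes \mathrm{Id})\gr{\Delta}\tau + \sum_{k+\ell \in B_\tau} \tfrac{1}{k!\ell!} X^k \otimes X^\ell \CJ_{k+\ell}(\tau).
\end{equation*}
For the first summand one uses $\fd_\eps \circ \CI = \CI \circ \fd_\eps$ to commute $\fd_\eps \otimes \fd_\eps^+$ past $\CI \otimes \mathrm{Id}$ and then invokes the inductive hypothesis. For the second summand one uses $\fd_\eps X^k = X^k$ and $\fd_\eps^+ \CJ_{k+\ell}(\tau) = \CJ_{k+\ell}(\fd_\eps \tau)$. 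The key book-keeping observation is that $\fd_\eps$ preserves the homogeneity of each symbol --- it only rescales $\<wn>$ by a scalar $\eps$ --- so that $\reg{\fd_\eps \tau} = \reg{\tau}$ and consequently $B_{\fd_\eps \tau} = B_\tau$; this guarantees that the index range of the sum in $\gr{\Delta}\CI(\fd_\eps \tau)$ matches that in $\gr{\Delta}\CI(\tau)$, and the two right-hand sides agree. No step is genuinely hard, but this bookkeeping of the degree-dependent truncation set $B_\tau$ under $\fd_\eps$ is the one place where it is worth being explicit.
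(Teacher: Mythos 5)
Your proposal is correct and follows essentially the same route as the paper: reduce the lemma to the co-algebraic identity $(\fd_\eps \otimes \fd_\eps^+) \circ \gr{\Delta} = \gr{\Delta} \circ \fd_\eps$, then prove this by structural induction over the symbols, with the same base cases and the same treatment of the product and $\CI(\cdot)$ cases. Your explicit remark that $\fd_\eps$ preserves homogeneities so $B_{\fd_\eps\tau} = B_\tau$ is a small, welcome clarification of a point the paper uses silently when it writes the index set $B_\tau$ after applying $\gr{\Delta}$ to $\CI(\fd_\eps\tau)$.
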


\begin{proof}
	Observe that 
	\begin{equation*}
		\fd_\eps \circ \gr{\Gamma}^\eps 
		= \fd_\eps (\operatorname{id} \tp \gr{f}^\eps) \gr{\Delta}
		= (\fd_\eps \tp \gr{f}) (\operatorname{id} \tp \fd_\eps^+) \gr{\Delta}
		= (\operatorname{id} \tp \gr{f}) (\fd_\eps \tp \fd_\eps^+) \gr{\Delta}
	\end{equation*}	
	and
	\begin{equation*}
		\gr{\Gamma} \circ \fd_\eps
		=
		(\operatorname{id} \tp \gr{f}) (\gr{\Delta} \circ \fd_\eps),
	\end{equation*}
	so the proof is finished if we can establish that 
	\begin{equation}
		(\fd_\eps \tp \fd_\eps^+) \gr{\Delta} = \gr{\Delta} \circ \fd_\eps.
		\label{pf:lem:dil_Gamma:eq1}
	\end{equation}
	This is true for $\tau \in \{\<wn>,\<cm>,\1\}$ since $\gr{\Delta} \tau = \tau \tp \1$ is true for those symbols. We now proceed recursively. Suppose \eqref{pf:lem:dil_Gamma:eq1} is true for $\tau \in \gr{\CT}$ with $\CI \tau \in \gr{\CT}$. Then,
	\begin{equs}
		\gr{\Delta} (\fd_\eps \CI \tau) 
		& = 
		\gr{\Delta} (\CI \fd_\eps \tau)
		=
		(\CI \tp \operatorname{id}) \gr{\Delta} (\fd_\eps \tau) + \sum_{k + \ell \in B_\tau} \frac{1}{k! \ell!} X^k \tp X^\ell\CJ_{k+\ell}(\fd_\eps \tau) \\
		&
		=
		(\CI \tp \operatorname{id}) (\fd_\eps \tp \fd_\eps^+) \gr{\Delta} \tau + \sum_{k + \ell \in B_\tau} \frac{1}{k! \ell!} X^k \tp X^\ell\fd_\eps^+\del[1]{\CJ_{k+\ell}(\tau)} \\
		&
		=
		(\fd_\eps \tp \fd_\eps^+) \del[3]{(\CI \tp \operatorname{id}) \gr{\Delta} \tau + \sum_{k + \ell \in B_\tau} \frac{1}{k! \ell!} X^k \tp X^\ell\CJ_{k+\ell}(\tau)}
		=
		(\fd_\eps \tp \fd_\eps^+) \gr{\Delta} (\CI \tau).
	\end{equs}
	Similarly, suppose \eqref{pf:lem:dil_Gamma:eq1} is true for $\tau,\sigma \in \gr{\CT}$ such that $\tau \sigma \in \gr{\CT}$. We then find that
	\begin{equs}
		\gr{\Delta}\del[0]{\fd_\eps(\tau \sigma)}
		& =
		\gr{\Delta}(\fd_\eps\tau \fd_\eps\sigma)
		=
		\gr{\Delta}(\fd_\eps\tau) \gr{\Delta}(\fd_\eps\sigma)
		=
		(\fd_\eps \tp \fd_\eps^+)(\gr{\Delta} \tau) (\fd_\eps \tp \fd_\eps^+)(\gr{\Delta} \sigma) \\
		&
		=
		(\fd_\eps \tp \fd_\eps^+) (\gr{\Delta} \tau \gr{\Delta} \sigma)
		=
		(\fd_\eps \tp \fd_\eps^+) \del[1]{\gr{\Delta} (\tau\sigma)}.
	\end{equs}
	Thus, \eqref{pf:lem:dil_Gamma:eq1} is true for all $\tau \in \gr{\CT}$ and the proof is finished.
\end{proof}

\begin{proof}[of~\thref{app:def:dilation}]
	As dilation does not change the homogeneity of symbols, it is clear that the analytical constraint~\ref{def:smooth_admissible_model:Pi1} is true for~$\d_\eps \gr{\bz}$. 
	Since~$\fd_\eps$ is linear, multiplicative, and commutes with~$\CI$, so are the conditions~\ref{def:smooth_admissible_model:Pi2} to~\ref{def:smooth_admissible_model:Pi5}.
	The algebraic condition~\ref{def:smooth_admissible_model:Pi6} holds as well: For~$y,\by \in \T^2$ and~$\tau \in \gr{\CT}$, by~\thref{lem:dil_Gamma} we have
		\begin{equation*}
			\gr{\Pi}^\eps_y \gr{\Gamma}^{\eps}_{y\by} \tau
			=
			(\gr{\Pi}_y \circ \fd_\eps \circ \fd_{\eps^{-1}} \circ \gr{\Gamma}_{y\by} \circ \fd_\eps) \tau
			=
			(\gr{\Pi}_y \circ \gr{\Gamma}_{y\by} \circ \fd_\eps) \tau
			=
			(\gr{\Pi}_{\by} \circ \fd_\eps) \tau
			=
			\gr{\Pi}^\eps_{\by} \tau.
		\end{equation*}
	The other claims are immediately seen to be true.
\end{proof}

As is easily seen, the operations of extension and dilation commute.

\begin{lemma} \label{lem:ext_dil_commute}
	Let $h \in \CH$ and $\eps > 0$. Then, 
	$E_h \circ \d_\eps = \d_\eps \circ E_h$.
\end{lemma}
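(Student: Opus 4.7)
The plan is to invoke the uniqueness statement in the extension proposition (\thref{app:prop:extension_op}): the model $E_h\bz \in \gr{\MM}$ is the \emph{unique} element of $\gr{\MM}$ whose $\Pi$-component
\begin{enumerate}[label=(\roman*)]
    \item restricts to $\Pi$ on $\CT$, with $f^{\eh}|_{\CT_+} = f$,
    \item satisfies $\Pi^{\eh}_z \<cm> \equiv h$ for every $z$, and
    \item is multiplicative on the mixed symbols $\tau = \tau_1 \tau_2 \in \gr{\CW}\setminus \CW$.
\end{enumerate}
Thus it suffices to check that both $E_h\d_\eps\bz$ and $\d_\eps E_h\bz$ satisfy (i)--(iii) with \emph{the same} data $(\d_\eps\bz, h)$.

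First I would observe that $\fd_\eps$ preserves $\CT \subseteq \gr{\CT}$ (and similarly $\fd_\eps^+$ preserves $\CT_+ \subseteq \gr{\CT_+}$), since none of the recursive rules defining $\fd_\eps$ introduce the symbol $\<cm>$. Consequently $\d_\eps$ restricts consistently from $\gr{\MM}$ to $\MM$ (cf.~\thref{rmk:dilation_min_models}), and the restriction of $\d_\eps E_h\bz$ to $\CT$ agrees with $\d_\eps\bz$ by property (i) of $E_h\bz$. The restriction of $E_h\d_\eps\bz$ to $\CT$ is $\d_\eps\bz$ by definition. So both models verify (i) with data $\d_\eps\bz$.

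For (ii), the key computation is
\[
  (\d_\eps E_h\bz)_z \<cm> \;=\; (E_h\bz)_z\bigl(\fd_\eps \<cm>\bigr) \;=\; (E_h\bz)_z \<cm> \;=\; h,
\]
since $\fd_\eps \<cm> = \<cm>$ by definition~\eqref{def:frac_dil_symbols}; and $(E_h \d_\eps\bz)_z \<cm> = h$ directly by construction of $E_h$. For (iii), multiplicativity on $\gr{\CW}\setminus\CW$ holds for $E_h\d_\eps\bz$ by construction, and for $\d_\eps E_h\bz$ it follows from multiplicativity of $\fd_\eps$ on $\gr{\CT}$ (which is built into its recursive definition) together with the multiplicativity of $E_h\bz$ on those symbols. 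The uniqueness clause of \thref{app:prop:extension_op} therefore forces the two models to coincide. Since no genuine analytic obstruction arises---only checking algebraic compatibility on three classes of symbols---the argument is essentially bookkeeping; the only point deserving care is confirming that $\fd_\eps$ restricted to $\CT$ really takes values in $\CT$ so that the restriction/extension commute cleanly with dilation.
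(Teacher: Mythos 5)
Your argument is correct and matches the paper's proof in substance: both rest on the observations that $\fd_\eps$ fixes $\<cm>$, preserves $\CT$ and $\CT_+$, is multiplicative, and commutes with $\CI$, after which one compares against the defining properties of the extension. The paper simply states the conclusion as a straightforward consequence of the definitions, whereas you explicitly route the argument through the uniqueness clause of \thref{app:prop:extension_op}; that is a harmless and slightly cleaner packaging of the same verification, though it would be worth noting explicitly that $\d_\eps E_h \bz \in \gr{\MM}$ (so that the uniqueness clause applies at all) is already guaranteed by \thref{app:def:dilation}.
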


\begin{proof}
	For $\tau \in \gr{\CT}$ and $\sigma \in \gr{\CT_+}$, we have $\fd_\eps \tau = \eps^{[\tau]} \tau$ and $\fd_\eps^+ \sigma = \eps^{[\sigma]} \sigma$.
	Combined with the multiplicativity of $\fd_\eps$ and $\fd_\eps^+$ as well as the identities $\fd_\eps \circ \CI = \CI \circ \fd_\eps$ and $\fd_\eps^+ \circ \CJ_\ell = \CJ_\ell \circ \fd_\eps$, the assertion is a straightforward consequence of the definitions of $E_h$ and $\d_\eps$.
\end{proof}

On the other hand, the operations of translation and dilation do \emph{not} commute, but instead satisfy a relationship that is easy to state.

\begin{lemma} \label{lem:rel_transl_dil}
	For each $h \in \CH$ and $\eps > 0$, we have $T_h \circ \d_\eps = \d_\eps \circ T_{\nicefrac{h}{\eps}}$.
\end{lemma}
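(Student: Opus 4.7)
My plan is to introduce an auxiliary "full" dilation operator $\tilde{\fd}_\eps \colon \gr{\CT} \to \gr{\CT}$ defined by $\tilde{\fd}_\eps \<wn> := \eps \<wn>$, $\tilde{\fd}_\eps \<cm> := \eps \<cm>$, $\tilde{\fd}_\eps X^k := X^k$, extended multiplicatively and commuting with $\CI$, together with its analogue $\tilde{\fd}_\eps^+$ on $\gr{\CT_+}$. The key algebraic identity, verifiable directly on the atomic symbol $\<wn>$ (where both sides give $\eps\<wn> + \eps\<cm>$) and propagated to all of $\CT$ via multiplicativity and commutation with $\CI$, is
\begin{equation*}
\tilde{\fd}_\eps \circ \ft_{\<cms>} = \ft_{\<cms>} \circ \fd_\eps \quad \text{on } \CT,
\end{equation*}
with an analogous identity on $\CT_+$ for the $+$-versions. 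This step is where the main subtlety shows up: the "ordinary" dilation $\fd_\eps$ from~\thref{app:def:dilation} fixes $\<cm>$ by design, so $\fd_\eps$ alone does not intertwine with $\ft_{\<cms>}$, and the auxiliary $\tilde{\fd}_\eps$ that rescales \emph{both} noise symbols uniformly is genuinely required.

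Next, writing $\tilde{\d}_\eps$ for the model-level operator on $\gr{\MM}$ induced by $\tilde{\fd}_\eps$ (defined in complete analogy with $\d_\eps$ in~\thref{app:def:dilation}, only with the enlarged abstract dilation), I claim the equality of models in $\gr{\MM}$
\begin{equation*}
E_h(\d_\eps \bz) = \tilde{\d}_\eps\bigl(E_{h/\eps}(\bz)\bigr).
\end{equation*}
Both sides are admissible in $\gr{\MM}$; restricted to $\CT$ they coincide with $\d_\eps \bz$ (since $\tilde{\fd}_\eps$ agrees with $\fd_\eps$ on $\CT$); and on $\<cm>$ both give $h$, because $\tilde{\d}_\eps(E_{h/\eps}\bz)_z \<cm> = E_{h/\eps}(\bz)_z(\eps\<cm>) = \eps \cdot (h/\eps) = h$. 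The uniqueness part of~\thref{app:prop:extension_op} then forces equality on all of $\gr{\CT}$, and the corresponding statement for the $f$-components follows by the defining relation~\ref{def:smooth_admissible_model:Pi5} together with the analogous identity for $\tilde{\fd}_\eps^+$.

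Combining the two observations and unfolding the definitions of $T_h$ from~\thref{app:prop:translation} and $\d_\eps$ from~\thref{app:def:dilation}, for any $\tau \in \CT$ we obtain
\begin{equs}
(T_h \d_\eps \bz)_z \tau
&= E_h(\d_\eps \bz)_z(\ft_{\<cms>} \tau)
= \tilde{\d}_\eps\bigl(E_{h/\eps}(\bz)\bigr)_z(\ft_{\<cms>} \tau) \\
&= E_{h/\eps}(\bz)_z\bigl(\tilde{\fd}_\eps \ft_{\<cms>} \tau\bigr)
= E_{h/\eps}(\bz)_z\bigl(\ft_{\<cms>} \fd_\eps \tau\bigr) \\
&= (T_{h/\eps}\bz)_z(\fd_\eps \tau)
= (\d_\eps T_{h/\eps}\bz)_z \tau,
\end{equs}
with the parallel computation on $\CT_+$ handling the $f$-components. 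The hard part is therefore purely algebraic: identifying the correct auxiliary dilation $\tilde{\fd}_\eps$ on the enlarged regularity structure so that the mismatch $\fd_\eps\<cm> = \<cm>$ versus $\tilde{\fd}_\eps\<cm> = \eps\<cm>$ is exactly compensated by the rescaling $h \rightsquigarrow h/\eps$ of the translation parameter.
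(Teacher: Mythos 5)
Your proof is correct. It takes a route that is genuinely parallel to, but organized differently from, the paper's: the paper intertwines the dilation $\fd_\eps$ (which fixes $\<cm>$) with a \emph{rescaled translation} symbol map, namely $\fd_\eps \circ \ft_{\<cms>} = \ft_{\nicefrac{\<cms>}{\eps}} \circ \fd_\eps$ where $\ft_{\nicefrac{\<cms>}{\eps}}$ sends $\<wn> \mapsto \<wn> + \eps^{-1}\<cm>$, and then absorbs the $\eps^{-1}$ factor into the extension operator $E_h$. You instead keep $\ft_{\<cms>}$ fixed, introduce an auxiliary \emph{full dilation} $\tilde{\fd}_\eps$ rescaling both noise symbols, establish the cleaner identity $\tilde{\fd}_\eps \circ \ft_{\<cms>} = \ft_{\<cms>} \circ \fd_\eps$, and absorb the mismatch into the rescaling $h \rightsquigarrow h/\eps$ of the extension parameter. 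Both arguments have to combine an abstract intertwining on the symbol level with the extension commutation to obtain the model-level statement $E_h(\d_\eps\bz) = \tilde{\d}_\eps(E_{h/\eps}\bz)$, which you make explicit and the paper leaves implicit. Your auxiliary $\tilde{\fd}_\eps$ has the advantage of being a bona fide dilation operator on $\gr{\TT}$, whereas the paper's $\ft_{\nicefrac{\<cms>}{\eps}}$ is a translation symbol map with a formal scalar coefficient, a construction the paper does not define anywhere; on the other hand the paper's version avoids an additional operator on $\gr{\MM}$. One small detail you assert rather than verify is that $\tilde{\d}_\eps$ maps $\gr{\MM}$ to itself; this is correct and the paper's own proofs of Lemma~\ref{lem:dil_Gamma} and Proposition~\ref{app:def:dilation} go through verbatim with $\tilde{\fd}_\eps$ in place of $\fd_\eps$, since they only use multiplicativity, commutation with $\CI$, and that $\gr{\Delta}$ acts as $\tau \mapsto \tau \tp \1$ on $\{\1,\<wn>,\<cm>\}$ — none of which is sensitive to whether $\<cm>$ is scaled. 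You would also want to note explicitly that the uniqueness step invokes the multiplicative condition~(2) of Proposition~\ref{app:prop:extension_op}, which is inherited by $\tilde{\d}_\eps(E_{h/\eps}\bz)$ from $E_{h/\eps}\bz$ because $\tilde{\fd}_\eps$ is multiplicative.
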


\begin{proof}
	The assertion follows by $\fd_\eps \circ \ft_{\<cms>} = \ft_{\nicefrac{\<cms>}{\eps}} \circ \fd_\eps$ and $\fd_\eps^+ \circ \ft_{\<cms>}^+ = \ft_{\nicefrac{\<cms>}{\eps}}^+ \circ \fd_\eps^+$, both of which are straightforward consequences of the definitions of $\fd_\eps$, $\fd_\eps^+$, $\ft_{\<cms>}$, and $\ft_{\<cms>}^+$.
\end{proof}

The following lemma allows us to normalise models to have norm of order one.

\begin{lemma}\label{lem:rescaling_lambda}
	Let $h \in \CH$,~$\bz \in \MM$, and~$\lambda = \lambda(\bz) := 1 + \barnorm{\bz^{\minus}}$. Then, the estimate
	\begin{equation}
		\threebars E_h \d_{\nicefrac{1}{\l}} \bz \threebars \aac 1
		\label{lem:rescaling_lambda:eq}
	\end{equation}
	holds with an implicit constant that depends on~$\norm[0]{h}_{\CH}$.
\end{lemma}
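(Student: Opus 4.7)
The statement is essentially a homogeneity computation: the homogeneous norm $\barnorm{\cdot}$ is designed precisely so that dilation by $1/\lambda$ acts as multiplication by $1/\lambda$ uniformly over symbols in $\CW_-$, which renders $\d_{1/\lambda}\bz^{\minus}$ a minimal model of unit size. The local Lipschitz continuity of the extension map $\EE$ and of the operator $E$ then transports this bound first to $\MM$ and then to $\gr{\MM}$.

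Concretely, I would proceed in four short steps. First, using the uniqueness part of Theorem~\ref{app:thm:extension} together with the identity $\Pi^{\d_{1/\lambda}\bz}_z \tau = (1/\lambda)^{[\tau]} \Pi^{\bz^{\minus}}_z \tau$ for $\tau \in \CW_-$ (Proposition~\ref{app:def:dilation}), identify the minimal component as $(\d_{1/\lambda}\bz)^{\minus} = \d_{1/\lambda}\bz^{\minus}$. Second, plug this into the definition~\eqref{def:minimal_admissible_model:norm:suprema} to obtain, for each $\tau \in \CW_-$,
\begin{equation*}
\barnorm{\d_{1/\lambda}\bz^{\minus}}_\tau
= \sup_{\mu,\phi,z} \del[3]{(1/\lambda)^{[\tau]}\, \mu^{-\reg{\tau}}\, |\langle \Pi^{\bz^{\minus}}_z\tau,\phi_z^{\mu}\rangle|}^{1/[\tau]}
= \tfrac{1}{\lambda}\, \barnorm{\bz^{\minus}}_\tau,
\end{equation*}
since the exponent $1/[\tau]$ compensates exactly the $[\tau]$-homogeneity of $\d_{1/\lambda}$. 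Taking the maximum over $\tau$ yields
$\barnorm{\d_{1/\lambda}\bz^{\minus}} \leq \barnorm{\bz^{\minus}}/(1+\barnorm{\bz^{\minus}}) \leq 1$.

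Third, Lemma~\ref{lem:estimate_min_norm_vs_hom_norm} (or the trivial inequality $x \leq x^{1/[\tau]}$ when $x\leq 1$ and $[\tau]\geq 1$) upgrades this to $\threebars \d_{1/\lambda}\bz^{\minus}\threebars \leq 1$, and local Lipschitz continuity of $\EE$ on $(\MM_-, \threebars\cdot\threebars)$, applied between $\d_{1/\lambda}\bz^{\minus}$ and the zero minimal model (whose image under $\EE$ is a fixed model of bounded norm), gives $\threebars \d_{1/\lambda}\bz\threebars = \threebars \EE(\d_{1/\lambda}\bz^{\minus})\threebars \aac 1$. Finally, the joint local Lipschitz continuity of $E:\CH\times \MM \to \gr{\MM}$ from Proposition~\ref{app:prop:extension_op}, restricted to the bounded set $\{h\}\times\{\threebars\cdot\threebars \leq C\}$ on which $\threebars E_h \boldsymbol{0}\threebars$ is already controlled by a constant depending on $\norm[0]{h}_\CH$, yields $\threebars E_h\d_{1/\lambda}\bz\threebars \aac_{\norm[0]{h}_\CH} 1$, as desired. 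There is no real obstacle; the only delicate point is the homogeneity bookkeeping in step two, which is precisely the reason the $[\tau]$-th root was built into the definition of $\barnorm{\cdot}$.
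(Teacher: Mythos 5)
Your proof is correct and follows essentially the same strategy as the paper: exploit the exact $1$-homogeneity of $\barnorm{\cdot}$ under dilation to show $\threebars \d_{1/\lambda}\bz^{\minus}\threebars \leq 1$, then transport this through the (locally Lipschitz) extension maps $\EE$ and $E_h$, anchoring the Lipschitz estimates at the zero minimal model, whose image $E_h\boldsymbol{0}$ is bounded in terms of $\norm[0]{h}_\CH$ via \cite[Lem.~A.2]{cfg}. The only organisational difference is that you chain the two Lipschitz estimates for $\EE$ and $E$ separately, whereas the paper writes a single triangle inequality $\threebars E_h\d_{1/\lambda}\bz\threebars \leq \threebars\EE(E_h\d_{1/\lambda}\bz^{\minus}) - \EE(E_h\d_{1/\lambda}\boldsymbol{0})\threebars + \threebars\EE(E_h\d_{1/\lambda}\boldsymbol{0})\threebars$ and estimates both terms at once; the content is the same. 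One small imprecision: in your step three you invoke ``$x \leq x^{1/[\tau]}$ for $x\leq 1$'' to convert the bound on $\barnorm{\cdot}$ into one on $\threebars\cdot\threebars$, but the inequality you actually need (and which is just as trivial) is $y^{[\tau]} \leq y$ for $y := \barnorm{\d_{1/\lambda}\bz^{\minus}}_\tau \leq 1$, since $\threebars\cdot\threebars_\tau = \barnorm{\cdot}_\tau^{[\tau]}$; as stated, your inequality would require knowing $\threebars\cdot\threebars_\tau\leq 1$ in advance, which is what you are trying to prove. This is cosmetic and does not affect the validity of the argument.
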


\begin{proof}
	By the triangle inequality, we have
		\begin{equation}
			\threebars E_h \d_{\nicefrac{1}{\l}} \bz\threebars
			=
			\threebars \EE\del[0]{E_h \d_{\nicefrac{1}{\l}} \bz^{\minus}}\threebars
			\leq
			\threebars\EE\del[0]{E_h \d_{\nicefrac{1}{\l}} \bz^{\minus}} - \EE\del[0]{E_h \d_{\nicefrac{1}{\l}} \boldsymbol{0}} \threebars
			+
			\threebars\EE\del[0]{E_h \d_{\nicefrac{1}{\l}} \boldsymbol{0}}\threebars
			\label{lem:rescaling_lambda:pf_aux_1}
		\end{equation}
	where~$\boldsymbol{0} \in \MM_-$ is the lift of $0 \in \C^{\infty}(\T^2)$ and~$\EE$ the extension map from~\thref{app:thm:extension}. 
	By (joint) local Lipschitz continuity of~$\EE$ and~$E$, we estimate the first summand by
		\begin{equation*}
			\threebars \EE\del[0]{E_h \d_{\nicefrac{1}{\l}} \bz^{\minus}} - \EE\del[0]{E_h \d_{\nicefrac{1}{\l}} \boldsymbol{0}} \threebars
			\aac
			\threebars E_h \d_{\nicefrac{1}{\l}} \bz^{\minus} - E_h \d_{\nicefrac{1}{\l}} \boldsymbol{0} \threebars
			\aac
			\threebars \d_{\nicefrac{1}{\l}} \bz^{\minus} - \d_{\nicefrac{1}{\l}} \boldsymbol{0} \threebars
			=
			\threebars\d_{\nicefrac{1}{\l}} \bz^{\minus} \threebars.
		\end{equation*}
	By the definitions in~\thref{def:minimal_admissible_model:norm:suprema}, the RHS can further be estimated by
		\begin{equation*}
			\threebars \d_{\nicefrac{1}{\l}} \bz^{\minus}\threebars
			=
			\frac{1}{\l} \threebars\Pi^{\minus}\threebars_{\<wns>} + \frac{1}{\l^2} \barnorm{\Pi^{\minus}}^2_{\<11s>}
			\leq 
			1
		\end{equation*}
		using the fact that $\fd_{\nicefrac{1}{\l}} \tau = \l^{-[\tau]} \tau$ and the definition of~$\l$.
		For the second summand in~\eqref{lem:rescaling_lambda:pf_aux_1}, we have 
		\begin{equation*}
			\threebars\EE\del[0]{E_h \d_{\nicefrac{1}{\l}} \boldsymbol{0}}\threebars
			=
			\threebars \Pi^{0,\eh}\threebars
			=
			\threebars\Pi^{h}\threebars
		\end{equation*}
		where~$\Pi^h$ denotes the lift of~$h \in \CH$. The RHS can be bounded by a constant (for the polynomial terms) and (powers of)~$\norm[0]{h}$ by the estimates in~\cite[Lem.~A.$2$]{cfg}.
\end{proof}

As a corollary, this allows us to control the extended and appropriately normalised operators~$\Gamma \in \CG$.

\begin{lemma} \label{app:lemma:deter_bound_Gamma}
	Let~$h \in \CH$, $\bz = (\Pi,\Gamma) \in \MM$, and set~$\l := 1+\barnorm{\bz^{\minus}}$. We have the bound 
	\begin{equation*}
	\norm[0]{\Gamma^{\eh,\nicefrac{1}{\lambda}}}
	:=
	\sup_{\tau \in \gr{\CT}} \sup_{\b < \text{deg}(\tau)} \sup_{z \neq \bar{z}} 
	\frac{\norm[0]{\Gamma_{z\bar{z}}^{\eh,\nicefrac{1}{\lambda}}\tau}_{\b}}{\abs[0]{z - \bar{z}}^{\text{deg}(\tau) - \beta}}
	\aac_{\norm[0]{h}}
	1.
	\end{equation*}
\end{lemma}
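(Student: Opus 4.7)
My plan is to reduce the bound on $\Gamma^{\eh,\nicefrac{1}{\l}}$ to the model-norm estimate already furnished by Lemma~\ref{lem:rescaling_lambda}, invoking the standard principle in Hairer's theory that, for admissible models, the structure-group component $\Gamma$ is polynomially controlled by the analytic component $\Pi$.

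As a first step I would apply Lemma~\ref{lem:rescaling_lambda} directly to obtain $\threebars E_h \d_{\nicefrac{1}{\l}} \bz \threebars \aac_{\norm[0]{h}} 1$. Setting $\bar{\bz} := E_h \d_{\nicefrac{1}{\l}} \bz = (\bar{\Pi},\bar{\Gamma}) \in \gr{\MM}$, this only controls the analytic component $\bar{\Pi}$, and the remaining task is to pass from $\threebars \bar{\Pi} \threebars \aac_{\norm[0]{h}} 1$ to a uniform bound on $\norm[0]{\bar{\Gamma}} = \norm[0]{\Gamma^{\eh,\nicefrac{1}{\l}}}$.

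For this step I would exploit the explicit construction of $\Gamma$ from $\Pi$ recalled in~\thref{def:smooth_admissible_model}: by~\ref{def:smooth_admissible_model:Pi5}, the characters $\bar{f}_y \in \gr{\CG_+}$ attached to $\bar{\bz}$ are determined multiplicatively by $\bar{f}_y(X^\ell) = (-y)^\ell$ and $\bar{f}_y(\CJ_\ell \tau) = -(D^\ell K * \bar{\Pi}_y \tau)(y)$, and then $\bar{\Gamma}_{z\bar{z}} = \bar{F}_z^{-1} \circ \bar{F}_{\bar{z}}$ with $\bar{F}_y = \Gamma_{\bar{f}_y}$ defined by~\eqref{app:rs:str_gr}. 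Standard Schauder-type estimates on $K$, combined with~\ref{def:smooth_admissible_model:Pi1} for $\bar{\Pi}$, provide pointwise bounds $\abs[0]{\bar{f}_y(\CJ_\ell \tau)} \aac \threebars \bar{\Pi} \threebars$ together with the correct $\abs{y - \bar{y}}^{\reg{\tau} + 2 - \abs{\ell}}$ difference scaling. Propagating these estimates through the multiplicative coproduct~$\gr{\Delta}$ -- using the fact that only finitely many $\tau \in \gr{\CT}$ below any fixed degree are involved -- yields that each coefficient $\norm[0]{\bar{\Gamma}_{z\bar{z}} \tau}_\b$ is controlled by a polynomial in $\threebars \bar{\Pi} \threebars$ times $\abs{z-\bar{z}}^{\reg{\tau} - \b}$. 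This is exactly the content of \cite[Thm.~5.14]{hairer_rs} (or, equivalently, of the Lipschitz-in-$\Pi$ bounds for $\Gamma$ proved in the course of establishing Prop.~3.31 there), applied to $\bar{\bz}$ with the zero model as reference.

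Combining both steps gives $\norm[0]{\Gamma^{\eh,\nicefrac{1}{\l}}} \aac P\del[1]{\threebars \bar{\Pi} \threebars} \aac_{\norm[0]{h}} 1$ for some polynomial $P$ whose degree depends only on the finitely many symbols of $\gr{\CT}$ that enter the supremum. The only mildly delicate point is to confirm that the Schauder estimates on $K$ produce precisely the powers of $\abs{z-\bar{z}}^{\reg{\tau} - \b}$ appearing in the definition of $\norm[0]{\Gamma}$; however, this matching is exactly what the cited results of~\cite{hairer_rs} guarantee, so no new computation is needed.
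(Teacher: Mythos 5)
Your proposal is correct and follows the paper's approach exactly: normalize the model via Lemma~\ref{lem:rescaling_lambda} to control $\threebars E_h \d_{\nicefrac{1}{\l}} \bz \threebars$, then invoke the standard fact that for admissible models the $\Gamma$-part is polynomially controlled by the $\Pi$-part, i.e.~\cite[Thm.~5.14]{hairer_rs}. The paper's own proof is a two-line pointer to \cite[Thm.~5.14]{hairer_rs} (general case) and to \cite[Rmk.~3.5 and App.~B]{cfg} (explicit gPAM computation), so your version simply makes the implicit role of Lemma~\ref{lem:rescaling_lambda} and the construction of $\Gamma$ from $\Pi$ via~\ref{def:smooth_admissible_model:Pi5} explicit.
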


\begin{proof}
	In the case of gPAM, this can be proved directly, see~\cite[Rmk.~$3.5$]{cfg} and its proof in their appendix~B. 
	However, as pointed out in~\cite[Rmk.~$2.4$]{hairer_weber_ldp} and~\cite[Rmk.~$3.5$]{hairer_pardoux}, it is also a direct consequence of~\cite[Thm.~$5.14$]{hairer_rs} in the general case.
\end{proof}

The merit of the following lemma lies in the fact that it explains why the homogeneous norm~$\barnorm{\cdot}$ from~eq.'s~\eqref{def:minimal_admissible_model:norm} and~\eqref{def:minimal_admissible_model:norm:suprema} is called~\emph{homogeneous} in the first place.
It also explains the very reason for working with the~\emph{minimal} rather than the full model: The latter includes the polynomials and therefore is \emph{not homogeneous} (w.r.t. dilation) under the homogeneous norm.

\begin{lemma}~\label{lem:expl_hom}
	For~$\eps \geq 0$ and~$\bz^{\minus} \in \MM_-$, we have~$\barnorm{\d_\eps\bz^{\minus}} = \eps \barnorm{\bz^{\minus}}$.
\end{lemma}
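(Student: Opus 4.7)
The claim is a direct computation from the definition of $\barnorm{\cdot}$ and the explicit action of $\fd_\eps$ on $\CT_-$. First, I would note that on smooth admissible models the dilation is realised as $\d_\eps \Pi^{\minus} = \Pi^{\minus} \circ \fd_\eps$ (via $\EE$ and restriction, cf.~\thref{app:def:dilation} and~\thref{rmk:dilation_min_models}), and that for every $\tau \in \CT_-$ the abstract operator acts by the clean rule $\fd_\eps \tau = \eps^{[\tau]} \tau$. For $\tau = \<wn>$ this is by definition; for $\tau = \<11> = \CI(\<wn>)\<wn>$ it follows from multiplicativity and the commutation $\fd_\eps \circ \CI = \CI \circ \fd_\eps$, giving $\fd_\eps \<11> = \CI(\eps\<wn>) \cdot \eps\<wn> = \eps^2 \<11>$. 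In particular this yields $\fd_\eps \tau = \eps^{[\tau]}\tau$ for every $\tau \in \CW_- = \{\<wn>, \<11>\}$.

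Consequently, for smooth $\Pi^{\minus} \in \MM_-$, every test function $\phi \in \CB$, every scale $\l \in (0,1]$, every base point $z$ and every $\tau \in \CW_-$,
\[
  \l^{-\reg{\tau}} \bigl|\scal{(\d_\eps \Pi^{\minus})_z \tau, \phi_z^\l}\bigr|
  \;=\; \eps^{[\tau]}\, \l^{-\reg{\tau}} \bigl|\scal{\Pi^{\minus}_z \tau, \phi_z^\l}\bigr|.
\]
Taking the $[\tau]$-th root and then the supremum over $\l, \phi, z$ according to~\eqref{def:minimal_admissible_model:norm:suprema} gives $\barnorm{\d_\eps \Pi^{\minus}}_\tau = \eps\,\barnorm{\Pi^{\minus}}_\tau$ for each $\tau \in \CW_-$, and maximising over $\tau \in \CW_-$ yields the asserted identity $\barnorm{\d_\eps\bz^{\minus}} = \eps\,\barnorm{\bz^{\minus}}$ for smooth $\bz^{\minus}$.

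Finally, I would extend the identity from smooth models to all of $\MM_-$ by density. By~\thref{def:minimal_admissible_model}, every $\bz^{\minus} \in \MM_-$ is a limit (in $\threebars \cdot \threebars$, hence in $\barnorm{\cdot}$ by~\thref{lem:estimate_min_norm_vs_hom_norm}) of smooth models $\bz^{\minus,(n)}$; since $\d_\eps$ is linear in the pairing $\scal{\Pi^{\minus}_z \tau,\phi}$ and both $\bz^{\minus} \mapsto \barnorm{\bz^{\minus}}$ and $\bz^{\minus}\mapsto \barnorm{\d_\eps\bz^{\minus}}$ are continuous in this topology, the equality passes to the limit. There is no real obstacle here: the statement is essentially the reason why the particular exponents $1/[\tau]$ were introduced in the definition of $\barnorm{\cdot}$ in the first place, as remarked just above the lemma statement.
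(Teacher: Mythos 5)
Your proof is correct and follows exactly the route the paper takes: the paper's own proof is the one-liner that the claim follows from the definition of~$\barnorm{\cdot}$ together with~$\Pi^\eps\tau = \eps^{[\tau]}\Pi\tau$ from~\thref{app:def:dilation}, which is precisely the computation you carry out. The final density step is harmless but not actually needed, since~\thref{app:def:dilation} already states the identity~$\Pi^\eps\tau = \eps^{[\tau]}\Pi\tau$ for arbitrary admissible models, not just smooth ones, so the pointwise scaling of the pairing~$\scal{\Pi^{\minus}_z\tau,\phi_z^\l}$ holds directly on all of~$\MM_-$.
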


\begin{proof}
	The claim follows directly from the definition of~$\barnorm{\cdot}$ and the fact that~$\Pi^{\eps} \tau = \eps^{[\tau]} \Pi \tau$, cf.~\thref{app:def:dilation}.
\end{proof}

\subsubsection{Renormalisation} \label{sec:renormalisation}

Renormalisation in regularity structures is encoded on the level of models. 
In recent years, it has been set up in great generality in the series~\cite{bhz, chandra-hairer,rs_renorm} of seminal papers; for the relatively simple gPAM, however, it can be done \enquote{by hand}, as carried out in Hairer's foundational article~\cite{hairer_rs}.

We just review the results we need; for details, see~\cite[Sec.'s~$8.3$ and~$9.1$]{hairer_rs}. We fix~$\fc_\d \in \R$ and suitably amend~\thref{def:canonical_lift} of the canonical lift~$\LL(\xi_\d)$: For~$y, \by \in \T^2$, we define
	\begin{equation}
		\hat{\Pi}_y^{\xi_\d}\<wn>(\by) := \Pi_y^{\xi_\d}\<wn>(\by), \quad
		\hat{\Pi}_y^{\xi_\d}\<1>(\by) := \Pi_y^{\xi_\d}\<1>(\by), \quad
		\hat{\Pi}_y^{\xi_\d}\<11>(\by) 
		:= \hat{\Pi}_y^{\xi_\d}\<1>(\by) \hat{\Pi}_y^{\xi_\d}\<wn>(\by) - \fc_{\d},
		\label{app:ldp_rv_chaos_1}
	\end{equation}
impose~\ref{def:smooth_admissible_model:Pi2} to~\ref{def:smooth_admissible_model:Pi5} above, set~$\hbz^{\xi_\d,-} := \hat{\Pi}^{\xi_\d} \in \MM_-$ and then~$\hbz^{\xi_\d} := \EE\del[0]{\hbz^{\xi_\d,-}} \in \MM$.	 
We then have the following convergence result~\cite[Thm.~$10.19$]{hairer_rs}:

\begin{proposition}
	Let~$\fc_\d := \scal{K_\d,\rho_\d}$. Then, there exists a (random) model~$\hbz \in \MM$ independent of the specific choice of mollifier~$\rho$ such that
		\begin{equation*}
			\lim_{\d \to 0} \hbz^{\xi_\d} = \hbz \quad \text{in prob. in} \ \MM.
		\end{equation*} 
	We call~$\hbz$ the \emph{BPHZ model}.
\end{proposition}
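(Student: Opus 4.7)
The plan is to first reduce the claim to convergence on the minimal model space~$\MM_-$, then handle the symbols of negative degree one by one, with the renormalisation constant~$\fc_\d$ precisely tuned to cancel the divergent contribution coming from the contraction of the ill-defined product~$\<11>$. Concretely, by~\thref{app:thm:extension} the extension map~$\EE: (\MM_-,\threebars\cdot\threebars)\to(\MM,\threebars\cdot\threebars)$ is locally Lipschitz, so it suffices to show that~$\hbz^{\xi_\d,-}\to\hbz^{-}$ in probability in $\MM_-$, for some $\hbz^{-}\in\MM_-$; the limit~$\hbz$ will then be defined as~$\EE(\hbz^{-})$. Since $\CW_-=\{\<wn>,\<11>\}$, I only have to control the two components~$\threebars \hat\Pi^{\xi_\d}-\hat\Pi^{\xi_{\d'}}\threebars_{\<wns>}$ and~$\threebars \hat\Pi^{\xi_\d}-\hat\Pi^{\xi_{\d'}}\threebars_{\<11s>}$ and check that the resulting sequences are Cauchy in probability.

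For $\<wn>$, the object is mollifier-independent in structure: $\hat\Pi^{\xi_\d}_y\<wn>=\xi_\d$, and the spatial white noise $\xi$ almost surely belongs to $\CC^{-1-\kappa}$ on $\T^2$, so standard estimates (a Gaussian computation giving $\E[\scal{\xi_\d-\xi_{\d'},\phi^\l_y}^2]\lesssim\l^{-2}(\d\vee\d')^{2\kappa}$, combined with equivalence of moments on the first Wiener chaos and a Kolmogorov-type criterion) show that~$\threebars\hat\Pi^{\xi_\d}-\hat\Pi^{\xi_{\d'}}\threebars_{\<wns>}\to 0$ in $L^p$ as $\d,\d'\to 0$.

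The heart of the argument is~$\<11>$. Using~\ref{def:smooth_admissible_model:Pi4}--\ref{def:smooth_admissible_model:Pi5}, one writes~$\Pi_y^{\xi_\d}\<1>(\bar y) = \int\bigl(K(\bar y-z)-K(y-z)-\sum_{|\ell|<1-\kappa}\tfrac{(\bar y-y)^\ell}{\ell!}D^\ell K(y-z)\bigr)\,\xi_\d(z)\,\dif z$; since $1-\kappa<1$ the sum is empty, so $\Pi_y^{\xi_\d}\<1>(\bar y)$ is the difference of the convolutions. Then~$\Pi_y^{\xi_\d}\<1>(\bar y)\,\xi_\d(\bar y)$ decomposes into its Wick product (second chaos) plus its expectation (zeroth chaos). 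A direct Gaussian computation shows
\[
\E\bigl[\Pi_y^{\xi_\d}\<1>(\bar y)\,\xi_\d(\bar y)\bigr]
= (K_\d * \rho_\d)(\bar y-y)-\scal{K_\d,\rho_\d},
\]
where $K_\d := K*\rho_\d$; the second term is exactly $\fc_\d$, so after subtraction only the smooth piece $(K_\d*\rho_\d)(\bar y-y)$ remains, and this is bounded uniformly in $\d$ (recall $K$ is only log-divergent in 2D). Hence~$\hat\Pi^{\xi_\d}_y\<11>(\bar y)$ coincides, up to a term of order~$\OO(1)$, with the second-chaos Wick product $:\Pi_y^{\xi_\d}\<1>(\bar y)\,\xi_\d(\bar y):$, which I estimate in $L^2$ by computing the covariance kernel and bounding it using $K\in L^p_{\loc}$ for all $p<\infty$: for any $\phi\in\CB$ and $\l\in(0,1]$,
\[
\E\Bigl[\bigl|\scal{\hat\Pi^{\xi_\d}_y\<11>-\hat\Pi^{\xi_{\d'}}_y\<11>,\phi^\l_y}\bigr|^2\Bigr]\lesssim \l^{-4\kappa}(\d\vee\d')^{2\kappa},
\]
which is exactly of the order $\l^{2\,\reg{\<11s>}}$ with a small gain in $\d$. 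Equivalence of moments on the second Wiener chaos promotes this to an $L^p$ bound for every finite~$p$.

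Having these pointwise $L^p$ estimates together with translation invariance in law, an application of the standard Kolmogorov-type criterion for convergence of models (as formulated, e.g., in~\cite[Thm.~10.7]{hairer_rs}) upgrades them to convergence of the sequence $\{\hbz^{\xi_\d,-}\}_{\d>0}$ in $L^p(\Omega;\MM_-)$ with respect to~$\threebars\cdot\threebars$ (hence in probability, and via~\thref{lem:estimate_min_norm_vs_hom_norm} also with respect to $\barnorm{\cdot}$). The limit, which I call $\hbz^-$, is independent of the specific mollifier $\rho$: indeed, the second-chaos component has an intrinsic definition through $K$ and $\xi$ only, and the zeroth-chaos contribution has been subtracted. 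Setting $\hbz:=\EE(\hbz^-)\in\MM$ and invoking continuity of $\EE$, I conclude that $\hbz^{\xi_\d}\to\hbz$ in probability in $\MM$. The main obstacle is the careful bookkeeping of the Gaussian contraction for $\<11>$ in the logarithmically singular 2D case: this is where the precise choice $\fc_\d=\scal{K_\d,\rho_\d}$ is forced upon us, and where one must extract the small factor $(\d\vee\d')^{2\kappa}$ from the difference of mollified kernels without losing the scaling in $\l$.
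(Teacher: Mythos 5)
The paper does not reprove this proposition; it is invoked as a citation to~\cite[Thm.~$10.19$]{hairer_rs}, and the consistency of the choice $\fc_\d = \scal{K_\d,\rho_\d}$ is only checked in the adjacent remark via the identity $\fc_\d = \E\sbr[1]{\Pib^{\xi_\d}\<11>(0)}$. Your strategy --- reduce to $\MM_-$ via the extension theorem, decompose the components over $\CW_- = \{\<wn>,\<11>\}$ into Wiener chaoses, obtain $L^2$ estimates against rescaled test functions with a small gain in the mollification parameter, and upgrade by equivalence of moments and a Kolmogorov-type criterion --- is the standard one and matches the cited argument.

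However, there is a sign error in the central Gaussian contraction that, taken literally, makes your argument for $\<11>$ inconsistent. Writing $\Pi_y^{\xi_\d}\<1>(\bar y) = (K_\d * \xi)(\bar y) - (K_\d * \xi)(y)$ with $K_\d := K * \rho_\d$, a direct computation gives
\begin{equation*}
\E\bigl[\Pi_y^{\xi_\d}\<1>(\bar y)\,\xi_\d(\bar y)\bigr]
= \scal{K_\d,\rho_\d} - (K_\d * \rho_\d)(\bar y - y),
\end{equation*}
not $(K_\d * \rho_\d)(\bar y - y) - \scal{K_\d,\rho_\d}$ as you wrote. With your sign, subtracting $\fc_\d = \scal{K_\d,\rho_\d}$ leaves the divergent residual $-2\fc_\d$, so the renormalisation would fail; with the correct sign the constant cancels exactly and the zeroth-chaos part of $\hat{\Pi}_y^{\xi_\d}\<11>(\bar y)$ equals $-(K_\d*\rho_\d)(\bar y - y)$. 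This residual is moreover \emph{not} bounded uniformly in $\d$: at $\bar y = y$ it equals $\scal{K_\d,\rho_\d}\sim\log(1/\d)$, which is precisely why the constant must be subtracted in the first place. The correct (and sufficient) observation is that its pairing with $\phi^\l_y$ is $O(\abs{\log\l})$, uniformly in $\d$, and that this is $\lesssim\l^{-2\kappa}$ as the model-norm definition demands. After fixing the sign and replacing the uniform boundedness claim with this logarithmic bound, your argument is sound.
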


\begin{remark} \label{rmk:bphz_model_ren_constant}
	For~$\hbz$ to be the BPHZ model, one defines~$\fc_{\d} := \E\sbr[1]{\Pib^{\xi_\d}\<11>(0)}$. That is consistent with our choice, as the following short calculation using the covariance structure of SWN~$\xi$ shows:
		\begin{align} 
			\E\sbr[1]{\Pib^{\xi_\d}\<11>(0)}
			&
			= \E\sbr[1]{\Pib^{\xi_\d}\<1>(0)\Pib^{\xi_\d}\<wn>(0)}
			= \E\sbr[1]{(K * \xi_\d)(0)\xi_\d(0)}
			= \E\sbr[1]{\scal{\xi,K_\d(-\cdot)}\scal{\xi,\rho_\d(-\cdot)}} \label{calculation_ren_constant} \\
			&
			= \E\sbr[1]{\scal{\xi,K_\d(-\cdot)}\scal{\xi,\rho_\d(-\cdot)}}
			= \int K_\d(-x) \rho_\d(-x) \dif x
			= \int K_\d(x) \rho_\d(x) \dif x
			= \scal{K_\d,\rho_\d}. \notag
		\end{align}	
	Furthermore, the statement of the previous proposition is also true on minimal model space: We denote the limiting model by~$\hbz^{\minus} \in \MM_-$ and call it the \emph{minimal BPHZ model}.
\end{remark}

We recall another result from Cannizzaro, Friz, and Gassiat, namely \cite[Lem.~$3.20$]{cfg}, in slightly updated terminology. 

\begin{lemma}\label{lem:trans_prob_cfg}
	Let $\hbz \in \MM$ be the BPHZ model. There exists a null set $N$ such that, for every $h \in \CH$ and $\omega \in N^{\operatorname{c}}$, we have
	\begin{equation}
		T_h \sbr[1]{\hbz(\omega)} = \hbz(\omega + h).
		\label{lem:trans_prob_cfg:eq}
	\end{equation}
\end{lemma}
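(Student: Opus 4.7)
The plan is to first establish the identity pointwise at the level of the smoothed noises $\xi_\d$, then pass to the limit using the probabilistic convergence of the BPHZ models combined with Cameron-Martin quasi-invariance, and finally upgrade the resulting $h$-dependent null set to one that is uniform in $h \in \CH$.

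At the smooth level, I would verify the deterministic pointwise identity
\begin{equation*}
	T_{h_\d}\bigl[\hbz^{\xi_\d}(\omega)\bigr] = \hbz^{\xi_\d}(\omega + h), \qquad h_\d := h * \rho_\d,
\end{equation*}
valid for \emph{every} $\omega \in B$ and $h \in \CH$. The verification rests on three observations: (i)~$\xi_\d(\omega + h) = \xi_\d(\omega) + h_\d$ by linearity of convolution; (ii)~at the level of canonical lifts, $\LL(\xi_\d + h_\d) = T_{h_\d}\LL(\xi_\d)$, which follows from~\thref{rmk:lift_h} and the very definition of the translation operator on smooth admissible models; and (iii)~the BPHZ renormalisation for~\eqref{gpam} acts only on~$\<11>$ by subtracting the \emph{deterministic} constant $\fc_\d$ (cf.~\eqref{app:ldp_rv_chaos_1}), an operation that manifestly commutes with $T_{h_\d}$.

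Next, fixing $h \in \CH$ and sending $\d \to 0$, along a common subsequence I would use that $\hbz^{\xi_\d} \to \hbz$ holds $\P$-a.s.\ in $(\MM,\threebars\cdot\threebars)$; since $h_\d \to h$ in $\CH$, joint local Lipschitz continuity of the translation map (\thref{app:prop:translation}) then forces $T_{h_\d}\hbz^{\xi_\d}(\omega) \to T_h \hbz(\omega)$ $\P$-a.s. On the other hand, the Cameron-Martin theorem guarantees $(T_{-h})_\ast\P \ll \P$, so the convergence $\hbz^{\xi_\d} \to \hbz$ in $\P$-probability transfers to $\hbz^{\xi_\d}(\omega + h) \to \hbz(\omega + h)$ in $\P$-probability, hence $\P$-a.s.\ along a further subsequence. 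Matching the two limits with the smooth identity produces $T_h \hbz(\omega) = \hbz(\omega + h)$ outside an $h$-dependent null set $N_h$.

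To promote this to a single null set, I would fix a countable dense subset $D \subseteq \CH$ and define $N := \bigcup_{h \in D} N_h$, which remains a $\P$-null set. For $\omega \notin N$ and an arbitrary $h \in \CH$, approximating $h$ by $h_n \in D$ in $\CH$ and invoking continuity of $T_\bullet\hbz(\omega)$ in its Cameron-Martin argument (\thref{app:prop:translation} again) yields $T_{h_n}\hbz(\omega) \to T_h\hbz(\omega)$ in $\MM$; by the second step applied to $h_n \in D$, the left-hand side equals $\hbz(\omega + h_n)$. \textbf{The main obstacle} lies precisely here: since the map $\omega \mapsto \hbz(\omega)$ is only $\P$-measurable and not continuous on the abstract Wiener space~$(B,\CH,\mu)$, one cannot directly assert $\hbz(\omega + h_n) \to \hbz(\omega + h)$. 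The resolution is to work with the specific version of $\hbz$ obtained by \emph{defining} $\hbz(\omega + h) := T_h \hbz(\omega)$ on $N^c$, which is consistent on $D$ by the previous step, continuous in~$h$ by the translation map's continuity, and $\P$-a.s.\ equal to the original BPHZ model thanks to Cameron-Martin quasi-invariance; by construction, this version then satisfies~\eqref{lem:trans_prob_cfg:eq} on $N^c$ uniformly in $h \in \CH$.
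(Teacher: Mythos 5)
Your steps~1 and~2 are correct: the pointwise identity at the mollified level holds because BPHZ renormalisation for gPAM subtracts the \emph{deterministic} constant $\fc_\d$ from $\<11>$ alone, hence commutes with $T_{h_\d}$; and letting $\d \to 0$ via Cameron--Martin quasi-invariance combined with joint local Lipschitz continuity of $T$ correctly produces an $h$-dependent null set $N_h$. However, your resolution of the uniformity obstacle is circular. Defining a new version by $\tilde{\hbz}(\omega + h) := T_h\hbz(\omega)$ for $\omega \in N^{c}$ requires well-definedness on $N^{c}+\CH$: if $\omega_1 + h_1 = \omega_2 + h_2$ with $\omega_1,\omega_2 \in N^{c}$, consistency demands $T_{h_1}\hbz(\omega_1) = T_{h_2}\hbz(\omega_2)$, i.e.\ $T_{h_1-h_2}\hbz(\omega_1) = \hbz(\omega_1 + (h_1-h_2))$. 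Here $h_1 - h_2$ is an arbitrary element of $\CH$, not an element of $D$, so the consistency you established (only for $h \in D$) does not cover it --- well-definedness of the new version presupposes the very identity you are proving. (Note that the paper itself does not prove this lemma but recalls it from~\cite[Lem.~3.20]{cfg}.)

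The uniform null set in~\cite{cfg} is obtained by exploiting the finite Wiener-chaos structure of the BPHZ-model coefficients: $\hat{\Pi}\<wn>$ lies in the first chaos and $\hat{\Pi}\<11>$ in the second-plus-zeroth chaos. For a random variable $F$ in a chaos of order at most $n$, the Cameron--Martin shift admits a \emph{finite} Stroock--Taylor expansion $F(\omega+h) = \sum_{k=0}^{n}\frac{1}{k!}D^{k}F(\omega)[h^{\otimes k}]$, and once Borel representatives of the finitely many Malliavin derivatives $D^{k}F$ are fixed, this polynomial-in-$h$ identity holds simultaneously for \emph{all} $h \in \CH$ outside a single null set, because the right-hand side is manifestly continuous (indeed polynomial) in $h$ and built from finitely many $\omega$-measurable objects. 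Matching the Taylor-expanded right-hand side against the explicit form of $T_h\hbz(\omega)$ then delivers the uniform null set directly, bypassing the measurability-vs-continuity tension that stalls your approach.
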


The following lemma is a consequence of~\cite[Lem.~$3.20$]{cfg} and~\thref{lem:rel_transl_dil}:
\begin{lemma}\label{lem:transop_cm_shift}
	There exists a nullset $N$ such that, for all $\omega \in N^c$, $h \in \CH$, and $\eps > 0$, we have 
	\begin{equation*}
		(T_h \circ \d_\eps) \sbr[1]{\hbz(\omega)} 
		=
		\d_\eps \sbr[1]{\hbz (\omega + \nicefrac{h}{\eps})}.
		\label{lem:transop_cm_shift:eq}
	\end{equation*}
\end{lemma}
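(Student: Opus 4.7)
The plan is to derive the identity by a direct two-step computation, combining the purely algebraic/analytic commutation relation from Lemma~\ref{lem:rel_transl_dil} with the probabilistic translation identity from Lemma~\ref{lem:trans_prob_cfg}. The subtle point is that we need a \emph{single} nullset $N$ that works uniformly over all $h \in \CH$ and all $\eps > 0$, rather than one that depends on these parameters.

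First I would take the nullset $N$ provided by Lemma~\ref{lem:trans_prob_cfg}, which is already uniform in $h \in \CH$: for every $\omega \in N^c$ and every $k \in \CH$ one has $T_k[\hbz(\omega)] = \hbz(\omega + k)$. Since the statement of Lemma~\ref{lem:trans_prob_cfg} quantifies over all $k \in \CH$ after fixing $N$, in particular we may apply it with the Cameron--Martin element $k := h/\eps$ for any $h \in \CH$ and any $\eps > 0$.

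Next, fix $\omega \in N^c$, $h \in \CH$ and $\eps > 0$. Starting from the left-hand side and using Lemma~\ref{lem:rel_transl_dil}, which asserts $T_h \circ \d_\eps = \d_\eps \circ T_{h/\eps}$ as an identity of maps on $\MM$, we obtain
\begin{equation*}
(T_h \circ \d_\eps)[\hbz(\omega)] = (\d_\eps \circ T_{h/\eps})[\hbz(\omega)] = \d_\eps\bigl(T_{h/\eps}[\hbz(\omega)]\bigr).
\end{equation*}
Applying Lemma~\ref{lem:trans_prob_cfg} with shift $h/\eps$ to the inner expression then yields $T_{h/\eps}[\hbz(\omega)] = \hbz(\omega + h/\eps)$, so that
\begin{equation*}
(T_h \circ \d_\eps)[\hbz(\omega)] = \d_\eps\bigl[\hbz(\omega + h/\eps)\bigr],
\end{equation*}
which is exactly the claim. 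No further obstacle arises: the only point requiring a moment of thought is the uniformity of the nullset, which is already built into the statement of Lemma~\ref{lem:trans_prob_cfg} and therefore transfers without modification.
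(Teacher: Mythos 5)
Your proof is correct and follows essentially the same route as the paper: both rely on the uniform-in-$h$ nullset from Lemma~\ref{lem:trans_prob_cfg} (applied with the shift $h/\eps$) together with the commutation relation $T_h \circ \d_\eps = \d_\eps \circ T_{h/\eps}$ from Lemma~\ref{lem:rel_transl_dil}. The only difference is the order in which the two lemmas are invoked, which is immaterial.
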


\begin{proof}
	Let $N$ be the null set from \thref{lem:trans_prob_cfg}. Now consider eq.~\eqref{lem:trans_prob_cfg:eq}, replace $h \rightsquigarrow \nicefrac{h}{\eps}$, and apply $\d_\eps$ on both sides. We then conclude by~\thref{lem:rel_transl_dil}. 
\end{proof}
Trivially, both of the previous lemmas also hold for~$\hbz^{\minus}$.

\subsection{Modelled distributions} \label{sec:modelled_distributions}

Spaces of modelled distributions play a key role in Hairer's theory: In case of the polynomial regularity structure, they are simply H\"older continuous functions and for the rough paths regularity structure, they coincide with controlled paths, see~\cite[sec.~$13.3$]{friz-hairer}.

\begin{definition}
	For $s \in [0,\infty)$, we define the \emph{hyperplane at time}~$s$ by
		\begin{equation*}
			P_s := \{z = (t,x) \in \R^3: \ t =s \}.
		\end{equation*}
	We also set~$P := P_0$.
\end{definition}

\begin{definition}[Modelled distribution] \label{def:mod_distr}
	Let $\bz = (\Pi,\Gamma) \in \MM$ be a model for the regularity structure~$\TT$.
	Fix~$\gamma > 0$,~$\eta \in \R$, and~$s \in [0,\infty)$. 
	\begin{enumerate}[label=(\roman*)]
	\item \label{def:mod_distr:i} We say that 
		\begin{equation*}
			U: (0,\infty) \x \R^2 \to \CT_{<\gamma} := \bigoplus_{\beta < \gamma} \CT_\b
		\end{equation*}
	belongs to~$\DD_s^{\gamma,\eta}(\Gamma)$ if, for every compact set~$D \subseteq (0,\infty) \x \R^2$, the quantity 
		\begin{equs}[][def:mod_distr:eq]
			\threebars U \threebars_{\gamma,\eta;\Gamma;D;P_s}
			& 
			=
			\sup_{z \in D \setminus P_s} \sup_{\beta < \gamma} \thinspace \abs{t-s}^{\frac{\beta-\eta}{2} \vee 0} \norm[0]{U(z)}_{\beta} \\ 
			& 
			+
			\sup_{\substack{z,\bar{z} \in D \setminus P_s, \\ \abs{z - \bar{z}} \leq 1}} \sup_{\beta < \gamma} \del[1]{\abs[0]{t - s} \wedge \abs[0]{\bar{t} - s}}^{\frac{\beta-\eta}{2} \vee 0} \frac{\norm[0]{U(z) - \Gamma_{z\bar{z}} U(\bar{z})}_\beta}{\abs{z - \bar{z}}^{\gamma - \beta}}
		\end{equs}
	is finite, where $z = (t,x)$ and $\bar{z} = (\bar{t},\bar{x})$. We denote the first summand on the RHS of~\eqref{def:mod_distr:eq} by~$\norm[0]{U}_{\gamma,\eta;D;P_s}$.
		\item \label{def:mod_distr:ii}  Let~$\bbz = (\bar{\Pi},\bar{\Gamma}) \in \MM$ be another model. For~$U \in \DD_s^{\gamma,\eta}(\Gamma)$ and $\bar{U} \in \DD_s^{\gamma,\eta}(\bar{\Gamma})$, we write $\threebars U;\bar{U} \threebars_{\gamma,\eta;D;P_s}$ for the quantity in~\eqref{def:mod_distr:eq} where we change $U(z) \rightsquigarrow U(z) - \bar{U}(z)$ in the first summand and 
			\begin{equation*}
				U(z) - \Gamma_{z\bar{z}} U(\bar{z})
				\rightsquigarrow
				U(z) - \bar{U}(z) - \Gamma_{z\bar{z}} U(\bar{z}) + \bar{\Gamma}_{z\bar{z}} \bar{U}(\bar{z})
			\end{equation*}  
		in the second.
	\end{enumerate}
\end{definition}

\begin{remark} \label{rmk:domain_D}
	In practice, when the model~$\bz$ is clear from the context, we refrain from indicating~$\Gamma$ in the norm in~\eqref{def:mod_distr:eq}. 
	In case~$s = 0$, we also omit the subscript~$P_s$.
	In addition, recall that we are only considering \emph{periodic} models, cf.~\thref{def:periodic_models}, complying with our assumption of periodic boundary conditions. Hence, the domain~$D$ can always be taken as $D = I \x \T^2$ for some interval~$I \subseteq \R$. We will use the following notation:
		\begin{equation*}
			\threebars \cdot \threebars_{\gamma,\eta,(s,T]} \quad \text{when } D := (s,T] \x \T^2, \quad
			\threebars \cdot \threebars_{\gamma,\eta,T} \quad \text{when } D = D_T := (0,T] \x \T^2.
		\end{equation*}
	Finally, observe that the notation~$\threebars U ; \bar{U} \threebars$ in~\ref{def:mod_distr:ii} above indicates that it is \emph{not} a function of~$U - \bar{U}$. In fact, $U$ and~$\bar{U}$ do not even live in the same space, a consequence of the \enquote{fibred} structure of~$\MM \ltimes \DD^{\gamma,\eta}$, cf.~figure~\ref{figure:model_space_fibres} on p.~\pageref{figure:model_space_fibres}. For any fixed domain~$D$, the space~$(\DD_{P_s}^{\gamma,\eta}(\Gamma),\threebars_{\gamma,\eta;D;P_s})$ is a bona fide Banach space.
\end{remark}

\subsubsection*{$\bullet$ Fr\'{e}chet differentiability and Taylor's formula for modelled distributions}

We can always write a modelled distribution~$\gr{Y} \in \DD^{\gamma,\eta}_{\gr{\CU}}(\gr{\bz})$ that takes values in the sector~$\gr{\CU} = \scal{\1,\<1>,\<1g>,X}$ as
\begin{equation}
	\gr{Y}(z) = \phi_{\tiny \1}(z) \1 + \phi_{\<1s>}(z)\<1> + \phi_{\<1gs>}(z)\<1g> + \scal{\phi_X(z),X}
	\label{eq:ansatz_U}
\end{equation}
for some coefficient functions~$\phi_{\tiny \1}$, $\phi_{\<1s>}$, $\phi_{\<1gs>}$, and $\phi_{X}$. 
Further layed out in \cite[sec.~3.4]{hairer_pardoux}, the function $g$ then induces a function $G$ that acts on such modelled distributions by
\begin{equation}
	G(\gr{Y})(z) := 
	g(\phi_{\tiny \1}(z)) \1 
	+ 
	g'(\phi_{\tiny \1}(z)) \phi_{\<1s>}(z)\<1>
	+ 
	g'(\phi_{\tiny \1}(z)) \phi_{\<1gs>}(z)\<1g>
	+ 
	g'(\phi_{\tiny \1}(z)) \scal{\phi_X(z),X}.
	\label{eq:G_ext_rs}
\end{equation}

\begin{remark}[Projection onto~$\boldsymbol{\CT_{<\gamma}}$] \label{rmk:projection}
	In following Hairer~\cite[sec.~$4.2$]{hairer_rs}, the function~$G$ should actually be denoted~$G_\gamma$ to keep track of the (here implicit) projection onto~$T_{< \gamma}$. In the whole article, we will keep $\gamma = 1 + 2\kappa$ for some $0 < \kappa \ll 1$ \emph{fixed} and thus suppress the subindex. In the same spirit, we will not explicitly indicate said projection when multiplying two elements in~$\DD^{\gamma,\eta}_{\gr{\CU}}(\gr{\bz})$ and when expanding~$Pu_0$ into its Taylor jet
		\begin{equation*}
			\TT P u_0 := \sum_{\abs[0]{k} < \gamma} \frac{X^k}{k!} \del[1]{D^k(Pu_0)} \in \DD_{\CU}^{\gamma,\eta}
		\end{equation*}  
	of order~$\gamma$.	 
\end{remark}

In a next step, we will quantify the Fr\'{e}chet differentiability that $G$ inherits from $g$ and, as a natural byproduct, obtain Taylor's formula for~$G$.

\begin{lemma} \label{prop:taylor_modelled_distrib}
	Fix $\gr{\bz} \in \gr{\MM}$ and consider two modelled distributions $\gr{Y}, \gr{\tilde{Y}} \in \DD^{\gamma,\eta}_{\gr{\CU}}(\gr{\bz})$ with coefficient functions $\phi_\tau$ and $\tilde{\phi}_\tau$, $\tau \in \{\mathbf{1}, \<1>, \<1g>, X\}$, respectively. Given a function $f \in \CC^4(\R;\R)$ with $F$ defined as in~\eqref{eq:G_ext_rs}, we then have the identity
	\begin{equation}
		F(\gr{Y} + \gr{\tilde{Y}}) - F(\gr{Y}) - F'(\gr{Y})\gr{\tilde{Y}}
		=
		\int_0^1 (1-s) F''(\gr{Y} + s\gr{\tilde{Y}}) \gr{\tilde{Y}}^2 \dif s.
		\label{prop:taylor_modelled_distrib_eq}
	\end{equation}
	For $f \in \CC^3(\R,\R)$, we have
	\begin{equation}
		F(\gr{Y} + \gr{\tilde{Y}}) - F(\gr{Y}) 
		=
		\int_0^1 F'(\gr{Y} + s\gr{\tilde{Y}})\gr{\tilde{Y}} \dif s.
		\label{prop:taylor_modelled_distrib_eq2}
	\end{equation}
\end{lemma}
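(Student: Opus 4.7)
The strategy is to reduce both identities to the classical one-dimensional Taylor formula with integral remainder, applied pointwise at each space-time point $z$ in the finite-dimensional target space $\CT_{<\gamma}$. Concretely, I would fix $z$ and consider the map
$$\varphi_z: [0,1] \to \CT_{<\gamma}, \qquad \varphi_z(s) := F(\gr{Y}+s\gr{\tilde{Y}})(z).$$
Using the explicit formula~\eqref{eq:G_ext_rs} (with $g\rightsquigarrow f$), each of the four coefficient functions of $\varphi_z(s)$ is either $f(\phi_{\1}+s\tilde{\phi}_{\1})$ (for the $\1$-coefficient) or a product of $f'(\phi_{\1}+s\tilde{\phi}_{\1})$ with an affine function of $s$ (for the coefficients of $\<1>$, $\<1g>$, $X$). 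Thus the assumption $f\in \CC^4$ (resp.\ $\CC^3$) delivers $\varphi_z \in \CC^2([0,1];\CT_{<\gamma})$ (resp.\ $\CC^1$), and a direct computation of the first two derivatives gives
$$\varphi_z'(s) = F'(\gr{Y}+s\gr{\tilde{Y}})(z)\cdot\gr{\tilde{Y}}(z),\qquad \varphi_z''(s) = F''(\gr{Y}+s\gr{\tilde{Y}})(z)\cdot\gr{\tilde{Y}}(z)^2,$$
where $F'$, $F''$ denote the lifts of $f'$, $f''$ via the formula~\eqref{eq:G_ext_rs} and the dots denote multiplication in the truncated structure $\CT_{<\gamma}$ (followed by the tacit projection of Remark~\ref{rmk:projection}). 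Given this, the claim is just the pointwise Taylor identities
$$\varphi_z(1) - \varphi_z(0) - \varphi_z'(0) = \int_0^1(1-s)\varphi_z''(s)\,ds,\qquad \varphi_z(1) - \varphi_z(0) = \int_0^1 \varphi_z'(s)\,ds$$
read off in $\CT_{<\gamma}$ and then interpreted as an equality of modelled distributions in $z$.

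The one genuine piece of bookkeeping --- which I expect to be the main (and essentially only) obstacle --- is checking the identifications of $\varphi_z'$ and $\varphi_z''$ above. The key algebraic input is that $\<1>,\<1g>,X$ all have homogeneity $\geq 1-\kappa$, so any product of two or more of them lies in $\CT_{\geq 2-2\kappa}\subseteq \CT_{\geq\gamma}$ for $\gamma=1+2\kappa$ and is therefore projected out. Consequently, if one writes $\gr{Y}-\phi_{\1}\1 =: \gr{Z}$ and analogously $\tilde{\gr{Z}}$, the product $\gr{Z}\cdot\tilde{\gr{Z}}$ vanishes after projection, and $\gr{\tilde{Y}}^2$ reduces to $\tilde{\phi}_{\1}^{2}\1 + 2\tilde{\phi}_{\1}\tilde{\gr{Z}}$. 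A short calculation, using the chain rule for $s\mapsto f^{(k)}(\phi_{\1}+s\tilde{\phi}_{\1})$ on the $\1$-coefficient and the Leibniz rule on the higher-order ones, then matches $\varphi_z'(s)$ term-by-term with the lift of $f'$ at $\gr{Y}+s\gr{\tilde{Y}}$ multiplied by $\gr{\tilde{Y}}$, and similarly for $\varphi_z''$.

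Once both identifications are in place, the proofs of~\eqref{prop:taylor_modelled_distrib_eq} and~\eqref{prop:taylor_modelled_distrib_eq2} are completed by invoking the vector-valued Taylor formula with integral remainder on the smooth curve $\varphi_z$ (for each fixed $z$) and observing that the resulting equality in $\CT_{<\gamma}$ holds uniformly in $z$, hence as an equality of functions $D\to \CT_{<\gamma}$ and a fortiori of modelled distributions in the relevant $\DD^{\gamma,\eta}$-space. No analytical estimates on the $\DD^{\gamma,\eta}$-norms are required for the identities themselves; those would enter only if one additionally wanted continuity of the remainder map, which is not part of the statement here.
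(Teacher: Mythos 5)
Your argument is essentially correct, and it is worth noting that the paper itself does not give a proof of this lemma: it only remarks that the statement follows in greater generality from Schönbauer's Proposition~B.2 in~\cite{schoenbauer}, so your derivation is a concrete, self-contained alternative tailored to the gPAM regularity structure. The reduction to the one-dimensional Taylor formula via the curve $\varphi_z(s) = F(\gr{Y}+s\gr{\tilde Y})(z)$ and the observation that any product of two factors from $\scal{\<1>,\<1g>,X}$ lands in homogeneity $\geq 2-2\kappa > \gamma = 1+2\kappa$ and is therefore projected away is exactly the structural fact that makes the lift $F$ behave like a first-order jet, and your term-by-term verification of $\varphi_z'$ and $\varphi_z''$ against the lifted derivatives is correct.

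Two small remarks. First, the pointwise computation of $\varphi_z''$ on the $\<1>$-coefficient involves $f''$, so $\varphi_z \in \CC^2([0,1];\CT_{<\gamma})$ already follows from $f \in \CC^3$; the stated hypothesis $f \in \CC^4$ is stronger than needed for your pointwise step. Its actual role — which your last paragraph waves away a bit too quickly — is to guarantee that the integrand $s \mapsto F''(\gr{Y}+s\gr{\tilde Y})\gr{\tilde Y}^2$ is a well-defined and continuous $\DD^{\gamma,\eta}$-valued map, via \cite[Prop.~6.13]{hairer_rs}, so that the right-hand side of~\eqref{prop:taylor_modelled_distrib_eq} makes sense as a Bochner integral in $\DD^{\gamma,\eta}_{\gr\CU}(\gr\bz)$. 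This matters because the identity is subsequently used with $\threebars\cdot\threebars_{\gamma,\eta}$ pulled inside the integral (cf.\ the estimate~\eqref{lem:diffb_G_from_g:pf:remainder_k_est_prod_est}), which requires the Bochner interpretation, not just a pointwise-in-$z$ equality. Once the Bochner integral is known to exist, its value coincides with the pointwise integral, and your pointwise Taylor identity then closes the argument. Second, you may want to observe explicitly that $\gr{\tilde Y}^2$ and $\gr{Y}\gr{\tilde Y}$ are in $\DD^{\gamma,\eta}_{\gr\CU}$ by \cite[Prop.~6.12]{hairer_rs}, so that all three terms on the left-hand side of~\eqref{prop:taylor_modelled_distrib_eq} live in the same space before comparing with the right-hand side; this is implicit in your "interpreted as an equality of modelled distributions" but deserves a word given the finicky role of the sector $\gr\CU$.
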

This lemma is true in greater generality than stated here, see~\cite[Prop.~B.$2$]{schoenbauer} and its proof. 
It is instrumental in the proof of the following proposition.

\begin{proposition} \label{lem:diffb_G_from_g} 
	Fix $\gr{\bz} \in \gr{\MM}$, let $\DD := \DD^{\gamma,\eta}_{\gr{\CU}}(\gr{\bz})$, and suppose $g \in \CC^{\ell + 4}$, $\ell \geq 1$. Then, $G \in \CC^\ell(\DD,\DD)$ (in the Fr\'{e}chet sense) and
	\begin{equation}
		D^{(k)} G(\gr{Y})\sbr[1]{(\gr{Y_m})_{m=1}^k}
		:=
		D^{(k)} G(\gr{Y})[\gr{Y_1},\ldots,\gr{Y_k}] 
		= G^{(k)}(\gr{Y}) \prod_{m=1}^k \gr{Y_m}, \quad k \leq \ell,
		\label{lem:diffb_G_from_g:eq}
	\end{equation} 
	where $G^{(k)}$ is built from $g^{(k)}$ by means of~\eqref{eq:G_ext_rs}.
\end{proposition}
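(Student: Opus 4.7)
The plan is to proceed by induction on $k \in \{0,1,\ldots,\ell\}$, showing at each step that $G \in \CC^{k}(\DD,\DD)$ with $k$-th Fréchet derivative given by the multiplicative formula \eqref{lem:diffb_G_from_g:eq}. The case $k=0$ reduces to continuity of $G$, which is a direct consequence of~\cite[Prop.~6.13]{hairer_rs} combined with the local Lipschitz estimate that the multiplication and composition maps satisfy on sectors of non-negative regularity (here, $\gr{\CU}$ is function-like).

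For the inductive step $k \mapsto k+1$, I need to verify that for $\gr{Y} \in \DD$ and test directions $(\gr{Y_m})_{m=1}^{k+1}$, the candidate Fréchet remainder
\begin{equation*}
\Delta^{(k)}[\gr{Y};(\gr{Y_m})_{m=1}^{k+1}] := D^{(k)}G(\gr{Y}+\gr{Y_{k+1}})\bigl[(\gr{Y_m})_{m=1}^{k}\bigr] - D^{(k)}G(\gr{Y})\bigl[(\gr{Y_m})_{m=1}^{k}\bigr] - D^{(k+1)}G(\gr{Y})\bigl[(\gr{Y_m})_{m=1}^{k+1}\bigr],
\end{equation*}
with $D^{(k+1)}G(\gr{Y})$ \emph{defined} by the formula \eqref{lem:diffb_G_from_g:eq}, satisfies
\begin{equation*}
\sup_{\threebars \gr{Y_m} \threebars \le 1, \; 1 \le m \le k} \threebars \Delta^{(k)}[\gr{Y};(\gr{Y_m})_{m=1}^{k+1}] \threebars_{\gamma,\eta} = o\bigl(\threebars \gr{Y_{k+1}} \threebars_{\gamma,\eta}\bigr).
\end{equation*}
Using the induction hypothesis in the explicit form \eqref{lem:diffb_G_from_g:eq}, this remainder factors as
\begin{equation*}
\Delta^{(k)}[\gr{Y};(\gr{Y_m})_{m=1}^{k+1}] = \bigl[G^{(k)}(\gr{Y}+\gr{Y_{k+1}}) - G^{(k)}(\gr{Y}) - G^{(k+1)}(\gr{Y})\gr{Y_{k+1}}\bigr] \prod_{m=1}^{k} \gr{Y_m}.
\end{equation*}

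The key step is then to apply Taylor's formula on the space of modelled distributions, \thref{prop:taylor_modelled_distrib} eq.~\eqref{prop:taylor_modelled_distrib_eq}, to the $\CC^{\ell+4-k} \supseteq \CC^{4}$ function $g^{(k)}$, yielding the integral representation
\begin{equation*}
G^{(k)}(\gr{Y}+\gr{Y_{k+1}}) - G^{(k)}(\gr{Y}) - G^{(k+1)}(\gr{Y})\gr{Y_{k+1}} = \int_0^1 (1-s)\, G^{(k+2)}(\gr{Y} + s\gr{Y_{k+1}})\,\gr{Y_{k+1}}^2 \, ds.
\end{equation*}
From here the estimate is routine: multiplicativity of $\DD^{\gamma,\eta}_{\gr{\CU}}(\gr{\bz})$ (since $\gr{\CU}$ is function-like, the relevant pair is $\gamma$-regular per~\cite[Prop.~6.12]{hairer_rs}) and the boundedness of composition with smooth $g^{(k+2)}$ from~\cite[Prop.~3.11]{hairer_pardoux} together give
\begin{equation*}
\threebars \Delta^{(k)}[\gr{Y};(\gr{Y_m})_{m=1}^{k+1}] \threebars_{\gamma,\eta} \le C\bigl(\threebars \gr{Y} \threebars_{\gamma,\eta}, \|g^{(k+2)}\|_{\CC^2}\bigr) \cdot \threebars \gr{Y_{k+1}} \threebars_{\gamma,\eta}^{2} \prod_{m=1}^{k} \threebars \gr{Y_m} \threebars_{\gamma,\eta},
\end{equation*}
uniformly on bounded sets, which proves both the claimed $k+1$-st differentiability and the continuity of $\gr{Y} \mapsto D^{(k+1)}G(\gr{Y})$ (the latter again by the strong local Lipschitz property of multiplication and composition).

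The main obstacle is bookkeeping: one must make sure that all multiplications of modelled distributions above land back in $\DD^{\gamma,\eta}_{\gr{\CU}}(\gr{\bz})$ (rather than in a larger space $\DD^{\bar{\gamma},\bar{\eta}}$ with different indices), and that the composition map $\gr{Y} \mapsto G^{(k+2)}(\gr{Y})$ is well-defined and locally Lipschitz. Both are legitimate under the regularity assumption $g \in \CC^{\ell+4}$ — the additional four derivatives are precisely the buffer needed to cover these compositions up to order $\ell$ together with the Taylor remainder of order two. Once this is set up, the induction closes cleanly.
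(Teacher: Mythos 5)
Your proof is correct and follows essentially the same route as the paper's: the same induction, the same factorisation of the Fréchet remainder into $[G^{(k)}(\gr{Y}+\gr{Y_{k+1}}) - G^{(k)}(\gr{Y}) - G^{(k+1)}(\gr{Y})\gr{Y_{k+1}}]\prod_{m=1}^k \gr{Y_m}$, the same application of~\thref{prop:taylor_modelled_distrib} eq.~\eqref{prop:taylor_modelled_distrib_eq}, and the same use of~\cite[Prop.~6.12 \& 6.13]{hairer_rs} to close the estimate. The only cosmetic difference is that the paper begins the induction at $k=1$ and then notes the same argument covers $k=0$, whereas you start at $k=0$; this changes nothing of substance.
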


\begin{remark}[Regularity assumptions on $\boldsymbol{g}$] \label{rmk:order_of_diffb}
	Let us comment on the order of differentiability imposed on~$g$.   
	First of all, recall that $\gamma = 1 + 2\kappa$ and that $\gr{\CU}$ is a function-like sector, the lowest non-zero homogeneity of which is 
	\begin{equation*}
		\chi := \reg{\<1>} = \reg{\<1g>} = 1 - \kappa.
	\end{equation*}
	As detailed in \cite[Prop.~6.13]{hairer_rs}, for $G^{(\ell)}$ to be well-defined we actually only need $g^{(\ell)} \in \C^n$ with $n \in \N$ and $n \geq \nicefrac{\gamma}{\chi} \vee 1$, i.e. $n \geq 2$ and $g \in \CC^{\ell+2}$. However, in our proof we will make use of~\thref{prop:taylor_modelled_distrib} with $F' := G^{(\ell)}$, so that $G^{(\ell+1)}$ appears on the RHS of~\eqref{prop:taylor_modelled_distrib_eq} and, by the same token, necessitates~$g \in \CC^{(\ell+3)}$.
	However, we will also want to use (strong) local Lipschitz continuity of~$G^{(\ell+1)}$, so \cite[Prop.~$6.13$]{hairer_rs} and \cite[Prop.~3.11]{hairer_pardoux} imply the need for an additional~\enquote{$+1$} in regularity, resulting in~$g \in \CC^{(\ell+4)}$.
	
	In the setting of~\thref{thm:laplace_asymp}, we have~$\ell = N+3$ and so the above reasoning requires~$g \in \CC^{(N+7)}$.
\end{remark}

The following proof borrows from that of \cite[Prop.~4.7]{cfg}.

\begin{proof}
	Suppose we had already established that $G \in \CC^k$ for some $k \in \{1,\ldots,\ell-1\}$ with $D^{(m)}G$, $m = 1,\ldots,k$, given by~\eqref{lem:diffb_G_from_g:eq}. We want to do the induction step $k \mapsto k+1$ and prove that $G \in \CC^{k+1}$, so let $Y \in \DD$ and consider
	\begin{equation}
		R^{(k)}\sbr[1]{\gr{Y};(\gr{Y_i})_{i=1}^{k+1}}
		:=
		D^{(k)}G(\gr{Y} + \gr{Y_{k+1}})\sbr[1]{(\gr{Y_i})_{i=1}^k}
		-
		D^{(k)}G(\gr{Y})\sbr[1]{(\gr{Y_i})_{i=1}^k}
		-
		G^{(k+1)}(\gr{Y}) \prod_{m=1}^{k+1} \gr{Y_m}.
		\label{lem:diffb_G_from_g:pf:remainder_k}
	\end{equation}
	We want to prove that
	\begin{equation}
		\sup_{\substack{\threebars \gr{Y_m} \threebars_{\gamma,\eta} \leq 1, \\ 1 \leq m  \leq k}} \threebars R^{(k)}\sbr[1]{\gr{Y};(\gr{Y_i})_{i=1}^{k+1}} \threebars_{\gamma,\eta} = o\del[1]{\threebars \gr{Y_{k+1}} \threebars_{\gamma,\eta}}, \quad \gr{Y_{k+1}} \to 0.
		\label{lem:diffb_G_from_g:pf:remainder_k_est}
	\end{equation}
	Setting $P_k := \prod_{n=1}^k \gr{Y_n}$, an element of~$\DD$ by~\cite[Prop.~6.12]{hairer_rs}, we easily see that	
	\begin{equation}
		R^{(k)}\sbr[1]{\gr{Y};(\gr{Y_i})_{i=1}^{k+1}}
		=
		\del[1]{G^{(k)}(\gr{Y} + \gr{Y_{k+1}}) - G^{(k)}(\gr{Y}) - G^{(k+1)}(\gr{Y})\gr{Y_{k+1}}} P_k.
		\label{lem:diffb_G_from_g:pf:remainder_k_est_prod}
	\end{equation} 
	Since $k \leq \ell - 1$, we have $g^{(k)} \in \CC^m$ with $m \geq 5$, so we may invoke~\thref{prop:taylor_modelled_distrib}, more specifically~\eqref{prop:taylor_modelled_distrib_eq}, to obtain 
	\begin{equation*}
		R^{(k)}\sbr[1]{\gr{Y};(\gr{Y_i})_{i=1}^{k+1}}
		=
		\int_0^1 (1-s) G^{(k+2)}(\gr{Y} + s \gr{Y_{k+1}}) \gr{Y_{k+1}}^2 \dif s \thinspace P_k =: I_{k+2}(\gr{Y},\gr{Y_{k+1}}) P_k
	\end{equation*} 
	From \cite[Prop.~6.13]{hairer_rs}, we know that $G^{(k+2)}(\gr{Y} + s \gr{Y_{k+1}}) \in \DD$ for each $s \in [0,1]$; \cite[Prop.~6.12]{hairer_rs} then implies that $G^{(k+2)}(\gr{Y} + s \gr{Y_{k+1}}) \sbr[0]{\gr{Y_{k+1}}}^2$ is also in $\DD$. 
	Hence, so is the Bochner integral $I_{k+2}(\gr{Y},\gr{Y_{k+1}})$ and, by the same token as before, the product $I_{k+2}(\gr{Y},\gr{Y_{k+1}}) P_k$. The quoted propositions then give
	\begin{equs}[][lem:diffb_G_from_g:pf:remainder_k_est_prod_est]
		\threebars R^{(k)}\sbr[1]{\gr{Y};(\gr{Y_i})_{i=1}^{k+1}} \threebars_{\gamma,\eta}
		& 
		\aac
		\sup_{s \in [0,1]} \threebars G^{(k+2)}(\gr{Y} + s \gr{Y_{k+1}})\threebars_{\gamma,\eta} \threebars \gr{Y_{k+1}} \threebars_{\gamma,\eta}^2 \prod_{n=1}^k \threebars \gr{Y_n} \threebars_{\gamma,\eta} \\
		&
		\aac
		\del[1]{\threebars \gr{Y} \threebars_{\gamma,\eta} + \threebars \gr{Y_{k+1}} \threebars_{\gamma,\eta}} \threebars \gr{Y_{k+1}} \threebars_{\gamma,\eta}^2 \prod_{n=1}^k \threebars \gr{Y_n} \threebars_{\gamma,\eta}, 
	\end{equs} 
	using, in particular, that $G^{(k+2)}$ is locally Lipschitz continuous. 
	The previous equation implies that 
	\begin{equation}
		\sup_{\substack{\threebars \gr{Y_m} \threebars_{\gamma,\eta} \leq 1, \\ 1 \leq m  \leq k}} \threebars R^{(k)}\sbr[1]{\gr{Y};(\gr{Y_i})_{i=1}^{k+1}} \threebars_{\gamma,\eta} = O\del[1]{\threebars \gr{Y_{k+1}} \threebars_{\gamma,\eta}^2}, \quad \gr{Y_{k+1}} \to 0,
		\label{lem:diffb_G_from_g:pf:remainder_k_est:O}
	\end{equation}
	and so~\eqref{lem:diffb_G_from_g:pf:remainder_k_est} follows. Since the previous argument also works for $k=0$, where
	\begin{equation*}
		R^{(0)}\sbr[1]{\gr{Y};\gr{Y_1}}
		= G(\gr{Y}+\gr{Y_1}) - G(\gr{Y}) - G'(\gr{Y})\gr{Y_1},
	\end{equation*}
	the induction argument can be closed. 
\end{proof}

As a corollary of the last proposition, we may lift Taylor's theorem from $g$ to $G$.

\begin{corollary}
	In the setting of~\thref{lem:diffb_G_from_g}, the identity
	\begin{equation*}
		G(\gr{Y} + \gr{\tilde{Y}}) + G(\gr{Y}) + \sum_{m=1}^{\ell-1} \frac{1}{m!} G^{(m)}(\gr{Y})\gr{\tilde{Y}}^m + R_\ell(\gr{Y},\gr{\tilde{Y}}), \quad \gr{Y},\gr{\tilde{Y}} \in \DD,
	\end{equation*}
	holds true with
	\begin{equation*}
		R_\ell(\gr{Y},\gr{\tilde{Y}}) = \int_0^1 \frac{(1-s)^{\ell-1}}{(\ell-1)!} G^{(\ell)}(\gr{Y} + s \gr{\tilde{Y}}) \gr{\tilde{Y}}^\ell \dif s.
	\end{equation*}
\end{corollary}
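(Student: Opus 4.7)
The plan is to reduce the statement to the classical Taylor theorem with integral remainder for Banach-space-valued functions of a single real variable, using that Proposition~\ref{lem:diffb_G_from_g} has already handed us $G \in \CC^\ell(\DD,\DD)$ together with an explicit formula for all Fr\'echet derivatives up to order~$\ell$.

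Concretely, for fixed $\gr{Y},\gr{\tilde Y}\in\DD$ I would introduce the auxiliary curve
\begin{equation*}
\phi: [0,1]\to \DD, \qquad \phi(s) := G(\gr{Y} + s\gr{\tilde Y}),
\end{equation*}
and first check that $\phi\in\CC^\ell([0,1],\DD)$ in the sense of Fr\'echet. This follows from Proposition~\ref{lem:diffb_G_from_g} via the chain rule applied to the affine map $s\mapsto \gr{Y}+s\gr{\tilde Y}$: by induction on $k\in\{1,\ldots,\ell\}$ one obtains
\begin{equation*}
\phi^{(k)}(s) = D^{(k)}G(\gr{Y}+s\gr{\tilde Y})\bigl[\gr{\tilde Y},\ldots,\gr{\tilde Y}\bigr] = G^{(k)}(\gr{Y}+s\gr{\tilde Y})\,\gr{\tilde Y}^k,
\end{equation*}
where the second equality is exactly~\eqref{lem:diffb_G_from_g:eq}. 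Continuity of $s\mapsto\phi^{(\ell)}(s)$ in $\DD$ is inherited from the joint continuity of the multiplication map and of $\gr{Z}\mapsto G^{(\ell)}(\gr{Z})$ on $\DD$, both of which follow from \cite[Prop.~6.12]{hairer_rs} and \cite[Prop.~6.13]{hairer_rs} combined with the hypothesis $g\in\CC^{\ell+4}$.

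With $\phi$ in hand, I would then apply the classical Taylor theorem for a $\CC^\ell$ curve valued in the Banach space $\DD$ between $s=0$ and $s=1$: iterated integration by parts (or an induction on $\ell$ starting from the fundamental theorem of calculus) yields
\begin{equation*}
\phi(1) = \phi(0) + \sum_{m=1}^{\ell-1}\frac{\phi^{(m)}(0)}{m!} + \int_0^1 \frac{(1-s)^{\ell-1}}{(\ell-1)!}\,\phi^{(\ell)}(s)\,\dif s.
\end{equation*}
Substituting the formulas for $\phi(0)$, $\phi(1)$, $\phi^{(m)}(0)$ and $\phi^{(\ell)}(s)$ yields precisely the desired identity. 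The Bochner integral on the right is well-defined in $\DD$ because the integrand is continuous in $s$ with values in $\DD$ and $[0,1]$ is compact.

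There is no genuine obstacle here: the heavy lifting (the $\CC^\ell$-regularity of $G$ and the explicit formula for its derivatives) is already contained in Proposition~\ref{lem:diffb_G_from_g}, and the case $\ell=2$ has essentially been spelled out in Lemma~\ref{prop:taylor_modelled_distrib}, eq.~\eqref{prop:taylor_modelled_distrib_eq}. The only small point to be careful about is to justify the chain rule for $\phi$ in the Fr\'echet sense; this is routine once the Fr\'echet differentiability of $G$ is known, but it is the step that links the two statements and I would write it out explicitly.
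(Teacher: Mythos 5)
Your proposal is correct and is exactly the argument the paper has in mind: the paper gives no proof of this corollary, instead stating it as an immediate consequence of Proposition~\ref{lem:diffb_G_from_g}, and your reduction to the single-variable curve $\phi(s) = G(\gr{Y}+s\gr{\tilde Y})$ followed by the classical Banach-valued Taylor theorem with integral remainder is the standard way to make that deduction explicit. (You are also right that the displayed identity in the corollary has a typographical ``$+$'' where an ``$=$'' is meant; your reading is the intended one.)
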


\subsubsection*{$\bullet$ Dilation of modelled distributions}

The following lemma clarifies the relation between modelled distributions based on an admissible model and its dilated counterpart. It is the analogue of \cite[Lem.~3.22]{cfg} and proved similarly.

\begin{lemma} \label{lem:consistency_dilation}
	Let $\bz \in \MM$ be an admissible model for a regularity structure~$\TT$, $\eps \in I = [0,1]$, and $\d_\eps \bz = (\Pi^\eps,\Gamma^\eps)$.	For each $\gamma > 0$, $\eta \in [0,\gamma]$, and $U^\eps \in \DD^{\gamma,\eta}(\Gamma^\eps)$, we have:
	\begin{enumerate}[label=(\roman*)]
		\item \label{lem:consistency_dilation:i} $\fd_\eps U^\eps \in \DD^{\gamma,\eta}(\Gamma)$, i.e. $\fd_\eps\del[1]{\DD^{\gamma,\eta}(\Gamma^\eps)} \subseteq \DD^{\gamma,\eta}(\Gamma)$. In addition, $\fd_\eps$ commutes with the operations of composition with smooth functions and product between modelled distributions.
		\item \label{lem:consistency_dilation:ii} $\CR^{\d_\eps \bz} U^\eps = \CR^{\bz} \fd_\eps U^\eps$.
		\item \label{lem:consistency_dilation:iii} $\fd_\eps\del[1]{\CP^{\d_\eps \bz} U^\eps} = \CP^{\bz} \fd_\eps U^\eps$.
	\end{enumerate}
\end{lemma}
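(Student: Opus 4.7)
The proof proceeds in three steps, closely paralleling the argument for translation given in \cite[Lem.~$3.22$]{cfg}; the algebraic backbone is Lemma~\ref{lem:dil_Gamma}, which asserts $\fd_\eps \circ \Gamma^\eps = \Gamma \circ \fd_\eps$, together with the definitional commutativities $\fd_\eps \circ \CI = \CI \circ \fd_\eps$, $\fd_\eps \circ \CJ_k = \CJ_k \circ \fd_\eps$ (where $\fd_\eps^+ \CJ_k \tau = \CJ_k \fd_\eps \tau$) and the multiplicativity of $\fd_\eps$.

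For \ref{lem:consistency_dilation:i}, the point is that $\fd_\eps$ acts diagonally in the symbol basis with eigenvalues $\eps^{[\tau]} \in [0,1]$ (for $\eps \in I$), so the pointwise bound $\norm{\fd_\eps v}_\beta \leq \norm{v}_\beta$ holds for every $v \in \CT_{<\gamma}$ and every $\beta < \gamma$. This immediately controls the first summand in the norm~\eqref{def:mod_distr:eq}. For the second (H\"older-type) summand, Lemma~\ref{lem:dil_Gamma} yields
\begin{equation*}
    \fd_\eps U^\eps(z) - \Gamma_{z\bar{z}} \fd_\eps U^\eps(\bar{z})
    = \fd_\eps\del[1]{U^\eps(z) - \Gamma^\eps_{z\bar{z}} U^\eps(\bar{z})},
\end{equation*}
so the same pointwise inequality gives $\threebars \fd_\eps U^\eps \threebars_{\gamma,\eta;\Gamma;D} \leq \threebars U^\eps \threebars_{\gamma,\eta;\Gamma^\eps;D}$, proving $\fd_\eps\del[0]{\DD^{\gamma,\eta}(\Gamma^\eps)} \subseteq \DD^{\gamma,\eta}(\Gamma)$. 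Commutativity with products is then immediate from the multiplicativity of $\fd_\eps$ and the fact that products in $\DD^{\gamma,\eta}$ are defined pointwise in $\CT$. Commutativity with composition $U \mapsto G(U)$ follows by inspecting the explicit formula~\eqref{eq:G_ext_rs}: since $\fd_\eps \1 = \1$, the scalar component (feeding into $g$ and its derivatives) is unchanged, and the remaining symbols simply pick up their $\fd_\eps$-eigenvalues uniformly in both sides.

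For \ref{lem:consistency_dilation:ii}, I invoke the uniqueness part of the reconstruction theorem. Since $\Pi^\eps_z = \Pi_z \circ \fd_\eps$ by definition of $\d_\eps \bz$, we have
\begin{equation*}
    \Pi^\eps_z U^\eps(z) = \Pi_z \fd_\eps U^\eps(z),
\end{equation*}
so the distribution $\CR^\bz \fd_\eps U^\eps$ locally approximates $\Pi^\eps_z U^\eps(z)$ at scale $\lambda^\gamma$ for every $z$, which is precisely the defining property of $\CR^{\d_\eps \bz} U^\eps$; uniqueness yields the equality.

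For \ref{lem:consistency_dilation:iii}, I decompose the abstract convolution operator as $\CP^\bz U = \CI U + \CJ^\bz U + \CN^\bz U$, with $(\CJ^\bz U)(y) := \sum_{|k|<\gamma+2} \frac{X^k}{k!} f_y(\CJ_k U(y))$ and $(\CN^\bz U)(y) := \sum_{|k|<\gamma+2} \frac{X^k}{k!} \scal{D^k K(y-\cdot), \CR^\bz U - \Pi_y U(y)}$ (cf.~\cite[Thm.~5.12]{hairer_rs}). The first piece commutes with $\fd_\eps$ by the very definition of $\fd_\eps$ on $\CI\tau$; the second because $f_y^\eps = f_y \circ \fd_\eps^+$ and $\fd_\eps^+ \CJ_k = \CJ_k \fd_\eps$ while $\fd_\eps X^k = X^k$; the third because part~\ref{lem:consistency_dilation:ii} gives $\CR^{\d_\eps\bz} U^\eps = \CR^\bz \fd_\eps U^\eps$ and $\Pi^\eps_y U^\eps(y) = \Pi_y \fd_\eps U^\eps(y)$, while $\fd_\eps$ again acts trivially on the polynomial prefactor $X^k$. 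Summing the three identities yields $\fd_\eps \CP^{\d_\eps \bz} U^\eps = \CP^\bz \fd_\eps U^\eps$.

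The only genuine subtlety is the compatibility in part~\ref{lem:consistency_dilation:i} between the two non-trivial summands of the modelled distribution norm under $\fd_\eps$, but this is settled cleanly by Lemma~\ref{lem:dil_Gamma} via the intertwining identity for $\Gamma^\eps$; the rest is tracking definitions.
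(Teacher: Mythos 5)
Your argument is, in essence, the same as the paper's: for~\ref{lem:consistency_dilation:i} the pointwise contraction $\norm[0]{\fd_\eps v}_\beta \leq \norm[0]{v}_\beta$ together with the intertwining from Lemma~\ref{lem:dil_Gamma}; for~\ref{lem:consistency_dilation:ii} the relation $\Pi^\eps = \Pi \circ \fd_\eps$ and uniqueness in the reconstruction theorem; for~\ref{lem:consistency_dilation:iii} a term-by-term comparison of the pieces of the abstract convolution.

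One small but genuine gap in~\ref{lem:consistency_dilation:iii}: the decomposition you cite from~\cite[Thm.~5.12]{hairer_rs}, namely $\CK_\gamma = \CI + \CJ + \CN_\gamma$, is for the operator built from the \emph{truncated} singular kernel~$K$ only. The operator $\CP$ that actually appears in the statement is the full abstract heat convolution, which in~\cite[eq.~(7.18)]{hairer_rs} carries an additional piece $R_\gamma(\CR^{\bz}\thinspace\cdot)$ coming from the smooth remainder kernel $R$ in $G = K + R$. Your three-term decomposition therefore proves the identity for $\CK_\gamma$ rather than for $\CP$. The fix is immediate and uses the same ingredients: the extra term is polynomial-valued with coefficients $\scal{\CR^{\d_\eps\bz}U^\eps, D^{(k)}R(z-\cdot)}$, which equal $\scal{\CR^{\bz}\fd_\eps U^\eps, D^{(k)}R(z-\cdot)}$ by your part~\ref{lem:consistency_dilation:ii}, and $\fd_\eps$ leaves polynomials invariant. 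Once this fourth term is accounted for, the proof is complete and coincides with the paper's.
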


\begin{proof}
	\begin{enumerate}[label=(\roman*)]
		\item Observe that $\norm{\fd_\eps \tau}_\b = \eps^{[\tau]} \norm[0]{\tau}_\b \leq \norm{\tau}_\b$ for any $\tau \in \CT_\b$. For each $\b < \gamma$, we thus have $\norm[0]{(\fd_\eps U^\eps)(z)}_{\b} \leq \norm[0]{U^\eps(z)}_{\b}$ and
		\begin{equation*}
			\norm[0]{(\fd_\eps U^\eps)(z) - \Gamma_{z\bar{z}} (\fd_\eps U^\eps)(\bar{z})}_\b
			=
			\norm[1]{\fd_\eps\del[1]{U^\eps(z) - \Gamma^\eps_{z\bar{z}} U^\eps(\bar{z})}}_\b
			\leq
			\norm[0]{U^\eps(z) - \Gamma^\eps_{z\bar{z}} U^\eps(\bar{z})}_\b.
		\end{equation*}
		where the equality is due to~\thref{lem:dil_Gamma}.
		In summary, 
		\begin{equation*}
			\threebars \fd_\eps U^\eps \threebars_{\gamma,\eta;\Gamma} 
			\leq
			\threebars U^\eps \threebars_{\gamma,\eta;\Gamma^\eps} < \infty. 
		\end{equation*}
		The other claims are true because $\fd_\eps$ is linear and multiplicative.
		\item We set $\CR^\eps := \CR^{\d_\eps \bz}$ and $\CR := \CR^{\bz}$. Recall that, by definition, $\Pi^\eps = \Pi \circ \fd_\eps$. For $V^\eps \in \DD^\gamma(\Gamma^\eps)$ and $V \in \DD^\gamma(\Gamma)$, the  bounds for $\CR^\eps$ and $\CR$ in the reconstruction theorem~\cite[Thm.~3.10]{hairer_rs} read: For any compact $\fK \subseteq \R^+ \x \R^2$,
		\begin{equation}
			\abs[1]{(\CR^\eps V^\eps - \Pi_z [\fd_\eps V^\eps(z))](\phi^\l_z)} \aac \l^\g, \qquad
			\abs[1]{(\CR V - \Pi_z V(z))(\phi^\l_z)} \aac \l^\g
			\label{lem:consistency_dilation:pf_ii_eq}
		\end{equation} 
		hold uniformly over $\phi \in \CB$, $\l \in (0,1]$, and $z \in \fK$. Since $\g > 0$, these bounds characterise $\CR^\eps$ and $\CR$ uniquely. As $\fd_\eps V^\eps \in \DD^\gamma(\Gamma)$ by~\ref{lem:consistency_dilation:i}, we may plug it into the second bound in~\eqref{lem:consistency_dilation:pf_ii_eq} in place of~$V$, so the statement follows for~$V^\eps \in \DD^\gamma(\Gamma^\eps)$ by uniqueness. 
		The corresponding statement for $U^\eps \in \DD^{\gamma,\eta}(\Gamma^\eps)$ follows from~\cite[Prop.~6.9]{hairer_rs} for $\eta \in (-2,\gamma]$.
		\item For $U \in \DD^{\gamma,\eta}(\bz)$, the operator $\CP^{\bz}$ is defined by
			\begin{equation}
			(\CP^{\bz} U)(z) = \CI [U(z)] + \CJ^{\bz}(z) [U(z)] + \CN^{\bz} [U(z)] + R\del[1]{\CR^{\bz} U(z)},
			\end{equation}
		cf.~\cite[Eq.'s~(5.15) and (7.18)]{hairer_rs}. The operators $\CJ^{\bz}(z)$, $\CN^{\bz}$, and $R$ are defined in~\cite[Eq.'s~(5.11), (5.16), and (7.7)]{hairer_rs}, respectively, and take values in the polynomial regularity structure. 
		
		Since $\CI$ and $\fd_\eps$ commute and $\fd_\eps$ leaves the polynomials~$X^k$ invariant, we need only check that the coefficients of the polynomials $(k!)^{-1} X^k$ in $\CJ^{\d_\eps\bz}(z) [U^\eps(z)]$, $\CN^{\d_\eps\bz} [U^\eps(z)]$, and $R\del[1]{\CR^{\d_\eps\bz} U^\eps(z)}$ match those in $\CJ^{\bz}(z) [\fd_\eps U^\eps(z)]$, $\CN^{\bz} [\fd_\eps U^\eps(z)]$, and $R\del[1]{\CR^{\bz} \fd_\eps U^\eps(z)}$, respectively. Indeed, this is the case:
		\begin{itemize}
			\item $\CJ$: \qquad \hspace{-0.3em} $\scal{\Pi_z^\eps [U^\eps(z)], D^{(k)} K(z - \cdot)} = \scal{\Pi_z [\fd_\eps U^\eps(z)], D^{(k)} K(z - \cdot)}$.
			\item $\CN$: \qquad \hspace{-0.3em} $\scal{\CR^\eps U^\eps - \Pi^\eps_z [U^\eps(z)], D^{(k)} K(z - \cdot)} = \scal{\CR [\fd_\eps U^\eps] - \Pi_z [\fd_\eps U^\eps(z)], D^{(k)} K(z - \cdot)}$.
			\item $R \circ \CR$: \ $\scal{\CR^\eps U^\eps, D^{(k)}R(z - \cdot)} = \scal{\CR [\fd_\eps U^\eps], D^{(k)}R(z - \cdot)}$.
		\end{itemize}
		In these identities, we have used that~$\Pi_z^\eps = \Pi_z \circ \fd_\eps$ and~\ref{lem:consistency_dilation:ii}.
	\end{enumerate}
\end{proof}

\begin{remark}
	We emphasise that the preceding lemma is valid for \emph{any} regularity structure, in particular for those introduced in section~\ref{app:rs_background} above.
\end{remark}

\section{Deterministic gPAM and explosion times} \label{sec:explosion}

In this section, we summarise facts about explosion times of the (stochastic) PDEs under investigation. 

We start with gPAM driven by a Cameron-Martin function. 
The following proposition, in essence, is a version of~\cite[Prop.~A.1]{chouk_friz}, with some appropriate changes to cover our case of interest. 
Recall that~$\CX_T$ is the space of functions~$u \in \CC([0,T];\CC^{\eta}(\T^2))$ with~$u(0,\cdot) = u_0$ and~$\eta \in (\nicefrac{1}{2},1)$.

\begin{proposition}\label{prop:ex_det_gpam}
	Let $T > 0$. Given~$g \in \CC_{b}^1(\R)$, $u_0 \in \CC^\eta(\T^2)$ for $\eta \in (0,1)$, and $h \in \CH \equiv L^2(\T^2)$, there exists a unique global solution~$w_h \in \CX_T$ to the equation
		\begin{equation*}
			(\partial_t - \Delta) w_h = g(w_h)h, \quad w_h(0,\cdot) = u_0.
			\label{prop:ex_det_gpam:eq}
		\end{equation*}
\end{proposition}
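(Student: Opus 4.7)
The strategy is a standard Picard iteration on the mild (Duhamel) formulation, exploiting that $g$ is bounded so that the source term $g(w)h$ lives in $L^2(\T^2)$ uniformly in $w$, together with the smoothing of the heat semigroup $P_t = e^{t\Delta}$ on $\T^2$ from $L^2$ into $\CC^\eta$.

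The first step is to record the crucial heat-semigroup estimate
\begin{equation*}
\|P_t f\|_{\CC^\eta(\T^2)} \aac t^{-\alpha} \|f\|_{L^2(\T^2)}, \qquad f \in L^2(\T^2), \ t \in (0,1],
\end{equation*}
with some $\alpha = \alpha(\eta) \in (0,1)$. This is obtained from the standard bound $\|P_t f\|_{H^s} \aac t^{-s/2} \|f\|_{L^2}$ followed by the Sobolev embedding $H^s(\T^2) \embed \CC^\eta(\T^2)$, valid in dimension two for $s > \eta + 1$; choosing $s = \eta + 1 + \e$ with $\e > 0$ small gives $\alpha = (\eta + 1 + \e)/2 < 1$ since $\eta < 1$. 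In parallel, $\|P_t u_0\|_{\CC^\eta} \aac \|u_0\|_{\CC^\eta}$ uniformly in $t \in [0,T]$, and $t \mapsto P_t u_0$ is continuous into $\CC^\eta(\T^2)$.

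The second step is to set up, for $T_0 \in (0,T]$ to be fixed, the Duhamel map
\begin{equation*}
\Phi_h: \CX_{T_0} \to \CX_{T_0}, \qquad \Phi_h(w)(t) := P_t u_0 + \int_0^t P_{t-s}\big[g(w(s)) \thinspace h\big] \dif s.
\end{equation*}
Since $g \in \CC_b^1$, the pointwise product $g(w(s)) h \in L^2(\T^2)$ makes sense with $\|g(w(s)) h\|_{L^2} \leq \|g\|_\infty \|h\|_{L^2}$. Combined with the estimate above and $\int_0^t (t-s)^{-\alpha} \dif s \aac t^{1-\alpha}$, this yields $\|\Phi_h(w)\|_{\CX_{T_0}} \leq \|u_0\|_{\CC^\eta} + C T_0^{1-\alpha} \|g\|_\infty \|h\|_{\CH}$, so $\Phi_h$ is well-defined into $\CX_{T_0}$. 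For contraction, the mean value estimate $|g(w_1(s)) - g(w_2(s))| \leq \|g'\|_\infty \|w_1(s) - w_2(s)\|_\infty$ together with the embedding $\CC^\eta \embed L^\infty$ gives
\begin{equation*}
\|\Phi_h(w_1) - \Phi_h(w_2)\|_{\CX_{T_0}} \leq C T_0^{1-\alpha} \|g'\|_\infty \|h\|_{\CH} \ \|w_1 - w_2\|_{\CX_{T_0}},
\end{equation*}
and for $T_0$ small enough, depending only on $\|g'\|_\infty$ and $\|h\|_{\CH}$ (not on the size of the solution), $\Phi_h$ is a strict contraction, producing a unique local fixed point $w_h \in \CX_{T_0}$.

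The third step is the upgrade to global existence on $[0,T]$. The boundedness of $g$ is what makes this easy: on any existence interval, the mild formulation yields the a priori estimate
\begin{equation*}
\|w_h(t)\|_{\CC^\eta} \leq \|u_0\|_{\CC^\eta} + C \|g\|_\infty \|h\|_{\CH} \, t^{1-\alpha},
\end{equation*}
so $\|w_h(t)\|_{\CC^\eta}$ cannot blow up in finite time. Since the step-size $T_0$ in the fixed-point argument depends only on $\|g'\|_\infty$ and $\|h\|_{\CH}$, we can iterate: take $w_h(T_0)$ as a new initial condition in $\CC^\eta(\T^2)$, solve on $[T_0,2T_0]$, and so on, thereby covering any $[0,T]$ in finitely many steps. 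Uniqueness at each iteration is inherited from the local fixed point, and a standard gluing argument yields uniqueness on $[0,T]$.

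I do not expect any single step to be a serious obstacle; the only point to be careful about is the smoothing estimate and the constraint $\alpha < 1$, which is precisely where the restriction $\eta < 1$ enters. The multiplication $g(w) \cdot h$ is \emph{not} being made sense of via paraproducts or any Besov duality — it is the ordinary pointwise product of a bounded continuous function with an $L^2$-function — which is why this deterministic equation is essentially elementary, in sharp contrast to the singular SPDE~\eqref{gpam} where $\xi$ lives only in $\CC^{-1-\kappa}$.
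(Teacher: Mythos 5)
Your proof is correct and follows the same overall strategy as the paper's (Picard iteration on the Duhamel formulation, contraction on a short interval of fixed length, then gluing to reach $[0,T]$). The one technical difference is in how the source term $g(w)h$ is placed into a space from which the heat semigroup can regain $\CC^\eta$-regularity: the paper goes through negative Besov spaces, using the paraproduct estimates of Mourrat--Weber to put $[g(a)-g(b)]h$ in $B^{-\eps}_{2,\infty}\embed\CC^{-1-\eps}$, and then invokes Schauder-type smoothing of $e^{t\Delta}$ from $\CC^\gamma$ to $\CC^\eta$ with the exponent constraint $\eta-2<\gamma<-1$; you instead observe directly that $g(w)h\in L^2(\T^2)$ via the elementary $L^\infty\times L^2$ bound $\|g(w)h\|_{L^2}\leq\|g\|_\infty\|h\|_{L^2}$ and then use the $L^2\to H^s\to\CC^\eta$ smoothing chain. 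The resulting exponent is the same in both cases ($\alpha=(1+\eta+\varepsilon)/2<1$, which is precisely where $\eta<1$ is used), so the arguments are genuinely interchangeable. Your route has the minor virtue of avoiding paraproducts altogether — a worthwhile observation for this proposition, since it underlines that the deterministic shifted equation is elementary and all the difficulty lies in the singular stochastic term. One small remark: the a priori bound you quote in step three is not actually needed for global existence — the paper's (and your own) observation that the contraction step-size $T_0$ is independent of the initial datum already suffices; the a priori bound is harmless but redundant.
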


\begin{proof}
	Let $a, b \in \C^\eta(\T^2)$, set $v := g(a) - g(b)$, and note that we have $v \in \CC^\eta(\T^2)$ because $g \in \CC_b^1$. 
	We have~$\CH = B_{2,2}^0(\T^2) \embed B_{2,2}^{-\eps}(\T^2)$ for any~$\eps > 0$. Choosing~$\eps \in (0,\eta)$, the paraproduct estimates conveniently summarised by Mourrat and Weber~\cite[Prop.~A.7]{mourrat_weber_infinity} then imply that
		\begin{equation*}
			vh \in B_{2,\infty}^{-\eps}(\T^2) \embed \CC^{-1-\eps}(\T^2).
		\end{equation*}
	In addition, for any~$\gamma < -1$ we have the estimate
		\begin{equation*}
			\norm[0]{\sbr[0]{g(a) - g(b)} h}_{\gamma} \aac \norm[0]{g(a) - g(b)}_{\eta} \norm{h}_{\CH} \aac \norm[0]{g'}_{L^\infty} \norm[0]{a-b}_{\eta} \norm[0]{h}_\CH.
		\end{equation*}
	By the regularising properties of the heat semigroup~$P_t = e^{t\Delta}$~\cite[Prop.~A.13]{mourrat_weber_infinity},  we find for each~$f \in \CX_T$ and~$\gamma > \eta -2$ that
		\begin{equs}
			\norm[3]{\int_0^t P_{t-s} f_s \dif s}_{\eta} 
			& \aac 
			\int_0^t \norm[0]{P_{t-s} f_s}_{\eta} \dif s 
			\aac
			\int_0^t (t-s)^{\frac{1}{2}(\gamma - \eta)} \norm[0]{f_s}_\gamma \dif s
			\leq
			T^{1 + \frac{1}{2}(\gamma-\eta)} \norm[0]{f}_{\CX_T}. 
		\end{equs}
	We then introduce the map
		\begin{equation*}
			\Gamma_T: \CX_T \to \CX_T, \quad
			\Gamma_T(w)(t) := P_t u_0 + \int_0^t P_{t-s}\sbr[0]{g(w_s)h} \dif s
		\end{equation*}
	which is well-defined by the previous calculations (choose~$f = g(w)h$). Then, one easily finds that 
		\begin{equs}
			\norm[0]{\Gamma_T(w) - \Gamma_T(v)}_{\CX_T}
			&
			=
			\sup_{t \in [0,T]} \norm[3]{\int_0^t P_{t-s} \sbr[1]{(g(w_s) - g(v_s))h} \dif s}_{\eta} \\
			&
			\aac
			T^{1 + \frac{1}{2}(\gamma-\eta)} \norm[0]{(g(w_s) - g(v_s))h}_{\CX_T}
			\aac
			T^{1 + \frac{1}{2}(\gamma-\eta)} \norm[0]{g'}_{L^\infty} \norm[0]{h}_\CH \norm[0]{w-v}_{\CX_T}. 
		\end{equs}
	The exponent of~$T$ is positive, so we may choose $T_*$ small enough such that $\Gamma_{T_*}$ becomes a contraction, in turn admitting a unique fixed-point~$w_h$. Since $T_*$ does not depend on~$u_0$, this procedure can be iterated, giving rise to a global solution.
\end{proof}

\subsubsection*{$\bullet$ Explosion times.} 

Next, recall that the explosion time $T_\infty$ of the solution to eq.~\eqref{eq:fp_normal} -- that is: the smallest time for which the equation
	\begin{equation}
		U = \CP^{\bz} \del[1]{G(U)\thinspace\<wn>} + \TT Pu_0 \quad \text{in} \quad \DD^{\gamma,\eta}_\CU(\bz),
		\label{sec:explosion:abstract_gpam}
	\end{equation}
does \emph{not} have a solution~(cf.~\cite[Coro.~$9.3$]{hairer_rs}) -- is lower semicontinuous (abbr. \enquote{l.s.c.}) as a function of $\bz \in \MM$ ($u_0$ is fixed), see~\cite[Prop.~3.23]{cfg}. Since the extension operator $\EE: \MM_- \to \MM$ is continous~(cf.~\thref{app:thm:extension}), the map
	\begin{equation*}
		T_\infty^{\minus} := T_\infty(u_0,\cdot) \circ \EE: \ \MM_- \to [0,+\infty]
	\end{equation*}
is l.s.c. both on $(\MM_-,\norm[0]{\cdot})$ and~$(\MM_-,\barnorm{\thinspace\cdot\thinspace})$. We introduce
		\begin{itemize}
			\item the explosion time $T_0$ of the \emph{deterministic} PDE when the noise is set to zero ($\eps = 0$). More precisely, we set 
				\begin{equation}
					T_0 := T_\infty^{\minus}(\boldsymbol{0})
					\label{sec:explosion:time_0}
				\end{equation}
			where $\boldsymbol{0} \in \MM_-$ is the lift of the function that is constant~$0$.
			\item For $\eps \in [0,1]$, let $T^\eps := T_\infty^{\minus}(\d_\eps\hbz^{\minus})$. In other words, $T^\eps$ is the explosion time of~$\hat{u}^\eps$ from~\thref{intro:ex_sol_hairer} in the introduction.
			\item For $h \in \CH$, let $T_\infty^h := T_\infty^{\minus}(\LL_-(h))$ denote the \emph{deterministic} explosion time of the PDE driven by the non-random noise~$h$. 
		\end{itemize}

\begin{remark}\label{rmk:explosion_gpam}
	In the specific case at hand, $T_0$ is the explosion time of the homogeneous heat equation with initial condition $u_0 \in \CC^{\eta}(\T^2)$. Hence, $T_0 = +\infty$ by linearity of the equation. Furthermore, for gPAM we have~$T_\infty^h = + \infty$ for any $h \in \CH$ as~\thref{prop:ex_det_gpam} above shows.
\end{remark}

The following two lemmas are general statements for stochastic PDEs amenable to analysis via regularity structures and not specific to~gPAM.

\begin{lemma}\label{sec:explosion:time_0_lem}
	Let $T < T_0$. Then $\P(T^\eps > T) \to 1$ as $\eps \to 0$.
\end{lemma}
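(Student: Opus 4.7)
The proof is essentially a soft application of three ingredients already established in the paper: the homogeneity of the minimal norm under dilation (Lemma~\ref{lem:expl_hom}), the Fernique-type bound for $\hbz^{\minus}$ (Theorem~\ref{thm:fernique}), and the lower semicontinuity of $T_\infty^{\minus}$ on $(\MM_-,\threebars\cdot\threebars)$ recalled just before Remark~\ref{rmk:explosion_gpam}.

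First, I would show that $\d_\eps \hbz^{\minus} \to \boldsymbol{0}$ almost surely in $(\MM_-,\threebars\cdot\threebars)$ as $\eps \to 0$. By Lemma~\ref{lem:expl_hom},
\[
\barnorm{\d_\eps \hbz^{\minus}} \;=\; \eps\,\barnorm{\hbz^{\minus}},
\]
and by the Fernique-type Theorem~\ref{thm:fernique} the random variable $\barnorm{\hbz^{\minus}}$ is almost surely finite (in fact it has Gaussian tails). Hence $\barnorm{\d_\eps\hbz^{\minus}} \to 0$ almost surely, and by Lemma~\ref{lem:estimate_min_norm_vs_hom_norm} this is equivalent to $\threebars \d_\eps\hbz^{\minus};\boldsymbol{0}\threebars \to 0$ almost surely.

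Next, since $T_\infty^{\minus}$ is lower semicontinuous on $(\MM_-,\threebars\cdot\threebars)$, the sublevel complement $\{\bz^{\minus} \in \MM_- : T_\infty^{\minus}(\bz^{\minus}) > T\}$ is open. By hypothesis $T < T_0 = T_\infty^{\minus}(\boldsymbol{0})$ (cf.\ Remark~\ref{rmk:explosion_gpam}), so this open set contains $\boldsymbol{0}$. Combined with the almost sure convergence from the previous step, this gives
\[
\mathbf{1}_{\{T^\eps > T\}} \;=\; \mathbf{1}_{\{\d_\eps\hbz^{\minus} \in \{T_\infty^{\minus} > T\}\}} \;\longrightarrow\; 1 \quad \text{a.s.}
\]
as $\eps \to 0$. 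A straightforward application of dominated convergence then yields $\P(T^\eps > T) \to 1$.

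There is no real obstacle here; the only point that requires care is not to confuse the two metrics $\threebars\cdot\threebars$ and $\barnorm{\cdot}$ on $\MM_-$, which is settled by invoking Lemma~\ref{lem:estimate_min_norm_vs_hom_norm} at the right moment. I would also point out that the argument does \emph{not} require the full LDP for $(\d_\eps\hbz^{\minus})_\eps$: the scaling identity of Lemma~\ref{lem:expl_hom} together with the a.s.\ finiteness of $\barnorm{\hbz^{\minus}}$ is already enough, which is consistent with the role of the lemma as a preliminary non-explosion statement.
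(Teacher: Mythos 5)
Your proof is correct and follows essentially the same route as the paper's: both combine lower semicontinuity of $T_\infty^{\minus}$ with the convergence $\d_\eps\hbz^{\minus}\to\boldsymbol{0}$. The only presentational difference is that the paper quantifies the l.s.c.\ condition via some $\delta(\tilde{\eta})$, invokes convergence in probability, and bounds $\P(T^\eps>T)$ from below directly, whereas you derive almost sure convergence explicitly from the homogeneity identity $\barnorm{\d_\eps\hbz^{\minus}}=\eps\barnorm{\hbz^{\minus}}$ (Lemma~\ref{lem:expl_hom}), use the topological form of l.s.c., and finish with dominated convergence -- the same content, slightly repackaged. One minor point: Fernique's theorem is sufficient but unnecessary here, since a.s.\ finiteness of $\barnorm{\hbz^{\minus}}$ is already automatic from $\hbz^{\minus}$ being $\MM_-$-valued (elements of $\MM_-$ have finite homogeneous norm by construction); you partially acknowledge this in your parenthetical remark.
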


\begin{proof}
	We assume that $T_0 < +\infty$, with obvious modifications if~$T_0 = +\infty$. By l.s.c. of $T_\infty^{\minus}$, we know that for each $\eta > 0$ there exists $\d = \d(\eta) > 0$ such that
		\begin{equation}
			\norm[0]{\bz^{\minus} - \boldsymbol{0}}
			=
			\norm[0]{\bz^{\minus}} < \d \implies T_{\infty}^{\minus}(\bz^{\minus}) > T_\infty^{\minus}(\boldsymbol{0}) - \eta \quad (\equiv T_0 - \eta).
			\label{sec:explosion:time_0_lem:pf_eq1}
		\end{equation}
	We choose~$\tilde{\eta} := T_0 - T > 0$. Since $\d_\eps \hbz^{\minus} \to \boldsymbol{0}$ in prob. as $\eps \to 0$, we know that 
		\begin{equation*}
			1 \geq \P(T^\eps > T) 
			= \P(T_\infty^{\minus}(\d_\eps \hbz^{\minus}) > T)
			\overset{\eqref{sec:explosion:time_0_lem:pf_eq1}}{\geq}
			\P(\norm[0]{\d_\eps \hbz^{\minus}} < \d(\tilde{\eta}))
			\to 1 \quad \text{as} \ \eps \to 0. 
			\label{sec:explosion:time_0_lem:pf_eq2}
		\end{equation*}
	The claim follows.
\end{proof}

Loosely speaking, the next lemma states that if one controls the noise (in the form of the model~$\d_\eps\hbz^{\minus}$), then one can ensure that the stochastic PDE driven by~$h + \eps \xi$ lives as long as that driven by~$h$. 
Its proof is informed by~\cite[Cor.~$3.27$]{cfg}.

\begin{lemma}\label{sec:explosion:cm_lem}
	Fix $h \in \CH$ and let $T < T_\infty^h$. Then, there exists $\rho = \rho(T) > 0$ such that, for all~$\eps \in [0,1]$ and $\bz^{\minus} \in \MM_-$ with~$\eps \barnorm{\bz^{\minus}} < \rho$, we have~$T_\infty^{\minus}(T_h \d_\eps \bz^{\minus}) > T$.
\end{lemma}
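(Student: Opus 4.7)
The strategy is to combine lower semicontinuity of the explosion time map at the distinguished point $\LL_-(h)$ with a quantitative continuity argument that shows $T_h \d_\eps \bz^-$ is forced close to $\LL_-(h)$ whenever $\eps \barnorm{\bz^-}$ is small.

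First I would note that $T_\infty^- = T_\infty^{\minus} \circ \operatorname{Id}_{\MM_-}$ inherits lower semicontinuity from $T_\infty$ on $(\MM, \threebars \cdot \threebars)$ via continuity of the extension operator $\EE$ (\thref{app:thm:extension}); by~\thref{lem:estimate_min_norm_vs_hom_norm} the resulting LSC also holds with respect to $\barnorm{\cdot}$. Since $\LL_-(h)$ satisfies $T_\infty^-(\LL_-(h)) = T_\infty^h > T$ by~\thref{rmk:explosion_gpam}, lower semicontinuity produces some $\tilde{\rho} > 0$ such that
\begin{equation*}
	\threebars \bw^- - \LL_-(h) \threebars < \tilde{\rho} \quad \Longrightarrow \quad T_\infty^-(\bw^-) > T.
\end{equation*}
The task therefore reduces to bounding $\threebars T_h\d_\eps\bz^- - \LL_-(h) \threebars$ in terms of $\eps \barnorm{\bz^-}$.

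For that, I would use the identity $\LL_-(h) = (T_h \boldsymbol{0})^-$ from \thref{rmk:lift_h} (with $\boldsymbol{0}$ the zero minimal model), and chain together the joint local Lipschitz continuity of $T: \CH \times \MM \to \MM$ (\thref{app:prop:translation}) and of $\EE : \MM_- \to \MM$. This yields, for any fixed $h$ and for models lying in a $\threebars \cdot \threebars$-ball of a given radius $R$, an estimate of the form
\begin{equation*}
	\threebars T_h \d_\eps \bz^- - \LL_-(h) \threebars_{\MM_-} \;\leq\; \threebars T_h \EE(\d_\eps \bz^-) - T_h \boldsymbol{0} \threebars_{\MM} \;\leq\; C(h,R)\, \threebars \d_\eps \bz^- \threebars_{\MM_-}.
\end{equation*}
The remaining quantity $\threebars \d_\eps \bz^- \threebars_{\MM_-}$ is explicit: since $\fd_\eps \tau = \eps^{[\tau]} \tau$ for the two symbols $\tau \in \CW_- = \{\<wn>, \<11>\}$, we get
\begin{equation*}
	\threebars \d_\eps \bz^- \threebars \;\leq\; \eps\, \barnorm{\bz^-} + \eps^2\, \barnorm{\bz^-}^2 \;\leq\; \rho\,(1 + \rho)
\end{equation*}
whenever $\eps \barnorm{\bz^-} < \rho$. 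In particular, choosing $\rho \leq 1$ first fixes a universal radius $R$ so that $C(h,R)$ is under control, and then further shrinking $\rho$ so that $C(h,R)\, \rho(1+\rho) < \tilde{\rho}$ completes the proof: for such $\rho$ and any $(\eps, \bz^-) \in [0,1] \times \MM_-$ with $\eps \barnorm{\bz^-} < \rho$, the model $T_h \d_\eps \bz^-$ lies in the LSC-neighbourhood of $\LL_-(h)$ from the first step, so $T_\infty^-(T_h \d_\eps \bz^-) > T$.

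The only mildly delicate point is the bookkeeping in the Lipschitz chain: $T_h$ is jointly locally Lipschitz only on bounded subsets of $\MM$, so one must first verify that $\EE(\d_\eps \bz^-)$ and $\boldsymbol{0}$ both stay in a fixed $\threebars \cdot \threebars$-ball for $\eps \barnorm{\bz^-} < \rho \leq 1$. This is automatic from the explicit bound on $\threebars \d_\eps \bz^- \threebars$ above, combined with the local Lipschitz continuity of $\EE$. Everything else is a routine continuity argument.
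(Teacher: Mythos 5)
Your proof is correct and follows essentially the same route as the paper's: lower semicontinuity of $T_\infty^{\minus}$ at $\LL_-(h)$ provides a $\threebars\cdot\threebars$-neighbourhood on which explosion exceeds $T$, local Lipschitz continuity of $T_h$ at $\boldsymbol{0}$ translates $\barnorm{\cdot}$-smallness of the input into proximity to $\LL_-(h) = T_h\boldsymbol{0}$, and $\rho$ is then shrunk appropriately. The only cosmetic difference is that the paper first rescales to $\eps=1$ (using $\barnorm{\d_\eps\bz^{\minus}} = \eps\barnorm{\bz^{\minus}}$, cf.~\thref{lem:expl_hom}) whereas you keep $\eps$ explicit and bound $\threebars\d_\eps\bz^{\minus}\threebars$ directly; this also lets you flag, more carefully than the paper does, that $\rho$ must absorb the local Lipschitz constant.
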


\begin{proof}
	By rescaling, we may assume that~$\eps = 1$.
	In addition, we assume that $T_\infty^h < +\infty$ with obvious modifications of our arguments otherwise. The explosion time~$T_\infty^{\minus}$ is l.s.c., so for each $\eta > 0$ there exists $\d = \d(\eta) > 0$ such that
	\begin{equation}
		\norm[0]{\tbz^{\minus} - \LL_-(h)} < \d \implies T_{\infty}^{\minus}(\tbz^{\minus}) > T_\infty^{\minus}(\LL_-(h)) - \eta \quad (\equiv T_\infty^h - \eta).
		\label{sec:explosion:cm_lem:pf_eq1}
	\end{equation}
	for any~$\tbz^{\minus} \in \MM_-$.
	We choose~$\tilde{\eta} := T_\infty^h - T > 0$. Since $\LL_-(h) = T_h\boldsymbol{0}$ and $T_h$ is locally Lipschitz continuous w.r.t. $\threebars \thinspace \cdot \thinspace \threebars$~(see~\thref{app:prop:translation}), we have
		\begin{equs}
			\threebars T_h \bz^{\minus} - \LL_-(h) \threebars
			&
			=
			\threebars T_h \bz^{\minus} - T_h\boldsymbol{0} \threebars
			\aac
			\threebars \bz^{\minus} \threebars
			\leq
			\max_{\tau \in \CW_-} \barnorm{\bz^{\minus}}^{[\tau]}
		\end{equs}
	Upon choosing $\rho := \d(\tilde{\eta}) \wedge 1$, the claim follows by~\eqref{sec:explosion:cm_lem:pf_eq1} for $\tbz^{\minus} := T_h \bz^{\minus}$.
\end{proof}

\section{Large deviations} \label{app:ldp_pam}

In this section, we will give a precise formulation of~\thref{thm:ldp} and characterise the large deviation behaviour of the solutions~$(\hat{u}^{(\eps)}: \eps \in I)$~from~\thref{intro:ex_sol_hairer}. In doing so, we will leverage the results of Hairer and Weber: our work amounts to proving applicability of~\cite[thm.~3.5]{hairer_weber_ldp}.
Recall that
	\begin{equation}
		\CW_- = \{\<wn>,\<11>\}, \quad
		\reg{\<wn>} = -1-\kappa, \quad 
		\reg{\<11>} = -2\kappa. 
	\end{equation}
Let us emphasise again that the methodology here is entirely contained in Hairer's and Weber's~\cite{hairer_weber_ldp} work. The backbone of their setting is the \emph{abstract Wiener space} (introduced by Gross~\cite{gross})~$(B,\CH,\mu)$ defined by
	\begin{itemize}
		\item the Hilbert space~$\CH := L^2(\T^2)$,
		\item the centred Gaussian measure~$\mu$ with Cameron-Martin space~$\CH$, and
		\item the Banach space $B$, the closure of~$\CC^{\infty}(\T^2)$ in the H\"older-Besov space~$\CC^{-1-\kappa}(\T^2)$ for any~$\kappa > 0$.
	\end{itemize}
Note that under~$\mu$, the space white noise~$\xi$ is the canonical process.

\subsubsection*{$\bullet$ The Banach spaces.}
With~$\norm[0]{\cdot}_{\tau}$ for $\tau \in \CW_-$~introduced in~\eqref{def:minimal_admissible_model:norm} and~\eqref{def:minimal_admissible_model:norm:suprema},the Banach space~$\BE := E_{\<wns>} \oplus E_{\<11s>}$ is constructed as follows:
	\begin{enumerate}[label=(\arabic*)]
		\item $E_{\<wns>}$ is the closure of smooth functions $\{\Pi\<wn>: \thinspace \tz \mapsto (\Pi \<wn>)(\tz) \in \CC^{\infty}(\T^2;\R)\}$ under the norm~$\norm[0]{\Pi}_{\<wns>}$. In other words,~$E_{\<wns>} = B$.
		\item $E_{\<11s>}$ is the closure of smooth functions $\{\Pi\<11>: (z,\tz) \mapsto (\Pi_z\<11>)(\tz) \in \CC^{\infty}(\T^2\x\T^2;\R)\}$ under the norm~$\norm[0]{\cdot}_{\<11s>}$.
	\end{enumerate}
Recall that there are also the homogeneous \enquote{norms}~$\barnorm{\cdot}_\tau$,~$\tau \in \CW_-$, in~\eqref{def:minimal_admissible_model:norm}. The same arguments as in the proof of~\thref{lem:estimate_min_norm_vs_hom_norm} show that we can also equip~$\BE$ with~$\barnorm{\cdot}$. Unless otherwise noted, however, we will always consider~$(\BE,\norm[0]{\cdot})$.

We emphasise that the Banach spaces~$E_\tau$, $\tau \in \CW_-$, are set up to dovetail nicely with the definition of the space~$\MM_-$ in~\thref{def:minimal_admissible_model}. In fact, $\MM_-$ is a closed subset (w.r.t. $\norm[0]{\cdot}$ and $\barnorm{\cdot}$) of~$\BE$.

\subsubsection*{$\bullet$ The random variables and their chaos decomposition.} 
We define the random variables $\Psib_\d: B \to \BE$ by
	\begin{equation}
		\Psib_\d(\xi) := \bigoplus_{\tau \in \CW_-}\Psi_{\d,\tau}(\xi), \quad \Psi_{\d,\tau}(\xi) := \hat{\Pi}^{\xi_\d} \tau, \quad \tau \in \CW_- = \{\<wn>,\<11>\}.
		\label{app:ldp:def_psib_delta}
	\end{equation}
where~$\hat{\Pi}^{\xi_\d}$ has been introduced in subsection~\ref{sec:renormalisation} above.
For $k \in \N_0$ and a Banach space~$E$, we denote by $\CH^{(k)}(\mu;E)$ the $E$-valued homogeneous Wiener-It\^{o} chaos (w.r.t.~$\mu$) of order $k$.	
By the definitions in~\eqref{app:ldp_rv_chaos_1}, one easily sees that the previously defined random variables are elements of the following (in-)homogeneous chaoses:
	\begin{equation*}
		\Psi_{\d,\<wns>} \equiv \Psi_{\d,\<wns>,1} \in \CH^{(1)}(\mu;E_{\<wns>}), \quad
		\Psi_{\d,\<11s>} = \Psi_{\d,\<11s>,2} + \Psi_{\d,\<11s>,0} \in \CH^{(2)}(\mu;E_{\<11s>}) \oplus \CH^{(0)}(\mu;E_{\<11s>})
	\end{equation*}
For $y,\by \in \T^2$, we want to calculate the respective integral kernels
	\begin{equation*}
		\hat{\CW}_{\d;\<wns>}(y,\by;\bullet_1) \in L_s^2\del[1]{\T^2}, \quad
		\hat{\CW}_{\d;\<11s>,2}(y,\by;\bullet_1,\bullet_2) \in L_s^2\del[1]{(\T^2)^2}, \quad 
		\hat{\CW}_{\d;\<11s>,0}(y,\by) \in \R 
	\end{equation*}
such that
	\begin{equation*}
		\Psi_{\d,\<wns>}[y,\phi] = I_1(\scal{\hat{\CW}_{\d,\<wns>}(y,\cdot,\bullet_1),\phi}), \quad 
		\Psi_{\d,\<11s>}[y,\phi] = I_2(\scal{\hat{\CW}_{\d,\<11s>,2}(y,\cdot,\bullet_1,\bullet_2),\phi}) 
		+ \scal{\hat{\CW}_{\d;\<11s>,0}(y,\cdot),\phi}
	\end{equation*}
where the bullets indicate the input variables for the \emph{Wiener-It\^{o} isometries}
	\begin{equation*}
		I_k: \CH^{\tp_s k} \simeq L_s^2\del[1]{(\T^2)^k} \to \CH^{(k)}(\mu;\R), \quad k = 1,2.
	\end{equation*}
Here, we have denoted by~$L_s^2$ the \emph{symmetric} $L^2$ space.
For more information on these isometries, we refer the reader to Nualart~\cite{nualart} and for the Banach-valued Wiener chaoses to Ledoux~\cite{ledoux}.	
\begin{enumerate}[label=(\arabic*)]
	\item For the symbol $\tau = \<wn>$, we find
	\begin{equation}
	\Psi_{\d,\<wns>}(\xi)[y,\by]
	= \hat{\Pi}^{\xi_\d}_y\<wn>(\by) 
	= \xi_\d(\by) 
	= (\xi * \rho_\d)(\by)
	= \scal{\xi,\rho_\d(\by - \cdot)}
	= I_1\del[1]{\rho_\d(\by - \cdot)},
	\end{equation}
	so that $\hat{\CW}_{\d;\<wns>}(y,\by;z_1) = \rho_\d(\by-z_1)$, independently of $y \in \T^2$.
	\item For the symbol $\tau = \<11>$, with $\wp$ denoting the Wick product we find
	\begin{equs}
		\Psi_{\d,\<11s>,2}(\xi)[y,\by] 
		& 
		=
		\Pi_y^{\xi_\d}\<1>(\by) \wp \Pi_y^{\xi_\d}\<wn>(\by) 
		=
		\sbr[1]{I_1\del[1]{\rho_\d(\by - \bullet)} \wp I_1\del[1]{K_\d(\by - \bullet) - K_\d(y - \bullet)}}(\xi) \\
		&
		=
		I_2\del[2]{\rho_\d(\by - \bullet_2) \tp \sbr[1]{K_\d(\by - \bullet_1) - K_\d(y - \bullet_1)}}(\xi),
	\end{equs}
	whereby it follows that $\hat{\CW}_{\d,\<11s>,2}(y,\by;z_1,z_2) = \rho_\d(\by - z_2) \sbr[1]{K_\d(\by - z_1) - K_\d(y - z_1)}$. 
	Using that $K(z) = K(-z)$, a calculation similar to~\eqref{calculation_ren_constant} leads to the equality
	\begin{equs}
		\Psi_{\d,\<11s>,0}(\xi)[y,\by] 
		& =
		E\sbr[1]{\Psi_{\d,\<11s>}[y,\by]} - \fc_{\d}
		=
		- \int_{\T^2} \rho_\d^{\star 2}(\by - z) K(y-z) \dif z
	\end{equs}
	independently of~$\xi$ where
	\begin{equation*}
		\rho_\d^{\star 2}(\by - z)
		:=
		\int_{\T^2} \rho_\d(\by - x) \rho_\d(z - x) \dif x
		=
		\E[\xi_\d(z)\xi_\d(\by)]
	\end{equation*}
	Hence,~$\hat{\CW}_{\d,\<11s>,0}(y,\by) = - (\rho_\d^{\star 2} * K)(\by - y)$.
	The previous computations can also be found in the original article of Hairer~\cite[Pf. of~Thm.~$10.19$]{hairer_rs}.
\end{enumerate}

\subsubsection*{$\bullet$ The homogeneous parts.}	

When establishing the large deviation principle, we need the concept of the \emph{homogeneous part}, in the literature also known as the \emph{S-transform}, see~\cite[Def.~$4.5$]{hu_yan_wick}. It is well-known that for $n \in \N$ and $h \in \CH$, we have
	\begin{equation}
		\sbr[1]{I_n(f_n)}_{\hom}(h) := \E\sbr[1]{I_n(f_n)(\cdot + h)} = \scal{f_n,h^{\tp_s n}}_{\CH^{\tp_s n}}.
		\label{eq:s_trafo}
	\end{equation}
For random variables with components in different inhomogeneous chaoses, the homogeneous part is defined to only take into account the highest chaos component:
	\begin{equation*}
		(\Psib_\d)_{\hom}(h) 
		= \bigoplus_{\tau \in \CW_-} \del[0]{\Psi_{\d,\tau,[\tau]}}_{\hom}(h)
		= \del[0]{\Psi_{\d,\<wn>}}_{\hom}(h) \oplus \del[0]{\Psi_{\d,\<11s>,2}}_{\hom}(h).
	\end{equation*}
Combining~\eqref{eq:s_trafo} with last paragraph's calculations, we find for $h \in \CH$ and
	\begin{enumerate}[label=(\arabic*)]
		\item the symbol $\tau = \<wn>$:
			\begin{equation*}
				(\Psi_{\d,\<wns>})_{\hom}(h)[y,\by]
				=
				\scal{\hat{\CW}_{\d,\<wns>,1}(y,\by;\bullet_1),h}_{\CW}
				=
				h_\d(\by)
				=
				\Pi_y^{h_\d}\<wns>(\by).
			\end{equation*}
		\item the symbol $\tau = \<11>$:
			\begin{equs}
				\del[0]{\Psi_{\d,\<11s>,2}}_{\hom}(h)[y,\by]
				=
				\scal{\hat{\CW}_{\d,\<11s>,2}(y,\by;\bullet_1,\bullet_2),h^{\tp_s 2}}_{\CH^{\tp_s 2}}
				=
				h_\d(y) \del[1]{K * h_\d(\by) - K * h_\d(y)}
				=
				\Pi_y^{h_\d} \<11>(\by).
			\end{equs}
		\end{enumerate}
By continuity of the translation operator~$T_h$ (cf.~\thref{app:prop:translation}) in~$h \in \CH$, we also find that
	\begin{equation*}
		\Pi^{h_\d} \tau = \Pi^{0,h_\d} \tau \to \Pi^{0,h} \tau = \Pi^{h} \tau \quad \text{in } E_\tau, \quad \tau \in \CW_-,
	\end{equation*}	
where $\Pi^{0,h}$ denotes the canonical lift of the $0$ function, translated into direction~$h$~(cf.~\thref{rmk:lift_h}). 
In summary, we have thus found that 
	\begin{equation}
		(\Psib_\d)_{\hom}(h) 
		= \bigoplus_{\tau \in \CW_-} \Pi^{h_\d} \tau 
		\quad \text{and} \quad 
		\lim_{\d \to 0} \thinspace (\Psib_\d)_{\hom}(h) 
		=
		\bigoplus_{\tau \in \CW_-} \Pi^{h} \tau 
		\quad \text{in } \BE.
		\label{app:ldp_hom_part_summary}
	\end{equation}
	
\subsubsection*{$\bullet$ Convergence of renormalised models.} 

We need to prove the existence of a random variable $\Psib = \bigoplus_{\tau \in \CW_-} \Psi_{\tau}: B \to \BE$ such that
	\begin{equation}
		\Psi_\tau = \lim_{\d \to 0} \Psi_{\d,\tau} \quad \text{in} \quad L^2(\mu;E_{\tau}).
		\label{app:ldp:eq_conv_renorm_models}
	\end{equation}
is true for all~$\tau \in \CW_-$. The limiting random variable~$\Psib$ is constructed similarly to~$\Psib_\d$ by its Wiener-It\^{o} chaos decomposition: the respective components~$\Psi_{\tau,k_\tau}$ for~$k_\tau \in \{0,\ldots,[\tau]\}$ and~$\tau \in \CW_-$ are identified in the proof of~\cite[Thm.~$10.19$]{hairer_rs}, more precisely in~~\cite[eq.~$10.23$]{hairer_rs} for~$\tau = \<11>$.

Let us recall the following result:

\begin{theorem}\label{app:thm:wic_top}
Let $E$ be a real Banach space, $n \in \N_0$, and denote by $\CH^{(n)}(\mu;E)$ the $E$-valued Wiener-It\^{o} chaos of order $n$. Then, for all $p \in (0,\infty)$, the $L^p(\mu;E)$-topologies coincide on $\CH^{(n)}(\mu;E)$.
\end{theorem}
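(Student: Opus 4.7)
The theorem is a classical equivalence-of-moments result for $E$-valued Gaussian chaos; it is usually attributed to Borell and derived from the hypercontractivity of the Ornstein--Uhlenbeck semigroup combined with a Banach-valued extension.

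First, I would reduce the claim to the inequality
\begin{equation*}
\|X\|_{L^p(\mu; E)} \leq C(n, p)\, \|X\|_{L^2(\mu; E)} \qquad \text{for all } X \in \CH^{(n)}(\mu; E), \ p \in [2, \infty).
\end{equation*}
Indeed, Hölder's inequality handles the comparison $L^q \leq L^p$ for $0 < q \leq p$ for free, while the converse $L^p \leq C \cdot L^q$ for $q < p$ follows by the standard Kahane/Paley--Zygmund trick: interpolating $\|X\|_{L^2}$ between $\|X\|_{L^q}$ and $\|X\|_{L^{p'}}$ for some $p' > p$ and then applying the $L^{p'} \leq L^2$ bound gives the reverse inequality with a constant depending only on $n, p, q$. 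So the equivalence of all $L^p$-norms reduces to the single \enquote{high-moment} bound above.

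Second, I would invoke the scalar hypercontractivity of Nelson--Gross for the Ornstein--Uhlenbeck semigroup $(T_t)_{t \geq 0}$ on $L^q(\mu;\R)$: one has $\|T_t f\|_{L^p(\mu)} \leq \|f\|_{L^q(\mu)}$ whenever $e^{2t} \geq (p-1)/(q-1)$, and $T_t$ acts as $e^{-nt}\,\mathrm{Id}$ on $\CH^{(n)}(\mu;\R)$. The key point is the lift to Banach-valued functions. The cleanest route is to cite Borell's extension~\cite{borell79} (see also Ledoux--Talagrand, \emph{Probability in Banach Spaces}), which asserts that, for any Banach space $E$, the same contraction property $\|T_t f\|_{L^p(\mu; E)} \leq \|f\|_{L^q(\mu; E)}$ holds on $L^q(\mu; E)$ under the same constraint on $t$. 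Taking $q = 2$, choosing $t > 0$ with $e^{2t} = p - 1$, and using the eigenvalue property $T_t X = e^{-nt}X$ then yields
\begin{equation*}
\|X\|_{L^p(\mu; E)} = e^{nt}\, \|T_t X\|_{L^p(\mu; E)} \leq e^{nt}\, \|X\|_{L^2(\mu; E)} = (p-1)^{n/2}\, \|X\|_{L^2(\mu; E)},
\end{equation*}
which is precisely the estimate of the first step.

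If a more self-contained proof is preferred over citing Borell, the Banach-valued hypercontractivity itself can be established by first passing to a separable closed subspace of $E$ (Pettis' theorem, noting that $X$ is strongly measurable), then approximating $X$ by simple Wiener-chaos elements of the form $\sum_{j=1}^{K} e_j\, I_n(f_j)$ with $e_j \in E$ and $f_j \in L^2_s((\T^2)^n)$, and finally reducing each finite-dimensional chunk to the scalar case via Hahn--Banach together with a norming argument: for a finite-dimensional range, $\|X\|_E$ is controlled by a finite supremum of scalar chaoses, and the $L^p$-sup of finitely many hypercontractive scalars is again hypercontractive, with a constant that does \emph{not} blow up. The main obstacle is precisely this Banach-valued extension: the scalar proof rests on a two-point (Bonami) inequality that has no literal vector analogue, so the step either invokes Borell's functional-analytic argument or requires the measurability/approximation machinery outlined above. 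I would opt to cite Borell or Ledoux--Talagrand rather than reprove hypercontractivity from scratch.
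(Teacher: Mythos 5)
Your proposal is substantively correct, and the route you describe — reduce to the high-moment bound $\|X\|_{L^p} \aac_{n,p}\|X\|_{L^2}$ by log-convexity of $L^p$-norms, then obtain that bound from Banach-valued hypercontractivity of the Ornstein--Uhlenbeck semigroup together with the eigenvalue property $T_t = e^{-nt}\operatorname{Id}$ on the $n$-th chaos — is exactly the standard argument. The paper itself does not prove the statement: it refers to Janson~\cite[Thm.~3.50]{janson} for $E=\R$ and to Ledoux~\cite{ledoux} for the Banach-valued case, which is precisely the literature your sketch rests on, so the two approaches coincide.

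One correction to the self-contained alternative in your final paragraph: the norming argument as stated does not close. If $\|X\|_E$ is replaced by $\sup_{j\leq K}|\ell_j(X)|$ for a finite norming family, the resulting $L^p$-bound on the supremum carries a constant that grows with $K$; the $L^p$-norm of a supremum of hypercontractive scalars is not itself dimension-free. The clean self-contained route is in fact simpler and is essentially Borell's observation: the Mehler formula represents $T_t$ as a probabilistic average, so $\|T_t f(x)\|_E \leq T_t\|f\|_E(x)$ pointwise by Jensen, and then scalar hypercontractivity applied to the real-valued function $\|f\|_E$ gives $\|T_t f\|_{L^p(\mu;E)} \leq \|T_t\|f\|_E\|_{L^p(\mu)} \leq \|f\|_{L^q(\mu;E)}$ with no separability or approximation needed. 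Since you ultimately opt to cite rather than reprove, this does not affect the validity of your proposal, but it is worth noting that the Banach-valued lift is in fact immediate rather than requiring the Pettis/Hahn--Banach machinery.
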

In case $E = \R$, the result is classical, see for example~\cite[Thm.~3.50]{janson}. Concerning the general case, we refer the reader to Ledoux~\cite{ledoux} and the references therein. We then find:
	\begin{enumerate}[label=(\arabic*)]
		\item 
		For the symbol $\tau = \<wn>$, \cite[Prop.~9.5]{hairer_rs} asserts that $\xi_\d$ converges to $\xi$ in $L^1(\mu,E_{\<wns>})$. Since $\xi_\d,\xi \in \CH^{(1)}(\mu;E_{\<wns>})$, this implies \eqref{app:ldp:eq_conv_renorm_models} due to~\thref{app:thm:wic_top}.\
		\item 
		In case $\tau = \<11>$, \eqref{app:ldp:eq_conv_renorm_models} is the content of~\cite[Thm. 10.7 \& 10.19]{hairer_rs}. By definition of~$\norm[0]{\cdot}_{\tau}$ and~$\barnorm{\cdot}_{\tau}$,  we immediately obtain
			\begin{equation*}
				\norm[0]{\Psi_{\d,\<11s>,2} - \Psi_{\<11s>,2}}_{L^2(\mu; (E_{\<11s>},\barnorm{\cdot}_{\<11s>}))}^2
				=
				\norm[0]{\Psi_{\d,\<11s>,2} - \Psi_{\<11s>,2}}_{L^1(\mu;(E_{\<11s>},\norm[0]{\cdot}_{\<11s>}))}.
			\end{equation*}
		Since $\Psi_{\d,\<11s>,2}, \Psi_{\<11s>,2} \in \CH^{(2)}(\mu;E_{\<11s>})$, by \thref{app:thm:wic_top}~$\Psi_{\d,\<11s>,2} \to \Psi_{\<11s>,2}$ in $L^2(\mu;(E_{\<11s>},\barnorm{\cdot}_{\<11s>}))$ as $\d \to 0$. 
		
		The convergence~$\Psi_{\d,\<11s>,0} \to \Psi_{\<11s>,0}$ in~$(E_{\<11s>},\norm[0]{\cdot})$ and~$(E_{\<11s>},\barnorm{\cdot})$ follows from Hairer's result combined with~\thref{lem:estimate_min_norm_vs_hom_norm}. 
	\end{enumerate}
Summing up, we have established~\eqref{app:ldp:eq_conv_renorm_models}, both for~$\BE$ equipped with~$\norm[0]{\cdot}$ and~$\barnorm{\cdot}$. It can be verified by a direct calculation that $\Psib_{\hom}(h) = \bigoplus_{\tau \in \CW_-} \Pi^h \tau$, so that~\eqref{app:ldp_hom_part_summary} implies that $(\Psi_\d)_{\hom}(h) \to \Psi_{\hom}(h)$ in~$\BE$ as~$\d \to 0$.

\subsubsection*{$\bullet$ Rescaling.} 
It is easy to verify the last ingredient to get an LDP on model space, namely the rescaling behaviour assumed in~\cite[Thm.~3.5]{hairer_weber_ldp}:
	\begin{equation}
		\Psib_\d^{(\eps)}(\xi) 
		= \bigoplus_{\tau \in \CW_-} \eps^{[\tau]} \Psi_{\d,\tau}(\xi)
		= \eps \hat{\Pi}^{\xi_\d} \<wn> \oplus \eps^2 \hat{\Pi}^{\xi_\d} \<11>
		= \hat{\Pi}^{\eps\xi_\d} \<wn> \oplus \hat{\Pi}^{\eps\xi_\d} \<11>, 
		\quad \xi \in B,
		\label{eq:rescaling}
	\end{equation}
In these identities, the first equality is by definition, see~\cite[eq.~(3.10)]{hairer_weber_ldp}, and the last equality follows directly from the definitions in~\eqref{app:ldp_rv_chaos_1}.

\begin{remark}
	Note that the rescaling in~\eqref{eq:rescaling} does not simply amout to keeping~$\mu$ as it is while rescaling~$\xi \rightsquigarrow \eps \xi$. Instead, it amounts to both rescaling~$\xi \rightsquigarrow \eps \xi$ \emph{and} $\mu \rightsquigarrow \mu_\eps := \mu \circ \eps^{-1}$.
	This is entirely consistent with the fact that the generalised contraction principle~\cite[Lem.~$3.3$]{hairer_weber_ldp}, to be essential below, pushes the LDP for $(\mu_\eps: \eps \in I)$ forward onto the probability measures~$(\Psib^{(\eps)}_*\mu_\eps: \eps \in I)$.
	
	In particular, given that~$\Psi_{\<11s>,2}$ may be represented as~$I_2(\CW_{\<11s>,2})$, this is consistent with the fact that it is in general \emph{not} true that $I_n(f_n)(\eps \xi) = \eps^n I_n(f_n)(\xi)$ for $\xi \in B$ and~$n \geq 2$. Instead, it is true that~
		\begin{equation*}
			I_n^{\eps}(f_n)(\eps \xi) = \eps^n I_n(f_n)(\xi), \quad I_n^\eps(f_n): \CH^{\tp_s n} \to \CH^{(n)}(\mu_\eps;\R)
		\end{equation*}
	for~$\mu$-a.e.~$\xi \in B$, where~$I_n^\eps$ is the $n$-th Wiener-It\^{o} associated with~$\mu_\eps$. Note that this also explains the rescaling behaviour of the renormalisation constant~$\fc_{\d}$, namely
		\begin{equation*}
			\eps^2 \fc_{\d} 
			= \E_{\mu_\eps}\sbr[1]{\Pib^{\eps\xi_\d}\<11>(0)} 
			= \fc_{\d,\eps}.
		\end{equation*}
\end{remark}

\subsubsection*{$\bullet$ The large deviations principle(s).} 

We have verified all the prerequisities for~\cite[Thm.~3.5]{hairer_weber_ldp} to apply. The first LDP concerns the space of minimal models~$\MM_-$. Recall that $\II(h) = \frac{1}{2}\norm{h}_\CH^2$ denotes Schilder's RF.

	\begin{theorem}\label{app:thm_ldp_models}
		The family $(\d_\eps\hbz^{\minus}: \eps \in I)$ of dilated, minimal BPHZ models satisfies a large deviation principle on $\MM_-$ (both equipped with $\norm[0]{\cdot}$ and $\barnorm{\cdot}$) under $\mu$ with rate $\eps^2$ and good rate function
			\begin{equation}
				\JJ_{\MM_-}: \MM_- \to [0,+\infty], \quad
				\JJ_{\MM_-}(\bz^{\minus})
				=
				\inf\{\II(h): \ h \in \CH, \ \LL_-(h) = \bz^{\minus}\}.
				\label{app:thm_ldp_models:rf}
			\end{equation} 
	\end{theorem}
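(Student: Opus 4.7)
The plan is to invoke Hairer and Weber's generalised contraction principle \cite[Thm.~3.5]{hairer_weber_ldp}, the preceding subsections having assembled exactly the ingredients it requires. The starting point is Schilder's theorem, which asserts that $(\eps\xi: \eps \in I)$ satisfies a large deviation principle on $(B,\mu)$ with rate $\eps^2$ and good rate function $\II$. Hairer and Weber's theorem then lifts this LDP along the family of maps $\Psib_\d^{(\eps)}$ and their limit, yielding an LDP for the family $\d_\eps \hbz^{\minus}$ on the Banach space $\BE$, with good rate function
\[
\tilde{\JJ}(\bz^{\minus}) := \inf\{\II(h): h \in \CH,\ \Psib_{\hom}(h) = \bz^{\minus}\}.
\]
The four hypotheses of \cite[Thm.~3.5]{hairer_weber_ldp} are precisely what we have verified: the chaos decomposition of $\Psib_\d$ with the correct order constraint $k_\tau \leq [\tau]$, the $L^2$-convergence $\Psib_\d \to \Psib$ recorded in~\eqref{app:ldp:eq_conv_renorm_models}, the convergence of the homogeneous parts $(\Psib_\d)_{\hom}(h) \to \Psib_{\hom}(h)$ from~\eqref{app:ldp_hom_part_summary}, and the rescaling identity~\eqref{eq:rescaling}.

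Next, the identification $\Psib_{\hom}(h) = \bigoplus_{\tau \in \CW_-} \Pi^h\tau = \LL_-(h)$ carried out in~\eqref{app:ldp_hom_part_summary} shows that $\tilde{\JJ}$ coincides with the claimed rate function $\JJ_{\MM_-}$ of~\eqref{app:thm_ldp_models:rf} wherever the infimum is taken over a non-empty set. To pass from an LDP on $\BE$ to an LDP on $\MM_-$, I would invoke that $\MM_-$ is a closed subset of $(\BE,\norm[0]{\cdot})$ by its very definition in~\eqref{def:admissible_model:closure} applied to minimal admissible models, and that the laws of $\d_\eps\hbz^{\minus}$ are concentrated on $\MM_-$ for every $\eps \in I$; the standard restriction principle for LDPs on closed subsets then applies, noting that $\tilde{\JJ}(\bz^{\minus}) = +\infty$ for $\bz^{\minus} \notin \MM_-$ since no Cameron–Martin $h$ can be lifted outside $\MM_-$.

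Finally, to upgrade the LDP from $(\MM_-,\norm[0]{\cdot})$ to $(\MM_-,\barnorm{\cdot})$, the strategy is to invoke~\thref{lem:estimate_min_norm_vs_hom_norm}, which guarantees that the two topologies induce the same Borel structure and the same notion of open/closed sets on $\MM_-$; hence both the upper and lower bounds transfer verbatim. The main obstacle, now behind us, was the quantitative $L^2$-control on the renormalised chaos components (established in~\eqref{app:ldp:eq_conv_renorm_models} via \thref{app:thm:wic_top}) and the identification of the homogeneous parts with canonical lifts—both already carried out above; what remains is the essentially bookkeeping-level invocation of Hairer--Weber's machinery.
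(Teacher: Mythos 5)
Your proposal is correct and follows essentially the same route as the paper's own proof: verify the hypotheses of Hairer--Weber~\cite[Thm.~3.5]{hairer_weber_ldp} (which the preceding subsections do), identify the homogeneous parts with canonical lifts to obtain the rate function, and restrict from $\BE$ to the closed subset $\MM_-$ via Dembo--Zeitouni~\cite[Lem.~4.1.5]{dembo-zeitouni}. The one place you are slightly more explicit than the paper is the transfer between the $\norm[0]{\cdot}$- and $\barnorm{\cdot}$-topologies: the paper simply asserts the LDP holds for both (having already noted that the $L^2$-convergence in~\eqref{app:ldp:eq_conv_renorm_models} holds for both norms, so Hairer--Weber could be invoked twice), whereas you transfer a single LDP using the topological equivalence from~\thref{lem:estimate_min_norm_vs_hom_norm}; both routes are fine since an LDP is a purely topological statement.
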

The following proof is virtually the same as the one of~\cite[Thm.~4.3]{hairer_weber_ldp}.
\begin{proof}
By~\cite[Thm.~3.5]{hairer_weber_ldp}, the large deviations of~$\Psib^{(\eps)}$ on $\BE$ are governed by the rate function $\Lambda: \BE \to [0,+\infty]$ given by 
	\begin{equation*}
		\Lambda(\bs) 
		= \inf\bigg\{ \frac{1}{2} \norm{h}_\CH^2: \Psib_{\operatorname{hom}}(h) = \bs \bigg\}
		= \inf\bigg\{ \frac{1}{2} \norm{h}_\CH^2: \bigoplus_{\tau \in \CW_-} \Pi^h \tau = \bs \bigg\}.
	\end{equation*}
By construction, the random variables~$\Psib^{(\eps)}$ take values in~$\MM_-$. The latter is a closed subset of $\BE$, so by~\cite[Lem.~$4.1.5$]{dembo-zeitouni} the LDP also holds on $\MM_-$ (both w.r.t~$\norm[0]{\cdot}$ and~$\barnorm{\cdot}$) with appropriately restricted RF and w.r.t. the relative topology.
\end{proof}

Our strategy is to \enquote{push forward} this LDP via the generalised contraction principle~\cite[Lem.~$3.3$]{hairer_weber_ldp}.
In a first instance, we easily obtain the following corollary. 

\begin{corollary} \label{app:coro_ldp_models}
The family $(\d_\eps\hbz: \eps \in I)$ of dilated BPHZ models satisfies a large deviation principle on~$\MM$~under $\mu$ with rate $\eps^2$ and good rate function
	\begin{equation}
	\JJ_{\MM}: \MM \to [0,+\infty], \quad
	\JJ_{\MM}(\bz)
	=
	\inf\{\II(h): \ h \in \CH, \ \LL(h) = \bz\}.
	\label{app:coro_ldp_models:rf}
	\end{equation} 	
\end{corollary}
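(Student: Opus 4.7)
The plan is to deduce this LDP from Theorem~\ref{app:thm_ldp_models} via the (ordinary) contraction principle, using the extension operator $\EE: \MM_- \to \MM$ as the pushforward map. The key preliminary observation is that dilation commutes with extension: since for any $\bz^{\minus} \in \MM_-$ the dilated minimal model $\d_\eps\bz^{\minus}$ admits a unique admissible extension by Theorem~\ref{app:thm:extension}, and since the dilation $\d_\eps\bz = \d_\eps \EE(\bz^{\minus})$ already provides such an extension of $\d_\eps\bz^{\minus}$ (it is admissible and restricts correctly to $\CT_-$), uniqueness gives $\d_\eps\hbz = \EE(\d_\eps\hbz^{\minus})$. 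In particular, the family $(\d_\eps\hbz : \eps \in I)$ is the image under $\EE$ of the family appearing in Theorem~\ref{app:thm_ldp_models}.

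First, I would record that $\EE$ is continuous from $(\MM_-,\threebars\cdot\threebars)$ to $(\MM,\threebars\cdot\threebars)$ by Theorem~\ref{app:thm:extension}, and bijective. Second, I would apply Dembo--Zeitouni's contraction principle~\cite[Thm.~$4.2.1$]{dembo-zeitouni}: since $(\d_\eps\hbz^{\minus})_{\eps \in I}$ satisfies an LDP with good rate function $\JJ_{\MM_-}$ and speed $\eps^2$, the pushforward family $(\EE(\d_\eps\hbz^{\minus}))_{\eps \in I} = (\d_\eps\hbz)_{\eps \in I}$ satisfies an LDP on $\MM$ with speed $\eps^2$ and good rate function
\begin{equation*}
	\tilde{\JJ}_{\MM}(\bz) := \inf\{\JJ_{\MM_-}(\bz^{\minus}) : \bz^{\minus} \in \MM_-, \ \EE(\bz^{\minus}) = \bz\}.
\end{equation*}
Because $\EE$ is a bijection, the infimum in the display is over the singleton $\{\EE^{-1}(\bz)\}$ and hence $\tilde{\JJ}_{\MM}(\bz) = \JJ_{\MM_-}(\EE^{-1}(\bz))$.

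Finally, I would identify $\tilde{\JJ}_{\MM}$ with the rate function $\JJ_{\MM}$ stated in the corollary. Using the definition of $\JJ_{\MM_-}$ from~\eqref{app:thm_ldp_models:rf} together with $\LL = \EE \circ \LL_-$ (cf.~Definition~\ref{def:canonical_lift}) and the bijectivity of $\EE$, for any $h \in \CH$ we have $\LL_-(h) = \EE^{-1}(\bz)$ if and only if $\LL(h) = \bz$, so that
\begin{equation*}
	\tilde{\JJ}_{\MM}(\bz)
	= \inf\{\II(h) : \LL_-(h) = \EE^{-1}(\bz)\}
	= \inf\{\II(h) : \LL(h) = \bz\}
	= \JJ_{\MM}(\bz).
\end{equation*}
This yields the claim. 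There is no real obstacle here; the only point to be careful about is to verify the identity $\d_\eps \circ \EE = \EE \circ \d_\eps$ on $\MM_-$, which follows cleanly from the uniqueness part of the extension theorem. The same argument works verbatim with either of the topologies $\threebars\cdot\threebars$ or $\barnorm{\cdot}$ on $\MM$, since by Lemma~\ref{lem:estimate_min_norm_vs_hom_norm} they induce the same notion of convergence on the closed subset $\MM_-$ and $\EE$ remains continuous in both cases.
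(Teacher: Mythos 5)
Your proof is correct and follows essentially the same route as the paper: push the LDP from $\MM_-$ to $\MM$ via the continuous extension map $\EE$ using a contraction principle, then identify the rate function using $\LL = \EE \circ \LL_-$. The only cosmetic differences are that you invoke the ordinary Dembo--Zeitouni contraction principle instead of the generalised version from Hairer--Weber (immaterial here since $\EE$ is continuous), and you make explicit the commutation identity $\d_\eps \circ \EE = \EE \circ \d_\eps$ -- a point the paper uses silently when writing $\d_\eps\hbz = \EE(\Psib^{(\eps)})$, and which your uniqueness argument justifies cleanly.
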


\begin{proof}
	Since the extension map~$\EE: \MM_- \to \MM$ from~\thref{app:thm:extension}	is continuous, the generalised contraction principle \cite[Lem.~$3.3$]{hairer_weber_ldp} yields an LDP on~$\MM$ for~$(\d_\eps\hbz = \EE\del[1]{\Psib^{(\eps)}}: \eps \in I)$. The corresponding RF is given by
	\begin{equs}
		\JJ_{\MM}(\bz) 
		& 
		= \inf\{\JJ_{\MM_-}(\bs): \bs \in \MM_-, \ \EE(\bs) = \bz\} \\
		&
		=
		\inf\bigg\{ 
		\frac{1}{2} \norm{h}_\CH^2: \ h \in \CH, \
		\Pi^h \tau = s_\tau \ \text{for} \ \bs = \bigoplus_{\tau \in \CW_-} s_\tau \in \MM_- \ \text{and} \ \EE(\bs) = \bz
		\bigg\} \\
		&
		=
		\inf\bigg\{ 
		\frac{1}{2} \norm{h}_\CH^2: \ h \in \CH, \ \LL(h) = \bz
		\bigg\}.
	\end{equs}
\end{proof}

Finally, we can give a precise statement of~\thref{thm:ldp} announced in the introduction. Its proof entirely mirrors that of~\cite[Thm.~$4.4$]{hairer_weber_ldp}. 
Recall that~$\CX_T := \{u \in \CC([0,T],\CC^\eta(\T^2)): u(0,\cdot) = u_0\}$, where~$u_0 \in \CC^{\eta}(\T^2)$ is the \emph{fixed} initial condition, and that~\enquote{$u^{\deathsmall}$} denotes a \enquote{graveyard path} which we postulate to have distance~$1$ from every element~$u \in \CX_T$.

\begin{theorem}\label{app:thm_ldp_gpam}
	Let~$T > 0$, ~$\eta \in (\nicefrac{1}{2},1)$, and~$\bar{\CX}_T := \CX_T \cup \{u^{\deathsmall}\}$. Then, the family~$(\hat{u}^\eps: \eps \in I)$ in~\thref{intro:ex_sol_hairer} satisfies a LDP in $\bar{\CX}_T$ with RF
		\begin{equation*}
			\JJ(u) = \inf\{\II(h): \ h \in \CH, \ \Phi(\LL(h)) = u\}, \quad \JJ(u^{\deathsmall}) := \infty
		\end{equation*}
	Here, we have set~$\hat{u}^{\eps} := u^{\deathsmall}$ in case~$T > T^\eps$, the latter introduced in~sec.~\ref{sec:explosion} above.	
\end{theorem}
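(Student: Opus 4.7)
The plan is to deduce Theorem~\ref{app:thm_ldp_gpam} from Corollary~\ref{app:coro_ldp_models} via a contraction-principle argument. First, I would introduce the extended solution map
\[
	\bar{\Phi} : \MM \to \bar{\CX}_T, \qquad
	\bar{\Phi}(\bz) := \begin{cases} \Phi(\bz), & T_\infty(\bz) > T, \\ u^{\deathsmall}, & \text{otherwise,} \end{cases}
\]
so that $\hat{u}^\eps = \bar{\Phi}(\d_\eps \hbz)$. The set $\{T_\infty > T\} \subseteq \MM$ is open by the lower semicontinuity of $T_\infty$ recorded in Section~\ref{sec:explosion}, and $\Phi$ is continuous on this open set, so $\bar{\Phi}$ is continuous at every $\bz$ with $T_\infty(\bz) \neq T$. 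Since $u^{\deathsmall}$ has distance~$1$ from every $u \in \CX_T$, the singleton $\{u^{\deathsmall}\}$ is clopen in $\bar{\CX}_T$, which lets us handle the two parts of $\bar{\CX}_T$ separately.

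I would verify the LDP bounds directly rather than invoke a black-box contraction principle. For the lower bound on an open $G \subseteq \bar{\CX}_T$, the intersection $G \cap \CX_T$ is open in $\CX_T$, and $\bar{\Phi}^{-1}(G \cap \CX_T) = \Phi^{-1}(G \cap \CX_T) \cap \{T_\infty > T\}$ is open in $\MM$ by continuity of $\Phi$ there; Corollary~\ref{app:coro_ldp_models} gives
\[
	\liminf_{\eps \to 0} \eps^2 \log \P(\hat{u}^\eps \in G)
	\geq - \inf_{\bz \in \bar{\Phi}^{-1}(G)} \JJ_{\MM}(\bz).
\]
For the upper bound on a closed $F \subseteq \bar{\CX}_T$, I would split $F = (F \cap \CX_T) \sqcup (F \cap \{u^{\deathsmall}\})$, bound $\P(\hat{u}^\eps \in F \cap \CX_T)$ by $\P(\d_\eps\hbz \in \overline{\bar{\Phi}^{-1}(F \cap \CX_T)})$, and use the LDP upper bound from Corollary~\ref{app:coro_ldp_models}; the contribution from $F \cap \{u^{\deathsmall}\}$ is controlled by $\P(T^\eps \leq T)$, which Lemma~\ref{sec:explosion:time_0_lem} combined with the LDP for models shows to decay faster than any $e^{-c/\eps^2}$.

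To identify the rate function, I would use the structure of $\JJ_{\MM}$ from~\eqref{app:coro_ldp_models:rf} to write, for $u \in \CX_T$,
\[
	\JJ(u) = \inf\{\JJ_{\MM}(\bz) : \bar{\Phi}(\bz) = u\}
	= \inf\{\II(h) : h \in \CH, \ \bar{\Phi}(\LL(h)) = u\}.
\]
By Remark~\ref{rmk:explosion_gpam}, in conjunction with Proposition~\ref{prop:ex_det_gpam}, we have $T_\infty^h = +\infty$ for every $h \in \CH$, so $\LL(h) \in \{T_\infty > T\}$ always and the constraint $\bar{\Phi}(\LL(h)) = u$ reduces to $\Phi(\LL(h)) = u$, giving the claimed form of $\JJ$. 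In particular, no $h \in \CH$ maps to $u^{\deathsmall}$, so $\JJ(u^{\deathsmall}) = +\infty$, consistent with the convention.

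The main obstacle will be the upper bound: $\bar{\Phi}^{-1}(F \cap \CX_T)$ is relatively closed in $\{T_\infty > T\}$ but need not be closed in $\MM$, and passing to its $\MM$-closure could in principle add points with smaller $\JJ_{\MM}$ and spoil the desired estimate. This is reconciled by exploiting that $\JJ_{\MM}$ is a good rate function with compact sublevel sets concentrated on $\overline{\operatorname{image}(\LL)}$: any additional limit point with finite rate function is again of the form $\LL(h)$, hence lies in $\{T_\infty > T\}$ by non-explosion, so that $\Phi$ is continuous at it and $\Phi(\LL(h)) \in F$ — i.e.\ the closure adds no points with strictly smaller $\II(h)$. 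This is precisely the step at which the gPAM-specific global existence result of Section~\ref{sec:explosion} is essential; everything else is a direct transcription of the Hairer--Weber~\cite{hairer_weber_ldp} scheme for Allen--Cahn into the present setting.
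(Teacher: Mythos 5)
Your argument is correct and rests on exactly the same key ingredient as the paper's proof: global existence of gPAM driven by $h\in\CH$ (\thref{prop:ex_det_gpam} together with~\thref{rmk:explosion_gpam}), which shows that $\{\JJ_{\MM}<\infty\}=\MM_\CH$ lies inside $\{T_\infty>T\}$ and hence that $\Phi$ is continuous on an open neighbourhood of the finite-rate set. Where the paper then simply invokes the generalised contraction principle of Hairer and Weber (\cite[Lem.~3.3]{hairer_weber_ldp}) as a black box, you reprove that lemma inline by verifying the LDP lower and upper bounds directly and by handling the graveyard state separately. The substance of your ``main obstacle'' paragraph — that one only needs $\overline{\bar{\Phi}^{-1}(F\cap\CX_T)}\cap\{\JJ_{\MM}<\infty\}\subseteq\bar{\Phi}^{-1}(F\cap\CX_T)$, which follows because every model with finite rate function is a lift $\LL(h)$ and hence lies in the open set where $\Phi$ is continuous — is precisely the content of~\cite[Lem.~3.3]{hairer_weber_ldp}. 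Your derivation thus buys self-containedness at the cost of length; the paper's appeal to the existing lemma is shorter but relies on the reader knowing the Hairer--Weber result. Two very minor phrasing issues: the finite-rate set is $\operatorname{image}(\LL)=\MM_\CH$ itself, not its closure $\overline{\operatorname{image}(\LL)}$ (the closure would not a priori stay inside $\{T_\infty>T\}$, but you never actually use the closure in the decisive step); and the estimate $\limsup_{\eps}\eps^2\log\P(T^\eps\le T)=-\infty$ follows from the LDP upper bound applied to the closed set $\{T_\infty\le T\}$ alone — \thref{sec:explosion:time_0_lem} only gives $\P(T^\eps>T)\to1$, which is strictly weaker, so the reference to it is superfluous.
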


\begin{proof}[of~\thref{thm:ldp}]
	Recall that~$\hat{u}^{\eps} = \Phi(\d_\eps \hbz)$ where~$\Phi = \CR \circ \CS$. Let $\MM_\CH$ be the set of models that arise as lifts of functions in~$\CH$, that is
		\begin{equation*}
			\MM_\CH 
			:= \{\bz \in \MM: \ \exists \thinspace h \in \CH \ \text{s.t.} \ \bz = \LL(h)\}.
		\end{equation*}
	By~\thref{prop:ex_det_gpam}, solutions to gPAM driven by~$h \in \CH$ are global (in particular, $\Phi(\LL(h)) = w_h$), so that~$\Phi(\bz) \neq +\infty$ for~$\bz \in \MM_\CH$. 
	As a consequence,~$\Phi$ is continuous on an open neighbourhood~$\CO$ of~$\MM_\CH$. 
	Finally, since~$\MM_\CH = \{\JJ_{\MM} < +\infty\}$ by~\eqref{app:coro_ldp_models:rf}, the generalised contraction principle~\cite[Lem.~$3.3$]{hairer_weber_ldp} implies the claim.
\end{proof}

\section{Fernique's theorem on model space} \label{app:fernique}

Let~$(B,\CH,\mu)$ be the abstract Wiener space from sec.~\ref{app:ldp_pam} which is also the setting for the generalised Fernique theorem proved by Friz and Oberhauser~\cite{generalised_fernique}.

By~$\hbz \in \MM$, we denote the BPHZ model and let~$\hbz^{\minus} \in \MM_-$ be its minimal part, that is~$\hbz = \EE(\hbz^{\minus})$. The following theorem establishes Gaussian concentration.

\begin{theorem}[Fernique]\label{thm:fernique}
	There exists an $\chi > 0$ such that 
	\begin{equation}
	\E_\mu\sbr[1]{\exp\del[1]{\chi \barnorm{\hbz^{\minus}}^2}}
	\equiv
	\int_B \exp\del[1]{\chi \barnorm{\hbz^{\minus}(\omega)}^2} \mu(\dif \omega) 
	<
	\infty.
	\label{thm:fernique:eq}
	\end{equation}
\end{theorem}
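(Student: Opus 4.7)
The strategy is to apply the generalised Fernique theorem of Friz and Oberhauser~\cite{generalised_fernique} to the measurable functional
\[
	F: B \to [0,\infty], \qquad F(\omega) := \barnorm{\hbz^{\minus}(\omega)},
\]
interpreted as a random variable on the abstract Wiener space $(B,\CH,\mu)$ of~sec.~\ref{app:ldp_pam}. That result asserts that any measurable $F$ which is a.s.\ finite and satisfies a Cameron--Martin quasi-subadditivity estimate of the form $F(\omega + h) \leq C(F(\omega) + \norm{h}_{\CH})$ enjoys Gaussian tails, and thus the integrability bound~\eqref{thm:fernique:eq} for some sufficiently small $\chi > 0$.

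First, I would record that $F$ is $\mu$-a.s. finite: the minimal BPHZ model $\hbz^{\minus}$ was constructed in sec.~\ref{sec:renormalisation} as a limit in probability in $(\MM_-,\threebars\cdot\threebars)$, hence in $(\MM_-,\barnorm{\cdot})$ by~\thref{lem:estimate_min_norm_vs_hom_norm}, so it takes values in $\MM_-$ almost surely and $F < \infty$ on a set of full $\mu$-measure. Measurability of $F$ follows from the construction (as a limit of the explicit smooth models $\hbz^{\xi_\d,\minus}$) combined with continuity of $\barnorm{\cdot}$.

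The heart of the argument is the Cameron--Martin shift estimate. For $h \in \CH$ and $\mu$-a.e.\ $\omega \in B$, we use~\thref{lem:trans_prob_cfg} (which extends trivially to the minimal model) to identify $\hbz^{\minus}(\omega + h) = T_h\hbz^{\minus}(\omega)$, and then invoke~\thref{lem:ineq_translation_barnorm} to obtain the uniform bound
\[
	F(\omega + h) = \barnorm{T_h\hbz^{\minus}(\omega)} \aac \barnorm{\hbz^{\minus}(\omega)} + \norm{h}_{\CH} = F(\omega) + \norm{h}_{\CH},
\]
with an implicit constant independent of both $\omega$ and $h$. This is precisely the hypothesis required by~\cite{generalised_fernique}, and applying that theorem concludes the proof.

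The one point requiring care is the application of~\thref{lem:trans_prob_cfg} to the minimal model rather than the full one: since the full BPHZ model is obtained from its minimal part by the deterministic, continuous extension map $\EE$ of~\thref{app:thm:extension}, and since the translation operators on $\MM$ and $\MM_-$ are compatible with $\EE$ (they are built from the same algebraic operations $\ft_{\<cms>}$, $\ft_{\<cms>}^+$), this transfer is routine. I expect this bookkeeping, together with the precise invocation of the Friz--Oberhauser result in this particular Banach space setting, to be the only nontrivial step; the two key ingredients (the Cameron--Martin identification and the norm estimate~\thref{lem:ineq_translation_barnorm}) are already available.
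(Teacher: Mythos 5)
Your proof is correct and uses exactly the same approach as the paper: the Cameron--Martin shift identity from~\thref{lem:trans_prob_cfg}, the translation estimate $\barnorm{T_h\bz^-} \aac \barnorm{\bz^-} + \norm{h}_{\CH}$ from~\thref{lem:ineq_translation_barnorm}, and the generalised Fernique theorem of Friz--Oberhauser. Your estimate $F(\omega+h) \aac F(\omega) + \norm{h}_\CH$ is the same as the paper's $\barnorm{\hbz^{\minus}(\omega)} \aac \barnorm{\hbz^{\minus}(\omega - h)} + \norm{h}_{\CH}$ after the substitution $\omega \mapsto \omega + h$, and the extra remarks on measurability and the transfer to the minimal model are sensible bookkeeping the paper compresses into ``which trivially also holds for the minimal admissible BPHZ model.''
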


\begin{proof}
	Let $h \in \CH$ and $N \subseteq B$ be the nullset from~\thref{lem:trans_prob_cfg}. For each $\omega \in N^{\operatorname{c}}$, we have
	\begin{equs}
		\barnorm{\hbz^{\minus}(\omega)}
		&
		=
		\barnorm{T_h\sbr[1]{T_{-h} \del[1]{\hbz^{\minus}(\omega)}}\hspace{-0.4em}}
		\aac
		\barnorm{T_{-h} \del[1]{\hbz^{\minus}(\omega)}\hspace{-0.2em}} + \norm{h}_{\CH}
		=
		\barnorm{\hbz^{\minus}(\omega - h)} + \norm{h}_{\CH},
	\end{equs}
	where the inequality is due to~\thref{lem:ineq_translation_barnorm} and the equalities due to~\thref{lem:trans_prob_cfg}, which trivially also holds for the minimal admissible BPHZ model~$\hbz_-$. The claim now follows from the generalised Fernique theorem~\cite[Thm.~$2$]{generalised_fernique}.
\end{proof}

\section{Proof of Duhamel's Formula} \label{sec:duhamel_proof}

For~$s \geq 0$, we denote the indicator function of the set~$P_{> s} := \{z=(t,x) \in \R^3: t > s\}$~by~$\1_s^+$. 

Let us emphasise that in the previous parts of the paper, we have abused notation and simply wrote~$\CP^{\esh}$ when really referring to~$\CQ_{<\gamma} \CP^{\esh} \1_0^+$. Regarding the projection $\CQ_{<\g}$~onto~$\CT_{< \g}$, this is in line with~\thref{rmk:projection}.
We introduce a few sectors relevant to the proof of~\thref{lem:inv_theta_op}. 
	\begin{definition}\label{def:sectors}	
		Recall that~$\gr{\CU} = \scal{\1, \<1>, \<1g>, X}$. We define the sectors
		\begin{align*}
			\gr{\CV}[\<cm>] 
			& 
			:= \scal{\gr{\CU}\<cm>} 
			:= \scal{\gr{\tau}\<cm>: \thinspace \gr{\tau} \in \gr{\CU}}
			\equiv \scal{\<cm>,\<11g>,\<1g1g>,X\<cm>}, \quad
			\gr{\CV_0}
			:= \gr{\CV}[\<cm>]\setminus\{\<11g>\}
			\equiv \scal{\<cm>,\<1g1g>,X\<cm>}, \\
			\gr{\CV}[\<wn>] 
			& := \scal{\gr{\CU}\<wn>} 
			\equiv \scal{\<wn>,\<11>,\<1g1>,X\<wn>}, \quad
			\gr{\tilde{\CV}}
			:=
			\gr{\CV}[\<wn>] \cup \gr{\CV}[\<cm>] 
			\equiv \scal{\<wn>,\<11>,\<1g1>,\<11g>,\<1g1g>,X\<wn>}, \quad
			\gr{\bar{\CV}} := \gr{\CV} \cup \{\1,X\}.
		\end{align*}	
	\end{definition}	

\begin{proof}[of~\thref{lem:inv_theta_op}]	
	The proof will be delivered in various steps, with the following figure as a guideline.
	
	\begin{figure}[h]
		\centering
		\includegraphics[scale=1]{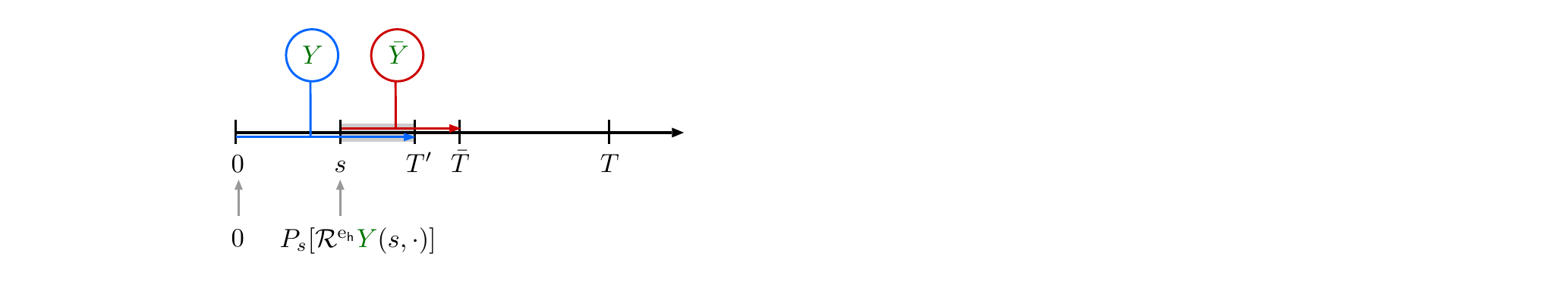}
		\captionsetup{font=small,format=plain}
		\caption{A schematic overview of the flow of time in the proof. At the bottom are the initial conditions: $0$ at time $0$, resulting in a solution~$\gr{Y}$ on~$(0,T')$, and~$P_s\sbr[1]{\CR^{\esh}\gr{Y}(s,\cdot)}$ for restart at time~$s$ (cf.~step~$3$) giving a solution~$\gr{\bar{Y}}$~on~$(s,\bar{T})$. Between~$s$ and $T'$ (shaded), the solutions $\gr{Y}$ and~$\gr{\bar{Y}}$ coincide~(cf.~step~$4$), rendering the procedure consistent.} 
		\label{figure:lem:inv_theta_op:pf}
	\end{figure}
	
	\begin{enumerate}[label=\textbf{Step \arabic*:}, align=left, leftmargin=0pt, labelindent=0pt,listparindent=0pt, itemindent=!]
		\item \textbf{The fixed-point problem~\textbf{\eqref{sec:duhamel:eq0}} is solvable.}	 \label{lem:inv_theta_op:pf_step_1}
		We verify the criteria of~\cite[Thm.~7.8]{hairer_rs}. First, note that~$G'(\gr{W})\thinspace\<cm> \in \DD^{\bar{\gamma},\bar{\eta}}_{\gr{\CV_0}}(\Gamma^{\esh})$ is invariant under the action of~$\fd_\lambda$.
		We introduce the maps~$\gr{\Lambda}$ and~$\gr{\tilde{\Lambda}}$ as
		\begin{equation*}
			\gr{\Lambda}: \DD^{\gamma,\eta}_{\gr{\CU}}(\Gamma^{\esh}) \to \DD^{\bar{\gamma},\bar{\eta}}_{\gr{\CV}[\<cms>]}(\Gamma^{\esh}), \quad
			\gr{Y} \mapsto  \gr{\Lambda Y} := G'(\gr{W})\thinspace\<cm>\thinspace\gr{Y}, \qquad
			\gr{\tilde{\Lambda} Y} := \gr{\Lambda Y + \tilde{V}}\thinspace;
		\end{equation*}
		they are well-defined by~\cite[Prop.~6.12]{hairer_rs}. For $\gr{\tilde{V}} \in \DD^{\bar{\gamma},\bar{\eta}}_{\gr{\tilde{\CV}}}(\Gamma^{\esh})$ it is easily seen that the pair of sectors~$(\gr{\CU},\gr{\bar{\CV}})$ satisfies the conditions
		\begin{equation*}
			\bar{\CT} \subseteq \gr{\CU} \subseteq \bar{\CT} + \CT_{\geq \zeta}, \quad
			\bar{\CT} \subseteq \gr{\bar{\CV}} \subseteq \bar{\CT} + \CT_{\geq \bar{\zeta}}, \quad
			\CQ_{< \gamma} \CI \gr{\bar{\CV}}_{< \bar{\gamma}} \subseteq \gr{\CU}_{< \gamma}
		\end{equation*} 
		with
		\begin{equation*}
			0 < \zeta < \bar{\zeta} +2, \quad 
			0 < \bar{\gamma} \leq \gamma \leq \bar{\gamma} +2, \quad
			\bar{\gamma} \leq \zeta
		\end{equation*}
		for
		\begin{equation*}
			\zeta := \a + 2, \quad \bar{\zeta} := \a, \quad \gamma := - \a + \kappa, \quad \bar{\gamma} := \gamma + \a.
		\end{equation*}
		Since $\eta \in (0,\a +2)$, cf.~\cite[Lem.~9.1]{hairer_rs}, we have $\bar{\eta} \in (\a,2\a+2]$ and thus the conditions
		\begin{equation*}
			\eta < (\bar{\eta} \wedge \bar{\zeta}) + 2, \quad
			\bar{\eta} \wedge \bar{\zeta} > -2
		\end{equation*}
		are also satisfied. Finally, the map~$\gr{\Lambda}$ (resp.~$\gr{\tilde{\Lambda}}$) is (affine) linear and continuous, hence Lipschitz continuous. Actually, we even have the estimate
		\begin{equation}
			\threebars \gr{\Lambda Y} \threebars_{\bar{\gamma},\bar{\eta};[s,T]}
			\leq
			C \threebars \gr{Y} \threebars_{\gamma,\eta;[s,T]}, 
			\quad
			C := \threebars G'(\gr{W})\thinspace\<cm> \thinspace \threebars_{\bar{\gamma},\bar{\eta};T}
			\label{lem:inv_theta_op:pf_estimate}
		\end{equation} 
		uniformly over all~$s \in [0,T]$ and $\gr{\Lambda}$ as well as~$\gr{\tilde{\Lambda}}$ map $\DD^{\gamma,\eta}_{s}(\Gamma^{\esh})$ into~$\DD^{\bar{\gamma},\bar{\eta}}_{s}(\Gamma^{\esh})$. In short, \cite[Ass.~12]{hairer-mattingly} is satisfied, the FP problem~\eqref{sec:duhamel:eq0} is solvable, and step~$1$ thus completed.
		\item \textbf{Neumann series bounds on short time intervals.} \label{lem:inv_theta_op:pf_step_2}
		For a linear, bounded operator~$A$ with $\norm[0]{A} < 1$, it is well-known under the name \emph{Neumann series} that
		\begin{equation*}
			\norm[0]{\sbr[0]{\operatorname{Id} - A}^{-1}} \leq \del[1]{1 - \norm[0]{A}}^{-1}.		
		\end{equation*} 
		For $s, t \in [0,T]$ such that $t > s$, we analyse the norm of the operator 
		\begin{equation}
			A_s := \CQ_{< \gamma}
			\CP^{E_\sh\d_{\nicefrac{1}{\lambda}}\bz}\1_s^+\del[1]{\gr{\Lambda}\thinspace\bullet}
			\in 
			\CL\del[1]{
				\DD^{\gamma,\eta;t}(\Gamma^{\esh;\nicefrac{1}{\l}})}
		\end{equation}
		where the last statement is true by~\cite[Prop.~$6.16$]{hairer_rs} combined with~\eqref{lem:inv_theta_op:pf_estimate}. 
		
		For $\tilde{z} = (\tilde{t},\tilde{x})$ and $\bar{z} = (\bar{t},\bar{x})$ with $\tilde{z},\bar{z} \in (s,t] \x \R^{d}$, observe that $d_{\fs}(z,P_s) \leq (t-s)^{\nicefrac{1}{2}}$ implies that
		\begin{equation*}
			\norm[0]{\tilde{z}}_{P_s} = 1 \wedge  d_{\fs}(\tilde{z},P_s) \leq (t-s)^{\nicefrac{1}{2}}, \quad
			\norm[0]{\tilde{z},\bar{z}}_{P_s} = \norm[0]{\tilde{z}}_{P_s} \wedge \norm[0]{\bar{z}}_{P_s} \leq (t-s)^{\nicefrac{1}{2}}
		\end{equation*}
		for $t > s$ small enough. The proof of~\cite[Thm.~7.1]{hairer_rs} can thus be suitably amended to imply	the bound
		\begin{equs}
			\threebars A_s \gr{Y} \threebars_{\gamma,\eta,t;\Gamma^{\esh,\nicefrac{1}{\l}}}
			&
			\aac
			(t-s)^{\beta} \norm[0]{\Pi^{\esh;\nicefrac{1}{\lambda}}} \threebars \gr{\Lambda Y} \threebars_{\bar{\gamma},\bar{\eta};(s,t]}
			\aac (t-s)^{\beta}
			\leq t^\beta
		\end{equs}
		for some~$\beta > 0$, uniformly over all $\gr{Y}$ with $\threebars \gr{Y} \threebars_{\gamma,\eta;t} \leq 1$ and all $s,t \in [0,T]$ such that $t > s$ and $t - s < 1$. In the previous estimate, we have used~\eqref{lem:inv_theta_op:pf_estimate} and~\thref{lem:rescaling_lambda}. Hence, by choosing $t$ small enough we can always ensure that~$\norm[0]{A_s}_{\operatorname{op};t;\Gamma^{\esh;\nicefrac{1}{\lambda}}} \leq \nicefrac{1}{2}$ and that
		\begin{equation}
			\norm[0]{\sbr[0]{\operatorname{Id} - A_s}^{-1}}_{\operatorname{op};t;\Gamma^{\esh;\nicefrac{1}{\lambda}}} \leq \del[1]{1 - \norm[0]{A_s}_{\operatorname{op};t;\Gamma^{\esh;\nicefrac{1}{\lambda}}}}^{-1}
			\leq
			2
			\label{lem:inv_theta_op:pf_neumann}
		\end{equation}
		uniformly over all $s \in [0,T]$.
		\item \textbf{Restarting the equation after short time.} \label{lem:inv_theta_op:pf_step_3}	
		By step~$1$, there exists $T' < T$ small such that the FP problem~\eqref{sec:duhamel:eq0} admits a unique solution $\gr{Y} \in \DD^{\gamma,\eta;T'}_{\gr{\CU}}(\Gamma^{\esh})$. In particular, step~$2$ (with $A := A_0$) implies that, for each~$s \in (0,T')$, we have
		\begin{align}
			\thinspace 
			&
			\threebars \fd_\lambda \gr{Y} \threebars_{\gamma,\eta;s;\Gamma^{\esh;\nicefrac{1}{\l}}}
			=
			\threebars [\operatorname{Id} - A]^{-1} \del[0]{\fd_\lambda \gr{V}} \threebars_{\gamma,\eta;s;\Gamma^{\esh;\nicefrac{1}{\l}}}
			\aac
			\threebars \fd_{\lambda} \gr{V} \threebars_{\gamma,\eta;s,\Gamma^{\esh;\nicefrac{1}{\l}}} \label{lem:inv_theta_op:pf_est_step3}\\
			= \ 
			&
			\threebars \CQ_{<\gamma}\CP^{E_\sh\d_{\nicefrac{1}{\l}}\bz}\1_0^+(\fd_\l\gr{\tilde{V}}) \threebars_{\gamma,\eta;s,\Gamma^{\esh;\nicefrac{1}{\l}}}
			\aac
			s^{\beta} \norm[0]{\Pi^{\esh;\nicefrac{1}{\l}}}
			\threebars \1_0^+(\fd_\l\gr{\tilde{V}}) \threebars_{\bar{\gamma},\bar{\eta};s;\Gamma^{\esh;\nicefrac{1}{\l}}} 
			\aac 
			\threebars \fd_\l\gr{\tilde{V}} \threebars_{\bar{\gamma},\bar{\eta};T;\Gamma^{\esh;\nicefrac{1}{\l}}} \notag
		\end{align}
		In the previous computation, we have again used~\thref{lem:rescaling_lambda} and, in addition, eq.~\eqref{lem:inv_theta_op:pf_neumann} as well as the transformative behaviour of~$\CP^{E_\sh\bz}$ w.r.t.~$\fd_\l$, see~\thref{lem:consistency_dilation}\ref{lem:consistency_dilation:iii}.
		
		We now consider
		\begin{equation}
			\gr{\bar{Y}} = \CQ_{< \gamma}\CP^{\esh}\1_s^+\del[1]{G'(\gr{W})\gr{\bar{Y}}\thinspace\<cm>\thinspace + \gr{\tilde{V}}} + P_s\sbr[1]{\CR^{\esh}\gr{Y}(s,\cdot)},
			\label{lem:inv_theta_op:pf_rest_fp_prob}
		\end{equation}
		with $P y_s$ the \enquote{harmonic extension} of $y_s := \CR^{\esh}\gr{Y}(s,\cdot)$ defined by
		\begin{equation*}
			P_sy_s(t,x) := \sbr[1]{P(t-s,\cdot) *_{\operatorname{s}} y_s}(x)
		\end{equation*}
		where $*_{\operatorname{s}}$ denotes spatial convolution. In conjunction with~\cite[Prop.~3.28]{hairer_rs}, our assumptions on the sector~$\gr{\CU}$ detailed in step~$1$ guarantee that $y_s \in \mathcal{C}^{\zeta} \embed \mathcal{C}^{\eta}$. A suitable modification of~\cite[Lem.~7.5]{hairer_rs} then shows that (the lift of) $P_s y_s$ (to $\bar{\CT}$) belongs to $\DD_s^{\gamma,\eta}(\Gamma^{\esh;\nicefrac{1}{\l}})$ and can be estimated~by
		\begin{equs}[][lem:inv_theta_op:pf_est_harm_ext]
			\threebars P_s y_s \threebars_{\gamma,\eta,T;\Gamma^{\esh;\nicefrac{1}{\l}}}
			& \aac
			\norm[0]{\CR^{E_\sh\d_{\nicefrac{1}{\l}}\bz} \fd_\l \gr{Y}(s,\cdot)}_{\eta}
			\aac
			\norm[0]{\Pi^{\esh;\nicefrac{1}{\l}}} \threebars \fd_\lambda \gr{Y} \threebars_{\gamma,\eta;s;\Gamma^{\esh;\nicefrac{1}{\l}}} \\
			& \aac
			\threebars \fd_\l\gr{\tilde{V}} \threebars_{\bar{\gamma},\bar{\eta};T;\Gamma^{\esh;\nicefrac{1}{\l}}}
		\end{equs}
		using~\eqref{lem:inv_theta_op:pf_est_step3} and~\thref{lem:rescaling_lambda}. This observation qualifies $P_s y_s$ as an admissible initial condition specified in~\cite[Thm.~7.8]{hairer_rs}, so the same argument as in step~$1$ renders the restarted FP problem in~\eqref{lem:inv_theta_op:pf_rest_fp_prob} solvable with~$\gr{\bar{Y}} \in \DD^{\gamma,\eta;(s,\bar{T})}_s(\Gamma^{\esh})$ for some $\bar{T} \in (T',T)$. 
		Step~$2$ combined with the estimate~\eqref{lem:inv_theta_op:pf_est_harm_ext} and~\thref{lem:rescaling_lambda} then implies that
		\begin{equs}
			\threebars \fd_\lambda \gr{\bar{Y}} \threebars_{\gamma,\eta;(s,\bar{T});\Gamma^{\esh;\nicefrac{1}{\l}}}
			& =
			\threebars [\operatorname{Id} - A_s]^{-1}  
			\del[1]{\CQ_{< \gamma}\CP^{E_\sh\d_{\nicefrac{1}{\l}}\bz}\thinspace (\1_s^+ \fd_\l \gr{\tilde{V})} 
				+ 
				P_s\sbr[1]{\CR^{\esh}\gr{Y}(s,\cdot)}}	
			\threebars_{\gamma,\eta;\bar{T};\Gamma^{\esh;\nicefrac{1}{\l}}} \\
			& \aac \ 
			\threebars 
			\CP^{E_\sh\d_{\nicefrac{1}{\l}}\bz}\thinspace (\1_s^+ \fd_\l \gr{\tilde{V})} 
			\threebars_{\gamma,\eta;\bar{T};\Gamma^{\esh;\nicefrac{1}{\l}}}
			+ 
			\threebars
			P_s\sbr[1]{\CR^{\esh}\gr{Y}(s,\cdot)}
			\threebars_{\gamma,\eta;\bar{T};\Gamma^{\esh;\nicefrac{1}{\l}}} \\
			& \aac \
			\bar{T}^\beta \thinspace \norm[0]{\Pi^{\esh;\nicefrac{1}{\l}}} \threebars \1_s^+ \fd_\l \gr{\tilde{V}} \threebars_{\gamma,\eta;\bar{T};\Gamma^{\esh;\nicefrac{1}{\l}}}
			+
			\threebars \fd_\l \gr{\tilde{V}} \threebars_{\bar{\gamma},\bar{\eta};T} 
			\aac 
			\threebars \fd_\l \gr{\tilde{V}} \threebars_{\bar{\gamma},\bar{\eta};T}
		\end{equs}
		\item \textbf{Patching together and iteration.} \label{lem:inv_theta_op:pf_step_4}	
		By~\cite[Prop.~7.11]{hairer_rs}, $\gr{\bar{Y}}$ agrees with $\gr{Y}$ on $(s,T']$, so we may patch solutions together: We define~
		\begin{equation*}
			\gr{\tilde{Y}}(t,x) := \gr{Y}(t,x) \1_{t \leq s} + \gr{\bar{Y}}(t,x) \1_{t \in (s,\bar{T})}
		\end{equation*}
		which we know solves~\eqref{sec:duhamel:eq0} on~$(0,\bar{T})$. In addition, the estimates in step~$3$ imply the bound
		\begin{equation}
			\threebars \fd_\l \gr{\tilde{Y}} \threebars_{\gamma,\eta;\bar{T};\Gamma^{\esh;\nicefrac{1}{\l}}}	
			\leq
			\threebars \fd_\l \gr{Y} \threebars_{\gamma,\eta;s;\Gamma^{\esh;\nicefrac{1}{\l}}}
			+
			\threebars \fd_\l \gr{\bar{Y}} \threebars_{\gamma,\eta;(s,\bar{T});\Gamma^{\esh;\nicefrac{1}{\l}}}
			\aac
			\threebars \fd_\l \gr{\tilde{V}} \threebars_{\bar{\gamma},\bar{\eta};T}.
			\label{lem:inv_theta_op:pf_step_4_eq}
		\end{equation}
		The procedure described in steps~$1$ to~$3$ that culminated in the estimate~\eqref{lem:inv_theta_op:pf_step_4_eq} can be repeated ad infinitum until the (finite!) time~$T$ is reached. Thereby, we iteratively obtain a solution~$\gr{Y}$ to~\eqref{sec:duhamel:eq0} on~$(0,T)$ with the desired estimate
		\begin{equation*}
			\threebars \fd_\l \gr{Y} \threebars_{\gamma,\eta;T;\Gamma^{\esh;\nicefrac{1}{\l}}}
			\aac
			\threebars \fd_\l \gr{\tilde{V}} \threebars_{\bar{\gamma},\bar{\eta};T;\Gamma^{\esh;\nicefrac{1}{\l}}}.
		\end{equation*}
	\end{enumerate}
\end{proof}

\bibstyle{alpha}
\bibliography{bibfile.bib}

\end{document}